\newtheorem{thm}{Theorem}[section]
\newtheorem{prop}{Proposition}[section]
\newtheorem{lem}{Lemma}[section]
\theoremstyle{remark}
\newtheorem{rem}{Remark}[section]
\newtheorem*{rem*}{Remark}
\newtheorem{cor}{Corollary}[section]
\numberwithin{equation}{section}
\newcommand{\op}[1]{\operatorname{#1}}
\newcommand{\R}{\mathbb R}
\begin{document}
%%\ziju{0.15} \zihao{-4}{\heiti\zihao{3}} \tableofcontents
%\setcounter{page}{1}
%\pagestyle{plain}

\title[Effective maximum principles]{Effective maximum principles
for spectral methods}

\author[D. Li]{Dong Li}
%\address[D. Li]
%{Department of Mathematics, University of British Columbia, 1984 Mathematics Road,
%Vancouver, BC, Canada V6T1Z2 and \\
%School of Mathematics, Institute for Advanced Study, 1st Einstein Drive, Princeton NJ, USA 08544}
\email{mpdongli@gmail.com}

\subjclass{35Q35}

\keywords{spectral method, Allen-Cahn, maximum principle, Burgers, Navier-Stokes}

\begin{abstract}
Many physical problems such as Allen-Cahn flows have natural maximum principles which yield strong point-wise control of the physical solutions in terms of the boundary data,
the initial
conditions and the operator coefficients. Sharp/strict maximum principles insomuch of fundamental importance for the continuous problem
 often do not persist under numerical  discretization. 
A lot of past research concentrates on designing fine numerical schemes
which preserves the sharp maximum principles especially for nonlinear problems. 
However these sharp principles  not only sometimes introduce unwanted stringent conditions on the numerical schemes but also completely
  leaves  many powerful
 %$L^2$ 
 frequency-based methods unattended and rarely analyzed directly in the sharp maximum
 norm topology.
 A prominent example is the spectral methods in the family of weighted residual methods.

In this work we introduce and develop a new framework of almost sharp maximum principles 
which allow the numerical solutions to deviate from the sharp bound by a controllable
discretization error: we call them effective maximum principles. 
We showcase the analysis 
for the classical Fourier spectral methods including Fourier Galerkin and Fourier collocation in space
with forward Euler in time or second order Strang splitting.  The model equations include
the Allen-Cahn equations with double well potential, the Burgers equation and the
Navier-Stokes equations. We give a comprehensive proof of the effective maximum principles
under very general parametric conditions.
\end{abstract}
\maketitle
\tableofcontents
\section{Introduction}%----------------------------introduction
In solving  physical problems such as Allen-Cahn flows in interfacial dynamics, the 
maximum principle plays an important role since it gives strong point-wise control of the physical solutions in terms of the boundary data,
the initial
conditions and the operator coefficients. For practical numerical simulations, it is often the case
that sharp/strict maximum principles  for the continuous problem often do not persist under numerical  discretization. 
A lot of past research is centered on designing fine numerical schemes which preserves
the maximum in a sharp way especially for nonlinear problems. For linear parabolic equations,
it is well known that central finite difference in space with backward Euler time stepping
can preserve the sharp maximum principle (cf. Chapter 9 of \cite{LarTh03}
for a textbook analysis of 1D homogeneous heat equation).  This is also the case 
if one employs lumped mass linear finite element in space using acute simplicial triangulation. 
% in the context
%of semi-linear parabolic equations these schemes are often called maximum-principle-preserving 
%numerical methods. 
Although preserving the sharp maximum principle is highly desirable for numerical simulations,
 these  often  introduce unwanted stringent conditions on the numerical schemes. Moreover it completely
  leaves  out many powerful
 $L^2$-based methods unattended and rarely analyzed directly in the sharp maximum
 norm topology.
 In this respect a prominent example is the spectral methods in the family of weighted residual methods. In this work we introduce and develop a new framework of almost sharp maximum principles 
which allow the numerical solutions to deviate from the sharp bound by a controllable
discretization error: we call them effective maximum principles. Our main models are Allen-Cahn
equations in physical dimensions $d\le 3$, but we also discuss related models such
as Burgers equations, Navier-Stokes equations.  All these will be discussed in this introduction.

We begin by considering the Allen-Cahn equation in physical dimensions $d=1,2,3$:
\begin{align} \label{r1}
\begin{cases}
\partial_t u = \nu^2 \Delta u -f(u),  \quad t>0; \\
u \Bigr|_{t=0} =u_0, \\
\text{periodic boundary conditions,}
\end{cases}
\end{align}
where $u$ is a scalar function which typically represents the concentration of one of the
two metallic components of the alloy. For simplicity we consider the periodic boundary condition and assume the function to have period $1$ in each spatial coordinate axis. The parameter $\nu>0$ controls the interfacial width which
is small compared with the system size under study. The nonlinear term has the usual
double well form:
\begin{align*}
f(u)=F^{\prime}(u)=u^3-u, \quad F(u)=\frac14 {(u^2-1)^2}.
\end{align*}
Introduce the energy functional
\begin{align*}
\mathcal E(u) = \int ( \frac 12 \nu^2 |\nabla u|^2 + F(u) ) dx.
\end{align*}
The system \eqref{r1} can be regarded as a gradient flow of $\mathcal E(\cdot)$ in the $L^2$ metric.
On the other hand if one changes the topology to $H^{-1}$ then we obtain the usual Cahn-Hilliard
system. Due to the gradient flow structure 
\begin{align*}
u_t =- \frac{\delta \mathcal E}{ \delta u},
\end{align*}
one has the energy law
\begin{align*}
\frac {d} {dt} \mathcal E(u(t)) 
= - \| \partial_t u \|_2^2 = - \int | \nu^2 \Delta u -f(u)|^2 dx.
\end{align*}
Thus for smooth solutions, we have the energy decay
\begin{align*}
\mathcal E(u(t)) \le \mathcal E (u(s)), \quad \forall\, 0\le s \le t<\infty.
\end{align*}
Similar energy laws also exists for other phase field models such as the Cahn-Hilliard system.
However for the Allen-Cahn system due to its particular structure one has an additional maximum
principle which asserts that the $L^{\infty}$ norm of the 
smooth solution is bounded by $1$ if the initial data is bounded by $1$. Verification of these two fundamental conservation laws are of pivotal role in designing robust and stable numerical
schemes for the Allen-Cahn system.

We begin with  an implicit-explicit Fourier Galerkin discretization of \eqref{r1}. For simplicity
consider the system in one dimension, i.e. the torus $\mathbb T=[0,1)$.  For a periodic function
$f$ with Fourier coefficients $\widehat f (k)$, we define its projection to the first $N$-modes
($N\ge 2$ is an integer)
as:
\begin{align*}
(\Pi_N f )(x) = \sum_{|k| \le N} \widehat f (k) e^{2\pi i k  \cdot x}.
\end{align*}
In yet other words $\Pi_N$ is the projection to the space
\begin{align*}
S_N=\operatorname{span} \{ e^{2\pi i k\cdot x}: \;  \text{ $k\in \mathbb Z$ and  $|k| \le N$} \}.
\end{align*}
A prototypical implicit-explicit Fourier spectral scheme has the form:
\begin{align} \label{r3}
\begin{cases}
\frac{u^{n+1} -u^n}{ \tau} = \nu^2 \partial_{xx} u^{n+1} -\Pi_N ( (u^n)^3- u^n),
\quad n\ge 0, \\
u^0 =\Pi_N u_0,
\end{cases}
\end{align}
where $\tau>0$ is the size of the time step, and $u^n$ denotes the numerical solution at the time
step $t=n\tau$.  Note that the linear part is treated implicitly whereas the nonlinear part is explicit
which makes the practical computation very convenient.  Thanks to the frequency projection
the Fourier modes of $u^n$ are  all trapped in the window $[-N,N]$.  In later sections  we also consider the full collocation  case and aliasing errors.  
On the other hand, the system \eqref{r3} in some sense
captures the essential difficulties of the numerical analysis for the Fourier Galerkin method.

As was already mentioned we are concerned with the $L^{\infty}$ maximum principle
for the approximation system \eqref{r3}. For this purpose it is convenient to recast it as
\begin{align} \label{r4}
\begin{cases}
u^{n+1}= T_{N, \tau \nu^2}  \Bigl( f_{\tau}(u^n) \Bigr), \quad n\ge 0;\\
u^{0}= \Pi_N u_0, 
\end{cases}
\end{align}
where $T_{N, \tau\nu^2} = (\operatorname{Id}-\tau \nu^2 \partial_{xx})^{-1} \Pi_N$, and
\begin{align*}
f_{\tau}(z) = (1+\tau) z - \tau z^3.
\end{align*}
Similar reformulation can also be written down for other discretization
methods such as the finite difference scheme. One immediate problem which makes
the analysis of \eqref{r4} nontrivial in the $L^{\infty}$ setting is the lack of $L^{\infty}$
preservation due to the frequency truncation $\Pi_N$. Indeed, even for $n=0$ and generic
initial data $u_0$ with $\|u_0\|_{\infty}\le 1$, one can have
\begin{align*}
\|u^0\|_{\infty} >1
\end{align*}
which is caused by the lack of positivity of the Dirichlet kernel. For this and similar other
technical obstructions (cf. page 219 of \cite{YDZ18}), there has been no discussion of $L^{\infty}$ maximum principle
for the Fourier spectral method in the literature prior to this work. The very purpose of this
work is to settle this issue and develop a new framework for spectral methods. 
%Note that for well-designed finite difference
%schemes this issue disappears and we will discuss it later.

Our first result is concerned with a detailed analysis of the operator $T_{N,\tau \nu^2}$. 
Albeit classical 
the main novelty here is the quantification of the parameters and the sharpness of the 
involved constants. To allow some generality we denote for
$N\ge 1$, $\beta>0$, $T_{N,\beta}= (\operatorname{Id}-\beta \partial_{xx})^{-1}
\Pi_N$. Also denote $\Pi_{>N}=\operatorname{Id}-\Pi_N$ and
$K_{>N, \beta}=(\operatorname{Id}-\beta\partial_{xx})^{-1} \Pi_{>N}$.

\begin{thm}[Maximum principle for $T_{N,\beta}$, $1$D] \label{thm1.1_00}
Let $\beta>0$ and $N\ge 1$. 
\begin{itemize}
\item \underline{Strict maximum principle}: If $N\ge N_0(\beta)= \frac 1 {2\pi^2 
\sqrt{\beta} } e^{\frac 1 {2\sqrt{\beta}}}$, then we have the
strict maximum principle:
\begin{align*}
\| T_{N,\beta} f \|_{L^{\infty}(\mathbb T)} \le \| f \|_{L^{\infty}(\mathbb T)}.
\end{align*}
On the other hand if $N< N_0$ there are counterexamples.
\item \underline{Effective maximum principle}: For any $N\ge 1$, we have
\begin{align*}
\|T_{N,\beta} f \|_{L^{\infty}(\mathbb T)} 
\le (1+\frac {\alpha_1}{ 1+\beta N^2}  \log (N+2) ) \| f\|_{L^{\infty}(\mathbb T)},
\end{align*}
where $\alpha_1>0$ is an absolute constant.
\item \underline{Sharp bounds on $K_{>N,\beta}$}:  there are absolute constants 
$c_1>0$, $c_2>0$, such that
\begin{align*}
\frac {c_1}{1+\beta N^2} \log (N+2)
\le \|K_{>N, \beta} \|_{L^1(\mathbb T)} 
\le \frac {c_2} {1+\beta N^2} \log (N+2).
\end{align*}
\end{itemize}

These results are essentially sharp with respect to the dependence on the parameters.
 See Theorem \ref{thm2.14old} in Section 2 for some results concerning lower bounds and 
 more
definite and precise statements.

\end{thm}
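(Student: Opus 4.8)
The plan is to reduce all three statements to one‑dimensional $L^1$ estimates for convolution kernels. Since $T_{N,\beta}$ is a Fourier multiplier it is a convolution operator on $\mathbb T$, namely $T_{N,\beta}f=G_{N,\beta}*f$ with
\begin{equation*}
G_{N,\beta}(x)=\sum_{|k|\le N}\frac{e^{2\pi i kx}}{1+4\pi^2\beta k^2}=\Pi_N G_{\infty,\beta}(x),\qquad G_{\infty,\beta}(x):=\sum_{k\in\mathbb Z}\frac{e^{2\pi i kx}}{1+4\pi^2\beta k^2},
\end{equation*}
so $\|T_{N,\beta}\|_{L^\infty(\mathbb T)\to L^\infty(\mathbb T)}=\|G_{N,\beta}\|_{L^1(\mathbb T)}$; in particular, since $\int_{\mathbb T}G_{N,\beta}=1$, the strict maximum principle is exactly the statement $\|G_{N,\beta}\|_{L^1}=1$, i.e. $G_{N,\beta}\ge0$ a.e. I would then use that $G_{\infty,\beta}$ is the periodized Green's function of $\operatorname{Id}-\beta\partial_{xx}$, with the explicit form $G_{\infty,\beta}(x)=\frac{1}{2\sqrt\beta}\cosh\!\big(\tfrac{2x-1}{2\sqrt\beta}\big)\big/\sinh\!\big(\tfrac{1}{2\sqrt\beta}\big)$ on $[0,1]$; hence $G_{\infty,\beta}\ge0$, $\int_{\mathbb T}G_{\infty,\beta}=1$, $\|G_{\infty,\beta}\|_{L^\infty}=G_{\infty,\beta}(0)$, and $\min_{\mathbb T}G_{\infty,\beta}=G_{\infty,\beta}(\tfrac12)=\big(2\sqrt\beta\,\sinh(1/(2\sqrt\beta))\big)^{-1}$. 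Writing $c_j:=(1+4\pi^2\beta j^2)^{-1}$ and $G_{N,\beta}=G_{\infty,\beta}-K_{>N,\beta}$, where $K_{>N,\beta}(x)=\sum_{|k|>N}c_{|k|}e^{2\pi i kx}$, everything now reduces to sharp $L^1$ and $L^\infty$ control of $K_{>N,\beta}$.

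The core estimate is the upper bound $\|K_{>N,\beta}\|_{L^1}\lesssim(1+\beta N^2)^{-1}\log(N+2)$, which also gives the effective maximum principle at once: $\|T_{N,\beta}\|_{L^\infty\to L^\infty}=\|G_{N,\beta}\|_{L^1}\le\|G_{\infty,\beta}\|_{L^1}+\|K_{>N,\beta}\|_{L^1}=1+\|K_{>N,\beta}\|_{L^1}$, so one may take $\alpha_1$ to be the implied constant. To bound $\|K_{>N,\beta}\|_{L^1}$ I would first perform a summation by parts, using $c_j\downarrow0$ and the Dirichlet kernel $D_m(x)=\sum_{|j|\le m}e^{2\pi i jx}$, to write $K_{>N,\beta}=-c_{N+1}D_N+R_N$ with $R_N=\sum_{k>N}(c_k-c_{k+1})D_k$. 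The first term satisfies $\|c_{N+1}D_N\|_{L^1}\asymp c_{N+1}\log N\asymp(1+\beta N^2)^{-1}\log(N+2)$ by the classical Lebesgue‑constant estimate, so it already has the desired size. For $R_N$ I would split $\mathbb T$ at $\|x\|=1/N$ (with $\|x\|$ the distance to $\mathbb Z$): on $\{\|x\|>1/N\}$ the crude bound $|R_N(x)|\le|\sin\pi x|^{-1}\sum_{k>N}(c_k-c_{k+1})=c_{N+1}|\sin\pi x|^{-1}\lesssim c_{N+1}/\|x\|$ integrates to $\lesssim c_{N+1}\log N$; on $\{\|x\|\le1/N\}$ the naive bound $|R_N|\le R_N(0)$ loses a full power of $N$, so I would perform a second summation by parts, which brings in the second differences $c_k-2c_{k+1}+c_{k+2}$ (whose absolute sum $A_N$ is controlled by the total variation of $c'$ on $[N,\infty)$, i.e. by $\int_N^\infty|c''|$) paired against Fejer‑type kernels bounded by $\|x\|^{-2}$; the resulting envelope $\min\big(R_N(0),\,A_N\|x\|^{-2}\big)$ is integrable on $\{\|x\|\le1/N\}$ and again contributes $\lesssim(1+\beta N^2)^{-1}\log(N+2)$ in every parameter regime. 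This second summation by parts, and the careful tracking of the $\beta$‑dependence through $c''$, is the step I expect to be the main obstacle; everything else is soft.

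For the strict maximum principle I would use $\|K_{>N,\beta}\|_{L^\infty}=K_{>N,\beta}(0)=2\sum_{k>N}c_k$ (each cosine is $\le1$) together with the integral comparison $2\sum_{k>N}c_k\le2\int_N^\infty(1+4\pi^2\beta t^2)^{-1}\,dt\le(2\pi^2\beta N)^{-1}$, which give
\begin{equation*}
G_{N,\beta}(x)=G_{\infty,\beta}(x)-K_{>N,\beta}(x)\ \ge\ \frac{1}{2\sqrt\beta\,\sinh(1/(2\sqrt\beta))}-\frac{1}{2\pi^2\beta N},
\end{equation*}
which is $\ge0$ as soon as $N\ge\sinh(1/(2\sqrt\beta))/(\pi^2\sqrt\beta)$; since $\sinh t<\tfrac12 e^t$ this is implied by $N\ge N_0(\beta)=e^{1/(2\sqrt\beta)}/(2\pi^2\sqrt\beta)$, proving the strict maximum principle in that range. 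The remaining assertions --- the matching lower bound $\|K_{>N,\beta}\|_{L^1}\gtrsim(1+\beta N^2)^{-1}\log(N+2)$, and the existence of $f$ with $\|T_{N,\beta}f\|_{L^\infty}>\|f\|_{L^\infty}$ when $N$ drops below the threshold --- rest on the fact that the Fourier coefficients $c_{|k|}\asymp(1+\beta N^2)^{-1}$ of $K_{>N,\beta}$ form a slowly varying plateau over the dyadic block $N<|k|\le 2N$, so that testing $K_{>N,\beta}$ against a sign pattern of the type $\operatorname{sgn}(D_N)$ localized near $[1/N,\tfrac12]$ recovers the extra $\log N$ (once the remainder $R_N$ is shown to contribute strictly less there), together with a scale‑by‑scale analysis of $G_{N,\beta}$ near $x=0$ and $x=\tfrac12$. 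Ruling out that cancellation in the lower bound is the other point that needs genuine care; these items, and the precise constants $\alpha_1,c_1,c_2$, are carried out in Theorem~\ref{thm2.14old}.
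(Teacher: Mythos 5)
Your proposal is correct and, for the core estimates, lands on one of the two proofs the paper actually gives. Your upper bound for $\|K_{>N,\beta}\|_{L^1}$ — peel off $-c_{N+1}D_N$ by one Abel summation, then control the remainder by a second Abel summation against Fej\'er-type kernels with the envelope $\min\bigl(R_N(0),\,\|x\|^{-2}\sum_{k}|c_k-2c_{k+1}+c_{k+2}|\bigr)$ — is exactly the mechanism of the paper's second proof (Propositions \ref{prop_upperlower}--\ref{prop_upperlower1} and the convex-sequence theorem in Section 2.1), and your strict-maximum-principle argument, comparing the minimum of the periodized Green's function against the uniform tail bound $2\sum_{k>N}c_k\le(2\pi^2\beta N)^{-1}$, is precisely the computation in Proposition \ref{prop_tmp211} (you use the exact $\operatorname{cosh}/\operatorname{sinh}$ formula where the paper keeps only the two nearest translates, which is why your threshold is marginally better before being rounded up to the same $N_0$). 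The one genuine organizational difference is that the paper's Fej\'er-kernel proof needs the second differences to be nonnegative, which forces a case split at $N\sim\beta^{-1/2}$ (Proposition \ref{prop_upperlower} for $N\gtrsim\beta^{-1/2}$, Proposition \ref{prop_upperlower1} plus an interpolation argument for $N\lesssim\beta^{-1/2}$), whereas you replace convexity by the total variation of the first differences and so treat all $N$ uniformly; this is a clean simplification, and I checked that the envelope integral does come out to $O\bigl((1+\beta N^2)^{-1}\log(N+2)\bigr)$ in both regimes $\beta N^2\gtrless1$ (in the regime $\beta N^2<1$ one must use $A_N\lesssim\sqrt\beta$ rather than the cruder $A_N\lesssim c_N/N$, but both hold). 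The paper's \emph{first} proof (Proposition \ref{prop_KN_lu_0001}), based on smooth Littlewood--Paley cutoffs and Poisson summation, is a genuinely different route that you do not take; it is what the paper uses to get the matching $L^1$ lower bound (via the trick $\tilde A=(1-\beta\partial_{xx})A$ with an $L^1$-bounded reverse multiplier), and it is also what generalizes to $d\ge2$. Be aware that the two items you defer — the lower bound and the counterexamples for $N<N_0$ — are not cosmetic: the paper proves the lower bound by the multiplier trick just mentioned (or alternatively by integrating over $\Omega_N^-$ using positivity of the Fej\'er part), and the counterexample is only established for $\beta^{-1/2}\lesssim N\lesssim\beta^{-1/4}e^{1/(6\sqrt\beta)}$ (Proposition \ref{prop_tmp211}), not for all $N<N_0$; your sketch of "testing against $\operatorname{sgn}(D_N)$" is in the right spirit but the cancellation analysis there is where the real work sits.
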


\begin{rem*}
For the strict maximum principle of $T_{N,\beta}$ to hold we needed $N\ge N_0(\beta)
=O(e^{C/\sqrt{\beta}})$ which is large in the regime $0<\beta\ll 1$.  The sharpness
of such bounds is established in Proposition \ref{prop_tmp211}. But heuristically 
there are several ways to see why this is needed. One way is as follows. 
Denote the kernel of $T_{N,\beta}$ as $K_N=D_N*g_{\beta}$, where
\begin{align*}
g_{\beta}(x)=\sum_{n\in \mathbb Z} g_{\beta}^{(0)}(x+n), \quad g_{\beta}^{(0)}
(x)= \frac 1 {2\sqrt{\beta}} \exp(-\frac{|x|}{\sqrt{\beta}}),
\end{align*}
and $*$ is the usual convolution on the torus.  Take the fundamental domain for the torus
as $[-\frac 12,\frac 12]$ and consider $K_N(x_0)= \int_{|y|\le \frac 12} D_N(y)
g_{\beta}(x_0-y) dy$ with $x_0$ near $\frac 12-\frac {0.5}{2N+1}$.  Note that
the Dirichlet kernel is sharply peaked at $y=0$ with value $(2N+1)$ followed by a minimum
at $y=\frac 32 \cdot \frac 1{2N+1}$ with value approximately $-\frac 2{3\pi} (2N+1) \approx -0.212*(2N+1)$ and so on. 
 Clearly for
$y=O(\frac 1{2N+1})$ the contribution is approximately $N\cdot e^{-\frac 1{2\sqrt{\beta}}}$
and this has to beat the $O(1)$ fluctuation near $x_0$. Thus we need $N\gtrsim
\exp(c/\sqrt{\beta})$. Another way to see it is to observe that $g_{\beta}(y)$ has minimum
value around $\frac 1 {2\sqrt{\beta}} \exp(-c/\sqrt{\beta})$. Now we take the Dirichlet kernel
and convolve it with $g_{\beta}$ around this minimum. Clearly we need $N\gg \exp(c/\sqrt{\beta})$
in order for the first peak of $D_N$ to beat the first negative trough.

\end{rem*}

Theorem \ref{thm1.1_00} is a special case of Theorem \ref{thm2.14old}
in Section 2.  In Section 2 we give two proofs of the sharp characterization that $\|K_{>N, \beta}\|_{L^1(\mathbb T)}
 \sim \frac 1 {1+N^2 \beta} \log (N+2)$. The first proof (Proposition 
 \ref{prop_KN_lu_0001}) is essentially based on the Poisson summation whereas the second proof
 (Proposition \ref{prop_upperlower1}) uses the Fejer kernel which exploits the convexity of the Fourier
 coefficients in certain regimes. The proof therein naturally generalizes to certain convex
 trigonometric series.

 Our next result  generalizes Theorem \ref{thm1.1_00} to dimensions $d\ge 1$. 
In numerical computations, we usually work with the projection operators 
\begin{align*}
\Pi_N=\Pi_N^{(d)} = \Pi_{N,k_1} \cdots \Pi_{N, k_d},
\end{align*}
where $\Pi_{N,k_j}$ refers to projection into each frequency coordinate $k_j$. In
yet other words $\Pi_N^{(d)}$ is the frequency truncation to the
domain  $\{k:\, |k|_{\infty}\le N\}$, i.e., 
\begin{align*}
\widehat{\Pi_N^{(d)} } (k) =1_{|k|_{\infty} \le N}, \qquad k=(k_1,\cdots,k_d) \in 
\mathbb Z^d.
\end{align*} 
For $\beta>0$, $N\ge 2$, we define 
\begin{align*}
&T_{>N, \beta}= (\operatorname{Id}- \beta \Delta)^{-1} (\operatorname{Id}
-\Pi_{N}^{(d)}),  \qquad \widehat{T_{>N,\beta}}(k) = \frac 
1{1+\beta(2\pi |k| )^2} \cdot 1_{|k|_{\infty}>N}, \\
&T_{N,\beta}
= (\operatorname{Id}-\beta \Delta)^{-1} \Pi_N^{(d)},
\qquad \widehat{T_{N, \beta}}(k) = \frac 1 {1+\beta(2\pi |k|)^2} 
\cdot 1_{|k|_{\infty} \le N}.
\end{align*}

 \begin{thm}[Maximum principle for $T_{N,\beta}$ and $T_{>N,\beta}$,
 $d\ge 1$]  \label{thm_1.2_00}
 %\label{thm2.18_00}
Let $d\ge 1$, $N\ge 2$, $\beta>0$. Then the following hold for $T_{>N, \beta}= (\operatorname{Id}- \beta \Delta)^{-1} (\operatorname{Id}
-\Pi_{N}^{(d)})$ with the kernel $K_{>N,\beta}$, and $T_{N,\beta}
= (\operatorname{Id}-\beta \Delta)^{-1} \Pi_N^{(d)}$.
\begin{enumerate}
\item \underline{Sharp $L_x^1$ bound}. 
\begin{align*}
\frac {c_1} {1+\beta N^2}
(\log (N+2) )^d \le 
\| K_{>N,\beta} \|_{L_x^1(\mathbb T^d)} \le \frac {c_2} {1+\beta N^2}
(\log (N+2))^d,
\end{align*}
where $c_1>0$, $c_2>0$ depend only on the dimension $d$. 
\item \underline{Effective maximum principle}. For all $N\ge 2$, we have 
\begin{align*}
\| T_{N,\beta} f \|_{L_x^{\infty}(\mathbb T^d)}
\le (1+ \frac {c_3} { 1+\beta N^2} (\log (N+2))^d) \| f \|_{L_x^{\infty}(\mathbb T^d)},
\end{align*}
where $c_3>0$ depends only on the dimension $d$. 

\item \underline{Sharp maximum principle}.  Let $d=1,2$.  Denote the kernel of $T_{N,\beta}$ as
$K_{N,\beta}$. For all $N\ge N_0= N_0(\beta,d)$ (i.e. $N_0$ is constant depending only
on $\beta$ and $d$), it holds that
$K_{N,\beta}$ is a strictly positive function on $\mathbb T^d$ with unit $L^1_x$ mass, and
consequently
\begin{align*}
\| T_{N, \beta} f \|_{L_x^{\infty}(\mathbb T^d)} \le \|f \|_{L_x^{\infty}(\mathbb T^d)}.
\end{align*}
\end{enumerate}
\end{thm}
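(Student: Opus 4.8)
The plan is to reduce everything to the one-dimensional statement of Theorem \ref{thm1.1_00} by exploiting the tensor-product structure of both the projection $\Pi_N^{(d)}$ and the resolvent $(\op{Id}-\beta\Delta)^{-1}$, and then to handle the genuinely multi-dimensional loss — the extra powers of $\log(N+2)$ — by a careful bookkeeping of which coordinates are ``high'' and which are ``low''. For part (1), the key observation is that the kernel $K_{>N,\beta}$ does not factor, but its Fourier multiplier $\frac{1}{1+\beta(2\pi|k|)^2}\,1_{|k|_\infty>N}$ can be decomposed by inclusion–exclusion: writing $1_{|k|_\infty>N}=1-\prod_{j=1}^d 1_{|k_j|\le N}$, one expands the product and groups terms according to the subset $A\subseteq\{1,\dots,d\}$ of coordinates that are constrained to be low. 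On each such piece the multiplier $\frac{1}{1+\beta\sum_j(2\pi k_j)^2}$ is still not a tensor product, but one can bound it above and below by comparing $1+\beta\sum k_j^2$ with $\prod_j(1+\beta k_j^2)^{1/d}$ or, more efficiently, by freezing all but one coordinate; the resulting one-dimensional pieces are exactly $K_{>N,\beta}$ and $K_{N,\beta}$ in each variable, whose $L^1$ norms are controlled by Theorem \ref{thm1.1_00} (namely $\sim\frac{1}{1+\beta N^2}\log(N+2)$ and $\le 1+O(\frac{\log(N+2)}{1+\beta N^2})$ respectively). Multiplying the one-variable estimates across the $d$ coordinates and summing over the $2^d-1$ nonempty subsets $A$ produces the upper bound $\frac{c_2}{1+\beta N^2}(\log(N+2))^d$; for the lower bound it suffices to test against a single coordinate direction or to use the dominant term in the inclusion–exclusion (all coordinates high), which already contributes $\gtrsim\frac{1}{1+\beta N^2}(\log(N+2))^d$.

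Part (2) is then immediate: since $T_{N,\beta}=(\op{Id}-\beta\Delta)^{-1}-T_{>N,\beta}$ and $(\op{Id}-\beta\Delta)^{-1}$ has a positive kernel of unit $L^1_x$ mass (it is the periodization of the Bessel/Yukawa kernel), we get $\|T_{N,\beta}f\|_\infty\le(1+\|K_{>N,\beta}\|_{L^1_x})\|f\|_\infty$, and part (1) supplies the constant $c_3$. For part (3) with $d=1$, the positivity of $K_{N,\beta}$ and its unit mass are exactly the strict maximum principle already proved in Theorem \ref{thm1.1_00} once $N\ge N_0(\beta)$; the unit mass follows since $\widehat{K_{N,\beta}}(0)=1$ and positivity is the content of the strict bound. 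For $d=2$ one wants $K_{N,\beta}=(\op{Id}-\beta\Delta)^{-1}\Pi_N^{(2)}$ to be positive for $N$ large; the natural route is to write its kernel as a genuine two-dimensional object and show it is a small perturbation of a positive kernel. I would decompose $K_{N,\beta}=G_\beta-K_{>N,\beta}$ where $G_\beta$ is the (positive) periodized two-dimensional Yukawa kernel, and argue that $G_\beta$ is bounded below by a strictly positive constant on $\mathbb T^2$ (true for fixed $\beta$, with the lower bound degenerating like $e^{-c/\sqrt\beta}$), while $\|K_{>N,\beta}\|_{L^\infty_x}\to0$ as $N\to\infty$ — the latter from the rapid decay of the tail multiplier, giving a bound like $\frac{C_\beta}{N}$ or better. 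Choosing $N_0(\beta,2)$ so that this $L^\infty$ tail is smaller than $\min_{\mathbb T^2}G_\beta$ forces $K_{N,\beta}>0$ pointwise; the unit mass is again automatic from $\widehat{K_{N,\beta}}(0)=1$.

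The main obstacle I anticipate is the $d=2$ positivity in part (3): unlike the one-dimensional case, there is no clean convolution factorization $D_N*g_\beta$ with a scalar $g_\beta$, so one cannot simply invoke a one-variable kernel computation. The honest fix is the $L^\infty$-tail estimate $\|K_{>N,\beta}\|_{L^\infty_x(\mathbb T^2)}\lesssim_\beta N^{-1}$, which requires summing $\frac{1}{1+\beta(2\pi|k|)^2}$ over $\{|k|_\infty>N\}$ — this sum diverges without the decay of the multiplier, so one must be careful to use the $\beta|k|^2$ gain and convert the lattice sum into an integral comparison; getting the $\beta$-dependence of the constant explicit (it will blow up as $\beta\to0$, consistent with $N_0(\beta,d)=O(e^{C/\sqrt\beta})$ from the one-dimensional remark) is the delicate point. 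A secondary technical nuisance is that $\frac{1}{1+\beta\sum k_j^2}$ is not multiplicative, so the inclusion–exclusion reduction in part (1) needs an auxiliary elementary inequality (e.g. $\frac{1}{1+\beta\sum_j k_j^2}\le\min_j\frac{1}{1+\beta k_j^2}$ for the upper bound, combined with a one-dimensional $L^1$ bound on the remaining free variable after fixing the others at their worst case) rather than a literal product decomposition; this costs only dimensional constants but must be stated carefully.
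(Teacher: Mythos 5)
Your reduction for part (2) and the $d=1$ case of part (3) are fine and match the paper. The two substantive steps, however, both contain genuine gaps.

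For part (1), the step where you ``bound the multiplier above and below by comparing $1+\beta\sum_j k_j^2$ with $\prod_j(1+\beta k_j^2)^{1/d}$'' or by ``freezing all but one coordinate'' does not work: pointwise domination of Fourier multipliers does not transfer to $L^1$ bounds on their kernels (the Dirichlet kernel, with multiplier bounded by $1$ and $L^1$ norm $\sim\log N$, is the standard counterexample). After the inclusion--exclusion $1_{|k|_\infty>N}=\sum_{\emptyset\ne A}(-1)^{|A|+1}\prod_{j\in A}1_{|k_j|>N}$, each piece still carries the non-factoring symbol $(1+\beta|k|^2)^{-1}$, and crude convolution bounds give only $(\log N)^{|A|}$ without the essential decay factor $(1+\beta N^2)^{-1}$. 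The paper's route (Lemma \ref{lem2.15_00}, Theorem \ref{thm2.17_00}) is to split off the rough indicator as a genuine tensor product of one-dimensional Dirichlet-type kernels $\tilde D_N$ and pair it, via Young's inequality and Poisson summation, with a \emph{smooth, frequency-localized} symbol $\frac{1}{1+4\pi^2\beta|\xi|^2}\phi(\xi/N)\phi_1(\xi/N)$ whose inverse Fourier transform has $L^1(\R^d)$ norm $\lesssim(1+\beta N^2)^{-1}$; the far tail $(1-\phi(k/N))$ is handled separately by transference. The lower bound is likewise not obtained by ``keeping the dominant term of the inclusion--exclusion'' (an alternating sum gives no lower bound without controlling the other terms) but by the inverse multiplier trick: $\tilde A=(1+4\pi^2\beta N^2)\,m*A$ with $\|m\|_{L^1(\R^d)}\lesssim1$ and $\|\tilde A\|_1\gtrsim(\log N)^d$ because $\tilde A=-D_N^{(d)}+\text{smooth}$.

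The $d=2$ case of part (3) is where your argument breaks down irreparably as written. The claimed estimate $\|K_{>N,\beta}\|_{L^\infty_x(\mathbb T^2)}\lesssim_\beta N^{-1}$ is false: the triangle-inequality bound $\sum_{|k|_\infty>N}(1+\beta(2\pi|k|)^2)^{-1}$ \emph{diverges} in dimension two (each dyadic shell $|k|\sim2^j$ contributes $\asymp\beta^{-1}$), and indeed $K_{>N,\beta}=G_\beta-K_{N,\beta}$ is unbounded near $x=0$, since the periodized two-dimensional Bessel kernel $G_\beta$ has a logarithmic singularity there while $K_{N,\beta}$ is a trigonometric polynomial. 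So one cannot absorb $K_{>N,\beta}$ into the strictly positive minimum of $G_\beta$. The paper's appendix instead analyzes the convolution
\begin{align*}
I(x)=\sum_{n\in\mathbb Z^2}\int_{|y|_\infty<\frac12}F(x-y+n)\,\frac{\sin M\pi y_1}{\sin\pi y_1}\frac{\sin M\pi y_2}{\sin\pi y_2}\,dy,
\end{align*}
with $F$ the log-singular 2D Bessel kernel, and shows by stationary-phase-free integration by parts that $I(x)\ge c\log(1+1/|x|)+O(M^{-1}(\log M)^5)$ for all $x\ne0$, i.e.\ the truncated kernel \emph{inherits} the positive logarithmic singularity rather than being a small perturbation of a bounded positive function. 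Some argument of this delicacy is unavoidable, and it is precisely the step your proposal elides.
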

Theorem \ref{thm_1.2_00} is a restatement of Theorem \ref{thm2.18_00} in Section $2$. 
The corresponding analysis and proof can be found therein. From a practical point of
view, the statement (2) in Theorem \ref{thm_1.2_00} is most useful and effective in
the regime $0<\beta \ll 1$ since we only need to take $N$ moderately large in order
to obtain an almost sharp maximum principle. The $\log^d (N+2)$ factor in the upper
bound reflects the fact that the corresponding kernel is a tensor product of
one-dimensional Dirichlet kernels.

A natural generalization of the operator $(\op{Id}- \beta \Delta)^{-1} \Pi_N$ is the
truncated Bessel type operators $(\op{Id}- \beta \Delta)^{-s} \Pi_N$, where $s>0$. 
We denote the kernel function corresponding to $(\op{Id}- \beta \Delta)^{-s} \Pi_N$ 
as $F_{N,s}$ and note that $F_{N,s} \in C^{\infty}(\mathbb T^d)$ for each finite $N$. 
Let $s_*\in [0,1]$ be the unique solution to the equation
\begin{align*}
\int_0^{\frac {3}2 \pi} x^{1-s} \sin x dx =0, \quad 0\le s \le 1.
\end{align*}
In Lemma \ref{2015lem2.12} of Section $2$, we show that $s_*\in (0.308443, 0.308444)$.
The following group of results is proved in Section $2$. Perhaps a bit surprisingly, for 
the sharp maximum principle to hold, the transition threshold occurs at $s=s_*<1$. 
\begin{thm}[Maximum principle for general Bessel case]
Let $d\ge 1$, $N\ge 2$, $\beta>0$ and $s>0$. Then for all $N\ge 2$, we have
the \underline{effective maximum principle}: 
\begin{align*}
\| F_{N,s} \|_{L_x^1(\mathbb T^d)}
\le 1 + \frac {\alpha_{s,d} } {(1+\beta N^2)^{\frac s2} } ( \log (N+2) )^d,
\end{align*}
where $\alpha_{s,d}>0$ depend only on ($d$, $s$).

Concerning sharp maximum principles, we have the following.
\begin{enumerate}
\item If $s>d$ and $N\ge c_{s,d} (\beta^{\frac {d-1}2} e^{\frac 2 {\sqrt{\beta}} }
)^{\frac 1 {s-d} }$ ($c_{s,d}>0$ depends only on ($s$, $d$)), then $F_{N,s}$ is
a positive function and $\|F_{N,s} \|_{L^1(\mathbb T^d)} =1$.

\item Let $d=1$ and $s=1$. If $N\ge \alpha \beta^{-\frac 12} e^{\frac 1 {\sqrt{\beta} } }$
($\alpha>0$ is an absolute constant), then $F_{N,1}$ is  positive and has
unit $L^1$ mass.
\end{enumerate}
 Let $d=1$, $\beta>0$ and $0<s<1$. We identify $\mathbb T=[-\frac 12, \frac 12)$.  For some $s_* \in (0.308443, 0.308444)$, the following hold.
\begin{enumerate}
\item \underline{Lack of positivity for $0<s<s_*$}.  There are positive constants 
$\gamma_2(s)>\gamma_1(s)>0$ and $\gamma(s)>0$,  such that for any $y\in[\gamma_1(s),\, \gamma_2(s)]$
and $N\gg_s 1+\beta^{-\frac 1 {2(1-s)} }$, we have
\begin{align*}
\frac 1 {N^{1-s}} F_{N,s}(\frac y N) = - \gamma(s) + (1+ \frac 1 {\sqrt{\beta}} )\cdot O_s(N^{-(1-s)})
<-\frac 12 \gamma(s),
\end{align*}
In particular this shows that for $0<s<s_*$ the kernel function $F_{N,s}$ must be negative on an interval of length $O_s(N^{-1})$ for
all large $N$.

\item \underline{Positivity for $s_*<s<1$}. For any $s_*<s<1$ there are constants
$c_1(s)>0$ and $c_2(s)>0$ such that if $$N\ge \max\{
c_1(s) \beta^{-\frac 12} e^{\frac 1 {\sqrt{\beta}} }, c_2(s)\cdot(
1+ \beta^{-\frac 1 {2(1-s)} } ) \},$$
 then $F_N(x)$ is positive and hence has unit
$L_x^1$ mass.

\item \underline{Lack of positivity for $s=s_*$ and $0<\beta<\beta_*$}. 
Let $s=s_*$ and $y=\frac 34$. Let $0<\beta<\beta_*$ where $\beta_*$
is the same absolute constant as in Lemma \ref{lem2.14beta}.
 Then for $N\ge \frac 1 {\sqrt{\beta}}$, we have
\begin{align*}
F_{N,s}(\frac y N) = \frac 1 {2\pi \sqrt{\beta} }
A_{\beta}(s) +O_{s,\beta} ( N^{-s}  ),
\end{align*}
where $A_{\beta}(s)$ is given by the expression
\begin{align*}
A_{\beta}(s)
=\int_{\mathbb R} (\langle x \rangle^{-s} - |x|^{-s} )dx
-s 2\pi \sqrt{\beta}\int_{\mathbb R}
\langle x \rangle^{-s-2} x 
\{\frac x {2\pi \sqrt{\beta}} \} dx,
\end{align*}
where $\langle x \rangle =(1+x^2)^{\frac 12}$, and for $y \in \mathbb R$,
$\{y \}=y-[y]$ with $[y]$ being the smallest integer less than or equal to $y$.
Furthermore for $0<\beta<\beta_*$, we have
\begin{align*}
0<\alpha_1 < -A_{\beta}(s) <\alpha_2<\infty,
\end{align*}
where $\alpha_1>0$, $\alpha_2>0$ are absolute constants.
\end{enumerate}
\end{thm}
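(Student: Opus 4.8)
The plan is to handle the three groups of assertions --- the effective maximum principle (all $d$, all $s>0$, all $N$), the sharp positivity statements for $s>d$ together with the borderline $d=1$, $s=1$, and the fine $d=1$, $0<s<1$ trichotomy --- by progressively more delicate versions of one idea, all organized around the splitting $F_{N,s}=G_{\beta,s}-K_{>N,\beta,s}$, where $G_{\beta,s}$ is the untruncated torus Bessel kernel associated with the multiplier $m_s$ and $K_{>N,\beta,s}$ the kernel of its high-frequency part $(\operatorname{Id}-\Pi_N^{(d)})$-projection. For the effective maximum principle one first notes that $G_{\beta,s}$ is a strictly positive function with unit $L^1_x$ mass, which is immediate from the subordination identity $(\operatorname{Id}-\beta\Delta)^{-s}=\tfrac{1}{\Gamma(s)}\int_0^\infty t^{s-1}e^{-t}e^{\beta t\Delta}\,dt$ exhibiting $G_{\beta,s}$ as an average of positive mass-one periodic heat kernels. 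Hence $\|F_{N,s}\|_{L^1_x}\le 1+\|K_{>N,\beta,s}\|_{L^1_x}$ and everything reduces to $\|K_{>N,\beta,s}\|_{L^1_x}\le \alpha_{s,d}\,(1+\beta N^2)^{-s/2}(\log(N+2))^d$. For $d=1$ this is the exact analogue of the bound on $K_{>N,\beta}$ proved in Section~2, and I would rerun either of the two proofs given there --- Poisson summation (Proposition~\ref{prop_KN_lu_0001}) or the Fej\'er-kernel convexity argument (Proposition~\ref{prop_upperlower1}) --- with the general exponent $s$ in place of $1$, using the subordination formula to reduce to heat-kernel high-frequency estimates. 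For $d\ge 2$ the Bessel multiplier does not factor but the heat semigroup does, so after telescoping $\operatorname{Id}-\Pi_N^{(d)}=\sum_{j=1}^d(\Pi_{N,k_1}\cdots\Pi_{N,k_{j-1}})(\operatorname{Id}-\Pi_{N,k_j})$ and applying it to the tensor-product heat kernel one gets a sum of $d$ tensor products each carrying exactly one one-dimensional high-frequency factor; taking $L^1_x$ norms, inserting the one-dimensional bounds, and integrating against $t^{s-1}e^{-t}\,dt$ gives the claim, the $(\log(N+2))^d$ being the contribution of the $d$ one-dimensional Dirichlet factors.

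For the sharp positivity when $s>d$ the point is that $G_{\beta,s}$ is then not only positive but continuous and bounded, with a quantitatively controlled H\"older norm $\|G_{\beta,s}\|_{C^{s-d}}\lesssim_d\beta^{-\mathrm{p}(s,d)}$ (each derivative costing a factor $\beta^{-1/2}$) and a quantitative lower bound $\min_x G_{\beta,s}(x)\gtrsim_d \beta^{\mathrm{q}(d)}e^{-2/\sqrt\beta}$, obtained by periodizing the Euclidean Bessel kernel and retaining only the nearest lattice point --- the same heuristic as in the Remark after Theorem~\ref{thm1.1_00}. Since $F_{N,s}=G_{\beta,s}-(\operatorname{Id}-\Pi_N^{(d)})G_{\beta,s}$ and $\|(\operatorname{Id}-\Pi_N^{(d)})G_{\beta,s}\|_{L^\infty_x}\lesssim N^{-(s-d)}\|G_{\beta,s}\|_{C^{s-d}}$, the kernel $F_{N,s}$ is positive as soon as the latter quantity drops below $\min_x G_{\beta,s}$; solving this inequality for $N$ gives exactly the threshold $N\ge c_{s,d}(\beta^{(d-1)/2}e^{2/\sqrt\beta})^{1/(s-d)}$, and once $F_{N,s}>0$ it automatically has unit $L^1_x$ mass because $\widehat{F_{N,s}}(0)=1$. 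The case $d=1$, $s=1$ is not covered by ``$s>d$'' but is precisely the strict maximum principle for $T_{N,\beta}$ in Theorem~\ref{thm1.1_00}, up to the value of the absolute constant, so it is cited rather than re-proved.

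The genuinely hard part is the $d=1$, $0<s<1$ trichotomy: now $G_{\beta,s}$ blows up at the origin and the crude comparison is useless, so one must analyze $F_{N,s}$ on the scale $|x|\sim 1/N$. I would expand $F_{N,s}(y/N)=\sum_{|k|\le N}m_s(k)e^{2\pi i ky/N}$ by Poisson summation / Euler--Maclaurin, splitting off the bulk frequencies $\beta^{-1/2}\lesssim|k|\le N$ (where the multiplier $m_s$ is essentially a pure power) from the low frequencies; after rescaling, the leading term is a universal multiple of $N^{1-s}\int_0^1 v^{-s}\cos(2\pi yv)\,dv$ and the remainder is $(1+\beta^{-1/2})\,O_s(N^{-(1-s)})$, negligible precisely when $N\gg_s 1+\beta^{-1/(2(1-s))}$. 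At the first Dirichlet trough $y=\tfrac{3}{4}$ the phase $2\pi yv$ sweeps $[0,\tfrac{3}{2}\pi]$, so the sign of the leading term is that of $\int_0^{3\pi/2}u^{-s}\cos u\,du$, which by one integration by parts equals $(1-s)^{-1}\int_0^{3\pi/2}u^{1-s}\sin u\,du$ and is therefore negative for $0<s<s_*$, zero at $s=s_*$, and positive for $s_*<s<1$, with $s_*\in(0.308443,0.308444)$ by Lemma~\ref{2015lem2.12}. Case (1) follows at once: the leading term is negative on a whole $O_s(1)$-interval of $y$ around $\tfrac{3}{4}$, giving the stated sign of $N^{-(1-s)}F_{N,s}(y/N)$ and the negativity of $F_{N,s}$ on an interval of length $O_s(1/N)$. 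Case (2) follows by combining this near-origin information with the macroscopic argument of the previous paragraph: for $|x|\gtrsim 1/N$ one needs $G_{\beta,s}$ (which is $\gtrsim\beta^{\mathrm{r}}e^{-1/\sqrt\beta}$ away from the origin) to dominate the high-frequency remainder, which is what forces the additional condition $N\gtrsim\beta^{-1/2}e^{1/\sqrt\beta}$, while for $|x|\lesssim 1/N$ the near-origin expansion with $s>s_*$ has strictly positive leading term on the relevant range of $y$; together these give $F_{N,s}>0$ everywhere, hence unit $L^1_x$ mass.

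Case (3), $s=s_*$, is the crux and the main obstacle. Here the $N^{1-s}$ leading term vanishes identically at $y=\tfrac{3}{4}$, so one must push the Poisson summation one order further and keep the lattice-defect term; this produces precisely $F_{N,s_*}(\tfrac{3}{4N})=\tfrac{1}{2\pi\sqrt\beta}A_\beta(s_*)+O_{s,\beta}(N^{-s})$ with $A_\beta$ the stated integral --- the first piece $\int_{\mathbb R}(\langle x\rangle^{-s}-|x|^{-s})\,dx$ being the ``continuum integral minus diagonal'' term and the second, containing $\{x/(2\pi\sqrt\beta)\}$, being the Poisson/lattice defect of the rescaled multiplier obtained by one summation by parts. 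One is then left with the purely analytic task of showing that $-A_\beta(s_*)$ stays between two absolute positive constants for all $0<\beta<\beta_*$, which is the content of Lemma~\ref{lem2.14beta}. I expect this to be the delicate step: the first, $\beta$-independent, integral is a fixed negative number (which requires pinning down the numerical location $s_*\approx 0.3084$ precisely via Lemma~\ref{2015lem2.12}), whereas the second, $\beta$-dependent, oscillatory lattice integral $\int_{\mathbb R}\langle x\rangle^{-s-2}x\{x/(2\pi\sqrt\beta)\}\,dx$ must be controlled uniformly as $\beta\to 0$ and shown to be a genuine lower-order perturbation rather than producing an accidental cancellation for some small $\beta$.
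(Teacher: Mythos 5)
Your outline of the $0<s<1$ trichotomy is essentially the paper's argument: the rescaled analysis at scale $|x|\sim 1/N$ with error $(1+\beta^{-1/2})O_s(N^{-(1-s)})$, the reduction of the sign of the leading term at $y=\tfrac34$ to $\int_0^{3\pi/2}u^{-s}\cos u\,du=\tfrac1{1-s}\int_0^{3\pi/2}u^{1-s}\sin u\,du$ and its zero at $s_*$ (Lemma \ref{2015lem2.12}), and the Euler--Maclaurin/lattice-defect expansion at the critical exponent with the residual bound on $A_\beta(s_*)$ delegated to Lemma \ref{lem2.14beta} (where, incidentally, the oscillatory lattice integral is killed by the trivial bound $|\{z\}|\le 1$, giving $O(\sqrt\beta)$ --- less delicate than you fear). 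For case (2) you should make explicit that $y=\tfrac34$ is the \emph{global} minimum of $y\mapsto\int_0^{2\pi y}u^{-s}\cos u\,du$ over the whole range of $y$ you need, not just the value at one point. Your subordination route to the effective maximum principle (writing $(\operatorname{Id}-\beta\Delta)^{-s/2}$ as a $t^{s/2-1}e^{-t}dt$-average of heat semigroups and telescoping $\operatorname{Id}-\Pi_N^{(d)}$ across coordinates) is a legitimate alternative to the paper's direct smooth-cutoff adaptation of Theorem \ref{thm2.17_00}; both yield $(1+\beta N^2)^{-s/2}(\log(N+2))^d$.

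The one concrete error is your disposal of the case $d=1$, $s=1$. You assert it ``is precisely the strict maximum principle for $T_{N,\beta}$ in Theorem \ref{thm1.1_00}'' and propose to cite it. It is not: $T_{N,\beta}=(\operatorname{Id}-\beta\partial_{xx})^{-1}\Pi_N$ has multiplier $(1+4\pi^2\beta k^2)^{-1}$, so its kernel is $F_{N,2}$, a case already covered by your $s>d$ argument. Statement (2) concerns $F_{N,1}$, with multiplier $(1+4\pi^2\beta k^2)^{-1/2}$, and $s=d=1$ is exactly the borderline that both of your positive arguments miss: the limit kernel $F_{\infty,1}$ has a logarithmic singularity at the origin (so no bounded/H\"older comparison with $G_{\beta,s}$ is available), and the naive tail bound $\sum_{|k|>N}(1+4\pi^2\beta k^2)^{-1/2}\approx\beta^{-1/2}\sum_{|k|>N}|k|^{-1}$ diverges. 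One genuinely needs the summation-by-parts identity \eqref{id847_0_02} (convexity of $c_k=\langle 2\pi\sqrt\beta\,k\rangle^{-1}$ for $k\gtrsim\beta^{-1/2}$) to convert the high-frequency remainder into the pointwise bound $|h_N(x)|+c_N|D_N(x)|\lesssim\beta^{-1/2}N^{-1}|x|^{-1}$, and then to beat this against the lower bounds $F_{\infty,1}(x)\gtrsim\beta^{-1/2}e^{-1/\sqrt\beta}$ for $|x|\ge\tfrac15\sqrt\beta$ and $F_{\infty,1}(x)\gtrsim\beta^{-1/2}\ln(\sqrt\beta/|x|)$ for $\tfrac1{6N}\le|x|\le\tfrac15\sqrt\beta$; this is what produces the threshold $N\gtrsim\beta^{-1/2}e^{1/\sqrt\beta}$ (note: not the $e^{1/(2\sqrt\beta)}$ of Theorem \ref{thm1.1_00}). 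You already possess all the needed tools from your $0<s<1$ analysis, so the repair is routine, but as written the citation points at the wrong kernel and the case is unproved.
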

\begin{rem}
For the sharp maximum principle, further interesting cases are 
\begin{enumerate}
\item $d\ge 2$, $0<s\le d$; 
\item $d=1$, $s=s_*$, $\beta>\beta_*$. 
\end{enumerate}
Some partial results are available but we will not dwell on this issue here and
will investigate it elsewhere.
\end{rem}

We now return to \eqref{r4}. For $\beta= \nu^2 \tau >0$
% which is typically the case in practical numerical simulations, 
Theorem \ref{thm1.1_00} shows
that in general we can only expect the effective maximum principle:
\begin{align*}
\| T_{N,\nu^2 \tau} f \|_{L^{\infty}(\mathbb T)}
\le (1 + \frac {\operatorname{const} } {1+ N^2 \nu^2 \tau} 
\log (N+2) ) \| f \|_{L^{\infty}(\mathbb T)}, \qquad\forall\, N\ge 2.
\end{align*}
Moreover by Theorem \ref{thm2.14old}, this upper bound is optimal for 
$0<\beta=\nu^2\tau\ll 1$ (which is typically the case 
in practical numerical simulations) and moderately
large $N$, i.e. $\frac {\operatorname{const} } {\nu \sqrt{\tau} }
\le N \le \frac {\operatorname{const} } { \nu \sqrt{\tau} }
\exp(\frac 1 {6\nu^2 \tau} )$.  The sharp maximum does hold for
large $N\gtrsim \exp( \frac {\operatorname{const} } {\nu^2 \tau} )$ but it is
computationally unfeasible especially when $0<\nu^2 \tau \ll 1$. 
Thus in order for \eqref{r4} to admit any sort of maximum principle in some
reasonable generality, we must have some strong contractive estimates on
the nonlinear map $u^n \to f_{\tau} (u^n)$ which will compensate for the loss
in the linear estimate.  As it turns out, this is indeed possible under some mild constraints
on the time step. The heart of the matter is encapsulated in the following
elementary iterative system.

\begin{prop}   \label{2015prop1}
%\label{lem_polycubic3}
Let $\tau>0$ and consider the cubic polynomial $f_{\tau}(x)=(1+\tau) x -\tau x^3$
for $x\in \mathbb R$.
Then the following hold:
\begin{itemize}

\item If $0<\tau\le \frac 12$, then $\max_{|x|\le 1} |f_{\tau}(x)| =1$. Actually for any
$1\le \alpha\le \sqrt{1+\frac 2 {\tau}}$, we have
\begin{align*}
\max_{|x|\le \alpha} |f_{\tau} (x)|  \le \alpha.
\end{align*}
Furthermore for any
$1<\alpha<\sqrt{1+\frac 2 {\tau}}$, we have the \underline{strict inequality}
\begin{align*}
\max_{|x|\le \alpha} |f_{\tau} (x)| <\alpha.
\end{align*}

\item If $\frac 12 \le \tau \le 2$, then for any
$\frac {(1+\tau)^{\frac 32}} {\sqrt{3\tau}}
\cdot \frac 23 \le \alpha \le \sqrt{\frac {2+\tau}{\tau}} $, we have
\begin{align*}
\max_{|x|\le \alpha} |f_{\tau} (x)| \le \alpha.
\end{align*}
Furthermore if $\frac {(1+\tau)^{\frac 32}} {\sqrt{3\tau}}
\cdot \frac 23 <\alpha < \sqrt{\frac {2+\tau}{\tau}} $, then we have the 
\underline{strict inequality}
\begin{align*}
\max_{|x|\le \alpha} |f_{\tau} (x)| < \alpha.
\end{align*}

\item For $\tau>2$, define $x_0 = \sqrt{ \frac {1+\tau} {3\tau}} \in (0,1)$
 and $x_{n+1} = f_{\tau} (x_n)$. Then $|x_n|\to \infty$
as $n\to \infty$. 
\end{itemize}
\end{prop}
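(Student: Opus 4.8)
The plan is to analyze the cubic map $f_\tau$ by a combination of elementary calculus (locating critical points of $|f_\tau|$ on $[-\alpha,\alpha]$) and monotonicity of the relevant boundary and critical-point values as functions of the parameters. Since $f_\tau$ is odd it suffices to work with $g(x)=f_\tau(x)=(1+\tau)x-\tau x^3$ on $[0,\alpha]$ and bound $\max_{0\le x\le\alpha}|g(x)|$. The derivative is $g'(x)=(1+\tau)-3\tau x^2$, which vanishes at the single positive critical point $x_c=\sqrt{(1+\tau)/(3\tau)}$; at $x_c$ the map attains its positive maximum $M_\tau:=g(x_c)=\frac{2}{3}\cdot\frac{(1+\tau)^{3/2}}{\sqrt{3\tau}}$. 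On $[0,\infty)$, $g$ increases on $[0,x_c]$, decreases afterwards, and crosses zero again at $x=\sqrt{(1+\tau)/\tau}=:r_\tau$; past $r_\tau$ it is negative and decreasing. So on an interval $[0,\alpha]$ the quantity $\max|g|$ equals $\max\{M_\tau\ \text{(if }\alpha\ge x_c\text{)},\ |g(\alpha)|\}$, and the key observation is that $g(\alpha)<0$ once $\alpha>r_\tau$, with $|g(\alpha)|=\tau\alpha^3-(1+\tau)\alpha$ strictly increasing there; the endpoint constraint $\alpha\le\sqrt{(2+\tau)/\tau}$ is precisely the condition $\tau\alpha^3-(1+\tau)\alpha\le\alpha$, i.e. $|g(\alpha)|\le\alpha$, with strict inequality for strict inequality of $\alpha$. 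This handles the "$|g(\alpha)|\le\alpha$" half uniformly in $\tau>0$.

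Next I would handle the critical value $M_\tau$. For the first bullet ($0<\tau\le\frac12$) one checks $M_\tau\le 1$: indeed $M_\tau\le1\iff \frac{4}{27}\cdot\frac{(1+\tau)^3}{\tau}\le 1\iff 4(1+\tau)^3\le 27\tau$, and the function $\tau\mapsto 4(1+\tau)^3-27\tau$ is $\le0$ exactly on the interval between the two roots $\tau=\frac12$ and $\tau=2$ of $4(1+\tau)^3=27\tau$ (a cubic whose factorization $4(1+\tau)^3-27\tau=(2\tau-1)(... )$ can be verified directly, with the third root negative). Hence for $0<\tau\le\frac12$ we get $M_\tau\le1\le\alpha$ for every admissible $\alpha\ge1$, and combined with the endpoint estimate above $\max_{|x|\le\alpha}|f_\tau(x)|\le\alpha$; strict inequality for $1<\alpha<\sqrt{1+2/\tau}$ follows because then $\alpha<r_\tau$ is impossible only at the endpoint, more precisely one must separately note $x_c<1<\alpha$ forces the max to be taken at $x_c$ or at $\alpha$, and at $x_c$ we have $M_\tau\le1<\alpha$ while at $\alpha$ strictness comes from $\alpha<\sqrt{(2+\tau)/\tau}$. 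For the second bullet ($\frac12\le\tau\le2$), now $M_\tau\ge1$, and the lower constraint $\alpha\ge\frac23\cdot\frac{(1+\tau)^{3/2}}{\sqrt{3\tau}}=M_\tau$ is exactly what is needed so that $M_\tau\le\alpha$; again pairing with $|g(\alpha)|\le\alpha$ gives the claim, with strict inequalities propagating from strict inequalities on $\alpha$. One should also double-check $M_\tau\le\sqrt{(2+\tau)/\tau}$ on $[\frac12,2]$ so that the stated $\alpha$-interval is nonempty — this is again a polynomial inequality in $\tau$ reducible to checking a fixed-degree polynomial is nonnegative on $[\frac12,2]$.

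For the third bullet ($\tau>2$) the orbit starting at $x_0=x_c$ escapes to infinity. Here I would argue: since $\tau>2$, $M_\tau=g(x_c)>\sqrt{(2+\tau)/\tau}>1$, so $x_1=g(x_0)=M_\tau$ already exceeds the "trapping" radius. More usefully, one shows that for $x$ sufficiently large $|g(x)|\ge \lambda x$ with some $\lambda>1$, and that once $|x_n|$ is large enough the sequence $|x_n|$ is increasing and grows at least geometrically; so it suffices to show the orbit leaves every bounded set. Concretely: if $|x_n|\ge r_\tau+\delta$ for appropriate $\delta$ then $|x_{n+1}|=\tau|x_n|^3-(1+\tau)|x_n|\ge|x_n|$ and in fact $|x_{n+1}|\ge (1+c)|x_n|$ once $|x_n|$ exceeds a threshold depending on $\tau$; and one checks directly that the first few iterates from $x_0=x_c$ push the value above that threshold when $\tau>2$ (using $M_\tau>\sqrt{(2+\tau)/\tau}$ and iterating the monotone lower bound $|g(x)|\ge\tau x^3-(1+\tau)x$). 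The only mildly delicate point is to confirm the very first escape step, i.e. that $x_1,x_2,\dots$ do not accidentally land back inside the basin where $g$ contracts toward a fixed point; but since for $\tau>2$ the nonzero fixed points $\pm\sqrt{1/\tau}$ satisfy $|g'(\pm\sqrt{1/\tau})|=|1+\tau-3|=|\tau-2|$... wait, $=\tau-2$ which is $>0$ but could be $<1$ for $\tau<3$ — so these fixed points are attracting for $2<\tau<3$, hence the escape claim genuinely depends on the specific starting value $x_0=x_c$, and one must verify that this particular seed is not in the basin of attraction.

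The main obstacle I expect is exactly this last point: for $2<\tau<3$ the map $f_\tau$ has attracting period-one (and soon period-two) orbits, so "the orbit from $x_0$ diverges" is a statement about the specific initial condition $x_0=x_c=\sqrt{(1+\tau)/(3\tau)}$ and is false for generic starting points. I would resolve it by computing $x_1=f_\tau(x_0)=M_\tau$ and $x_2=f_\tau(M_\tau)$ explicitly and showing $|x_2|>\sqrt{(2+\tau)/\tau}$ in a way that forces $|x_3|>|x_2|$, then invoke the monotone escape lemma $|x|>\sqrt{(2+\tau)/\tau}\implies |f_\tau(x)|>|x|$, whose strictness (from $\tau|x|^3-(1+\tau)|x|>|x|\iff|x|>\sqrt{(2+\tau)/\tau}$) prevents the orbit from ever re-entering the trapping ball; combined with a geometric growth rate once $|x_n|$ is large this yields $|x_n|\to\infty$. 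Everything else is bookkeeping with fixed low-degree polynomials in $\tau$, best dispatched by exhibiting explicit factorizations (notably $4(1+\tau)^3-27\tau$ vanishing at $\tau=\tfrac12$ and $\tau=2$) rather than by appeal to calculus.
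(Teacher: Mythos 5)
Your overall route --- locating the critical point $x_c=\sqrt{(1+\tau)/(3\tau)}$, the critical value $M_\tau=\frac23\,(1+\tau)^{3/2}/\sqrt{3\tau}$, and the solution $\sqrt{(2+\tau)/\tau}$ of $f_\tau(x)=-x$, then comparing $M_\tau$ and $|f_\tau(\alpha)|$ with $\alpha$ --- is exactly the paper's (the proof of Lemma \ref{lem_polycubic3} is a terse version of the same analysis). Your treatment of the second and third bullets is sound; in particular, for $\tau>2$ the single observation $x_1=M_\tau>\sqrt{(2+\tau)/\tau}$ together with the uniform expansion $|f_\tau(x)|\ge(\tau|x_1|^2-(1+\tau))\,|x|$ for $|x|\ge|x_1|$ already forces geometric escape, so your worry about attracting fixed points for $2<\tau<3$ is moot: the orbit never re-enters the region where those fixed points live.

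The first bullet as you argue it, however, is wrong. You claim $M_\tau\le 1\iff 4(1+\tau)^3\le 27\tau$ holds for $0<\tau\le\frac12$, and that $4(1+\tau)^3-27\tau$ vanishes at $\tau=\frac12$ and $\tau=2$. In fact
\begin{align*}
4(1+\tau)^3-27\tau=(2\tau-1)^2(\tau+4)\ge 0 \qquad\text{for all }\tau>0,
\end{align*}
so $M_\tau\ge 1$ for \emph{every} $\tau>0$, with equality only at $\tau=\frac12$ (e.g.\ $M_{0.1}\approx 1.40$), and $\tau=2$ is not a root (the value there is $54$). Likewise ``$x_c<1$'' is backwards: for $\tau\le\frac12$ one has $x_c=\sqrt{(1+\tau)/(3\tau)}\ge 1$. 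The correct mechanism for $0<\tau\le\frac12$ is not ``$M_\tau\le 1\le\alpha$'' but $M_\tau=\frac23(1+\tau)\,x_c\le x_c$ (since $\frac23(1+\tau)\le1$ precisely when $\tau\le\frac12$): if $\alpha\ge x_c$ the interior critical value satisfies $M_\tau\le x_c\le\alpha$, while if $1\le\alpha<x_c$ the map is increasing and nonnegative on $[0,\alpha]$ with $f_\tau(\alpha)=\alpha(1+\tau-\tau\alpha^2)\le\alpha$ because $\alpha\ge1$; the endpoint bound $|f_\tau(\alpha)|\le\alpha$ for $\alpha\le\sqrt{(2+\tau)/\tau}$ is as you wrote. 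With that substitution the argument goes through, and the strict versions follow since $\alpha>1$ gives $f_\tau(\alpha)<\alpha$ and $\alpha<\sqrt{(2+\tau)/\tau}$ gives $\tau\alpha^3-(1+\tau)\alpha<\alpha$.
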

\begin{rem*}
We have
\begin{align*}
\frac {(1+\tau)^{\frac 32}} {\sqrt{3\tau}}
\cdot \frac 23 
=\begin{cases}
1, \quad \tau=\frac 12; \\
\sqrt 2, \quad \tau =2.
\end{cases}
\qquad
\sqrt{\frac {2+\tau} {\tau} }
= \begin{cases}
\sqrt 5, \quad \tau=\frac 12;\\
\sqrt 2, \quad \tau =2.
\end{cases}
\end{align*}
\end{rem*}

The most useful result in Proposition \ref{2015prop1} is the strict inequality for the
maximum. In particular for $0<\tau \le \frac 12$ and $\alpha =1+\delta_0$ with
$\delta_0>0$ suitably small, we have
\begin{align} \label{2015e199a}
\frac 1 {\alpha} \max_{|x|\le \alpha} |f_{\tau}(x) | \le \theta_0(\alpha,\tau)<1,
\end{align}
where $\theta_0(\alpha,\tau)$ depends only on ($\alpha$, $\tau$). 

Now we return to \eqref{r4}.  Denote $\alpha_n = \|u^n \|_{\infty}$. 
Assume $\alpha_n>1$ and $\alpha_n-1$ is suitably small. 
By Theorem \ref{thm1.1_00} and \eqref{2015e199a}, for $0<\tau\le \frac 12$  we deduce
\begin{align*}
\alpha_{n+1} \le 
(1+ O(\frac 1 {1+N^2 \nu^2 \tau} \log (N+2) ) )
\cdot \theta_0(\alpha_n, \tau)\cdot  \alpha_n. 
\end{align*}

Before we proceed further, we should point out one subtle technical difficulty with
the above simplified system in the regime $0<\tau\ll 1$. 
Note that 
$\theta_0(\alpha,\tau)\to 1$ as $\tau \to 0$. Since the  the damping factor contains $N^2 \tau$, it
follows that the threshold $N$ must be taken $\tau$-dependent in order to obtain contractive estimates which is hardly desirable in practice. In order to retain stability 
for $\tau\to 0$  and build a stability analysis for $N$ moderately large and \emph{independent of $\tau$},  a different line of argument is needed and indeed we develop a refined
analysis in Section 3 (and later sections) to cover the regime $0<\tau \ll 1$. 

To simplify the discussion, we now consider the case $\tau_* \le \tau \le \frac 12$ where
$\tau_*>0$ is fixed. In this case, it is not difficult to see that for $N\ge N_0(\tau_*, \nu)$
sufficiently large, we have
\begin{align*}
\alpha_{n+1} \le  \max_{|x|\le \alpha_n} |f_{\tau}(x) | 
+ \eta_n,
\end{align*}
where $\eta_n$ accounts for the spectral error and can be made sufficiently small. 
The next proposition quantifies the desired strong stability. 
\begin{prop}[Strong stability of the prototype iterative system]  \label{2015prop2}
%\label{lem_cubic_iteration}
Let $\tau>0$ and  $f_{\tau}(x)=(1+\tau) x -\tau x^3$ for $x\in \mathbb R$. Consider the recurrent relation
 \begin{align*}
\alpha_{n+1}= \max_{|x|\le \alpha_n} |f_{\tau}(x)| + \eta,\quad n\ge 0, 
\end{align*}
where $\eta>0$.

\begin{enumerate}
\item Case $0<\tau \le \frac 12$.  Let $\alpha_0=2$.  There exists an absolute constant $\eta_0>0$ sufficiently small, such
that for all $0 \le \eta\le \eta_0$, we have  $1\le \alpha_n \le 2$ for all $n$.  Furthermore
for all $n\ge 1$,
\begin{align} \notag
%\label{eq_cubic_iteration_01}
1+\eta \le \alpha_n \le 1+ \theta^n +\frac {1-\theta^n}{1-\theta} \eta,
\end{align}
where $\theta =1-2\tau$. 
\item Case $\frac 12 \le \tau \le 2-\epsilon_0$ where $0<\epsilon_0\le 1$. 
Let $\alpha_0 = \frac 12 (\frac {(1+\tau)^{\frac 32}} {\sqrt{3\tau}}\cdot \frac 23+
\sqrt{\frac {2+\tau}{\tau}})$. 
 Then there
exists a constant $\eta_0>0$ depending only on $\epsilon_0$, such that if 
$0\le \eta \le \eta_0$,  then for all $n\ge 1$, we have 
\begin{align*}
\frac {(1+\tau)^{\frac 32}} {\sqrt{3\tau}}\cdot \frac 23+\eta
\le \alpha_n \le \alpha_0.
\end{align*}
\end{enumerate}
\end{prop}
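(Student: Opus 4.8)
The plan is to analyze each case as a one-dimensional discrete dynamical system governed by the map $g_\tau(\alpha) := \max_{|x|\le\alpha}|f_\tau(x)| + \eta$ and to identify a forward-invariant interval on which $g_\tau$ is a contraction toward a fixed point slightly above the sharp bound. First I would record the elementary fact, extracted from Proposition \ref{2015prop1}, that on the relevant range of $\alpha$ the maximum $\max_{|x|\le\alpha}|f_\tau(x)|$ is attained at the endpoint $x=\pm\alpha$ (since for $1\le\alpha\le\sqrt{1+2/\tau}$ the cubic $f_\tau$ is still increasing in $|x|$ up to $\alpha$, its critical value $x_* = \sqrt{(1+\tau)/(3\tau)}$ lying either outside $[0,\alpha]$ or giving a smaller value). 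Hence on that range $g_\tau(\alpha) = |f_\tau(\alpha)| + \eta = (1+\tau)\alpha - \tau\alpha^3 + \eta$ for $\alpha\ge 1$ (using that $f_\tau(\alpha)>0$ there when $\alpha<\sqrt{1+1/\tau}$), and I can work with this explicit smooth expression.

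For Case (1), $0<\tau\le\tfrac12$: set $\phi(\alpha) = (1+\tau)\alpha - \tau\alpha^3 + \eta$. I would first check $\phi$ maps $[1,2]$ into itself for $\eta$ small: $\phi(1) = 1+\eta\ge 1$, and $\phi$ is decreasing on $[1,2]$ once $\alpha\ge x_*$ (note $x_*\le 1$ exactly when $\tau\ge 1/2$; for $\tau<1/2$ one has $x_*>1$, so instead I use that $\phi$ increases then decreases on $[1,2]$ and bound its maximum — at $\alpha=x_*$ the value is $\tfrac{2}{3}\cdot\frac{(1+\tau)^{3/2}}{\sqrt{3\tau}}+\eta$, which the Remark after Proposition \ref{2015prop1} tells us is $\le 1+\eta\le$ something well below $2$ for $\tau\le 1/2$; and $\phi(2) = 2+2\tau-8\tau+\eta = 2-6\tau+\eta\le 2$). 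This gives $1\le\alpha_n\le 2$ by induction. For the quantitative bound, write $\alpha_n = 1+\beta_n$ with $\beta_n\ge 0$; plugging in, $1+\beta_{n+1} = (1+\tau)(1+\beta_n) - \tau(1+\beta_n)^3 + \eta$, which expands to $\beta_{n+1} = (1-2\tau)\beta_n - 3\tau\beta_n^2 - \tau\beta_n^3 + \eta \le (1-2\tau)\beta_n + \eta = \theta\beta_n + \eta$, since the quadratic and cubic terms have negative sign. Iterating the linear recursion $\beta_{n+1}\le\theta\beta_n+\eta$ from $\beta_0 = 1$ yields $\beta_n\le \theta^n + \frac{1-\theta^n}{1-\theta}\eta$, and the lower bound $\alpha_n\ge 1+\eta$ follows because $\phi(\alpha)\ge\phi(1)=1+\eta$ whenever $\alpha\in[1, x_*]$, while if $\alpha_n$ ever exceeds $x_*$ one checks $\phi$ stays $\ge 1+\eta$ on the whole invariant interval (monotonicity plus $\phi(2)\ge 1+\eta$ for $\eta$ small). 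This gives exactly the claimed two-sided bound with $\theta = 1-2\tau$.

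For Case (2), $\tfrac12\le\tau\le 2-\epsilon_0$: abbreviate $a := \tfrac23\frac{(1+\tau)^{3/2}}{\sqrt{3\tau}}$ (the left endpoint, equal to $|f_\tau(x_*)|$, the global max of $|f_\tau|$ on $[-b,b]$ where $b := \sqrt{(2+\tau)/\tau}$) and $\alpha_0 = \tfrac12(a+b)$. Here the relevant fact from Proposition \ref{2015prop1} is that $a<b$ strictly for $\tfrac12<\tau<2$ (the Remark shows $a=b$ only at the endpoints), and $\max_{|x|\le\alpha}|f_\tau(x)| = a$ for every $\alpha\in[x_*, b]$ — i.e. once $\alpha$ reaches $x_*$ the maximum saturates at the constant $a$. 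So $g_\tau(\alpha) = a+\eta$ for all $\alpha\in[x_*,b]$, meaning the system collapses to its fixed value $a+\eta$ in one step, provided $\alpha_n$ stays in $[x_*,b]$. One checks $x_*\le\alpha_0\le b$: the first because $x_*<1\le a\le\alpha_0$ (as $x_*=\sqrt{(1+\tau)/(3\tau)}<1$ for $\tau>1/2$), the second because $\alpha_0 = \tfrac12(a+b)<b$. Then for $\eta\le\eta_0(\epsilon_0)$ small enough that $a+\eta\le b$ (possible since $b-a$ is bounded below by a positive constant depending only on $\epsilon_0$ on the compact range $\tfrac12\le\tau\le 2-\epsilon_0$ — this is where $\epsilon_0$ enters, keeping us away from $\tau=2$ where $a=b$), we get $\alpha_1 = a+\eta$ and then $\alpha_n = a+\eta$ for all $n\ge 1$, which certainly lies in $[a+\eta,\alpha_0]$.

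\textbf{Main obstacle.} The only genuinely delicate point is verifying that the iterates never escape the interval where the endpoint-versus-interior competition in $\max_{|x|\le\alpha}|f_\tau(x)|$ behaves as claimed — in Case (1) this is the check that $\phi(2) = 2-6\tau+\eta\le 2$ (clean) together with controlling the hump of $\phi$ near $\alpha=x_*$ when $\tau$ is close to $1/2$ and $x_*$ is close to $1$, and in Case (2) it is the uniform lower bound $b-a\ge c(\epsilon_0)>0$ needed to absorb $\eta$, which requires a short compactness argument on $\tau\in[\tfrac12, 2-\epsilon_0]$ plus the strict inequality $a<b$ for interior $\tau$ supplied by Proposition \ref{2015prop1}. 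Everything else is the routine linear-recursion estimate and induction.
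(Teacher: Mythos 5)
The central premise of your write-up --- that on the relevant range $\max_{|x|\le\alpha}|f_\tau(x)|$ is attained at the endpoint, so the recursion reads $\alpha_{n+1}=f_\tau(\alpha_n)+\eta$ --- is false, and this breaks Case (1). Writing $x_c=\sqrt{(1+\tau)/(3\tau)}$ for the critical point, $f_\tau$ increases only on $[0,x_c]$, so $\max_{|x|\le\alpha}|f_\tau(x)|=f_\tau(\alpha)$ holds only for $\alpha\le x_c$; for $\alpha>x_c$ the maximum is $\max\{f_\tau(x_c),|f_\tau(\alpha)|\}$, which equals the \emph{interior} critical value $f_\tau(x_c)$ whenever $\alpha\le 2x_c$ (one checks $|f_\tau(2x_c)|=f_\tau(x_c)$), i.e.\ the critical point gives a strictly larger value, not a smaller one. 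This regime is unavoidable in Case (1): for $\tfrac1{11}<\tau\le\tfrac12$ one has $x_c\in[1,2)$, so already $\alpha_1=f_\tau(x_c)+\eta$, not $\phi(2)$. Concretely at $\tau=\tfrac25$ your $\phi(2)=-0.4+\eta$ lies far outside $[1,2]$ (you verify $\phi(2)\le2$ but never $\phi(2)\ge1$, and indeed $\phi(2)\ge1+\eta$ fails for all $\tau>\tfrac16$, so your lower-bound step collapses), whereas the true value is $g_\tau(2)=f_\tau(x_c)+\eta\approx1.008+\eta$. Consequently the identity $\beta_{n+1}=\theta\beta_n-3\tau\beta_n^2-\tau\beta_n^3+\eta$ does not describe the iteration when $\alpha_n>x_c$; there one must separately verify $f_\tau(x_c)-1\le(1-2\tau)(\alpha_n-1)$, which is exactly what Cases 2 and 3 of the paper's proof of Lemma \ref{lem_cubic_iteration} do (using $|f_\tau(\alpha)|\le f_\tau(x_c)$ for $\alpha\le 2x_c$ together with $2x_c\ge2$ when $\tau\le\tfrac12$). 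A further sign error: the hump value $\tfrac23(1+\tau)^{3/2}/\sqrt{3\tau}$ is \emph{minimized} at $\tau=\tfrac12$, where it equals $1$; for $\tau<\tfrac12$ it is $\ge1$ (and tends to $\infty$ as $\tau\to0$), not $\le 1+\eta$ as you assert, and the Remark after Proposition \ref{2015prop1} says nothing about $\tau<\tfrac12$.

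Case (2) has the same flaw in milder form: the saturation $\max_{|x|\le\alpha}|f_\tau(x)|=a$ holds only for $\alpha\in[x_c,2x_c]$, and $2x_c<b$ strictly for every $\tau<2$ (indeed $b\le2x_c\iff\tau\ge2$), so your checks $\alpha_0\le b$ and $a+\eta\le b$ do not justify the claimed collapse of the orbit to $a+\eta$. The conclusion happens to be correct because the orbit does stay in $[x_c,2x_c]$, but that needs the additional verifications $\alpha_0=\tfrac12(a+b)\le2x_c$ (equivalent to $27(2+\tau)\le(1+\tau)(10-2\tau)^2$ on $[\tfrac12,2]$) and $a+\eta\le2x_c$, neither of which appears in your argument. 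The paper sidesteps this by never computing the orbit exactly: it bounds the maximum below by $\max_{|x|\le1}|f_\tau(x)|=a$ and above by $\max_{|x|\le\alpha_0}|f_\tau(x)|<\alpha_0$, the strict inequality coming from Lemma \ref{lem_polycubic3} and made uniform over $\tau\le2-\epsilon_0$.
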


Both Proposition \ref{2015prop1} and Proposition \ref{2015prop2} are proved in Section 3
 (see in particular Lemma \ref{lem_polycubic3} and \ref{lem_cubic_iteration} therein). 
For the one-dimensional system \eqref{r4}, a complete theory of $L^{\infty}$-stability and
instability  is worked out in Section 3 for all $0<\tau <\infty$ and $N\ge 2$. We shall
not reproduce all the details here and turn now to  the general theory for dimensions
$1\le d \le 3$ developed in Section 4. Consider
\begin{align} \label{r3md}
\begin{cases}
\frac{u^{n+1} -u^n}{ \tau} = \nu^2 \Delta u^{n+1} -\Pi_N ( (u^n)^3- u^n),
\quad n\ge 0, \qquad (t,x) \in (0,\infty) \times \mathbb T^d; \\
u^0 =\Pi_N u_0.
\end{cases}
\end{align}
Concerning \eqref{r3md}, the following group of stability results is proved in Section 4.
For the first time we are able to establish effective maximum principles for the Fourier spectral
methods applied on the nonlinear system.

\begin{thm}[Effective maximum principles for \eqref{r3md}] 
Consider \eqref{r3md} on $\mathbb T^d=[0,1)^d$ with $1\le d\le 3$ and $\nu >0$. 
Then the following hold.

\begin{enumerate}
\item \underline{Energy stability for $0<\tau \le 0.86$}. 
Let  $0<\tau\le 0.86$. Assume  $u_0 \in H^s(\mathbb T^d)\cap H^1(\mathbb T^d)$, $s>\frac d2$ and $$\|u^0\|_{\infty} \le \sqrt{\frac 53}. \qquad (
\text{note that }
\sqrt{\frac 53} \approx 1.29099).$$
 Then we have energy stability for any $N\ge N_0=N_0(s, d,\nu, u_0)$:
\begin{align*}
E(u^{n+1}) \le E(u^n), \quad \forall\, n\ge 0,
\end{align*}
where $E(u) = \int_{\mathbb T^d} ( \frac 12 \nu^2 |\nabla u|^2 + \frac 14 (u^2-1)^2) dx$. 
Furthermore,
\begin{align*}
\sup_{n\ge 0} \|u^n\|_{H^s} \le U_1<\infty,
\end{align*}
where $U_1>0$ is a constant depending only on ($s$, $d$, $\nu$, $u_0$).

\item \underline{Effective maximum principle for $0<\tau\le \frac 12$}.
Let $0<\tau\le \frac 12$. Assume $u_0 \in H^s(\mathbb T^d)\cap H^1(\mathbb T^d)$, $s>\frac d2$ and $\|u^0\|_{\infty} =\|\Pi_N u_0\|_{\infty} \le 1+\alpha_0$ for some $0\le \alpha_0 \le \sqrt{\frac 53}-1$ 
(note that $\sqrt{\frac 53} -1 \approx 0.29099$). Then for $N\ge N_1=N_1(s,d,\nu, u_0)$, we have
\begin{align*}
\|u^n\|_{\infty} \le 1+ (1-2\tau)^n \alpha_0+  N^{-(s-\frac d2)} C_{s,d,\nu, u_0}, \qquad\forall\, n\ge 1, 
\end{align*}
where $C_{s, d, \nu,  u_0}>0$ depends only on ($s$, $d$, $\nu$, $u_0$).

\item \underline{$L^{\infty}$-stability for $\frac 12 < \tau <2$}. 
Assume $\frac 12 <\tau < 2$. Then the following hold:
\begin{enumerate}
\item 
Let $\frac 12 <\tau<2-\epsilon_0$ for some $0<\epsilon_0\le 1$.  Denote $M_0=
\frac 12 (\frac {(1+\tau)^{\frac 32}} {\sqrt{3\tau}}\cdot \frac 23+
\sqrt{\frac {2+\tau}{\tau}})$. 
If $\|u^0\|_{\infty} \le M_0$ and $N\ge N_2=N_2(\nu)=
C(\epsilon_0) \cdot \nu^{-1} (|\log {\nu} |)^{\frac d2}$ when $0<\nu\ll 1$ ($C(\epsilon_0)$
is a constant depending only on $\epsilon_0$), then
\begin{align*}
\sup_{n\ge 0} \|u^n\|_{\infty} \le M_0.
\end{align*}

\item Let $d=1,2$. 
 For any
$\frac {(1+\tau)^{\frac 32}} {\sqrt{3\tau}}
\cdot \frac 23 \le \alpha \le \sqrt{\frac {2+\tau}{\tau}} $, if $\|u^0\|_{\infty} \le \alpha$ and
$N\ge N_3=N_3(\nu)=O(e^{\frac {\operatorname{const}} {\nu} }) $, then
\begin{align*}
\sup_{n\ge 0} \|u^n\|_{\infty} \le \alpha.
\end{align*}

\end{enumerate}

\end{enumerate}
\end{thm}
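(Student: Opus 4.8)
The plan is to reduce the multi-dimensional scheme \eqref{r3md} to the scalar iterative systems already analyzed in Propositions \ref{2015prop1} and \ref{2015prop2}, with the ``error'' term $\eta$ coming from the spectral truncation, which we control via the effective maximum principle of Theorem \ref{thm_1.2_00}(2) together with smoothness bounds on $u^n$. Recall from \eqref{r4} (in its $d$-dimensional analogue) that $u^{n+1} = T_{N,\nu^2\tau}\,(\Pi_N f_\tau(u^n))$, where $f_\tau(z) = (1+\tau)z - \tau z^3$ and $T_{N,\nu^2\tau} = (\op{Id} - \nu^2\tau\Delta)^{-1}\Pi_N^{(d)}$. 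The key identity to exploit is that $\Pi_N f_\tau(u^n) = f_\tau(u^n) - \Pi_{>N} f_\tau(u^n)$, so that $\|\Pi_N f_\tau(u^n)\|_\infty \le \|f_\tau(u^n)\|_\infty + \|\Pi_{>N} f_\tau(u^n)\|_\infty$. The first summand is bounded by $\max_{|x|\le \|u^n\|_\infty} |f_\tau(x)|$, which is exactly the quantity controlled by Proposition \ref{2015prop1}, while the second summand is the genuine discretization error $\eta_n$, bounded by $N^{-(s-d/2)}$ times a Sobolev norm of $f_\tau(u^n)$ (since $\Pi_{>N}$ kills the first $N$ modes and $f_\tau(u^n) \in H^s$ when $u^n \in H^s$ with $s > d/2$, an algebra). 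Applying $T_{N,\nu^2\tau}$ then multiplies by at most $1 + \frac{c_3}{1+\nu^2\tau N^2}(\log(N+2))^d$, which for $N$ large and $\nu,\tau$ fixed tends to $1$ and absorbs into the error.

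Part (1), energy stability: the standard argument is to test the scheme against $u^{n+1}-u^n$ and use convexity of the quartic, but the clean route here is to first establish the uniform $L^\infty$ bound $\|u^n\|_\infty \le \sqrt{5/3}$ (this is why the hypothesis is $\sqrt{5/3}$ rather than $1$: Proposition \ref{2015prop1} gives $\max_{|x|\le\alpha}|f_\tau(x)|\le\alpha$ for $\alpha$ up to $\sqrt{1+2/\tau}$, and $0<\tau\le 0.86$ makes $\sqrt{5/3}$ admissible with room to spare), then deduce the uniform $H^s$ bound by a bootstrap on the high-frequency part using the smoothing of $T_{N,\nu^2\tau}$, and finally run the discrete energy estimate. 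For the energy decay one writes $E(u^{n+1}) - E(u^n)$ as a sum of a negative ``$-\|u^{n+1}-u^n\|^2/\tau$''-type term plus remainder terms that are controlled because $|f'|$ and $|f''|$ are bounded on the invariant interval $[-\sqrt{5/3},\sqrt{5/3}]$; the condition $\tau \le 0.86$ is precisely what makes the quadratic form in $u^{n+1}-u^n$ negative definite. The uniform $H^s$ bound then follows since the energy controls $\|\nabla u^n\|_2$ and $\|u^n\|_4$, and the scheme upgrades this to $H^s$ via $\|u^{n+1}\|_{H^s} \le \|T_{N,\nu^2\tau}\|_{H^s \to H^s} \|\Pi_N f_\tau(u^n)\|_{H^s}$ combined with the algebra property and an interpolation/iteration argument.

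Part (2), the effective maximum principle for $0<\tau\le\frac12$: this is the direct combination. Set $\alpha_n = \|u^n\|_\infty$. From the decomposition above and Proposition \ref{2015prop1} (first bullet, $0<\tau\le\frac12$, which gives $\max_{|x|\le\alpha}|f_\tau(x)|\le\alpha$ for $1\le\alpha\le\sqrt{1+2/\tau}$, in particular a \emph{Lipschitz-type} contraction near $\alpha=1$ with rate $1-2\tau$ — this is where $\theta = 1-2\tau$ enters), one gets $\alpha_{n+1} \le (1 + \text{small})\big(\max_{|x|\le\alpha_n}|f_\tau(x)| + \eta_n\big)$ with $\eta_n \le N^{-(s-d/2)} C$ once the $H^s$ norms are uniformly bounded (which part (1) supplies, or a direct weaker bootstrap here). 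Then Proposition \ref{2015prop2}(1) is applied essentially verbatim with $\eta = N^{-(s-d/2)}C$: it yields $\alpha_n \le 1 + (1-2\tau)^n\alpha_0 + \frac{\eta}{2\tau}$, which is the claimed bound after folding $\frac{1}{2\tau}$ and the $(1+\text{small})$ factors into the constant $C_{s,d,\nu,u_0}$. One must check the bootstrap is consistent: choosing $N_1$ large forces the invariant interval to stay inside $[-\sqrt{5/3},\sqrt{5/3}]$ so the $H^s$ bound of part (1) applies.

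Part (3), $L^\infty$-stability for $\frac12<\tau<2$: here $f_\tau$ is no longer contractive toward $1$, so one uses the \emph{larger} invariant interval $[\frac23\frac{(1+\tau)^{3/2}}{\sqrt{3\tau}}, \sqrt{(2+\tau)/\tau}]$ from the second bullet of Proposition \ref{2015prop1} and the strict inequality therein, combined with Proposition \ref{2015prop2}(2). The subtlety is that we can no longer let the contraction swallow a $\tau$-independent $\eta$; instead we must choose $N$ large enough that the combined multiplicative-plus-additive error $\frac{c_3}{1+\nu^2\tau N^2}(\log(N+2))^d + \eta_n$ is strictly smaller than the gap produced by the strict inequality in Proposition \ref{2015prop1}. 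For case (a), $M_0$ is the midpoint of the invariant interval, and since $\epsilon_0 > 0$ bounds us away from $\tau = 2$ the strict contraction has a quantitative gap depending only on $\epsilon_0$; balancing against $\frac{c_3(\log N)^d}{\nu^2\tau N^2}$ and the spectral error $\eta_n$ (which needs a uniform $H^s$ bound, obtained on the invariant interval) gives the stated $N_2 = C(\epsilon_0)\nu^{-1}(|\log\nu|)^{d/2}$ — the $\nu^{-1}$ and logarithm come from inverting $\frac{(\log N)^d}{\nu^2 N^2} \lesssim 1$. For case (b) with $d=1,2$, one wants the \emph{endpoints} $\alpha$ of the invariant interval to be preserved, where $f_\tau$ is not strictly contractive ($\max_{|x|\le\alpha}|f_\tau(x)| = \alpha$ exactly), so no additive error can be tolerated; instead one invokes the \emph{sharp} maximum principle for $T_{N,\nu^2\tau}$ from Theorem \ref{thm_1.2_00}(3), valid in $d=1,2$ for $N \ge N_0(\beta,d) = O(e^{\text{const}/\nu})$, which makes $T_{N,\nu^2\tau}$ an exact $L^\infty$-contraction, and then $\|u^{n+1}\|_\infty \le \|T_{N,\nu^2\tau}\|_{\infty\to\infty}\|\Pi_N f_\tau(u^n)\|_\infty$; the remaining issue is that $\Pi_N$ alone is not $L^\infty$-bounded, so one instead uses that $T_{N,\nu^2\tau}\Pi_N f_\tau(u^n) = K_{N,\nu^2\tau} * f_\tau(u^n)$ with $K_{N,\nu^2\tau}$ a positive unit-mass kernel, giving directly $\|u^{n+1}\|_\infty \le \max_{|x|\le\alpha_n}|f_\tau(x)| \le \alpha_n$.

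The main obstacle, and the place where the real work lies, is establishing the uniform-in-$n$ Sobolev bound $\sup_n\|u^n\|_{H^s} < \infty$ that is needed to control the spectral error $\eta_n$ in parts (2) and (3): one cannot simply iterate $\|u^{n+1}\|_{H^s} \le \|\Pi_N f_\tau(u^n)\|_{H^s} \lesssim \|u^n\|_{H^s}^3$ because the cube is expansive. The resolution is to leverage the $L^\infty$ bound already in hand (the invariant interval) to linearize: on the invariant set $f_\tau$ is Lipschitz with a controlled constant, so $\|f_\tau(u^n)\|_{H^s} \lesssim (1 + \|u^n\|_{L^\infty}^2)\|u^n\|_{H^s}$, and then the genuine smoothing factor $\|T_{N,\nu^2\tau}\|_{H^s\to H^s} \le 1$ (it is a Fourier multiplier bounded by $1$) together with a Gronwall-type absorption of the $L^2/H^1$-level energy bound closes the estimate — but making this rigorous requires carefully tracking the interplay between the $H^s$ growth, the $L^\infty$ invariance, and the $\tau$-dependence of all constants, and this is exactly the ``refined analysis'' the introduction promises for Section 4.
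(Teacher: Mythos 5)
Your overall architecture coincides with the paper's: establish $L^\infty$ invariance by induction, reduce to the scalar iterations of Propositions \ref{2015prop1}--\ref{2015prop2}, use the discrete energy inequality with the threshold $\frac1\tau+\frac12\ge\frac32\|u^n\|_\infty^2$, and invoke the sharp maximum principle of Theorem \ref{thm_1.2_00}(3) for the endpoint case (3b) in $d=1,2$. However, there is a genuine gap in your treatment of part (2), precisely at the point the introduction flags as the ``subtle technical difficulty'' for $0<\tau\ll1$. You set $\eta_n\le N^{-(s-\frac d2)}C$ and then conclude via $\alpha_n\le 1+(1-2\tau)^n\alpha_0+\frac{\eta}{2\tau}$, ``folding $\frac1{2\tau}$ into the constant $C_{s,d,\nu,u_0}$.'' That constant is claimed to be independent of $\tau$, and $\frac1{2\tau}\to\infty$ as $\tau\to0$, so your bound degenerates exactly where the theorem is supposed to be uniform. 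The fix — which is what the paper does in Proposition \ref{prop_c3_1_small} and Corollary \ref{cor_c3_1_small} — is to observe that since $\Pi_{>N}u^n=0$, the truncation error only hits the cubic term and therefore carries a factor of $\tau$: $\Pi_{>N}f_\tau(u^n)=-\tau\,\Pi_{>N}((u^n)^3)$, so $\eta_n\le\tau\, N^{-(s-\frac d2)}C$ and $\frac{\eta}{1-\theta}=\frac12 N^{-(s-\frac d2)}C$ uniformly in $\tau$. Relatedly, your multiplicative factor $1+\frac{c_3}{1+\nu^2\tau N^2}(\log(N+2))^d$ from applying $T_{N,\nu^2\tau}$ does \emph{not} tend to $1$ for fixed $N$ as $\tau\to0$ (you yourself write ``for $N$ large and $\nu,\tau$ fixed''), so it cannot be absorbed with a $\tau$-independent $N_1$. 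Instead, write $u^{n+1}=(\op{Id}-\nu^2\tau\Delta)^{-1}\bigl(f_\tau(u^n)-\Pi_{>N}f_\tau(u^n)\bigr)$ and use that the \emph{untruncated} resolvent has a positive unit-mass kernel, hence is an exact $L^\infty$-contraction; the only error is then the additive one above. The same splitting should be used in the $L^\infty$ step of part (1) for the regime $0<\tau\le\tau_*$ (where the paper additionally quotes \cite{LTang20a} for the energy decay and discrete smoothing for the uniform $H^s$ bound).

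A second, smaller issue concerns part (3a): you route the spectral error through a uniform $H^s$ bound ``obtained on the invariant interval,'' but the theorem asserts $N_2=N_2(\nu)$ depending only on $\nu$ and $\epsilon_0$, not on $u_0$. The paper avoids any Sobolev norm here: since $\tau>\frac12$, the kernel bound $\|K_{>N,\nu^2\tau}\|_{L^1}\lesssim\frac{(\log(N+2))^d}{1+\nu^2\tau N^2}$ applied to $\|p(u^n)\|_\infty$ (controlled by the $L^\infty$ induction alone) already gives an error that is beaten by the $\epsilon_0$-gap in the strict inequality of Proposition \ref{2015prop1}, yielding $N_2\sim C(\epsilon_0)\nu^{-1}(|\log\nu|)^{d/2}$ with no reference to $u_0$. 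With these two corrections your argument closes and matches the paper's.
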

\begin{rem}
In Statement (1), our stability region is much wider than the result obtained by
Tang and Yang \cite{TY16}. For the finite difference case the analysis therein
requires $0<\tau \le \frac 12$. Our analysis here covers both the finite difference case
and the spectral case, and produces energy stability  for $0<\tau \le 0.86$ under 
much less stringent assumptions on the initial data. Also we should point out that
for $0<\nu \ll 1$, the dependence of $N_0$ is only power like which is a very mild 
constraint in practice. 
\end{rem}
\begin{rem}
For $\tau=2$ and $N$ not large, there are counterexamples as shown in Proposition
\ref{prop_tau2_tmp001}.
\end{rem}

To understand the effect of pure spectral truncation on $L^{\infty}$-stability, we now
consider the following model system 
\begin{align} \label{2015eJ1}
\begin{cases}
\partial_t u = \nu^2 \Delta u -\Pi_N ( u^3- u),
\quad (t,x) \in (0,\infty) \times \mathbb T^d, \\
u\Bigr|_{t=0} =\Pi_N u_0,
\end{cases}
\end{align}
where $\nu>0$ and $d\ge 1$.  The following results are proved
in Section 4. 

\begin{thm}[Maximum principle for the continuous in time system with spectral
truncation] 
Consider \eqref{2015eJ1} with $\nu>0$, $d\ge 1$.  Then the following hold.
\begin{enumerate}
\item \underline{Effective maximum principle for regular initial 
data}. Assume $\|u_0\|_{\infty} \le 1$ and 
$u_0 \in H^s(\mathbb T^d)$, $s>\frac d2$.
Then for $N\ge N_0=N_0(s, d,\nu,  u_0)$, we have
\begin{align*}
\sup_{0\le t<\infty} \|u(t, \cdot)\|_{\infty} 
\le  1+N^{-\gamma_0} C_1,
\end{align*}
where $\gamma_0=\min\{s-\frac d2, 1\}$, and $C_1>0$ depends only on
($s$, $d$, $\nu$, $u_0$).

\item \underline{Effective maximum principle for $L^{\infty}$ initial data}.
 Assume $\|u_0\|_{\infty} \le 1$.
Then for $N\ge N_1=N_1( d,\nu,  u_0)$, we have
\begin{align*}
\sup_{0\le t<\infty} \|u(t, \cdot)\|_{\infty} 
\le  \max\{\|\Pi_N u_0\|_{\infty}, 1\}+N^{-\frac 12} C_2,
\end{align*}
where $C_2>0$ depends only on
($d$, $\nu$, $u_0$).

\end{enumerate}
\end{thm}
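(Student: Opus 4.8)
Since $\Pi_N$ and the heat semigroup $e^{t\nu^2\Delta}$ both preserve the finite-dimensional space $S_N$ of trigonometric polynomials with $|k|_\infty\le N$, the solution of \eqref{2015eJ1} stays band-limited, $u(t)=\Pi_N u(t)$ for all $t\ge0$, and for each fixed $N$ the system is a smooth autonomous ODE on $S_N$, so $u$ is $C^\infty$ in $(t,x)$. Equation \eqref{2015eJ1} is moreover a constrained gradient flow, $\partial_t u=-\Pi_N(\delta E/\delta u)$ with $E$ restricted to $S_N$, so $\frac{d}{dt}E(u(t))=-\|\partial_t u\|_2^2\le0$; when $u_0\in H^1$ this gives a bound on $\|\nabla u(t)\|_2$ uniform in $t$ and $N$. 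The plan is to reduce everything to a scalar differential inequality for $m(t):=\|u(t,\cdot)\|_\infty$ of precisely the kind handled by Proposition \ref{2015prop2}.

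The core is a pointwise estimate obtained at a point of spatial maximum. If $m(t)=u(t,x_t)>0$ (the minimum case is symmetric), then $D^+m(t)\le\partial_t u(t,x_t)$ for a.e.\ $t$ by the envelope inequality, and using $\Delta u(t,x_t)\le0$ together with $\Pi_N(u^3-u)=u^3-u-\Pi_{>N}(u^3)$ (valid since $u=\Pi_N u$) one obtains
\begin{align*}
D^+m(t)\ \le\ m(t)-m(t)^3+\bigl\|\Pi_{>N}\bigl(u(t)^3\bigr)\bigr\|_{L^\infty}\ =:\ m-m^3+R_N(t).
\end{align*}
Thus $m$ obeys an inequality with the cubic damping $-m^3$ that stabilizes $m$ at the hyperbolic sink $\mu=1$ of $\mu'=\mu-\mu^3$ (linearization $-2$), perturbed by the small spectral forcing $R_N$. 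This step is routine: $t\mapsto\|u(t)\|_\infty$ is locally Lipschitz along the smooth band-limited flow.

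It remains to bound $R_N$ and feed it into the scalar comparison. For statement (1) I would run a continuity/bootstrap argument: assuming $m\le2$ on $[0,T]$, propagate a uniform-in-time bound $\sup_t\|u(t)\|_{H^\sigma}\le C(\sigma,d,\nu,u_0)$ — from the energy/$H^1$ control plus an estimate $\frac{d}{dt}\|u\|_{H^\sigma}^2\le-2\nu^2\|u\|_{H^{\sigma+1}}^2+C(\|u\|_\infty)\|u\|_{H^{\sigma+1}}\|u\|_{H^\sigma}$ in which the nonlinear term is absorbed by the dissipation and the estimate is closed via the Poincar\'e inequality on the nonconstant modes (frequencies $\ge1$) and a separate bound on the mean $\bar u$; an initial parabolic-smoothing layer covers any gap when $s<1$. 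Then, since $u^3$ has Fourier support in $|k|_\infty\le3N$, Cauchy--Schwarz on the annulus $\{N<|k|_\infty\le3N\}$ gives $R_N(t)\lesssim N^{d/2-\mu}\|u^3\|_{H^\mu}\lesssim N^{d/2-\mu}\|u\|_\infty^2\|u\|_{H^\mu}$ for $d/2<\mu\le\min\{s,\tfrac d2+1\}$, whence $R_N\lesssim N^{-\gamma_0}$ with $\gamma_0=\min\{s-\tfrac d2,1\}$ (the cap reflecting the level of regularity propagated uniformly in time). Inserting $\eta:=\sup_tR_N$ into Proposition \ref{2015prop2} and using $m(0)=\|\Pi_N u_0\|_\infty\le1+CN^{-(s-d/2)}\|u_0\|_{H^s}$ yields $\sup_tm(t)\le1+C_1N^{-\gamma_0}$. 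For statement (2), $L^\infty$ data forces a two-phase treatment. On a fixed short interval $[0,T_1]$ use the mild formulation $u(t)=e^{t\nu^2\Delta}\Pi_N u_0+\int_0^te^{(t-\tau)\nu^2\Delta}(u-\Pi_N u^3)(\tau)\,d\tau$: the linear flow is an $L^\infty$-contraction (positive periodized-Gaussian kernel) and hence never exceeds $\|\Pi_N u_0\|_\infty$, while the $|k|_\infty\gtrsim N$ content of $u$ is damped like $e^{-c\tau\nu^2N^2}$, so $\int_0^{T_1}R_N(\tau)\,d\tau\lesssim_\nu N^{d/2-2}\lesssim N^{-1/2}$ for $d\le3$; at $t=T_1$ parabolic smoothing gives $\|u(T_1)\|_{H^\mu}\lesssim_{\nu,T_1}1+\sup_{[0,T_1]}m^3$ for any $\mu<2$, after which the argument of (1) runs on $[T_1,\infty)$ with rate $N^{-(2-d/2)}\le N^{-1/2}$. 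Combining the two phases through the scalar comparison gives $\sup_tm(t)\le\max\{\|\Pi_N u_0\|_\infty,1\}+C_2N^{-1/2}$.

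The main obstacle is the uniform-in-time higher-Sobolev control of $u$ when $0<\nu\ll1$: the cubic nonlinearity must be swallowed by a weak dissipation, which works only because the energy law pins down $\|\nabla u\|_2$ (hence, via Poincar\'e on the nonconstant modes and a scalar ODE for $\bar u$, the full $H^1$ norm) independently of $t$ and $N$, and propagating this to $H^\sigma$ while keeping all constants $N$-independent is the delicate point. A secondary issue, specific to statement (2), is to obtain the $[0,T_1]$ estimate with the \emph{exact} overshoot $\|\Pi_N u_0\|_\infty$ rather than a $\log^dN$-amplified one, which hinges on keeping the projection \emph{inside} the contractive heat semigroup and on the frequency localization of the truncation error $\Pi_{>N}(u^3)$.
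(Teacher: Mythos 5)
Your overall strategy matches the paper's: control the spectral defect $\Pi_{>N}(u^3-u)$ in $L^\infty$ via propagated regularity, feed it into a maximum-principle comparison with the cubic damping $m-m^3$ (the paper phrases this as a first-touching-time contradiction at a spatial maximum rather than a differential inequality for $D^+\|u\|_\infty$, but these are the same device), and for rough data insert an initial heat-semigroup layer $[0,T_1]$ on which the solution is close to $e^{t\nu^2\Delta}\Pi_Nu_0$, whose kernel is positive and hence contractive. Your invocation of Proposition \ref{2015prop2} is a slight misfit, since that proposition treats the discrete recurrence, but the continuous analogue $m'\le m-m^3+\eta\Rightarrow\limsup m\le1+O(\eta)$ is elementary, so this is cosmetic. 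For part (1) your Bernstein/Cauchy--Schwarz bound $\|\Pi_{>N}(u^3)\|_\infty\lesssim N^{d/2-\mu}\|u\|_\infty^2\|u\|_{H^\mu}$ on the annulus $N<|k|_\infty\le3N$ reproduces the rate $\gamma_0=\min\{s-\tfrac d2,1\}$ for the same reason the paper's does (the cap at $1$ coming from the level of regularity one can propagate uniformly in time via bootstrapping), and your uniform-in-time $H^\sigma$ propagation, while more delicate than the paper's bootstrap of $\|\nabla u\|_\infty$ after the a priori $L^\infty$ bound, is a workable alternative.

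The one substantive gap is in part (2): you explicitly restrict to $d\le3$ when estimating $\int_0^{T_1}R_N\,d\tau\lesssim N^{d/2-2}$, but the theorem is stated for all $d\ge1$, and an $L^2$-based framework genuinely fails to give any decay once $d\ge4$. The paper circumvents this by running the entire local theory in $C_t^0L_x^q$ with $q>4d$ chosen large depending on $d$ (using that $\Pi_N$ is bounded on $L^q$ uniformly in $N$ for $1<q<\infty$), which yields $\sup_{0\le t\le t_0}t^{0.51}\|\nabla u(t)\|_q\le B_2$ and hence $\|\Pi_{>N}(u-u^3)\|_\infty\le N^{-0.99}t_1^{-0.51}B_3$ on $[t_1,t_0]$ in every dimension; choosing $t_1=N^{-49/51}$ then balances the Duhamel error $B_1t_1^{0.9}$ against the spectral error to produce the $N^{-1/2}$ rate. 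To close your argument for general $d$ you would need to replace your $L^2$ energy bookkeeping on the initial layer by exactly this kind of $L^q$ ($q$ large) smoothing estimate; as written, your proof of part (2) only establishes the claim for $d\le3$.
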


We now turn to the Fourier collocation method which is widely used in practical numerical simulations. 
To illustrate the theory we consider the 1D Allen-Cahn on the periodic torus 
$\mathbb T=[0,1)$ using discrete Fourier transform in space. We discretize the domain 
$[0,1)$ using $x_j= \frac {j} N$, $j=0$, $1$, $\cdots$, $N-1$, where $N$ is usually taken to be an even
number.  In typical FFT simulations, $N$ is usually taken to be a dyadic number.
 We use $u^n=(u^n_0,
\cdots, u^n_{N-1})^T$ to
denote the approximation of $u(t,x_j)=u(t,\frac j N)$, $ t= n\tau$, $0\le j\le N-1$. We shall adopt the following
convention for discrete Fourier transform: 
\begin{align*}
& \tilde U_k = \frac 1 N \sum_{j=0}^{N-1} U_j e^{-2\pi i k \cdot \frac jN}; \\
& U_j = \sum_{k=-\frac N2+1}^{\frac N2} \tilde U_k e^{2\pi i k \cdot \frac jN},
\end{align*}
where $\tilde U=(\tilde U_0,\cdots, \tilde U_{N-1})^T$ is the (approximate) Fourier coefficient 
vector of
 the input data $U=(U_0,\cdots, U_{N-1})^T$.   Note that $\tilde U_{k\pm N} =\tilde U_k$ for any $k\in \mathbb Z$. The discrete Laplacian operator $\Delta_h$ 
 corresponds to the Fourier multiplier $-(2\pi k)^2$ (when $k$ is restricted to $-\frac N2<k \le\frac N2$).

To understand how Fourier collocation affects $L^{\infty}$-stability, we
consider the following  ODE system which is continuous-in-time with Fourier-collation-in-space 
discretization of Allen-Cahn.  Here $U(t)= (U(t)_0, \cdots, U(t)_{N-1})^T$ is the numerical
approximation of the exact node values $u(t, x_j)_{j=0}^{N-1}$. 
\begin{align} \label{2015FCe1}
\begin{cases}
\frac  d {dt} U = \nu^2 \Delta_h U + U - U^{.3}, \\
U\Bigr|_{t=0}=U^0 \in \mathbb R^N,
\end{cases}
\end{align}
 where $U^{.3}=((U_0)^3, \cdots, (U_{N-1})^3)^T$.
 
 Concerning \eqref{2015FCe1}, the following results are proved in Section \ref{S:DFT1}.
\begin{thm}
Consider \eqref{2015FCe1} with $\nu>0$. Then the following hold.

\begin{enumerate}
\item \underline{Effective maximum principle for regular initial data}. Suppose $u_{\op{init}}:\, \mathbb T\to \mathbb R$ satisfies $u_{\op{init}} \in H^{\frac 32} (\mathbb T)$ and
$\|u_{\op{init}}\|_{\infty} \le 1$. Take the initial data
$U^0$ such that $U^0_j= u_{\op{init}}(\frac j N)$ for $0\le j \le N-1$. 
 Then for $N\ge N_1=N_1(u_{\op{init}}, \nu)>0$, we have
\begin{align*}
\|U(t)\|_{\infty}  \le  1 + C_1 \cdot N^{-\frac 12},
\end{align*}
where $C_1>0$ depends only on ($u_{\op{init}} $, $\nu$). 

\item \underline{Effective maximum principle: version 2}. Suppose $u_{\op{init}}:\, \mathbb T\to \mathbb R$ has Fourier support in 
$\{|k|\le \frac N2 \}$ with $\widehat{u_{\op{init}}}(\frac N2)
=\widehat{u_{\op{init}}}(-\frac N2)$ and
$\|u_{\op{init}}\|_{\infty} \le 1$. 
 Take the initial data
$U^0$ such that $U^0_j= u_{\op{init}}(\frac j N)$ for $0\le j \le N-1$. 
 Then for $N\ge N_2=N_2(u_{\op{init}}, \nu)>0$, we have
\begin{align*}
\|U(t)\|_{\infty}  \le  1 + C_2 \cdot N^{-\frac 14},
\end{align*}
where $C_2>0$ depends only on ($u_{\op{init}} $, $\nu$).

\item \underline{Effective maximum principle for rough initial data}.
Suppose $U^0 \in \mathbb R^N$ satisfies $\|U^0\|_{\infty} \le 1$.
If $N\ge N_3(\nu)>0$,  it holds that
\begin{align*}
\sup_{0\le t <\infty} \| U(t) \|_{\infty} \le 1+\epsilon_1,
\end{align*}
where $0<\epsilon_1<10^{-2}$ is an absolute constant. More precisely the following hold.
(Below we shall write $X=O(Y)$ if $|X| \le C Y$ where the constant $C$ only depends on $\nu$.)
\begin{enumerate}
\item For $t\ge T_0= T_0(\nu)>0$, 
\begin{align*}
& \| U(t) \|_{\infty}  \le 1 + O(N^{-\frac 18} (\log N)^2); 
%\\& \| \partial_x u(t) \|_2 \le O(1),
 \quad E( U(t)) \le O(1),
\end{align*}
where  $E(U)$ was defined in \eqref{Nov18EU1}.

\item For $C_1=C_1(\nu)>0 $ and $C_1\cdot N^{-\frac 34} \le t \le T_0$, it holds that
\begin{align*}
\| U(t) \|_{\infty}  \le 1+ O(N^{-\frac 18} (\log N)^2 ).
\end{align*}

\item For $C_2=C_2(\nu)>0$ and $ C_2 \cdot N^{-2} \log N \le t \le C_1 \cdot N^{-\frac 34}$,
\begin{align*}
\| U(t) \|_{\infty}  \le 1+ O(N^{-\frac 3 {16}} ).
\end{align*}

\item For $0<t \le C_2 N^{-2} \log N$, we have
\begin{align*}
& \|U(t) \|_{\infty} \le (1+\frac 12 \epsilon_1) \|U^0\|_{\infty}
+ O(N^{-\frac 12} (\log N)^{\frac 14} ).
\end{align*}

\item If $\|U^0\|_{\infty} \le \frac 1 {1+\frac 12 \epsilon_1} $ (note that 
$\frac 1 {1+\frac 12 \epsilon_1} >0.995$), then for all $t\ge 0$, we have
\begin{align*}
\|U(t) \|_{\infty} \le 1 + O(N^{-\frac 18} (\log N)^2).
\end{align*}

\item Suppose $f:\, \mathbb T\to \mathbb R$ is continuous and $\|f\|_{\infty} \le 1$. 
If $(U^0)_l= f(\frac l N)$ for all $0\le l\le N-1$,  then
\begin{align*}
\sup_{t\ge 0} \|U(t) \|_{\infty} 
\le 1 + O(\omega_f(N^{-\frac 23}) ) + O(N^{-c}),
\end{align*}
where $c>0$ is an absolute constant, and $\omega_f$ is defined in \eqref{N18:1b.0}.

\item  Suppose $f:\, \mathbb T\to \mathbb R$ is $C^{\alpha}$-continuous (see
\eqref{N18:1b.1}) for some $0<\alpha<1$ and $\|f\|_{\infty} \le 1$.  
If $(U^0)_l= f(\frac l N)$ for all $0\le l\le N-1$,  then
\begin{align*}
\sup_{t\ge 0}\|U(t)\|_{\infty}
\le 1 + O(N^{-c_1}),
\end{align*}
where $c_1>0$ is a constant depending only on $\alpha$. 
\end{enumerate}
Moreover we have the following result which shows the sharpness of our estimates above.
There exists a  function $f$: $\mathbb T \to \mathbb R$, continuous at all of $\mathbb T
\setminus \{x_*\}$ for some $x_*\in \mathbb T$ (i.e. continuous at all $x\ne x_*$) and  has the bound $\|f\|_{\infty} \le 1$
 such that 
the following hold:
for a sequence of even numbers $N_m \to \infty$, $t_m =\nu^{-2}N_m^{-2}$ and $x_m
=j_m/N_m$ with $0\le j_m\le N_m-1$ ,  if  $\tilde U_m(t) \in \mathbb R^{N_m}$ solves \eqref{2015FCe1}
with $\tilde U_m(0) =v_m \in R^{N_m}$ satisfying $(v_m)_j= f(\frac j {N_m})$ for all $0\le j \le N_m-1$.  Then
\begin{align*}
|\tilde U_m(t_m,x_m) | \ge 1+ \eta_*, \qquad\forall\, m\ge 1,
\end{align*}
where $\eta_*>0.001$ is an absolute constant.
\end{enumerate}
\end{thm}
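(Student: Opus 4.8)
The plan is to realize, through a single fixed profile, the extremal behaviour of the short--time solution operator of \eqref{2015FCe1}. Fix $\nu>0$. Up to the nonlinear correction, the solution map $v\mapsto \tilde U(t)$ is the linear propagator $e^{t(\nu^2\Delta_h+1)}$, whose $\ell^\infty\to\ell^\infty$ operator norm over the window $0\le t\lesssim \nu^{-2}N^{-2}$ exceeds $1$ by a definite amount --- this is precisely what is reflected by the factor $1+\tfrac12\epsilon_1$ in statement $(3)(d)$. So first I would fix, for each even $N$, a time $t_N\asymp\nu^{-2}N^{-2}$ and a unit $\ell^\infty$--norm grid vector $w^{(N)}$, supported in a window of $O(1)$ nodes about a node $j_N$, with $\bigl(e^{t_N(\nu^2\Delta_h+1)}w^{(N)}\bigr)_{j_N}\ge 1+2\eta_*$ for an explicit $\eta_*>10^{-3}$; the natural choice is $w^{(N)}_l=\operatorname{sgn}\bigl(\mathcal K_{N,t_N}(j_N-l)\bigr)$ on the $O(1)$ nodes carrying essentially all of the $\ell^1$ mass of the kernel $\mathcal K_{N,t_N}$ of $e^{t_N\nu^2\Delta_h}$, and $0$ elsewhere. (Since $w^{(N)}$ takes only the values $0,\pm1$ it will be realizable as nodal samples of a bounded function.) Establishing the lower bound on this evolved nodal value is the analytic heart and is discussed last.

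Next I would assemble $f$. Pick a lacunary sequence of even integers $N_1<N_2<\cdots$ growing fast enough that the intervals $I_m$ of length $O(N_m^{-1})$ centred at the points $x_*+j_m/N_m$, with $j_m$ chosen so that these points decrease monotonically to $x_*$, are pairwise disjoint; on $I_m$ put $f$ equal to a smoothly windowed copy of $w^{(N_m)}$ at resolution $N_m$ and amplitude $\le 1-2^{-m}$, and set $f\equiv0$ off $\bigcup_m I_m$. Then $f\in C^\infty(\mathbb T\setminus\{x_*\})$, one has $\|f\|_\infty\le1$ by disjointness of the windows, and near $x_*$ the patches retain amplitude bounded below while their internal frequency increases without bound, so $f$ has no one--sided limits at $x_*$; thus $f$ is continuous precisely on $\mathbb T\setminus\{x_*\}$. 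By construction $v_m=(f(j/N_m))_j$ coincides with $w^{(N_m)}$ on the nodes of $I_m$ and vanishes on the remaining nodes, up to the $O(N_m^{-1})$ window modulation which costs only $o(1)$ in $\ell^\infty$.

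The reduction to the linear estimate is then routine. From the Duhamel formula $\tilde U_m(t)=e^{t(\nu^2\Delta_h+1)}v_m-\int_0^t e^{(t-s)(\nu^2\Delta_h+1)}\tilde U_m(s)^{.3}\,ds$, the a priori bound $\sup_{t\ge0}\|\tilde U_m(t)\|_\infty\le1+\epsilon_1$ from statement $(3)$, and the uniform bound $\sup_{0\le\sigma\le t_m}\|e^{\sigma(\nu^2\Delta_h+1)}\|_{\ell^\infty\to\ell^\infty}\le C(\nu)$, the integral term is $O(t_m)=O(\nu^{-2}N_m^{-2})\to0$. As the kernel of $e^{t_m\nu^2\Delta_h}$ is concentrated within $O(1)$ nodes of the origin up to an $O(e^{-\pi^2})$ tail, only the patch on $I_m$ contributes to the value at $j_m$, so $|\tilde U_m(t_m,x_m)|\ge\bigl(e^{t_m(\nu^2\Delta_h+1)}w^{(N_m)}\bigr)_{j_m}-o(1)\ge1+2\eta_*-o(1)\ge1+\eta_*$ for $m$ large; discarding finitely many small $m$ (and reindexing) gives the statement for all $m\ge1$.

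The only genuinely delicate point is the sharp, \emph{signed} lower bound $\|\mathcal K_{N,t_N}\|_{\ell^1}\ge1+2\eta_*$, i.e.\ the quantitative failure of $e^{t_N(\nu^2\Delta_h+1)}$ to be an $\ell^\infty$--contraction at this time scale. By Poisson summation $\mathcal K_{N,t_N}$ is the $N$--periodization of $\widehat h_\tau$ with $h_\tau(\xi)=e^{-\tau(2\pi\xi)^2}\mathbf 1_{|\xi|\le1/2}$ and $\tau=t_N\nu^2N^2$, so as $N\to\infty$ one needs $\sum_{p:\widehat h_\tau(p)<0}|\widehat h_\tau(p)|\ge\eta_*>10^{-3}$. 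This forces a careful analysis of the oscillatory integral $\widehat h_\tau(p)=\tfrac1\pi\int_0^\pi e^{-\tau u^2}\cos(pu)\,du$: one isolates the Gaussian main term $\tfrac12\sqrt{\pi/\tau}\,e^{-p^2/4\tau}$ from the boundary remainder produced by the cutoff at $\xi=\tfrac12$, identifies the indices $p$ at which $\widehat h_\tau$ is negative, chooses the proportionality constant in $t_N\asymp\nu^{-2}N^{-2}$ to make this negative mass as large as possible, and sums. I expect this last step --- pinning down the \emph{numerical} constant $\eta_*>0.001$, rather than merely its positivity --- to be where essentially all of the work concentrates.
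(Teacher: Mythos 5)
Your proposal addresses only the final sharpness claim at the end of statement (3); it does not prove the theorem. Parts (1) and (2) (the $N^{-1/2}$ and $N^{-1/4}$ effective maximum principles for $H^{3/2}$ and band-limited data) and items (a)--(g) of part (3) --- which are the main content and require the energy/smoothing estimates for the equivalent PDE $\partial_t u=\nu^2\partial_{xx}u+Q_N(u-u^3)$, the local-in-time contraction in the norm $\|u\|_\infty/\log N+\|t^{1/2}\partial_x u\|_{L^\infty_tL^2_x}+\|u\|_{L^\infty_tL^2_x}$, the splitting into the time regimes $t\le C_2N^{-2}\log N$, $[C_2N^{-2}\log N, C_1N^{-3/4}]$, $[C_1N^{-3/4},T_0]$, $t\ge T_0$, and the $\ell^\infty\to\ell^\infty$ bound $1+\epsilon_a$ for the discrete heat propagator --- are entirely absent. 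Worse, your Duhamel reduction for the counterexample explicitly invokes the a priori bound $\sup_t\|\tilde U_m(t)\|_\infty\le 1+\epsilon_1$ "from statement (3)", which is one of the things being proved; as written the argument is circular unless you first establish (3)(a)--(g).

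For the one piece you do treat, your route coincides with the paper's (its Theorems \ref{N18:1} and \ref{N18:1a}): reduce to the kernel $Q_j$ of $e^{t\nu^2\Delta_h}$ at $t\asymp\nu^{-2}N^{-2}$, show its signed $\ell^1$ mass exceeds $1$ by a computable margin via the coefficients $\beta_j=\int_0^1e^{-\frac14\pi^2k_0^2s^2}\cos(\pi js)\,ds$ (the paper gets $\sum_{|j|\le3}|\beta_j|\ge1.002$ at $k_0=1$, and in fact the closed form $A(k_0)=2\int_0^1e^{-bs^2}ds-e^{-b}$ for small $k_0$), and plant sign-pattern patches at points accumulating at $x_*$. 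Two cautions even here: the kernel is \emph{not} concentrated on $O(1)$ nodes up to an exponentially small tail --- the coefficients decay only like $j^{-2}$ (the paper's tail estimate over $N^{1/3}<j<N-N^{1/3}$ is $O(N^{-1/3}\sqrt{\log N})$), so your "only the patch on $I_m$ contributes" step needs the polynomial decay plus the lacunary separation of the patches, not Gaussian localization; and the nonlinear correction must be controlled by an honest short-time bound (the paper uses the local contraction estimate and $\|u(t)-e^{t\partial_{xx}}v_0\|_\infty=O(N^{-1/2}(\log N)^{1/4})$ for $t\le T_1$), not by the conclusion of part (3).
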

\begin{rem}
For $0<\nu \ll 1$ the dependence of $N_i (\nu)$, $i=1,2,3$ is only power like which is
a mild constraint in practice.
\end{rem}

We now consider the fully discrete system.
\begin{align} \label{2015FCe4}
\frac {U^{n+1} - U^n}{\tau} = \nu^2 \Delta_h U^{n+1} + U^n-(U^n)^{.3},
\end{align}
where $\nu>0$, $\tau>0$ and $(U^n)^{.3}=((U_0^n)^3, \cdots, (U_{N-1}^n)^3)^T$. 
This is a first order IMEX method applied to Allen-Cahn with Fourier collocation in space.
The following results are proved in Section \ref{S:DFT1}.

\begin{thm}[Effective maximum principles for \eqref{2015FCe4}]
Consider \eqref{2015FCe4} with $\nu>0$, $\tau>0$. Then the following hold.

\begin{enumerate}

\item \underline{Sharp maximum principle for $0<\tau\le \frac 12$ and very large $N$}.  Assume $0<\tau \le \frac 12$ and
 $\|U^0\|_{\infty} \le 1$. If $N\ge4+\frac 1 {\pi^2 \nu\sqrt{\tau}} e^{\frac 1 {2\nu \sqrt{\tau} } }$, then  \begin{align*}
\|U^n\|_{\infty} \le 1, \qquad\forall\, n\ge 1.
\end{align*}

\item \underline{$L^{\infty} $ stability for 
$\frac 12 \le \tau \le 1.99$}.  Assume $\frac 12 \le \tau\le 1.99$ and
$\| U^0\|_{\infty} \le 1$. If $N\ge N_4(\nu)>0$ (the dependence of $N_4$ on $\nu$ is only 
power-like), then 
\begin{align} \notag
&\sup_{n\ge 0} \| U^n \|_{\infty} \le M_a = 
\frac 12 ( \frac 23 \cdot \frac {(1+\tau)^{\frac 32} } {\sqrt{3\tau}}
+\sqrt{\frac  {2+\tau}{\tau} } \Bigr). 
\end{align}
Moreover if $\frac 12 \le \tau \le 0.86$, then 
\begin{align}
&\sup_{n\ge 0} \| U^n\|_{\infty} \le M_b = \frac 23 \cdot \frac {(1+\tau)^{\frac 32} } {\sqrt{3\tau}}
+ \eta_0, \qquad \eta_0=10^{-5};  \notag \\
& E(U^{n+1}) \le E(U^n),    \qquad \forall\, n\ge 0; \\
& E(U^n) \le E(U^1) \le \mathcal E_1,  \qquad\forall\, n\ge 1;
\end{align}
where $\mathcal E_1>0$ depends only on $\nu$.

\item \underline{Effective maximum principle for 
$ N^{-0.3} \le \tau \le \frac 12$}.
Suppose $N^{-0.3} \le \tau \le \frac 12$ and
$\| U^0\|_{\infty} \le 1$. If $N\ge N_5(\nu)>0$ (the dependence of $N_5$ on $\nu$ is only 
power-like), then 
\begin{align} 
&\| U^n \|_{\infty} \le 1+ O(N^{-\frac 13}), \qquad\forall\, n\ge 0;  \notag \\
& E(U^{n+1}) \le E(U^n), \qquad\forall\, n\ge 0;\notag \\
& E(U^n) \le \mathcal E_2, \qquad \forall\, n\ge \frac 1 \tau; \notag 
\end{align}
where $\mathcal E_2>0$ depends only on $\nu$.

\item \underline{Effective  maximum principle for
$ 0<\tau \le N^{-0.3} $}. 
 Assume $
0<\tau\le N^{-0.3} $ and $\|U^0\|_{\infty} \le 1$. If $N\ge N_6(\nu)>0$ (the dependence of $N_6$ on $\nu$ is only 
power-like), then it holds that 
\begin{align*}
\sup_{n\ge 0} \|U^n \|_{\infty} \le 1 +\epsilon_1,
\end{align*}
where $0<\epsilon_1 <10^{-2}$ is an absolute constant. More precisely the following hold.
(Below we shall write $X=O(Y)$ if $|X|\le CY$ and the constant $C$ depends only
on $\nu$.)
\begin{enumerate}
\item $E(U^{n+1}) \le E(U^n)$ for all $n\ge 0$.
\item  For all $n\ge 2N^{-0.3}/\tau$, we have
\begin{align*}
\| U^n\|_{\infty} \le 1+O(N^{-\frac 3 {40}}).
\end{align*}
\item For $1\le n \le 2N^{-0.3}/\tau$, we have 
\begin{align*}
& \|U^n\|_{\infty} \le (1+\frac 12 \epsilon_1)
\|U^0\|_{\infty} + O(N^{-\frac 3 {40}}).
\end{align*}
\item If $\|U^0\|_{\infty} \le \frac 1 {1+\frac 12 \epsilon_1}$ 
(note that $1/(1+\frac 12 \epsilon_1)>0.995$), then 
\begin{align*}
\sup_{n\ge 0} \|U^n\|_{\infty} \le 1 + O(N^{-\frac 3{40}}).
\end{align*}

\item Suppose $f:\, \mathbb T\to \mathbb R$ is continuous and $\|f\|_{\infty} \le 1$. 
If $(U^0)_l= f(\frac l N)$ for all $0\le l\le N-1$,  then
\begin{align*}
\sup_{n\ge 0} \|U^n\|_{\infty} 
\le 1 + O(\omega_f(N^{-\frac 23}) ) + O(N^{-c}),
\end{align*}
where $c>0$ is an absolute constant, and $\omega_f$ is defined in \eqref{N18:1b.0}.

\item  Suppose $f:\, \mathbb T\to \mathbb R$ is $C^{\alpha}$-continuous for some $0<\alpha<1$ and $\|f\|_{\infty} \le 1$.  
If $(U^0)_l= f(\frac l N)$ for all $0\le l\le N-1$,  then
\begin{align*}
\sup_{n\ge 0}\|U^n\|_{\infty}
\le 1 + O(N^{-c_1}),
\end{align*}
where $c_1>0$ is a constant depending only on $\alpha$. 
\end{enumerate}
Moreover we have the following result which shows the sharpness of our estimates above.
There exists a  function $f$: $\mathbb T \to \mathbb R$, continuous at all of $\mathbb T
\setminus \{x_*\}$ for some $x_*\in \mathbb T$ (i.e. continuous at all $x\ne x_*$) and  has the bound $\|f\|_{\infty} \le 1$
 such that 
the following hold:
for a sequence of even numbers $N_m \to \infty$, $\tau_m =\frac 14 \nu^{-2}N_m^{-2}$ and $x_m
=j_m/N_m$ with $0\le j_m\le N_m-1$ , if $\tilde U_m \in \mathbb R^{N_m}$ solves 
\begin{align*}
(\op{Id} -\tau \nu^2 \Delta_h ) \tilde U_m = V_m + \tau_m f(V_m),
\end{align*}
where $(V_m)_l = f(\frac l {N_m})$ for all $0\le l\le N_m-1$. Then 
\begin{align*}
|\tilde U_m(x_m) | \ge 1+ \eta_*, \qquad\forall\, m\ge 1,
\end{align*}
where $\eta_*>0.001$ is an absolute constant.
\end{enumerate}
\end{thm}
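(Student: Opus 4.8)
The plan is to reduce the whole theorem to the interplay between a \emph{discrete} linear maximum principle for the solution operator $\mathcal T_h:=(\op{Id}-\tau\nu^2\Delta_h)^{-1}$ acting on grid functions and the scalar cubic dynamics of $f_\tau(z)=(1+\tau)z-\tau z^3$. Writing \eqref{2015FCe4} as $U^{n+1}=\mathcal T_h\bigl(f_\tau(U^n)\bigr)$ with $f_\tau$ applied componentwise, the nonlinear step is automatically nonexpansive in $\ell^\infty$ on grid values: $\|f_\tau(U^n)\|_{\ell^\infty}\le\max_{|x|\le\|U^n\|_{\ell^\infty}}|f_\tau(x)|$, with \emph{no} aliasing loss since $f_\tau$ acts pointwise — this is exactly why collocation is cleaner than Galerkin for the maximum norm. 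Hence all the work is in $\mathcal T_h$, whose circulant kernel $K_h$ has Fourier multiplier $(1+\tau\nu^2(2\pi k)^2)^{-1}\mathbf{1}_{|k|\le N/2}$, the lattice analogue of $K_{N,\beta}$ with $\beta=\tau\nu^2$. The first step is to transplant Theorem \ref{thm1.1_00} (through the Poisson–summation / Fejér arguments of Section 2, adapted to the lattice and the $N/2$ cutoff and tracking aliasing of the inverse kernel) to obtain: (a) $\|\mathcal T_h\|_{\ell^\infty\to\ell^\infty}=\|K_h\|_{\ell^1}\le 1+\tfrac{c\log(N+2)}{1+\tau\nu^2 N^2}$ for all $N$; and (b) $K_h\ge 0$ with $\sum_jK_{h,j}=1$, hence $\|\mathcal T_h\|_{\ell^\infty\to\ell^\infty}=1$, once $N/2\ge N_0(\tau\nu^2)=O\bigl(\tfrac1{\nu\sqrt\tau}e^{1/(2\nu\sqrt\tau)}\bigr)$ — which is precisely the stated threshold $N\ge 4+\tfrac1{\pi^2\nu\sqrt\tau}e^{1/(2\nu\sqrt\tau)}$.

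Given (a)–(b), part (1) is immediate by induction: for $0<\tau\le\tfrac12$ Proposition \ref{2015prop1} gives $\max_{|x|\le1}|f_\tau(x)|=1$, so $\|U^n\|_{\ell^\infty}\le1\Rightarrow\|f_\tau(U^n)\|_{\ell^\infty}\le1\Rightarrow\|U^{n+1}\|_{\ell^\infty}\le\|K_h\|_{\ell^1}=1$. For part (2), fix $\tfrac12\le\tau\le1.99$; now $\beta=\tau\nu^2$ is bounded below, so choosing $N\ge N_4(\nu)$ polynomially large makes $\eta:=\tfrac{cM_a\log(N+2)}{1+\tau\nu^2 N^2}$ smaller than the $\eta_0$ of Proposition \ref{2015prop2}(2); then $\alpha_{n+1}:=\|U^{n+1}\|_{\ell^\infty}\le(1+\tfrac{c\log N}{1+\tau\nu^2 N^2})\max_{|x|\le\alpha_n}|f_\tau(x)|\le\max_{|x|\le\alpha_n}|f_\tau(x)|+\eta$, and Proposition \ref{2015prop2}(2) started at $\alpha_0=M_0=M_a$ closes $\sup_n\|U^n\|_{\ell^\infty}\le M_a$. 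The refinement to $M_b$ and the energy decay for $\tfrac12\le\tau\le0.86$ need a separate ingredient: once $\|U^n\|_{\ell^\infty}$ is a priori bounded, the IMEX energy identity for \eqref{2015FCe4} — expand $E(U^{n+1})-E(U^n)$ and absorb the explicit cubic term using $\tau\le0.86$ and a spectral ($H^1$, with aliasing controlled by the a priori bound) estimate — gives $E(U^{n+1})\le E(U^n)$ and $E(U^n)\le E(U^1)\le\mathcal E_1(\nu)$; feeding the resulting $H^1$ bound, via the one-dimensional Agmon-type inequality built into Section 5, back into the $L^\infty$ estimate sharpens $M_a$ to $M_b$.

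Part (3) is the same scheme with the error bookkeeping pushed to its limit: for $N^{-0.3}\le\tau\le\tfrac12$ use Proposition \ref{2015prop2}(1) with contraction ratio $\theta=1-2\tau$, so $\alpha_n\le1+\theta^n\alpha_0+\tfrac{\eta}{1-\theta}$ with $\eta$ the per-step linear$+$aliasing error; since $\beta N^2=\tau\nu^2 N^2\ge\nu^2 N^{1.4}$ one gets $\tfrac{\eta}{1-\theta}=\tfrac{\eta}{2\tau}\lesssim\tfrac{\log N}{\tau^2\nu^2 N^2}\lesssim N^{-1/3}$, yielding $\|U^n\|_{\ell^\infty}\le1+O(N^{-1/3})$, with the energy statements following as in part (2) plus a dissipation estimate forcing $E(U^n)\le\mathcal E_2(\nu)$ after $n\ge1/\tau$. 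Part (4), $0<\tau\le N^{-0.3}$, is the genuine obstacle: here $\tau$ may be so small that $\tau\nu^2 N^2\ll1$, the per-step factor $\|K_h\|_{\ell^1}$ is only $O(\log N)$, and no step-by-step contraction survives \emph{uniformly in $\tau$}. The plan mirrors the continuous-in-time collocation theorem via a time-windowed bootstrap: (i) over the initial window $1\le n\le 2N^{-0.3}/\tau$ (i.e. $t=n\tau\le2N^{-0.3}$) use only $|f_\tau(x)|\le(1+\tau)|x|$ together with the \emph{composite} operator $\mathcal T_h^{\,n}$, whose multiplier $(1+\tau\nu^2(2\pi k)^2)^{-n}\approx e^{-t\nu^2(2\pi k)^2}$ is effectively band-limited at $|k|\lesssim1/(\nu\sqrt t)$ so that $\|\mathcal T_h^{\,n}\|_{\ell^\infty\to\ell^\infty}\le1+O\bigl(\tfrac{\log N}{1+n\tau\nu^2 N^2}\bigr)$, giving accumulated growth $\le(1+\tfrac12\epsilon_1)\|U^0\|_{\ell^\infty}+O(N^{-3/40})$; (ii) for $n\ge2N^{-0.3}/\tau$ parabolic smoothing has rendered $U^n$ effectively resolved, so the refined bound $1+O(N^{-1/8}(\log N)^2)$ and the energy decay (the IMEX energy identity is favorable since $\tau\le N^{-0.3}$) hold, the intermediate slabs (b)–(d) being obtained by rerunning this argument with frequency cutoffs tuned to the elapsed time; (iii) the modulus-of-continuity statements (e)–(g) follow by splitting $U^0=\Pi_M U^0+(\op{Id}-\Pi_M)U^0$ at an optimized $M$ and estimating the low part by smoothness, the high part by $\omega_f$ or the $C^\alpha$ norm.

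Finally, the sharpness claim is proved by an explicit Gibbs-type construction: take $f$ to be a smooth bump plus a single jump at $x_*$ with $\|f\|_\infty\le1$, so that its $N_m/2$-mode Dirichlet partial sum overshoots near $x_*$; sampling $f$ at $x_j=j/N_m$ and applying one step with $\tau_m=\tfrac14\nu^{-2}N_m^{-2}$ — so that $\mathcal T_h=(\op{Id}-\tfrac14 N_m^{-2}\Delta_h)^{-1}$ is an $O(1)$ perturbation of the projection onto $|k|\le N_m/2$ rather than a positive contraction — produces a node value exceeding $1+\eta_*$ at a suitable $x_m$ near $x_*$, and optimizing along $N_m\to\infty$ fixes the absolute constant $\eta_*>0.001$. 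The main obstacle throughout is part (4): matching the short-time growth estimate (i) with the smoothing estimate (ii) using constants \emph{uniform in $\tau$ as $\tau\to0$} is exactly where the naive per-step maximum-principle argument collapses and the windowed comparison with the continuous-in-time system, plus the composite-kernel smoothing bound, must do the work.
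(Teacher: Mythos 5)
Your reduction of parts (1)--(3) to the discrete kernel of $(\op{Id}-\tau\nu^2\Delta_h)^{-1}$ plus the scalar cubic iteration is essentially the paper's argument (Lemma \ref{Nov2}, Theorem \ref{Nov2_thma}, and the two theorems covering $\tfrac12\le\tau\le1.99$ and $N^{-0.3}\le\tau\le\tfrac12$), and the observation that the pointwise nonlinearity incurs no aliasing loss on grid values is exactly right. The difficulty is part (4) and the sharpness claim, and there your proposal has a genuine gap. You assert the composite bound $\|\mathcal T_h^{\,n}\|_{\ell^\infty\to\ell^\infty}\le 1+O\bigl(\log N/(1+n\tau\nu^2N^2)\bigr)$. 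In the critical window $n\tau\nu^2N^2\lesssim\log N$ (e.g.\ $n=1$, $\tau=\tfrac14\nu^{-2}N^{-2}$) this reads $1+O(\log N)$ and closes nothing; and no bound of the form $1+o(1)$ can be true there, because the paper proves a \emph{lower} bound $B_{N,1}>1.001$ at $\tau=\tfrac14\nu^{-2}N^{-2}$ (Theorem \ref{N21:1}). The actual content of the paper's short-time analysis (Theorems \ref{N18:1} and \ref{N21:1}) is that the $\ell^1$ mass of the composite kernel \emph{restricted to grid points} equals $1+\epsilon_a+O(N^{-1/3}\sqrt{\log N})$ with $0<\epsilon_a<0.01$ an absolute constant, obtained by reducing to $A(k_0)=\sum_{j}|\beta_j|$ with $\beta_j=\int_0^1e^{-bs^2}\cos(\pi js)\,ds$ and evaluating this sum in closed form ($2\int_0^1e^{-bs^2}ds-e^{-b}$ for small $b$, plus a separate large-$b$ estimate). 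This computation is what produces the irremovable constant $\epsilon_1$ in the statement; your proposal contains no substitute for it, and without it the accumulated growth over the initial window is not $(1+\tfrac12\epsilon_1)\|U^0\|_\infty+O(N^{-3/40})$. Relatedly, the transient regime where $\|u^n\|_\infty$ may be as large as $\log N$ must be handled by the $L^2$-based local smoothing estimates and the Duhamel comparison $u^n\approx B^nu^0$ before any maximum-principle step applies; your step (i) skips this.

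The sharpness construction is also off. Since $Q_N$ restricted to the grid is the identity, a Gibbs overshoot of the Dirichlet partial sum of a jump function is invisible at the nodes $x_m=j_m/N_m$; the overshoot must come from the \emph{signed} discrete resolvent kernel. The paper's counterexample (Theorem \ref{N18:1a} and the end of Theorem \ref{N21:1}) takes $f(\tfrac{l_m+j}{N_m})=\op{sgn}(c_j)$ for $|j|\le3$, where the $c_j$ are the explicitly computed kernel weights with $\sum_{|j|\le3}|c_j|>1.002$, at a sequence of locations accumulating at $x_*$. A single jump does not align with these signs and would not produce the stated $\eta_*>0.001$.
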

\begin{rem}
In Statement (1) the cut-off $\tau=1.99$ is for convenience only. One can replace it by any number less than $2$ but  $N_4$ will have to be adjusted correspondingly.
\end{rem}

Concerning time discretization, all the numerical methods we discussed so far are only
first order in time. With further work our effective maximum principles can be generalized to higher order
in time methods. To showcase the theory, we consider Strang splitting with Fourier collocation
for Allen-Cahn on $\mathbb T$. The exact model equation is 
\begin{align*}
\partial_t u = \nu^2 \partial_{xx} u + f(u),\qquad f(u)=u-u^3, \qquad
(t,x) \in (0,\infty) \times \mathbb T.
\end{align*}
We slightly abuse the notation and denote for $U\in \mathbb R^N$
\begin{align*}
f(U)= U - U^{\cdot 3}.
\end{align*}
We consider the time splitting
as follows. Let $\tau>0$ be the time step.  Consider the ODE
\begin{align*}
\begin{cases}
\frac d {dt} U = f(U), \\
U\Bigr|_{t=0} = a \in \mathbb R^N.
\end{cases}
\end{align*}
We define the solution operator $\mathcal N_{\tau}:\, \mathbb R^N
\to \mathbb R^N$ as the map $a \to U(\tau)$.  Thanks to
the explicit form of $f(U)$, we have ( $a=(a_0,\cdots, a_{N-1})^T$ and with no loss
we shall assume that $N\ge 2$ )
\begin{align*}
(\mathcal N_{\tau} a)_j = U(\tau)_j = \frac {a_j}
{ \sqrt{ e^{-2\tau} + (1-e^{-2\tau} ) (a_j)^2} },
\qquad j=0, 1,\cdots, N-1.
\end{align*}

Define $\mathcal S_{\tau}= e^{\frac {\tau}2 \nu^2 \Delta_h}$. 
Then $U^{n+1}$, $U^n \in \mathbb R^N$
are related via the relation:
\begin{align} \label{2015STe1}
U^{n+1}= 
\mathcal S_{\tau} \mathcal N_{\tau} \mathcal S_{\tau} U^n, 
\qquad n\ge 0.
\end{align}
It is not difficult to check that this particular Strang splitting method is 
second order in time. The following results are established in Section 6. 
\begin{thm}[Effective maximum principle for Strang splitting of Allen-Cahn with Fourier
Collocation] 
Consider \eqref{2015STe1} with $\nu>0$, $\tau>0$.  Then the following hold.

\begin{enumerate}
\item \underline{Effective maximum principle for $ N^{-0.4} \le \tau <\infty$}. 
 Assume $N^{-0.4} \le \tau<\infty$ and 
$\| U^0\|_{\infty} \le 1$. If $N\ge N_1(\nu)>0$ (the dependence of $N_1$ on $\nu$ is only 
power-like), then  for all $n\ge 0$, we have
\begin{align}  \notag
&\| U^n \|_{\infty} \le 1+ O(N^{-0.5}).
\end{align}

\item \underline{Effective maximum principle for $0<\tau < N^{-0.4} $}. 
 Assume $0< \tau <N^{-0.4}$ and 
$\| U^0\|_{\infty} \le 1$.
If $N\ge N_2(\nu)>0$ (the dependence of $N_2$ on $\nu$ is only 
power-like), then 
\begin{align}  \notag
\sup_{n\ge 0} \| U^n \|_{\infty} \le 1+ \epsilon_1,
\end{align}
where $0<\epsilon_1<10^{-2}$ is an absolute constant. More precisely the following hold.
\begin{enumerate}
\item For some $T_0=O(N^{-0.3})$ sufficiently small and for all $n\ge T_0/\tau$, we have
\begin{align*}
& \| U^n \|_{\infty} \le 1+ O(N^{-0.29}),
\end{align*}
\item For $1\le n \le T_0/\tau$, we have 
\begin{align*}
& \|U^n - \mathcal S_{2n\tau} U^0 \|_{\infty}
\le O(N^{-0.3}); \\
& \|
 \mathcal S_{2n\tau} U^0 
\|_{\infty} \le (1+\frac 1 2 \epsilon_1) \|U^0\|_{\infty};  \\
& \|U^n\|_{\infty} \le (1+\frac 12 \epsilon_1)
\|U^0\|_{\infty} + O(N^{-0.3}).
\end{align*}
\item If $\|U^0\|_{\infty} \le \frac 1 {1+\frac 12 \epsilon_1}$ 
(note that $1/(1+\frac 12 \epsilon_1)>0.995$), then 
\begin{align*}
\sup_{n\ge 0} \|U^n\|_{\infty} \le 1 + O(N^{-0.3}).
\end{align*}

\item Suppose $f:\, \mathbb T\to \mathbb R$ is continuous and $\|f\|_{\infty} \le 1$. 
If $(U^0)_l= f(\frac l N)$ for all $0\le l\le N-1$,  then
\begin{align*}
\sup_{n\ge 0} \|U^n\|_{\infty} 
\le 1 + O(\omega_f(N^{-\frac 23}) ) + O(N^{-c}),
\end{align*}
where $c>0$ is an absolute constant, and $\omega_f$ is defined in \eqref{N18:1b.0}.

\item  Suppose $f:\, \mathbb T\to \mathbb R$ is $C^{\alpha}$-continuous for some $0<\alpha<1$ and $\|f\|_{\infty} \le 1$.  
If $(U^0)_l= f(\frac l N)$ for all $0\le l\le N-1$,  then
\begin{align*}
\sup_{n\ge 0}\|U^n\|_{\infty}
\le 1 + O(N^{-c_1}),
\end{align*}
where $c_1>0$ is a constant depending only on $\alpha$. 
\end{enumerate}
Moreover we have the following result which shows the sharpness of our estimates above.
There exists a  function $f$: $\mathbb T \to \mathbb R$, continuous at all of $\mathbb T
\setminus \{x_*\}$ for some $x_*\in \mathbb T$ (i.e. continuous at all $x\ne x_*$) and  has the bound $\|f\|_{\infty} \le 1$
 such that 
the following hold:
for a sequence of even numbers $N_m \to \infty$, $\tau_m =\frac 14 \nu^{-2}N_m^{-2}$
, $n_m\ge 1$,  and $x_m
=j_m/N_m$ with $0\le j_m\le N_m-1$, if $\tilde U_m \in \mathbb R^{N_m}$ satisfies
\begin{align*}
\tilde U_m = \underbrace{e^{\frac 1 2 \tau_m \Delta_h} \mathcal N_{\tau_m} e^{\frac 12
\tau_m \Delta_h} \cdots 
e^{\frac 1 2 \tau_m \Delta_h} \mathcal N_{\tau_m} e^{\frac 12
\tau_m \Delta_h}}_{\text{iterate $n_m$ times}}  a_m,
\end{align*}
where $a_m \in \mathbb R^{N_m} $ satisfies 
$(a_m)_l = f(\frac l {N_m})$ for all $0\le l\le N_m-1$. Then 
\begin{align*}
|\tilde U_m(x_m) | \ge 1+ \eta_*, \qquad\forall\, m\ge 1,
\end{align*}
where $\eta_*>0.001$ is an absolute constant.
\end{enumerate}
\end{thm}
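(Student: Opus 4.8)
The plan is to view the scheme \eqref{2015STe1} as the iteration of the composition of a linear parabolic half-step $\mathcal S_\tau=e^{\frac\tau2\nu^2\Delta_h}$ and the \emph{exact} nonlinear half-flow $\mathcal N_\tau$, and to run, in each of the two $\tau$-regimes, a bookkeeping of $\alpha_n:=\|U^n\|_\infty$ of exactly the type carried out in Section~3 (cf. Proposition~\ref{2015prop2}). I would first record the elementary scalar facts for $\mathcal N_\tau$. Since $(\mathcal N_\tau a)_j=g_\tau(a_j)$ with $g_\tau(x)=x\bigl(e^{-2\tau}+(1-e^{-2\tau})x^2\bigr)^{-1/2}$, everything reduces to the map $g_\tau$: it is odd, strictly increasing, maps $[-1,1]$ into itself, satisfies $g_\tau'(1)=e^{-2\tau}$, and is concave on $[1,\infty)$; hence $1\le g_\tau(x)\le 1+e^{-2\tau}(x-1)$ for $x\ge1$, $g_\tau(x)\le(1-e^{-2\tau})^{-1/2}$ for all $x$, and $|g_\tau(x)-x|\le C\tau$ for $|x|\le2$. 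In words: the nonlinear half-step damps any overshoot above $1$ by the factor $e^{-2\tau}$, and for $\tau\ll1$ it is $O(\tau)$-close to the identity.

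Next I would establish the $\ell^\infty$ properties of $\mathcal S_\tau$ on the grid, which is where the real work lies. Writing $\mathcal S_t$ as circular convolution with $K_m=\frac1N\sum_{|k|\le N/2}e^{-\frac t2\nu^2(2\pi k)^2}e^{2\pi ikm/N}=\frac1N(\Pi_{N/2}G_t)(m/N)$, where $G_t$ is the (positive, unit-mass) periodized heat kernel, positivity of $G_t$ turns $\frac1N\sum_m G_t(m/N)$ into a Poisson-summation identity, while the Fourier-truncation remainder has sup-norm $\le\sum_{|k|>N/2}e^{-\frac t2\nu^2(2\pi k)^2}$; this gives $\|\mathcal S_t\|_{\ell^\infty\to\ell^\infty}\le1+\rho_N(t)$ with $\rho_N(t)$ super-exponentially small once $t\nu^2N^2\gtrsim1$, $\rho_N(t)\to0$ as $t\to0$, and the only nontrivial range $t\nu^2N^2=O(1)$, where a \emph{single} application overshoots by at most an absolute constant. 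I would also need two smoothing facts: (a) on functions band-limited to $|k|\le M\ll N$, $\mathcal S_\tau$ acts as the true periodized heat semigroup and is an $\ell^\infty$-contraction up to an $O(M/N)$ error (grid-sup versus continuous-sup of a low-degree trigonometric polynomial, via Bernstein); (b) $\mathcal S_t$ with $t\nu^2N^2\gtrsim1$ concentrates the Fourier mass below $M(t)\sim(t\nu^2)^{-1/2}\log N$ up to a negligible tail. For the structured-data sub-cases I would additionally use $\|\mathcal S_sU^0-f\|_\infty\lesssim\omega_f(\sqrt{s\nu^2}+N^{-1})+(\text{aliasing})$ when $U^0$ samples a (uniformly or $C^\alpha$) continuous $f$ with $\|f\|_\infty\le1$.

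With these in hand the assembly is routine. For $N^{-0.4}\le\tau<\infty$ one has $\tau\nu^2N^2\ge\nu^2N^{1.6}$, so $\rho_N(\tau)$ is super-exponentially small; putting $\delta_n=(\alpha_n-1)_+$, the two half-step facts give $\delta_{n+1}\le e^{-2\tau}(1+\rho_N(\tau))^2\delta_n+O(\rho_N(\tau))$, and since $\tau\gg\rho_N(\tau)$ the multiplier is $<1$, whence $\sup_n\delta_n\lesssim\rho_N(\tau)/\tau\lesssim N^{0.4}e^{-c\nu^2N^{1.6}}\ll N^{-0.5}$ (the large-$\tau$ sub-case being even easier since then $e^{-2\tau}$ is tiny and $\mathcal S_\tau$ strongly contracting); this is statement (1). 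For $0<\tau<N^{-0.4}$ the damping is too weak, and I would split the time axis at $T_0\sim N^{-0.3}$. For $1\le n\le T_0/\tau$, telescoping $|g_\tau(x)-x|\le C\tau$ and the semigroup identity $\mathcal S_\tau\mathcal S_\tau=\mathcal S_{2\tau}$ give $\|U^n-\mathcal S_{2n\tau}U^0\|_\infty\le C\,n\tau\le CT_0$, while the \emph{single} application $\mathcal S_{2n\tau}$ (with $2n\tau\le2T_0$) overshoots by $\le\tfrac12\epsilon_1$, so $\|U^n\|_\infty\le(1+\tfrac12\epsilon_1)\|U^0\|_\infty+O(T_0)\le1+\epsilon_1$ — the transient bound, and the source of sub-cases~(c),(d). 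For $n\ge T_0/\tau$ the accumulated smoothing concentrates $U^n$ below $M\sim\nu^{-1}N^{0.15}\,\mathrm{polylog}$, so every subsequent half-step is a near-contraction up to $O(M/N)$, and an induction of the Proposition~\ref{2015prop2} type closes with $\|U^n\|_\infty\le1+O(N^{-c})$; optimizing $T_0$ against the several error terms (transient mismatch, $M/N$, the splitting error, nonlinear high-frequency regeneration, and the initial-sampling error for structured data) produces the stated exponents $N^{-0.29}$, $N^{-3/40}$, \dots, and the $C^\alpha$ / continuous sub-cases follow verbatim with $\tfrac12\epsilon_1$ replaced by $\omega_f(N^{-2/3})$ (resp. $N^{-c_1}$).

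For the sharpness statement I would take $f:\mathbb T\to\mathbb R$ that near a single point $x_*$ is a unit-height jump (a $\pm(1-\delta)$ step, $\delta$ small) and smooth elsewhere, so $\|f\|_\infty\le1$; with $\tau_m=\tfrac14\nu^{-2}N_m^{-2}$ the half-step symbol is $e^{-\pi^2k^2/(2N_m^2)}$, a mild Gaussian mollification at exactly the critical scale, and $\mathcal N_{\tau_m}=\op{Id}+O(\tau_m)$, so $\tilde U_m$ is, up to $O(N_m^{-2})$, the mollified trigonometric interpolant of $f$, which exhibits a Gibbs overshoot of a definite size $>1.001$ at the crest near the jump; choosing $j_m$ so that $x_m=j_m/N_m$ sits on that crest gives $|\tilde U_m(x_m)|\ge1+\eta_*$ uniformly in $m$, the same computation being valid after a bounded number $n_m$ of iterations once one checks the overshoot is not destroyed. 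The step I expect to be the main obstacle is precisely the $\ell^\infty$-analysis of $\mathcal S_\tau$ in the crossover range $\tau\nu^2N^2=O(1)$ together with the quantitative smoothing-versus-truncation trade-off driving the second regime: one must show simultaneously that a single half-step overshoots by no more than the absolute constant $\tfrac12\epsilon_1$ (so the transient bound closes from $\|U^0\|_\infty\le1$) and that after time $T_0$ the iterate has become band-limited enough for all later half-steps to be near-contractions, and then choose $T_0$ so that both the transient error $O(T_0)$ and the post-transient error $O(M(T_0)/N)$ (and the nonlinear spectral broadening they must beat) are small at once — which is exactly what forces the fractional powers of $N$. The sharpness construction, by contrast, is a classical Gibbs computation once $f$'s single discontinuity is kept aligned with a grid crest along the sequence $N_m$.
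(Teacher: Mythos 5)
Your architecture is the same as the paper's: a scalar analysis of the nonlinear half-step showing $g_\tau(1+\delta)\le 1+e^{-2\tau}\delta$, an $\ell^\infty$ analysis of the grid kernel of $\mathcal S_t$, a contraction recursion balancing the linear overshoot against the factor $1-e^{-2\tau}$ in the regime $\tau\ge N^{-0.4}$, a splitting at $T_0\sim N^{-0.3}$ with the transient Duhamel bound $\|U^n-\mathcal S_{2n\tau}U^0\|_\infty\lesssim n\tau$ (this is exactly Lemma \ref{Dec5:1}), and a post-transient maximum-principle argument. Regime (1) and the transient part of regime (2) are correct as you wrote them. However, there are two genuine gaps. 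First, the central quantitative input — that a \emph{single} application of $\mathcal S_t$ on grid data bounded by $1$ overshoots by no more than an absolute constant $\tfrac12\epsilon_1<10^{-2}$ in the crossover range $0<t\nu^2N^2\lesssim\log N$, together with its refinements for continuous and $C^\alpha$ data — is asserted and flagged as "the main obstacle" but not proved, and the mechanism you sketch for it fails there: decomposing the grid kernel as (positive periodized heat kernel) $+$ (Fourier-truncation tail) gives a tail of size $\sum_{|k|>N/2}e^{-ct\nu^2k^2}$, which at the critical scale $t\nu^2N^2=O(1)$ is $O(N)$, not small. The paper's Appendix proof of Theorem \ref{N18:1} is a genuinely different computation: it reduces $\frac1N\sum_j|Q_j|$ to $\sum_{j\in\mathbb Z}\bigl|\int_0^1 e^{-\frac14\pi^2k_0^2s^2}\cos(\pi js)\,ds\bigr|$, evaluates this in closed form for small $k_0$ as $2\int_0^1 e^{-bs^2}ds-e^{-b}$ and bounds it for large $k_0$ by a convexity/integration-by-parts argument; everything in statements (2)(a)--(e), and the constant $\epsilon_1$, hinges on this. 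Your post-$T_0$ step also leaves the key mechanism implicit: the nonlinear half-step $Q_N(g_\tau(\cdot))$ regenerates high frequencies at every iteration, so "the iterate is band-limited below $M$" is not something one can propagate; the paper instead propagates a gradient bound ($\sqrt{n\tau}\|\partial_h U^n\|_2\lesssim 1$, estimates \eqref{Dec5:1b}--\eqref{Dec5:1c} and Lemma \ref{leDec8:1}) and uses it to bound the aliasing error $\|(Q_N-\op{Id})(f(u))\|_\infty\lesssim N^{-1/2}\|\partial_x(u^3)\|_2$ inside a maximum-principle argument for the continuous interpolant $u^n=Q_NU^n$.

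Second, the sharpness construction is not right as stated. A single $\pm1$ jump is \emph{not} the extremizer for the critically mollified kernel: the classical Gibbs overshoot is a property of the Dirichlet kernel, whereas here the kernel is the Gaussian-smoothed Dirichlet kernel at scale $t\nu^2N^2=\tfrac14$, whose total overshoot is only the tiny excess $\sum_j|\beta_j|-1\approx 0.002$; a step function tests the kernel against a monotone sign pattern and there is no reason its value exceeds $1.001$. The paper's counterexample (Theorem \ref{N18:1a}) instead prescribes the data to equal $\op{sgn}(c_j)$ at the seven grid points nearest $y_m$ — an \emph{oscillating} $+,-,+,-,\dots$ pattern at the grid scale, which is the true extremizer of the discrete convolution — and arranges these windows along $y_m\to x_*$ so that $f$ is continuous off the single accumulation point $x_*$; the overshoot is then propagated through $n_m$ nonlinear steps by the perturbation bounds from the transient analysis. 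Replacing your jump by this sign-matched oscillating data (and keeping your alignment-along-a-subsequence idea) is what is needed to close the sharpness claim.
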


Our analysis is certainly not restricted to the Allen-Cahn equation and be generalized
and developed in many other directions.  In Section $7$ and $8$ we introduce effective maximum principles for the 1D Burgers equation and the 2D Navier-Stokes equation with periodic boundary conditions. Quite interestingly, these almost sharp
maximum principles exhibit similar strong stability properties much as the Allen-Cahn case, although they are derived using slightly different mechanisms. In the following we give a short summary of the 
results obtained in Section $7$ and $8$.

Consider the 1D Burgers equation:
\begin{align} \label{20151DB1}
\begin{cases}
\partial_t u + \Pi_N ( u u_x) = \nu^2 u_{xx}, \quad (t,x) \in (0,\infty) \times \mathbb T; \\
u\Bigr|_{t=0} = u^0= \Pi_N f,
\end{cases}
\end{align}
where $f \in L^{\infty}(\mathbb T)$. 

\begin{thm}[Effective maximum principle for continuous in time Burgers with spectral
truncation]
Let $\nu>0$ and $u$ be the solution to \eqref{20151DB1}. 
We have for all $N\ge 2$, 
\begin{align*}
\sup_{t\ge 0} \|u(t) \|_{\infty}
\le \|u^0\|_{\infty} + \alpha \cdot N^{-c}, 
\end{align*}
where $c>0$ is an absolute constant, and $\alpha>0$ depends only on 
($\|f\|_{\infty}$, $\nu$). 
\end{thm}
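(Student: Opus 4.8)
\medskip
\noindent\textbf{Proof strategy.}
The plan is to recast \eqref{20151DB1} as the genuine viscous Burgers equation with a high--frequency forcing, use the classical pointwise maximum principle to absorb that forcing, and then show the total forcing is $O(N^{-c})$. Since $u(t)\in S_N$ for every $t$, one may write $\Pi_N(uu_x)=uu_x-\Pi_{>N}(uu_x)$, so that $u$ solves
\begin{align*}
\partial_t u+uu_x=\nu^2u_{xx}+g,\qquad g:=\Pi_{>N}(uu_x)=\tfrac12\,\partial_x\,\Pi_{>N}(u^2).
\end{align*}
The solution is smooth in $(t,x)$ and global (a polynomial ODE on the finite--dimensional space $S_N$, with nonincreasing $L^2$ energy, see below). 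First I would record the pointwise maximum principle: if $\phi(t)=\max_x u(t,x)$, then at a maximizing point $u_x=0$ and $u_{xx}\le 0$, so the upper Dini derivative obeys $D^+\phi(t)\le g(t,x_\ast)\le\|g(t)\|_{\infty}$, and symmetrically for $\min_x u$; hence
\begin{align*}
\sup_{t\ge0}\|u(t)\|_{\infty}\le\|u^0\|_{\infty}+\int_0^\infty\|g(s)\|_{\infty}\,ds,
\end{align*}
and the whole theorem reduces to $\int_0^\infty\|g(s)\|_{\infty}\,ds\le\alpha N^{-c}$.

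\medskip
\noindent Next come the $N$-uniform energy bounds. Testing the equation against $u$ and using $\int u^2u_x\,dx=0$ together with $\Pi_N u=u$ gives $\frac{d}{dt}\|u\|_2^2=-2\nu^2\|u_x\|_2^2$, so $\|u(t)\|_2\le\|u^0\|_2\le\|f\|_{\infty}$ and $\int_0^\infty\|u_x\|_2^2\,ds\le\frac1{2\nu^2}\|u^0\|_2^2$. Since the mean $m=\widehat{u^0}(0)$ is conserved with $|m|\le\|u^0\|_{\infty}$ and $u-m$ has zero mean, Poincar\'e gives $\frac{d}{dt}\|u-m\|_2^2\le-8\pi^2\nu^2\|u-m\|_2^2$, so $\|u(t)-m\|_2$ decays exponentially; summing the energy identity over $[j,j+1]$ then yields $\int_0^\infty\|u_x(s)\|_2\,ds\le C(\nu)\|u^0\|_2\le C(\nu)\|f\|_{\infty}$. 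The heart of the argument is the smallness of $g$. Because $g(s)$ has Fourier support in $N<|k|\le 2N$, Bernstein gives $\|g(s)\|_{\infty}\le CN^{1/2}\|g(s)\|_2\le CN^{3/2}\|\Pi_{>N}(u^2)(s)\|_2$. Splitting $u=\Pi_{\le N/2}u+\Pi_{>N/2}u$ and using $\Pi_{>N}\bigl((\Pi_{\le N/2}u)^2\bigr)=0$, only paraproducts involving $\Pi_{>N/2}u$ survive, so with the Lebesgue--constant bound $\|\Pi_{\le N/2}u\|_{\infty}+\|\Pi_{>N/2}u\|_{\infty}\le C(\log N)\|u\|_{\infty}$ one gets $\|\Pi_{>N}(u^2)(s)\|_2\le C(\log N)\,\|u(s)\|_{\infty}\,\|\Pi_{>N/2}u(s)\|_2$, hence $\|g(s)\|_{\infty}\le CN^{3/2}(\log N)\,M\,\|\Pi_{>N/2}u(s)\|_2$ with $M:=\sup_{t\ge0}\|u(t)\|_{\infty}$. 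Finally, $v:=\Pi_{>N/2}u$ solves $\partial_t v=\nu^2v_{xx}-\Pi_{(N/2,N]}(uu_x)$, and $-\nu^2\partial_{xx}$ on the range of $v$ has spectral gap $\lambda_N\ge\pi^2\nu^2N^2$; Duhamel and $\|uu_x\|_2\le M\|u_x\|_2$ give
\begin{align*}
\int_0^\infty\|\Pi_{>N/2}u(s)\|_2\,ds\le\lambda_N^{-1}\Bigl(\|u^0\|_2+M\!\int_0^\infty\!\|u_x(s)\|_2\,ds\Bigr)\le\frac{C(\nu)\,\|u^0\|_2\,(1+M)}{N^2},
\end{align*}
and combining, $\int_0^\infty\|g(s)\|_{\infty}\,ds\le C(\nu)\,N^{-1/2}(\log N)\,\|u^0\|_2\,M(1+M)$.

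\medskip
\noindent I would then close the estimate by a bootstrap on $M$. A priori $M\le CN^{1/2}\|u^0\|_2<\infty$, and $M_T:=\sup_{[0,T]}\|u\|_{\infty}$ is continuous, nondecreasing in $T$, with $M_0=\|u^0\|_{\infty}$. The estimates above give, for every $T$, $M_T\le\|u^0\|_{\infty}+\varepsilon_N\,M_T(1+M_T)$ with $\varepsilon_N:=C(\nu)N^{-1/2}(\log N)\|u^0\|_2$; since $\|u^0\|_{\infty},\|u^0\|_2\le C(\log N)\|f\|_{\infty}$, one has $\varepsilon_N(1+2\|u^0\|_{\infty})<\tfrac12$ once $N\ge N_\ast(\nu,\|f\|_{\infty})$, and a continuity argument forces $M_T\le2\|u^0\|_{\infty}$ for all $T$, i.e.\ $M\le2\|u^0\|_{\infty}$. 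Feeding this back, $\sup_{t\ge0}\|u(t)\|_{\infty}\le\|u^0\|_{\infty}+C(\nu)N^{-1/2}(\log N)^3\|f\|_{\infty}^3\le\|u^0\|_{\infty}+\alpha N^{-1/4}$ for $N\ge N_\ast$; the finitely many smaller $N$ are absorbed into $\alpha$ via the crude bound $\|u(t)\|_{\infty}\le CN^{1/2}\|u^0\|_2$, giving the claim with $c=1/4$.

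\medskip
\noindent I expect the main obstacle to be the power counting in the second paragraph: controlling $\|g\|_{\infty}$ costs $N^{1/2}$ (Bernstein into $L^{\infty}$) and $N$ (the derivative in $\partial_x(u^2)$), while parabolic damping of the high modes only returns $\lambda_N^{-1}\sim N^{-2}$; these balance to a genuine gain $N^{-1/2}$ \emph{only} if the leftover $L^2\!\to\!L^{\infty}$ cost on $\Pi_{\le N/2}u$ is taken to be the Lebesgue constant $\sim\log N$ rather than a crude $N^{1/2}$, which is exactly why the bootstrap on $\|u\|_{L^{\infty}_{t,x}}$ is unavoidable. A secondary point is making the Dini--derivative maximum principle rigorous (e.g.\ an $\varepsilon$-barrier or an $L^p$, $p\to\infty$, argument) and keeping every energy constant independent of $N$.
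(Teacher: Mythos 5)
Your argument is correct, and it reaches the theorem by a genuinely different route for the key estimate than the paper does. Both proofs share the same skeleton: rewrite \eqref{20151DB1} as the true viscous Burgers equation forced by $F=\Pi_{>N}(uu_x)$, run the pointwise maximum principle, and reduce everything to showing $\int_0^\infty\|F(s)\|_\infty\,ds\lesssim N^{-c}$; both also start from the exponential $L^2$ decay of the mean-free part. Where you diverge is in how the factor $N^{-c}$ is produced. The paper works pointwise in time and in physical space: parabolic smoothing upgrades the $L^2$ decay to bounds on $\|v\|_\infty$, $\|\partial_x v\|_2$ and $\||\partial_x|^{1/8}\partial_x v\|_\infty$ with an integrable $t^{-\epsilon_1}$ singularity, so $uu_x$ lies in a H\"older class uniformly after the initial layer, and then the tail estimate $\|\Pi_{>N}h\|_\infty\lesssim N^{-\gamma}(\log N)\|h\|_{C^\gamma}$ (the paper's Lemma \ref{lmDec17:1}) gives the gain; no bootstrap on $\sup_t\|u\|_\infty$ is needed because the smoothing constants depend only on $\|f\|_\infty$ and $\nu$. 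You instead argue in frequency space and only after integrating in time: the orthogonality $\Pi_{>N}\bigl((\Pi_{\le N/2}u)^2\bigr)=0$ isolates the high modes, the spectral gap $\lambda_N\sim\nu^2N^2$ makes $\int_0^\infty\|\Pi_{>N/2}u\|_2\,dt\lesssim N^{-2}$ even though the integrand is $O(1)$ at $t=0$, and Bernstein converts this into the desired bound at the cost of $N^{3/2}\log N$ and of a self-referential factor $M=\sup_t\|u\|_\infty$, which you correctly close by a continuity bootstrap seeded with the crude bound $M\le CN^{1/2}\|u^0\|_2$. Your approach buys a cleaner, quantitative exponent ($c=1/2$ up to logarithms, versus the paper's unspecified small $c$ tied to the chosen H\"older index) and avoids the somewhat delicate fractional smoothing estimates; the paper's approach buys a pointwise-in-time bound on the forcing and avoids the bootstrap. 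The two points you flag as needing care (the rigorous Dini-derivative maximum principle and the $N$-independence of the energy constants) are exactly the standard ones, and the paper handles the former with the explicit barrier $a(t)=C_2N^{-c}\int_0^t s^{-\epsilon_1}e^{-c_1s\nu^2}\,ds$; your version needs no singular weight since your control of $\|g\|_\infty$ is already integrable at $t=0$ after time integration.
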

\begin{rem}
The dependence of $\alpha$ on $1/\nu$ is at most power-like for $0<\nu \lesssim 1$. 
Also if one work with $u^0=\tilde P_N f$, where $\tilde P_N$ is a nice Fourier projector
such as the Fejer's kernel which satisfies $\|\tilde P_N f \|_{\infty} \le \|f\|_{\infty}$ and
$\operatorname{supp}(\tilde P_N f ) \subset \{ k:\, -\frac N2 <k \le \frac N2\}$,  then
we obtain in this case 
\begin{align*}
\sup_{t\ge 0} \|u(t) \|_{\infty}
\le \|f\|_{\infty} + O(N^{-c}).
\end{align*}
Such pre-processing of initial data is quite easy to implement in practice.
\end{rem}

Consider the following time-discretized version of \eqref{20151DB1}.
\begin{align} \label{20151DB2}
\begin{cases}
\frac{u^{n+1}-u^n} 
{\tau}  + \Pi_N ( u^n \partial_x u^n) = \nu^2 
\partial_{xx} u^{n+1}, \quad (t,x) \in (0,\infty) \times \mathbb T; \\
u\Bigr|_{t=0} = u^0= \Pi_N f,
\end{cases}
\end{align}
where $f \in L^{\infty}(\mathbb T)$. 

\begin{thm}[Effective maximum principle, Burgers with forward Euler and spectral truncation]
Let $\nu>0$ and $u^n$ be the solution to \eqref{20151DB2}. 
We have for all $0<\tau<\tau_0=\tau_0(\nu, \|f\|_{\infty})$ and $N\ge 2$, 
\begin{align*}
\sup_{n\ge 0} \|u^n \|_{\infty}
\le \|u^0\|_{\infty} + \alpha_1 \cdot N^{-\frac 12} +\alpha_2 \tau^{0.49}, 
\end{align*}
where $\alpha_1, \alpha_2>0$ depend only on 
($\|f\|_{\infty}$, $\nu$). 
\end{thm}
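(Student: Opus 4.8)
The plan is to exploit the implicit--explicit structure of \eqref{20151DB2}: one time step is an honestly non-expansive linear averaging post-composed with an explicit Burgers step that is non-expansive in $L^\infty$ up to a quantifiable $O(\tau^2)$ defect, and the accumulated defects are then controlled by parabolic smoothing together with the exponential relaxation of the spectrally truncated flow to its conserved mean. Since $u^n\in S_N$ and $\Pi_N(u^n\partial_x u^n)=\tfrac12\partial_x\Pi_N((u^n)^2)$, one step of \eqref{20151DB2} reads
\begin{align*}
u^{n+1}= g_{\tau\nu^2}*\bigl(u^n-\tau\, u^n\partial_x u^n\bigr)+\tau\, g_{\tau\nu^2}*\Pi_{>N}(u^n\partial_x u^n),
\end{align*}
where $g_{\tau\nu^2}$ is the strictly positive, unit $L^1$-mass kernel of $(\op{Id}-\tau\nu^2\partial_{xx})^{-1}$ (the $g_\beta$ of the Remark after Theorem \ref{thm1.1_00} with $\beta=\tau\nu^2$). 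Hence $\|u^{n+1}\|_\infty\le \|u^n-\tau u^n\partial_x u^n\|_\infty+\tau\|\Pi_{>N}(u^n\partial_x u^n)\|_\infty$.

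For the explicit step, the elementary inequality $|\partial_x w(x)|^2\le 2\|\partial_{xx}w\|_\infty(\|w\|_\infty-w(x))$ (comparing a non-negative $C^2$ function with the downward tangent parabola at any maximizer), applied to $w=(u^n)^2$, together with a one-variable optimization gives
\begin{align*}
\bigl\|\,u^n-\tfrac\tau2\partial_x\bigl((u^n)^2\bigr)\,\bigr\|_\infty\le \|u^n\|_\infty+\tfrac14\tau^2\|u^n\|_\infty\,\bigl\|\partial_{xx}\bigl((u^n)^2\bigr)\bigr\|_\infty ,
\end{align*}
and the overshoot above $\|u^n\|_\infty$ is confined to a union of short intervals near the maximizers of $(u^n)^2$, so that convolution with $g_{\tau\nu^2}$, whose mass is spread over scale $\nu\sqrt\tau\gg N^{-1}$ once $0<\tau<\tau_0(\nu,\|f\|_\infty)$, provides an extra damping of this defect. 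For the truncation term, $\Pi_{>N}(u^n\partial_x u^n)=\tfrac12\Pi_{>N}\partial_x((u^n)^2)$ lives in the shell $N<|k|\le 2N$, whence $\|\Pi_{>N}(u^n\partial_x u^n)\|_\infty\lesssim_s N^{\frac12-s}\|u^n\|_{H^s}^2$ for every $s>\tfrac12$, while the crude Bernstein bound $\lesssim N^{\frac32}\|u^n\|_\infty^2$ is the one to use on the initial Fourier layer $n\tau\lesssim(\nu N)^{-2}$.

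Next I would establish global Sobolev control. Testing \eqref{20151DB2} against $u^{n+1}$ and absorbing the nonlinear contribution by the implicit viscous term (as in the Allen--Cahn sections, using $\tau<\tau_0$) gives the discrete energy decay $\sup_n\|u^n\|_{L^2}\le\|f\|_\infty$ and, by the discrete Poincar\'e inequality, $\|u^n-\bar u\|_{L^2}\lesssim e^{-c\nu^2 n\tau}\|f\|_\infty$, where $\bar u=\widehat f(0)$ is conserved and $|\bar u|\le\|u^0\|_\infty$. Bootstrapping with the smoothing action of $(\op{Id}-\tau\nu^2\partial_{xx})^{-1}$ yields $\|u^n\|_{H^s}\lesssim_{s,\nu}\min\{N^s,(n\tau)^{-s/2}\}\|f\|_\infty$ up to lower order terms, hence $\|u^n\|_{H^s}\le C(s,\nu,\|f\|_\infty)$ for $n\tau\ge1$. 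One then closes a bootstrap for $M_n:=\|u^n\|_\infty$ over three time scales: on $n\tau\lesssim(\nu N)^{-2}$ the Bernstein bounds give $M_{n+1}\le M_n+C\tau^2 N^2 M_n^3+C\tau N^{3/2}M_n^2$, whose sum over $O((\nu N)^{-2}/\tau)$ steps is $O(\tau+N^{-1/2})$ and keeps $M_n\le 2\|u^0\|_\infty$; on $(\nu N)^{-2}\lesssim n\tau\lesssim\nu^{-2}\log N$ one feeds the time-singular smoothing bounds into the refined defect estimate and the viscous damping, choosing the Sobolev exponents and the crossover so that the sums remain $O(N^{-1/2}+\tau^{0.49})$; and on $n\tau\gtrsim\nu^{-2}\log N$ the energy decay together with a Gagliardo--Nirenberg interpolation against the uniform $H^s$ bound gives $M_n\le|\bar u|+O(N^{-1/2})$. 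Combining the three ranges and recalling that $\|u^0\|_\infty$ appears on the right-hand side of the claim yields $\sup_n M_n\le\|u^0\|_\infty+\alpha_1 N^{-1/2}+\alpha_2\tau^{0.49}$.

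The main obstacle is the intermediate range $(\nu N)^{-2}\lesssim n\tau\lesssim\nu^{-2}\log N$: there the per-step overshoot $\tau^2\|\partial_{xx}((u^n)^2)\|_\infty$ and the truncation defect $\tau N^{\frac12-s}\|u^n\|_{H^s}^2$ must be summed over $\sim\tau^{-1}\log N$ steps, and the smoothing bounds $\|u^n\|_{H^s}\sim(n\tau)^{-s/2}$ are most singular precisely at the lower end of the window, where they only match the Bernstein bounds. Making the viscous length scale $\nu\sqrt\tau$, the truncation scale $N^{-1}$, and the overshoot-localization scale $\tau M_n$ fit together so that every partial sum stays bounded by $\alpha_1 N^{-1/2}+\alpha_2\tau^{0.49}$ \emph{uniformly in $N$} is the delicate point, and it is this optimization that degrades the natural exponent $\tfrac12$ to $0.49$. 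An alternative route is to compare $u^n$ with the continuous-in-time solution of \eqref{20151DB1} and transfer its already-proven time-uniform effective maximum principle; the obstacle then becomes a forward-Euler consistency estimate valid uniformly on the whole time axis, which again relies on the dissipative structure to prevent error accumulation.
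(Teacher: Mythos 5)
Your global architecture (discrete $L^2$ decay to the conserved mean, time-singular smoothing bounds with exponential damping, and a per-step $L^\infty$ estimate whose defects are summed over an initial layer, an intermediate window, and a late-time regime) is the same as the paper's proof for \eqref{20151DB2} (= \eqref{Dec2_2}), but your per-step estimate is genuinely different. The paper rewrites one step as $(\op{Id}-\tau\nu^2\partial_{xx})u^{n+1}-\tau u^n\partial_x u^{n+1}=u^n+\tau u^n\partial_x(u^{n+1}-u^n)+\tau(\op{Id}-\Pi_N)(u^n u^n_x)$ and invokes the maximum principle for the semi-implicit elliptic--transport operator on the left; its per-step defect is therefore $\tau\|u^n\partial_x(u^{n+1}-u^n)\|_\infty\lesssim\tau^2(n\tau)^{-3/2-\epsilon}$, which requires third-derivative smoothing bounds and sums to $\tau^{1/2-\epsilon}$ --- that is exactly where the exponent $0.49$ comes from. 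You instead keep the transport fully explicit, use positivity and unit mass of the resolvent kernel, and control the explicit Burgers substep by the Glaeser-type inequality $|w'|^2\le 2\|w''\|_\infty(\max w-w)$ applied to $w=(u^n)^2$ plus a one-variable optimization; this is correct, needs only second derivatives, and produces a defect $\tau^2(n\tau)^{-1-\epsilon}$ that sums to $\tau^{1-\epsilon}$, so your route would in fact improve the $\tau$-rate if carried through. One bookkeeping slip: for the shell term the Bernstein bound is $\|\Pi_{>N}\partial_x((u^n)^2)\|_\infty\lesssim N^{3/2-s}\|(u^n)^2\|_{H^s}$, not $N^{1/2-s}$; since $s$ is at your disposal this is harmless, but it changes the time singularity you must integrate, and the efficient choice (as in the paper) is to gain exactly $N^{-1/2}$ from half a derivative so that the extra factor $(n\tau)^{-1/4}$ keeps the time-sum convergent.

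The one step that fails as written is the very first one when $\tau$ is not tiny relative to $N$. Applying your per-step bound at $n=0$ requires $\|\partial_{xx}((u^0)^2)\|_\infty$, and for $u^0=\Pi_N f$ the only available control is Bernstein, giving a first-step defect of order $\tau^2N^2\|u^0\|_\infty^3+\tau N^{3/2}\|u^0\|_\infty^2$; for $\tau\sim N^{-1}$, say, this is $O(1)$ rather than $O(N^{-1/2}+\tau^{0.49})$. Your initial Fourier layer $n\tau\lesssim(\nu N)^{-2}$ does not rescue this, because for $\tau\gtrsim N^{-2}$ that layer contains at most one step, and that one step is precisely the problematic one. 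The cure is what the paper does for exactly this step: do not discard the resolvent's smoothing on the nonlinear term at $n=0$, i.e.\ bound $\tau\|(\op{Id}-\tau\nu^2\partial_{xx})^{-1}\partial_x\Pi_N((u^0)^2)\|_\infty\lesssim\tau^{1/2-\epsilon}\|u^0\|_\infty^2$ using the $L^1$ norm $\sim(\tau\nu^2)^{-1/2}$ of the differentiated kernel, and only switch to the Glaeser estimate from $n\ge1$ on, once the smoothing bounds $\|\partial_{xx}((u^n)^2)\|_\infty\lesssim(n\tau)^{-1-\epsilon}$ are in hand. With that patch your plan closes.
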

\begin{rem}
One should observe that the above $L^{\infty}$ bound has a very nice pattern:
\begin{align*}
\sup_{n\ge 0} \|u^n \|_{\infty} \le ``\text{$L^{\infty}$ of initial data}" + ``
\text{Spectral error}" + ``\text{Time discretization
error}".
\end{align*}
The decay rate in $N$ and $\tau$ is probably not optimal but we will not dwell on it here.
\end{rem}

We now consider the Fourier collocation method applied to the 1D Burgers. 
Let $U(t) \in \mathbb R^N$, $N\ge 2$ solve
\begin{align} \label{20151DB8}
\begin{cases}
\frac d {dt} U + \frac 12 \partial_h ( U^{\cdot 2} )=\Delta_h U,  \quad
t>0, \\
U\Bigr|_{t=0} = U^0 \in \mathbb R^N,
\end{cases}
\end{align}
where for convenience we have set the viscosity coefficient $\nu=1$, and the operators
$\partial_h$, $\Delta_h$ correspond to the Fourier multipliers
$2\pi i k$, $-4\pi^2 k^2$ for $-\frac N2<k\le \frac N2$ respectively in the DFT formula.  

\begin{thm}[Effective maximum principle, 1D Burgers with Fourier collocation and regular
data] 
Consider \eqref{20151DB8} with $N\ge 2$. Suppose $f$ is a real-valued function on $\mathbb T$
having $\hat f$ supported in $\{ |k| \le \frac N2 \}$ with
$\hat f (\frac N2) = \hat f( -\frac N2)$ (so that $f=Q_Nf$, where $Q_N$ is defined in
the beginning part of Section \ref{S:DFT1}).
Let $(U^0)_j= f(\frac j N)$ for all $0\le j\le N-1$.
Then for all $N\ge N_0=N_0(\|f\|_{2})$, we have
\begin{align*}
\sup_{t\ge 0} \|U(t) \|_{\infty}
 \le \|f \|_{\infty} + \gamma_1 \cdot N^{-\frac 12} 
\cdot (1+ \| \partial_x f \|_2^2),
\end{align*}
where $\gamma_1>0$ depends only on $\|f\|_{2}$.
\end{thm}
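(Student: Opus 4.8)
The plan is to estimate $\sup_{t\ge 0}\|U(t)\|_\infty$ by comparing the Fourier-collocation solution $U(t)$ with a reference ``ideal'' solution built from the continuous Burgers dynamics. First I would interpolate the node vector $U(t)$ to a trigonometric polynomial $u(t,x)$ of degree $\le N/2$ via $u(t,x)=\sum_{|k|\le N/2}'\widetilde U_k(t)e^{2\pi i kx}$ (with the usual halving convention at $k=\pm N/2$), so that $u(t,\tfrac jN)=U(t)_j$ and $\|U(t)\|_\infty\le\|u(t)\|_{L^\infty(\mathbb T)}$. Then I would write the equation satisfied by $u$: applying the interpolation operator to \eqref{20151DB8} produces $\partial_t u+\tfrac12\,\mathcal I_N\partial_x(u^2)=\partial_{xx}u$, where $\mathcal I_N$ is the collocation (pseudospectral) projector onto degree-$\le N/2$ polynomials. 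The aliasing discrepancy $\tfrac12(\mathcal I_N-\Pi_{N/2})\partial_x(u^2)$ is the genuinely new term compared to the Galerkin-truncated equation \eqref{20151DB1}; I would bound it in $L^\infty$ using the energy bound.

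The backbone of the argument is energy control. Since $f=Q_Nf$ is exactly representable and the collocation scheme conserves the discrete $\ell^2$ energy up to dissipation (the nonlinear term $\tfrac12\partial_h(U^{\cdot2})$ is, after the standard summation-by-parts / skew-symmetric check in the DFT variables, energy-neutral or controllably so), one gets $\|U(t)\|_{\ell^2}\lesssim\|f\|_2$ uniformly in $t$, and moreover a uniform-in-time $H^1$-type bound $\int_0^\infty\|\partial_h U(s)\|_{\ell^2}^2\,ds\lesssim\|f\|_2^2$ plus, from the parabolic smoothing of $\partial_{xx}$, a bound $\sup_{t\ge t_0}\|\partial_x u(t)\|_2\lesssim_{t_0}\|f\|_2$ for any $t_0>0$, and near $t=0$ the initial $\|\partial_x f\|_2$ controls things (this is the source of the $1+\|\partial_x f\|_2^2$ factor in the statement). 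These Sobolev bounds, combined with the trigonometric-polynomial Bernstein inequalities (degree $\le N/2$), let me estimate the aliasing error: $\|(\mathcal I_N-\Pi_{N/2})g\|_\infty\lesssim N^{-1/2}\|g\|_{H^1}$ type bounds for $g=\partial_x(u^2)$, using $\|u^2\|_{H^1}\lesssim\|u\|_{H^1}\|u\|_\infty$ and closing via the crude a priori bound $\|u\|_\infty\lesssim N^{1/4}\|u\|_2$.

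With the aliasing term isolated as a source of size $O(N^{-1/2}(1+\|\partial_x f\|_2^2))$ in a suitable norm, the remaining step is to run the effective-maximum-principle machinery already developed for the spectrally truncated continuous-in-time Burgers equation \eqref{20151DB1}, i.e. the proof of the theorem stated just above it. That argument works with $L^\infty$ directly, using heat-semigroup kernel positivity on the torus together with the sharp $L^1$ kernel bounds of Theorem \ref{thm1.1_00} / Theorem \ref{thm_1.2_00} (the operator $(\operatorname{Id}-\beta\partial_{xx})^{-1}\Pi_N$ and its continuous-time analogue $e^{t\partial_{xx}}\Pi_N$ have $L^1$ kernel norm $1+O(N^{-1}\log N)$-type), absorbing the nonlinear transport term into the maximum via the fact that $u u_x$ moves the level sets but does not raise the supremum for the untruncated flow, and feeding the projector-induced $O(N^{-1/2})$ overshoot plus the aliasing $O(N^{-1/2})$ overshoot into the Gr\"onwall-type recursion that the earlier proof sets up. Because the dissipation $-\nu^2(2\pi k)^2$ is strictly negative on all nonzero modes, the overshoot does not accumulate in $t$, which is exactly why the bound is uniform in time.

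The main obstacle I anticipate is the aliasing error in $L^\infty$: unlike the Galerkin case, $\mathcal I_N$ is not an $L^\infty$-contraction and its commutator with $\partial_x(\cdot)^2$ must be controlled using only the (time-uniform but $N$-independent) $L^2$/$H^1$ energy bounds, and one has to be careful that the Bernstein-inequality losses of powers of $N$ are more than compensated by the decay coming from the dissipation and from the smoothness of $u^2$; getting the clean exponent $N^{-1/2}$ rather than something worse is where the delicacy lies, and it is the reason the hypothesis $f=Q_Nf$ (so that there is no additional projection error in the initial data) is imposed.
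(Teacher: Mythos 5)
Your architecture is the same as the paper's: interpolate $U(t)$ to the trigonometric polynomial $u=Q_NU$, write $\partial_t u+u\partial_x u=\partial_{xx}u+\tfrac12\partial_x(\op{Id}-Q_N)(u^2)$, control the aliasing defect in $L^\infty$ by $N^{-1/2}$ times Sobolev norms obtained from energy plus parabolic smoothing, and conclude with a comparison/maximum-principle argument in which the defect enters as a time-integrable forcing. However, there are two places where your sketch would break as written, and they are precisely the places where the paper has to work. First, the collocation nonlinearity is \emph{not} energy-neutral: the paper's remark immediately preceding the theorem exhibits $u=\sin 2\pi mx$ with $m=N/3$ for which $\int_{\mathbb T}Q_N(u^2)\,\partial_x u\,dx\ne 0$, so the ``standard skew-symmetric check'' fails. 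The actual proof writes $v=u-\bar u$, bounds the defect $\tfrac12\langle(Q_N-\op{Id})(v^2+2\bar u v),\partial_x v\rangle$ by $(\alpha_0N^{-1}|\bar u|+\alpha_0N^{-1/2}\|v\|_2)\|\partial_x v\|_2^2$, and absorbs it into the dissipation $-2\|\partial_x v\|_2^2$; this absorption is exactly where the hypothesis $N\ge N_0(\|f\|_2)$ originates, and without it you get neither the exponential decay $\|v(t)\|_2\le\|v(0)\|_2e^{-2\pi^2t}$ nor a time-uniform conclusion.

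Second, closing the $L^\infty$ aliasing estimate via the Bernstein-type bound $\|u\|_\infty\lesssim N^{1/4}\|u\|_2$ injects a positive power of $N$ and would degrade the exponent from $N^{-1/2}$ to at best $N^{-1/4}$. The paper avoids any Bernstein loss by proving $N$-independent short-time smoothing estimates, culminating in $\|t^{1/4}\partial_x^2u\|_{L_t^\infty L_x^2}\le C(1+\|\partial_xf\|_2^2)$ (this is the sole source of the factor $1+\|\partial_xf\|_2^2$ in the statement), together with exponential decay of all higher norms for $t\gtrsim1$; the defect is then bounded by $\|\partial_x(\op{Id}-Q_N)(u^2)\|_\infty\lesssim N^{-1/2}\|\partial_x^2(u^2)\|_2\lesssim N^{-1/2}(1+\|\partial_xf\|_2^2)t^{-1/4}e^{-\beta t}$, whose time integral is $O(N^{-1/2})$. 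A final minor point: the comparison step does not need the discrete kernel positivity or the $L^1$ kernel bounds of Theorems \ref{thm1.1_00}--\ref{thm_1.2_00}; the classical maximum principle applied to $g=u-(\|u^0\|_\infty+\int_0^t\|F(s)\|_\infty ds)$ suffices, since $f=Q_Nf$ gives $\|u^0\|_\infty=\|f\|_\infty$ exactly.
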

\begin{rem}
We stress that the condition on the initial function $f$ can certainly be removed, see
Theorem \ref{thm7.4} for a much more general and technical result for rough
initial data. Our results give
a clear explanation and justification of almost sharp $L^{\infty}$ bounds observed in practical numerical simulations.  By using the machinery developed in this work, it is also possible
to give a comprehensive analysis of the fully discrete scheme such as forward Euler in time with
implicit treatment of the dissipation term, and Fourier collocation in space. All these will be addressed
elsewhere.
\end{rem}

Consider on the torus $\mathbb T^2 = [0,1)^2$ the two dimensional Navier-Stokes
system expressed in the vorticity form:
\begin{align} \label{2015NS1}
\begin{cases}
\partial_t \omega + \Pi_N( u \cdot \nabla \omega) = \Delta \omega, \\
\omega \Bigr|_{t=0} = \omega^0 = \Pi_N f,
\end{cases}
\end{align}
where $f\in L^{\infty}(\mathbb T^2)$ and has mean zero.
We shall work with $\omega$ having zero mean which is clearly preserved in time. The
velocity $u$ is connected to the vorticity $\omega$ through the Biot-Savart law:
$u=\nabla^{\perp} \Delta^{-1} \omega$.

\begin{thm}[Effective maximum principle for 2D Navier-Stokes with spectral Galerkin
truncation] 
Let  $\omega$ be the solution to \eqref{2015NS1}. 
We have for all $N\ge 2$, 
\begin{align*}
\sup_{t\ge 0} \|\omega(t) \|_{\infty}
\le \|\omega^0\|_{\infty} + \alpha \cdot N^{-\frac 12}, 
\end{align*}
where  $\alpha>0$ depends only on 
$\|f\|_{\infty}$. 
\end{thm}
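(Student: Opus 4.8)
The plan is to run a Duhamel-based fixed-point / a priori estimate in $L^\infty$ for the vorticity, exploiting the smoothing of the heat semigroup $e^{t\Delta}$ on $\mathbb T^2$ together with the frequency truncation $\Pi_N$. First I would write the mild formulation
\begin{align*}
\omega(t) = e^{t\Delta}\omega^0 - \int_0^t e^{(t-s)\Delta}\,\Pi_N\bigl(u(s)\cdot\nabla\omega(s)\bigr)\,ds,
\end{align*}
and observe that since $\nabla\cdot u=0$ the nonlinear term is $\Pi_N\nabla\cdot(u\omega)$, so each application of $e^{(t-s)\Delta}\Pi_N\nabla\cdot$ gains a factor $\sim (t-s)^{-1/2}$ in operator norm $L^\infty\to L^\infty$ (here the truncated heat kernel $e^{t\Delta}\Pi_N$ has an $L^1$ kernel bounded uniformly, up to the usual logarithm, by the analysis underlying Theorem \ref{thm1.1_00}; in fact on $\mathbb T^2$ one can simply bound $\|e^{t\Delta}\Pi_N g\|_\infty\le C\|g\|_\infty$ and $\|\nabla e^{t\Delta}\Pi_N g\|_\infty\le C t^{-1/2}\|g\|_\infty$ since the relevant kernel norms are controlled just as in Section 2). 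Since $\|e^{t\Delta}\omega^0\|_\infty\le \|\omega^0\|_\infty$ is \emph{not} literally true because of the Dirichlet-type kernel, I would instead invoke the effective bound: $\|e^{t\Delta}\Pi_N\omega^0\|_\infty\le (1+O(N^{-1}\log N))\|\omega^0\|_\infty$, but this is too lossy; the cleaner route is to keep $e^{t\Delta}\omega^0$ paired against $\omega^0=\Pi_N f$ and note $\|e^{t\Delta}\Pi_N f\|_\infty\le\|f\|_\infty$ fails, so one must split $\omega^0 = f - \Pi_{>N} f$ and use $\|e^{t\Delta} f\|_\infty\le\|f\|_\infty$ together with $\|e^{t\Delta}\Pi_{>N}f\|_\infty\le \|K_{>N,t}\|_{L^1}\|f\|_\infty \le C(1+tN^2)^{-1}\log N\,\|f\|_\infty\le C N^{-1/2}\|f\|_\infty$ uniformly in $t$ (this is exactly the sharp $L^1$ bound on $K_{>N,\beta}$ from Theorem \ref{thm1.1_00}, with $\beta=t$, optimized against $t$; for $tN^2\le N^{1/2}$ one uses the crude bound $\log N$, and for $t N^2 > N^{1/2}$ one gets $N^{-1/2}\log N$ — actually one wants a clean $N^{-1/2}$, so I would interpolate: $\|K_{>N,t}\|_{L^1}\lesssim \min\{\log N, (tN^2)^{-1}\log N\}$, hence $\lesssim N^{-1/2+\epsilon}$ only; to get the stated $N^{-1/2}$ one absorbs the $\log$ by a tiny loss and a slightly larger constant, or sharpens with the $d=2$ version).

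The second step is the energy/enstrophy control that feeds the nonlinear estimate. Testing the equation against $\omega$ and using $\Pi_N$ self-adjoint plus $\int (u\cdot\nabla\omega)\omega=0$ gives $\frac{d}{dt}\|\omega\|_2^2 = -2\|\nabla\omega\|_2^2$, so $\|\omega(t)\|_2\le\|\omega^0\|_2\le\|f\|_2\le\|f\|_\infty$ and $\int_0^\infty\|\nabla\omega\|_2^2\,dt\le\tfrac12\|f\|_\infty^2$. From $u=\nabla^\perp\Delta^{-1}\omega$ and Biot–Savart on $\mathbb T^2$ one has $\|u(t)\|_\infty\lesssim \|\omega(t)\|_2^{1/2}\|\nabla\omega(t)\|_2^{1/2} + \|\omega(t)\|_2$ (a standard 2D interpolation inequality for mean-zero fields), hence $\|u(t)\|_\infty$ is in $L^4_t$ with a norm controlled by $\|f\|_\infty$. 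Then in the Duhamel term,
\begin{align*}
\Bigl\|\int_0^t e^{(t-s)\Delta}\Pi_N\nabla\cdot(u\omega)\,ds\Bigr\|_\infty
\le C\int_0^t (t-s)^{-1/2}\|u(s)\|_\infty\|\omega(s)\|_\infty\,ds,
\end{align*}
and I would bootstrap on $A(t):=\sup_{0\le s\le t}\|\omega(s)\|_\infty$: by Hölder in $s$ ($(t-s)^{-1/2}\in L^{4/3}$ on $[0,t]$ for $t\le T$ fixed, and globally one splits the time axis or uses that $\|u\|_\infty\in L^4$), the nonlinear contribution is bounded by $C(\|f\|_\infty,\nu)\,t^{1/4}\cdot(\text{small})\cdot A(t)$ on short intervals — which is not small for large $t$, so one must instead run the estimate on unit time intervals and use the decay of the enstrophy: after time $1$, $\|\nabla\omega\|_2^2$ has integrated to something small, so $\|u\|_{L^4_t([n,n+1])}\to 0$, giving a genuinely contractive Duhamel term for $n$ large and a bounded one for $n$ small.

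The honest way to organize this, and the step I expect to be the main obstacle, is the \textbf{global-in-time} closure: on a single time interval the Duhamel term carries a factor $\|u\|_{L^4_t}$ which does not decay, so a naive Gronwall gives exponential growth in $t$, destroying the uniform-in-$t$ bound. The fix is to observe that $\|u\|_\infty$ decays: by the enstrophy dissipation $\int_0^\infty\|\nabla\omega\|_2^2<\infty$ and the 2D Navier–Stokes decay estimates, $\|\omega(t)\|_2$ and $\|\nabla\omega(t)\|_2$ decay (e.g. $\|\nabla\omega(t)\|_2^2\lesssim t^{-1}\|f\|_\infty^2$ or faster), so $\|u(t)\|_\infty\in L^1_t$ or at least $\int_0^\infty\|u(s)\|_\infty (t-s)^{-1/2}$-type integrals are uniformly bounded; then a single Gronwall-type argument closes with $\sup_t A(t)\le (\|f\|_\infty + CN^{-1/2})\exp(C\int_0^\infty\|u\|_\infty\,\text{kernel})$, and since that exponential is a fixed finite constant one rewrites it as $\|f\|_\infty + \alpha N^{-1/2}$ by absorbing — wait, that gives $(1+\epsilon)\|f\|_\infty$, not $\|f\|_\infty+\alpha N^{-1/2}$. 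To get the \emph{sharp} constant $1$ in front of $\|f\|_\infty$ one cannot afford any multiplicative loss, so the real argument must be more delicate: one shows that the nonlinear Duhamel term is itself $O(N^{-1/2})$ small. This is plausible because $u\cdot\nabla\omega$ has a derivative that, combined with the enstrophy bound and the regularization, produces negative powers of $N$ only if we also exploit that $\omega$ gains regularity instantly (parabolic smoothing: $\|\omega(t)\|_{H^1}\lesssim t^{-1/2}\|f\|_\infty$ for $t\le1$ and bounded after), so that $\|\Pi_{>N}(u\nabla\omega)\|$ and the high-frequency part of the semigroup action are $O(N^{-1/2})$, while the low-frequency part is handled by the $L^2\to L^\infty$ smoothing which converts the decaying enstrophy into a small sup-norm contribution. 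I would therefore split $\omega = e^{t\Delta}\omega^0 + w$, prove $\|w(t)\|_\infty\le C(\|f\|_\infty,\nu)N^{-1/2}$ uniformly in $t$ by a separate bootstrap on $\|w\|_\infty$ using $\|e^{t\Delta}\omega^0\|_\infty\le\|f\|_\infty + CN^{-1/2}$ as the linear input and the (now genuinely small, because the enstrophy is integrable and $N$ is large) nonlinear feedback — the smallness of the nonlinear term coming from: (i) the $(t-s)^{-1/2}$ smoothing, (ii) $\int_0^\infty\|\nabla\omega\|_2^2 ds\le\tfrac12\|f\|_\infty^2$, and (iii) an extra $N^{-1/2}$ harvested from the high-frequency tail $\Pi_{>N}$ in the definition of the scheme combined with the Bernstein/kernel estimates of Section 2. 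Closing the bootstrap for $w$ cleanly, with the correct uniform-in-$t$ and $O(N^{-1/2})$ bound and no loss on the leading $\|f\|_\infty$, is where the technical heart of the proof lies; I expect it to parallel closely the Burgers argument (Theorem for \eqref{20151DB1}) with the Biot–Savart law and 2D enstrophy estimates replacing the scalar maximum-principle structure.
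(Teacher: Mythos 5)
There is a genuine gap, and it sits exactly where you flagged the ``technical heart'': your final plan is to split $\omega = e^{t\Delta}\omega^0 + w$ and prove $\|w(t)\|_\infty = O(N^{-1/2})$ by making the entire nonlinear Duhamel contribution small. That cannot work. The nonlinearity $\Pi_N(u\cdot\nabla\omega)$ is $O(1)$ in any reasonable norm (only its high-frequency tail $(\op{Id}-\Pi_N)(u\cdot\nabla\omega)$ carries a factor $N^{-1/2}$), so the term $\int_0^t e^{(t-s)\Delta}\Pi_N\nabla\cdot(u\omega)\,ds$ is genuinely of size $O(\|f\|_\infty^2)$, not $O(N^{-1/2})$; consequently $w$ is $O(1)$ and the decomposition yields at best $(1+C)\|f\|_\infty$, the multiplicative loss you correctly identified as fatal. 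No amount of enstrophy decay or parabolic smoothing turns the low-frequency part of the convection term into a power of $N$.

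The missing idea is to abandon the mild formulation for the leading-order nonlinearity altogether and restore the \emph{untruncated} transport term: write the equation as $\partial_t\omega + u\cdot\nabla\omega = \Delta\omega + F$ with $F = (\op{Id}-\Pi_N)(u\cdot\nabla\omega)$, and apply the classical pointwise maximum principle for the drift--diffusion operator. At an interior spatial maximum the gradient of $\omega$ vanishes, so the $O(1)$ transport term contributes nothing to the growth of $\max_x\omega$; the only loss is $\int_0^\infty\|F(s)\|_\infty\,ds$. This is how the paper proceeds (mirroring its Burgers argument): the $L^2$ decay $\|\omega(t)\|_2\le\|f\|_2e^{-4\pi^2 t}$ plus parabolic smoothing give $\|u(t)\cdot\nabla\omega(t)\|_{C^{0.52}}\le C\,t^{-\epsilon_1}e^{-c_1 t}$, and the H\"older estimate $\|(\op{Id}-\Pi_N)g\|_\infty\lesssim N^{-\gamma}(\log N)^d\|g\|_{C^\gamma}$ (Lemma \ref{lmDec17:1}) converts this into $\|F(t)\|_\infty\le C\,N^{-1/2}t^{-\epsilon_1}e^{-c_1t}$, which is integrable in time and produces exactly the additive correction $\alpha N^{-1/2}$. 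One then compares $\omega$ with the barrier $\|\omega^0\|_\infty + a(t)$, $a(t)=\int_0^t\|F(s)\|_\infty ds$. Note also that your worry about $\|e^{t\Delta}\Pi_Nf\|_\infty\le\|f\|_\infty$ failing is moot here: the theorem is stated relative to $\|\omega^0\|_\infty$ with $\omega^0=\Pi_Nf$ as the initial datum of the barrier argument, so no positivity of the truncated heat kernel is required.
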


For the time-discretized case, consider
\begin{align} \label{2015NS2}
\begin{cases}
\frac{\omega^{n+1}-\omega^n} 
{\tau}  + \Pi_N ( u^n \cdot \nabla \omega^n) = 
\Delta \omega^{n+1}, \quad (t,x) \in (0,\infty) \times \mathbb T^2; \\
\omega\Bigr|_{t=0} = \omega^0= \Pi_N f,
\end{cases}
\end{align}
where $f \in L^{\infty}(\mathbb T^2)$ and has mean zero. 

\begin{thm}[Effective maximum principle for 2D Navier-Stokes, Forward Euler with spectral
truncation]
Let  $\omega^n$ be the solution to \eqref{2015NS2}. 
We have for all $0<\tau<\tau_0=\tau_0(\|f\|_{\infty})$ and $N\ge 2$, 
\begin{align*}
\sup_{n\ge 0} \|\omega^n \|_{\infty}
\le \|\omega^0\|_{\infty} + \alpha_1 \cdot N^{-\frac 12} +\alpha_2 \tau^{0.4}, 
\end{align*}
where $\alpha_1, \alpha_2>0$ depend only on  $\|f\|_{\infty}$. 
\end{thm}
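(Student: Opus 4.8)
The plan is to adapt the strategy already developed for the Allen-Cahn and 1D Burgers forward-Euler schemes to the vorticity formulation, exploiting that the nonlinearity $u^n\cdot\nabla\omega^n$ is, after the Biot-Savart law, a quadratic term with good smoothing properties. First I would recast \eqref{2015NS2} in the semigroup form
\begin{align*}
\omega^{n+1} = (\op{Id}-\tau\Delta)^{-1}\Pi_N\bigl(\omega^n - \tau\, u^n\cdot\nabla\omega^n\bigr),
\end{align*}
so that the linear operator $(\op{Id}-\tau\Delta)^{-1}\Pi_N = T_{N,\tau}$ enters exactly as in Theorem~\ref{thm_1.2_00}. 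For $d=2$ this yields $\|T_{N,\tau}g\|_\infty \le (1 + \frac{c}{1+\tau N^2}(\log(N+2))^2)\|g\|_\infty$, which is harmless provided $\tau N^2 \gtrsim (\log N)^2$; the complementary small-$\tau$ regime has to be treated separately, exactly as the remark before Proposition~\ref{2015prop2} warns, and this is where the bulk of the work lies. The second ingredient is an a priori energy/enstrophy bound: since $\Pi_N$ is self-adjoint and $\int (u\cdot\nabla\omega)\,\omega = 0$ on the Galerkin-truncated space, standard $L^2$ energy estimates for \eqref{2015NS2} give $\sup_n\|\omega^n\|_2 \le \|f\|_2 \le \|f\|_\infty$ (the mean-zero hypothesis and $\mathbb{T}^2$ Poincaré making everything clean), plus, after one step of parabolic smoothing, control of $\|\omega^n\|_{H^1}$ and hence $\|u^n\|_{W^{1,\infty}}$ in 2D by Sobolev embedding with a logarithmic loss.

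Next I would run the iteration on $\alpha_n := \|\omega^n\|_\infty$. Writing $g^n := \omega^n - \tau\,u^n\cdot\nabla\omega^n$, one has the crude bound $\|g^n\|_\infty \le \alpha_n + \tau\|u^n\|_\infty\|\nabla\omega^n\|_\infty$. Unlike the Allen-Cahn case there is no pointwise contraction from the nonlinearity, so I cannot close on $\alpha_n$ alone; instead the plan is to split the Duhamel-type representation of $\omega^n$ into a linear part $T_{N,\tau}^n \omega^0$ and a nonlinear remainder, and estimate the remainder in a norm strong enough to beat the $\tau\|\nabla\omega^n\|$ factor. Concretely, parabolic regularization over the accumulated time $n\tau$ (or a short initial layer) upgrades the $L^2$ enstrophy bound to an $L^\infty$-in-space bound on $\nabla\omega^n$ of size $O((n\tau)^{-1/2-\epsilon}\|f\|_\infty)$ for $n\tau$ not too small, and a direct $\ell^2$-in-time (discrete) summation of the $\tau^{1/2}$-weighted gradient bounds — the analogue of the $\tau^{0.4}$ bookkeeping in the Burgers theorem — produces the $\alpha_2\tau^{0.4}$ term. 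The $\alpha_1 N^{-1/2}$ term comes from the high-frequency tail: $\Pi_N$ removes frequencies $>N$, and the quadratic nonlinearity moved through $K_{>N,\tau}$ (bounded in $L^1$ by $\frac{c}{1+\tau N^2}(\log(N+2))^2$ from Theorem~\ref{thm_1.2_00}(1)), together with the $H^1$ enstrophy bound, loses at worst $N^{-1/2}$ after optimizing the logarithmic factors.

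The hard part will be the small-$\tau$ regime $0<\tau \ll N^{-2}$, where the linear operator $T_{N,\tau}$ is close to the bare projection $\Pi_N$ and gives essentially no damping, so a single-step argument cannot absorb the $(\log N)^2$ overshoot of the Dirichlet kernel. The remedy — mirroring the refined analysis promised in Section~3 for Allen-Cahn and used in the rough-data Burgers theorem — is to compare $\omega^n$ over a short window $0<n\tau \le T_0 = O(N^{-c})$ directly with the frequency-truncated heat flow $e^{n\tau\Delta}\Pi_N\omega^0$ of the initial datum, showing $\|\omega^n - e^{n\tau\Delta}\omega^0\|_\infty = O(N^{-c})$ by a discrete Gronwall estimate in which every nonlinear increment is $O(\tau)$ and there are at most $T_0/\tau$ of them, while $\|e^{n\tau\Delta}\omega^0\|_\infty$ is controlled not through the bad $\Pi_N$ bound but through the good modulus-of-continuity estimate for the (positive) periodic heat kernel acting on $\Pi_N f$ — one accepts a one-time $O(N^{-c})$ or $O(\omega_f(\cdot))$ loss there. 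Past the time $T_0$ one is back in the regime $\tau N^2 \gtrsim T_0 N^2 \gg (\log N)^2$ where the first argument applies, and stitching the two phases together yields the stated bound with, after choosing the various exponents, spectral error $N^{-1/2}$ and time-discretization error $\tau^{0.4}$. Checking that the exponents actually chain consistently through both phases, and verifying $\tau_0=\tau_0(\|f\|_\infty)$ suffices for the nonlinear estimates to close (no blow-up of $\omega^n$ before the smoothing kicks in), is the main bookkeeping obstacle.
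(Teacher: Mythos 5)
There is a genuine gap in how you handle the transport term, and it is precisely the step you flag as ``the bulk of the work''. Your plan is to bound the nonlinear contribution by summing Duhamel increments of size $\tau\|u^n\|_\infty\|\nabla\omega^n\|_\infty$, using parabolic smoothing to get $\|\nabla\omega^n\|_\infty=O((n\tau)^{-1/2-\epsilon})$. But $\tau\sum_{n}(n\tau)^{-1/2-\epsilon}e^{-c n\tau}$ converges to an $O(1)$ constant depending on $\|f\|_\infty$ — it is not small in $\tau$ or $N$. So this route yields $\sup_n\|\omega^n\|_\infty\le\|\omega^0\|_\infty+O(1)$, not the claimed $\|\omega^0\|_\infty+\alpha_1 N^{-1/2}+\alpha_2\tau^{0.4}$. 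The theorem asserts that the \emph{entire} first-order transport contribution costs nothing in $L^\infty$, and no norm bound on $u^n\cdot\nabla\omega^n$ alone can deliver that; you must exploit the pointwise structure of advection. The paper's device is to rewrite the step as
\begin{align*}
(\op{Id}-\tau\Delta)\omega^{n+1}-\tau\,u^n\cdot\nabla\omega^{n+1}
=\omega^n+\tau\,u^n\cdot\nabla(\omega^n-\omega^{n+1})+\tau(\op{Id}-\Pi_N)(u^n\cdot\nabla\omega^n),
\end{align*}
and to apply the maximum principle for the advection--diffusion operator on the left (at an interior maximum of $\omega^{n+1}$ the gradient vanishes and the Laplacian is nonpositive). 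Then only two error terms remain per step: the time-lag commutator $\tau u^n\cdot\nabla(\omega^n-\omega^{n+1})=O(\tau^2(n\tau)^{-1.6})$, which telescopes to $O(\tau^{0.4})$, and the spectral tail $\tau(\op{Id}-\Pi_N)(u^n\cdot\nabla\omega^n)=O(\tau N^{-1/2}(n\tau)^{-0.9}e^{-\beta n\tau})$ (via a $C^\gamma$ bound and Lemma~\ref{lmDec17:1}), which sums to $O(N^{-1/2})$.

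A secondary point: your concern about the $(\log N)^2$ overshoot of $T_{N,\tau}$ in the regime $\tau\ll N^{-2}$, and the proposed two-phase short-time heat-flow comparison, are unnecessary here. Since $\omega^n-\tau\Pi_N(u^n\cdot\nabla\omega^n)$ is already spectrally localized to $\{|k|_\infty\le N\}$, applying $T_{N,\tau}$ to it coincides with applying the untruncated resolvent $(\op{Id}-\tau\Delta)^{-1}$, whose kernel is positive for every $\tau>0$ and $N$; the Dirichlet-kernel pathology never enters. The difficulty in this theorem is not the linear propagator but the absorption of the transport term, which your proposal does not resolve.
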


\subsection*{Notation and preliminaries}

For any $k=(k_1,\cdots,k_d) \in \mathbb R^d$, we denote
\begin{align*}
&|k| = |k|_2= \sqrt{k_1^2+\cdots+k_d^2}, \quad
|k|_{\infty} =\max_{1\le j\le d} |k_j|; \\
&\langle k \rangle  = (1+|k|^2)^{\frac 12}.
\end{align*}

For any two positive quantities $X$ and $Y$, we shall write $X\lesssim Y$ or $Y\gtrsim X$ if
$X \le  CY$ for some  constant $C>0$ whose precise value is unimportant. 
We shall write $X\sim Y$ if both $X\lesssim Y$ and $Y\lesssim X$ hold.
We write $X\lesssim_{\alpha_1,\cdots, \alpha_k}Y$ if the
constant $C$ depends on some parameters $\alpha_1,\cdots, \alpha_k$. We shall 
write $X=O(Y)$ if $|X| \lesssim Y$ and $X=O_{\alpha_1,\cdots,\alpha_k}(Y)$ if $|X| \lesssim_{\alpha_1,\cdots, \alpha_k} Y$. 

We shall denote $X\ll Y$ if
$X \le c Y$ for some sufficiently small constant $c$. The smallness of the constant $c$ is
usually clear from the context. The notation $X\gg Y$ is similarly defined. Note that
our use of $\ll$ and $\gg$ here is \emph{different} from the usual Vinogradov notation
in number theory or asymptotic analysis.

We adopt the following convention for Fourier transforms. Denote for
$f:\; \mathbb R^d \to \mathbb C$,  $g:\mathbb R^d \to \mathbb C$, 
\begin{align*}
(\mathcal F f)(\xi) = \int_{\mathbb R^d} f(x) e^{-2\pi i x \cdot \xi} dx, \\
(\mathcal F^{-1} g)(x) = \int_{\mathbb R^d}  g(\xi ) e^{ 2\pi i \xi \cdot x } d\xi.
\end{align*}

For any function $f$ defined on the periodic torus $\mathbb T^d = \mathbb R^d/\mathbb Z^d$
which we identify as $[0,1)^d$, denote
\begin{align*}
\widehat f (k) = \int_{[0,1)^d} f(x) e^{-2\pi i k \cdot x} dx.
\end{align*}
Occasionally we identify $\mathbb T^d=[-\frac 12, \frac 12)^d$ in order to
isolate the singularity near $x=0$. 

For $0<\gamma<1$ and $f:\mathbb T^d \to \mathbb R$, we define 
\begin{align*}
&\| f \|_{\dot C^{\gamma}(\mathbb T^d) } = \sup_{x \ne y} \frac {|f(x) - f(y) |} {
|x-y|^{\gamma} }, \\
& \| f \|_{C^{\gamma}(\mathbb T^d)} = \| f \|_{L^{\infty}(\mathbb T^d)}
+ \| f \|_{\dot C^{\gamma}(\mathbb T^d) }.
\end{align*}

Recall the Dirichlet kernel and unscaled Fejer kernel:
\begin{align*}
&D_j(x) =\frac{\sin(2j+1)\pi x} {\sin \pi x} = \sum_{|l| \le j} e^{2\pi i l x}; \quad j\ge 0; \\
&\tilde F_k =(\frac {\sin (k+1) \pi x } {\sin \pi x} )^2= \sum_{j=0}^k D_j (x), \quad
k\ge 0; \\
\end{align*}
We also define $\tilde F_{-1}=0$, $\tilde F_{-2}=1$ and note that
\begin{align*}
2 \cos 2 \pi k x = \tilde F_k -2\tilde F_{k-1} +\tilde F_{k-2}, \qquad\forall\, k\ge 0.
\end{align*}

Now we recall the usual Poisson summation formula which will be used sometimes without
explicit mentioning.
\begin{lem}[Poisson summation] 
Let $\delta$ be the usual Dirac distribution. Then 
\begin{align*}
& \ \sum_{k \in \mathbb Z^d} e^{-2\pi i k \cdot x}=\sum_{n \in \mathbb Z^d} \delta(x-n).
\end{align*}
For $f\in L^1(\mathbb R^d)$ with $|f(x)|+|(\mathcal F f)(x)| \lesssim (1+|x|)^{-d-\epsilon}$ for some $\epsilon>0$,
we have
\begin{align*}
& \sum_{k \in \mathbb Z^d} (\mathcal F f)(k)=\sum_{n \in \mathbb Z^d} f(n), \\
&   \sum_{k \in \mathbb Z^d}  (\mathcal Ff)(k) e^{2\pi ik\cdot y}
=\sum_{n \in \mathbb Z^d} f(y+n), \quad\forall\, y \in \mathbb R^d.
\end{align*}
\end{lem}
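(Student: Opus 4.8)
The plan is to prove the two pointwise identities first by the classical periodization argument, and then to obtain the Dirac comb identity by duality against test functions.

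First I would set $F(y)=\sum_{n\in\mathbb Z^d} f(y+n)$. The hypothesis $|f(x)|\lesssim (1+|x|)^{-d-\epsilon}$ makes this series converge absolutely and uniformly on compact sets, so $F$ is a well-defined continuous $\mathbb Z^d$-periodic function, i.e. $F\in C(\mathbb T^d)$. Next I would compute its Fourier coefficients: interchanging sum and integral (legitimate by the absolute convergence just noted) and substituting $z=y+n$, using $e^{-2\pi i k\cdot n}=1$,
\begin{align*}
\widehat F(k)&=\int_{[0,1)^d}\Bigl(\sum_{n}f(y+n)\Bigr)e^{-2\pi i k\cdot y}\,dy \\
&=\sum_n\int_{n+[0,1)^d}f(z)e^{-2\pi i k\cdot z}\,dz
=(\mathcal F f)(k).
\end{align*}
Now the second part of the hypothesis, $|\mathcal F f(x)|\lesssim (1+|x|)^{-d-\epsilon}$, gives $\sum_k|\widehat F(k)|=\sum_k|(\mathcal F f)(k)|<\infty$, so the trigonometric series $\sum_k\widehat F(k)e^{2\pi i k\cdot y}$ converges uniformly to a continuous function with the same Fourier coefficients as $F$; since continuous functions on $\mathbb T^d$ are determined by their Fourier coefficients, this sum equals $F$. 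This is precisely $\sum_n f(y+n)=\sum_k(\mathcal F f)(k)e^{2\pi i k\cdot y}$ for all $y\in\mathbb R^d$, and putting $y=0$ yields $\sum_n f(n)=\sum_k(\mathcal F f)(k)$.

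Finally, for the distributional identity I would pair both sides against an arbitrary $\phi\in C_c^\infty(\mathbb R^d)$ (which trivially satisfies the decay hypotheses, with $\mathcal F\phi$ Schwartz, so the previous step applies to it). On the right, $\langle\sum_n\delta(\cdot-n),\phi\rangle=\sum_n\phi(n)$. On the left, interchanging the now locally finite sum with the integral, $\langle\sum_k e^{-2\pi i k\cdot x},\phi\rangle=\sum_k\int\phi(x)e^{-2\pi i k\cdot x}\,dx=\sum_k(\mathcal F\phi)(k)$, which equals $\sum_n\phi(n)$ by the pointwise formula already established (applied to $\phi$ at $y=0$). Since the two pairings agree for every test function, the two distributions coincide. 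If one prefers the identity in $\mathcal S'(\mathbb R^d)$ rather than $\mathcal D'(\mathbb R^d)$, one tests against Schwartz $\phi$ and uses the full decay hypotheses to periodize $\phi$; the argument is otherwise identical.

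None of this is deep: the only points needing genuine care are the Fubini-type interchanges (all justified by the polynomial decay of $f$ and $\mathcal F f$) and the one classical input that an absolutely summable Fourier series recovers the continuous periodic function it was built from — equivalently, uniqueness of Fourier coefficients on $C(\mathbb T^d)$, which follows from Fej\'er/Ces\`aro summation or density of trigonometric polynomials. Accordingly I expect the ``main obstacle,'' such as it is, to be purely a matter of bookkeeping the interchange of summation and integration cleanly in the $d$-dimensional setting.
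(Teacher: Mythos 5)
Your proof is correct. The paper states this lemma as a recalled classical fact and gives no proof of its own, so there is nothing to compare against; your periodization argument (define $F(y)=\sum_n f(y+n)$, compute $\widehat F(k)=(\mathcal F f)(k)$, use absolute summability of the coefficients plus uniqueness of Fourier coefficients for continuous functions, then dualize to get the Dirac comb) is the standard and complete route. The only cosmetic slip is calling $\sum_k e^{-2\pi i k\cdot x}$ a ``locally finite sum'' when pairing with $\phi$: the sum is not locally finite, and the interchange is properly justified by noting that the partial sums converge in $\mathcal D'$ (or $\mathcal S'$) precisely because the pairings $\sum_k(\mathcal F\phi)(k)$ converge absolutely for Schwartz $\phi$ --- which is what you in fact use.
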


\begin{rem*}
This is just saying that under suitable decay assumptions the natural periodization of
the original function inherits its Fourier coefficients. 
An immediate useful estimate is: if $|f(x)|+|(\mathcal F f)(x)| \lesssim (1+|x|)^{-d-\epsilon}$ for some $\epsilon>0$, then
\begin{align*}
\|   \sum_{k \in \mathbb Z^d}  (\mathcal Ff)(k) e^{2\pi ik\cdot y}
\|_{L^1_y(\mathbb T^d)} \le \| f \|_{L^1(\mathbb R^d)},
\end{align*}
where equality holds when $f$ has a definite sign.
\end{rem*}
\begin{rem*}
Another useful corollary is as follows. Suppose $f \in L^1(\mathbb T^d)$ and 
has absolutely converging Fourier series expansion with Fourier coefficients
$\hat f(k) $. Then
\begin{align*}
\| \sum_{k} \hat f(k) (\mathcal F g)(k) e^{2\pi i k\cdot x} \|_{L^1_x(\mathbb T^d)}
\le \|f \|_{L_x^1(\mathbb T^d)} \| g \|_{L_x^1(\mathbb R^d)},
\end{align*}
where $g$ and $\mathcal Fg$ is assumed to have sufficient decay.
\end{rem*}

\begin{rem*}
Observe that for $w \in \mathbb R$, one has
\begin{align*}
e^{-|w|} = \frac 1 {\pi} 
\int_{\mathbb R} \frac 1 {1+y^2} e^{i w \cdot y} dy
= \int_0^{\infty}
\frac 1 {\sqrt{\pi s}} e^{-s} e^{-\frac {|w|^2}  {4s} } ds. 
\end{align*}
Then for $x \in \mathbb R^d$,  $\beta\ge 0$,
\begin{align*}
e^{-\beta |x|} = \int_0^{\infty}
\frac 1 {\sqrt{\pi s}} e^{-s} e^{-\beta^2\frac {|x|^2}  {4s} } ds. 
\end{align*}
Thus
\begin{align*}
\int_{\mathbb R^d} e^{-2\pi \beta |x|} e^{-2\pi i\xi \cdot x } dx
=& \pi^{-\frac {d+1}2} \cdot \beta^{-d}\cdot (1+\frac{|\xi|^2} {\beta^2})^{-\frac{d+1}2}
\Gamma(\frac{d+1}2),
\end{align*}
where $\Gamma$ is the usual Gamma function. It follows that
for $\beta>0$, $y \in \mathbb R^d$, 
\begin{align*}
\sum_{k \in \mathbb Z^d}
(1+\frac{|k|^2} {\beta^2})^{-\frac{d+1}2}  e^{2\pi i k\cdot y}
=\pi^{\frac {d+1}2} \cdot \beta^{d}\cdot \frac 1 {
\Gamma(\frac{d+1}2)} \sum_{n \in \mathbb Z^d} e^{-2\pi \beta|y+n|}.
\end{align*}
Interestingly, for dimension $d=1$, one can take the limit $\beta\to 0$ and derive
\begin{align*}
\sum_{0\ne k \in \mathbb Z}  \frac 1 {k^2} e^{2\pi i k |y|} =
2\pi^2 (y^2-|y|+\frac 16), \quad \forall\, y \in [-1,1].
\end{align*}
Denote for $0\le y <1$,
\begin{align*}
F_2(y)= \sum_{0\ne k \in \mathbb Z}  \frac 1 {k^2} e^{2\pi i k y} =
2\pi^2 (y^2-y+\frac 16),
\end{align*}
and extend $F_2$ to the whole $\mathbb R$ via $F_2(y)=F_2(1+y)$.  It is easy
to check that $F_2$ coincides with $2\pi^2(y^2-|y|+\frac 16)$ in the domain $[-1,1]$ and
there is no inconsistency in the definition. 
One should recognize that the function $F_2(y)$ is the usual Bernoulli polynomial.
\end{rem*}

\section{Linear setting: 1D torus continuous case}
Consider the periodic 1D torus $\mathbb T= \mathbb R/\mathbb Z$ which can be identified
as $[0,1)$. For periodic function $f:\, \mathbb T \to \mathbb C$ and integer $N\ge 1$, 
recall
\begin{align*}
\Pi_N f = \sum_{|k|\le N} \hat f(k) e^{2\pi i k \cdot x}.
\end{align*}
Note that $\Pi_N f = D_N * f$, where $D_N$ is the usual Dirichlet kernel given by
\begin{align*}
D_N(x) = \sum_{|k|\le N} e^{2\pi i k \cdot x} = \frac {\sin(2N+1)\pi x} {\sin \pi x}.
\end{align*}

\begin{prop}
Let $N\ge 1$. Then
\begin{align*}
\| \Pi_N f \|_{L^{\infty}(\mathbb T)} = \| D_N* f \|_{L^{\infty}(\mathbb T)}
\lesssim \log (N+2) \| f\|_{L^{\infty}(\mathbb T)}.
\end{align*}
The bound is sharp in the following sense. For each $N\ge 1$, there exists
$f_N \in C^{\infty}(\mathbb T)$, such that
\begin{align*}
| (D_N* f_N)(0) | > c_1 \log (N+2) \cdot \| f_N\|_{L^{\infty}(\mathbb T)},
\end{align*}
where $c_1>0$ is an absolute constant.
\end{prop}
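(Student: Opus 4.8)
The proof splits into the upper bound (a statement about the Lebesgue constant $\|D_N\|_{L^1}$) and the sharpness (a matching lower bound on that constant realized by a smooth test function).

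For the upper bound, the plan is to show $\|D_N\|_{L^1(\mathbb T)} \lesssim \log(N+2)$ and then invoke Young's convolution inequality $\|D_N * f\|_{L^\infty} \le \|D_N\|_{L^1}\,\|f\|_{L^\infty}$. To bound the Lebesgue constant I would identify $\mathbb T = [-\tfrac12,\tfrac12)$, use the elementary estimate $|\sin\pi x| \ge 2|x|$ valid on this interval together with the trivial bound $|D_N(x)| \le 2N+1$, so that $|D_N(x)| \le \min\{\,2N+1,\ (2|x|)^{-1}\,\}$. Splitting $\int_{-1/2}^{1/2}$ at $|x| = (2N+1)^{-1}$ gives an $O(1)$ contribution from the central region and an $O(\log N)$ contribution from $\int_{1/(2N+1)}^{1/2}\frac{dx}{2x}$, which yields the claimed bound.

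For sharpness, the key point is the matching lower bound $\|D_N\|_{L^1(\mathbb T)} \gtrsim \log(N+2)$. I would obtain it by using $\sin\pi x \le \pi x$ on $[0,\tfrac12]$ to write $\int_0^{1/2}|D_N(x)|\,dx \ge \frac1\pi\int_0^{1/2}\frac{|\sin((2N+1)\pi x)|}{x}\,dx$, then substituting $u=(2N+1)\pi x$ to reduce to $\frac1\pi\int_0^{(2N+1)\pi/2}\frac{|\sin u|}{u}\,du$, which is $\gtrsim \log N$ (sum the lower bounds $\int_{k\pi}^{(k+1)\pi}\frac{|\sin u|}{u}\,du \gtrsim \frac1{k+1}$ over $k$). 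Since $D_N$ is real, even, and continuous with only finitely many zeros on $\mathbb T$, the choice $g_N = \operatorname{sgn}(D_N)$ gives $(D_N * g_N)(0) = \int_{\mathbb T} D_N(y)\operatorname{sgn}(D_N(y))\,dy = \|D_N\|_{L^1(\mathbb T)}$. This $g_N$ is only a step function, so to land in $C^\infty(\mathbb T)$ I would mollify: set $f_N = g_N * \phi_{\varepsilon}$ for a fixed smooth nonnegative bump $\phi$ of total mass $1$ scaled to width $\varepsilon$; then $\|f_N\|_{L^\infty} \le 1$ automatically, and since $D_N * g_N = \Pi_N g_N$ is a continuous trigonometric polynomial, $(D_N * f_N)(0) = \bigl((D_N*g_N)*\phi_\varepsilon\bigr)(0) \to (D_N*g_N)(0)$ as $\varepsilon\to 0$. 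Choosing $\varepsilon = \varepsilon(N)$ small enough that the error is at most $\tfrac12\|D_N\|_{L^1}$ then gives $|(D_N*f_N)(0)| \ge \tfrac12\|D_N\|_{L^1(\mathbb T)} > c_1\log(N+2)\,\|f_N\|_{L^\infty}$.

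The only genuinely quantitative step is the $\log N$ lower bound for $\int_0^{(2N+1)\pi/2}\frac{|\sin u|}{u}\,du$; everything else is soft. The mollification step looks like it could be the main obstacle, but because we are allowed $f_N$ (hence $\varepsilon$) to depend on $N$ — the statement only asserts that for each $N$ \emph{some} $f_N$ exists — the continuity of the trigonometric polynomial $\Pi_N g_N$ makes it routine, and no uniform-in-$N$ control of the smoothing is needed. (Alternatively one could bypass mollification and take $f_N$ to be a suitable polynomial approximation of $\operatorname{sgn}(D_N)$, but smoothing by convolution is cleaner.)
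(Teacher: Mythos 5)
Your proposal is correct and follows essentially the same route as the paper: the upper bound is the standard Lebesgue-constant estimate $\|D_N\|_{L^1}\lesssim\log(N+2)$, and for sharpness the paper also takes $\operatorname{sgn}(D_N)$ (written out explicitly as the alternating sum of indicators of the intervals $[\tfrac{2j}{2N+1},\tfrac{2j+1}{2N+1})$) and mollifies, with the quantitative $\log N$ lower bound supplied by the interval-by-interval computation in Proposition \ref{prop_Omegapp} rather than your substitution $u=(2N+1)\pi x$ — a cosmetic difference only.
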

\begin{proof}
This is rather standard. For the lower bound define
\begin{align*}
\tilde f_N =
\begin{cases} \sum_{j=0}^N \chi_{[\frac{2j}{2N+1}, \frac{2j+1}{2N+1})}
- \sum_{j=0}^{N-1} \chi_{[\frac{2j+1}{2N+1}, \frac{2j+2}{2N+1} )}, \quad 0\le x<1,\\
\text{periodic extension.}
\end{cases}
\end{align*}
Mollifying $\tilde f_N$ suitably then yields the lower bound.
\end{proof}
\begin{prop} \label{prop_Omegapp}
Let  $N\ge 1$. Define 
\begin{align*}
\Omega_{N}^+=\{ x\in [0,1]:\;\; D_N(x)>0\} = \bigcup_{j=0}^N (\frac {2j}{2N+1},
\, \frac{2j+1}{2N+1} ), \\
\Omega_N^-=\{ x\in [0,1]:\;\; D_N(x)<0\}
=\bigcup_{j=0}^{N-1}(\frac{2j+1}{2N+1},\, \frac{2j+2}{2N+1}).
\end{align*}
Then $\operatorname{Leb}(\Omega_N^+) =\frac{N+1}{2N+1}$, $\operatorname{Leb}
(\Omega_N^-) =\frac{N}{2N+1}$, and
\begin{align*}
& \int_{\Omega_N^+} D_N(x) dx \ge \frac 1 {\pi^2} \log (2N+3);\\
&\int_{\Omega_N^-} (-D_N(x) ) dx \ge \frac 1 {\pi^2} \log (N+1).
\end{align*}

\end{prop}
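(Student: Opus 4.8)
The plan is to compute the two integrals by reducing them to a lower bound involving the harmonic-type sum $\sum 1/j$, which produces the logarithmic term. First I would record the elementary decomposition: on the torus identified with $[0,1]$,
\begin{align*}
\int_{\Omega_N^+} D_N(x)\,dx = \sum_{j=0}^N \int_{\frac{2j}{2N+1}}^{\frac{2j+1}{2N+1}} \frac{\sin(2N+1)\pi x}{\sin\pi x}\,dx,
\end{align*}
and similarly for $\Omega_N^-$. After the substitution $t=(2N+1)\pi x$, the $j$-th interval becomes $[2j\pi,(2j+1)\pi]$, on which $\sin t\ge 0$, and the integrand becomes $\frac{\sin t}{(2N+1)\sin(t/(2N+1))}$. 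The key elementary inequality I would use is $0<\sin\theta\le\theta$ for $\theta\in(0,\pi/2)$ (and more care near $\theta=\pi/2$ if needed, but here $t/(2N+1)\le \pi\cdot\frac{2N+1}{2N+1}<\pi$ so I would instead use $\sin\theta\le\theta$ on the relevant range), so that $(2N+1)\sin(t/(2N+1))\le t$ pointwise, hence the integrand dominates $\frac{\sin t}{t}$ on each $[2j\pi,(2j+1)\pi]$ where $\sin t\ge0$.

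Next I would estimate $\int_{2j\pi}^{(2j+1)\pi}\frac{\sin t}{t}\,dt$ from below: since $t\le (2j+1)\pi$ on this interval, this integral is at least $\frac{1}{(2j+1)\pi}\int_0^\pi\sin t\,dt = \frac{2}{(2j+1)\pi}$. Summing over $j=0,\dots,N$ gives
\begin{align*}
\int_{\Omega_N^+} D_N(x)\,dx \ge \frac{2}{\pi^2}\sum_{j=0}^N \frac{1}{2j+1} \ge \frac{2}{\pi^2}\cdot\frac12\log(2N+3) = \frac{1}{\pi^2}\log(2N+3),
\end{align*}
using $\sum_{j=0}^N\frac{1}{2j+1}\ge\frac12\int_0^{2N+1}\frac{ds}{s+1} = \frac12\log(2N+2)$, or a slightly sharper comparison to land on $\log(2N+3)$. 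The computation for $\Omega_N^-$ is identical: on the $j$-th subinterval $[\frac{2j+1}{2N+1},\frac{2j+2}{2N+1}]$ the substitution maps to $[(2j+1)\pi,(2j+2)\pi]$ where $\sin t\le 0$, so $-D_N(x)$ has the integrand $\frac{-\sin t}{(2N+1)\sin(t/(2N+1))}\ge \frac{|\sin t|}{t}$, and $\int_{(2j+1)\pi}^{(2j+2)\pi}\frac{|\sin t|}{t}\,dt\ge\frac{2}{(2j+2)\pi}$, so summing over $j=0,\dots,N-1$ yields $\frac{1}{\pi^2}\sum_{j=1}^{N}\frac1j\ge\frac{1}{\pi^2}\log(N+1)$.

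The Lebesgue measures are immediate: $\Omega_N^+$ is a union of $N+1$ intervals each of length $\frac{1}{2N+1}$, and $\Omega_N^-$ a union of $N$ such intervals, giving $\frac{N+1}{2N+1}$ and $\frac{N}{2N+1}$ respectively. The only genuinely delicate point is bookkeeping the constants so that the final bound reads exactly $\log(2N+3)$ and $\log(N+1)$ rather than a slightly weaker $c\log(N+2)$; I expect the main (minor) obstacle is pinning down the harmonic-sum comparison tightly enough—one can replace $\frac{2}{(2j+1)\pi}$ by the exact value $\int_{2j\pi}^{(2j+1)\pi}\frac{\sin t}{t}\,dt$ and use a Riemann-sum/monotonicity argument, or simply note $\sum_{j=0}^N\frac{1}{2j+1}>\frac12\log(2N+3)$ directly by comparison with $\int\frac{dx}{2x+1}$. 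Neither difficulty is conceptual; the entire argument is a one-page estimate once the substitution $t=(2N+1)\pi x$ and the inequality $(2N+1)\sin\frac{t}{2N+1}\le t$ are in place.
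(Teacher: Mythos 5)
Your argument is correct and is essentially the paper's own proof: both reduce each sign-definite interval of $D_N$ to the bound $\tfrac{2}{(2j+1)\pi^2}$ (resp.\ $\tfrac{1}{(j+1)\pi^2}$) via the inequality $\sin\theta\le\theta$ applied to the denominator, and then compare the resulting odd-harmonic (resp.\ harmonic) sum with $\int_0^{N+1}\frac{dx}{2x+1}=\tfrac12\log(2N+3)$ (resp.\ $\log(N+1)$); the paper merely shifts each interval to the origin instead of your global substitution $t=(2N+1)\pi x$. The only blemish is a dropped factor of $\pi$ in your intermediate description of the substituted integrand (the Jacobian is $\tfrac{1}{(2N+1)\pi}$, so the integrand dominates $\tfrac{\sin t}{\pi t}$, not $\tfrac{\sin t}{t}$), but your final displayed constants are the correct ones.
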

\begin{proof}
Clearly
\begin{align*}
 \sum_{j=0}^{N}
\int_{\frac{2j}{2N+1}}^{\frac{2j+1}{2N+1}}
\frac {\sin (2N+1)\pi x} {\sin \pi x} dx
&=\sum_{j=0}^N
\int_0^{\frac 1{2N+1} }
\frac{\sin(2N+1)\pi y} {\sin( \frac{2j\pi}{2N+1} + \pi y)} dy
\ge \sum_{j=0}^N \frac 1 {\frac{2j+1}{2N+1} \pi}
\int_0^{\frac 1 {2N+1}}
\sin( 2N+1)\pi y dy \notag \\
&\ge \frac  2 {\pi^2}  \sum_{j=0}^N 
\frac 1 {2j+1} \ge \frac 2{\pi^2} \int_0^{N+1} \frac 1 {2x+1} dx=\frac 1 {\pi^2}
\log(2N+3).
\end{align*}
Similarly
\begin{align*}
- \sum_{j=0}^{N-1}
\int_{\frac{2j+1}{2N+1}}^{\frac{2j+2}{2N+1}}
\frac {\sin (2N+1)\pi x} {\sin \pi x} dx
\ge \frac 1 {\pi^2} \sum_{j=0}^{N-1}
\frac 1 {j+1} \ge \frac 1 {\pi^2} \log(N+1).
\end{align*}

\end{proof}

Let $\nu>0$, $N\ge 1$, and consider
\begin{align*}
&K_{N}(x) = \sum_{|k|\le N} \frac 1 {1+4\pi^2 \nu^2 k^2} e^{2\pi i k\cdot x}, \quad
x \in \mathbb T;\\
&K_{>N}(x) = \sum_{|k|> N} \frac 1 {1+4\pi^2 \nu^2 k^2} e^{2\pi i k\cdot x}, \quad
x \in \mathbb T.\\
\end{align*}
Note that $\|K_{>N}\|_{1} \le \|K_{>N}\|_2= (\sum_{|k|>N}
\frac 1 {(1+4\pi^2 \nu^2 k^2)^2} )^{\frac 12} 
\lesssim \nu^{-2} \cdot N^{-\frac 32}.$ A better bound is available. See below.

\begin{prop} \label{prop_KN_lu_0001}
Let $\nu>0$. We have
\begin{align*}
\frac {c_1} {1+ (\pi N\nu)^2} \log (N+2)
\le \| K_{>N} \|_{L^1(\mathbb T)} \le \frac {c_2} {1+ (\pi N\nu)^2} \log (N+2), \quad
\forall\, N\ge 1,
\end{align*}
where $c_1>0$, $c_2>0$ are absolute constants.
\end{prop}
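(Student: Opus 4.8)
Throughout write $a:=1/(2\pi\nu)$, so the multiplier is $b_k:=(1+4\pi^2\nu^2k^2)^{-1}=a^2/(a^2+k^2)$, and set $G_\nu:=\sum_{k\in\mathbb Z}b_k e^{2\pi ikx}$. The starting point is the Poisson summation identity recorded in the preliminaries (the case $d=1$, exponent $-1$): $G_\nu$ is the $1$-periodization of the two‑sided exponential $\tfrac1{2\nu}e^{-|x|/\nu}$, so $G_\nu\ge0$, $\|G_\nu\|_{L^1(\mathbb T)}=\widehat{G_\nu}(0)=1$, and $G_\nu$ is smooth away from $0$ with a single corner there. Since $K_N=\Pi_NG_\nu=D_N*G_\nu$ and $K_{>N}=G_\nu-\Pi_NG_\nu=\Pi_{>N}G_\nu$, the upper bound splits into two easy pieces. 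For $\nu N\le1$ (where $1+(\pi N\nu)^2\sim1$) the crude estimate $\|K_{>N}\|_{L^1}\le\|G_\nu\|_{L^1}+\|D_N\|_{L^1}\|G_\nu\|_{L^1}\lesssim\log(N+2)$ suffices. For $\nu N\ge1$ one needs the factor $(\nu N)^{-2}$: writing $b_k=\nu^{-2}\tilde b_k$ with $\tilde b_k:=(\nu^{-2}+4\pi^2k^2)^{-1}\le(4\pi^2k^2)^{-1}$ and summing by parts against the partial sums $D_k-D_N$ of $\sum_{j>N}2\cos(2\pi jx)$ (legitimate since $\tilde b_k\downarrow0$ and $\|D_k-D_N\|_{L^1}\lesssim\log(k+2)$), one gets $K_{>N}=\nu^{-2}\sum_{k>N}(\tilde b_k-\tilde b_{k+1})(D_k-D_N)$, whence by the Lebesgue–constant bound $\|K_{>N}\|_{L^1}\lesssim\nu^{-2}\big(\tilde b_{N+1}\log(N+2)+\sum_{k>N}\tilde b_k/k\big)\lesssim\nu^{-2}N^{-2}\log(N+2)$, using $\tilde b_{N+1}\lesssim N^{-2}$ and $\sum_{k>N}\tilde b_k/k\lesssim\sum_{k>N}k^{-3}\lesssim N^{-2}$. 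Combining gives the upper bound $\|K_{>N}\|_{L^1}\lesssim\log(N+2)/(1+(\pi N\nu)^2)$.

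For the lower bound the plan is to extract from $K_{>N}$ one explicit term proportional to $D_N$, whose $L^1$ norm carries the logarithm by Proposition \ref{prop_Omegapp}, and to bound the rest by a logarithm‑free quantity. With $c_k:=(a^2+k^2)^{-1}$ (so $b_k=a^2c_k$, $b_{N+1}=a^2c_{N+1}$), two applications of summation by parts — first against the partial sums $D_k-D_N$, then against their partial sums $\tilde F_k-\tilde F_N-(k-N)D_N$ — yield, after telescoping, the identity
\[
K_{>N}=a^2\sum_{k\ge N+1}(c_k-2c_{k+1}+c_{k+2})\,\tilde F_k\;-\;a^2(c_{N+1}-c_{N+2})\,\tilde F_N\;-\;b_{N+1}D_N ,
\]
where $\tilde F_k$ is the Fej\'er kernel; the rearrangement is justified by absolute $L^1$‑convergence, since $|c_k-2c_{k+1}+c_{k+2}|\lesssim(a^2+k^2)^{-2}$ and $\|\tilde F_k\|_{L^1}=k+1$. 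Now $t\mapsto(a^2+t^2)^{-1}$ is positive, decreasing and convex on $\{t\ge a/\sqrt3\}$. When $\nu N\gtrsim1$ this set contains $\{k\ge N+1\}$, so every coefficient of $\tilde F_k$ is $\ge0$; using $\tilde F_k\ge0$ and telescoping the resulting nonnegative sums, the first two groups have $L^1$ norm $\lesssim a^2\big((c_{N+1}-c_{N+2})(N+1)+c_{N+1}\big)\lesssim b_{N+1}$, with no logarithm, while $\|b_{N+1}D_N\|_{L^1}\ge\tfrac2{\pi^2}b_{N+1}\log(N+1)$ by Proposition \ref{prop_Omegapp}. The reverse triangle inequality then gives $\|K_{>N}\|_{L^1}\ge b_{N+1}\big(\tfrac2{\pi^2}\log(N+1)-C\big)\gtrsim b_{N+1}\log(N+2)\sim\log(N+2)/(1+(\pi N\nu)^2)$ once $N$ exceeds an absolute constant. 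For $\nu N\lesssim1$ the same identity still works: only the indices $N<k<a/\sqrt3$ produce coefficients of the wrong sign, and their total contribution to $\|K_{>N}\|_{L^1}$ is $\le a^2\sum_{N<k<a/\sqrt3}|c_k-2c_{k+1}+c_{k+2}|(k+1)\lesssim a^2\cdot(a/\sqrt3)\cdot a^{-4}\cdot a=O(1)$, which is dominated by the main term $\gtrsim\log(N+2)$ (here $b_{N+1}\sim1$). The finitely many remaining small $N$ are handled by compactness: for each fixed $N$, $K_{>N}$ is a nonzero $C^\infty$ function and $\nu\mapsto(1+(\pi N\nu)^2)\|K_{>N}\|_{L^1}$ is continuous and strictly positive on $(0,\infty)$ with positive limits at $0$ and $\infty$, hence bounded below; shrinking $c_1$ accordingly finishes the proof.

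The substantive difficulty is the lower bound — precisely, the bookkeeping that keeps every term produced by the two summations by parts of strictly lower order than $b_{N+1}\log(N+2)$, uniformly in $\nu$ and in particular through the crossover $\nu N\sim1$ where the resolvent coefficients change convexity; it is the matching of the convexity threshold $k\gtrsim a/\sqrt3$ with the two parameter regimes that allows a single argument. The upper bound is by comparison entirely soft. (As an alternative for $\nu N\lesssim1$ one may test $K_{>N}$ against $\psi_N:=\operatorname{sgn}(D_N)$: since $\widehat{D_N}(l)=1_{|l|\le N}$ forces $(\Pi_N\psi_N)(0)=\int_{\mathbb T}D_N\psi_N=\|D_N\|_{L^1}$ while $\psi_N(0)=1$, one has $(\Pi_{>N}\psi_N)(0)=1-\|D_N\|_{L^1}$; approximating $\langle G_\nu,\Pi_{>N}\psi_N\rangle$ by this value with error $\lesssim\nu N\log N$ — via Bernstein's inequality for $(\Pi_N\psi_N)'$, the vanishing of $\psi_N-1$ near $0$, and $\int\tfrac1{2\nu}e^{-|x|/\nu}|x|\,dx=\nu$ — gives $\|K_{>N}\|_{L^1}\ge|\langle G_\nu,\Pi_{>N}\psi_N\rangle|\gtrsim\log N$.)
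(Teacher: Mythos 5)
Your proof is correct, but it takes a genuinely different route from the one the paper gives for this proposition. The paper's own proof of Proposition \ref{prop_KN_lu_0001} is a transference argument: it splits $K_{>N}$ with smooth Fourier cutoffs into a piece supported where $|k|\sim N$ and a tail, bounds each by the $L^1(\mathbb R)$ norm of an explicit inverse Fourier transform times the Lebesgue constant, obtains the lower bound by inverting the multiplier $(1+4\pi^2\beta|\xi|^2)/(1+4\pi^2\beta N^2)$ on the annulus to transfer the $\log N$ lower bound for $-D_N+(\text{smooth bump})$ back to the annular piece, and settles $N\sim1$ by interpolating $\|K_{>N}\|_2\lesssim\|K_{>N}\|_1^{1/2}\|K_{>N}\|_\infty^{1/2}$. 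What you do instead --- double Abel summation against Fej\'er kernels together with the convexity of $k\mapsto(a^2+k^2)^{-1}$ past the threshold $k\sim a/\sqrt3$ --- is exactly the paper's ``second proof'', carried out in Propositions \ref{prop_upperlower} and \ref{prop_upperlower1}; your identity for $K_{>N}$ coincides with the one there after regrouping the boundary terms, and your handling of the crossover (absorbing the $O(1)$ total contribution of the finitely many non-convex indices $N<k<a/\sqrt3$, where $b_{N+1}\sim1$) removes the restriction $N\ge e^{3\pi^2}$ imposed there. The trade-off is that the cutoff/transference proof ignores the sign structure of the coefficients and is what the paper iterates in dimension $d\ge2$ (Theorem \ref{thm2.17_00}), while your convexity argument is more elementary, gives pointwise sign information on the summands, and extends to general convex coefficient sequences as in the paper's Subsection 2.1. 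Two minor points: for bounded $N$ the single inequality $\|K_{>N}\|_{L^1}\ge|\widehat{K_{>N}}(N+1)|=b_{N+1}$ already suffices (the logarithm is then a constant), so the continuity/limit discussion is superfluous; and your parenthetical duality argument with $\operatorname{sgn}(D_N)$ only closes when $\nu N\ll1$, since its error $\nu N\log N$ is comparable to the main term at the boundary $\nu N\sim1$ --- this does not affect the main argument, which covers that regime.
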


\begin{proof}
We first show the upper bound.

Choose $\phi \in C_c^{\infty}(\mathbb R)$ such that $\phi (x) =1$ for $|x| \le 1$ 
and $\phi(x)=0$ for $|x| \ge 2$. Choose $\phi_1 \in C_c^{\infty}(\mathbb R)$
such that $\phi_1(x)=1$ for $1\le |x| \le 2$ and $\phi_1(x)=0$ for $
x\le 0.9$ or $x\ge 2.1$.  Then
\begin{align*}
K_{>N}(x)& = \sum_{|k|>N} \frac 1 {1+4\pi^2 \nu^2 k^2}
\phi(\frac k N) e^{2\pi i k\cdot x} + \sum_{|k|>N}
\frac 1 {  1+ 4\pi^2 \nu^2 k^2} (1- \phi(\frac k N) ) e^{2\pi i k\cdot x}\\
& = \sum_{N<|k|\le 2N} \frac 1 {1+4\pi^2 \nu^2 k^2}
\phi(\frac k N) e^{2\pi i k\cdot x} + \sum_{k \in \mathbb Z}
\frac 1 {  1+ 4\pi^2 \nu^2 k^2} (1- \phi(\frac k N) ) e^{2\pi i k\cdot x} \\
& = \sum_{N<|k|\le 2N} \frac 1 {1+4\pi^2 \nu^2 k^2}
\phi(\frac k N) \phi_1(\frac kN) e^{2\pi i k\cdot x} + \sum_{k \in \mathbb Z}
\frac 1 {  1+ 4\pi^2 \nu^2 k^2} (1- \phi(\frac k N) ) e^{2\pi i k\cdot x} \notag \\
&=: A(x) + B(x).
\end{align*}
For the first piece, denote 
\begin{align*}
K_1(x) = \mathcal F^{-1} ( \frac 1 { 1+(2\pi \nu \xi)^2} \phi(\frac {\xi} N)
\phi_1( \frac {\xi} N) )
\end{align*}
and note that (this is where the localization $\phi_1$ is needed)
\begin{align*}
\| K_1 \|_{L^1(\mathbb R)} \lesssim \frac 1 { 1+ (\pi \nu N)^2}.
\end{align*}
Observe 
\begin{align*}
A(x) &= \int_{\mathbb R} ( \frac 1 {1+(2\pi \nu \xi)^2} 
\phi(\frac {\xi } N) \phi_1( \frac {\xi} N) ) 
\cdot ( \sum_{N<|k|\le 2N} \delta (\xi - k) ) e^{2\pi i \xi \cdot x} d\xi \notag \\
& = \sum_{n \in \mathbb Z} \int_0^1 K_1(x-y-n) \tilde D_N (y) dy,
\end{align*}
where 
\begin{align*}
\tilde D_N (y) = \sum_{N<|k|\le 2N} e^{2\pi i k\cdot y}.
\end{align*}
Thus
\begin{align*}
\| A\|_{L^1(\mathbb T)} \lesssim \| K_1 \|_{L^1(\mathbb R)} \| \tilde D_N \|_{L^1(\mathbb T)}
\lesssim \frac 1 {1+ (\pi \nu N)^2} \log N.
\end{align*}

For the second piece we discuss two cases. If $N \le \frac 1 {\pi \nu}$, then we simply have
\begin{align*}
\| B\|_{L^1(\mathbb T)} \lesssim 1.
\end{align*}
If $N> \frac 1 { \pi \nu}$, then one can make use of the identity
\begin{align*}
 & \frac 1 {1+4\pi^2 \nu^2 \xi^2} \cdot (1-\phi(\frac {\xi} N)) \notag \\
 =&\; \frac 1 {4\pi^2 \nu^2 N^2}
 \cdot ( 1- \frac 1 { 1+ 4\pi^2 \nu^2 \xi^2} ) \cdot \frac { 1- \phi( \frac {\xi} N)}
 {( \frac {\xi} N)^2}.
 \end{align*}
 Thus in this case
 \begin{align*}
 \| B \|_{L^1(\mathbb T)} \lesssim \frac 1 { \pi^2 \nu^2 N^2} \lesssim
 \frac 1 {1+ (\pi \nu N)^2}.
 \end{align*}

Now we turn to the lower bound.  We first consider the situation that $N$ is sufficiently large (larger
than an absolute constant). This amounts to showing
\begin{align*}
\| A \|_{L^1(\mathbb T)} \gtrsim \frac 1 { 1+ (\pi \nu N)^2} \log N.
\end{align*}
Denote (below $\phi$ and $\phi_1$ are the same functions used in the definition
of $A(x)$)
\begin{align*}
\tilde A(x)  & = \sum_{N<|k| \le 2N}
\phi( \frac k N) \phi_1( \frac k N) e^{2\pi i k \cdot x} \notag \\
& = \sum_{N<|k| \le 2N} \phi( \frac k N) e^{2\pi i k \cdot x} 
= \sum_{|k|>N} \phi( \frac k N) e^{2\pi i k \cdot x} \notag \\
& =- \sum_{|k|\le N} \phi( \frac k N) e^{2\pi i k \cdot x}  + \sum_{k \in \mathbb Z}
\phi( \frac k N) e^{2\pi i k \cdot x} \notag \\
& =- \sum_{|k|\le N}  e^{2\pi i k \cdot x}  + \sum_{k \in \mathbb Z}
\phi( \frac k N) e^{2\pi i k \cdot x} = - D_N(x) + \sum_{k \in \mathbb Z}
\phi( \frac k N) e^{2\pi i k \cdot x}.
\end{align*}
Clearly for $N$ large,
\begin{align*}
\| \tilde A \|_{L^1(\mathbb T)} \gtrsim \log N.
\end{align*}
Now observe that on $\mathbb T$:
\begin{align*}
\tilde A = (1-  \nu^2 \partial_{xx} ) A.
\end{align*}

Let $\psi \in C_c^{\infty}(\mathbb R)$ be such that $\psi (\xi) =1$ for $0.5\le |\xi| \le 3$ and
$\psi(\xi)=0$ for $|\xi|\le 0.4$ or $|\xi| >4$. Let 
\begin{align*}
m(x) = \int_{\mathbb R} \frac {1+ (2\pi \nu \xi)^2} {1+ (2\pi \nu N)^2} \psi(\frac {\xi} N)
e^{2\pi i \xi \cdot x} d\xi.
\end{align*}
It is easy to check that $\| m\|_{L^1(\mathbb R)} \lesssim 1$.  Now since
\begin{align*}
\tilde A(x) = ( { 1+(2\pi \nu N)^2} ) \sum_{n \in 
\mathbb Z} \int_0^1 m(x-y)  A(y) dy,
\end{align*}
we clearly have
\begin{align*}
\| \tilde A \|_{L^1(\mathbb T)} \lesssim (1+(2\pi \nu N)^2) \cdot \| A \|_{L^1(\mathbb T)}.
\end{align*}
Thus the lower bound for $\| A\|_{L^1(\mathbb T)} $ is shown when $N$ is large.

%Now if $N$ is of constant order, one can use the interpolation
%argument from Theorem \ref{thm_uniform} to obtain the lower bound on 
%$\|K_{>N}\|_{L^1(\mathbb T)}$. Thus for all $N\ge 1$ we have the desired
%lower bound.

Now if $N$ is of constant order, we can use the interpolation inequality
\begin{align}
\| K_{>N} \|_2 \lesssim \| K_{>N}  \|_1^{\frac 12} \| K_{>N} \|_{\infty}^{\frac 12}. \notag
\end{align}
Note that for $N\sim 1$, 
\begin{align}
\| K_{>N}  \|_2^2 \sim \sum_{k\gtrsim 1}  \frac 1 {(1+k^2 \nu^2)^2}
\sim \begin{cases}
\frac 1 {\nu}, \quad 0<\nu<1; \\
\nu^{-4}, \quad \nu\ge 1.
\end{cases}
\end{align}
Also
\begin{align}
\| K_{>N} \|_{\infty} 
\lesssim \sum_{k\gtrsim 1}
\frac 1 { 1+ k^2 \nu^2} \lesssim 
\begin{cases}
\frac 1 {\nu}, \quad 0<\nu <1; \\
\nu^{-2}, \quad \nu \ge 1.
\end{cases}
\end{align}
Thus for $N\sim 1$,
\begin{align}
\| K_{>N} \|_1 \gtrsim \frac 1 {1+\nu^2}. \notag
\end{align}
\end{proof}

In Proposition \ref{prop_KN_lu_0001}, the lower bound on $K_{>N}$ can also be obtained from
a more general result, see Proposition \ref{prop_phidN1} below. 

We first need a simple lemma.

\begin{lem} \label{theta_complex_1}
For any $\theta_0\in [0,1)$, there exits $\alpha_1$ ($\alpha_1$ may depend on
$\theta_0$) with $0<10^{-6}< \alpha_1<1-10^{-6}$, such that
 \begin{align*}
 \operatorname{Im}( e^{2\pi i (\theta_0 +x) }) \ge 10^{-9}, \quad
 \forall\, x \in [\alpha_1,\,\alpha_1+10^{-7} ].
 \end{align*}
\end{lem}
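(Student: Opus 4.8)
The plan is to reduce the claim to an elementary statement about the argument of a point moving on the unit circle, and then verify it with an explicit choice of $\alpha_1$ depending on $\theta_0$. Write $e^{2\pi i(\theta_0+x)} = \cos(2\pi(\theta_0+x)) + i\sin(2\pi(\theta_0+x))$, so that the quantity to be bounded below is simply $\sin(2\pi(\theta_0+x))$. We need a short interval $[\alpha_1,\alpha_1+10^{-7}]$ on which $2\pi(\theta_0+x)$ stays comfortably inside the range where the sine is positive and bounded away from $0$ — say, inside $[\pi/6,\, 5\pi/6]$ modulo $2\pi$, on which $\sin \ge 1/2$.

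First I would note that as $x$ ranges over $[0,1)$, the phase $\theta_0 + x$ ranges over an interval of length $1$, hence its fractional part sweeps all of $[0,1)$; in particular there is a value $x_0 \in [0,1)$ with $\theta_0 + x_0 \equiv \tfrac14 \pmod 1$, i.e. $2\pi(\theta_0+x_0) \equiv \pi/2$, where $\sin = 1$. Around such an $x_0$ the sine is $\ge 1/2$ on a phase-window of length $1/6$ (the preimage of $[\pi/6,5\pi/6]$), i.e. for $\theta_0+x$ within $1/12$ of $x_0$ on each side. Since $10^{-7} \ll 1/12$, I can choose $\alpha_1$ to be $x_0$ itself if $x_0 \le 1 - 10^{-6}$ and $x_0 \ge 10^{-6}$; the only issue is forcing $\alpha_1$ to lie strictly inside $(10^{-6}, 1-10^{-6})$. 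But the set of admissible $x$ (those with $\sin(2\pi(\theta_0+x)) \ge 1/2$) contains a full interval of length $1/6$ inside $[0,1)$ up to wraparound, so I can slide $\alpha_1$ to the left or right within that interval by an amount far exceeding $10^{-6}$ while keeping the whole sub-interval $[\alpha_1,\alpha_1+10^{-7}]$ inside it; concretely, pick $\alpha_1$ to be the left endpoint of the admissible window shifted inward by $10^{-6}$, which still satisfies $10^{-6} < \alpha_1 < 1 - 10^{-6}$ and $\sin(2\pi(\theta_0+x)) \ge 1/2 > 10^{-9}$ for all $x \in [\alpha_1, \alpha_1 + 10^{-7}]$. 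A small case check handles the rare configuration where the admissible window straddles the point $0 \equiv 1$, in which case one simply uses the other connected piece of it inside $(10^{-6}, 1-10^{-6})$.

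Since $\operatorname{Im}(e^{2\pi i(\theta_0+x)}) = \sin(2\pi(\theta_0+x)) \ge \tfrac12 > 10^{-9}$ on the constructed interval, the lemma follows. There is no real obstacle here — the only thing requiring a moment's care is the bookkeeping to keep $\alpha_1$ bounded away from the endpoints $0$ and $1$ by the prescribed $10^{-6}$, which is trivially possible because the target window of admissible phases has length $1/6$, dwarfing all the $10^{-k}$ tolerances involved.
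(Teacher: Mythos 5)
Your proof is correct and takes essentially the same route the paper intends: the paper simply declares the lemma obvious, to be settled by discussing cases of $\theta_0$ and choosing the shift suitably, and your argument supplies exactly that elementary verification (reduce to $\sin(2\pi(\theta_0+x))\ge 1/2$ on a long admissible phase window and slide $\alpha_1$ inward to respect the $10^{-6}$ margins). The only quibble is a harmless factor-of-two slip: the preimage of $[\pi/6,\,5\pi/6]$ consists of the $x$ within $1/6$ (not $1/12$) of $x_0$, so the admissible window actually has length $1/3$; since all you need is a window dwarfing $10^{-6}$, this changes nothing.
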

\begin{proof}
Obvious. One just need to discuss separately the cases $\theta_0 $ in different intervals
 and choose the corresponding shifts suitably. 
\end{proof}

Let $\phi \in \mathcal S(\mathbb R)$ and note that
$\widehat \phi(1)$ may be complex-valued. 
If $\widehat{\phi}(1) \ne 0$, then by Lemma \ref{theta_complex_1}, we have for some 
$0<10^{-6}< \alpha_1<1 -10^{-6}$,
\begin{align*}
\operatorname{Im}(e^{2\pi i y} \frac { \widehat{\phi}(1) }
 {|\widehat \phi (1)| } ) \ge 10^{-9}, \quad \forall\,
 y\in [\alpha_1,\, \alpha_1+10^{-7} ].
\end{align*}

Define 
\begin{align*}
Q_N ( D_N) (x) 
& = \int_{\mathbb R} N \phi(Ny ) \frac {\sin( (2N+1) \pi (x-y) )} 
{ \sin ( \pi (x-y) )} dy \notag \\
& =\int_{\mathbb R} \phi(y) \frac 
{ \sin (  (2N+1)  \pi (x- \frac 1 Ny) )} { \sin ( \pi (x-\frac 1 Ny) ) } dy.
\end{align*}
One typical case is that $\widehat \phi (\xi) $ is localized to $\{|\xi| \sim 1 \}$ so that 
$Q_N(D_N)$ is the frequency projection of the Dirichlet kernel to the frequency annulus
$\{ |\xi| \sim N \}$.  Note that if $x=\frac {2j_0+\epsilon} {2N+1}$, where $j_0$ is an  integer, 
$N^{\frac 1{10} } \le j_0 \le N^{\frac 12} $, 
$\alpha_1 \le \frac {\epsilon}2
\le \alpha_1+10^{-7} $ with $\alpha_1$ the same as in Lemma \ref{theta_complex_1}, 
then  as $N\to \infty$, 
\begin{align*}
Q_N(D_N) (x) \cdot x & = \int_{\mathbb R} 
\phi(y) \frac { \sin ( ( 2+\frac 1 N) \pi ( \frac  N{2N+1} \epsilon -y) ) }
{ \sin ( \pi ( \frac {2j_0+\epsilon} {2N+1} - \frac y N) )}  
( \frac {2j_0+\epsilon} {2N+1 } )dy \notag \\
& \to   \frac 1 {\pi} \int_{\mathbb R} \phi(y) 
\sin (2\pi (\frac 12 \epsilon-y)) dy  \notag \\
& = \frac 1 { \pi } \operatorname{Im}( e^{\pi i \epsilon} \widehat \phi (1) )
\gtrsim \; |\widehat{\phi}(1)|.
\end{align*}
This heuristic computation then leads to the following proposition.
\begin{prop} \label{prop_phidN1}
If $\widehat{\phi}(1)=0$, then
\begin{align*}
\lim_{N\to \infty} \frac{\| Q_N(D_N)\|_{L^1(\mathbb T)} }
{\log N} =0.
\end{align*}
If $\widehat{\phi}(1) \ne 0$, then
there exists some $N_0 =N_0(\phi)$ such that if $N\ge N_0$, then 
\begin{align*}
c_1 |\widehat\phi(1)| \log N \le \| Q_N (D_N) \|_{L^1(\mathbb T)} \le c_2
|\widehat \phi(1) |  \log N,
\end{align*}
where $c_1>0$, $c_2>0$ are absolute constants.

\end{prop}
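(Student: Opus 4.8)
The plan is to estimate the $L^1$ norm of $Q_N(D_N)$ by splitting the torus into three regimes according to the distance to the singularity $x=0$: a near zone $|x| \lesssim N^{-1}$, an intermediate zone $N^{-1} \lesssim |x| \lesssim N^{-1/2}$ (or so), and a far zone $|x| \gtrsim N^{-1/2}$. First, write $Q_N(D_N)(x) = \int_{\mathbb R} \phi(y)\, \frac{\sin((2N+1)\pi(x - y/N))}{\sin(\pi(x-y/N))}\, dy$ as in the displayed formula preceding the statement. In the far zone, $\sin(\pi(x-y/N)) \approx \sin(\pi x)$ is bounded away from zero on the support of $\phi$, so one can Taylor-expand the denominator and integrate by parts in $y$; the oscillation of $\sin((2N+1)\pi(\cdots))$ combined with the smoothness and decay of $\phi$ gives rapid decay, and the contribution to the $L^1$ norm is $O(1)$ (in fact $o(\log N)$), regardless of whether $\widehat\phi(1)$ vanishes. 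In the near zone $|x| = O(N^{-1})$, a change of variables $x = t/N$ shows $Q_N(D_N)(x)$ is $O(N)$ pointwise over an interval of length $O(N^{-1})$, contributing $O(1)$ to the $L^1$ norm.

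The heart of the matter is the intermediate zone, which produces the $\log N$. Here I use exactly the heuristic already laid out before the proposition: parametrize $x = \frac{2j+\epsilon}{2N+1}$ with $j$ ranging over integers in a dyadic range between roughly $N^{\alpha}$ and $N^{1-\alpha}$ (the precise exponents are cosmetic), so that $|x| \sim j/N$. On each such interval the rescaled quantity $x \cdot Q_N(D_N)(x)$ converges, uniformly in $j$ over the range, to $\frac{1}{\pi}\operatorname{Im}(e^{\pi i \epsilon}\widehat\phi(1))$ as $N\to\infty$, by dominated convergence using the decay of $\phi$. If $\widehat\phi(1) = 0$ this limit is identically zero, and a quantitative version of the convergence (tracking the $O(1/N)$ and $O(j/N)$ error terms from expanding the sine ratio) shows each interval contributes $o(1/j)$ to $\|Q_N(D_N)\|_{L^1}$, so summing over the $O(\log N)$ dyadic scales gives $o(\log N)$; combined with the two outer zones this yields the first assertion. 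If $\widehat\phi(1)\neq 0$, then by Lemma \ref{theta_complex_1} there is a subinterval of $\epsilon$-values of length $\gtrsim 10^{-7}$ on which $\frac1\pi\operatorname{Im}(e^{\pi i\epsilon}\widehat\phi(1)) \gtrsim |\widehat\phi(1)|$; hence $|Q_N(D_N)(x)| \gtrsim |\widehat\phi(1)|/|x|$ on a fixed fraction of each interval in the intermediate range, and summing $\int |Q_N(D_N)| \gtrsim \sum_{j} |\widehat\phi(1)|/j \gtrsim |\widehat\phi(1)| \log N$. The matching upper bound $\lesssim |\widehat\phi(1)|\log N$ comes from the crude pointwise bound $|Q_N(D_N)(x)| \lesssim |\widehat\phi(1)|/|x| + O(1/(N|x|^2))$ valid throughout the intermediate and far zones (again by expanding the sine ratio and integrating against $\phi$), integrated over $|x| \gtrsim N^{-1}$.

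The main obstacle is making the intermediate-zone asymptotics uniform and quantitative across the full dyadic range of $j$ simultaneously, rather than for a single fixed $j$ as in the preceding heuristic: one needs an error estimate in the convergence $x\cdot Q_N(D_N)(x) \to \frac1\pi\operatorname{Im}(e^{\pi i\epsilon}\widehat\phi(1))$ that is summable against $\sum_j 1/j$, which forces careful bookkeeping of the two sources of error — the discrepancy between $\sin(\pi(x - y/N))$ and its linearization $\pi(x-y/N)$, which is $O(x)$, and the discrepancy between $\frac{N}{2N+1}$ and $\frac12$ in the argument of the numerator, which is $O(1/N)$. A clean way to organize this is to bound the difference between $Q_N(D_N)(x)$ and its model $\frac1{\pi x}\operatorname{Im}(e^{\pi i \epsilon}\widehat\phi(1))$ by $C(1 + |x|^{-1}) \cdot (\frac1N |x|^{-1} + |x|)$ pointwise, using integration by parts in $y$ to exploit $\phi \in \mathcal S(\mathbb R)$; once this is in hand, both the upper and lower bounds follow by summing a harmonic series, and the $o(\log N)$ claim in the vanishing case follows because the model term drops out entirely. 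The remaining case $N$ not large is absorbed by choosing $N_0(\phi)$ appropriately, since for bounded $N$ everything is finite.
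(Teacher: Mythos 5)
Your proposal follows essentially the same route as the paper's proof: parametrize $x=\frac{2j+\epsilon}{2N+1}$, show that $x\cdot Q_N(D_N)(x)$ equals $\frac1\pi\operatorname{Im}(e^{\pi i\epsilon}\widehat\phi(1))$ up to errors summable against $\sum_j 1/j$, invoke Lemma \ref{theta_complex_1} to select a range of $\epsilon$ on which the imaginary part is $\gtrsim|\widehat\phi(1)|$, and sum over $j$ in a polynomial range to get the lower bound; the upper bound and the $\widehat\phi(1)=0$ case follow from the same pointwise comparison. The paper organizes the upper bound by splitting $j$ into $j\le\alpha\log N$, $\alpha\log N\le j\le N/\log N$, and $N/\log N\le j\le 2N$ rather than by zones in $|x|$, but this is cosmetic.

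One assertion in your first paragraph is false, though not fatally so. In the far zone the phase $\sin((2N+1)\pi(x-y/N))$ oscillates in $y$ at frequency $\frac{2N+1}{2N}\approx 1$, not at frequency $O(N)$, so integration by parts in $y$ produces no decay; the $y$-integral converges to $\frac{1}{\sin\pi x}\operatorname{Im}\bigl(e^{i(2N+1)\pi x}\,\widehat\phi(\tfrac{2N+1}{2N})\bigr)$, which is generically of size $|\widehat\phi(1)|/|x|$. Consequently the far zone $|x|\gtrsim N^{-1/2}$ contributes $\sim|\widehat\phi(1)|\log N$, not $O(1)$, when $\widehat\phi(1)\ne 0$; it is $o(\log N)$ only in the case $\widehat\phi(1)=0$, and then only because the main term carries the factor $\widehat\phi(1+\tfrac{1}{2N})=O(1/N)$, not because of oscillatory cancellation. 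Your proof survives because the crude pointwise bound of your last two paragraphs is stated to hold throughout the intermediate and far zones and the lower bound uses only the intermediate zone, but the stated justification for the far-zone estimate should be discarded in favor of the model-plus-error decomposition you already set up.
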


\begin{proof}
Consider the case $\widehat{\phi}(1) \ne 0$. 
 We first show the lower bound.
Let $x_0=\frac {2j_0+\epsilon} {2N+1}$, where $j_0$ is an odd integer (this is to ensure
that the intervals corresponding to different $j_0$ are non-overlapping), 
$N^{\frac 1{10} } \le j_0 \le N^{\frac 12} $, 
$\alpha_1 \le \frac {\epsilon}2
\le \alpha_1+10^{-7} $ with $\alpha_1$ the same as in Lemma \ref{theta_complex_1}, 
then  
\begin{align*}
Q_N(D_N) (x_0)  & = \int_{\mathbb R} 
\phi(y) \cdot \chi_{|y| \le N^{\frac 1 {20}}} 
\frac { \sin ( ( 2+\frac 1 N) \pi ( \frac  N{2N+1} \epsilon -y) ) }
{ \sin ( \pi ( \frac {2j_0+\epsilon} {2N+1} - \frac y N) )}  
dy \notag \\
& \quad + \int_{\mathbb R } 
\phi(y) \cdot \chi_{ |y| > N^{\frac 1 {20} }   } 
\frac { \sin ( ( 2+\frac 1 N) \pi ( \frac  N{2N+1} \epsilon -y) ) }
{ \sin ( \pi ( \frac {2j_0+\epsilon} {2N+1} - \frac y N) )}  
 dy.
\end{align*}
Since $\phi$ is a Schwartz function, the second piece above can be easily bounded by
$N^{-10}$ which is negligible as $N$ tends to infinity.  In the computation below we shall
completely ignore this piece. Then
\begin{align*}
Q_N(D_N) (x_0) \cdot x_0 & \approx \int_{\mathbb R} 
\phi(y) \chi_{|y| \le N^{\frac 1{20}} } \frac { \sin ( ( 2+\frac 1 N) \pi ( \frac  N{2N+1} \epsilon -y) ) }
{ \sin ( \pi ( \frac {2j_0+\epsilon} {2N+1} - \frac y N) )}  
( \frac {2j_0+\epsilon} {2N+1 } )dy \notag \\
& = \int_{\mathbb R} 
\phi(y)\chi_{|y| \le N^{\frac 1{20}} } \frac { \sin ( ( 2+\frac 1 N) \pi ( \frac  N{2N+1} \epsilon -y) ) }
{ \sin ( \pi ( \frac {2j_0+\epsilon} {2N+1} - \frac y N) )}  
( \frac {2j_0+\epsilon} {2N+1 }  - \frac y N)dy \notag \\
&\quad + \frac 1 N \int_{\mathbb R} 
y\phi(y) \chi_{|y| \le N^{\frac 1{20}} }\frac { \sin ( ( 2+\frac 1 N) \pi ( \frac  N{2N+1} \epsilon -y) ) }
{ \sin ( \pi ( \frac {2j_0+\epsilon} {2N+1} - \frac y N) )}   dy.
\end{align*}
Note that the second piece is bounded by $O(N^{-\frac 1 {10}})$ and it is negligible.
For the first piece, one notes that $|\frac {x} {\sin x} - 1| \lesssim x^2$ for $|x| \ll 1$. Thus
\begin{align*}
Q_N(D_N) (x_0) \cdot x_0 & = \frac 1 {\pi} \int_{\mathbb R} 
\phi(y) \chi_{|y| \le N^{\frac 1{20}} } { \sin ( ( 2+\frac 1 N) \pi ( \frac  N{2N+1} \epsilon -y) ) }
dy + O(N^{-\frac 1 {10}}) \notag \\
& =\frac 1 {\pi} \underbrace{\int_{\mathbb R} 
\phi(y)  { \sin ( ( 2+\frac 1 N) \pi ( \frac  N{2N+1} \epsilon -y) ) }
dy}_{=:I_N} + O(N^{-\frac 1 {10}}). \notag 
\end{align*}
Note that in $I_N$ there is no dependence on $j_0$. As $N\to \infty$, 
 we have
\begin{align*}
&I_N \to   \int_{\mathbb R} \phi(y) 
\sin (2\pi (\frac 12 \epsilon-y)) dy  
 =  \operatorname{Im}( e^{\pi i \epsilon} \widehat \phi (1) )
\gtrsim \; |\widehat{\phi}(1)|,
\end{align*}
where in the last step we used Lemma \ref{theta_complex_1}. It follows that
\begin{align*}
\sum_{\substack{N^{\frac 1{10}} \le j_0 \le N^{\frac 12}
\\ \text{$j_0$ is odd} } }
\int_{\frac{2j_0+2\alpha_1}{2N+1}}^{  \frac{2j_0+2\alpha_1+2\cdot 10^{-7}} {2N+1}}
Q_N(D_N)(x) dx \gtrsim\, |\widehat{\phi}(1)|
\sum_{\substack{N^{\frac 1{10}} \le j_0 \le N^{\frac 12}
\\ \text{$j_0$ is odd} } } \frac 1 { \frac {j_0} N} \cdot \frac 1 N \gtrsim  (\log N) |\widehat{\phi}(1)|.
\end{align*}

Now we turn to the upper bound.  For $x=\frac{j+\epsilon}{2N+1}$
with $0\le \epsilon \le 1$ and $j$ being an integer in $[0,2N]$, we have
\begin{align*}
Q_N(D_N)(x)  
& =\int_{\mathbb R} \phi(y) \frac 
{ \sin (  (2N+1)  \pi (x- \frac 1 Ny) )} { \sin ( \pi (x-\frac 1 Ny) ) } dy \notag \\
&=(-1)^j \int_{\mathbb R} \phi(y) \frac 
{ \sin (  (2N+1)  \pi (\frac {\epsilon}{2N+1}- \frac 1 Ny) )} { \sin ( \pi (
\frac {j+\epsilon}{2N+1} -\frac 1 Ny) ) } dy.
\end{align*}
For the regime $ j\le \alpha {\log N}$ where
$\alpha= |\widehat{\phi}(1)|/ \| \phi\|_{L^1(\mathbb R)}$, 
we note that  $\| Q_N(D_N)\|_{\infty}
\lesssim \| \phi\|_{L^1(\mathbb R)} \cdot N$, and
\begin{align*}
\| Q_N(D_N) \|_{L^1([\frac j {2N+1}, \frac {j+1} {2N+1} ])} \lesssim  \| \phi\|_{L^1(\mathbb R)}.
\end{align*}
Thus
\begin{align*}
\sum_{j\le \alpha {\log N}} 
\| Q_N(D_N) \|_{L^1([\frac j {2N+1}, \frac {j+1} {2N+1} ])} \lesssim  
\| \phi\|_{L^1(\mathbb R)} \alpha {\log N} \lesssim |\widehat{\phi}(1)|
\log N.
\end{align*}
Next consider the regime $\frac N { \log N} \le j \le 2N$.
Clearly we have
\begin{align*}
\sum_{\frac N{ \log N} 
\le j \le 2N} 
\int_{\frac {j}{2N+1}}^{\frac {j+1} {2N+1}} 
|Q_N(D_N)(x)| dx 
\lesssim \| \phi\|_{L^1(\mathbb R)} \sum_{\frac N{ \log N} 
\le j \le 2N}  \frac 1 j  \lesssim\;
\| \phi\|_{L^1(\mathbb R)} \log\log N 
\lesssim \; |\widehat{\phi}(1)| \log N,
\end{align*}
for $N$ sufficiently large.  Finally consider the regime 
$\alpha {\log N}\le j \le \frac N {\log N}$. 
For $x= \frac {j+\epsilon}{2N+1}$, we have
\begin{align*}
Q_N(D_N)(x) \cdot x
&= (-1)^j \int_{\mathbb R} \phi(y) \frac 
{ \sin (  (2N+1)  \pi (\frac {\epsilon}{2N+1}- \frac 1 Ny) )} { \sin ( \pi (
\frac {j+\epsilon}{2N+1} -\frac 1 Ny) ) } 
(\frac {j+\epsilon}{2N+1} -\frac 1 Ny) dy \notag \\
&\quad+ \frac {(-1)^j} N
\int_{\mathbb R}  y\phi(y) \frac 
{ \sin (  (2N+1)  \pi (\frac {\epsilon}{2N+1}- \frac 1 Ny) )} { \sin ( \pi (
\frac {j+\epsilon}{2N+1} -\frac 1 Ny) ) } dy.
\end{align*}
Note that the second piece above can be easily bounded by $(\log N)^{-1}$. For the first
piece one can use the cut-off $\chi_{|y| \le \frac 1 {100} \alpha \log N}$ and the
inequality $|\frac x {\sin x} -1 | \ll x^2$ for $|x| \ll 1$ to extract the main order. It follows
easily that
\begin{align*}
|Q_N(D_N)(x)| \cdot |x| \lesssim |\widehat {\phi}(1)| + O( (\log N)^{-1} ) 
\lesssim |\widehat{\phi}(1)|,
\end{align*}
for $N$ sufficiently large. Collecting all estimates, we obtain 
\begin{align*}
\| Q_N(D_N) \|_{L^1(\mathbb T)} \lesssim |\widehat{\phi}(1)| \cdot \log N.
\end{align*}

Finally we turn to the case $\widehat{\phi}(1)=0$. The analysis is similar. First
the regime $\frac N{\log N} \le j\le 2N$ is acceptable since it gives at most $\log\log N$
growth.  The other two regimes are $j\le \sqrt {\log N}$ and 
$\sqrt{\log N} \le j \le \frac N {\log N}$. We omit the details.
\end{proof}

In the next few propositions, we outline an alternative approach to obtain the $L^1$-norm
bound on $K_{>N}$. The advantage is that it can be used on more general trigonometric 
series whose coefficients satisfy certain convexity properties.

\begin{prop} \label{prop_upperlower}
Let $\nu>0$. Let $N_0= e^{3\pi^2}$. If $N\ge \max\{N_0, \,\frac 1 {2\sqrt 3 \pi \nu}\}$,
then
\begin{align*}
\frac {c_1} {(N\nu)^2} \log N 
\le \| K_{>N} \|_{L^1(\mathbb T)} \le  \frac {c_2} {(N\nu)^2} \log N,
\end{align*}
where $c_1>0$, $c_2>0$ are absolute constants. 

\end{prop}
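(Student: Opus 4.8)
The idea is to expand $K_{>N}$ as a signed superposition of Fejér kernels $\tilde F_k$ and estimate the resulting blocks, using that the symbol $a_k:=(1+4\pi^2\nu^2k^2)^{-1}$ is \emph{convex} in $k$ precisely on the range that the hypothesis secures. Since $a_{-k}=a_k$ is real, $K_{>N}(x)=2\sum_{k>N}a_k\cos(2\pi kx)$. Applying the identity $2\cos2\pi kx=\tilde F_k-2\tilde F_{k-1}+\tilde F_{k-2}$ from the preliminaries and reindexing (or, equivalently, performing Abel summation twice with partial sums $D_k$ and then $\tilde F_k$), and then recombining the boundary terms via $\tilde F_N-\tilde F_{N-1}=D_N$, one arrives at
\[
K_{>N}=-a_{N+1}D_N-b_{N+1}\tilde F_N+\sum_{k=N+1}^{\infty}(\Delta^2 a_k)\,\tilde F_k,\qquad b_k:=a_k-a_{k+1},\ \ \Delta^2 a_k:=a_k-2a_{k+1}+a_{k+2},
\]
the tails at infinity being legitimate to discard because $a_M\|D_M\|_{L^1}=O(M^{-2}\log M)\to0$ and $b_M\|\tilde F_M\|_{L^1}=O(M^{-2})\to0$. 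The key structural point: with $f(t)=(1+ct^2)^{-1}$, $c=4\pi^2\nu^2$, one computes $f''(t)=2c(1+ct^2)^{-3}(3ct^2-1)$, so $f''\ge0$ \emph{exactly} for $t\ge\tfrac1{2\sqrt3\pi\nu}$; hence the assumption $N\ge\tfrac1{2\sqrt3\pi\nu}$ gives $\Delta^2 a_k=\iint_{[0,1]^2}f''(k+s+t)\,ds\,dt\ge0$ for all $k\ge N+1$. Consequently $\sum_{k>N}(\Delta^2 a_k)\tilde F_k$ is a nonnegative combination of the nonnegative Fejér kernels, with $L^1$ norm \emph{equal to} $\sum_{k>N}(\Delta^2 a_k)(k+1)$.

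Next I would collect elementary size bounds, all using that $N\ge\tfrac1{2\sqrt3\pi\nu}$ forces $cN^2\ge\tfrac13$, whence $a_k\sim(cN^2)^{-1}\sim(\nu N)^{-2}$ for $k\sim N$, $0<b_{N+1}\lesssim cN(1+cN^2)^{-2}\lesssim(\nu^2N^3)^{-1}$, and $0\le\Delta^2 a_k\lesssim(\nu^2k^4)^{-1}$ for $k>N$ (bounding $f''$). Since $\|\tilde F_k\|_{L^1}=k+1$, and since $\sum_{k>N}(\Delta^2 a_k)(k+1)$ telescopes to $b_{N+1}(N+2)+a_{N+2}$, the two remaining blocks contribute together $b_{N+1}(2N+3)+a_{N+2}\lesssim(\nu N)^{-2}$. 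The leading block is $\|a_{N+1}D_N\|_{L^1}=a_{N+1}\|D_N\|_{L^1}$, where the classical two-sided Lebesgue-constant bound $\tfrac1{\pi^2}\log(N+1)\le\|D_N\|_{L^1}\lesssim\log(N+2)$ enters — the lower half being exactly the positive- and negative-part estimates of Proposition \ref{prop_Omegapp}. The triangle inequality then yields at once $\|K_{>N}\|_{L^1}\le a_{N+1}\|D_N\|_{L^1}+O((\nu N)^{-2})\lesssim(\nu N)^{-2}\log N$.

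For the matching lower bound I would again use the triangle inequality, now subtractively: $\|K_{>N}\|_{L^1}\ge a_{N+1}\|D_N\|_{L^1}-b_{N+1}\|\tilde F_N\|_{L^1}-\sum_{k>N}(\Delta^2 a_k)(k+1)\ge(\nu N)^{-2}\bigl(c'\log N-C'\bigr)$ for explicit absolute constants $c',C'$. The sole purpose of the large threshold $N_0=e^{3\pi^2}$ is to make $\log N$ big enough that the logarithmic main term dominates the bounded error; keeping track of the constants (the $\pi^{-2}$ from Proposition \ref{prop_Omegapp}, tail sums such as $\sum_{k>N}k^{-3}\le\tfrac12N^{-2}$, and the uniform boundedness of $ct(1+ct^2)^{-2}$-type quantities for $t\ge N$) shows $N_0=e^{3\pi^2}$ suffices, giving $\|K_{>N}\|_{L^1}\ge c_1(\nu N)^{-2}\log N$. (One could alternatively just invoke Proposition \ref{prop_KN_lu_0001}, since $1+(\pi N\nu)^2\lesssim(\pi N\nu)^2$ in this regime, but the point is to display the convexity/Fejér route, which is the one that generalizes in Proposition \ref{prop_upperlower1}.)

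The main obstacle — and the reason the \emph{two}-step Fejér rearrangement is essential rather than a single reindexing — is the bookkeeping of the boundary terms near $k=N$. Reindexing $2\cos2\pi kx=\tilde F_k-2\tilde F_{k-1}+\tilde F_{k-2}$ produces the boundary blocks $a_{N+1}\tilde F_{N-1}$ and $(a_{N+2}-2a_{N+1})\tilde F_N$, each of $L^1$ norm of order $(\nu^2N)^{-1}$, vastly larger than the true answer $(\nu N)^{-2}\log N$; only after recombining them into $-a_{N+1}D_N-b_{N+1}\tilde F_N$ (so that the relevant kernel is $D_N$ with $\|D_N\|_{L^1}\sim\log N$, not $\tilde F_{N-1}$ with $\|\tilde F_{N-1}\|_{L^1}\sim N$) does the genuine cancellation surface. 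The second, milder point is to verify that the convexity window of the rational symbol $(1+4\pi^2\nu^2k^2)^{-1}$ is exactly $k\ge\tfrac1{2\sqrt3\pi\nu}$, matching the stated hypothesis; the remainder is the routine calculus of estimating $f$, $f'$, $f''$ of a rational function.
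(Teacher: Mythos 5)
Your proposal is correct and follows essentially the same route as the paper: the double Abel summation into a nonnegative combination of Fej\'er kernels $\tilde F_k$ (valid because $(1+4\pi^2\nu^2k^2)^{-1}$ is convex precisely for $k\ge \tfrac1{2\sqrt3\pi\nu}$), plus boundary terms recombined so that the $\log N$ comes from $\|D_N\|_{L^1}$ via Proposition \ref{prop_Omegapp}, with $N_0=e^{3\pi^2}$ making the logarithmic main term beat the $O((\nu N)^{-2})$ remainders. The only (immaterial) difference is in the lower bound: the paper integrates $K_{>N}$ over $\Omega_N^-$ so that the nonnegative Fej\'er block contributes with a favorable sign and need not be subtracted, whereas you use the reverse triangle inequality on all of $\mathbb T$ and absorb the telescoped sum $\sum_{k>N}(\Delta^2a_k)(k+1)=b_{N+1}(N+2)+a_{N+2}$ into the error — both give the same result.
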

\begin{rem*}
In the regime $N\ll \frac 1{\nu}$ (for $0<\nu\ll 1$), the $L^1$ bound of $K_{>N}$ and $K_N$ 
can also be quite
large. For example, fix $N=N_0$ independent of $\nu$ and take $\nu\to0$, then the main
order of $K_{N_0}$ is given by the Dirichlet kernel, and we have (below we use $L^2$ to estimate
the error piece)
\begin{align*}
\| K_{N_0} \|_{L^1(\mathbb T)} &\ge  \| \frac {\sin (2N_0+1) \pi x}{ \sin \pi x}
\|_{L^1(\mathbb T)} - \operatorname{const} \cdot \nu^2 \cdot N_0^{2.5} \notag \\
& \ge \; \operatorname{const}
\cdot \log N_0 - \operatorname{const} \cdot \nu^2 \cdot N_0^{2.5} \notag \\
& \gtrsim \log N_0\, ,
\end{align*}
where $\nu>0$ is sufficiently small. 
%This lower bound can be quite large in the small $\nu$
%regime. For example if $N_0\sim \nu^{-\frac 12}$, then $\| K_{N_0}\|_{L^1}
%\gtrsim |\log \frac 1 {\nu} |$ which can be large as $\nu \to 0$.
 Noting that
$\| K_{N_0} + K_{>N_0} \|_{L^1(\mathbb T)} = 1$, we  obtain a similar lower
bound for $K_{>N_0}$.

\end{rem*}

\begin{proof}
Denote 
\begin{align*}
&c_k = \frac 1 {1+4\pi^2 \nu^2 k^2}, \\
&\tilde F_k (x)= \sum_{j=0}^k D_j (x) = \left( \frac {\sin (k+1) \pi x} {\sin \pi x} \right)^2.
\end{align*}
Note that $\|\tilde F_k\|_1=k+1$ and $\tilde F_k -\tilde F_{k-1} =D_k$. Then
\begin{align*}
K_{>N}(x)=\sum_{|k|\ge N+1}
\frac 1 {1+4\pi^2 \nu^2 k^2} e^{2\pi i k\cdot x} 
& = \sum_{k\ge N+1}  c_k ( \tilde F_k - 2 \tilde F_{k-1} +\tilde F_{k-2}) \notag \\
& = \sum_{k\ge N+1} c_k \tilde F_k
-2 \sum_{k\ge N} c_{k+1} \tilde F_k + \sum_{k\ge N-1} c_{k+2} \tilde F_k \notag \\
& =\sum_{k\ge N} (c_k-2c_{k+1} +c_{k+2}) \tilde F_k
-c_N \tilde F_N + c_{N+1} \tilde F_{N-1} \notag \\
& =\sum_{k\ge N} (c_k-2c_{k+1} +c_{k+2}) \tilde F_k
-(c_N-c_{N+1})\tilde F_{N-1} -c_{N} D_N.
\end{align*}
Note that if $12 k^2 \pi^2 \nu^2-1\ge 0$, then
\begin{align*}
c_k -2 c_{k+1} +c_{k+2} \ge 0.
\end{align*}
It is easy to check that
\begin{align*}
(c_N-c_{N+1}) \| \tilde F_{N-1}\|_{L^1(\mathbb T)}  =
\frac{4\pi^2 (2N+1) \nu^2 N} {(1+4\pi^2 N^2 \nu^2) (1+
4\pi^2 (N+1)^2 \nu^2) } \le \frac 2 {1+ 4\pi^2 N^2 \nu^2}.
\end{align*}
By Proposition \ref{prop_Omegapp}, we have
\begin{align*}
\int_{\Omega_N^{-}} K_{>N}(x) dx \ge
\frac {1} {1+ 4\pi^2 N^2 \nu^2}  \frac 1 {\pi^2}
\log (N+1)-\frac 2 {1+ 4\pi^2 N^2 \nu^2}
\gtrsim  \frac 1 {(N\nu)^2} \log N.
\end{align*}
Thus 
\begin{align*}
\| K_{>N} \|_{L^1(\mathbb T)} &\ge \|K_{>N} \|_{L^1(\Omega_N^-)}
\ge \int_{\Omega_N^-} K_{>N}(x) dx  \\
&\gtrsim  \frac 1 {(N\nu)^2} \log N.
\end{align*}

For the upper bound, one notes that $\int_{\mathbb T} K_{>N}(x) dx=0$, and 
\begin{align*}
\| K_{>N} \|_{L^1(\mathbb T)} 
\le 2 N(c_N-c_{N+1}) + 2c_N \| D_N \|_{L^1(\mathbb T)} \lesssim 
\frac 1 {(N\nu)^2} \log N.
\end{align*}

\end{proof}

\begin{prop} \label{prop_upperlower1}
Let $\nu>0$.  There exists an absolute constant $N_1>0$ such that if $
N_1\le N$ and $N\le   \frac 1 {2 \sqrt 3 \pi \nu}$,
then
\begin{align*}
 &\| K_{N} \|_{L^1(\mathbb T)} \sim  \log N, \\
 &\| K_{>N} \|_{L^1(\mathbb T)} \sim  \log N.
\end{align*}
Consequently by Proposition \ref{prop_upperlower}, for all $N \ge  \max\{N_1, e^{3\pi^2}\}$,
we have
\begin{align*}
\| K_{>N} \|_{L^1(\mathbb T)} \sim \frac 1 {1+(N\nu)^2} \log N.
\end{align*}
\end{prop}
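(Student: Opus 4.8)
The plan is to reduce the entire statement to the single estimate $\|K_N\|_{L^1(\mathbb T)}\sim\log(N+2)$ in the window $N_1\le N\le\frac1{2\sqrt3\pi\nu}$, and then to read off the bound for $K_{>N}$ from the fact that $K_N+K_{>N}$ is (up to periodization) the Laplace kernel, which has unit $L^1$ mass. First I would record that the hypothesis $N\le\frac1{2\sqrt3\pi\nu}$ is \emph{exactly} the statement $4\pi^2\nu^2N^2\le\frac13$; consequently $\frac34\le c_k\le 1$ for all $|k|\le N$, where $c_k=\frac1{1+4\pi^2\nu^2k^2}$. Moreover, writing $g(t)=\frac1{1+4\pi^2\nu^2t^2}$ one computes $g''(t)=\frac{2a(3at^2-1)}{(1+at^2)^3}$ with $a=4\pi^2\nu^2$, so $g''\le 0$ precisely on $[0,\frac1{2\sqrt3\pi\nu}]\supseteq[0,N]$. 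Hence the finite sequence $(c_k)_{k=0}^{N}$ is \emph{concave}: for $0\le k\le N-2$ one has $c_k-2c_{k+1}+c_{k+2}=\int_0^1\int_0^1 g''(k+s+t)\,ds\,dt\le 0$. This is the complementary sign to the one used in Proposition~\ref{prop_upperlower}, and it is what makes the Fejér-kernel expansion useful here.

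Next I would run a double summation-by-parts against the Fejér kernels, in the spirit of the proof of Proposition~\ref{prop_upperlower}. Using $D_k=\tilde F_k-\tilde F_{k-1}$ with $\tilde F_{-1}=0$, a first Abel step yields $K_N=c_N D_N+\sum_{k=0}^{N-1}(c_k-c_{k+1})D_k$, and a second one yields
\begin{align*}
K_N = c_N D_N + (c_{N-1}-c_N)\,\tilde F_{N-1} + \sum_{k=0}^{N-2}(c_k - 2c_{k+1}+c_{k+2})\,\tilde F_k .
\end{align*}
Since $\tilde F_k\ge 0$ with $\|\tilde F_k\|_{L^1}=k+1$ and all coefficients $c_k-2c_{k+1}+c_{k+2}\le 0$ by concavity, the last sum is a nonpositive function, so its $L^1$ norm equals $-\sum_{k=0}^{N-2}(c_k-2c_{k+1}+c_{k+2})(k+1)$; a telescoping computation identifies this with $(N-1)(c_{N-1}-c_N)-(1-c_{N-1})$, which is bounded by $(N-1)(c_{N-1}-c_N)\le N\cdot 4\pi^2\nu^2(2N-1)\le 8\pi^2\nu^2N^2\le\frac23$ in our window. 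The middle term obeys $\|(c_{N-1}-c_N)\tilde F_{N-1}\|_{L^1}=(c_{N-1}-c_N)N\le\frac23$ likewise, and the leading term satisfies $c_N\|D_N\|_{L^1}\sim\log(N+2)$, using $\frac34\le c_N\le 1$ together with the upper bound $\|D_N\|_{L^1}\lesssim\log(N+2)$ (first Proposition of this section) and the lower bound $\|D_N\|_{L^1}\gtrsim\log(N+2)$ (Proposition~\ref{prop_Omegapp}). Thus, for $N$ larger than an absolute constant $N_1$, the $c_ND_N$ term dominates the $O(1)$ remainder and $\|K_N\|_{L^1}\sim\log N$.

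For $K_{>N}$ I would invoke Poisson summation: with $f(x)=\frac1{2\nu}e^{-|x|/\nu}$ one has $\widehat f(k)=\frac1{1+4\pi^2\nu^2k^2}=c_k$, so $K_N+K_{>N}=\sum_{n\in\mathbb Z}f(\cdot+n)\ge 0$ has $L^1$ mass $\int_{\mathbb R}f=1$ (this is exactly the identity recorded in the preliminaries). Hence $\big|\,\|K_{>N}\|_{L^1}-\|K_N\|_{L^1}\,\big|\le 1$, and the previous step gives $\|K_{>N}\|_{L^1}\sim\log N$ once $N\ge N_1$. The final ``consequently'' claim is then bookkeeping: for $N\ge\max\{N_1,e^{3\pi^2}\}$, either $N\le\frac1{2\sqrt3\pi\nu}$, in which case the present proposition gives $\|K_{>N}\|_{L^1}\sim\log N\sim\frac{\log N}{1+(N\nu)^2}$ because $(N\nu)^2\le\frac1{12\pi^2}$; or $N>\frac1{2\sqrt3\pi\nu}$, in which case $N\ge\max\{e^{3\pi^2},\frac1{2\sqrt3\pi\nu}\}$ so Proposition~\ref{prop_upperlower} applies and gives $\|K_{>N}\|_{L^1}\sim\frac{\log N}{(N\nu)^2}\sim\frac{\log N}{1+(N\nu)^2}$ because $(N\nu)^2\ge\frac1{12\pi^2}$.

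The main obstacle is essentially organizational rather than analytic: one must arrange the double Abel summation so that the ``Dirichlet part'' $c_ND_N$ (which carries the $\log N$ growth) is cleanly isolated from every remainder term, and then check that each remainder is genuinely $O(1)$. This rests on two elementary points that must be verified with care — that the concavity sign $c_k-2c_{k+1}+c_{k+2}\le0$ holds on the \emph{whole} range $0\le k\le N$, which is precisely what $N\le\frac1{2\sqrt3\pi\nu}$ guarantees, and the crude estimates $1-c_{N-1}=O(\nu^2N^2)$ and $c_{N-1}-c_N=O(\nu^2N)$ — but no new analytic input is needed beyond $\|D_N\|_{L^1}\sim\log(N+2)$ and Poisson summation, both already available in the text.
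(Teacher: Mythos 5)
Your proof is correct and follows essentially the same route as the paper: double summation by parts against the Fej\'er kernels, using that $N\le \frac{1}{2\sqrt 3\pi\nu}$ forces $c_k-2c_{k+1}+c_{k+2}\le 0$ on the whole range so that the $c_N D_N$ term carries the $\log N$ growth, and then transferring to $K_{>N}$ via $\|K_N+K_{>N}\|_{L^1}=1$. The only (harmless) difference is that you evaluate the $L^1$ norm of the concave-difference sum exactly by telescoping, whereas the paper simply exploits its sign and integrates over $\Omega_N^-$ to get the lower bound.
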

\begin{proof}
We only need to show that $\| K_{N} \|_{L^1(\mathbb T)} \gtrsim \log N$. 
Denote again $c_k = \frac 1 {1+4\pi^2 \nu^2 k^2}$.  Then
\begin{align*}
K_{N}(x) &=\sum_{|k|\le 4} c_k e^{2\pi i k \cdot x} 
+ \sum_{4<k\le N} c_k (\tilde F_{k}- 2\tilde F_{k-1} + \tilde F_{k-2}) \notag \\
& = \sum_{|k|\le 4} c_k e^{2\pi i k \cdot x}  +
\sum_{k=4}^{N-1} (c_k -2 c_{k+1} +c_{k+2}) \tilde F_k
+c_{N} D_{N}  \notag \\
& \qquad +(c_{N}-c_{N+1}) \tilde F_{N-1} + (c_5-c_{4}) \tilde F_{3}
-c_4 D_4.
\end{align*}
Note that for $k \le N-1$, we have
\begin{align*}
c_k -2 c_{k+1} +c_{k+2}
= \frac {\beta}{1+\beta(k+1)^2}
\cdot \frac { 2\beta(3k^2+6k+2) -2} { (1+\beta k^2) (1+\beta (k+2)^2) } \le 0,
\end{align*}
where $\beta = (2\pi \nu)^2$.  Here we used the assumption $N\le \frac 1 {2\sqrt 3 \pi \nu}$. 

Recall $\Omega_N^- = \{ x\in \mathbb T:\, D_N (x) <0\}$. Then
\begin{align*}
\| K_N\|_{L^1(\mathbb T)} \ge \| K_N \|_{L^1(\Omega_N^-)} \ge
c_N \| D_N\|_{L^1(\Omega_N^-)} - \operatorname{const}
- (c_N-c_{N+1}) N \gtrsim \log N.
\end{align*}

\end{proof}

%{\color{red} \underline{to remove!}
%\begin{thm} \label{thm_uniform}
%Let $\nu>0$. Then for all $N\ge 1$, we have
%\begin{align*}
%\| K_{>N} \|_{L^1(\mathbb T)} \sim \frac 1 { 1+(N \nu)^2} \log (N+2).
%\end{align*}
%
%\end{thm}
%
%
%\begin{proof}
%Thanks to Proposition \ref{prop_upperlower1}, we only need to consider the
%case $1\le N \le \max \{N_1, \, e^{3\pi^2} \}$. Since $N\sim 1$, we only
%need to show
%\begin{align*}
%\| K_{>N} \|_{L^1(\mathbb T)} \sim \frac 1 { 1+ \nu^2}.
%\end{align*}
%The upper bound is easy:
%\begin{align*}
%\| K_{>N} \|_{L^1(\mathbb T)} \le \| K_{>N} \|_{L^{\infty}(\mathbb T)}
%\lesssim \sum_{k\ge N} \frac 1 { 1+ (2\pi \nu k)^2} \lesssim \frac 1 {1+\nu^2}.
%\end{align*}
%For the lower bound, one notes that
%\begin{align*}
%\| K_{>N} \|_{L^2(\mathbb T)} \sim \Bigl
%(\sum_{k\ge N} \frac 1 {(1+ (2\pi \nu k)^2)^2} \Bigr)^{\frac 12} \sim  \frac 1 {1+\nu^2}.
%\end{align*}
%Since
%\begin{align*}
%\| K_{>N} \|_{L^2(\mathbb T)} \le \| K_{>N} \|_{L^1(\mathbb T)}^{\frac 12} 
%\| K_{>N} \|_{L^{\infty}(\mathbb T)}^{\frac 12},
%\end{align*}
%the desired lower bound then easily follows.
%\end{proof}
%}

\begin{lem} \label{lem_FNlower1}
Let 
\begin{align*}
F_N(x) = - \frac{\sin (2N+1)\pi x} {\sin \pi x} - \frac 2 N
\cdot \left( \frac {\sin N \pi x} {\sin \pi x } \right)^2.
\end{align*}
There exists an absolute constant $N_0>0$ sufficiently large, 
such that  the following hold for any $N\ge N_0$, $\epsilon \in [\frac 1 {10},\frac 16]$:

If $x= \frac {j+1-\epsilon}{2N}$, $N^{\frac 1{10}} <j <N^{\frac 12}$, and $j$ is odd, then
\begin{align*}
F_N(x) \ge \beta_1 \frac N j,
\end{align*}
where $\beta_1$ is an absolute constant.
%If $x= \frac {j+1-\epsilon}{2N}$, $N^{\frac 1{10}} <j <N^{\frac 12}$,  and $ j$ is even, then
%\begin{align*}
%F_N(x) \gtrsim
%\end{align*}
\end{lem}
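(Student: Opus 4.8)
The plan is to show that at the prescribed points $x=\frac{j+1-\epsilon}{2N}$ the Dirichlet piece $-\frac{\sin(2N+1)\pi x}{\sin\pi x}$ is the dominant contribution, of size $\sim N/j$ and with the correct positive sign thanks to the parity of $j$, while the Fej\'er piece $\frac2N\bigl(\frac{\sin N\pi x}{\sin\pi x}\bigr)^2$ is only $O(N/j^2)$ and hence negligible once $j\ge N^{1/10}$.

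First I would record that on the range $N^{1/10}<j<N^{1/2}$ one has $\pi x=\frac{(j+1-\epsilon)\pi}{2N}=O(N^{-1/2})$, whence $\sin\pi x=\pi x\,(1+O(N^{-1}))$ and $\frac1{\sin\pi x}=\frac{2N}{(j+1-\epsilon)\pi}\,(1+O(N^{-1}))$. Next, since $j$ is odd, $j+1$ is even, so $(2N+1)\pi x=(j+1-\epsilon)\pi+\frac{(j+1-\epsilon)\pi}{2N}$ and therefore $\sin\bigl((2N+1)\pi x\bigr)=-\sin(\epsilon\pi)+O(N^{-1/2})$, the error coming from the phase shift $\frac{(j+1-\epsilon)\pi}{2N}=O(N^{-1/2})$. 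Combining these, and using $j+1-\epsilon=j\,(1+O(1/j))$ together with $j>N^{1/10}$, I get
\[
-\frac{\sin(2N+1)\pi x}{\sin\pi x}=\frac{2N\sin(\epsilon\pi)}{j\pi}\bigl(1+O(N^{-1/10})\bigr).
\]
The parity hypothesis is used precisely here: for $j$ even the same computation would produce $+\sin(\epsilon\pi)$ and the sign would be wrong. For the Fej\'er piece, $N\pi x=\frac{(j+1-\epsilon)\pi}{2}$ and $j+1$ even force $\bigl(\sin N\pi x\bigr)^2=\sin^2\!\bigl(\tfrac{\epsilon\pi}{2}\bigr)$ exactly, so
\[
\frac2N\Bigl(\frac{\sin N\pi x}{\sin\pi x}\Bigr)^2=\frac{8N\sin^2(\epsilon\pi/2)}{(j+1-\epsilon)^2\pi^2}\bigl(1+O(N^{-1})\bigr)=O(N/j^2).
\]

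Adding the two estimates gives $F_N(x)=\frac{2N\sin(\epsilon\pi)}{j\pi}\bigl(1+O(N^{-1/10})\bigr)-O(N/j^2)$; since $N/j^2\le N^{-1/10}\,(N/j)$ and $\sin(\epsilon\pi)\ge\sin(\pi/10)>0$ uniformly for $\epsilon\in[\tfrac{1}{10},\tfrac{1}{6}]$, for every $N$ larger than some absolute constant $N_0$ the right-hand side is at least $\beta_1 N/j$ with, say, $\beta_1=\frac{\sin(\pi/10)}{2\pi}$. The only real work is the bookkeeping of the error terms — checking that each $O(\cdot)$ is absolute and uniform over the stated ranges of $j$ and $\epsilon$ — which is routine, since the three small scales in play, namely $(\pi x)^2\lesssim N^{-1}$, the phase perturbation $\lesssim N^{-1/2}$, and $1/j\lesssim N^{-1/10}$, are all controlled by fixed negative powers of $N$. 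I do not anticipate any genuine obstacle beyond keeping these scales straight.
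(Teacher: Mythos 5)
Your proof is correct and follows essentially the same route as the paper's: evaluate $\sin((2N+1)\pi x)$ and $\sin^2(N\pi x)$ at $x=\frac{j+1-\epsilon}{2N}$ using the parity of $j$ to get the sign right, obtain the Dirichlet contribution $\gtrsim \epsilon N/j$, and observe that the Fej\'er piece is $O(\epsilon^2 N/j^2)$, hence negligible once $j\ge N^{1/10}$. The bookkeeping of the error scales is exactly as you describe and presents no obstacle.
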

\begin{proof}
Note that 
\begin{align*}
&-\sin (2N+1)\pi x= (-1)^{j+1} \sin (\epsilon \pi - \frac {j+1-\epsilon}{2N} \pi)
=\sin (\epsilon \pi - \frac {j+1-\epsilon}{2N} \pi) \ge \frac 12 \epsilon \pi, \\
& 2\sin^2 N\pi x= 1-\cos 2N\pi x=1-(-1)^{j+1} \cos (\epsilon \pi)=
\frac 12 \epsilon^2 \pi^2 + O(\epsilon^4),
\end{align*}
where in the first inequality above we have used the assumptions on $N$ and $\epsilon$. 
Then 
\begin{align*}
&- \frac{\sin (2N+1)\pi x} {\sin \pi x} \ge 
 \operatorname{const} \cdot \epsilon \cdot \frac N j,\\
& \frac 2 N
\cdot \left( \frac {\sin N \pi x} {\sin \pi x } \right)^2
\le \operatorname{const} \cdot \frac N {j^2} \cdot \epsilon^2.
\end{align*}
The desired result then clearly follows.

\end{proof}

Lemma \ref{lem_FNlower1} leads to an interesting point-wise lower bound on $K_{>N}$.
It also yields another proof for $\|K_{>N}\|_{L^1(\mathbb T)}$. We record it here
for the sake of completeness.

\begin{prop}
Let $\nu>0$. If $N\ge \max\{N_0, \, \frac 1 {2\sqrt 3 \pi \nu} \}$ where $N_0$ is the
same absolute constant as in Lemma \ref{lem_FNlower1}, then for any
$0\le x<1$, 
\begin{align*}
K_{>N}(x) \ge \beta_1 \sum_{\substack{N^{\frac 1{10}} <j<N^{\frac 12}\\
\text{$j$ is odd} }} 
\frac 1 {1+4\pi^2 \nu^2 N^2}
\cdot \frac N j \cdot 1_{[\frac {j+\frac 56}{2N}, \, \frac{j+\frac 9 {10}} {2N} ]},
\end{align*}
where $\beta_1$ is the same absolute constant as in Lemma \ref{lem_FNlower1}. 
Consequently
\begin{align*}
\| K_{>N}\|_{L^1(\mathbb T)} \gtrsim \frac 1 {1+4\pi^2 \nu^2 N^2}
\log (N+2).
\end{align*}
\end{prop}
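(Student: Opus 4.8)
The plan is to recycle the Fej\'er-kernel expansion already derived in the proof of Proposition~\ref{prop_upperlower} and then feed its ``boundary terms'' into Lemma~\ref{lem_FNlower1}. Write $c_k=(1+4\pi^2\nu^2k^2)^{-1}$ and recall from there the identity
\begin{align*}
K_{>N}(x)=\sum_{k\ge N}(c_k-2c_{k+1}+c_{k+2})\,\tilde F_k(x)-(c_N-c_{N+1})\,\tilde F_{N-1}(x)-c_N D_N(x).
\end{align*}
Since $N\ge\frac1{2\sqrt3\pi\nu}$ guarantees $12k^2\pi^2\nu^2\ge1$ for every $k\ge N$, each coefficient $c_k-2c_{k+1}+c_{k+2}$ is nonnegative, and $\tilde F_k\ge0$ always, so the whole first sum may be discarded in a lower bound:
\begin{align*}
K_{>N}(x)\ge-(c_N-c_{N+1})\tilde F_{N-1}(x)-c_N D_N(x)=c_N\Bigl(-D_N(x)-\frac{c_N-c_{N+1}}{c_N}\,\tilde F_{N-1}(x)\Bigr).
\end{align*}

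Next I would perform an elementary comparison of coefficients: cross-multiplying gives $\frac{c_N-c_{N+1}}{c_N}=\frac{4\pi^2\nu^2(2N+1)}{1+4\pi^2\nu^2(N+1)^2}\le\frac2N$ for all $N\ge1$ and $\nu>0$ (the difference of the two sides has a negative numerator after clearing denominators). Because $\tilde F_{N-1}\ge0$, replacing this coefficient by the larger value $\frac2N$ only shrinks the bracket, so with $F_N(x)=-D_N(x)-\frac2N\tilde F_{N-1}(x)$ — which is exactly the function appearing in Lemma~\ref{lem_FNlower1}, since $D_N(x)=\frac{\sin(2N+1)\pi x}{\sin\pi x}$ and $\tilde F_{N-1}(x)=\bigl(\frac{\sin N\pi x}{\sin\pi x}\bigr)^2$ — we arrive at the clean pointwise bound $K_{>N}(x)\ge c_N F_N(x)$ valid for every $x$.

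Now I localize. A point $x=\frac{j+1-\epsilon}{2N}$ with $\epsilon\in[\frac1{10},\frac16]$ runs exactly over the interval $I_j:=[\frac{j+5/6}{2N},\frac{j+9/10}{2N}]$, and for $j$ odd with $N^{1/10}<j<N^{1/2}$ (and $N\ge N_0$) Lemma~\ref{lem_FNlower1} yields $F_N(x)\ge\beta_1\frac Nj$ on $I_j$; hence on $I_j$ we get $K_{>N}(x)\ge\beta_1 c_N\frac Nj=\frac{\beta_1}{1+4\pi^2\nu^2N^2}\cdot\frac Nj$. The admissible odd $j$ differ by at least $2$ while each $I_j$ has length $\frac1{30N}$, so the $I_j$ are pairwise disjoint and at most one indicator in the asserted sum is active at any given $x$; thus the displayed pointwise inequality is precisely the collection of these per-interval estimates. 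For the $L^1$ statement, integrate over $\bigcup_j I_j$:
\begin{align*}
\|K_{>N}\|_{L^1(\mathbb T)}\ge\sum_{\substack{N^{1/10}<j<N^{1/2}\\ j\ \text{odd}}}\int_{I_j}|K_{>N}|\ge\frac{\beta_1}{30\,(1+4\pi^2\nu^2N^2)}\sum_{\substack{N^{1/10}<j<N^{1/2}\\ j\ \text{odd}}}\frac1j,
\end{align*}
and the final sum is $\gtrsim\log N\gtrsim\log(N+2)$ for $N$ large by comparison with $\int_{N^{1/10}}^{N^{1/2}}dt/t$.

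There is no serious obstacle here beyond Lemma~\ref{lem_FNlower1} itself; the only points demanding care are the two sign-sensitive reductions. Discarding the high-$k$ Fej\'er terms is legitimate only under the convexity condition $12k^2\pi^2\nu^2\ge1$ for all $k\ge N$, i.e.\ exactly the hypothesis $N\ge\frac1{2\sqrt3\pi\nu}$; and enlarging $\frac{c_N-c_{N+1}}{c_N}$ to $\frac2N$ is admissible only because $\tilde F_{N-1}$ has a fixed sign. One should also read the displayed pointwise bound as this union of per-interval inequalities, since $K_{>N}$ has zero mean and is therefore genuinely negative somewhere on $\mathbb T$.
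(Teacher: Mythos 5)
Your proof is correct and follows essentially the same route as the paper's: the same Fej\'er-kernel identity, the same convexity argument under $N\ge\frac1{2\sqrt3\pi\nu}$ to discard the nonnegative sum, the same bound $c_N-c_{N+1}\le\frac2N c_N$, and the same invocation of Lemma~\ref{lem_FNlower1} followed by integration over the disjoint intervals. The extra details you supply (verifying the coefficient comparison, disjointness of the $I_j$, and the harmonic-sum estimate) are exactly what the paper compresses into ``direct integration,'' and your remark that the pointwise bound must be read interval-by-interval because $K_{>N}$ has zero mean is an apt observation.
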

\begin{proof}
Clearly
\begin{align*}
K_{>N}(x)=\sum_{|k|\ge N+1}
\frac 1 {1+4\pi^2 \nu^2 k^2} e^{2\pi i k\cdot x} 
& =\sum_{k\ge N} (c_k-2c_{k+1} +c_{k+2}) \tilde F_k - c_N D_N
-(c_N-c_{N+1})\tilde F_{N-1},
\end{align*}
where we use again the kernel
\begin{align*}
\tilde F_k (x)= \sum_{j=0}^k D_j (x) = \left( \frac {\sin (k+1) \pi x} {\sin \pi x} \right)^2
\end{align*}
and 
\begin{align*}
c_k = \frac 1 {1+4\pi^2 \nu^2 k^2}. 
\end{align*}

Note that for $k\ge N\ge \frac 1 {2\sqrt 3 \pi \nu}$, we have
\begin{align*}
c_k -2 c_{k+1} +c_{k+2} \ge 0.
\end{align*}
Noting that $c_N-c_{N+1} \le c_N\cdot \frac 2N$, we obtain
\begin{align*}
\sum_{|k|\ge N+1}
\frac 1 {1+4\pi^2 \nu^2 k^2} e^{2\pi i k\cdot x} 
&\ge  \frac 1 {1+4\pi^2 \nu^2 N^2} \cdot
\left[- \frac {\sin (2N+1)\pi x} {\sin \pi x}
- \frac 2 N \cdot \left(
\frac {\sin N\pi x} {\sin \pi x } \right)^2\right].
\end{align*}
The point-wise lower bound then easily follows from Lemma \ref{lem_FNlower1}. The 
$L^1$ lower bound follows from direct integration.
\end{proof}
\begin{prop} \label{prop_tmp211}
Let $\nu>0$. If $N\ge  \frac 1 {2\pi^2 \nu}
 e^{\frac 1 {2\nu}}$, then $K_N(x) >0$ for all
$x \in \mathbb T$ and $\| K_N\|_{L^1(\mathbb T)} =1$.

The dependence of $N$ on $\nu$
is almost sharp in the following sense: for $0<\nu\ll 1$, if $
\frac 1{2\sqrt 3  \pi \nu} \le N \lesssim \frac 1 {\sqrt{\nu}} e^{\frac 1{6\nu}}$, then 
\begin{align*}
K_N<- \frac {0.3} {1+4 \pi^2 \nu^2 (N+1)^2}
\end{align*}
 on a set of measure $O(1)$ and
 \begin{align*}
 \|K_N\|_{L^1(\mathbb T)} \ge 1+ \frac {\gamma_1}{1+4 \pi^2 \nu^2 (N+1)^2},
 \end{align*}
 where $\gamma_1$ is an absolute constant.
\end{prop}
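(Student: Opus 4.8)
The plan is to write $K_N=g_\nu-K_{>N}$, where $\beta=4\pi^2\nu^2$ and $g_\nu(x)=\sum_{n\in\mathbb Z}\frac1{2\nu}e^{-|x+n|/\nu}$ is the periodic Green's function of $\operatorname{Id}-\nu^2\partial_{xx}$ (so $\widehat{g_\nu}(k)=(1+\beta k^2)^{-1}$, and the splitting $K_N=g_\nu-K_{>N}$ is just the decomposition of Fourier modes). Summing the geometric series, $g_\nu(x)=\dfrac{\cosh((x-\frac12)/\nu)}{2\nu\sinh(1/(2\nu))}$ for $x\in[0,1]$, so $g_\nu$ is positive, decreasing on $[0,\frac12]$, and $\min_{\mathbb T}g_\nu=g_\nu(\frac12)=\frac1{2\nu\sinh(1/(2\nu))}>\frac1\nu e^{-1/(2\nu)}$ (since $e^t>2\sinh t$). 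For the first (positivity) assertion I would need only the crude estimate $\|K_{>N}\|_{L^\infty}\le\sum_{|k|>N}(1+\beta k^2)^{-1}\le\frac1{2\pi^2\nu^2}\sum_{k>N}k^{-2}\le\frac1{2\pi^2\nu^2N}$: when $N\ge\frac1{2\pi^2\nu}e^{1/(2\nu)}$ one has $N>\frac{\sinh(1/(2\nu))}{\pi^2\nu}$ (again $e^t>2\sinh t$), hence $\|K_{>N}\|_{L^\infty}<\frac1{2\nu\sinh(1/(2\nu))}=\min_{\mathbb T}g_\nu$, so $K_N(x)=g_\nu(x)-K_{>N}(x)>0$ for all $x$, and therefore $\|K_N\|_{L^1}=\int_{\mathbb T}K_N=\widehat{K_N}(0)=1$. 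The stated threshold $\frac1{2\pi^2\nu}e^{1/(2\nu)}$ drops out of this comparison exactly.

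For the second (almost sharpness) assertion I would work on the test interval $I=[\frac13,\frac12]$, where $g_\nu$ is exponentially small but $K_{>N}$ is not. Fix a small absolute constant $c_0$ (say $c_0=\frac1{40\pi}$), suppose $0<\nu\ll1$ and $\frac1{2\sqrt3\pi\nu}\le N\le c_0\nu^{-1/2}e^{1/(6\nu)}$, and set $c_k=(1+\beta k^2)^{-1}$. By the Abel/Fejér summation identity established in the proof of Proposition~\ref{prop_upperlower},
\[
K_{>N}(x)=\sum_{k\ge N}(c_k-2c_{k+1}+c_{k+2})\tilde F_k(x)-(c_N-c_{N+1})\tilde F_{N-1}(x)-c_ND_N(x),
\]
and the hypothesis $N\ge\frac1{2\sqrt3\pi\nu}$ makes $c_k-2c_{k+1}+c_{k+2}\ge0$ for all $k\ge N$, with $\sum_{k\ge N}(c_k-2c_{k+1}+c_{k+2})=c_N-c_{N+1}\le 3(\beta N^3)^{-1}$. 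On $I$ we have $\sin^2\pi x\ge\frac34$, so $0\le\tilde F_k(x)\le\frac43$ and $|D_N(x)|\le\frac2{\sqrt3}$; hence the first two (sign-definite) terms are $\le\frac8{\beta N^3}\le\frac1{100}c_N$ (automatic, since $N$ is large when $\nu\ll1$), giving $K_{>N}(x)\ge -c_ND_N(x)-\frac1{100}c_N$ on $I$. On $S:=\{x\in I:\sin((2N+1)\pi x)\le-\frac12\}$ one has $D_N(x)\le-\frac12$ (as $0<\sin\pi x\le1$), so $K_{>N}(x)\ge\frac{49}{100}c_N$ there, and an elementary count of the periods of $\sin((2N+1)\pi\cdot)$ inside $I$ gives $|S|\ge\frac1{20}$.

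It then remains to compare with $g_\nu$: on $I$, $g_\nu(x)\le g_\nu(\frac13)=\frac{\cosh(1/(6\nu))}{2\nu\sinh(1/(2\nu))}\le\frac2\nu e^{-1/(3\nu)}$ for $\nu\ll1$, while $N\le c_0\nu^{-1/2}e^{1/(6\nu)}$ forces $\beta N^2\le 4\pi^2c_0^2\nu e^{1/(3\nu)}$ and hence $c_N\ge(8\pi^2c_0^2\nu)^{-1}e^{-1/(3\nu)}$ for $\nu\ll1$; the choice $c_0=\frac1{40\pi}$ then yields $g_\nu(x)\le\frac1{100}c_N$ on $I$. Consequently, on $S$,
\[
K_N(x)=g_\nu(x)-K_{>N}(x)\le\tfrac1{100}c_N-\tfrac{49}{100}c_N=-\frac{0.48}{1+\beta N^2}<-\frac{0.3}{1+\beta(N+1)^2},
\]
which proves the pointwise bound on the set $S$, of measure $\ge\frac1{20}$; finally $\|K_N\|_{L^1}=\int_{\mathbb T}K_N+2\int_{\{K_N<0\}}(-K_N)=1+2\int_{\{K_N<0\}}(-K_N)\ge1+2|S|\cdot\frac{0.48}{1+\beta N^2}\ge1+\frac{\gamma_1}{1+\beta(N+1)^2}$ with $\gamma_1=0.04$.

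This proof has no deep obstacle; its content is that the clean splitting $K_N=g_\nu-K_{>N}$ together with the Abel summation exposes $-c_ND_N$ as the dominant part of $K_{>N}$ on the bulk interval $I$ (where $\tilde F_k$ and $D_N$ are uniformly $O(1)$, so only the size of $c_N$ against $c_N-c_{N+1}$ matters). The two places needing care are the elementary counting argument showing $|S|\gtrsim1$, and—above all—the bookkeeping of the absolute constants so that the numerical targets $0.3$ and $\gamma_1$ are met; in particular the exponent $\frac16$ is pinned down by balancing $c_N\asymp(\beta N^2)^{-1}$ against $g_\nu$ at the left endpoint $\frac13$ of $I$, i.e.\ $\beta N^2\ll\nu e^{(1/2-1/3)/\nu}=\nu e^{1/(3\nu)}$. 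The gap between the resulting $e^{1/(6\nu)}$ and the positivity threshold $e^{1/(2\nu)}$ is genuine, which is why the statement is only ``almost'' sharp.
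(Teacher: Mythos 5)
Your proof is correct and follows essentially the same route as the paper's: positivity comes from writing $K_N=g_\nu-K_{>N}$ with the crude tail bound $\|K_{>N}\|_\infty\le \frac1{2\pi^2\nu^2 N}$ against $\min g_\nu\sim\frac1\nu e^{-1/(2\nu)}$, and the sharpness comes from the same Fej\'er summation-by-parts identity together with the convexity of $c_k=(1+\beta k^2)^{-1}$ for $k\ge N\ge\frac{1}{2\sqrt3\pi\nu}$. The only cosmetic difference is in how the negative set is exhibited: you take the sub-level set $\{x\in[\frac13,\frac12]:\sin((2N+1)\pi x)\le-\frac12\}$ and let $-c_ND_N$ dominate after uniformly bounding the Fej\'er terms, whereas the paper evaluates at the points $j_0/(N+1)$ where $\tilde F_N$ vanishes and then perturbs to a set of measure $O(1)$; both arguments rest on the same identity and yield the same conclusion.
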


%\begin{rem*}
%Alternatively one can use Theorem \ref{thm_uniform} to obtain complementary regimes of $N$ for
%which $K_N$ changes its sign. Indeed, by Theorem
%\ref{thm_uniform} there exists an absolute constant $\alpha_1>0$ such that
%if 
%\begin{align*}
%\frac 1 {1+(N\nu)^2} \log(N+2)>\alpha_1,
%\end{align*}
%then $\|K_{>N}\|_{L^1(\mathbb T)} >2$ and 
%$\|K_N\|_{L^1(\mathbb T)} >1$. Consequently $K_N$ must change its sign since
%$\int_{\mathbb T} K_N dx =1$.
%\end{rem*}

\begin{proof}
For the first result, note that 
\begin{align*}
K_N(x) &\ge \frac 1 {2\nu} \sum_{n\in \mathbb Z}
e^{-\frac 1 {\nu} |x+n|} - \sum_{|k|>N}  \frac 1 {1+4\pi^2 \nu^2 k^2} \notag \\
&> \frac 1 {\nu} e^{-\frac 1 {2\nu}} -  
\frac 1 {4\pi^2 \nu^2}
\frac 2 {N} \ge 0, 
\end{align*}
if $N\ge  \frac 1 {2\pi^2 \nu}
 e^{\frac 1 {2\nu}}$.

For the last result we recall the identity for $K_{>N}(x)$:
\begin{align*}
\sum_{|k|\ge N+1}
\frac 1 {1+4\pi^2 \nu^2 k^2} e^{2\pi i k\cdot x} 
& =\sum_{k\ge N} (c_k-2c_{k+1} +c_{k+2}) \tilde F_k 
-c_N \tilde F_N + c_{N+1} \tilde F_{N-1}.
 \end{align*}

Note that for $k\ge N\ge \frac 1 {2\sqrt 3 \pi \nu}$, we have
\begin{align*}
c_k -2 c_{k+1} +c_{k+2} \ge 0.
\end{align*}

Now if we take $x_0= \frac {j_0}{N+1}$, where $j_0$ is an integer in $[1,N]$, then 
$\tilde F_N=0$ and 
\begin{align*}
\tilde F_{N-1} = \biggl( \frac 
{ \sin ( N\pi \frac {j_0} {N+1} ) } { \sin (\frac {j_0 \pi} {N+1} ) } \biggr)^2 =1.
\end{align*}
It follows that for $x_0= \frac {j_0}{N+1}$, $N\ge  \frac 1 {2\sqrt 3 \pi \nu}$,
\begin{align*}
K_{>N}(x_0)
\ge  \frac 1 { 1+ 4\pi^2 \nu^2 (N+1)^2}.
\end{align*}
Thus 
\begin{align*}
K_N(x_0) &\le \frac 1 {2\nu} \sum_{n\in \mathbb Z}
e^{-\frac 1 {\nu} |x_0+n|} - \frac 1 {1+4 \pi^2 \nu^2 (N+1)^2} \notag\\
&< \frac 1 {\nu} \cdot 2\frac { e^{-\frac 1{\nu}  \min\{x_0,1-x_0\}  }} {1-e^{-\frac 1{\nu}}} 
- \frac 1 {1+4 \pi^2 \nu^2 (N+1)^2}. 
\end{align*}
Assume $j_0\in [\frac 13 (N+1), \frac 23 (N+1)]$ such that $x_0 \in [\frac 13, \frac 23]$, then
for $
\frac 1{2\sqrt 3  \pi \nu} \le N \lesssim \frac 1 {\sqrt{\nu}} e^{\frac 1{6\nu}}$, we have
\begin{align*}
K_N(x_0) < \frac 1 {\nu} \cdot \frac 2{e^{\frac 1 {3\nu}} (1-e^{-\frac 1 {\nu} }) } 
- \frac 1 {1+4 \pi^2 \nu^2 (N+1)^2} 
<- \frac {0.5} {1+4 \pi^2 \nu^2 (N+1)^2}.
\end{align*}
One may then pick $O(N)$ of  $j_0$ from the interval $[\frac 13 (N+1), \frac 23 (N+1)]$,
and choose $x_0 =\frac {j_0+\epsilon} {N+1}$, with $\epsilon \in [0,\epsilon_*]$
($\epsilon_*$ is a sufficiently small absolute constant), such that
\begin{align*}
K_N(x_0)<- \frac {0.3} {1+4 \pi^2 \nu^2 (N+1)^2}.
\end{align*}
Note that the total measure of such $x_0$ is $O(1)$. Finally we should point out that since $\int_{\mathbb T}
K_N dx=1$, the above point-wise bound then yields the (slightly inferior) $L^1$ lower
bound of $K_N$ as:
\begin{align*}
\|K_N\|_{L^1(\mathbb T)} \ge 1+ \frac {\gamma_1}{1+4 \pi^2 \nu^2 (N+1)^2},
\end{align*}
where $\gamma_1$ is an absolute constant. Note that we lose a logarithm here compared
to the optimal bound.
\end{proof}

% backup
%Now if we take $x_0= \frac {j_0}{N+1}$, where $j_0$ is an integer in $[1,N]$, then 
%$\tilde F_N=0$ and 
%\begin{align*}
%\tilde F_{N-1} = \biggl( \frac 
%{ \sin ( N\pi \frac {j_0} {N+1} ) } { \sin (\frac {j_0 \pi} {N+1} ) } \biggr)^2 =1.
%\end{align*}
%It follows that for $x_0= \frac {j_0}{N+1}$, $N\ge  \frac 1 {2\sqrt 3 \pi \nu}$,
%\begin{align*}
%\sum_{|k|\ge N+1}
%\frac 1 {1+4\pi^2 \nu^2 k^2} e^{2\pi i k\cdot x_0} 
%\ge  \frac 1 { 1+ 4\pi^2 \nu^2 (N+1)^2}.
%\end{align*}
%Thus 
%\begin{align*}
%K_N(x_0) &\le \frac 1 {4\nu} \sum_{n\in \mathbb Z}
%e^{-\frac 1 {2\nu} |x_0+n|} - \frac 1 {1+4 \pi^2 \nu^2 (N+1)^2} \notag\\
%&< \frac 1 {\nu} \cdot \frac { e^{-\frac 1{2\nu}  \min\{x_0,1-x_0\}  }} {1-e^{-\frac 1{2\nu}}} 
%- \frac 1 {1+4 \pi^2 \nu^2 (N+1)^2}. 
%\end{align*}
%Assume $j_0\in [\frac 13 N, \frac 23 N]$ such that $x_0 \in [\frac 13, \frac 23]$, then 
%\begin{align*}
%K_N(x_0) < \frac 1 {\nu} \cdot \frac 1{e^{\frac 1 {6\nu}} (1-e^{-\frac 1 {2\nu} }) } 
%- \frac 1 {1+4 \pi^2 \nu^2 (N+1)^2} 
%<- \frac {0.5} {1+4 \pi^2 \nu^2 (N+1)^2}.
%\end{align*}
%Note that there are $O(N)$ such $j_0$. 
%Denote $K_N^{+} =\max\{K_N,\,0\}$ and $K_N^{-} = \max\{ -K_N, 0 \}$. Then
%\begin{align*}
%\| K_N^-\|_{L^1} \gtrsim N \cdot \frac 1 {\nu^2  N^2} \gtrsim \frac 1 {N \nu^2}.
%\end{align*}
%Thus 
%\begin{align*}
%\| K\|_{L^1 (\mathbb T)}  \ge 1+ \operatorname{const} \cdot (N\nu^2)^{-1}.
%\end{align*}
%\end{proof}

\subsection{Generalization to convex sequences}

The simple method used in the proof of Proposition \ref{prop_upperlower} is quite robust and can
be generalized. For example, consider a sequence of  real
numbers $(c_k)_{k\ge 0}$ such that  $\sup_{k\ge 0} |c_k| <\infty$ and the following hold:
\begin{enumerate}
\item for some $k_0\ge 0$, 
\begin{align*}
c_k -2c_{k+1}+c_{k+2} \ge 0, \quad \forall\, k\ge k_0.
\end{align*}

\item $ \lim_{k \to \infty} (c_{k+1}-c_k) k =0$. 

\item $\lim_{k\to \infty} c_k \log k =0$.

\end{enumerate}
\begin{rem}\label{rem12implies}
Condition (1) and (2) implies that $\sum_{k=0}^{\infty} | c_{k} -2 c_{k+1} +
c_{k+2} | \cdot (k+1) <\infty$.
\end{rem}

\begin{rem*}
Condition (3) cannot be deduced from (1) and (2). For example, let $c_k = \frac 1 {\log\log k}$, then condition (1) and (2) are satisfied, but
not condition (3).
\end{rem*}
\begin{rem*}
Condition (2) and (3) is convenient for extracting convergence rates. These conditions can be
weakened further provided one works with other norms such as total variational norms and so
on.
However, we do not dwell on this issue here.
\end{rem*}

Now let
\begin{align*}
G_N(x) = \sum_{|k| \le N} c_{|k|} e^{2\pi i k\cdot x} = c_0+ 2 \sum_{k=1}^N 
c_k \cos (2 \pi k \cdot x).
\end{align*}

\begin{thm}
Suppose the sequence $(c_k)_{k\ge 0}$ satisfies the conditions (1), (2) and (3). Then
$G_N$ converges in $L^1$ to a function $G_{\infty}$ as $N$ tends to infinity.
Furthermore the following upper and lower estimates hold:
\begin{align*}
& \| G_N-G_{\infty} \|_{L^1(\mathbb T)}
\le 2 |c_N-c_{N+1}| \cdot N + \alpha_1|c_N| \log N, \quad \forall\, N\ge 4; \\
&\| G_N - G_{\infty} \|_{L^1(\mathbb T)} \ge c_N \frac 1 {\pi^2}
\log (N+1) - |c_N-c_{N+1}| \cdot N, \quad \forall\, N\ge 4,
\end{align*}
where $\alpha_1>0$ is an absolute constant. 
\end{thm}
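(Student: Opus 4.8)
The plan is to run the argument of Proposition~\ref{prop_upperlower} with the concrete sequence $(1+4\pi^2\nu^2k^2)^{-1}$ replaced by an abstract $(c_k)$ satisfying (1)--(3). The algebraic engine is the identity $2\cos2\pi kx=\tilde F_k-2\tilde F_{k-1}+\tilde F_{k-2}$ recorded in the preliminaries. Applying it to the tail $\sum_{N<|k|\le M}c_{|k|}e^{2\pi ikx}=2\sum_{k=N+1}^{M}c_k\cos2\pi kx$ and summing by parts twice (first against $D_k=\tilde F_k-\tilde F_{k-1}$, then against $\tilde F_k$) produces, for every $4\le N<M$, a closed form
\begin{align*}
G_M-G_N=\sum_{k=N}^{M-2}(c_k-2c_{k+1}+c_{k+2})\tilde F_k-(c_N-c_{N+1})\tilde F_{N-1}-c_ND_N+\mathcal R_{N,M},
\end{align*}
where $\mathcal R_{N,M}=c_MD_M-(c_M-c_{M-1})\tilde F_{M-1}$ is a boundary term with $\|\mathcal R_{N,M}\|_{L^1}=O\big(|c_M|\log(M+2)+|c_M-c_{M-1}|M\big)$. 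First I would use this to obtain the limit: by Remark~\ref{rem12implies} the series $\sum_k|c_k-2c_{k+1}+c_{k+2}|(k+1)$ converges and $\|\tilde F_k\|_{L^1}=k+1$, while (2) forces $|c_M-c_{M-1}|M\to0$ and (3) forces $|c_M|\log(M+2)\to0$; hence $(G_N)$ is Cauchy in $L^1(\mathbb T)$ and defines $G_\infty$, and letting $M\to\infty$ above yields the working identity
\begin{align*}
G_\infty-G_N=\sum_{k\ge N}(c_k-2c_{k+1}+c_{k+2})\tilde F_k-(c_N-c_{N+1})\tilde F_{N-1}-c_ND_N,
\end{align*}
identical in shape to the one in Proposition~\ref{prop_upperlower}.

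Next I would record a monotonicity/sign fact. With $d_k:=c_{k+1}-c_k$, condition (1) says $(d_k)_{k\ge k_0}$ is nondecreasing and condition (2) says $kd_k\to0$, so $d_k\le0$ for $k\ge k_0$; thus $(c_k)$ is eventually nonincreasing, and since $c_k\to0$ by (3) it is also eventually nonnegative. Hence for $N\ge\max\{4,k_0\}$ the leading piece $P:=\sum_{k\ge N}(c_k-2c_{k+1}+c_{k+2})\tilde F_k$ is a nonnegative function, so $\|P\|_{L^1}=\int_{\mathbb T}P$; inserting $\int\tilde F_k=k+1$, $\int\tilde F_{N-1}=N$, $\int D_N=1$ and $\int_{\mathbb T}(G_\infty-G_N)=0$ (there is no zero mode) into the working identity gives the exact value $\|P\|_{L^1}=(N+1)c_N-Nc_{N+1}$.

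For the upper bound I would take absolute values in the working identity: using $\|\tilde F_{N-1}\|_{L^1}=N$, the standard Lebesgue-constant estimate $\|D_N\|_{L^1(\mathbb T)}\lesssim\log(N+2)$, and $c_N-c_{N+1}\ge0$ in this regime,
\begin{align*}
\|G_\infty-G_N\|_{L^1}\le\big[(N+1)c_N-Nc_{N+1}\big]+N(c_N-c_{N+1})+c_N\|D_N\|_{L^1}=2N(c_N-c_{N+1})+c_N\big(1+\|D_N\|_{L^1}\big),
\end{align*}
and since $1+\|D_N\|_{L^1}\le\alpha_1\log N$ for $N\ge4$ and $c_N=|c_N|$, $c_N-c_{N+1}=|c_N-c_{N+1}|$ for $N\ge k_0$, the claimed estimate follows (the finitely many cases $4\le N<k_0$, when present, being absorbed into the constant). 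For the lower bound I would localize the working identity to $\Omega_N^-=\{x:D_N(x)<0\}$: on $\Omega_N^-$ one has $P\ge0$ and $-c_ND_N=c_N|D_N|\ge0$, so $G_\infty-G_N\ge c_N|D_N|-(c_N-c_{N+1})\tilde F_{N-1}$ there, whence by Proposition~\ref{prop_Omegapp} ($\int_{\Omega_N^-}(-D_N)\ge\pi^{-2}\log(N+1)$) and $\int_{\Omega_N^-}\tilde F_{N-1}\le\|\tilde F_{N-1}\|_{L^1}=N$,
\begin{align*}
\|G_\infty-G_N\|_{L^1(\mathbb T)}\ge\int_{\Omega_N^-}(G_\infty-G_N)\ge\frac{c_N}{\pi^2}\log(N+1)-(c_N-c_{N+1})N,
\end{align*}
which is exactly the asserted lower bound (trivially true, hence harmless, when its right-hand side is negative).

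I expect the only genuinely delicate point to be the bookkeeping of the two summations by parts and the verification that the $k=M$ boundary term $\mathcal R_{N,M}$ really tends to $0$ in $L^1$ as $M\to\infty$, so that the working identity for $G_\infty-G_N$ is legitimate and $G_\infty$ is well defined; once that is in place, the upper and lower estimates are a direct transcription of the proofs of Propositions~\ref{prop_Omegapp} and~\ref{prop_upperlower}. The one conceptual wrinkle to flag is that hypotheses (1)--(2) only become effective for indices $\ge k_0$, so the absolute-constant statements are cleanest for $N\ge\max\{4,k_0\}$, the remaining finitely many low-$N$ cases being handled by enlarging the constant.
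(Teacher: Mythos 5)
Your proof is correct and follows the same route as the paper: the double summation by parts against the kernels $\tilde F_k$, the Cauchy-in-$L^1$ argument via Remark \ref{rem12implies} together with conditions (2)--(3) killing the boundary terms at $M$, and then a transcription of the upper and lower estimates from Propositions \ref{prop_Omegapp} and \ref{prop_upperlower} (the paper explicitly omits these last details). The caveat you flag about indices below $k_0$ is genuine but is a feature of the theorem's statement rather than of your argument; for $N\ge\max\{4,k_0\}$ everything you write --- including the exact identity $\|P\|_{L^1}=(N+1)c_N-Nc_{N+1}$ obtained from the zero mean of $G_\infty-G_N$, and the eventual nonnegativity and monotonicity of $(c_k)$ needed to drop the absolute values --- checks out.
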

\begin{rem*}
This theorem is quite handy in practical applications. For example, one can take
$c_k =k^{-\alpha}$, $k\ge 1$, where $\alpha>0$ (note that we do not require $\alpha>1$!). 
It is easy to check that conditions (1), (2) and (3) are satisfied. Then corresponding to
this sequence $(c_k)_{k\ge 0}$ we have
\begin{align*}
\| G_N - G_{\infty} \|_{L^1(\mathbb T)} \sim N^{-\alpha} \log N, 
\end{align*}
for all large $N$. 
\end{rem*}

\begin{proof}
Note that for $4\le N<M$, 
\begin{align*}
G_M - G_N & = \sum_{N<k\le M} c_k (\tilde F_k - 2 \tilde F_{k-1} + \tilde F_{k-2} ) \notag \\
& = \sum_{k=N+1}^M c_k \tilde F_k -2 \sum_{k=N}^{M-1} c_{k+1} \tilde F_{k} +
\sum_{k=N-1}^{M-2} c_{k+2} \tilde F_{k} \notag \\
& =\sum_{k=N}^{M-2} (c_k -2 c_{k+1} +c_{k+2}) \tilde F_k
+c_M \tilde F_M +c_{M-1} \tilde F_{M-1} -2 c_M \tilde F_{M-1} -c_N \tilde F_N
+c_{N+1} \tilde F_{N-1} \notag \\
&=\sum_{k=N}^{M-2} (c_k -2 c_{k+1} +c_{k+2}) \tilde F_k
+c_M D_M +(c_{M-1}-c_{M}) \tilde F_{M-1} - (c_N-c_{N+1}) \tilde F_{N-1}
-c_N D_N.
\end{align*}
Since (by Remark \ref{rem12implies}) $\sum_{k} |c_k -2 c_{k+1} +c_{k+2}| (k+1)<\infty$, it follows easily that $G_N$ is Cauchy
in $L^1$ and converges to an $L^1$ function as $N\to \infty$.  The upper and lower
bounds are similarly estimated as before. We omit the details.
\end{proof}
\begin{rem*}
One can also change the metric and obtain more precise point-wise convergence results with suitable weights.
For example, fix any $\delta_0>0$, one can show (say identify $\mathbb T=[-\frac 12,
\frac 12)$)
\begin{align*}
\|G_N-G_{M}\|_{L_x^{\infty}(\delta_0<|x|<\frac 12)} \to 0,
\end{align*}
as $N<M$ tends to infinity.  
\end{rem*}

\subsection{PDE results}
Let $\nu>0$ and consider the following linear elliptic PDE posed on the 1D torus $\mathbb 
T=[0,1)$:
\begin{align} \label{simplest_elpde}
u - \nu^2 \partial_{xx} u = \Pi_N f,
\end{align}
where we recall $\Pi_N$  defined as
\begin{align*}
\Pi_N f = \sum_{|k|\le N} \hat f(k) e^{2\pi i k \cdot x}.
\end{align*}
This equation is perhaps one of the simplest cases for the spectral Galerkin method. By using
the results derived earlier, we have the following theorem.
\begin{thm} \label{thm2.14old}
Let $f\in L^{\infty}(\mathbb T)$. Then the unique solution $u$ to \eqref{simplest_elpde}
satisfies the following:

\begin{itemize}
\item Strict maximum principle. If $N\ge  \frac 1 {2\pi^2 \nu} e^{\frac 1 {2\nu}}$, then
\begin{align*}
\| u\|_{L^{\infty}(\mathbb T)} \le \| f \|_{L^{\infty}(\mathbb T)}.
\end{align*}
\item Generalized maximum principle. For all $N\ge 1$, 
\begin{align*}
\| u \|_{L^{\infty}(\mathbb T)} \le
\left( 1+ \frac{\alpha_1} { 1+ (\pi \nu N)^2} \log (N+2) \right) \| f\|_{L^{\infty}(\mathbb T)},
\end{align*}
where $\alpha_1>0$ is an absolute constant.

\item Lower bound. For each $N\ge 1$, there exists smooth $f^{(N)}$ with $
\|f^{(N)}\|_{L^{\infty}(\mathbb T)}=1$, such that the corresponding solution
$u^{(N)}$ satisfies
\begin{align*}
\| u^{(N)} \|_{L^{\infty}(\mathbb T)} \ge 
\frac {\alpha_2} { 1+ (\pi N\nu)^2} \log (N+2) -1,
\end{align*}
where $\alpha_2>0$ is an absolute constant.

\item Lower bound for small $\nu$.  Assume $0<\nu\ll 1$.  For each $N$ satisfying 
$\frac 1{2\sqrt 3  \pi \nu} \le N \lesssim \frac 1 {\sqrt{\nu}} e^{\frac 1{6\nu}}$, 
there exists smooth $f^{(N)}$ with $
\|f^{(N)}\|_{L^{\infty}(\mathbb T)}=1$, such that the corresponding solution
$u^{(N)}$ satisfies
 \begin{align*}
 \|u^{(N)} \|_{L^{\infty}(\mathbb T)} \ge 1+ \frac {\alpha_3}{1+4 \pi^2 \nu^2 (N+1)^2},
 \end{align*}
 where $\alpha_3>0$ is an absolute constant.
 
\end{itemize}

\end{thm}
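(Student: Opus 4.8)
The plan is to write the solution operator of \eqref{simplest_elpde} as convolution with the truncated kernel $K_N$ (Fourier multiplier $(1+4\pi^2\nu^2k^2)^{-1}\mathbf 1_{|k|\le N}$) analyzed above, and then read off each of the four assertions directly from the $L^1$ bounds already established for $K_N$ and $K_{>N}$. First I would observe that taking Fourier coefficients in \eqref{simplest_elpde} gives $(1+4\pi^2\nu^2k^2)\widehat u(k)=\mathbf 1_{|k|\le N}\widehat f(k)$, so the solution is unique, lies in $S_N$, and equals $K_N*f$. Writing $g=g_{\nu^2}$ for the periodized Green kernel of $(\operatorname{Id}-\nu^2\partial_{xx})^{-1}$ from the Remark following Theorem \ref{thm1.1_00} (so $g>0$, $\widehat g(k)=(1+4\pi^2\nu^2k^2)^{-1}$, $\|g\|_{L^1(\mathbb T)}=1$), one has the splitting $K_N=g-K_{>N}$ together with $\int_{\mathbb T}K_N=\widehat{K_N}(0)=1$.

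For the effective (generalized) maximum principle I would use Young's inequality $\|u\|_{L^\infty}=\|K_N*f\|_{L^\infty}\le\|K_N\|_{L^1}\|f\|_{L^\infty}$ together with the triangle inequality $\|K_N\|_{L^1}\le\|g\|_{L^1}+\|K_{>N}\|_{L^1}=1+\|K_{>N}\|_{L^1}$ and the upper bound of Proposition \ref{prop_KN_lu_0001}; this yields the stated estimate with $\alpha_1=c_2$ for every $N\ge 1$. For the strict maximum principle, Proposition \ref{prop_tmp211} already gives $K_N>0$ pointwise once $N\ge\frac{1}{2\pi^2\nu}e^{1/(2\nu)}$, and combined with $\int_{\mathbb T}K_N=1$ this forces $\|K_N\|_{L^1}=1$, hence $\|u\|_{L^\infty}\le\|f\|_{L^\infty}$.

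Both lower bounds come from the elementary $L^1$–$L^\infty$ duality: for the smooth torus kernel $K_N$ one has $\int_{\mathbb T}|K_N(-y)|\,dy=\|K_N\|_{L^1}$, so fixing $x_0=0$ and letting $f^{(N)}_\varepsilon$ be a mollification of $y\mapsto\operatorname{sgn}K_N(-y)$ (thus $\|f^{(N)}_\varepsilon\|_{L^\infty}\le1$), one gets $(K_N*f^{(N)}_\varepsilon)(0)\to\|K_N\|_{L^1}$ as $\varepsilon\to0$ by weak-$*$ convergence against the $L^1$ function $K_N(-\cdot)$. Choosing $\varepsilon$ small produces a smooth $f^{(N)}$ with $\|f^{(N)}\|_{L^\infty}=1$ and $\|u^{(N)}\|_{L^\infty}\ge\|K_N\|_{L^1}-\delta_N$ for an arbitrarily small $\delta_N>0$. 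For the third bullet I then bound $\|K_N\|_{L^1}\ge\|K_{>N}\|_{L^1}-\|g\|_{L^1}\ge\frac{c_1}{1+(\pi N\nu)^2}\log(N+2)-1$ using Proposition \ref{prop_KN_lu_0001} and absorb $\delta_N$ into the additive constant, obtaining $\alpha_2$ slightly below $c_1$. For the fourth bullet I instead invoke the second part of Proposition \ref{prop_tmp211}, which in the stated regime $\frac{1}{2\sqrt3\pi\nu}\le N\lesssim\nu^{-1/2}e^{1/(6\nu)}$ gives $\|K_N\|_{L^1}\ge1+\frac{\gamma_1}{1+4\pi^2\nu^2(N+1)^2}$; taking $\delta_N\le\frac{\gamma_1}{2}(1+4\pi^2\nu^2(N+1)^2)^{-1}$ then yields the claim with $\alpha_3=\gamma_1/2$.

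The only step needing a little care is the mollification in the lower bounds, where one must check that smoothing $\operatorname{sgn}K_N(-\cdot)$ costs at most an arbitrarily small amount at $x_0=0$. Since $K_N\in C^\infty(\mathbb T)$ for every finite $N$ this is immediate from dominated convergence, and the error $\delta_N$ can always be made negligible against the main terms $\frac{\log(N+2)}{1+(\pi N\nu)^2}$ and $\frac{1}{1+4\pi^2\nu^2(N+1)^2}$ respectively. I expect no serious obstacle: the theorem is essentially a repackaging of the already-established kernel estimates through convolution and duality, so the remaining work is just bookkeeping with the constants coming from Propositions \ref{prop_tmp211} and \ref{prop_KN_lu_0001}.
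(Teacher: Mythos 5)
Your proposal is correct and follows essentially the same route as the paper, whose proof simply cites Proposition \ref{prop_tmp211} for the first and fourth bullets and Proposition \ref{prop_KN_lu_0001} for the second and third. The convolution identity $u=K_N*f$, Young's inequality with the splitting $K_N=g_{\nu^2}-K_{>N}$, and the mollified-sign duality argument for the lower bounds are exactly the standard bookkeeping the paper leaves implicit.
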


\begin{proof}
The first and fourth result follow from Proposition \ref{prop_tmp211}.
The second and third results follow from Proposition \ref{prop_KN_lu_0001}.

\end{proof}

\subsection{The case $d\ge 2$}
We now discuss some higher dimensional analogues of the previous results.
In practical numerical computations, we usually use the projection 
\begin{align*}
\Pi_N=\Pi_N^{(d)} = \Pi_{N,k_1} \cdots \Pi_{N, k_d},
\end{align*}
where $\Pi_{N,k_j}$ refers to projection into each frequency coordinate $k_j$. In
yet other words $\Pi_N^{(d)}$ is the frequency truncation with $|k|_{\infty}\le N$. 

We begin with a technical lemma.

\begin{lem} \label{lem2.15_00}
Let $d\ge 1$, $\beta>0$, and $N\ge 2$.
Suppose $\phi \in C_c^{\infty}(\mathbb R^d)$ is such that
$\phi(\xi)=1$ for $|\xi| \le \frac 32\sqrt d$, and 
$\phi(\xi )=0$ for $|\xi|>2\sqrt d$. Then
\begin{align*}
\| 
\sum_{|k|_{\infty}>N}
\frac 1 {1+4\pi^2 \beta |k|^2}
\phi(\frac k N) e^{2\pi i k\cdot x }
\|_{L_x^1(\mathbb T^d)} \sim
\frac 1 {1+\beta N^2} ( \log (N+2))^d,
\end{align*}
where the implied constants depend only on $d$ and the function $\phi$.
\end{lem}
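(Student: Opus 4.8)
The plan is to transplant the one–dimensional argument behind Proposition~\ref{prop_KN_lu_0001} to the cube geometry via an inclusion–exclusion over the $d$ coordinate directions, keeping track that every constant depends only on $d$ and $\phi$. Write $G$ for the kernel in the statement. Since $\phi\equiv1$ on $\{|\xi|\le\tfrac32\sqrt d\}$ and $\phi\equiv0$ outside $\{|\xi|\le 2\sqrt d\}$, the symbol $\phi(k/N)$ equals $1$ whenever $|k|_\infty\le N$ (then $|k|\le\sqrt d\,N$) and vanishes whenever $|k|_\infty>2\sqrt d\,N$; hence $\widehat G$ is supported in the box shell $\{\,N<|k|_\infty\le 2\sqrt d\,N\,\}$, on which $|k|\sim_d N$ and therefore $\tfrac1{1+4\pi^2\beta|k|^2}\sim_d\tfrac1{1+\beta N^2}$. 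I would then expand $1_{|k|_\infty>N}=\sum_{\emptyset\ne S\subseteq\{1,\dots,d\}}(-1)^{|S|+1}\prod_{j\in S}1_{|k_j|>N}$ and split $G=\sum_S(-1)^{|S|+1}G_S$ accordingly.

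For the \emph{upper bound} I would estimate each $G_S$ separately. Put $M:=\lfloor 2\sqrt d\,N\rfloor$. On $\operatorname{supp}\widehat{G_S}$ we may replace $1_{|k_j|>N}$ by $1_{N<|k_j|\le M}$ (since $\phi(k/N)\ne0$ forces $|k_j|\le M$), and there $|k/N|>1$. This yields the exact identity, valid for all $k\in\mathbb Z^d$,
\begin{align*}
\widehat{G_S}(k)=\frac1{1+\beta N^2}\,A(k/N)\prod_{j\in S}1_{N<|k_j|\le M},\qquad
A(\xi):=\frac{1+\beta N^2}{1+4\pi^2\beta N^2|\xi|^2}\,\phi(\xi)\,\tilde\rho(\xi),
\end{align*}
where $\tilde\rho\in C^\infty(\mathbb R^d)$ is a cutoff equal to $1$ on $\{|\xi|\ge1\}$ and vanishing on $\{|\xi|\le\tfrac12\}$ (needed only to remove the singular behaviour of the middle factor near the origin). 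The step I expect to be the main obstacle is showing that $A\in C_c^\infty(\mathbb R^d)$ with \emph{all} its seminorms bounded uniformly in $N$ and $\beta$: on the compact shell $\{\tfrac12\le|\xi|\le 2\sqrt d\}$ the renormalized multiplier $\tfrac{1+\lambda}{1+4\pi^2\lambda|\xi|^2}$ (with $\lambda=\beta N^2$) has all derivatives bounded uniformly over $\lambda\in(0,\infty)$, since each such derivative is a finite sum of terms $\tfrac{(1+\lambda)\lambda^m P(\xi)}{(1+4\pi^2\lambda|\xi|^2)^{m+1}}$ with $|\xi|$ kept away from $0$. Granting this, $\mathcal F^{-1}A$ is Schwartz with $\|\mathcal F^{-1}A\|_{L^1(\mathbb R^d)}\lesssim_{d,\phi}1$, so by Poisson summation the periodized kernel $\mathcal A_N(x):=\sum_kA(k/N)e^{2\pi ik\cdot x}$ has $\|\mathcal A_N\|_{L^1(\mathbb T^d)}\lesssim_{d,\phi}1$, and the identity above exhibits $G_S$ as the convolution on $\mathbb T^d$ of $\tfrac1{1+\beta N^2}\mathcal A_N$ with the product measure $H_S:=\bigotimes_{j\in S}(D_M-D_N)\otimes\bigotimes_{j\notin S}\delta$ (with $\delta$ the unit Dirac mass on $\mathbb T$), whose total variation is $\prod_{j\in S}\|D_M-D_N\|_{L^1(\mathbb T)}\lesssim_d(\log(N+2))^{|S|}$ by the first Proposition of this section. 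Hence $\|G_S\|_{L^1(\mathbb T^d)}\lesssim_{d,\phi}\tfrac1{1+\beta N^2}(\log(N+2))^{|S|}\le\tfrac1{1+\beta N^2}(\log(N+2))^d$, and summing over the $2^d-1$ nonempty $S$ gives the upper bound.

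For the \emph{lower bound} I would pass to $\tilde G:=(1-\beta\Delta)G$, whose symbol is $\phi(k/N)1_{|k|_\infty>N}$. Because $\phi(k/N)=1$ for $|k|_\infty\le N$,
\begin{align*}
\tilde G(x)=\sum_{k\in\mathbb Z^d}\phi(k/N)\,e^{2\pi ik\cdot x}-\prod_{j=1}^dD_N(x_j),
\end{align*}
i.e. $\tilde G$ equals the box Dirichlet kernel up to the periodization of $N^d(\mathcal F^{-1}\phi)(N\,\cdot)$, which has $L^1(\mathbb T^d)$ norm at most a constant $C_0(d,\phi)$ independent of $N,\beta$. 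Using $\|D_N\|_{L^1(\mathbb T)}\gtrsim\log(N+2)$ from Proposition~\ref{prop_Omegapp}, this gives $\|\tilde G\|_{L^1(\mathbb T^d)}\ge\prod_j\|D_N\|_{L^1(\mathbb T)}-C_0\gtrsim_{d,\phi}(\log(N+2))^d$ once $N\ge N_*(d,\phi)$. Conversely, on $\operatorname{supp}\widehat G\subseteq\{1<|k/N|\le 2\sqrt d\}$ one has $\widehat{\tilde G}(k)=(1+\beta N^2)\mu(k/N)\widehat G(k)$ with $\mu(\xi):=\tfrac{1+4\pi^2\beta N^2|\xi|^2}{1+\beta N^2}\,\zeta(\xi)$, $\zeta\in C_c^\infty(\mathbb R^d)$ equal to $1$ on $\{|\xi|\le 3\sqrt d\}$; the same compact‑support reasoning shows $\mu\in C_c^\infty$ with seminorms uniform in $N,\beta$, so its periodized kernel has $L^1(\mathbb T^d)$ norm $\lesssim_d1$, whence $\|\tilde G\|_{L^1(\mathbb T^d)}\lesssim_d(1+\beta N^2)\|G\|_{L^1(\mathbb T^d)}$. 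Combining, $\|G\|_{L^1(\mathbb T^d)}\gtrsim_{d,\phi}\tfrac1{1+\beta N^2}(\log(N+2))^d$ for $N\ge N_*$; for the finitely many $2\le N<N_*$ it suffices to bound $\|G\|_{L^1(\mathbb T^d)}$ below by a single Fourier coefficient, e.g. $\|G\|_{L^1(\mathbb T^d)}\ge|\widehat G(N+1,0,\dots,0)|=\tfrac1{1+4\pi^2\beta(N+1)^2}\gtrsim_d\tfrac1{1+\beta N^2}\gtrsim_{d,\phi}\tfrac{(\log(N+2))^d}{1+\beta N^2}$. Thus the whole argument rests on the two model facts already in the excerpt — Poisson summation (smooth, compactly supported symbols give $O(1)$ kernels on $\mathbb T^d$) and $\|D_N\|_{L^1(\mathbb T)}\sim\log(N+2)$ — together with the inclusion–exclusion, the only genuinely new input being the uniform‑in‑$(N,\beta)$ $C^k$ control of the renormalized multipliers on a shell bounded away from the origin, which is exactly what forces the implied constants to depend only on $d$ and $\phi$.
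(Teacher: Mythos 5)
Your proof is correct and follows essentially the same route as the paper's: for the upper bound, a renormalized multiplier $\tfrac{1+\beta N^2}{1+4\pi^2\beta N^2|\xi|^2}$ localized by smooth cutoffs to a shell $|\xi|\sim 1$, with seminorms uniform in $(N,\beta)$ and hence an $O(1)$ kernel in $L^1(\mathbb T^d)$ by Poisson summation, convolved against tensor products of one-dimensional Dirichlet kernels contributing $(\log(N+2))^d$; for the lower bound, inversion of $(\operatorname{Id}-\beta\Delta)$ by a uniformly bounded compactly supported multiplier to transfer the $(\log(N+2))^d$ bound from the pure frequency cutoff $\sum_{|k|_\infty>N}\phi(k/N)e^{2\pi ik\cdot x}=-\prod_j D_N(x_j)+O_{L^1}(1)$. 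The only cosmetic deviations are your inclusion--exclusion over coordinate subsets (the paper writes $1_{N<|k|_\infty\le 2\sqrt d\,N}$ directly as a difference of two box indicators, each of which already tensorizes) and your single-Fourier-coefficient bound for the finitely many $2\le N<N_*$ (the paper uses $L^2$--$L^1$--$L^\infty$ interpolation there); both variants are valid.
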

\begin{proof}
First we show the upper bound. Let $\phi_1 \in C_c^{\infty}(\mathbb R^d)$
be such that $\phi_1(\xi)=1$ when $1\le |\xi| \le 2\sqrt d$,
and $\phi_1(\xi)=0$ for $|\xi|<\frac 12$ or $|\xi|>3\sqrt d$. Then clearly we have
\begin{align*}
 &\|  \sum_{|k|_{\infty}>N}
\frac 1 {1+4\pi^2 \beta |k|^2}
\phi(\frac k N) e^{2\pi i k\cdot x }
\|_{L_x^1(\mathbb T^d)} \notag \\
=&\;
\| \sum 1_{N<|k|_{\infty} \le 2\sqrt d N} 
\cdot \frac 1 {1+4\pi^2 \beta |k|^2} 
\phi(\frac k N) \phi_1(\frac  k N) e^{2\pi i k \cdot x} \|_{L_x^1(\mathbb T^d)} 
\notag \\
\le & \| \tilde D_N \|_{L_x^1(\mathbb T^d)} \cdot \| H\|_{L_x^1(\mathbb R^d)
} \lesssim ( \log (N+2) )^d \cdot \frac 1 {1+\beta N^2},
\end{align*}
where $\tilde D_N$ is a Dirichlet kernel corresponding to the part
$1_{N<|k|_{\infty} \le 2\sqrt d N}=
1_{|k|_{\infty}\le 2\sqrt d N} - 1_{|k|_{\infty} \le N}$ (Note that both summands 
naturally split as product
of one-dimensional kernels) and $H=\mathcal F^{-1} ( \frac 1 {1+4\pi^2 \beta |\xi|^2} 
\phi(\frac {\xi} N) \phi_1(\frac  {\xi} N) )$. 

Next we show the lower bound. Note that 
\begin{align*}
\tilde A(x) = \sum_{|k|_{\infty}>N}
\phi(\frac k N) e^{2\pi i k \cdot x}
= - \sum_{|k|_{\infty} \le N} e^{2\pi i k \cdot x} + 
\sum_{k \in \mathbb Z^d} \phi(\frac k N) e^{2\pi i k\cdot x}
\end{align*}
which implies
\begin{align*}
\| \tilde A \|_{L^1_x(\mathbb T^d)} \gtrsim
\; (\log N)^d, \quad \text{for $N$ large}.
\end{align*}
Observe 
\begin{align*}
\tilde A(x) = (1+4\pi^2 \beta N^2)
\sum_{|k|_{\infty} >N}
\frac 1 {1+4\pi^2 \beta |k|^2}
\phi(\frac k N) \phi_1(\frac k N)
\frac {1+4\pi^2 \beta |k|^2} {1+4\pi^2 \beta N^2} e^{2\pi i k \cdot x}.
\end{align*}
The desired lower bound (for $N$ large) then follows since
\begin{align*}
\| \mathcal F^{-1} ( 
\phi_1(\frac {\xi} N)
\frac {1+4\pi^2 \beta |\xi|^2} {1+4\pi^2 \beta N^2})
\|_{L_x^1(\mathbb R^d)} \lesssim 1.
\end{align*}

Finally for $N\sim 1$ one can note that $A(x)=\sum_{|k|_{\infty}>N}
\frac 1 {1+4\pi^2 \beta |k|^2}
\phi(\frac k N) e^{2\pi i k\cdot x }$ has the bounds:
\begin{align*}
&\| A\|_{L_x^{\infty}(\mathbb T^d)} \lesssim \frac  1 {1+\beta}, \quad
\| A\|_{L_x^2(\mathbb T^d)} \sim \frac 1 {1+\beta}.
\end{align*}
One can then use interpolation to get the lower bound. 

\end{proof}

Recall the Japanese bracket notation $\langle x \rangle = (1+|x|^2)^{\frac 12}$
for $x \in \mathbb R^d$. Let $\epsilon>0$ and consider 
\begin{align*}
g_N(x) &= \sum_{|k|_{\infty} \le N} \langle k \rangle^{-\epsilon}
e^{2\pi i k \cdot x}, \\
\tilde g_N(x) &= \sum_{k \in \mathbb Z^d}
\langle k \rangle^{-\epsilon}
\psi(\frac k N) e^{2\pi i k\cdot x},
\end{align*}
where $\psi \in C_c^{\infty}(\mathbb R^d)$ satisfies $\psi(\xi)=0$
for $|\xi| \le \sqrt d$, and $\psi(\xi)=0$ for $|\xi|>2\sqrt d$.

\begin{lem} \label{lem2.16_00}
Both $(g_N)_{N\ge 2}$ and $(\tilde g_N)_{N\ge 2}$ are Cauchy in $L^1(\mathbb T^d)$, and
\begin{align*}
\| g_N -\tilde g_N \|_{L_x^1(\mathbb T^d)} \lesssim N^{-\epsilon} \cdot (\log N)^d
\to 0,\qquad \text{as $N\to \infty$}.
\end{align*}
\end{lem}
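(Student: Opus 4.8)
The plan is to pin down the common $L^1$ limit of the two families explicitly as the periodization of a Bessel potential kernel and then estimate $g_N$ and $\tilde g_N$ against it; the bound on $\|g_N-\tilde g_N\|_{L^1}$ and both Cauchy assertions will then fall out of the triangle inequality. Set $G_\epsilon=\mathcal F^{-1}(\langle\xi\rangle^{-\epsilon})$ on $\mathbb R^d$; recall $G_\epsilon>0$, $G_\epsilon\in L^1(\mathbb R^d)$ with $\|G_\epsilon\|_{L^1(\mathbb R^d)}=1$ for every $\epsilon>0$ (it behaves like $|x|^{\epsilon-d}$ near the origin and decays exponentially at infinity), and let $\mathcal G_\epsilon\in L^1(\mathbb T^d)$ be its periodization, so that by Poisson summation $\widehat{\mathcal G_\epsilon}(k)=\langle k\rangle^{-\epsilon}$. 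The whole lemma would then be reduced to the single estimate
\[
\|g_N-\mathcal G_\epsilon\|_{L^1(\mathbb T^d)}+\|\tilde g_N-\mathcal G_\epsilon\|_{L^1(\mathbb T^d)}\lesssim N^{-\epsilon}(\log N)^d .
\]

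For the $\tilde g_N$ term, write $\tilde g_N-\mathcal G_\epsilon=-\sum_k\langle k\rangle^{-\epsilon}(1-\psi(k/N))e^{2\pi i k\cdot x}$; recalling that $\psi\equiv1$ on $\{|\xi|\le\sqrt d\}$ (so $\psi(k/N)=1$ whenever $|k|_\infty\le N$), the symbol $1-\psi(\xi/N)$ is smooth and supported in $|\xi|\gtrsim N$. I would insert a Littlewood--Paley decomposition $1-\psi(\xi/N)=\sum_{2^j\gtrsim N}\sigma_j(\xi)$ with each $\sigma_j$ a smooth bump at frequency scale $|\xi|\sim2^j$. Each piece $\langle k\rangle^{-\epsilon}\sigma_j(k)$ is finitely supported, so the convolution consequence of Poisson summation recorded in the preliminaries gives $\|\mathcal F^{-1}_{\mathbb T^d}(\langle k\rangle^{-\epsilon}\sigma_j(k))\|_{L^1(\mathbb T^d)}\le\|\mathcal F^{-1}_{\mathbb R^d}(\langle\xi\rangle^{-\epsilon}\sigma_j(\xi))\|_{L^1(\mathbb R^d)}$, and after the substitution $\xi=2^j\eta$ this is $\lesssim 2^{-j\epsilon}$. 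Summing the geometric series over $2^j\gtrsim N$ yields $\|\tilde g_N-\mathcal G_\epsilon\|_{L^1}\lesssim N^{-\epsilon}$.

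For the $g_N$ term one meets the sharp cube cutoff $\mathcal G_\epsilon-g_N=\sum_{|k|_\infty>N}\langle k\rangle^{-\epsilon}e^{2\pi i k\cdot x}$. I would split it with a radial bump $\phi$ equal to $1$ on $\{|\xi|\le\tfrac32\sqrt d\}$ and supported in $\{|\xi|\le2\sqrt d\}$: the far part $\sum_k\langle k\rangle^{-\epsilon}(1-\phi(k/N))e^{2\pi i k\cdot x}$ (whose summand vanishes for $|k|_\infty\le N$) is handled exactly as the $\tilde g_N$ term, contributing $\lesssim N^{-\epsilon}$, while the near part $\sum_{N<|k|_\infty\le2\sqrt d N}\langle k\rangle^{-\epsilon}\phi(k/N)e^{2\pi i k\cdot x}$ is treated exactly as the upper bound in the proof of Lemma \ref{lem2.15_00}, with $\frac1{1+4\pi^2\beta|k|^2}$ replaced by $\langle k\rangle^{-\epsilon}$: insert an annular cutoff $\phi_1$ (equal to $1$ on the frequencies actually present and supported away from the origin at scale $N$), write the sum as $\tilde D_N*H$ with $\tilde D_N$ the shell Dirichlet kernel, $\|\tilde D_N\|_{L^1(\mathbb T^d)}\lesssim(\log N)^d$, and $H=\mathcal F^{-1}_{\mathbb R^d}(\langle\xi\rangle^{-\epsilon}\phi(\xi/N)\phi_1(\xi/N))$. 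Combining, $\|g_N-\mathcal G_\epsilon\|_{L^1}\lesssim N^{-\epsilon}(\log N)^d$, which finishes the reduction. (One may also note directly that $g_N-\tilde g_N=-\sum_{|k|_\infty>N}\langle k\rangle^{-\epsilon}\psi(k/N)e^{2\pi i k\cdot x}$ is frequency-supported in $N<|k|_\infty\le2\sqrt d N$, so the difference estimate by itself is literally the Lemma \ref{lem2.15_00} bound; the detour through $\mathcal G_\epsilon$ is what additionally supplies the two Cauchy statements, since $M$ then disappears from the bounds.)

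The main obstacle is the uniform-in-scale symbol estimate underlying both parts: one must show $\|\mathcal F^{-1}_{\mathbb R^d}(\langle\xi\rangle^{-\epsilon}\phi(\xi/N)\phi_1(\xi/N))\|_{L^1(\mathbb R^d)}\lesssim N^{-\epsilon}$ and its dyadic analogue $\lesssim 2^{-j\epsilon}$, with constants independent of $N$ (resp.\ $j$). This rests on the elementary but slightly fiddly fact that after the substitution $\xi=N\eta$ (resp.\ $\xi=2^j\eta$) the symbol equals $N^{-\epsilon}$ times $(N^{-2}+|\eta|^2)^{-\epsilon/2}\phi(\eta)\phi_1(\eta)$, a family of functions supported in a fixed compact set bounded away from the origin whose $C^m(\mathbb R^d)$ norms are bounded uniformly in $N\ge1$ for every $m$; hence the inverse Fourier transform is $L^1(\mathbb R^d)$-bounded uniformly, and rescaling restores the $N^{-\epsilon}$. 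Everything else is bookkeeping plus direct quotation of Poisson summation and of the argument in Lemma \ref{lem2.15_00}.
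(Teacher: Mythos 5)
Your proposal is correct, and its technical core coincides with the paper's: the difference $g_N-\tilde g_N$ is frequency-supported in the shell $N<|k|_\infty\lesssim N$, and you bound it by $\|\tilde D_N\|_{L^1(\mathbb T^d)}\cdot\|H\|_{L^1(\mathbb R^d)}\lesssim(\log N)^dN^{-\epsilon}$ exactly as in the proof of Lemma \ref{lem2.15_00}, which is precisely what the paper does (your parenthetical remark at the end is literally the paper's argument). Where you diverge is in the Cauchy statements: the paper handles $(\tilde g_N)$ softly, bounding $\|\tilde g_N-\tilde g_M\|_{L^1(\mathbb T^d)}$ by $\|\mathcal F^{-1}(\langle\xi\rangle^{-\epsilon}(\psi(\xi/N)-\psi(\xi/M)))\|_{L^1(\mathbb R^d)}$ via the Poisson-summation transfer and simply asserting this tends to zero, whereas you identify the limit explicitly as the periodized Bessel kernel $\mathcal G_\epsilon$ and prove quantitative rates $\|\tilde g_N-\mathcal G_\epsilon\|_{L^1}\lesssim N^{-\epsilon}$ and $\|g_N-\mathcal G_\epsilon\|_{L^1}\lesssim N^{-\epsilon}(\log N)^d$ by a Littlewood--Paley decomposition of the tail symbol. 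Your route costs a little more bookkeeping but buys a stronger conclusion (an explicit limit with a convergence rate, which is also what the discussion after the lemma in the paper uses) and it supplies the uniform-in-scale $L^1(\mathbb R^d)$ symbol estimate that the paper leaves implicit; your justification of that estimate by rescaling to a fixed annulus with uniformly bounded $C^m$ norms is the right one.
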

\begin{proof}
Due to the assumption on the support of $\psi$, we see that $\psi(\frac k N)=1$ when 
$|k|_{\infty} \le N$, and $\psi(\frac k N)=0$ when $|k|_{\infty} >2N$. 
Choose $\psi_1 \in C_c^{\infty}$ with support in $\{ |\xi| \sim 1\}$ such that
$\psi_1(\xi)=1$ on the support on $\psi$.  Then
\begin{align*}
\| g_N -\tilde g_N\|_{L_x^1(\mathbb T^d)}
& = \| \sum_{N<|k|_{\infty}\le 2N} 
\langle k \rangle^{-\epsilon} \psi(\frac k N)  \psi_1(\frac k N) e^{2\pi i k\cdot x}
\|_{L_x^1(\mathbb T^d)} \notag \\
& \lesssim \; \| \tilde D_N
\|_{L_x^1(\mathbb T^d)} \|
\int \langle \xi \rangle^{-\epsilon}
\psi(\frac {\xi} N) \psi_1(\frac {\xi} N) e^{2\pi i \xi \cdot x} d
\xi \|_{L_x^1(\mathbb R^d)} \lesssim 
(\log N)^d \cdot N^{-\epsilon},
\end{align*}
where $\tilde D_N$ is the Dirichlet kernel corresponding to $1_{N<|k|_{\infty} \le 2N}$.

It then suffices for us to show that $(\tilde g_N)_{N\ge 2}$ is Cauchy in $L^1_x(\mathbb
T^d)$. Observe that for $M>N\ge 2$, we have
\begin{align*}
\| \tilde g_N -\tilde g_M\|_{L_x^1(\mathbb T^d)}
\le \| \int_{\mathbb R^d}
\langle \xi \rangle^{-\epsilon}
(\psi (\frac {\xi} N) - \psi(\frac {\xi} M) ) e^{2\pi i \xi \cdot x}
d\xi \|_{L_x^1(\mathbb R^d) } \to 0,
\end{align*}
as we take $M>N$ to infinity.

\end{proof}

Lemma \ref{lem2.16_00} can be used to show the following folklore identity that for $\epsilon>0$,
\begin{align*}
\sum_{k \in \mathbb Z^d} \langle 2\pi k \rangle^{-\epsilon}
e^{2\pi i k \cdot x} = \sum_{n \in \mathbb Z^d}
f(x+n),
\end{align*}
where $f=\mathcal F^{-1}( \langle 2\pi \xi \rangle^{-\epsilon})$ is the usual Bessel potential.
Note that even though $f$ has exponential decay, the decay of the Fourier coefficient
is not enough to apply the classic version of the Poisson summation formula.
To resolve this the natural idea is to truncate the LHS and in some sense Lemma
\ref{lem2.16_00} assures the equivalence of different two truncations. The convergence of the Fourier series on the LHS can then be understood in  $L_x^1(\mathbb T^d)$ sense. Stronger point-wise asymptotics
are also available but we will not dwell on this issue here.

 Consider the operator
$T_{>N, \beta}= (\operatorname{Id}- \beta \Delta)^{-1} (\operatorname{Id}
-\Pi_{N}^{(d)})$.  Note that it is spectrally localized to the sector $\{k:\, |k|_{\infty}>N\}$.
Denote the corresponding kernel as $K_{>N, \beta}$.
\begin{thm} \label{thm2.17_00}
Let $N\ge 2$. 
$K_{>N, \beta}$ is an $L^1$ function  on $\mathbb T^d$ and 
\begin{align*}
\| K_{>N,\beta} \|_{L^1(\mathbb T^d)} \sim \frac 1 {1+\beta N^2}
\log^d (N+2),
\end{align*}
where the implied constants depend only on $d$.
\end{thm}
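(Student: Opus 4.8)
The plan is to peel off from the symbol of $K_{>N,\beta}$ the part supported in the dyadic range $\{|k|\lesssim N\}$, which is exactly where the $(\log(N+2))^d$ loss comes from, and to show that the genuine high-frequency remainder costs only $(1+\beta N^2)^{-1}$, with no logarithm. Fix $\phi\in C_c^\infty(\mathbb R^d)$ with $\phi(\xi)=1$ for $|\xi|\le\frac32\sqrt d$ and $\phi(\xi)=0$ for $|\xi|>2\sqrt d$, as in Lemma \ref{lem2.15_00}, and write $K_{>N,\beta}=A+B$ with
\[
A(x)=\sum_{|k|_\infty>N}\frac{\phi(k/N)}{1+4\pi^2\beta|k|^2}\,e^{2\pi i k\cdot x},\qquad
B(x)=\sum_{k\in\mathbb Z^d}\frac{1-\phi(k/N)}{1+4\pi^2\beta|k|^2}\,e^{2\pi i k\cdot x},
\]
the frequency restriction in $B$ being automatic because $1-\phi(k/N)$ vanishes when $|k|_\infty\le N$. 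By Lemma \ref{lem2.15_00}, $\|A\|_{L^1(\mathbb T^d)}\sim_d (1+\beta N^2)^{-1}(\log(N+2))^d$, so the theorem reduces to showing $\|B\|_{L^1(\mathbb T^d)}\lesssim_d (1+\beta N^2)^{-1}$.

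I would prove this via two estimates on $\|B\|_{L^1}$, each valid for all $\beta>0$, and take the smaller. (i) Since $1-\phi(k/N)$ is the $k$-th Fourier coefficient of $\delta_0-\phi_N$, where $\phi_N$ is the periodization of $N^d(\mathcal F^{-1}\phi)(N\cdot)$ (so $\|\phi_N\|_{L^1(\mathbb T^d)}=O_d(1)$), and $(1+4\pi^2\beta|k|^2)^{-1}$ is the Fourier coefficient of the periodized Bessel kernel $\Phi_\beta$, which is nonnegative with unit mass (subordination to the heat semigroup), one gets $B=\Phi_\beta-\Phi_\beta*\phi_N$, hence $\|B\|_{L^1}\le 1+\|\phi_N\|_{L^1}=O_d(1)$. (ii) Using $(1+a)^{-1}=a^{-1}(1-(1+a)^{-1})$ with $a=4\pi^2\beta|\xi|^2$, the $\mathbb R^d$-symbol $m(\xi)=(1-\phi(\xi/N))(1+4\pi^2\beta|\xi|^2)^{-1}$ factors as
\[
m(\xi)=\frac1{4\pi^2\beta N^2}\Bigl(n(\xi/N)-n(\xi/N)\cdot\frac1{1+4\pi^2\beta|\xi|^2}\Bigr),\qquad n(\eta)=\frac{1-\phi(\eta)}{|\eta|^2}.
\]
Now $n$ is a smooth symbol of order $-2$ — it vanishes near the origin and equals $|\eta|^{-2}$ at infinity — so $\mathcal F^{-1}n\in L^1(\mathbb R^d)$: it is rapidly decreasing at infinity and has at most an integrable singularity $O(|x|^{2-d})$ at the origin (logarithmic when $d=2$). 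Combining $\|\mathcal F^{-1}(n(\cdot/N))\|_{L^1(\mathbb R^d)}=\|\mathcal F^{-1}n\|_{L^1(\mathbb R^d)}=O_d(1)$, the fact that the inverse Fourier transform of $\xi\mapsto(1+4\pi^2\beta|\xi|^2)^{-1}$ has $L^1(\mathbb R^d)$ norm $1$ (it is positive with unit integral), and the algebra inequality $\|\mathcal F^{-1}(fg)\|_{L^1}\le\|\mathcal F^{-1}f\|_{L^1}\|\mathcal F^{-1}g\|_{L^1}$, one obtains $\|\mathcal F^{-1}m\|_{L^1(\mathbb R^d)}=O_d((\beta N^2)^{-1})$. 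Since $B$ is the periodization of $\mathcal F^{-1}m\in L^1(\mathbb R^d)$, it lies in $L^1(\mathbb T^d)$ with Fourier coefficients $m(k)$ — so $K_{>N,\beta}=A+B$ is an $L^1$ function, as claimed — and periodization does not increase the $L^1$ norm, whence $\|B\|_{L^1(\mathbb T^d)}\le\|\mathcal F^{-1}m\|_{L^1(\mathbb R^d)}=O_d((\beta N^2)^{-1})$. The two bounds give $\|B\|_{L^1(\mathbb T^d)}\lesssim_d(1+\beta N^2)^{-1}$.

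The conclusion is now bookkeeping. Upper bound: $\|K_{>N,\beta}\|_{L^1}\le\|A\|_{L^1}+\|B\|_{L^1}\lesssim_d (1+\beta N^2)^{-1}(\log(N+2))^d$ for every $N\ge2$, the $B$-term being absorbed since $(\log(N+2))^d\ge(\log4)^d$. Lower bound: there is $N_0=N_0(d)$ so that for $N\ge N_0$ the lower half of Lemma \ref{lem2.15_00} dominates the $B$-term, giving $\|K_{>N,\beta}\|_{L^1}\ge\|A\|_{L^1}-\|B\|_{L^1}\gtrsim_d(1+\beta N^2)^{-1}(\log(N+2))^d$; for the finitely many remaining $N\in\{2,\dots,N_0-1\}$, since $(\log(N+2))^d\sim_d1$ and $(N+1)^2\le\frac94 N^2$, one simply tests against the character $k_0=(N+1,0,\dots,0)$:
\[
\|K_{>N,\beta}\|_{L^1(\mathbb T^d)}\ \ge\ |\widehat{K_{>N,\beta}}(k_0)|\ =\ \frac1{1+4\pi^2\beta(N+1)^2}\ \gtrsim_d\ \frac{(\log(N+2))^d}{1+\beta N^2}.
\]
The main obstacle is estimate (ii): one must show the complementary piece $B$ costs only $(1+\beta N^2)^{-1}$ uniformly as $\beta\to0$, and in dimensions $d\ge2$ the tail symbol $|\xi|^{-2}$ is not integrable on $\mathbb R^d$, so the pointwise Poisson summation formula is unavailable; the resolution is the resolvent-identity factorization above together with the softer principle that periodization does not increase the $L^1$ norm. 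The rest — Lemma \ref{lem2.15_00}, positivity of $\Phi_\beta$, triangle inequalities — is routine.
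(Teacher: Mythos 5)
Your proof is correct and follows essentially the same route as the paper: the same cutoff decomposition $K_{>N,\beta}=A+B$, reduction of the main term to Lemma \ref{lem2.15_00}, and the resolvent-identity/periodization mechanism for the tail (which the paper delegates to Lemma \ref{lem2.16_00} ``and the discussion afterwards'' but which you spell out more carefully, including the positivity-of-the-Bessel-kernel bound for the regime $\beta N^2\lesssim 1$). The only genuine deviation is the lower bound for bounded $N$: the paper convolves against an auxiliary kernel with symbol $|k|^{-(d+1)}$ and uses $L^2$ duality, whereas you simply test against the single character $k_0=(N+1,0,\dots,0)$ --- your version is shorter and equally valid, since in that range the logarithm is $O_d(1)$.
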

\begin{proof}
Choose $\phi$ as in Lemma \ref{lem2.15_00} and note that
\begin{align*}
K_{>N,\beta}(x)
&=\sum_{|k|_{\infty}>N}
\frac 1 {1+4\pi^2 \beta |k|^2}
\phi(\frac k N) e^{2\pi i k \cdot x}
+\sum_{k \in \mathbb Z^d}
\frac 1 {1+4\pi^2 \beta |k|^2}
(1-\phi(\frac k N) ) e^{2\pi i k \cdot x} \notag \\
&=: \,A_1(x) +A_2(x).
\end{align*}
Here we used the fact that when $1-\phi(\frac k N)=0$ we must have
$|k|\ge \frac 32 N \sqrt d$ which implies $|k|_{\infty} \ge \frac 32 N$
(so that the condition $|k|_{\infty}>N$ can be dropped). 

To bound $A_2$ we can use Lemma \ref{lem2.16_00} and the discussion afterwards which gives
\begin{align*}
\| A_2\|_{L_x^1(\mathbb T^d)
}
\le\; \| \mathcal F^{-1}( \frac 1 {1+4\pi^2 \beta |\xi|^2}
(1- \phi(\frac {\xi} N) ) ) \|_{L_x^1(\mathbb R^d) } \lesssim 
\frac 1 {1+ \beta N^2}.
\end{align*}
By Lemma \ref{lem2.15_00} we have
\begin{align*}
\|A_1\|_{L_x^1(\mathbb T^d)}\sim
\frac 1 {1+\beta N^2} ( \log (N+2))^d,
\end{align*}
and thus the desired bound  for $K_{>N, \beta}$ follows when $N$ is large.

Note that the desired upper bound for $K_{>N, \beta}$ holds for all $N\ge 2$ and
we only need to show the lower bound when $N$ is of order $1$.  To this end consider
\begin{align*}
&B(x) = \sum_{|k|_{\infty}>N}
\frac 1 { |k|^{d+1}} \cdot \frac 1 {1+4\pi^2 \beta |k|^2} e^{2\pi i k \cdot x}, \\
&B_1(x) =\sum_{|k|_{\infty}>N} \frac 1 {|k|^{d+1}} e^{2\pi i k\cdot x}.
\end{align*}
Easy to check that for $N\sim 1$, 
\begin{align*}
\| B\|_{L_x^2(\mathbb T^d)} \sim \frac 1 {1+\beta}, \quad
\|B_1\|_{L_x^2(\mathbb T^d)} \sim 1.
\end{align*}
Since $B= B_1*K_{>N,\beta}$ (here $*$ denotes the usual convolution on the torus),
we then have
\begin{align*}
\frac 1 {1+\beta} \lesssim \|B\|_{L_x^2(\mathbb T^d)} \le
 \|B_1\|_{L_x^2(\mathbb T^d)} \cdot \|K_{>N,\beta} \|_{L_x^1(\mathbb T^d)}
 \lesssim \| K_{>N,\beta} \|_{L_x^1(\mathbb T^d)}.
 \end{align*}
Thus the desired lower bound holds also when $N$ is of order $1$.

\end{proof}

\begin{thm} \label{thm2.18_00}
Let $d\ge 1$, $N\ge 2$, $\beta>0$. Then the following hold for $T_{>N, \beta}= (\operatorname{Id}- \beta \Delta)^{-1} (\operatorname{Id}
-\Pi_{N}^{(d)})$ with the kernel $K_{>N,\beta}$, and $T_{N,\beta}
= (\operatorname{Id}-\beta \Delta)^{-1} \Pi_N^{(d)}$.
\begin{enumerate}
\item \underline{Sharp $L_x^1$ bound}. 
\begin{align*}
\frac {c_1} {1+\beta N^2}
(\log (N+2) )^d \le 
\| K_{>N,\beta} \|_{L_x^1(\mathbb T^d)} \le \frac {c_2} {1+\beta N^2}
(\log (N+2))^d,
\end{align*}
where $c_1>0$, $c_2>0$ depend only on the dimension $d$. 
\item \underline{Generalized maximum principle}. For all $N\ge 2$, we have 
\begin{align*}
\| T_{N,\beta} f \|_{L_x^{\infty}(\mathbb T^d)}
\le (1+ \frac {c_3} { 1+\beta N^2} (\log (N+2))^d) \| f \|_{L_x^{\infty}(\mathbb T^d)},
\end{align*}
where $c_3>0$ depend only on the dimension $d$. 

\item \underline{Strict maximum principle}.  Let $d=1,2$.  Denote the kernel of $T_{N,\beta}$ as
$K_{N,\beta}$. For all $N\ge N_0= N_0(\beta,d)$ (i.e. $N_0$ is constant depending only
on $\beta$ and $d$), it holds that
$K_{N,\beta}$ is a strictly positive function on $\mathbb T^d$ with unit $L^1_x$ mass, and
consequently
\begin{align*}
\| T_{N, \beta} f \|_{L_x^{\infty}(\mathbb T^d)} \le \|f \|_{L_x^{\infty}(\mathbb T^d)}.
\end{align*}
\end{enumerate}

\end{thm}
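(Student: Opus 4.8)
We handle the three parts in turn; Parts (1)--(2) and Part (3) for $d=1$ are short, and the substance is Part (3) for $d=2$. Part (1) is literally the statement of Theorem~\ref{thm2.17_00}, already proved. For Part~(2), work at the level of kernels: from $(\op{Id}-\beta\Delta)^{-1}=T_{N,\beta}+T_{>N,\beta}$ one has $K_{N,\beta}=g_\beta-K_{>N,\beta}$, where $g_\beta$ is the kernel of $(\op{Id}-\beta\Delta)^{-1}$ on $\mathbb T^d$. The identity $(1+\beta(2\pi|k|)^2)^{-1}=\int_0^\infty e^{-t}e^{-t\beta(2\pi|k|)^2}\,dt$ exhibits $g_\beta$ as an average against $e^{-t}\,dt$ of positive periodic heat kernels, so $g_\beta>0$ and $\|g_\beta\|_{L^1(\mathbb T^d)}=\widehat{g_\beta}(0)=1$; hence $\|K_{N,\beta}\|_{L^1}\le 1+\|K_{>N,\beta}\|_{L^1}$, and Part~(1) yields Part~(2) with $c_3=c_2$ after Young's inequality $\|T_{N,\beta}f\|_\infty\le\|K_{N,\beta}\|_{L^1}\|f\|_\infty$. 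Part~(3) for $d=1$ is exactly Proposition~\ref{prop_tmp211} with $\nu=\sqrt\beta$: $K_{N,\beta}>0$ once $N\ge\tfrac1{2\pi^2\sqrt\beta}e^{1/(2\sqrt\beta)}$, and positivity together with $\widehat{K_{N,\beta}}(0)=1$ forces $\|K_{N,\beta}\|_{L^1}=1$, hence the sharp maximum principle.

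For Part~(3) with $d=2$ the plan is to use the \emph{subordination-plus-tensorization} representation
\[
K_{N,\beta}(x)=\frac1\beta\int_0^\infty e^{-\gamma/\beta}\,H_{N,\gamma}(x_1)\,H_{N,\gamma}(x_2)\,d\gamma,\qquad H_{N,\gamma}(y):=\sum_{|k|\le N}e^{-\gamma(2\pi k)^2}e^{2\pi iky},
\]
obtained from $(1+\beta(2\pi|k|)^2)^{-1}=\int_0^\infty e^{-t}\prod_j e^{-t\beta(2\pi k_j)^2}\,dt$ and the substitution $\gamma=\beta t$; the same formula with the full periodic heat kernel $\theta_\gamma$ in place of $H_{N,\gamma}$ represents $g_\beta$. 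I would first establish three elementary facts about the truncated heat kernel ($\|y\|$ denoting distance from $y$ to $\mathbb Z$): (a) $|H_{N,\gamma}(y)|\lesssim\min\{N,\ \|y\|^{-1},\ \gamma^{-1/2}+1\}$ (bound at $y=0$; summation by parts on the decreasing weights $e^{-\gamma(2\pi k)^2}$ away from $y=0$); (b) $H_{N,\gamma}(y)>0$ for every $\gamma\ge0$ when $\|y\|\le\tfrac1{5N}$ (all cosines $\cos2\pi ky$, $|k|\le N$, are then positive); (c) writing $H_{N,\gamma}=\theta_\gamma-(\mathrm{tail})$ and pitting $\theta_\gamma(y)\ge\theta_\gamma(\tfrac12)\gtrsim\gamma^{-1/2}e^{-1/(16\gamma)}$ against the Gaussian tail bound $\lesssim\gamma^{-1/2}e^{-c\gamma N^2}$, there are absolute constants $c_0,N_*$ with $H_{N,\gamma}(y)\ge\tfrac12\theta_\gamma(y)>0$ for all $y$ once $N\ge N_*$ and $\gamma\ge c_0\min\{\tfrac1N,\ \max(\|y\|^2,\tfrac1{N^2})\}$.

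With these in hand I would fix $N\ge N_*$ and split $\mathbb T^2$ into three regions. If $\|x\|_\infty\le\tfrac1{5N}$, the integrand is nonnegative by (b), so $K_{N,\beta}(x)\ge\tfrac1{4\beta}\int_1^\infty e^{-\gamma/\beta}\theta_\gamma(x_1)\theta_\gamma(x_2)\,d\gamma\gtrsim_\beta1$. If $\tfrac1{5N}<\|x\|_\infty$ and $|x|$ is small, say $\|x_1\|\le\|x_2\|=\|x\|_\infty$, split the $\gamma$-integral at the scale-adapted point $\gamma_*=c_0\max(\|x\|_\infty^2,\tfrac1{N^2})$: by (c) the tail $\int_{\gamma_*}^\infty$ dominates $\tfrac14$ of the $\theta$-tail, so it is $\ge\tfrac14\bigl(\beta g_\beta(x)-\int_0^{\gamma_*}e^{-\gamma/\beta}\theta_\gamma(x_1)\theta_\gamma(x_2)\,d\gamma\bigr)\ge\tfrac14(\beta g_\beta(x)-C)$ (using $\theta_\gamma(x_1)\theta_\gamma(x_2)\lesssim\gamma^{-1}e^{-\|x\|_\infty^2/(4\gamma)}$ for bounded $\gamma$ and $\|x\|_\infty^2/(4\gamma_*)\gtrsim1$, whence the removed piece is $\lesssim E_1(\tfrac1{100c_0})$), while by (a) the head $\int_0^{\gamma_*}$ is $\lesssim\|x_2\|^{-1}\int_0^{\gamma_*}\min(N,\gamma^{-1/2})\,d\gamma\lesssim\sqrt{\gamma_*}/\|x_2\|\lesssim1$. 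Thus $K_{N,\beta}(x)\ge\tfrac14 g_\beta(x)-C'/\beta$, positive whenever $g_\beta(x)>4C'/\beta$; since $g_\beta\ge\tfrac1{2\pi\beta}K_0(|\cdot|/\sqrt\beta)$ (dropping the nonnegative periodization tail) and $K_0(r)\uparrow\infty$ as $r\downarrow0$, this holds on a ball $|x|<r_1(\beta):=\sqrt\beta\,K_0^{-1}(8\pi C')$. Finally, for $|x|\ge r_1(\beta)$ we have $\|x\|_\infty\ge r_1(\beta)/\sqrt2$ bounded below uniformly in $N$, so the same split with the $N$-dependent cutoff $\gamma_*=c_0/N$ (allowed by the first alternative in (c)) makes both the head ($\lesssim\sqrt{c_0/N}/\|x\|_\infty$) and the removed $\theta$-piece ($\le C\,E_1(\|x\|_\infty^2N/(4c_0))$) tend to $0$ uniformly, giving $K_{N,\beta}(x)\ge\tfrac14 g_\beta(x)-o_N(1)\ge\tfrac14\min_{|y|\ge r_1(\beta)}g_\beta(y)-o_N(1)>0$ for $N\ge N_0(\beta)$. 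These regions cover $\mathbb T^2$, so $K_{N,\beta}>0$; unit $L^1$ mass and the maximum principle then follow as for $d=1$.

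The genuinely hard step is the middle region: one must quantitatively beat the logarithmic singularity of the two-dimensional Bessel kernel $g_\beta$ at the origin against the $O(1/\beta)$ error generated by the small-$\gamma$ (Dirichlet-dominated) portion of the subordination integral, and this is exactly what forces the scale-adapted cutoff $\gamma_*\sim\|x\|_\infty^2$ together with the precise $-\log r$ behavior of $K_0$. The remaining ingredients---the three estimates on $H_{N,\gamma}$, the heat-kernel lower bound near the origin, and the uniform decay of the high-frequency remainder away from the origin---are routine.
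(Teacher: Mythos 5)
Your treatment of Parts (1)--(2) and of Part (3) for $d=1$ coincides with the paper's: (1) is Theorem \ref{thm2.17_00}, (2) follows from $K_{N,\beta}=g_\beta-K_{>N,\beta}$ with $g_\beta>0$ of unit mass plus Young's inequality, and (3) for $d=1$ is Proposition \ref{prop_tmp211}. For Part (3) with $d=2$, however, you take a genuinely different route, and as far as I can check it is correct. The paper's appendix argument is a real-space oscillatory-integral computation: it writes the kernel as the periodized convolution of the two-dimensional Bessel kernel $F$ with the tensor Dirichlet kernel, isolates the logarithmic singularity $F(z)=c_1-c_2\log|z|+f_1(z)$ via Lemma \ref{aOc1}, and then integrates by parts repeatedly against $\sin M\pi y_i/\sin\pi y_i$ to obtain the quantitative pointwise lower bound $I(x)\gtrsim\log(1+1/|x|)$, matching the singularity of $g_\beta$. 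Your argument instead uses subordination in the resolvent parameter to write $K_{N,\beta}$ as a positive mixture of tensor products of one-dimensional truncated heat kernels, so that positivity is inherited from the 1D factors up to a controllable high-frequency tail; the three-region decomposition with the scale-adapted cutoff $\gamma_*\sim\max(\|x\|_\infty^2,N^{-2})$ then pits the $-\log$ blow-up of $K_0(|x|/\sqrt\beta)$ against the $O(1/\beta)$ error from the Dirichlet-dominated head, exactly as you identify. What your approach buys is conceptual transparency (no stationary-phase bookkeeping, and the three lemmas (a)--(c) on $H_{N,\gamma}$ are elementary) and a cleaner view of why the result stops at $d=2$: the head estimate $\|x\|_\infty^{-1}\int_0^{\gamma_*}\min(N,\gamma^{-1/2})^{d-1}\,d\gamma$ is $O(1)$ only for $d\le 2$. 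What it gives up is the sharp pointwise lower bound $K_{N,\beta}(x)\gtrsim\log(1+1/|x|)$ that the paper's computation delivers and which serves as the template for the fractional Bessel analysis elsewhere in Section 2; your method only yields positivity with a lower bound of the form $\tfrac14 g_\beta(x)-O(1/\beta)$ in the intermediate region. One cosmetic remark: the monotonicity claim $\theta_\gamma(y)\ge\theta_\gamma(\tfrac12)$ in your item (c) is true but unnecessary --- the single-term lower bound $\theta_\gamma(y)\ge(4\pi\gamma)^{-1/2}e^{-\|y\|^2/(4\gamma)}$, which you already use, suffices for both alternatives of the cutoff condition.
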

\begin{rem*}
An interesting problem is to investigate the maximum principle for the spherical operator
$\tilde T_{N,\beta}= (\operatorname{Id}-\beta \Delta)^{-1} \tilde \Pi_N$ where
$\tilde \Pi_N$ corresponds to the Fourier cut-off $|k|\le N$ (i.e. $|k|_{\infty}$ is replaced by
the usual $l^2$ norm $|k|$). 
\end{rem*}
\begin{proof}
The first two follow from Theorem \ref{thm2.17_00}. 
We only need to show (3). Note that the case $d=1$ is already settled before with explicit dependence
of constants.  The case $d=2$ is proved in the appendix. 
\end{proof}

\subsection{Truncation of general Bessel case}

We first recall the usual Bessel potential on $\mathbb R^d$: for $s>0$,
\begin{align*}
G_s= (\operatorname{Id}-\Delta)^{-\frac s2} \delta_0,
\qquad \widehat{G_s}(\xi)= \langle 2\pi \xi \rangle^{-s}.
\end{align*}
At the origin, we have for $|x|\to 0$,
\begin{align*}
G_s(x)= \begin{cases}
\frac{\Gamma(\frac{d-s}2) } {2^s \pi^{\frac s2} |x|^{d-s}}  (1+o(1)),
\quad 0<s<d;\\
\frac 1 {2^{d-1} \pi^{\frac d2}}
\ln \frac  1 {|x|} (1+o(1)), \quad s=d; \\
\frac {\Gamma(\frac{s-d}2)} {2^s \pi^{\frac s2}}
(1+o(1)), \quad s>d.
\end{cases}
\end{align*}
When $|x|\to \infty$,
\begin{align*}
G_s(x)
= \frac {e^{-|x|}}
{2^{\frac{d+s-1}2}
\pi^{\frac {d-1}2} 
\Gamma(\frac s2)
|x|^{\frac{d+1-s}2}
}
(1+o(1)).
\end{align*}
It follows that  for $s>d$, we have the rough bound
\begin{align*}
G_s(x) \gtrsim \; e^{-2|x|}, \qquad\forall\, x \in \mathbb R^d.
\end{align*}
For $s=d$, we have
\begin{align*}
G_s(x) \gtrsim\; 
\begin{cases}
e^{-2|x|}, 
\quad \forall\, |x| \ge \frac 1 {5}; \\
-\ln |x|, \quad \forall\, 0<|x|<\frac 1 5.
\end{cases}
\end{align*}
For $0<s<d$, we have
\begin{align*}
G_s(x) \gtrsim\; 
\begin{cases}
e^{-2|x|}, 
\quad \forall\, |x| \ge \frac 1 {5}; \\
|x|^{-(d-s)}, \quad \forall\, 0<|x|<\frac 1 5.
\end{cases}
\end{align*}

Denote for $\beta>0$, $N\ge 2$,
\begin{align*}
F_{N,s}(x)= \sum_{|k|_{\infty} \le N} \frac 1
{(1+4\pi^2 \beta |k|^2)^{\frac s2} } e^{2\pi i k \cdot x}, \qquad
x \in \mathbb T^d. 
\end{align*}
We suppress the dependence of $F_{N,s}$ on $\beta$ since we focus only on
studying other parameters while keeping $\beta>0$ fixed. 
It is easy to check that the $L_x^1$ limit of $F_{N,s}$ is given by 
\begin{align*}
F_{\infty,s} (x) =\beta^{-\frac d2}
\sum_{n \in \mathbb Z^d} G_s(\frac {x+n}{\sqrt{\beta}} )
\ge \beta^{-\frac d2} G_s(\frac x {\sqrt{\beta}}),
\qquad \forall\, x \in \mathbb T^d=[-\frac 12,\frac 12)^d.
\end{align*}

\begin{lem}
Let $d\ge 1$ and $s>d$. There exists $N_0=c_{s,d}\cdot (\beta^{\frac {d-1}2} e^{\frac 2 {\sqrt{\beta}} } )^{\frac 1 {s-d}}
$ ($c_{s,d}>0$ depends only on $(s,d)$) such that if $N\ge N_0$,
then each $F_{N,s}$ is positive and hence has unit $L^1_x$ mass.
\end{lem}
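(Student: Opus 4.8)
\emph{Plan.} The plan is to write $F_{N,s}$ as the full periodized Bessel potential minus a controllable high-frequency tail, estimate the tail in $L^\infty$, bound the periodization from below on all of $\mathbb T^d$, and pick $N_0$ so that the former is beaten by the latter. Since $s>d$ the coefficients $(1+4\pi^2\beta|k|^2)^{-s/2}$ are absolutely summable over $k\in\mathbb Z^d$, so $F_{\infty,s}$ is a genuine continuous function with the representation recalled above, $F_{\infty,s}(x)=\beta^{-d/2}\sum_{n\in\mathbb Z^d}G_s\!\big(\tfrac{x+n}{\sqrt\beta}\big)$, and for every $x\in\mathbb T^d$
\[
F_{N,s}(x)=F_{\infty,s}(x)-E_N(x),\qquad E_N(x):=\sum_{|k|_\infty>N}\frac{e^{2\pi i k\cdot x}}{(1+4\pi^2\beta|k|^2)^{s/2}} .
\]

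First I would estimate the tail uniformly. From $1+4\pi^2\beta|k|^2\ge 4\pi^2\beta|k|^2$ and $|k|\ge|k|_\infty>N$,
\[
\|E_N\|_{L^\infty(\mathbb T^d)}\le\sum_{|k|_\infty>N}(1+4\pi^2\beta|k|^2)^{-s/2}\le(2\pi\sqrt\beta)^{-s}\sum_{|k|>N}|k|^{-s}\le C_{s,d}\,\beta^{-s/2}N^{d-s},
\]
where $\sum_{|k|>N}|k|^{-s}\lesssim_{s,d}N^{d-s}$ by comparison with $\int_N^\infty r^{d-1-s}\,dr$ (convergent precisely because $s>d$); note this bound is independent of $\beta$. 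Next I would bound $F_{\infty,s}$ from below on $\mathbb T^d=[-\tfrac12,\tfrac12)^d$. Keeping only the term $n=0$ and using $G_s>0$ gives $F_{\infty,s}(x)\ge\beta^{-d/2}G_s(x/\sqrt\beta)$; since $|x|\le\tfrac{\sqrt d}{2}$ the argument $x/\sqrt\beta$ stays in the ball of radius $\tfrac{\sqrt d}{2\sqrt\beta}$, which is large when $\beta$ is small. Feeding in the behaviour of $G_s$ — its positivity and finiteness at the origin, and the far-field asymptotics $G_s(y)\asymp_{s,d}|y|^{(s-d-1)/2}e^{-|y|}$ — yields, for all $x\in\mathbb T^d$, a lower bound of the shape
\[
F_{\infty,s}(x)\ \ge\ c_{s,d}\,\beta^{-(s+d-1)/4}\,e^{-\kappa(d)/\sqrt\beta},\qquad \kappa(d)=\tfrac{\sqrt d}{2}\le 2\ \ (d\le 3),
\]
the exponent $\kappa(d)$ coming from the distance of the worst-placed $x$ to the lattice together with the unit exponential decay rate of the Bessel potential, and the power from the $\beta^{-d/2}$ prefactor and the factor $|x/\sqrt\beta|^{(s-d-1)/2}$.

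Combining the two estimates, the pointwise bound $F_{N,s}(x)\ge\inf_{\mathbb T^d}F_{\infty,s}-\|E_N\|_{L^\infty}$ is strictly positive as soon as $C_{s,d}\beta^{-s/2}N^{d-s}<c_{s,d}\beta^{-(s+d-1)/4}e^{-\kappa/\sqrt\beta}$, i.e.\ as soon as $N^{\,s-d}>\mathrm{const}_{s,d}\cdot\beta^{-(s-d+1)/4}e^{\kappa/\sqrt\beta}$. Elementary bookkeeping of the exponents then repackages this constraint into the stated form: one bounds $e^{\kappa/\sqrt\beta}\le e^{2/\sqrt\beta}$ using $\kappa(d)\le 2$ for $d\le 3$, and notes that the residual polynomial-in-$\beta$ discrepancy is dominated as $\beta\to0$ by the exponential gap $e^{(2-\kappa)/\sqrt\beta}$, so that after enlarging the constant (which also absorbs the trivial regime $\beta\gtrsim_{s,d}1$, where already $\|E_N\|_\infty<1\le\inf_{\mathbb T^d}F_{\infty,s}$ for every $N\ge 2$) one obtains $N_0=c_{s,d}\big(\beta^{(d-1)/2}e^{2/\sqrt\beta}\big)^{1/(s-d)}$. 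Finally $F_{N,s}>0$ together with $\int_{\mathbb T^d}F_{N,s}\,dx=\widehat{F_{N,s}}(0)=(1+4\pi^2\beta\cdot 0)^{-s/2}=1$ gives that $F_{N,s}$ has unit $L^1$ mass. The main obstacle is the lower bound on $F_{\infty,s}$: one really needs the precise near-origin and far-field asymptotics of $G_s$ (the soft bound $G_s\gtrsim e^{-2|\cdot|}$ alone is too lossy to recover the correct $\beta$-power for small $\beta$), whereas the tail estimate and the mass computation are routine.
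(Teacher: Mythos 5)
Your proposal is correct and follows the same skeleton as the paper's proof: split $F_{N,s}=F_{\infty,s}-E_N$, bound the tail by $\sum_{|k|_\infty>N}(1+4\pi^2\beta|k|^2)^{-s/2}\lesssim_{s,d}\beta^{-s/2}N^{-(s-d)}$, and lower-bound the periodization by its $n=0$ term $\beta^{-d/2}G_s(x/\sqrt\beta)$. The one place you diverge is the lower bound on $G_s$. The paper deliberately uses only the rough bound $G_s(y)\gtrsim e^{-2|y|}$ (and says so in the remark following the lemma, attributing the ``slightly worse'' threshold to exactly this), which gives $F_{\infty,s}\gtrsim\beta^{-d/2}e^{-2/\sqrt\beta}$ and hence the condition $N^{s-d}\gtrsim\beta^{(d-s)/2}e^{2/\sqrt\beta}$. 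You instead invoke the precise far-field asymptotics $G_s(y)\asymp_{s,d}|y|^{(s-d-1)/2}e^{-|y|}$; this costs a little more care (one must check the worst-placed $x$, with $|x|\sim\sqrt d/2$, and treat $x$ near the lattice separately using the positivity of $G_s$ at the origin), but it buys a genuinely sharper $\beta$-dependence, and — as you correctly observe — the soft bound alone does not reproduce the power $\beta^{(d-1)/2}$ appearing in the stated $N_0$: the paper's own displayed conclusion carries $\beta^{(d-s)/2}$ instead, the two being reconciled only because the exponential factor dominates every power of $\beta$ as $\beta\to0$. Two small repairs. First, in the regime $\beta\gtrsim_{s,d}1$ you assert $1\le\inf_{\mathbb T^d}F_{\infty,s}$, which cannot hold since $\int_{\mathbb T^d}F_{\infty,s}\,dx=1$ forces $\inf F_{\infty,s}\le1$; what you need (and what is true) is simply $F_{N,s}(x)\ge1-\sum_{0\ne k\in\mathbb Z^d}(1+4\pi^2\beta|k|^2)^{-s/2}>0$ once $\beta$ exceeds a constant depending on $(s,d)$. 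Second, the parenthetical ``$d\le3$'' next to $\kappa(d)=\sqrt d/2\le2$ is misplaced (the inequality holds for $d\le16$); your argument as written covers every dimension with $\sqrt d/2<2$, which is already more than the rough bound delivers.
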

\begin{rem*}
Note that for $d=1$ and $s=2$, our $N_0 \sim e^{\frac 2{\sqrt{\beta}}} $ is slightly worse
than the constant $O(\frac 1 {\sqrt{\beta}} e^{\frac 1 {\sqrt {\beta}}})$ 
obtained before. This is due to the use of the
 rough lower bound on $G_s$.
\end{rem*}
\begin{proof}
There is no issue with point-wise convergence since $s>d$. Observe that
\begin{align*}
\sum_{|k|_{\infty}>N} \frac 1
{(1+4\pi^2 \beta |k|^2)^{\frac s2} }\lesssim \beta^{-\frac s2} N^{-(s-d)}.
\end{align*}
It follows that
\begin{align*}
F_{N,s}(x) \gtrsim \; \beta^{-\frac d2}
e^{-2 \frac 1 {\sqrt{\beta}} }  - \beta^{-\frac s2 } N^{-(s-d)}> 0,
\end{align*}
if $N\ge N_0 \sim  (\beta^{\frac {d-s}2} e^{\frac 2 {\sqrt{\beta}} } )^{\frac 1 {s-d}}$.
\end{proof}

The complementary regime $0<s\le d$ requires more delicate analysis. 

First we make an easy note. For all $s>0$, $d\ge 1$, and $|x|\le \frac 1 {6 N\sqrt{d}}$,
we have 
\begin{align*}
2 \pi |k\cdot x | \le \frac {\pi} 3, \quad \forall\, \text{$k\in \mathbb Z^d$
with $|k|_{\infty} \le N$}.
\end{align*}
Clearly then for all  $|x|\le \frac 1 {6 N\sqrt{d}}$, we have
\begin{align*}
F_{N,s}(x) \ge \frac 12 \sum_{|k|_{\infty} \le N}
\frac 1 {(1+4\pi^2 \beta |k|^2)^{\frac s2} } >0.
\end{align*}
Thus to analyze positivity we only need to focus on the regime
$\{x \in [-\frac 12, \frac 12]^d:\; |x|\ge \frac 1 {6N \sqrt{d}}\} $.  In the rest of the analysis we shall only
treat $x$ in this regime.

\subsubsection{Complete analysis for $1D$ case}

We first investigate the 1D case in complete detail. For convenience throughout this
subsection we shall identify
\begin{align*}
\mathbb T=[-\frac 12, \frac 12).
\end{align*}
This is to single out the possible singularity near $0$. 

To continue it is useful to denote (by a slight abuse of notation)
\begin{align*}
c_k =c(k)= \langle 2\pi \sqrt{\beta} k \rangle^{-s}= \frac
1 {(1+4\pi^2 \beta |k|^2)^{\frac s2} }.
\end{align*}
Note that $(\langle x \rangle^{-s})^{\prime\prime}=s \langle x \rangle^{-(s+4)}
\cdot((s+1)x^2-1)$. Thus $c(k)$ is convex whenever $k\ge \frac 1 {2\pi \sqrt{s+1} \sqrt{\beta}}$.
We will use the following identity for any $4\le N<M$:
\begin{align} \label{id847_0_01}
F_{M,s}-F_{N,s}
&=\sum_{k=N}^{M-2} (c_k -2 c_{k+1} +c_{k+2}) \tilde F_k
+c_M D_M +(c_{M-1}-c_{M}) \tilde F_{M-1} - (c_N-c_{N+1}) \tilde F_{N-1}
-c_N D_N,
\end{align}
where $D_N(x) =\frac{\sin(2N+1)\pi x}{\sin \pi x}$ and
$\tilde F_N(x) = ( \frac{\sin(N+1) \pi x} {\sin \pi x} )^2$. 
From this it is not difficult to check that $(F_{N,s})_{N\ge 4}$ converges uniformly on
$\mathbb T_{\delta}=\{x \in [-\frac 12,\frac 12):\, |x|>\delta\}$ for any fixed
small $\delta>0$.  Sending $M$ to infinity and using convexity of $c_k$ (for $k\ge N$)
then yields for $N\ge \frac 1 {2\pi \sqrt{s+1} \sqrt{\beta}}$,
\begin{align} \label{id847_0_02}
F_{N,s}(x) \ge F_{\infty,s}(x)  -h_N(x)+c_N D_N(x), \quad\text{for any $\delta<|x|\le \frac 12$}.
\end{align}
where 
\begin{align*}
h_N(x) = \sum_{k=N}^{\infty} (c_k-2c_{k+1}+c_{k+2}) \tilde F_k.
\end{align*}

\begin{lem}
Let $d=1$ and $0<s\le 1$.  For all $0<s\le 1$ and 
$\frac 15 \sqrt{\beta} \le |x| \le \frac 12$, we have
\begin{align*}
F_{\infty,s}(x)
\gtrsim \, \beta^{-\frac 12} e^{-\frac 1 {\sqrt{\beta}}}.
\end{align*}
For all $0<|x|<\min \{\frac 15 \sqrt{\beta},\, \frac 12\}$, we have
\begin{align*}
F_{\infty,s}(x)
\gtrsim
\begin{cases}
\beta^{-\frac 12} \ln | \frac{\sqrt{\beta}}{|x|} |,
\qquad s=1,\\
\beta^{-\frac 12} ( \frac{\sqrt{\beta}} {|x|} )^{1-s},
\qquad 0<s<1.
\end{cases}
\end{align*}
For any $N\ge 
\frac 1 {2 \pi \sqrt{\beta}}$ and $
\frac 1 {6N}<|x| \le \frac 12$, we have
\begin{align*}
|c_N| + | h_N(x)|
\lesssim \; \beta^{-\frac s2} N^{-s}  \frac 1 {|x|}.
\end{align*}
\end{lem}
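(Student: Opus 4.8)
The plan is to handle the three claims in turn: the first two are a direct transfer of the recalled pointwise bounds on the Bessel kernel $G_s$, while the third is a convexity--telescoping estimate of the tail sum $h_N$.

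\emph{Lower bounds for $F_{\infty,s}$.} I would start from the displayed inequality $F_{\infty,s}(x)\ge\beta^{-1/2}G_s(x/\sqrt\beta)$ (positivity of $G_s$ and periodization) and substitute $y=x/\sqrt\beta$. Since we identify $\mathbb T=[-\tfrac12,\tfrac12)$ we always have $|y|\le\tfrac1{2\sqrt\beta}$. If $\tfrac15\sqrt\beta\le|x|\le\tfrac12$ (a nonempty regime only when $\sqrt\beta\le\tfrac52$), then $\tfrac15\le|y|\le\tfrac1{2\sqrt\beta}$, and the recalled bound $G_s(y)\gtrsim e^{-2|y|}$, valid for all $0<s\le1$ when $|y|\ge\tfrac15$, gives $G_s(y)\gtrsim e^{-1/\sqrt\beta}$, which is the first estimate. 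If $0<|x|<\min\{\tfrac15\sqrt\beta,\tfrac12\}$, then $|y|<\tfrac15$, and the recalled near-origin bounds $G_1(y)\gtrsim-\ln|y|$ (for $s=1$) and $G_s(y)\gtrsim|y|^{-(1-s)}$ (for $0<s<1$) become, after multiplying by $\beta^{-1/2}$ and using $|y|=|x|/\sqrt\beta$, exactly the claimed $\beta^{-1/2}\ln(\sqrt\beta/|x|)$ and $\beta^{-1/2}(\sqrt\beta/|x|)^{1-s}$ bounds.

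\emph{Bounding $|c_N|+|h_N(x)|$.} For $N\ge\tfrac1{2\pi\sqrt\beta}$ one has $4\pi^2\beta N^2\ge1$, hence $1+4\pi^2\beta N^2\sim\beta N^2$ and $|c_N|\sim\beta^{-s/2}N^{-s}$; since $|x|\le\tfrac12$ forces $\tfrac1{|x|}\ge2$, this already gives $|c_N|\lesssim\beta^{-s/2}N^{-s}\tfrac1{|x|}$. For $h_N$ I would use convexity: $k\mapsto\langle2\pi\sqrt\beta k\rangle^{-s}$ is convex for $k\ge\tfrac1{2\pi\sqrt{s+1}\sqrt\beta}$, and since $s+1\ge1$ the hypothesis $N\ge\tfrac1{2\pi\sqrt\beta}$ places the whole tail in the convex range, so the coefficients $a_k:=c_k-2c_{k+1}+c_{k+2}$ ($k\ge N$) are nonnegative and telescoping (with $c_k\to0$) gives $\sum_{k\ge N}a_k=c_N-c_{N+1}$. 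Combining this with the crude uniform bound $0\le\tilde F_k(x)=\bigl(\tfrac{\sin(k+1)\pi x}{\sin\pi x}\bigr)^2\le(4|x|^2)^{-1}$ (from $|\sin\pi x|\ge2|x|$ on $[-\tfrac12,\tfrac12]$) and the elementary estimate $c_N-c_{N+1}=\int_N^{N+1}4\pi^2 s\beta k\,(1+4\pi^2\beta k^2)^{-\frac s2-1}\,dk\lesssim\beta^{-s/2}N^{-s-1}$, one obtains
\begin{align*}
|h_N(x)|\le\frac1{4|x|^2}\sum_{k\ge N}a_k=\frac{c_N-c_{N+1}}{4|x|^2}\lesssim\frac{\beta^{-s/2}N^{-s-1}}{|x|^2}.
\end{align*}
On the stated range $\tfrac1{6N}<|x|\le\tfrac12$ one has $\tfrac1{|x|N}<6$, so $\beta^{-s/2}N^{-s-1}|x|^{-2}=\beta^{-s/2}N^{-s}\cdot\tfrac1{|x|}\cdot\tfrac1{|x|N}\lesssim\beta^{-s/2}N^{-s}\tfrac1{|x|}$, which is the asserted bound.

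The computations are all routine; the only point needing care — and the main obstacle — is the alignment of thresholds: one must check that the single crude loss $\tilde F_k\lesssim|x|^{-2}$, applied uniformly in $k$ with no decomposition of the sum by the size of $k$, is exactly absorbed by the lower restriction $|x|>\tfrac1{6N}$, and that the convexity window $k\ge\tfrac1{2\pi\sqrt{s+1}\sqrt\beta}$ is implied by $N\ge\tfrac1{2\pi\sqrt\beta}$ for every $0<s\le1$. (The implied constants in the recalled $G_s$ lower bounds may depend on $s$, which is harmless since $s$ is fixed throughout.)
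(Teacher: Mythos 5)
Your proposal is correct and follows essentially the same route as the paper: the first two bounds are a direct transfer of the recalled pointwise lower bounds on $G_s$ via $F_{\infty,s}(x)\ge\beta^{-1/2}G_s(x/\sqrt\beta)$, and the tail estimate uses exactly the paper's chain $h_N(x)\lesssim|x|^{-2}\sum_{k\ge N}(c_k-2c_{k+1}+c_{k+2})=|x|^{-2}(c_N-c_{N+1})\lesssim|x|^{-2}\beta^{-s/2}N^{-s-1}\lesssim|x|^{-1}\beta^{-s/2}N^{-s}$, with the last step absorbed by $|x|>\tfrac1{6N}$. The threshold checks you flag (convexity window and the uniform Fejér bound) are handled correctly.
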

\begin{proof}
Direct computation. Note that
\begin{align*}
h_N(x) \lesssim\; |x|^{-2} \sum_{k=N}^{\infty}(c_k-2c_{k+1}+c_{k+2})
=|x|^{-2}(c_N-c_{N+1}) \lesssim |x|^{-2} \beta^{-\frac s2} N^{-s-1}
\lesssim |x|^{-1} \beta^{-\frac s2} N^{-s}.
\end{align*}
\end{proof}
\begin{lem}
Let $d=1$ and $s=1$. Then for any $N\ge N_0(\beta)= 
\alpha \cdot \beta^{-\frac 12}
e^{\frac 1{\sqrt{\beta}}}$ ($\alpha>0$ is some absolute
constant), $F_{N,1}$ is positive 
and has unit $L_x^1$ mass.
\end{lem}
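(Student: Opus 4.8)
The plan is to prove pointwise positivity of $F_{N,1}$ on $\mathbb T=[-\tfrac12,\tfrac12)$ by splitting into the region $|x|\le\frac1{6N}$, where positivity is immediate from the elementary observation made just before the $1$D analysis (all phases $2\pi kx$ with $|k|\le N$ then lie in $[-\tfrac\pi3,\tfrac\pi3]$, so $F_{N,1}(x)\ge\tfrac12\sum_{|k|\le N}c_k>0$), and the complementary region $\frac1{6N}<|x|\le\frac12$, where the convexity identity \eqref{id847_0_02} is available. Since $s+1=2$ here, once $N\ge\frac1{2\pi\sqrt2\sqrt\beta}$ (and $N\ge4$, so that \eqref{id847_0_01} applies) the sequence $(c_k)$ is convex for $k\ge N$ and \eqref{id847_0_02} gives, for every $x\neq0$,
\[
F_{N,1}(x)\ \ge\ F_{\infty,1}(x)-h_N(x)+c_N D_N(x)\ \ge\ F_{\infty,1}(x)-h_N(x)-c_N|D_N(x)| ,
\]
where $h_N\ge0$. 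So on $\frac1{6N}<|x|\le\frac12$ it suffices to show $F_{\infty,1}(x)>h_N(x)+c_N|D_N(x)|$.

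For the error I would use the crude bounds $|D_N(x)|\le|\sin\pi x|^{-1}\le(2|x|)^{-1}$ and $c_N=(1+4\pi^2\beta N^2)^{-1/2}\le(2\pi\sqrt\beta N)^{-1}$, together with the estimate $|c_N|+|h_N(x)|\lesssim\beta^{-1/2}N^{-1}|x|^{-1}$ from the preceding lemma (valid for $N\ge\frac1{2\pi\sqrt\beta}$), to get $h_N(x)+c_N|D_N(x)|\le C_0\,\beta^{-1/2}(N|x|)^{-1}$ with $C_0$ absolute. For the main term I would invoke the two descriptions of $F_{\infty,1}$ from the same lemma. On $\frac15\sqrt\beta\le|x|\le\frac12$ we have $F_{\infty,1}(x)\gtrsim\beta^{-1/2}e^{-1/\sqrt\beta}$, and since there $(N|x|)^{-1}\le 5(N\sqrt\beta)^{-1}$, the desired inequality reduces to $N\gtrsim\beta^{-1/2}e^{1/\sqrt\beta}$. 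On $\frac1{6N}<|x|<\min\{\frac15\sqrt\beta,\frac12\}$ we have $F_{\infty,1}(x)\gtrsim\beta^{-1/2}\ln(\sqrt\beta/|x|)$, so the inequality becomes $N|x|\ln(\sqrt\beta/|x|)\gtrsim1$; here I would note that $r\mapsto r\ln(\sqrt\beta/r)$ is increasing on $(0,\sqrt\beta/e)$, which contains $(\frac1{6N},\frac15\sqrt\beta)$ since $\frac15<\frac1e$, whence $N|x|\ln(\sqrt\beta/|x|)\ge\frac16\ln(6N\sqrt\beta)\ge\frac16\ln(6\alpha)$, using $N\sqrt\beta\ge N_0(\beta)\sqrt\beta=\alpha e^{1/\sqrt\beta}\ge\alpha$; taking $\alpha$ large makes this work with room to spare.

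To finish I would collect the constraints: $N\ge4$, $N\ge\frac1{2\pi\sqrt2\sqrt\beta}$, $N\ge\frac1{2\pi\sqrt\beta}$, and the two thresholds produced above are each of the form $N\ge c\,\beta^{-1/2}e^{1/\sqrt\beta}$ or $N\ge$ absolute constant, hence all implied by $N\ge N_0(\beta)=\alpha\,\beta^{-1/2}e^{1/\sqrt\beta}$ for a single sufficiently large absolute $\alpha$ (using $\alpha\beta^{-1/2}e^{1/\sqrt\beta}\ge\alpha$ and $\ge\alpha\beta^{-1/2}$). This yields $F_{N,1}>0$ throughout $\mathbb T$; positivity then gives $\|F_{N,1}\|_{L^1(\mathbb T)}=\int_{\mathbb T}F_{N,1}=\widehat{F_{N,1}}(0)=c_0=1$.

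The main obstacle is the tension between the two ends of the range of $|x|$. Near $|x|\sim N^{-1}$ the truncation error $h_N+c_N|D_N|$ is as large as $\sim\beta^{-1/2}$, but there $F_{\infty,1}$ carries a compensating logarithmic gain $\sim\beta^{-1/2}\ln(N\sqrt\beta)$; near $|x|\sim\frac12$ the error drops to $\sim(\beta N)^{-1}$, yet $F_{\infty,1}$ is only $\gtrsim\beta^{-1/2}e^{-1/\sqrt\beta}$, exponentially small for small $\beta$ — it is this far regime that forces $N_0\sim\beta^{-1/2}e^{1/\sqrt\beta}$. Checking that one choice of $\alpha$ simultaneously clears both regimes together with the structural thresholds is the crux of the bookkeeping; the genuinely analytic input (the lower bounds for $F_{\infty,1}$ and the bound for $h_N$) is already supplied by the preceding lemma.
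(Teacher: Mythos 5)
Your proof is correct and follows essentially the same route as the paper's: automatic positivity for $|x|\le\frac1{6N}$, the convexity identity \eqref{id847_0_02} with the error bound $|c_N|+|h_N(x)|+c_N|D_N(x)|\lesssim\beta^{-1/2}(N|x|)^{-1}$, the exponential lower bound on $F_{\infty,1}$ in the regime $|x|\gtrsim\sqrt\beta$ (which forces $N_0\sim\beta^{-1/2}e^{1/\sqrt\beta}$), and the logarithmic lower bound in the regime $\frac1{6N}<|x|\lesssim\sqrt\beta$. The only cosmetic difference is that you close the near regime by the monotonicity of $r\mapsto r\ln(\sqrt\beta/r)$, whereas the paper splits on whether $\ln(\sqrt\beta/|x|)$ exceeds a fixed constant; both are immediate.
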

\begin{proof}
Consider first the regime $\frac 15 \sqrt{\beta} \le |x| \le \frac 12$ (note that
in this case for such $x$ to exist we must have $0<\beta\le (\frac 52)^2$). We have
\begin{align*}
F_{N,s}(x) & \ge  \alpha_1 \beta^{-\frac 12} e^{-\frac 1 {\sqrt{\beta}}}
-\alpha_2 \beta^{-\frac 12} \frac 1 {N|x|}\notag \\
& \ge \alpha_1 \beta^{-\frac 12} e^{-\frac 1 {\sqrt{\beta}}}
-5\alpha_2 \beta^{-\frac 12} \frac 1 {N\sqrt{\beta}},
\end{align*}
where $\alpha_1>0$, $\alpha_2>0$ are absolute constants. Clearly then
$N_0 \sim \beta^{-\frac 12} e^{\frac 1 {\sqrt{\beta}}}$ suffices.

Next consider the regime $\frac 1 {6N}\le |x| \le \min\{ \frac 15 \sqrt{\beta}, \, \frac 12 \}$. 
We have 
\begin{align*}
F_{N,s}(x) 
& \ge \alpha_3 \beta^{-\frac 12} \ln
| \frac {\sqrt{\beta}} {|x| } | - \alpha_2 \beta^{-\frac 12} \cdot \frac 1 {N|x|},
\end{align*}
where $\alpha_3>0$ is also an absolute constant. Now if
$\ln| \frac {\sqrt{\beta}} {|x| } |  \ge 2\frac {\alpha_2} {\alpha_3}$, 
clearly we have positivity.  On the other hand if $\ln| \frac {\sqrt{\beta}} {|x| } |  <2\frac {\alpha_2} {\alpha_3}$, then 
\begin{align*}
F_{N,s}(x) \ge \alpha_3 \beta^{-\frac 12}
\ln 5 - \alpha_2 \beta^{-1} \cdot \frac 1 N \cdot e^{2\frac{\alpha_2}{\alpha_3}}>0,
\end{align*}
if we take $N_0 \gg \beta^{-\frac 12}$. 

\end{proof}

We now focus on $0<s<1$.

\begin{lem}
Let $d=1$ and $0<s<1$. If $\frac 15 \sqrt {\beta}
\le |x| \le \frac 12$ and $N\ge N_0(\beta,s)
=\alpha(s) \cdot \beta^{-\frac 12} e^{\frac 1{s\sqrt{\beta}}}$ ($\alpha(s)>0$
depends only on $s$), then $F_{N,s}(x)>0$.
\end{lem}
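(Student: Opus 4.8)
The plan is to deduce positivity directly from the pointwise comparison \eqref{id847_0_02}, which is available once $N\ge \frac{1}{2\pi\sqrt{s+1}\sqrt{\beta}}$ and reads
\begin{align*}
F_{N,s}(x)\ge F_{\infty,s}(x)-h_N(x)+c_N D_N(x),\qquad \tfrac15\sqrt{\beta}<|x|\le\tfrac12 .
\end{align*}
The main term $F_{\infty,s}(x)$ is the genuinely positive object, being a periodization of the nonnegative Bessel potential $G_s$, and the whole point is that for $N$ large the two correction terms $h_N(x)$ and $c_N D_N(x)$ are swamped by it. So the proof reduces to quantitative lower/upper bounds for these three quantities in the window $\frac15\sqrt{\beta}\le|x|\le\frac12$. (If $\frac15\sqrt{\beta}>\frac12$ the window is empty and there is nothing to prove, so we may assume $0<\beta\le\frac{25}{4}$.)

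First I would assemble the ingredients already established above. From the lower bound on $F_{\infty,s}$ we have $F_{\infty,s}(x)\gtrsim \beta^{-1/2}e^{-1/\sqrt{\beta}}$ uniformly for $\frac15\sqrt{\beta}\le|x|\le\frac12$. From the tail estimate we have $|c_N|+|h_N(x)|\lesssim \beta^{-s/2}N^{-s}/|x|$ whenever $N\ge\frac1{2\pi\sqrt{\beta}}$ and $\frac1{6N}<|x|\le\frac12$; in our window the side condition $\frac1{6N}<|x|$ is automatic since $|x|\ge\frac15\sqrt{\beta}$ and $N$ will be far larger than $\frac5{6\sqrt{\beta}}$. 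Finally, since $|\sin\pi x|\ge 2|x|$ for $|x|\le\frac12$, the Dirichlet kernel obeys the crude bound $|D_N(x)|\le\frac1{2|x|}$, hence $|c_N D_N(x)|\le c_N/(2|x|)\lesssim \beta^{-s/2}N^{-s}/|x|$. Note that one cannot discard $c_ND_N$ by a sign argument, since $D_N$ oscillates, but estimating it in absolute value costs nothing.

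Combining, for $x$ in the window,
\begin{align*}
F_{N,s}(x)\ge c_0\,\beta^{-1/2}e^{-1/\sqrt{\beta}}-C_0\,\beta^{-s/2}N^{-s}\cdot\frac1{|x|}
\ge c_0\,\beta^{-1/2}e^{-1/\sqrt{\beta}}-5C_0\,\beta^{-(s+1)/2}N^{-s},
\end{align*}
using $1/|x|\le 5/\sqrt{\beta}$, where $c_0,C_0>0$ are absolute constants. The right-hand side is positive as soon as $N^s>(5C_0/c_0)\,\beta^{-s/2}e^{1/\sqrt{\beta}}$, i.e. $N>(5C_0/c_0)^{1/s}\,\beta^{-1/2}e^{1/(s\sqrt{\beta})}$. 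Taking $\alpha(s)=(5C_0/c_0)^{1/s}$, enlarged if necessary so as also to dominate the absolute constants $\frac1{2\pi\sqrt{s+1}}$ and $\frac1{2\pi}$ (harmless, since $e^{1/(s\sqrt{\beta})}\ge1$), the threshold $N_0(\beta,s)=\alpha(s)\,\beta^{-1/2}e^{1/(s\sqrt{\beta})}$ ensures $F_{N,s}(x)>0$ on the window, which is exactly the claimed bound.

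The step I expect to require the most care is purely bookkeeping: checking that the single hypothesis $N\ge N_0(\beta,s)$ simultaneously implies all the side conditions needed to invoke \eqref{id847_0_02} and the two cited estimates, and tracking how the exponent $1/(s\sqrt{\beta})$ — rather than $1/\sqrt{\beta}$ as in the $s=1$ case — arises precisely from balancing $\beta^{-1/2}e^{-1/\sqrt{\beta}}$ against $\beta^{-(s+1)/2}N^{-s}$. There is no genuine analytic obstacle here, since the two nontrivial inputs (the Bessel periodization lower bound on $F_{\infty,s}$ and the convexity-based control of the tail $h_N$) have already been proved.
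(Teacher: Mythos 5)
Your argument is correct and is essentially the paper's own proof, just written out in full: the paper likewise combines the comparison \eqref{id847_0_02} with the lower bound $F_{\infty,s}(x)\gtrsim\beta^{-1/2}e^{-1/\sqrt{\beta}}$ and the tail bound $|c_N|+|h_N(x)|\lesssim\beta^{-s/2}N^{-s}/|x|$, absorbs $1/|x|\le 5/\sqrt{\beta}$, and balances the two terms to extract the threshold $N\gtrsim_s\beta^{-1/2}e^{1/(s\sqrt{\beta})}$. Your extra bookkeeping (the crude bound $|c_ND_N(x)|\le|c_N|/(2|x|)$, the check that the window may be empty, and the verification of the side conditions) is exactly the detail the paper leaves implicit.
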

\begin{proof}
Observe that 
\begin{align*}
F_{N,s}(x)
& \ge \alpha_1(s) \cdot \beta^{-\frac 12} e^{-\frac 1{\sqrt{\beta}}}
-\alpha_2(s) 
\cdot \beta^{-\frac s2} \cdot N^{-s} \cdot \frac 1{\sqrt{\beta}},
\end{align*}
where $\alpha_1(s)>0$, $\alpha_2(s)>0$ depend only on $s$. 
\end{proof}

\begin{lem}
Let $d=1$ and $0<s<1$. Assume $N\ge \frac 1 {2\pi \sqrt{\beta}}$. There exist a constant $\tilde\gamma_1(s)>0$ depending only
on $s$ such that if $\frac {{\tilde \gamma_1(s)}} N \le |x| 
\le \min\{ \frac 15 \sqrt{\beta},\, \frac 12 \}$, then $F_{N,s}(x)>0$.
\end{lem}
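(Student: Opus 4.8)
The plan is to feed the pointwise bound \eqref{id847_0_02} into the size estimates for $F_{\infty,s}$, $h_N$ and $c_N$ recorded in the two preceding lemmas. First I would observe that since $s+1>1$ the hypothesis $N\ge\frac1{2\pi\sqrt\beta}$ already forces $N\ge\frac1{2\pi\sqrt{s+1}\sqrt\beta}$, so that \eqref{id847_0_02} is available pointwise for every $x\ne0$, giving
\begin{align*}
F_{N,s}(x)\ \ge\ F_{\infty,s}(x)-|h_N(x)|-|c_N|\,|D_N(x)|,\qquad 0<|x|\le\tfrac12 .
\end{align*}
Since $|D_N(x)|\le1/|\sin\pi x|\lesssim|x|^{-1}$ on $[-\tfrac12,\tfrac12]$ and $c_N\lesssim(\sqrt\beta N)^{-s}=\beta^{-s/2}N^{-s}$, the last term is $\lesssim\beta^{-s/2}N^{-s}|x|^{-1}$, i.e. of exactly the same order as the bound $|c_N|+|h_N(x)|\lesssim\beta^{-s/2}N^{-s}|x|^{-1}$ supplied by the earlier lemma for $N\ge\frac1{2\pi\sqrt\beta}$, $\tfrac1{6N}<|x|\le\tfrac12$. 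So on the range $\tfrac1{6N}<|x|\le\min\{\tfrac15\sqrt\beta,\tfrac12\}$ all three error contributions collapse into one term of size $\lesssim\beta^{-s/2}N^{-s}|x|^{-1}$.

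Next I would insert the main-term lower bound $F_{\infty,s}(x)\gtrsim\beta^{-1/2}(\sqrt\beta/|x|)^{1-s}=\beta^{-s/2}|x|^{s-1}$, which is valid precisely for $0<|x|<\min\{\tfrac15\sqrt\beta,\tfrac12\}$. Combining the two displays yields, for $\tfrac1{6N}<|x|\le\min\{\tfrac15\sqrt\beta,\tfrac12\}$,
\begin{align*}
F_{N,s}(x)\ \ge\ \beta^{-\frac s2}\,|x|^{-1}\bigl(c_1\,|x|^{s}-c_2\,N^{-s}\bigr),
\end{align*}
with $c_1=c_1(s)>0$ and $c_2=c_2(s)>0$. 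This is strictly positive as soon as $|x|^{s}>(c_2/c_1)N^{-s}$, i.e. $|x|>(c_2/c_1)^{1/s}N^{-1}$. I would therefore set $\tilde\gamma_1(s):=\max\{(c_2/c_1)^{1/s},\tfrac16\}$; then for $\tilde\gamma_1(s)/N\le|x|\le\min\{\tfrac15\sqrt\beta,\tfrac12\}$ the constraint $|x|>\tfrac1{6N}$ used above is automatic, the displayed inequality applies, and $F_{N,s}(x)>0$.

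There is no genuine obstacle here: the argument is a bookkeeping exercise in matching the $\beta$--powers and in checking that the various ranges of validity ($N\ge\frac1{2\pi\sqrt\beta}$, $\tfrac1{6N}<|x|$, $|x|<\min\{\tfrac15\sqrt\beta,\tfrac12\}$) overlap on the claimed interval. The only point worth a word is that the truncation remainder in \eqref{id847_0_02} carries the extra term $c_ND_N(x)$ beyond $h_N$; the remark above is that this term, though oscillatory and possibly negative, is dominated by $\beta^{-s/2}N^{-s}|x|^{-1}$ and hence is absorbed into the same error, so it does not affect the threshold $\tilde\gamma_1(s)$. For completeness one notes that the complementary regime $|x|\le\tfrac1{6N}$ is positive for free by the elementary estimate $F_{N,s}(x)\ge\tfrac12\sum_{|k|_\infty\le N}c_k>0$ recalled at the start of the subsection, so only the gap $\tfrac1{6N}<|x|<\tilde\gamma_1(s)/N$ is left uncontrolled, consistent with the statement.
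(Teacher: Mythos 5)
Your proof is correct and follows essentially the same route as the paper: combine the pointwise inequality \eqref{id847_0_02} with the lower bound $F_{\infty,s}(x)\gtrsim\beta^{-s/2}|x|^{s-1}$ and the error bound $\lesssim\beta^{-s/2}N^{-s}|x|^{-1}$ (which absorbs $h_N$, $c_N D_N$, and the $|x|\le\frac1{6N}$ regime), then match the two powers of $|x|$ to determine $\tilde\gamma_1(s)$. The only cosmetic point is to take $\tilde\gamma_1(s)$ strictly larger than the matching value so that the inequality is strict, which is implicit in the paper as well.
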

\begin{proof}
Observe that 
\begin{align*}
F_{N,s}(x)
&\ge \,
\alpha_3(s) \beta^{-\frac 12}
( \frac{\sqrt{\beta}} {|x|} )^{1-s}
-\alpha_4(s)
\cdot \beta^{-\frac s2}
N^{-s} \cdot \frac 1{|x|} \notag \\
&= \,
\alpha_3(s) \beta^{-\frac s 2} |x|^{-1+s}
-\alpha_4(s)
\cdot \beta^{-\frac s2} 
N^{-s} |x|^{-1},
\end{align*}
where $\alpha_3>0$, $\alpha_4>0$ depend only on $s$. 
Matching $|x|^{-1+s}$ with the term $N^{-s} |x|^{-1}$  then determines the constant
 $\tilde \gamma_1(s)$.
\end{proof}

\begin{rem*}
Yet another way to obtain the positivity for $F_{N,s}(x)$ in the regime 
$\frac 1 N \ll |x| \le \frac 12$ is as follows. For simplicity consider $\beta=
\frac 1 {(4\pi)^2}$ and
\begin{align*}
K_{>N}(x) = \sum_{|k|>N} \langle k \rangle^{-s} e^{2\pi i k \cdot x}
=\sum_{k \in \mathbb Z} \langle k \rangle^{-s}
\eta_N(k) e^{2\pi i k\cdot x},
\end{align*}
where $\eta_{N}$ is a linear interpolation such that
$\eta_N(k)=0$ for $|k|\le N$  and $\eta_N(k)=1$ for $|k|\ge N+1$. 
In particular $\eta_N^{\prime}(k)= \operatorname{sgn}(k)1_{|k| \in[N,N+1]}$. We only
need to show
\begin{align*}
|K_{>N}(x)| \lesssim\; N^{-s} \frac 1 {|x|}.
\end{align*}
It then 
suffices to consider
\begin{align*}
G(x)&= \int_{\mathbb R} \langle \xi \rangle^{-s} \eta_N(\xi) e^{2\pi i \xi
\cdot x} d\xi \notag \\
&= -\frac 1 {2\pi i x} \int_{\mathbb R}
(-s) \langle \xi \rangle^{-s-2} \xi \eta_N(\xi) e^{2\pi i \xi \cdot x} d\xi 
 -\frac 1 {2\pi i x} \int_N^{N+1} \langle \xi \rangle^{-s}
2i \sin 2\pi \xi \cdot x d\xi.
\end{align*}
One can then obtain for $0<|x|\le \frac 12$,
\begin{align*}
|G(x)| \lesssim \; |x|^{-1} \cdot N^{-s},
\end{align*}
and for $|x| \ge \frac 12$,
\begin{align*}
|G(x) | \lesssim |x|^{-2} \cdot N^{-s}.
\end{align*}
The desired estimate then follows. One should observe that this line of computation 
is in some sense analogous to \eqref{id847_0_02} whereas the latter can be viewed as discrete integration by parts.
\end{rem*}
\begin{rem*}
In the preceding remark, one may also take $\eta_N$ to be a smooth function as
\begin{align*}
\eta_N(\xi) =\eta_{1} * (1_{|\xi|>N+\frac 12}),
\end{align*}
where $*$ is the usual convolution on $\mathbb R$, and $\eta_{1}  \in C^{\infty}_c(\mathbb R)$ is an even function such 
that $\eta_{1} (x)=1$ for $|x|\le 0.01$ and $\eta_{1}(x)=0$ for $|x| >0.02$. This way one does not
need to worry about boundary terms when doing integration by parts.
\end{rem*}

Since we have shown the positivity of $F_{N,s}$ for $|x|<\frac 1 {6N}$ and $\frac 1 N\ll |x|\le \frac 12$, it then suffices for us to study the positivity or non-positivity of $F_{N,s}(x)$ for $0<s<1$ and
 $|x| \in (\frac 1 {6N},
\frac{\tilde \gamma_1(s)} N)$. To this end let $x= \frac y N$ and assume that
$|y| \in [\gamma_1, \, \gamma_2]$ for some $0<\gamma_1(s)<\gamma_2(s) $. 
Here
we allow some flexibility and the values of $\gamma_1$ and $\gamma_2$ will become clear later. 
Consider the expression
\begin{align*}
H_{N,s}(y) =N^{-(1-s)}  F_{N,s}(\frac y N)
=\frac 1 {N^{1-s}} \sum_{|k|\le N} \frac 1 {(1+4\pi^2 \beta |k|^2)^{\frac s2} }
e^{2\pi i \frac k N \cdot y}
\end{align*}
and note that
\begin{align*}
H_{\infty,s}(y) =(2\pi \sqrt{\beta})^{-s}  \int_{-1}^1 |t|^{-s} e^{2\pi it\cdot y} dt.
\end{align*}

\begin{lem} \label{lem2.25_00}
We have
\begin{align*}
\max_{|y| \in [\gamma_1(s),\,
\gamma_2(s)]}
|H_{N,s}(y) - H_{\infty,s}(y) | \lesssim_s\; 
N^{-(1-s)} ( 1+\frac 1 {\sqrt{\beta}}).
\end{align*}
\end{lem}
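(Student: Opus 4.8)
The plan is to recognise $H_{N,s}$ as a Riemann sum and compare it to the integral $H_{\infty,s}$. Setting $g_N(t)=(N^{-2}+4\pi^2\beta t^2)^{-s/2}$, one checks $N^s c_k=g_N(k/N)$, so that $H_{N,s}(y)=\frac1N\sum_{|k|\le N}g_N(k/N)\,e^{2\pi i(k/N)y}$ is precisely a mesh-$1/N$ Riemann sum for $\int_{-1}^{1}g_\infty(t)e^{2\pi ity}\,dt=H_{\infty,s}(y)$, except that the density is $g_N$ rather than $g_\infty(t)=(2\pi\sqrt\beta)^{-s}|t|^{-s}$. First I would dispose of the range $N<(2\pi\sqrt\beta)^{-1}$ — in which necessarily $\beta<1$ — by the crude bounds $|H_{N,s}(y)|\lesssim_s N^{-(1-s)}+\beta^{-s/2}$ (from $\sum_{|k|\le N}c_k\lesssim_s 1+\beta^{-s/2}N^{1-s}$) and $|H_{\infty,s}(y)|\lesssim_s\beta^{-s/2}$, observing that $\beta^{-s/2}\lesssim_s\beta^{-1/2}N^{-(1-s)}$ in that range. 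So I may assume $N\ge(2\pi\sqrt\beta)^{-1}$, and split the error by the triangle inequality into the pure quadrature error $\mathrm{(I)}=\bigl|\frac1N\sum_{|k|\le N}g_N(k/N)e^{2\pi i(k/N)y}-\int_{-1}^1 g_N(t)e^{2\pi ity}\,dt\bigr|$ and the density discrepancy $\mathrm{(II)}=\int_{-1}^{1}|g_N(t)-g_\infty(t)|\,dt$.

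For $\mathrm{(I)}$ I would use the standard bounded-variation quadrature estimate $\lesssim N^{-1}\bigl(\operatorname{Var}_{[-1,1]}F+\sup_{[-1,1]}|F|\bigr)$ applied to $F(t)=g_N(t)e^{2\pi ity}$. Since $g_N$ is even and unimodal on $[-1,1]$ with $\sup g_N=g_N(0)=N^s$, its variation is $\le 2N^s$; with $|y|\le\gamma_2(s)$ bounded one gets $\operatorname{Var}_{[-1,1]}F\lesssim_s N^s$ and $\sup|F|=N^s$, hence $\mathrm{(I)}\lesssim_s N^{-(1-s)}$, uniformly in $y$. This contributes the ``$1$'' in the factor $1+\beta^{-1/2}$.

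For $\mathrm{(II)}$ I would split the $t$-integral at $t_\ast=(2\pi\sqrt\beta N)^{-1}\in(0,1)$. On $|t|\ge t_\ast$, writing $g_\infty(t)-g_N(t)=(2\pi\sqrt\beta)^{-s}|t|^{-s}\bigl(1-(1+u)^{-s/2}\bigr)$ with $u=N^{-2}/(4\pi^2\beta t^2)\le1$ and using $0\le 1-(1+u)^{-s/2}\le\frac s2 u$, the integrand is $\lesssim_s\beta^{-s/2-1}N^{-2}|t|^{-s-2}$, and since $-s-2<-1$ its integral over $t_\ast\le|t|\le1$ is $\lesssim_s\beta^{-s/2-1}N^{-2}t_\ast^{-s-1}\lesssim_s\beta^{-1/2}N^{-(1-s)}$. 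On $|t|<t_\ast$ I would simply use $|g_N(t)-g_\infty(t)|\le g_N(0)+g_\infty(t)=N^s+(2\pi\sqrt\beta)^{-s}|t|^{-s}$; integrating over $|t|<t_\ast$ (using $0<s<1$ for the singular term) gives $\lesssim_s N^s t_\ast+\beta^{-s/2}t_\ast^{1-s}\lesssim_s\beta^{-1/2}N^{-(1-s)}$. Summing the two ranges yields $\mathrm{(II)}\lesssim_s\beta^{-1/2}N^{-(1-s)}$, and combining with the bound on $\mathrm{(I)}$ gives the claim.

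The step I expect to be most delicate is the small-$t$ regime $|t|<t_\ast$ — equivalently the few modes $|k|\lesssim(\sqrt\beta N)^{-1}N$ near the origin — where $g_N$ has a spike of height $N^s$ while $g_\infty$ has an integrable singularity; one must place the cutoff exactly at $t_\ast=(2\pi\sqrt\beta N)^{-1}$ so that the spike contribution $N^s t_\ast$ and the singular contribution $\beta^{-s/2}t_\ast^{1-s}$ both balance out to precisely $\beta^{-1/2}N^{-(1-s)}$ and no worse. The remaining ingredients — the quadrature estimate, the unimodality of $g_N$, and the elementary inequality $1-(1+u)^{-s/2}\le\frac s2 u$ — are routine.
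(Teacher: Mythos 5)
Your proof is correct, and the error budget you arrive at — a quadrature term of size $N^{-(1-s)}$ plus a term of size $\beta^{-1/2}N^{-(1-s)}$ concentrated at the scale $|t|\sim(\sqrt\beta N)^{-1}$ — is exactly the one in the paper. The organization, however, is genuinely different. The paper splits on the frequency side: it first bounds the low modes $|k|\le\beta^{-1/2}$ crudely (each coefficient is at most $1$, and there are $O(1+\beta^{-1/2})$ of them), then replaces $(1+4\pi^2\beta k^2)^{-s/2}$ by the pure power $(4\pi^2\beta k^2)^{-s/2}$ for the remaining modes, and finally does a cell-by-cell Riemann-sum comparison for $|\xi|^{-s}e^{2\pi i\xi y}$ using the pointwise derivative bound $|g(\xi)-g(k/N)|\lesssim_s k^{-1-s}N^{s}$ away from the origin. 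You instead keep the regularized density $g_N$ intact, run a single global bounded-variation (Koksma-type) quadrature estimate over all of $[-1,1]$ — which works precisely because the regularization caps the spike at $g_N(0)=N^s$, so $\operatorname{Var}(g_N)\le 2N^s$ — and only afterwards compare the two densities in $L^1$, splitting at $t_*=(2\pi\sqrt\beta N)^{-1}$. Your step (II) thus absorbs both the paper's low-mode piece and its symbol-replacement piece, while your step (I) replaces its mesh-by-mesh derivative estimate; your separate disposal of the range $N<(2\pi\sqrt\beta)^{-1}$ is also a point the paper's sketch leaves implicit. Both routes are standard and of comparable length; yours has the mild advantage of never having to exclude a neighborhood of the origin from the quadrature step, at the cost of invoking the variation bound for products.
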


\begin{proof}
Utterly standard and thus we only sketch the details (so as to check the constants).
First note that the tail piece is OK:
\begin{align*}
N^{-(1-s)}
\Bigl|\sum_{|k|\le \frac 1 {\sqrt{\beta}}}
\frac 1 {(1+4\pi^2 \beta |k|^2)^{\frac s2} }
e^{2\pi i \frac k N \cdot y}
\Bigr| \lesssim_s \; (1+\frac 1{\sqrt{\beta}}) N^{-(1-s)}.
\end{align*}
Here we have the factor $1+\frac 1{\sqrt{\beta}}$ instead of purely $\frac 1 {\sqrt {\beta}}$ due
to the $k=0$ term. This is needed especially when $\beta >1$.

Next observe that 
\begin{align*}
& N^{-(1-s)} \Bigl|\sum_{\frac 1 {\sqrt{\beta}} <|k|\le N} (
\frac 1 {(1+4\pi^2 \beta |k|^2)^{\frac s2} } -
\frac 1 {(4\pi^2 \beta |k|^2)^{\frac s2} } )
e^{2\pi i \frac k N \cdot y} \Bigr|
\notag \\
\lesssim_s\; &
N^{-(1-s)} \sum_{\frac 1 {\sqrt{\beta}} <|k|\le N}
\frac 1 {(1+4\pi^2 \beta |k|^2)^{\frac s2+1} } 
 \lesssim_s \; (1+\frac 1{\sqrt{\beta}})N^{-(1-s)}.
\end{align*}
Finally to bound the difference between the integral and the summation, it suffices
to consider the function $g(\xi)= |\xi|^{-s} e^{2\pi i \xi \cdot y}$. Note that on
the mesh $[\frac k N, \frac {k+1} N]$ (say WLOG $k\ge 2$),  we  have
\begin{align*}
\Bigl|g(\xi) -g (\frac k N)\Bigr| \lesssim_s  (\frac k N)^{-s-1} \cdot \frac 1 N= k^{-1-s} \cdot N^{s}.
\end{align*}
Since the mesh size is $1/N$, the total error for the function
$\beta^{-\frac s 2} g(\xi)$ can then be estimated by
\begin{align*}
\beta^{-\frac s2} \sum_{|k|>\frac 1 {\sqrt{\beta}}} |k|^{-1-s} \cdot N^s\cdot \frac 1 N 
\lesssim \; N^{-(1-s)} \cdot (1+\frac 1 {\sqrt{\beta}}).
\end{align*}

\end{proof}

Thanks to Lemma \ref{lem2.25_00}, we have for all $y\in [\gamma_1(s),
\gamma_2(s)]$, 
\begin{align*}
F_{N,s}(\frac yN) = N^{1-s} H_{\infty,s}(y) + O_s(1) \cdot 
(1+\frac 1 {\sqrt{\beta}}),
\end{align*}
where for any quantity $X$ we denote $X=O_s(1)$ if $|X|\le C_s$ for some
constant $C_s>0$ depending only on $s$. Clearly
\begin{align*}
H_{\infty,s}(y) &=2 (2\pi \sqrt{\beta})^{-s}  \int_{0}^1 t^{-s}  \cos  ({2\pi  t y}) dt \notag \\
&= 2 (2\pi \sqrt{\beta})^{-s}
(2\pi y)^{-(1-s)} \int_0^{2\pi y} t^{-s} \cos t dt.
\end{align*}

Consider for $0<s<1$, and for $y\in[0, \infty)$,
\begin{align*}
h_s(y) =\int_0^y  t^{-s} \cos t d t.
\end{align*}
\begin{lem}
For any $0<\delta_1<\frac 1{10}$, we have
\begin{align*}
\inf_{\delta_1\le y <\infty} h_s(y) =\min\{h_s(\delta_1),\;
 h_s(\frac{3\pi}2)\}.
\end{align*}
\end{lem}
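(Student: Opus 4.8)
The plan is to read off everything from the sign of the derivative. For $y>0$ one has $h_s'(y)=y^{-s}\cos y$, which has the same sign as $\cos y$, so the only critical points of $h_s$ in $(0,\infty)$ are the points $\tfrac\pi2+k\pi$, $k\ge 0$, with local maxima at $\tfrac\pi2+2k\pi$ and local minima at $y_k:=\tfrac{3\pi}2+2k\pi$, $k\ge 0$ (note that $h_s$ is well defined and continuous on $[0,\infty)$ since $0<s<1$ makes $t^{-s}$ integrable near $0$). Since $\delta_1<\tfrac1{10}<\tfrac\pi2$, the function $h_s$ is strictly increasing on $[\delta_1,\tfrac\pi2]$, hence $\inf_{[\delta_1,\pi/2]}h_s=h_s(\delta_1)$. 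On $[\tfrac\pi2,\infty)$ the function decreases on $[\tfrac\pi2,y_0]$ and on each $[y_k,y_{k+1}]$ it rises to the intervening local maximum and then falls back down to $y_{k+1}$, so on each such interval its minimum is one of the endpoints; consequently $\inf_{[\pi/2,\infty)}h_s=\inf_{k\ge 0}h_s(y_k)$, and therefore
\[
\inf_{\delta_1\le y<\infty}h_s(y)=\min\Big\{h_s(\delta_1),\ \inf_{k\ge 0}h_s(y_k)\Big\}.
\]
It then remains to show that $h_s(y_0)=h_s(\tfrac{3\pi}2)$ is the smallest of the values $h_s(y_k)$.

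For this I would prove that $k\mapsto h_s(y_k)$ is nondecreasing, i.e.
\[
h_s(y_{k+1})-h_s(y_k)=\int_{y_k}^{y_{k+1}}t^{-s}\cos t\,dt\ \ge\ 0,\qquad k\ge 0 .
\]
Substituting $t=y_k+u$, $u\in[0,2\pi]$, and using the identity $\cos(\tfrac{3\pi}2+u)=\sin u$, the integral becomes $\int_0^{2\pi}(y_k+u)^{-s}\sin u\,du$. Splitting $[0,2\pi]$ at $\pi$ and translating the second piece by $\pi$ (so that $\sin(u+\pi)=-\sin u$) rewrites it as
\[
\int_0^{\pi}\big[(y_k+u)^{-s}-(y_k+\pi+u)^{-s}\big]\sin u\,du ,
\]
whose integrand is nonnegative because $x\mapsto x^{-s}$ is decreasing and $\sin u\ge 0$ on $[0,\pi]$ (strictly positive on the open interval, so the difference is in fact strict). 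Hence $h_s(y_k)>h_s(y_0)$ for every $k\ge 1$, so $\inf_{k\ge0}h_s(y_k)=h_s(\tfrac{3\pi}2)$, and combining with the display above yields $\inf_{\delta_1\le y<\infty}h_s(y)=\min\{h_s(\delta_1),h_s(\tfrac{3\pi}2)\}$.

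The only real content is the monotonicity of the critical values $h_s(y_k)$; the rest is the elementary calculus of $h_s'$. I do not expect a genuine obstacle: the pairing of a positive half-hump of $t^{-s}\cos t$ against the next, smaller, negative half-hump is precisely the classical argument behind the conditional convergence of $\int_0^\infty t^{-s}\cos t\,dt$ for $0<s<1$, and it produces the required ordering directly. The one thing to watch is that nothing competes from the regime $y\to\infty$, but this is automatic since on each interval between consecutive local minima $h_s$ stays above the smaller endpoint, so the infimum over $[\tfrac\pi2,\infty)$ is genuinely attained at $y_0=\tfrac{3\pi}2$.
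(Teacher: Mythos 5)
Your proposal is correct and follows essentially the same route as the paper: locate the interior local minima of $h_s$ at $y=\tfrac{3\pi}{2}+2k\pi$ from the sign of $h_s'(y)=y^{-s}\cos y$, and show the sequence of critical values is increasing because $\int_{y_k}^{y_{k+1}}t^{-s}\cos t\,dt=\int_0^{2\pi}(y_k+u)^{-s}\sin u\,du>0$ by pairing the positive half-period against the smaller negative one. The only cosmetic difference is that the paper disposes of the behaviour at infinity by noting $h_s(\infty)>0$, while you observe that the infimum over $[\pi/2,\infty)$ is attained at the first local minimum; both are fine.
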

\begin{proof}
Note that
\begin{align*}
h_s(\infty) = s(1+s) \int_0^{\infty} t^{-s-2}  (1-\cos t )d t>0.
\end{align*}
Clearly $h_s$ only has interior minimums at $y=(n+\frac 12)\pi$ with
$n\ge 1$ being any odd integer. The result then follows from the fact that
for odd $n\ge 1$, 
\begin{align*}
\int_{(n+\frac 12) \pi}^{(n+2+\frac 12) \pi}
t^{-s} \cos t dt 
=\int_{0}^{2 \pi} ( (n+\frac 12)\pi+t)^{-s} \sin  t dt>0.
\end{align*}
\end{proof}

Now note that
\begin{align*}
h_s(\frac 32 \pi)=
\frac 1 {1-s} \int_0^{\frac 32 \pi} \xi^{1-s} \sin \xi d\xi.
\end{align*}

\begin{lem} \label{2015lem2.12}
Consider for $0\le s\le 1$ 
\begin{align*}
f_1(s)= \int_0^{\frac 32 \pi} \xi^{1-s} \sin \xi d\xi.
\end{align*}
Then $f_1(0)=-1$ and $f_1(1)=1$, and
\begin{align*}
\inf_{0< s <1}   f_1^{\prime}(s)   \ge \frac 12.
\end{align*}
For some $s_* \in (0.308443, 0.308444)$, we have $f_1(s_*)=0$, and $f_1(s)<0$ for $0\le s<s_*$,
and $f_1(s)>0$ for $s_*<s\le 1$. 
\end{lem}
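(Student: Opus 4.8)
The plan is to show that $f_1$ is strictly increasing on $[0,1]$, in fact that $f_1'\ge\tfrac12$, and then to locate the unique zero $s_*$ by a rigorous numerical bracketing. The endpoint values are elementary: integrating by parts, $\int_0^{3\pi/2}\xi\sin\xi\,d\xi=[\sin\xi-\xi\cos\xi]_0^{3\pi/2}=-1$ and $\int_0^{3\pi/2}\sin\xi\,d\xi=[-\cos\xi]_0^{3\pi/2}=1$, so $f_1(0)=-1$ and $f_1(1)=1$. Since for $s\in[0,1]$ and $\xi\in(0,1]$ one has $|\xi^{1-s}(\ln\xi)\sin\xi|\le\xi|\ln\xi|\in L^1(0,1)$, while the integrand is bounded on $[1,\tfrac{3\pi}{2}]$ uniformly in $s$, differentiation under the integral sign is justified and $f_1'(s)=-\int_0^{3\pi/2}\xi^{1-s}(\ln\xi)\sin\xi\,d\xi$. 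The whole statement rests on the uniform lower bound $f_1'(s)\ge\tfrac12$ on $[0,1]$.

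To prove it I would fold the interval using $\sin(\pi\mp u)=\pm\sin u$. Put $\Psi_s(\xi)=\xi^{1-s}\ln\xi$; splitting $[0,\tfrac{3\pi}{2}]=[0,\tfrac{\pi}{2}]\cup[\tfrac{\pi}{2},\pi]\cup[\pi,\tfrac{3\pi}{2}]$ and substituting $\xi=\pi-u$ and $\xi=\pi+u$ in the pieces $[\tfrac{\pi}{2},\pi]$ and $[\pi,\tfrac{3\pi}{2}]$ respectively gives
\[ f_1'(s)=\int_0^{\pi/2}\bigl[\Psi_s(\pi+u)-\Psi_s(\pi-u)-\Psi_s(u)\bigr]\sin u\,du. \]
Two $s$-free lower bounds for the bracket then suffice. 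First, $\Psi_s'(\xi)=\xi^{-s}\bigl((1-s)\ln\xi+1\bigr)\ge\xi^{-1}$ for $\xi\ge1$, $s\in[0,1]$ (using $\xi^{-s}\ge\xi^{-1}$ and $(1-s)\ln\xi\ge0$), so integrating over $[\pi-u,\pi+u]\subset[\tfrac{\pi}{2},\tfrac{3\pi}{2}]$ yields $\Psi_s(\pi+u)-\Psi_s(\pi-u)\ge\ln\tfrac{\pi+u}{\pi-u}$. Second, $u^{1-s}\ge u$ when $u\le1$ and $u^{1-s}\le u$ when $u\ge1$, hence $-\Psi_s(u)\ge-u\ln u$ for all $u\in(0,\tfrac{\pi}{2}]$. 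Therefore
\[ f_1'(s)\ \ge\ c_0:=\int_0^{\pi/2}\Bigl(\ln\tfrac{\pi+u}{\pi-u}-u\ln u\Bigr)\sin u\,du, \]
a number independent of $s$, and it remains only to check $c_0\ge\tfrac12$. This is a routine rigorous quadrature: with $\ln\tfrac{\pi+u}{\pi-u}=2\sum_{m\ge0}\tfrac{(u/\pi)^{2m+1}}{2m+1}$ and the closed forms for $\int_0^{\pi/2}u^{2m+1}\sin u\,du$, together with $\int_0^1 u\ln u\sin u\,du=-\sum_{n\ge0}\tfrac{(-1)^n}{(2n+1)!(2n+3)^2}$ and $\int_1^{\pi/2}u\ln u\sin u\,du$ by any scheme carrying an explicit remainder, one gets $c_0\approx0.589>\tfrac12$. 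Thus $f_1'\ge\tfrac12$ on $[0,1]$, so $f_1$ is strictly increasing; combined with $f_1(0)=-1<0<1=f_1(1)$, the intermediate value theorem gives a unique $s_*\in(0,1)$ with $f_1(s_*)=0$, and $f_1<0$ on $[0,s_*)$, $f_1>0$ on $(s_*,1]$.

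To pin down $s_*$ I would integrate the Taylor series of $\sin$ termwise, obtaining the absolutely convergent expansion
\[ f_1(s)=\sum_{n\ge0}\frac{(-1)^n}{(2n+1)!}\cdot\frac{(3\pi/2)^{2n+3-s}}{2n+3-s}. \]
For $n\ge1$ the terms alternate and strictly decrease in modulus, so truncating after finitely many terms leaves an error bounded by the first omitted term; evaluating at $s=0.308443$ and $s=0.308444$ — keeping enough digits to absorb the several orders of cancellation among the low-order terms, or equivalently writing $f_1(s)=\int_0^{\pi}\xi^{1-s}\sin\xi\,d\xi-\int_\pi^{3\pi/2}\xi^{1-s}(-\sin\xi)\,d\xi$ and bounding each (positive) integral rigorously — shows $f_1(0.308443)<0<f_1(0.308444)$. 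Since $f_1$ is strictly increasing, this gives $s_*\in(0.308443,0.308444)$.

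The genuine obstacle is getting the lower bound on $f_1'$ to actually reach $\tfrac12$: crude pointwise estimates of $\Psi_s(\pi+u)-\Psi_s(\pi-u)-\Psi_s(u)$ lose more than the thin margin available ($f_1'(1)\approx0.77$ is already close to $\tfrac12$), so the key is the folding together with the comparison $\Psi_s'(\xi)\ge\xi^{-1}$, which replaces the $s$-dependent integrand by the clean $s$-free functions $\ln\tfrac{\pi+u}{\pi-u}$ and $-u\ln u$ with only mild loss. The numerics in the last step are also somewhat delicate owing to the cancellation, but amount only to a finite verification.
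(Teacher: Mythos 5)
Your proof is correct, and for the key step it takes a genuinely different route from the paper. The paper's argument is: compute $f_1'(s)=\int_0^{3\pi/2}\xi^{1-s}(-\log\xi)\sin\xi\,d\xi$, note that $f_1''$ is uniformly bounded (in fact $f_1''<0$, so $\inf f_1'=f_1'(1)\approx 0.77$), and then certify $f_1'\ge\frac12$ by a rigorous numerical computation of $f_1'$ over $s\in[0,1]$ with interpolation error controlled by the bound on $f_1''$. You instead fold the integral about $\xi=\pi$ and replace the $s$-dependent integrand by the $s$-free minorant $\ln\frac{\pi+u}{\pi-u}-u\ln u$, using the two clean comparisons $\Psi_s'(\xi)=\xi^{-s}((1-s)\ln\xi+1)\ge\xi^{-1}$ for $\xi\ge1$ (valid on $[\pi-u,\pi+u]\subset[\pi/2,3\pi/2]$) and $-u^{1-s}\ln u\ge -u\ln u$ on $(0,\pi/2]$; this reduces the uniform-in-$s$ statement to checking a single explicit number $c_0\approx 0.59>\frac12$ (I confirm your value), at the cost of a weaker constant than the true $\inf f_1'\approx 0.77$ but with enough margin. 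What your approach buys is that the only remaining numerical verifications are a one-dimensional quadrature of a fixed smooth integrand and the final bracketing $f_1(0.308443)<0<f_1(0.308444)$, whereas the paper must numerically control a family of integrals parametrized by $s$; both proofs ultimately rest on a finite rigorous computation for the localization of $s_*$, where the roughly six orders of cancellation you flag are the same delicacy the paper's stated values $f_1(0.308443)\approx-1.92\times10^{-6}$ and $f_1(0.308444)\approx 5.43\times10^{-7}$ exhibit. The endpoint evaluations, the justification of differentiation under the integral, and the IVT conclusion are all in order.
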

\begin{rem*}
One can check that $f_1^{\prime\prime}(s) <0$ and
\begin{align}
\inf_{0<s<1} f_1^{\prime}(s) = f_1^{\prime}(1) \approx 0.77. \notag
\end{align}
\end{rem*}

\begin{rem*}
For $0\le s \le1$, the function $f_1(s)$ is given by a hypergeometric expression:
\begin{align*}
\frac 1 {3-s} \cdot \Bigl( \frac {3\pi} 2 \Bigr)^{3-s}
\operatorname{HypergeometricPFQ}[\frac 32-\frac s2, \{\frac 32, \frac {5-s}2 \},
-\frac {9\pi^2}{16} ].
\end{align*}
\end{rem*}
\begin{proof}
We have
\begin{align*}
&f_1^{\prime} (s)  = \int_0^{\frac 32\pi} \xi^{1-s} (-\log \xi)\sin \xi d\xi,
\qquad f_1^{\prime\prime} (s)  = \int_0^{\frac 32\pi} \xi^{1-s} (\log \xi)^2\sin \xi d\xi.
\end{align*}
Clearly we have uniform bound on $\|f_1^{\prime\prime} \|_{L_s^\infty([0,1])}$. A rigorous
numerical computation together with error estimates then shows that the desired lower
bound on $f_1^{\prime}$ indeed holds true.  Since $f_1$ is strictly monotonically increasing,
the existence of $s_*$ follows from the intermediate value theorem. The precise numerical 
value is found via rigorous numerical integration. In particular, we have
\begin{align*}
f_1(0.308443)\approx -1.92202\times 10^{-6}<0, 
\qquad f_1(0.308444) \approx 5.43492\times 10^{-7}>0.
\end{align*}
\end{proof}

The following theorem follows immediately from the preceding series of lemmas and computations.

\begin{thm}[Sharp characterization of $F_{N,s}$ when $0<s<1$]
For some $s_* \in (0.308443, 0.308444)$, the following hold.
\begin{enumerate}
\item \underline{Lack of positivity for $0<s<s_*$}.  There are positive constants 
$\gamma_2(s)>\gamma_1(s)>0$ and $\gamma(s)>0$,  such that for any $y\in[\gamma_1(s),\, \gamma_2(s)]$
and $N\gg_s 1+\beta^{-\frac 1 {2(1-s)} }$, we have
\begin{align*}
\frac 1 {N^{1-s}} F_{N,s}(\frac y N) = - \gamma(s) + (1+ \frac 1 {\sqrt{\beta}} )\cdot O_s(N^{-(1-s)})
<-\frac 12 \gamma(s),
\end{align*}
In particular this shows that for $0<s<s_*$ the kernel function $F_{N,s}$ must be negative on an interval of length $O_s(N^{-1})$ for
all large $N$.

\item \underline{Positivity for $s_*<s<1$}. For any $s_*<s<1$ there are constants
$c_1(s)>0$ and $c_2(s)>0$ such that if $$N\ge \max\{
c_1(s) \beta^{-\frac 12} e^{\frac 1 {\sqrt{\beta}} }, c_2(s)\cdot(
1+ \beta^{-\frac 1 {2(1-s)} } ) \},$$
 then $F_N(x)$ is positive and hence has unit
$L_x^1$ mass.
\end{enumerate}
\end{thm}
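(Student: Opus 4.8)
My plan is to reduce the whole statement, as the discussion preceding it already suggests, to the behaviour of the scalar function $h_s(z)=\int_0^z t^{-s}\cos t\,dt$. Recall that positivity of $F_{N,s}$ has already been established for $|x|<\frac{1}{6N}$ and, on combining the lemma valid on $\frac{1}{5}\sqrt{\beta}\le|x|\le\frac12$ with the one valid on $\frac{\tilde\gamma_1(s)}{N}\le|x|\le\min\{\frac15\sqrt{\beta},\frac12\}$, for all $\frac{\tilde\gamma_1(s)}{N}\le|x|\le\frac12$; the only undecided range is the window $x=y/N$ with $y\in[\frac16,\tilde\gamma_1(s)]$. On any fixed interval of $y$'s, Lemma \ref{lem2.25_00} together with the identity $H_{\infty,s}(y)=2(2\pi\sqrt{\beta})^{-s}(2\pi y)^{-(1-s)}h_s(2\pi y)$ gives
\[
\tfrac{1}{N^{1-s}}\,F_{N,s}\!\left(\tfrac{y}{N}\right)=H_{\infty,s}(y)+\bigl(1+\tfrac{1}{\sqrt{\beta}}\bigr)\,O_s\!\bigl(N^{-(1-s)}\bigr),
\]
so, $\beta$ being fixed, the sign of $F_{N,s}$ on the window is that of $h_s(2\pi y)$ once $h_s$ is bounded away from $0$ and $N$ is large.

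For part (1): $0<s<s_*$ forces $f_1(s)<0$ by Lemma \ref{2015lem2.12}, hence $h_s(\tfrac{3\pi}{2})=\frac{f_1(s)}{1-s}<0$. I would pick, using continuity of $h_s$ and the fact that its first sign change past $z=0$ occurs in $(\tfrac{\pi}{2},\tfrac{3\pi}{2})$, constants $\frac16<\gamma_1(s)<\tfrac34<\gamma_2(s)$ with $h_s(2\pi y)\le\frac12 h_s(\tfrac{3\pi}{2})<0$ for all $y\in[\gamma_1(s),\gamma_2(s)]$. Then $H_{\infty,s}(y)\le-\gamma(s):=-(2\pi\sqrt{\beta})^{-s}\bigl(\inf_{y\in[\gamma_1(s),\gamma_2(s)]}(2\pi y)^{-(1-s)}\bigr)\,\bigl|h_s(\tfrac{3\pi}{2})\bigr|<0$ on that interval, and the displayed identity yields $\frac{1}{N^{1-s}}F_{N,s}(\tfrac{y}{N})\le-\gamma(s)+(1+\beta^{-1/2})O_s(N^{-(1-s)})<-\tfrac12\gamma(s)$ as soon as $(1+\beta^{-1/2})N^{-(1-s)}$ is small compared with $\gamma(s)$, i.e. once $N\gg_s 1+\beta^{-\frac{1}{2(1-s)}}$. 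Continuity in $y$ then shows $F_{N,s}$ is negative on an interval of length $\sim_s N^{-1}$, so positivity fails.

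For part (2): now $f_1(s)>0$, so $h_s(\tfrac{3\pi}{2})>0$; also $h_s>0$ on $(0,\tfrac{\pi}{2}]$ trivially, and the ``monotonicity across periods'' argument behind the identity $\inf_{z\ge\delta_1}h_s(z)=\min\{h_s(\delta_1),h_s(\tfrac{3\pi}{2})\}$, applied now with $\delta_1=\tfrac{\pi}{3}$, gives $\inf_{z\ge\pi/3}h_s=\min\{h_s(\tfrac{\pi}{3}),h_s(\tfrac{3\pi}{2})\}=:\kappa(s)>0$. Since $(2\pi y)^{-(1-s)}\ge(2\pi\tilde\gamma_1(s))^{-(1-s)}$ for $y\le\tilde\gamma_1(s)$, this gives $H_{\infty,s}(y)\ge 2(2\pi\sqrt{\beta})^{-s}(2\pi\tilde\gamma_1(s))^{-(1-s)}\kappa(s)=:\rho(s)>0$ uniformly on the window, whence $F_{N,s}(\tfrac{y}{N})\ge N^{1-s}\rho(s)-O_s(1)(1+\beta^{-1/2})>0$ once $N^{1-s}\gg_s(1+\beta^{-1/2})/\rho(s)$, i.e. $N\ge c_2(s)(1+\beta^{-\frac{1}{2(1-s)}})$. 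Combined with the earlier positivity lemmas on $|x|<\frac{1}{6N}$ and on $\frac{\tilde\gamma_1(s)}{N}\le|x|\le\frac12$ (the latter requiring the exponential threshold $N\ge c_1(s)\beta^{-1/2}e^{1/\sqrt{\beta}}$), this gives $F_{N,s}>0$ on all of $\mathbb{T}$ for $N\ge\max\{c_1(s)\beta^{-1/2}e^{1/\sqrt{\beta}},\,c_2(s)(1+\beta^{-\frac{1}{2(1-s)}})\}$; and since $\widehat{F_{N,s}}(0)=1$, positivity gives $\|F_{N,s}\|_{L^1(\mathbb{T})}=1$.

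The main difficulty here is bookkeeping rather than anything conceptual: one must keep straight the $\beta$-powers in the main term (the factor $(2\pi\sqrt{\beta})^{-s}$ coming from $H_{\infty,s}$) against those in the correction $(1+\beta^{-1/2})O_s(N^{-(1-s)})$ of Lemma \ref{lem2.25_00}, so that the threshold $N\gg_s 1+\beta^{-\frac{1}{2(1-s)}}$ emerges cleanly in both parts; and, in part (1), one must check that the chosen window $[\gamma_1(s),\gamma_2(s)]$ is consistent with (indeed contained in) the range left open by the earlier positivity lemmas, which is where the constants $\tilde\gamma_1(s)$ and the $\alpha_i(s)$ produced by those lemmas have to be tracked. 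Everything else is assembly of Lemma \ref{lem2.25_00}, Lemma \ref{2015lem2.12} and the regime-by-regime estimates established earlier.
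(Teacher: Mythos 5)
Your proposal is correct and is essentially the paper's own proof: the paper declares that this theorem ``follows immediately from the preceding series of lemmas and computations,'' and your assembly of Lemma \ref{lem2.25_00}, Lemma \ref{2015lem2.12}, the identity $H_{\infty,s}(y)=2(2\pi\sqrt{\beta})^{-s}(2\pi y)^{-(1-s)}h_s(2\pi y)$, and the regime-by-regime positivity lemmas is exactly that argument. The only cosmetic points are that the lemma $\inf_{z\ge\delta_1}h_s=\min\{h_s(\delta_1),h_s(\tfrac{3\pi}{2})\}$ is stated for $\delta_1<\tfrac1{10}$ but its proof applies verbatim for $\delta_1=\tfrac{\pi}{3}\le\tfrac{\pi}{2}$, and that your $\gamma(s)$ and $\rho(s)$ carry a $(2\pi\sqrt{\beta})^{-s}$ factor (so they are not purely $s$-dependent) --- an imprecision already present in the theorem's own statement and harmless in the intended regime $0<\beta\lesssim 1$.
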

\begin{rem}
It is also possible to characterize the lack of positivity of $F_{N,s}$ via real-space representation.
For example, for $0<s\ll 1$ we can consider (take $\beta=1$) 
\begin{align*}
F_{N,s}(x)= \sum_{n \in \mathbb Z} \int_{|y|<\frac 12}
g_0(x-y+n) \frac {\sin M \pi y} {\sin \pi y} dy,
\end{align*}
where $M=2N+1$ and $g_0=\mathcal F^{-1} ( (1+4\pi^2 \xi^2)^{-\frac s2} )$. 
Note that $g_0(z) \sim |z|^{-(1-s)} (1+o(1))$ as $|z|\to 0$ and has exponential decay as
$|z|\to \infty$. 
By a discussion similar to that in the proof of Statement (3) of Theorem \ref{thm2.18_00}
in the appendix, we can consider only the main piece (below $f_0(y)=\frac y {\sin \pi y}$)
\begin{align*}
\sum_{|n|\le 3} \int_{|y|<\frac M2} |x+n-\frac y M|^{-(1-s)} f_0(\frac yM) \frac {\sin \pi y} y dy
&=
M^{1-s}\Bigl( \int_{|y|<\frac M2} |Mx- y|^{-(1-s)} f_0(\frac yM)\frac {\sin \pi y} y dy \notag \\
&\quad +\sum_{1\le |n|\le 3} \int_{|y|<\frac M2} |Mx+Mn- y|^{-(1-s)}
f_0(\frac yM) \frac {\sin \pi y} y dy
\Bigr).
\end{align*}
Setting $x= \frac {3 } {2M}$ and taking the limit $M\to \infty$ for the first term inside
the bracket, we  obtain
\begin{align*}
&\int_{\mathbb R} |y-\frac {3}2|^{-(1-s)} \frac {\sin \pi y} y dy \notag \\
=&\int_{|y-\frac {3}2 |<0.01}  |y-\frac {3}2|^{-(1-s)} \frac {\sin \pi y} y dy  +\int_{|y-\frac {3}2 |>0.01}  |y-\frac {3}2|^{-(1-s)} \frac {\sin \pi y} y dy.
\end{align*}
Clearly the first term diverges as $-\frac {\operatorname{C_1}} s$ ($C_1>0$ is
a constant) as $s\to 0$.  On the other hand, it is easy to check that
\begin{align*}
\Bigl| \sum_{1\le |n|\le 3} \int_{|y|<\frac M2} |Mx+Mn- y|^{-(1-s)}
f_0(\frac yM) \frac {\sin \pi y} y dy\Bigr| \lesssim M^{-(1-s)} \log M.
\end{align*}
One can then see that $F_{N,s}(\frac 3{4N+2}) $ must be negative
for $0<s\ll 1$ and $N$ sufficiently large.
\end{rem}

Now we turn to the critical case $s=s_*$ which requires a more refined analysis
since the main integral term vanishes in the limit.
\begin{lem}
\begin{align*}
\sum_{i=m}^n f(i) &=
\int_{m}^n f(x) dx
+ \frac{f(m)+f(n)} 2+ \int_m^n f^{\prime}(x) (\{x\} -\frac 12) dx \notag \\
&= 
\int_{m}^n f(x) dx
+ {f(m)} + \int_m^n f^{\prime}(x) \{x\} dx,
\end{align*}
where $\{x\}=x-[x]$ denotes the usual fractional part (i.e., $[x]$ 
denotes the largest integer less than or equal to $x$ so that $\{x\} \in [0,1)$).
For example
$\{-3.1\}=0.9$ and $\{3.1\}=0.1$.
\end{lem}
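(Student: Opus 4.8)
The plan is to derive the identity by an elementary integration by parts carried out on each unit subinterval and then summed. Throughout I assume $m\le n$ are integers and $f\in C^1([m,n])$, which is all that is needed in the subsequent applications (the fractional part $\{x\}$ is bounded and piecewise smooth, so all integrals below make sense).

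First I would fix an integer $k$ with $m\le k\le n-1$ and observe that on $[k,k+1]$ one has $\{x\}=x-k$, hence $\{x\}-\frac12=x-k-\frac12$. Integration by parts gives
\[
\int_k^{k+1} f'(x)\Bigl(x-k-\frac12\Bigr)\,dx
=\Bigl[f(x)\bigl(x-k-\tfrac12\bigr)\Bigr]_k^{k+1}-\int_k^{k+1} f(x)\,dx
=\frac{f(k+1)+f(k)}2-\int_k^{k+1} f(x)\,dx .
\]
Summing this over $k=m,m+1,\dots,n-1$, the endpoint contributions $\tfrac12 f(k)+\tfrac12 f(k+1)$ telescope into $\sum_{i=m}^n f(i)-\tfrac12 f(m)-\tfrac12 f(n)$, and the integrals combine into $\int_m^n f(x)\,dx$, so
\[
\int_m^n f'(x)\Bigl(\{x\}-\frac12\Bigr)\,dx=\sum_{i=m}^n f(i)-\frac{f(m)+f(n)}2-\int_m^n f(x)\,dx .
\]
Rearranging this is precisely the first displayed equality of the lemma.

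For the second equality I would simply write $\{x\}=\bigl(\{x\}-\frac12\bigr)+\frac12$ inside the integral and use $\int_m^n f'(x)\,dx=f(n)-f(m)$ to get
\[
\int_m^n f'(x)\{x\}\,dx=\int_m^n f'(x)\Bigl(\{x\}-\frac12\Bigr)\,dx+\frac{f(n)-f(m)}2 .
\]
Adding $\int_m^n f(x)\,dx+f(m)$ to both sides and invoking the first identity then collapses the right-hand side to $\sum_{i=m}^n f(i)$, which is the second claimed form. There is no real obstacle here; the only mild point of care is the bookkeeping of the boundary terms when telescoping, and the fact that although $\{x\}$ jumps at integers this is harmless since it appears only under integral signs. (Should one ever want to weaken $C^1$ to absolute continuity, a routine density argument would suffice, but we do not need it.)
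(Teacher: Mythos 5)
Your proof is correct. The integration by parts on each cell $[k,k+1]$, where $\{x\}-\tfrac12=x-k-\tfrac12$, does produce the boundary contribution $\tfrac{f(k+1)+f(k)}2$, and the telescoping over $k=m,\dots,n-1$ correctly yields $\sum_{i=m}^n f(i)-\tfrac{f(m)+f(n)}2$; the passage to the second form via $\{x\}=(\{x\}-\tfrac12)+\tfrac12$ and $\int_m^n f'=f(n)-f(m)$ is also right. The only point worth comparing with the paper is that the paper does not actually prove this lemma: it simply identifies it as the (first-order) Euler--Maclaurin summation formula and cites a reference. So your argument is not so much a different route as a self-contained derivation of a standard fact the paper takes off the shelf. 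What your version buys is transparency about the hypotheses (you only need $f\in C^1([m,n])$, and the jumps of $\{x\}$ at integers are harmless under the integral sign, exactly as you note); what the paper's citation buys is brevity. Either is acceptable here, and your bookkeeping of the endpoint terms is accurate.
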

\begin{proof}
This is the usual Euler-Maclaurin formula. See
for example \cite{EMformula}.
 %\href{https://en.wikipedia.org/wiki/Euler-Maclaurin_formula}{https://en.wikipedia.org/wiki/Euler-%Maclaurin_formula}
\end{proof}

\begin{lem} \label{lem2.14beta}
Recall $\langle x \rangle =(1+x^2)^{\frac 12}$.
For $0<s<1$ let
\begin{align*}
A_{\beta} (s) =\int_{-\infty}^{\infty}
( \langle x \rangle^{-s} - |x|^{-s}) dx
- s 2\pi \sqrt{\beta}\int_{-\infty}^{\infty}
 \langle x\rangle^{-s-2}  x \{ \frac x {2\pi \sqrt{\beta}} \} dx.
 \end{align*}
 Then for  $s \in [0.29, 0.31]$ and $0<\beta <\beta_*$ ($\beta_*$ is an absolute constant), we have
 \begin{align*}
 0<\alpha_1<   - A_{\beta}(s) <\alpha_2<\infty,
 \end{align*}
 where $a_1$ and $\alpha_2$ are absolute constants.
 \end{lem}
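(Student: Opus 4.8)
The plan is to separate $A_\beta(s)$ into a $\beta$-independent main term and a remainder that is $O(\beta)$. I would write $A_\beta(s) = I_1(s) - I_2(\beta,s)$ with
\[
I_1(s) = \int_{-\infty}^{\infty}\bigl(\langle x\rangle^{-s}-|x|^{-s}\bigr)\,dx,\qquad
I_2(\beta,s) = 2\pi s\sqrt{\beta}\int_{-\infty}^{\infty}\langle x\rangle^{-s-2}\,x\,\Bigl\{\tfrac{x}{2\pi\sqrt{\beta}}\Bigr\}\,dx .
\]
Both integrals converge absolutely: near $x=0$ one uses $s<1$ (so $|x|^{-s}$ is integrable) and the boundedness of $\langle x\rangle^{-s-2}|x|$; near $|x|=\infty$ one uses $\langle x\rangle^{-s}-|x|^{-s}=O(|x|^{-s-2})$ and $\langle x\rangle^{-s-2}|x|=O(|x|^{-s-1})$. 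The lemma will follow once I show that $-I_1(s)$ is bounded above and below by absolute positive constants for $s\in[0.29,0.31]$ and that $|I_2(\beta,s)|\le C\beta$ for an absolute $C$; one then fixes $\beta_*$ small.

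For $I_1$: since $\langle x\rangle=(1+x^2)^{1/2}>|x|$ whenever $x\neq0$, the integrand is strictly negative a.e., so $I_1(s)<0$ for every $s\in(0,1)$. I would then check that $s\mapsto I_1(s)$ is continuous on, say, $[0.2,0.4]$ by dominated convergence, a dominating function being a constant multiple of $|x|^{-0.4}1_{|x|\le 1}+|x|^{-2.2}1_{|x|>1}$; compactness of $[0.29,0.31]$ then gives absolute constants $0<c_1\le -I_1(s)\le c_2<\infty$ there. (A concrete estimate is also easy: split $\int_0^1$ and estimate $\frac1{1-s}-\int_0^1\langle x\rangle^{-s}\,dx$, then add the elementary bounded tail $\int_1^\infty(x^{-s}-\langle x\rangle^{-s})\,dx$; this pins $-I_1(s)$ to an explicit range.)

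For $I_2$, the one genuine point is the oscillatory cancellation of the sawtooth. Write $\{y\}=\tfrac12+\sigma(y)$ with $\sigma(y)=\{y\}-\tfrac12$; the $\tfrac12$ piece contributes $0$ since $x\mapsto\langle x\rangle^{-s-2}x$ is odd and integrable. Let $\Sigma(y)=\int_0^y\sigma(t)\,dt$, which is $1$-periodic with $|\Sigma|\le\tfrac18$; put $g(x)=\langle x\rangle^{-s-2}x$, so that $g'(x)=\langle x\rangle^{-s-4}\bigl(1-(s+1)x^2\bigr)$ and $\|g'\|_{L^1(\mathbb R)}$ is bounded by an absolute constant on $s\in[0.29,0.31]$ (it decays like $|x|^{-s-2}$); and set $T=2\pi\sqrt{\beta}$. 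Integration by parts yields
\[
\int_{-\infty}^{\infty} g(x)\,\sigma(x/T)\,dx = -T\int_{-\infty}^{\infty} g'(x)\,\Sigma(x/T)\,dx ,
\]
the boundary term vanishing because $g(x)\to0$; hence $\bigl|\int g\,\sigma(\cdot/T)\bigr|\le\tfrac18 T\|g'\|_{L^1}$ and $|I_2(\beta,s)|\le \tfrac{\pi^2}{2}\,s\,\|g'\|_{L^1}\,\beta\le C\beta$.

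Finally, $-A_\beta(s)=-I_1(s)+I_2(\beta,s)$ satisfies $c_1-C\beta\le -A_\beta(s)\le c_2+C\beta$; choosing $\beta_*=c_1/(2C)$ gives $\tfrac12 c_1\le -A_\beta(s)\le c_2+\tfrac12 c_1$ for all $0<\beta<\beta_*$ and $s\in[0.29,0.31]$, which is the claim with $\alpha_1=\tfrac12 c_1$ and $\alpha_2=c_2+\tfrac12 c_1$. I expect the only real work to be bookkeeping the uniformity in $s$ of the constants $c_1,c_2$ and of $\|g'\|_{L^1}$ — routine via continuity and compactness on a fixed interval — while the substantive step is the integration by parts against the bounded periodic primitive $\Sigma$ of the sawtooth.
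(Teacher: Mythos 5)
Your proof is correct, and the overall architecture (split $A_\beta$ into a $\beta$-independent main term and an $O(\beta)$ oscillatory remainder, then shrink $\beta_*$) matches the paper's. The two places where you diverge are worth noting. For the main term, the paper evaluates $\int(|x|^{-s}-\langle x\rangle^{-s})\,dx$ in closed form as $-\sqrt{\pi}\,\Gamma(\tfrac{s-1}{2})/\Gamma(\tfrac{s}{2})$ and reads off positivity and boundedness from there; you instead argue softly via strict negativity of the integrand plus continuity and compactness in $s$ on $[0.29,0.31]$. Both are valid; the exact formula buys explicit numerical constants (the paper's remark quotes a value for $\beta=\tfrac1{4\pi^2}$, $s=0.3$, which your route would not produce without further computation), while your argument is more elementary and generalizes to integrands without a nice closed form. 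For the remainder, the paper simply asserts $s\,\bigl|\int\langle x\rangle^{-s-2}x\,\{x/(2\pi\sqrt{\beta})\}\,dx\bigr|\le c_1\beta^{1/2}$ with no justification; your centering of the sawtooth ($\{y\}=\tfrac12+\sigma(y)$, with the $\tfrac12$ killed by oddness of $\langle x\rangle^{-s-2}x$) followed by integration by parts against the bounded $1$-periodic primitive $\Sigma$ is precisely the mechanism needed to get that bound, and it is uniform in $s$ since $\|g'\|_{L^1}$ is. So your write-up actually supplies a step the paper leaves implicit; there is no gap.
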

 \begin{rem*}
 One can compute numerically that for $\beta=\frac 1 {4\pi^2}$ and $s=0.3$
 \begin{align*}
 -A_{\beta}(s) \in [1.13-0.53, \;1.13+0.53].
 \end{align*}
 A further interesting question is to investigate the case $\beta \gtrsim 1$. 
 \end{rem*} 
 \begin{proof}
 Observe that for $s\in [0.29,0.31]$, 
 \begin{align*}
 s |\int_{-\infty}^{\infty}
 \langle x\rangle^{-s-2}  x \{ \frac x {2\pi \sqrt{\beta}} \} dx| 
 \le c_1 \cdot \beta^{\frac 12},
 \end{align*}
 where $c_1>0$ is an absolute constant.
 
 On the other hand for $s\in [0.29, 0.31]$, we have
 \begin{align*}
 \int_{-\infty}^{\infty} (|x|^{-s} - \langle x \rangle^{-s} ) dx
 = -\sqrt{\pi} 
 \frac{\Gamma (\frac {-1+s}2) } {\Gamma(\frac s 2) } 
 \ge c_2>0,
 \end{align*}
 where $\Gamma(\cdot)$ is the usual Gamma function and
  $c_2>0$ is another absolute constant. 
 The desired result then follows easily.
 \end{proof}

\begin{thm}[Lack of positivity for $s=s_*$ and $0<\beta<\beta_*$]
Let $s=s_*$ and $y=\frac 34$. Let $0<\beta<\beta_*$ where $\beta_*$
is the same constant as in Lemma \ref{lem2.14beta}.
 Then for $N\ge \frac 1 {\sqrt{\beta}}$, we have
\begin{align*}
F_{N,s}(\frac y N) = \frac 1 {2\pi \sqrt{\beta} }
A_{\beta}(s) +O_{s,\beta} ( N^{-s}  ),
\end{align*}
where $A_{\beta}(s) <0$ is given by the expression
\begin{align*}
A_{\beta}(s)
=\int_{\mathbb R} (\langle x \rangle^{-s} - |x|^{-s} )dx
-s 2\pi \sqrt{\beta}\int_{\mathbb R}
\langle x \rangle^{-s-2} x 
\{\frac x {2\pi \sqrt{\beta}} \} dx.
\end{align*}
\end{thm}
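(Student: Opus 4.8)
The plan is to sharpen the estimate behind Lemma~\ref{lem2.25_00}: that lemma, combined with the vanishing of $h_s(\frac{3\pi}{2})$ at $s=s_*$, only yields the crude bound $|F_{N,s}(\frac yN)|\lesssim_s 1+\frac1{\sqrt\beta}$, whereas here we must pin down the precise next-order constant. The tool is the Euler--Maclaurin formula recalled above. Fix $y=\frac34$, set $c(k)=\langle 2\pi\sqrt\beta k\rangle^{-s}$ and $\phi(x)=\langle 2\pi\sqrt\beta x\rangle^{-s}\cos(2\pi x\,\frac yN)$, so that $F_{N,s}(\frac yN)=2\sum_{k=0}^N\phi(k)-1$. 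Euler--Maclaurin on $[0,N]$ gives $F_{N,s}(\frac yN)=2\int_0^N\phi(x)\,dx+1+2\int_0^N\phi'(x)\{x\}\,dx$. I would show that the first term tends to $\frac1{2\pi\sqrt\beta}\int_{\mathbb R}(\langle x\rangle^{-s}-|x|^{-s})\,dx$ and the last to $-s\int_{\mathbb R}\langle x\rangle^{-s-2}x\{\frac x{2\pi\sqrt\beta}\}\,dx-1$, with all errors $O_{s,\beta}(N^{-s})$; the two spurious $\pm1$'s then cancel and produce the stated identity. The strict inequality $A_\beta(s)<0$ is then immediate from Lemma~\ref{lem2.14beta}, since $s_*\in(0.308443,0.308444)\subset[0.29,0.31]$ and $0<\beta<\beta_*$.

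For the integral term I would substitute $u=2\pi\sqrt\beta x$ and split $\langle u\rangle^{-s}=|u|^{-s}+(\langle u\rangle^{-s}-|u|^{-s})$. The pure-power contribution works out to $\frac1{2\pi\sqrt\beta}\bigl(\frac{4N\sqrt\beta}{3}\bigr)^{1-s}\int_0^{3\pi/2}w^{-s}\cos w\,dw$; integrating by parts and using $\cos\frac{3\pi}{2}=0$ this equals $\frac1{(1-s)\,2\pi\sqrt\beta}\bigl(\frac{4N\sqrt\beta}{3}\bigr)^{1-s}\int_0^{3\pi/2}w^{1-s}\sin w\,dw=\frac1{(1-s)\,2\pi\sqrt\beta}\bigl(\frac{4N\sqrt\beta}{3}\bigr)^{1-s}f_1(s)$, which \emph{vanishes identically} at $s=s_*$ because $f_1(s_*)=0$ by the very definition of $s_*$ in Lemma~\ref{2015lem2.12}. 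This is exactly why the pair $(s,y)=(s_*,\frac34)$ is singled out: it kills the otherwise dominant $O(N^{1-s})$ term on the nose. Since $0<s<1$, the remainder $\langle u\rangle^{-s}-|u|^{-s}$ is absolutely integrable on $(0,\infty)$, so replacing $\cos(\frac{yu}{N\sqrt\beta})$ by $1$ and extending the range to $+\infty$ leaves $\frac1{2\pi\sqrt\beta}\int_0^\infty(\langle u\rangle^{-s}-|u|^{-s})\,du=\frac1{4\pi\sqrt\beta}\int_{\mathbb R}(\langle x\rangle^{-s}-|x|^{-s})\,dx$ for $\int_0^N\phi$, the discarded pieces being $O_{s,\beta}(N^{-1-s})$.

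For the correction term, differentiating gives $\phi'(x)=-4\pi^2 s\beta\,x\,\langle 2\pi\sqrt\beta x\rangle^{-s-2}\cos(2\pi x\tfrac yN)-2\pi\tfrac yN\,\langle 2\pi\sqrt\beta x\rangle^{-s}\sin(2\pi x\tfrac yN)$. The second summand carries the factor $\tfrac yN$, and a direct $L^1$ bound shows it contributes only $O_{s,\beta}(N^{-s})$. For the first summand the same substitution $u=2\pi\sqrt\beta x$ turns $2\int_0^N(\cdot)\{x\}\,dx$ into $-2s\int_0^{2\pi\sqrt\beta N}u\langle u\rangle^{-s-2}\cos(\frac{yu}{N\sqrt\beta})\{\frac u{2\pi\sqrt\beta}\}\,du$; replacing $\cos$ by $1$ and extending to $+\infty$ (each step $O_{s,\beta}(N^{-s})$) gives $-2s\int_0^\infty u\langle u\rangle^{-s-2}\{\frac u{2\pi\sqrt\beta}\}\,du$. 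Now $\{-t\}=1-\{t\}$ together with $\int_0^\infty u\langle u\rangle^{-s-2}\,du=\frac1s$ yields the identity $2\int_0^\infty u\langle u\rangle^{-s-2}\{\frac u{2\pi\sqrt\beta}\}\,du=\int_{\mathbb R}x\langle x\rangle^{-s-2}\{\frac x{2\pi\sqrt\beta}\}\,dx+\frac1s$, so the first summand contributes $-s\int_{\mathbb R}x\langle x\rangle^{-s-2}\{\frac x{2\pi\sqrt\beta}\}\,dx-1+O_{s,\beta}(N^{-s})$. Summing the three pieces, $+1$ and $-1$ cancel and one arrives at $F_{N,s}(\frac yN)=\frac1{2\pi\sqrt\beta}A_\beta(s)+O_{s,\beta}(N^{-s})$.

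The hard part will be the error accounting, because every $O_{s,\beta}(N^{-s})$ loss comes from the one approximation $\cos(\frac{yu}{N\sqrt\beta})\approx1$, whose argument runs over $[0,\frac{3\pi}{2}]$ --- not a small interval --- so a plain Taylor expansion does not suffice. The remedy is to split the $u$-range at $u\sim N\sqrt\beta$: on the lower part the argument is $\lesssim1$ and $|\cos t-1|\le\frac12 t^2$ can be paired with the integrable decay $u\langle u\rangle^{-s-2}\sim u^{-s-1}$ (resp.\ $\langle u\rangle^{-s}-|u|^{-s}\sim u^{-s-2}$, valid for $s>0$), while on the far tail one uses the trivial $|\cos t-1|\le2$ against the same decay. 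The hypothesis $N\ge\frac1{\sqrt\beta}$ enters precisely to make the rescaled cutoff $2\pi\sqrt\beta N\ge2\pi$, so that the truncated integrals actually capture the bulk region $u\sim1$ where $\langle u\rangle$ differs appreciably from $|u|$; the convexity remarks on $(c_k)$ used elsewhere in this section are not needed here.
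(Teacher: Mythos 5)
Your proposal is correct and follows essentially the same route as the paper: Euler--Maclaurin summation, cancellation of the leading $O(N^{1-s})$ power term via $f_1(s_*)=\int_0^{3\pi/2}\xi^{1-s_*}\sin\xi\,d\xi=0$, and error control by splitting the rescaled integrals at $|u|\sim N\sqrt{\beta}$. The only cosmetic difference is that you sum the real cosine series over $[0,N]$ and recover the two-sided integrals via $\{-t\}=1-\{t\}$ (whence your cancelling $\pm 1$'s), whereas the paper applies Euler--Maclaurin directly to the complex exponential sum over $[-N,N]$, where the boundary term $f(-N)$ is simply $O_s(\beta^{-s/2}N^{-s})$.
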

\begin{rem}
A further interesting issue is to investigate the case  $|y-\frac 34|\ll 1$. However
we shall not dwell on it here.
\end{rem}
\begin{proof}
Denote $\beta_1= 4\pi^2 \beta$ and $y_1=2\pi y $. 
Clearly for $f(k) =(1+\beta_1 k^2)^{-\frac s2} e^{i \frac k N y_1}$, we have
\begin{align*}
F_{N,s}(\frac y N) &
= \sum_{k=-N}^{N} f(k)
= \int_{-N}^N f(x) dx 
+f(-N) + \int_{-N}^N ( (1+\beta_1 x^2)^{-\frac s2} )^{\prime}
e^{i \frac x N y_1} \{ x \} dx  \notag \\
&\qquad \qquad \qquad + 
\int_{-N}^N
(1+\beta_1 x^2)^{-\frac s2} \frac {iy_1}{N}
e^{i \frac x N y_1} \{x \} dx.
\end{align*}
Note that for $s=s_*$, $y=\frac 3 4$, we have
\begin{align*}
\int_{-N}^N ( \beta_1 k^2)^{-\frac s2} e^{2\pi i \frac k N y} dk =0.
\end{align*}
Also it is easy to check that (here and below we need $N\sqrt{\beta}\ge 1$):
\begin{align*}
|f(-N)| + \int_{-N}^N
(1+\beta_1 x^2)^{-\frac s2} \frac {1}{N}dx =O_s( \beta^{-\frac s2} N^{-s}).
\end{align*}
Thus we obtain
\begin{align*}
F_{N,s}(\frac yN)
&= \int_{-N}^N
( (1+\beta_1 x^2)^{-\frac s 2}
- (\beta_1 x^2)^{-\frac s2} )
e^{i \frac x N y_1} dx 
-s \int_{-N}^N
(1+\beta_1 x^2)^{-\frac s2-1}
\cdot \beta_1 x e^{i\frac x N y_1}
\{ x \} dx + O_s( \beta^{-\frac s2} N^{-s} ) \notag \\
&=  \frac 1 {\sqrt{\beta_1}} \Bigl( \int_{-N_1 }^{N_1}
( (1+ x^2)^{-\frac s 2}
- |x|^{-s} )
e^{i \frac x {N_1} y_1} dx \Bigr)
-s \int_{-N_1}^{N_1}
(1+ x^2)^{-\frac s2-1}
\cdot  x e^{i\frac x {N_1} y_1}
\{ \frac {x} {\sqrt{\beta_1}} \} dx \notag 
\notag \\
&\qquad + O_s( \beta^{-\frac s2} N^{-s} ),
\end{align*}
where we have denoted $N_1=N\sqrt{\beta_1}$. 
The result then follows from the error estimates:
\begin{align*}
&\int_{|x|\le N_1}  |(1+ x^2)^{-\frac s 2}
- |x|^{-s} | \cdot
|1-e^{i\frac x {N_1} y_1}| dx  = O(N_1^{-1}), \notag \\
& \int_{|x|>N_1}  |(1+ x^2)^{-\frac s 2}
- |x|^{-s} | dx = O(N_1^{-1-s}), \notag \\
&\int_{|x|>N_1} (1+x^2)^{-\frac s2-1} |x| dx = O(N_1^{-s}), \notag \\
&\int_{|x|\le N_1} (1+x^2)^{-\frac s2-1}
\cdot |x|\cdot |e^{i\frac x {N_1} y_1} -1| dx = O(N_1^{-s}).
\end{align*}
Here we used the fact  that for $|x|\gg 1$, 
$(1+x^2)^{-\frac s2} = |x|^{-s} + O(|x|^{-s-2})$. 
\end{proof}

\begin{rem}
Let $\beta>0$, $N\ge 2$. Let $d\ge 2$, $0<s\le d$. One can consider the
general Bessel cases: 
\begin{align*}
&F^{\operatorname{square} }_{N,s} (x) = \sum_{|k|_{\infty} > N}
(1+4\pi^2 \beta |k|^2)^{-\frac s2} e^{2\pi i k\cdot x}, \\
&F^{\operatorname{sphere} }_{>N,s} (x) = \sum_{|k| > N}
(1+4\pi^2 \beta |k|^2)^{-\frac s2} e^{2\pi i k\cdot x}.
\end{align*}
These will be investigated elsewhere. 
\end{rem}

We summarize the results obtained in this section as the following theorem.

\begin{thm}[Sharp maximum principle]
Let $d\ge 1$ and $\beta>0$. Then the following hold.
\begin{enumerate}
\item If $s>d$ and $N\ge c_{s,d} (\beta^{\frac {d-1}2} e^{\frac 2 {\sqrt{\beta}} }
)^{\frac 1 {s-d} }$ ($c_{s,d}>0$ depends only on ($s$, $d$)), then $F_{N,s}$ is
a positive function and $\|F_{N,s} \|_{L^1(\mathbb T^d)} =1$.

\item Let $d=1$ and $s=1$. If $N\ge \alpha \beta^{-\frac 12} e^{\frac 1 {\sqrt{\beta} } }$
($\alpha>0$ is an absolute constant), then $F_{N,1}$ is  positive and has
unit $L^1$ mass.
\end{enumerate}
 Let $d=1$, $\beta>0$ and $0<s<1$. We identify $\mathbb T=[-\frac 12, \frac 12)$.  For some $s_* \in (0.308443, 0.308444)$, the following hold.
\begin{enumerate}
\item \underline{Lack of positivity for $0<s<s_*$}.  There are positive constants 
$\gamma_2(s)>\gamma_1(s)>0$ and $\gamma(s)>0$,  such that for any $y\in[\gamma_1(s),\, \gamma_2(s)]$
and $N\gg_s 1+\beta^{-\frac 1 {2(1-s)} }$, we have
\begin{align*}
\frac 1 {N^{1-s}} F_{N,s}(\frac y N) = - \gamma(s) + (1+ \frac 1 {\sqrt{\beta}} )\cdot O_s(N^{-(1-s)})
<-\frac 12 \gamma(s),
\end{align*}
In particular this shows that for $0<s<s_*$ the kernel function $F_{N,s}$ must be negative on an interval of length $O_s(N^{-1})$ for
all large $N$.

\item \underline{Positivity for $s_*<s<1$}. For any $s_*<s<1$ there are constants
$c_1(s)>0$ and $c_2(s)>0$ such that if $$N\ge \max\{
c_1(s) \beta^{-\frac 12} e^{\frac 1 {\sqrt{\beta}} }, c_2(s)\cdot(
1+ \beta^{-\frac 1 {2(1-s)} } ) \},$$
 then $F_N(x)$ is positive and hence has unit
$L_x^1$ mass.

\item \underline{Lack of positivity for $s=s_*$ and $0<\beta<\beta_*$}. 
Let $s=s_*$ and $y=\frac 34$. Let $0<\beta<\beta_*$ where $\beta_*$
is the same constant as in Lemma \ref{lem2.14beta}.
 Then for $N\ge \frac 1 {\sqrt{\beta}}$, we have
\begin{align*}
F_{N,s}(\frac y N) = \frac 1 {2\pi \sqrt{\beta} }
A_{\beta}(s) +O_{s,\beta} ( N^{-s}  ),
\end{align*}
where $A_{\beta}(s)$ is given by the expression
\begin{align*}
A_{\beta}(s)
=\int_{\mathbb R} (\langle x \rangle^{-s} - |x|^{-s} )dx
-s 2\pi \sqrt{\beta}\int_{\mathbb R}
\langle x \rangle^{-s-2} x 
\{\frac x {2\pi \sqrt{\beta}} \} dx.
\end{align*}
Furthermore for $0<\beta<\beta_*$, we have
\begin{align*}
0<\alpha_1 < -A_{\beta}(s) <\alpha_2<\infty,
\end{align*}
where $\alpha_1>0$, $\alpha_2>0$ are absolute constants.
\end{enumerate}
\end{thm}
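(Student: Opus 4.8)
This statement merely consolidates the results already established in this section, so the proof is a matter of citing each ingredient and gluing them together; I sketch the structure and indicate where the genuine work lies.

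\emph{Parts (1) and (2).} The case $s>d$ is exactly the lemma proved above, whose argument combines the crude lower bound $G_s(x)\gtrsim e^{-2|x|}$ on the Bessel potential with the tail estimate $\sum_{|k|_{\infty}>N}(1+4\pi^2\beta|k|^2)^{-s/2}\lesssim\beta^{-s/2}N^{-(s-d)}$; balancing these two forces $N\gtrsim(\beta^{(d-s)/2}e^{2/\sqrt\beta})^{1/(s-d)}$. The case $d=1$, $s=1$ is the corresponding one-dimensional lemma, which uses instead the sharper near-origin bound $F_{\infty,1}(x)\gtrsim\beta^{-1/2}\log(\sqrt\beta/|x|)$ together with the same-style error control $|c_N|+|h_N(x)|\lesssim\beta^{-s/2}N^{-s}|x|^{-1}$. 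In both cases, once $F_{N,s}$ is known to be a nonnegative trigonometric polynomial, the identity $\widehat{F_{N,s}}(0)=1$ gives $\|F_{N,s}\|_{L^1(\mathbb T^d)}=1$ for free, so no further input is needed.

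\emph{The range $d=1$, $0<s<1$, statements (1) and (2).} These are precisely the content of the sharp characterization theorem for $0<s<1$ proved above, and the plan is to recall its three ingredients: (i) positivity of $F_{N,s}$ on $|x|\le\frac1{6N\sqrt d}$ and on $\frac1N\ll|x|\le\frac12$, obtained by playing the lower bound $F_{\infty,s}(x)\gtrsim\beta^{-1/2}(\sqrt\beta/|x|)^{1-s}$ against the error term coming from the discrete-integration-by-parts identity \eqref{id847_0_02}; (ii) on the remaining window $x=y/N$ with $y$ in a fixed compact set, the reduction via Lemma \ref{lem2.25_00} to the limit profile $H_{\infty,s}(y)=2(2\pi\sqrt\beta)^{-s}(2\pi y)^{-(1-s)}h_s(2\pi y)$, where $h_s(u)=\int_0^u t^{-s}\cos t\,dt$; (iii) the sign analysis of $h_s$, whose infimum over $[\delta_1,\infty)$ is attained at $u=\frac32\pi$ (or at the left endpoint), together with the identity $h_s(\frac32\pi)=\frac1{1-s}f_1(s)$ with $f_1$ as in Lemma \ref{2015lem2.12}: $f_1(s)<0$ for $s<s_*$ and $f_1(s)>0$ for $s>s_*$. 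For $0<s<s_*$ this lets one take $\gamma_1(s)<\gamma_2(s)$ in a neighbourhood of $\frac34$ on which $H_{\infty,s}$ is bounded away from $0$ by a negative amount, which under the constraint $N\gg_s 1+\beta^{-1/(2(1-s))}$ (so that $\beta^{-1/2}N^{-(1-s)}\ll_s 1$) yields the claimed pointwise lower bound and hence negativity of $F_{N,s}$ on an interval of length $O_s(N^{-1})$; for $s_*<s<1$, positivity of $h_s$ on all of $[\delta_1,\infty)$ closes the last window and delivers global positivity, hence unit $L^1_x$ mass, under $N\ge\max\{c_1(s)\beta^{-1/2}e^{1/\sqrt\beta},\,c_2(s)(1+\beta^{-1/(2(1-s))})\}$.

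\emph{The critical case $s=s_*$, statement (3).} Here one invokes the lack-of-positivity theorem for $s=s_*$ together with Lemma \ref{lem2.14beta}. The plan is to apply the Euler--Maclaurin formula to $f(k)=(1+\beta_1 k^2)^{-s/2}e^{iky_1/N}$ with $\beta_1=4\pi^2\beta$, $y_1=2\pi y$, $y=\frac34$, subtract the leading integral $\int_{-N}^N(\beta_1 x^2)^{-s/2}e^{2\pi ixy/N}\,dx$, which vanishes exactly because $s=s_*$ and $y=\frac34$ make $h_{s_*}(\frac32\pi)=0$, rescale $x\mapsto x/\sqrt{\beta_1}$ (with $N_1=N\sqrt{\beta_1}$), and bound the four resulting error integrals by $O_s(\beta^{-s/2}N^{-s})$; what survives is exactly $\frac1{2\pi\sqrt\beta}A_\beta(s)$ with $A_\beta(s)=\int_{\mathbb R}(\langle x\rangle^{-s}-|x|^{-s})\,dx-s\,2\pi\sqrt\beta\int_{\mathbb R}\langle x\rangle^{-s-2}x\{x/(2\pi\sqrt\beta)\}\,dx$. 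Lemma \ref{lem2.14beta} then shows $-A_\beta(s)$ lies between two absolute positive constants for $0<\beta<\beta_*$, so $F_{N,s}(y/N)<0$ once $N\gtrsim 1/\sqrt\beta$ makes the $O_s(\beta^{-s/2}N^{-s})$ error negligible.

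\emph{Main obstacle.} At the level of this theorem there is none, since all the analytic work was carried out in the lemmas and theorems above. The one point that genuinely requires care---and the reason the positivity threshold occurs at $s_*<1$ rather than at $1$---is the critical case: after the leading integral cancels, one must verify that the Euler--Maclaurin remainder contributes the truly $O(\sqrt\beta)$-size term $-s\,2\pi\sqrt\beta\int_{\mathbb R}\langle x\rangle^{-s-2}x\{x/(2\pi\sqrt\beta)\}\,dx$ (rather than something absorbable into the error), and that, combined with the negative $O(1)$ main term $\int_{\mathbb R}(\langle x\rangle^{-s}-|x|^{-s})\,dx$ evaluated via the Gamma-function identity in Lemma \ref{lem2.14beta}, it keeps $A_\beta(s)$ strictly negative throughout $0<\beta<\beta_*$; this is precisely what Lemma \ref{lem2.14beta} secures.
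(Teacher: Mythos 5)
Your proposal is correct and matches the paper exactly: the theorem is stated in the paper as a summary of the results already established in the subsection on truncated Bessel kernels, and your proof correctly assembles the same ingredients (the $s>d$ and $d=1,\,s=1$ positivity lemmas, the sharp characterization for $0<s<1$ via Lemma \ref{lem2.25_00} and the sign of $f_1(s)$ from Lemma \ref{2015lem2.12}, and the Euler--Maclaurin analysis with Lemma \ref{lem2.14beta} for the critical case $s=s_*$). No further comment is needed.
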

\begin{thm}[Effective maximum principle] \label{2015thm9}
Let $d\ge 1$, $N\ge 2$, $\beta>0$ and $s>0$. Then for all $N\ge 2$, we have
\begin{align*}
\| F_{N,s} \|_{L_x^1(\mathbb T^d)}
\le 1 + \frac {c_{s,d} } {(1+\beta N^2)^{\frac s2} } ( \log (N+2) )^d,
\end{align*}
where $c_{s,d}>0$ depends only on ($d$, $s$).
\end{thm}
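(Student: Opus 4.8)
The plan is to compare $F_{N,s}$ with its $L^1$-limit. Recall from the discussion preceding the statement that $F_{N,s}\to F_{\infty,s}$ in $L^1(\mathbb T^d)$, where $F_{\infty,s}(x)=\beta^{-d/2}\sum_{n\in\mathbb Z^d}G_s(\frac{x+n}{\sqrt\beta})$ is the periodization of a rescaled Bessel kernel. Since $G_s>0$ on $\mathbb R^d$, the function $F_{\infty,s}$ is nonnegative, so $\|F_{\infty,s}\|_{L^1(\mathbb T^d)}=\int_{\mathbb T^d}F_{\infty,s}=\widehat{F_{\infty,s}}(0)=(1+0)^{-s/2}=1$. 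By the triangle inequality it therefore suffices to prove
\begin{align*}
\|K_{>N,s}\|_{L^1(\mathbb T^d)}\lesssim_{d,s}\frac{(\log(N+2))^d}{(1+\beta N^2)^{s/2}},
\end{align*}
where $K_{>N,s}:=F_{\infty,s}-F_{N,s}$ is the $L^1(\mathbb T^d)$ function with Fourier coefficients $(1+4\pi^2\beta|k|^2)^{-s/2}1_{|k|_\infty>N}$. This is the exact analogue of Theorem \ref{thm2.17_00} (which is the case $s=2$), and I would follow the same two-piece decomposition.

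Fix $\phi\in C_c^\infty(\mathbb R^d)$ with $\phi\equiv1$ on $\{|\xi|\le\frac32\sqrt d\}$ and $\operatorname{supp}\phi\subset\{|\xi|<2\sqrt d\}$, and write $K_{>N,s}=A_1+A_2$, where $A_1$ is the trigonometric polynomial with Fourier coefficients $(1+4\pi^2\beta|k|^2)^{-s/2}\phi(k/N)$ (finitely supported in frequency, since $\phi(k/N)\ne0$ forces $N<|k|_\infty<2\sqrt dN$) and $A_2:=K_{>N,s}-A_1$, whose Fourier coefficients are $(1+4\pi^2\beta|k|^2)^{-s/2}(1-\phi(k/N))$ for all $k$. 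For $A_1$: introducing a second bump $\phi_1$ supported in $\{|\xi|\sim1\}$ and equal to $1$ on the relevant shell, one writes $A_1=\tilde D_N*H$ on $\mathbb T^d$, where $\tilde D_N(y)=\sum_{N<|k|_\infty\le2\sqrt dN}e^{2\pi ik\cdot y}$ is a difference of two tensor-product Dirichlet kernels, so $\|\tilde D_N\|_{L^1(\mathbb T^d)}\lesssim_d(\log(N+2))^d$, and $H$ is the periodization of $\mathcal F^{-1}\!\big((1+4\pi^2\beta|\xi|^2)^{-s/2}\phi(\xi/N)\phi_1(\xi/N)\big)$. Rescaling $\xi=N\eta$ turns the symbol into $(1+\beta N^2)^{-s/2}$ times a $C_c^\infty$ bump supported in $\{|\eta|\sim1\}$ whose seminorms are bounded uniformly in $\beta,N$ (the ratio $\frac{1+\beta N^2}{1+4\pi^2\beta N^2|\eta|^2}$ is uniformly smooth away from $\eta=0$), so $\|H\|_{L^1(\mathbb T^d)}\lesssim_{d,s}(1+\beta N^2)^{-s/2}$. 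This is Lemma \ref{lem2.15_00} run with exponent $s/2$ in place of $1$, and it yields $\|A_1\|_{L^1(\mathbb T^d)}\lesssim_{d,s}(1+\beta N^2)^{-s/2}(\log(N+2))^d$.

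For $A_2$: it coincides with the periodization of $\Psi:=\mathcal F^{-1}\!\big((1+4\pi^2\beta|\xi|^2)^{-s/2}(1-\phi(\xi/N))\big)\in L^1(\mathbb R^d)$ (they have the same Fourier coefficients and both lie in $L^1$), so Tonelli's theorem gives $\|A_2\|_{L^1(\mathbb T^d)}\le\|\Psi\|_{L^1(\mathbb R^d)}$. When $\beta N^2\le1$ one estimates crudely: $\Psi=\beta^{-d/2}G_s(\cdot/\sqrt\beta)-\beta^{-d/2}G_s(\cdot/\sqrt\beta)*\big(N^d\check\phi(N\cdot)\big)$ together with $\|G_s\|_{L^1(\mathbb R^d)}=1$ give $\|\Psi\|_{L^1}\le1+\|\check\phi\|_{L^1}\lesssim1\sim(1+\beta N^2)^{-s/2}$. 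The delicate regime is $\beta N^2\ge1$: there the symbol decays only like $|\xi|^{-s}$, so for small $s$ its inverse transform need not be integrable at infinity, and one cannot simply pull out a scalar factor. I would resolve this with a Littlewood–Paley splitting $1-\phi(\xi/N)=\sum_{j\ge0}\psi_j(\xi)$ into dyadic shells $|\xi|\sim2^jN$: on the $j$-th shell $(1+4\pi^2\beta|\xi|^2)^{-s/2}\sim(1+\beta(2^jN)^2)^{-s/2}$ and the corresponding piece of $\Psi$ is a single Schwartz bump rescaled to frequency $2^jN$ with uniformly bounded profile, hence has $L^1(\mathbb R^d)$-norm $\lesssim_{d,s}(1+\beta(2^jN)^2)^{-s/2}$; summing the geometric series (using $\beta N^2\ge1$) gives $\|\Psi\|_{L^1}\lesssim_{d,s}(\beta N^2)^{-s/2}\sum_{j\ge0}2^{-js}\lesssim_{s}(1+\beta N^2)^{-s/2}$. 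Combining the two cases, $\|A_2\|_{L^1(\mathbb T^d)}\lesssim_{d,s}(1+\beta N^2)^{-s/2}$, which together with the bound on $A_1$ proves the estimate on $\|K_{>N,s}\|_{L^1}$ and hence the theorem. Throughout, the point needing real care is uniformity of every implied constant in $\beta$ and $N$; the main obstacle is precisely the $A_2$ estimate when $\beta N^2\gg1$, where the slow decay of the Bessel symbol must be played off against the smoothness of the frequency cutoff, and the dyadic decomposition is what keeps this loss-free.
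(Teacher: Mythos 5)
Your proof is correct and follows essentially the route the paper intends: the paper's own proof of this theorem simply says it is an adaptation of the argument for Theorem \ref{thm2.17_00}, and your decomposition $K_{>N,s}=A_1+A_2$ with the Dirichlet-kernel convolution bound for $A_1$ and the Poisson-summation comparison for $A_2$ is exactly that adaptation. The one place where the $s=2$ argument does not transfer verbatim is the $L^1(\mathbb R^d)$ bound for the high-frequency piece $A_2$ when $0<s$ is small and $\beta N^2\gg 1$, and your dyadic (Littlewood--Paley) summation there is a correct and natural way to supply the detail the paper omits.
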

\begin{proof}[Proof of Theorem \ref{2015thm9}]
The proof is a simple adaptation of the proof of Theorem \ref{thm2.17_00}. We omit
the details.

\end{proof}

\section{1D torus case for Allen-Cahn: semi-discretization}
 Consider the scheme for Allen-Cahn on the torus $\mathbb T=[0,1)$:
\begin{align*}
\begin{cases}
\frac{u^{n+1} -u^n}{ \tau} = \nu^2 \partial_{xx} u^{n+1} -\Pi_N ( (u^n)^3- u^n),
\quad n\ge 0, \\
u^0 =\Pi_N u_0.
\end{cases}
\end{align*}
Here $\tau>0$ is the size of the time step.

The following lemma will be used later which we will often refer to as a standard discrete
energy estimate.
\begin{lem}[Energy estimate]
For any $n\ge 0$, we have
\begin{align*}
E(u^{n+1}) - E(u^n) &+ \frac 12 \nu^2 \| \partial_x (u^{n+1}-u^n) \|_2^2 
+ (\frac 1 {\tau}+\frac 12) \|u^{n+1}-u^n\|_2^2 \notag \\
& \le \frac 32 \max\{ \|u^n\|_{\infty}^2, \; \|u^{n+1}\|_{\infty}^2
\}\cdot \| u^{n+1}-u^n\|_2^2,
\end{align*}
where 
$F(x)=\frac 14 (x^2-1)^2$, and
\begin{align*}
E(u) = \frac 12 \nu^2 \| \partial_x u \|_{L^2(\mathbb T)}^2 + \int_{\mathbb T}
F(u) dx.
\end{align*}
\end{lem}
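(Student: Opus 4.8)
The plan is to test the scheme against the increment $\delta^n := u^{n+1}-u^n$ in $L^2(\mathbb T)$ and convert each term into a telescoping energy difference plus controllable remainders. First I would take the $L^2(\mathbb T)$ inner product of
\[
\frac{u^{n+1}-u^n}{\tau} = \nu^2 \partial_{xx}u^{n+1} - \Pi_N\big((u^n)^3-u^n\big)
\]
with $\delta^n$. The left side produces $\tau^{-1}\|\delta^n\|_2^2$. For the dissipation term, integration by parts on the torus gives $\langle \partial_{xx}u^{n+1},\delta^n\rangle = -\langle \partial_x u^{n+1},\partial_x \delta^n\rangle$, and the polarization identity $\langle a,a-b\rangle = \tfrac12(\|a\|_2^2-\|b\|_2^2+\|a-b\|_2^2)$ with $a=\partial_x u^{n+1}$, $b=\partial_x u^n$ rewrites this as $-\tfrac12\|\partial_x u^{n+1}\|_2^2 + \tfrac12\|\partial_x u^n\|_2^2 - \tfrac12\|\partial_x\delta^n\|_2^2$. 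Multiplying by $\nu^2$ supplies the gradient part of $E(u^{n+1})-E(u^n)$ and leaves the extra $\tfrac12\nu^2\|\partial_x\delta^n\|_2^2$ on the left.

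Next I would treat the nonlinear term. Since $u^n$ and $u^{n+1}$ both have Fourier support in $\{|k|\le N\}$, so does $\delta^n$; hence the projection is harmless in the pairing and $\langle \Pi_N((u^n)^3-u^n),\delta^n\rangle = \langle f(u^n),\delta^n\rangle$ with $f=F'$. The key algebraic step is the exact second-order Taylor expansion with Lagrange remainder: pointwise, $F(u^{n+1}) = F(u^n) + F'(u^n)\delta^n + \tfrac12 F''(\xi^n)(\delta^n)^2$, where $\xi^n(x)$ lies between $u^n(x)$ and $u^{n+1}(x)$. Integrating over $\mathbb T$ gives $\langle f(u^n),\delta^n\rangle = \int_{\mathbb T}(F(u^{n+1})-F(u^n)) - \tfrac12\int_{\mathbb T} F''(\xi^n)(\delta^n)^2$. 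Combining with the previous paragraph yields the exact identity
\[
E(u^{n+1}) - E(u^n) + \tfrac12\nu^2\|\partial_x\delta^n\|_2^2 + \tfrac1\tau\|\delta^n\|_2^2 = \tfrac12\int_{\mathbb T} F''(\xi^n)(\delta^n)^2.
\]

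To finish I would bound the right-hand side using $F''(x) = 3x^2-1$, so $\tfrac12 F''(\xi^n) = \tfrac32(\xi^n)^2 - \tfrac12$. Since $|\xi^n(x)|\le\max\{|u^n(x)|,|u^{n+1}(x)|\}$ pointwise, $(\xi^n)^2 \le \max\{\|u^n\|_\infty^2,\|u^{n+1}\|_\infty^2\}$ everywhere, hence $\tfrac12\int F''(\xi^n)(\delta^n)^2 \le \big(\tfrac32\max\{\|u^n\|_\infty^2,\|u^{n+1}\|_\infty^2\}-\tfrac12\big)\|\delta^n\|_2^2$. Moving the $-\tfrac12\|\delta^n\|_2^2$ to the left converts $\tfrac1\tau$ into $\tfrac1\tau+\tfrac12$ and gives exactly the claimed inequality.

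There is no serious obstacle here: this is a standard discrete energy identity. The only points needing a moment's care are (i) checking that $\delta^n\in S_N$ so that $\Pi_N$ can be dropped in the nonlinear pairing, and (ii) using the \emph{pointwise} intermediate value $\xi^n$ in the Taylor remainder, so that $(\xi^n)^2$ is dominated pointwise by $\max\{(u^n)^2,(u^{n+1})^2\}$ and the $\|\cdot\|_\infty^2$ factor comes out of the integral cleanly; averaging the remainder would not suffice. The small sign bookkeeping — that the spare $-\tfrac12$ from $F''$ is exactly what upgrades $\tfrac1\tau$ to $\tfrac1\tau+\tfrac12$ — should also be noted explicitly.
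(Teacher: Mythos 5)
Your proof is correct and follows essentially the same route as the paper: pair the scheme with $u^{n+1}-u^n$, use the polarization identity for the dissipative term, drop $\Pi_N$ in the nonlinear pairing since the increment lies in $S_N$, and apply the pointwise Lagrange-form Taylor remainder for $F$ with $f'(\xi)=3\xi^2-1$ supplying both the $\frac32\max\{\cdot\}$ factor and the extra $\frac12$ on the $\frac1\tau$ term. No gaps.
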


\begin{proof}
Denote by $(,)$ the usual $L^2$ pairing of functions.  Then 
\begin{align*}
\frac {\| u^{n+1} - u^n \|_2^2} {\tau}
&=-\nu^2 \frac 12 ( \| \partial_x u^{n+1} \|_2^2
-\| \partial_x u^n \|_2^2 + \| \partial_x (u^{n+1}-u^n)\|_2^2)
- (\Pi_N ( f(u^n)), u^{n+1}-u^n) \notag \\
&=-\nu^2 \frac 12 ( \| \partial_x u^{n+1} \|_2^2
-\| \partial_x u^n \|_2^2 + \| \partial_x (u^{n+1}-u^n)\|_2^2)
- ( f(u^n), u^{n+1}-u^n),
\end{align*}
where we have denoted $f(z)= F^{\prime}(z) = z^3-z$. 

Now note that
\begin{align*}
F(u^{n+1}) =F(u^n) +f(u^n) \cdot (u^{n+1}-u^n) +
\frac 12 f^{\prime}(\xi) (u^{n+1}-u^n)^2,
\end{align*}
where $\xi$ lies between $u^n$ and $u^{n+1}$. The desired inequality then
easily follows since $f^{\prime}(z)=3z^2-1$.

\end{proof}

\begin{prop} \label{prop_c3_1}
Assume $u_0 \in H^1(\mathbb T)$ and $\| u_0\|_{L^{\infty}(\mathbb T)} \le 1$. 
Let $0<\tau_{*}\le \tau \le \frac 12$. Let $0<\alpha_0\le 1$. If 
$N\ge N_0=N_0(\tau_*,\nu, \|u_0\|_{H^1},\alpha_0)$, then
\begin{align*}
\sup_{n\ge 0} \| u^n \|_{\infty} \le 1+\alpha_0.
\end{align*}
\end{prop}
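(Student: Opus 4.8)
The plan is to recast the scheme in the fixed-point form
\begin{equation*}
u^{n+1}=T_{N,\tau\nu^2}\bigl(f_\tau(u^n)\bigr),\qquad u^0=\Pi_N u_0,
\end{equation*}
with $T_{N,\beta}=(\operatorname{Id}-\beta\partial_{xx})^{-1}\Pi_N$ and $f_\tau(z)=(1+\tau)z-\tau z^3$, and then to run a scalar induction on $A_n:=\|u^n\|_{L^\infty(\mathbb T)}$. Since $u_0$ is real-valued, so is every $u^n$, hence $u^n(x)\in[-A_n,A_n]$ and $\|f_\tau(u^n)\|_\infty\le g_\tau(A_n)$, where $g_\tau(\alpha):=\max_{|x|\le\alpha}|f_\tau(x)|$ is non-decreasing in $\alpha$. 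Applying the effective (``generalized'') maximum principle of Theorem \ref{thm2.14old} with the viscosity parameter $\nu^2$ there replaced by $\tau\nu^2$ yields the scalar recursion
\begin{equation*}
A_{n+1}\le(1+\delta_N)\,g_\tau(A_n),\qquad \delta_N:=\frac{\alpha_1}{1+\pi^2\nu^2\tau N^2}\,\log(N+2),
\end{equation*}
where $\alpha_1$ is the absolute constant of Theorem \ref{thm2.14old}. Since $\tau\ge\tau_*$ we have $\delta_N\le\alpha_1\log(N+2)/(1+\pi^2\nu^2\tau_*N^2)\to0$ as $N\to\infty$, with a rate depending only on $\tau_*$ and $\nu$.

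Next I extract the strict contractivity encoded in Proposition \ref{2015prop1}. Set $M:=1+\alpha_0$, so $1<M\le 2$. Because $\tau\le\tfrac12$ gives $\sqrt{1+2/\tau}\ge\sqrt5>2\ge M$, Proposition \ref{2015prop1} applies at $\alpha=M$ and gives the \emph{strict} inequality $g_\tau(M)<M$ for every $\tau\in[\tau_*,\tfrac12]$. The map $\tau\mapsto g_\tau(M)$ is continuous, so compactness of $[\tau_*,\tfrac12]$ furnishes a constant
\begin{equation*}
\Theta_0:=\max_{\tau_*\le\tau\le 1/2}\frac{g_\tau(M)}{M}<1
\end{equation*}
depending only on $\tau_*$ and $\alpha_0$. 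Combined with the monotonicity of $g_\tau$, for every $\alpha\in[0,M]$ we get $(1+\delta_N)\,g_\tau(\alpha)\le(1+\delta_N)\,g_\tau(M)\le(1+\delta_N)\,\Theta_0\,M$.

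It then remains to choose $N_0=N_0(\tau_*,\nu,\|u_0\|_{H^1},\alpha_0)$ so that two things hold for all $N\ge N_0$. First, $(1+\delta_N)\Theta_0\le1$; this is possible since $\delta_N\to0$ (with a rate fixed by $\tau_*,\nu$) while $\Theta_0=\Theta_0(\tau_*,\alpha_0)<1$. Second, the one-dimensional Sobolev estimate
\begin{equation*}
\|u_0-\Pi_N u_0\|_\infty\le\sum_{|k|>N}|\widehat u_0(k)|\le\Bigl(\sum_{|k|>N}(2\pi k)^{-2}\Bigr)^{1/2}\|u_0\|_{\dot H^1}\le C\,N^{-1/2}\|u_0\|_{H^1}
\end{equation*}
lets us require $C N^{-1/2}\|u_0\|_{H^1}\le\alpha_0$, so that $A_0=\|\Pi_N u_0\|_\infty\le\|u_0\|_\infty+C N^{-1/2}\|u_0\|_{H^1}\le1+\alpha_0=M$. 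With such an $N_0$ the induction closes: $A_0\le M$, and if $A_n\le M$ then $A_{n+1}\le(1+\delta_N)g_\tau(A_n)\le(1+\delta_N)\Theta_0 M\le M$; hence $\sup_{n\ge0}\|u^n\|_\infty\le M=1+\alpha_0$. (Equivalently, on the range $A_n\le 2$ one may bound $(1+\delta_N)g_\tau(A_n)\le g_\tau(A_n)+2\delta_N$ and quote Proposition \ref{2015prop2}.)

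The one truly essential hypothesis — and the only real obstacle — is $\tau\ge\tau_*>0$: it is precisely what forces $\delta_N\to0$ uniformly in $\tau$ and what keeps the contraction deficit $1-\Theta_0$ bounded away from $0$. If $\tau$ is allowed to degenerate, $\Theta_0\to1$ and $\delta_N$ is no longer dominated by $1-\Theta_0$, so the iteration can slowly creep above $1+\alpha_0$; this is exactly the difficulty noted after \eqref{2015e199a} in the introduction, and is why the regime $0<\tau\ll1$ requires the separate, more refined analysis developed later.
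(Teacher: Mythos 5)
Your proof is correct, but it follows a genuinely different route from the paper's. You work multiplicatively with $A_n=\|u^n\|_\infty$: you bound the whole truncated resolvent by the effective maximum principle factor $(1+\delta_N)$ and then beat it with the strict polynomial contraction $g_\tau(1+\alpha_0)<1+\alpha_0$ from Proposition \ref{2015prop1}, made uniform over $\tau\in[\tau_*,\tfrac12]$ by compactness. This is essentially the strategy sketched heuristically around \eqref{2015e199a} in the introduction. The paper's actual proof instead shifts to $\eta^n=u^n-1$ (and $-1-u^n$), rewrites the scheme as $(1-\tau\nu^2\partial_{xx})\eta^{n+1}=(1-2\tau)\eta^n-\tau(3+\eta^n)(\eta^n)^2+\tau\Pi_{>N}(3(\eta^n)^2+(\eta^n)^3)$, uses the \emph{positive kernel} of the untruncated resolvent on the first two terms (the sign condition $3+\eta^n\ge 1-\alpha_0\ge0$ is where $\alpha_0\le1$ enters there, whereas for you it only ensures $1+\alpha_0<\sqrt{1+2/\tau}$), and pays the $\frac{\log(N+2)}{\nu^2N^2}$ price only on the high-frequency remainder via the $L^1$ bound on $K_{>N,\tau\nu^2}$. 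The paper's additive decomposition yields a linear recursion $\alpha_{n+1}\le(1-2\tau)\alpha_n+\text{(spectral error)}$ that it reuses verbatim to get the exponential decay estimate of Proposition \ref{prop_tmp_prop3.3a}; your multiplicative contraction gives the uniform bound claimed here just as cleanly, though it would need the same restructuring to produce that decay. One minor caveat: your closing parenthetical invoking Proposition \ref{2015prop2} is not quite an equivalent route to the stated bound, since that proposition starts its iteration at $\alpha_0=2$ and only yields $\alpha_n\le2$ plus a decaying tail, not $\le 1+\alpha_0$ for all $n$; but your main argument does not rely on it.
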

\begin{rem*}
The condition $u_0\in H^1(\mathbb T)$ may be weakened to $u_0\in H^s(\mathbb T)$
$s>\frac 12$.
\end{rem*}
\begin{rem*}
The dependence of $N_0$ on $\nu$ (neglecting $\|u_0\|_{H^1}$ and 
$\alpha_0$) is rather mild especially in the regime $\nu\ll 1$.
From the proof below, one can see that it suffices to take $N_0$ with
$N_0^{-2} \log N_0 \ll \nu^2$ which is roughly $N_0 \gg \nu^{-1}\sqrt{ |\log \nu| }$. 
One should note that from a scaling heuristic the typical mesh size should resolve 
$\Delta x \approx \nu$ which corresponds to the Fourier cut-off $1/\nu$ in the frequency
space. Thus up to a logarithm $N_0$ is  optimal.
\end{rem*}
\begin{rem*}
The constraint $\alpha_0>0$  already comes from the fact that $u^0=\Pi_N u_0$ in general
does not preserve the strict upper bound as the Dirichlet kernel changes its sign. On the
other hand, even if we assume $\|u^0\|_{\infty} \le 1$ (note $u^0=\Pi_N u_0$!),
this maximum is still not preserved  (for small $N$) 
as one can see from the counter-example
in Proposition \ref{prop_c3_0}.
\end{rem*}

\begin{proof}
Step 1. Initial data. Write $\Pi_{>N}= I-\Pi_N$. Then 
\begin{align*}
\| \Pi_N u_0\|_{\infty} 
& \le \|u_0\|_{\infty} + \| \Pi_{>N} u_0\|_{\infty} \notag \\
& \le 1 + N^{-\frac 12 }\|u_0\|_{H^1(\mathbb T)} \le 1+\alpha_0,
\end{align*}
if $N\ge N_0(\alpha_0, \|u_0\|_{H^1})$.  Here one should note that we must have
$\alpha_0>0$. 

Step 2. Induction.  Rewrite 
\begin{align*}
(1-\tau \nu^2 \partial_{xx}) u^{n+1}
= \Pi_N \bigl( (1+\tau) u^n - \tau (u^n)^3 \bigr).
\end{align*}
The inductive assumption is 
\begin{align*}
\|u^n \|_{\infty} \le 1+\alpha_0.
\end{align*}
In the following we shall show $u^{n+1} \le 1+\alpha_0$. By repeating the argument for
$-u^{n+1}$ we also get the lower bound. 

Denote $u^n=1+\eta^n$. Then clearly by the inductive assumption
\begin{align*}
-(2+\alpha_0) \le \eta^n \le \alpha_0.
\end{align*}

For $\eta^{n+1}=u^{n+1}-1$ we have the equation
\begin{align*}
(1-\tau \nu^2 \partial_{xx}) \eta^{n+1}
&= \Pi_N \bigl(  (1+\tau) (1+\eta^n) - \tau (1+\eta^n)^3 -1 \bigr) \notag \\
& = \Pi_N \bigl( (1-2\tau) \eta^n - 3 \tau (\eta^n)^2 -\tau (\eta^n)^3 \bigr) \notag\\
&= (1-2\tau) \eta^n -\tau (3+\eta^n) (\eta^n)^2 + \tau 
\Pi_{>N}  \bigl(  3 (\eta^n)^2 + (\eta^n)^3 \bigr).
\end{align*}

Here we used the fact that $\Pi_N \eta^n = \eta^n$. Note that 
$3+\eta^n \ge 1-\alpha_0 \ge 0$ (here we need $\alpha_0\le 1$). By using the maximum
principle, we then get
\begin{align*}
\max \eta^{n+1}
&\le (1-2\tau) \alpha_0 + \tau \| (1-\tau \nu^2 \partial_{xx})^{-1} \Pi_{>N}
\bigl( 3 (\eta^n)^2 + (\eta^n)^3 \bigr) \|_{\infty} \notag \\
& \le (1-2\tau) \alpha_0  + \tau \cdot \frac{\operatorname{const}}{1+4\pi^2 \tau \nu^2 N^2}
\log(N+2)\cdot \operatorname{const} \notag \\
& \le (1-2\tau) \alpha_0 + \operatorname{const} \cdot \frac 1 {\nu^2 N^2} \log (N+2)
\le \alpha_0,
\end{align*}
if $N\ge N_0$ (here note that in the last inequality we need the lower bound on $\tau$ so that $N_0$ remains independent
of $\tau$). Note that in the second inequality above we have used the
$L^1$ bound for the operator $(1-\tau \nu^2 \partial_{xx})^{-1} \Pi_{>N}$ which
was established in the previous section.

\end{proof}

\begin{prop}[Lack of strict maximum principle] \label{prop_c3_0}
There exist $\nu>0$, $\tau>0$, $N\ge 1$, $u^0$ with $\|u^0\|_{\infty}\le 1$ such that
\begin{align*}
\|u^1\|_{\infty} >1.
\end{align*}

\end{prop}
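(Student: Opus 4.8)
The plan is to exhibit a completely explicit single-mode counterexample with the smallest nontrivial truncation $N=1$. The mechanism is that the spectral cutoff $\Pi_N$ inside the nonlinearity \emph{removes} precisely the high harmonics created by the cube, so that for a one-mode initial datum the update \eqref{r3} collapses to a scalar computation. Accordingly I would take $u^0=\cos(2\pi x)$, which lies in $S_1$ (so $u^0=\Pi_1u_0$ with $u_0=\cos 2\pi x$) and has $\|u^0\|_{\infty}=1$. Using $\cos^3\theta=\tfrac34\cos\theta+\tfrac14\cos 3\theta$, the cube $(u^0)^3$ carries a $\cos(6\pi x)$ component that is annihilated by $\Pi_1$; heuristically this is exactly the nonlinear feedback the continuous flow uses to keep $\|u\|_\infty$ controlled, and deleting it is what allows an overshoot.

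The computation is then one line. One finds
\begin{align*}
\Pi_1\bigl((u^0)^3-u^0\bigr)=\Pi_1\Bigl(\tfrac34\cos 2\pi x+\tfrac14\cos 6\pi x-\cos 2\pi x\Bigr)=-\tfrac14\cos 2\pi x,
\end{align*}
so \eqref{r3} with $n=0$ reads $(1-\tau\nu^2\partial_{xx})u^1=\bigl(1+\tfrac\tau4\bigr)\cos 2\pi x$. Since the right-hand side involves only the modes $k=\pm1$, inverting $1-\tau\nu^2\partial_{xx}$ mode by mode gives
\begin{align*}
u^1=\frac{1+\frac\tau4}{1+4\pi^2\nu^2\tau}\,\cos 2\pi x,\qquad \|u^1\|_{\infty}=\frac{1+\frac\tau4}{1+4\pi^2\nu^2\tau}.
\end{align*}

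It remains only to make this coefficient exceed $1$. For $\tau>0$ one has $\dfrac{1+\tau/4}{1+4\pi^2\nu^2\tau}>1\iff \tfrac\tau4>4\pi^2\nu^2\tau\iff \nu<\tfrac1{4\pi}$, so \emph{any} $\tau>0$ together with \emph{any} $\nu\in(0,\tfrac1{4\pi})$ --- for instance $\tau=\tfrac12$, $\nu=\tfrac1{100}$ --- yields $\|u^0\|_{\infty}\le 1$ but $\|u^1\|_{\infty}>1$, proving the proposition. I do not expect any real obstacle here: the only thing to get right is the choice of $u^0$, and the rest is elementary algebra. Conceptually, this is the scalar shadow of the fact (Theorem~\ref{thm2.14old}) that $T_{N,\nu^2\tau}$ is not an $L^\infty$ contraction when $N$ is below the threshold $N_0(\nu^2\tau)$, so one could alternatively argue abstractly from $\|T_{1,\nu^2\tau}\|_{L^\infty\to L^\infty}>1$; but the explicit single-mode construction is cleaner and self-contained, and it simultaneously shows that the restriction $\alpha_0>0$ in Proposition~\ref{prop_c3_1} cannot be removed.
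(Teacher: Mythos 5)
Your proposal is correct, and the computation checks out: with $N=1$ and $u^0=\cos 2\pi x$ one has $\Pi_1\bigl((u^0)^3-u^0\bigr)=-\tfrac14\cos 2\pi x$, hence $u^1=\frac{1+\tau/4}{1+4\pi^2\nu^2\tau}\cos 2\pi x$, and the coefficient exceeds $1$ precisely when $\nu<\tfrac1{4\pi}$, for every $\tau>0$. Since the proposition only asserts existence and allows $N\ge 1$, this settles it.

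Your route is genuinely different from the paper's. The paper takes $N=2$, $\tau=\tfrac12$, and perturbs around the constant state: $u^0=1-2\delta\cos^2(2\pi x)=1+\eta^0$ with $\eta^0=-\delta(1+\cos 4\pi x)$, then expands the update for $\eta^1=u^1-1$ and isolates the quadratic term $-3\tau\Pi_N\bigl((\eta^0)^2\bigr)$. The mechanism there is that $\Pi_N$ deletes the $\cos 8\pi x$ harmonic of $(\eta^0)^2$, destroying the sign-definiteness of the square, so that $\max\eta^1\gtrsim\delta^2>0$ after taking $\delta$ and $\nu$ small; the overshoot is only $O(\delta^2)$. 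Your single-mode construction instead exploits the truncation of the third harmonic of the cube, collapses everything to a scalar identity, and produces an order-one overshoot $\approx\tau/4$ for small $\nu$ — cleaner, fully explicit, and quantitatively stronger. What the paper's version buys is that it works at the physically relevant state $u\approx 1$ (the bottom of the double well), which is the regime its subsequent stability arguments (Propositions \ref{prop_c3_1} and \ref{prop_tmp_prop3.3a}) actually operate in, and it illustrates why the tolerance $\alpha_0>0$ there cannot be dispensed with even for data pinned near $\pm 1$. Either proof is acceptable for the statement as written.
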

\begin{rem*}
More elaborate (covering more regimes of the parameters) construction of counter-examples is possible but we shall not dwell on this issue
here.
\end{rem*}
\begin{proof}
Take $\tau=\frac 12$, $N=2$, and 
\begin{align*}
u^0 = 1+\eta^0 = 1-2 \delta \cdot \cos^2(2\pi x),
\end{align*}
where $\delta>0$ will be taken sufficiently small.  Write $u^1= 1+\eta^1$.  Then
\begin{align*}
(1-\tau \nu^2 \partial_{xx}) \eta^1 = -\tau 
\Pi_N \bigl(  3 (\eta^0)^2 + (\eta^0)^3 \bigr).
\end{align*}
Thus 
\begin{align*}
\eta^1 &= - \tau \Pi_N ( 3 (\eta^0)^2) -\tau^2 \frac{\nu^2 \partial_{xx}} 
{1-\tau \nu^2 \partial_{xx} } \Pi_N ( 3 (\eta^0)^2) 
+ \frac {-\tau}{ 1-\tau \nu^2 \partial_{xx} }  \Pi_N ( (\eta^0)^3)  \notag \\
&= -3\tau \Pi_N ( (\eta^0)^2) + O(\nu^2) + O(\delta^3).
\end{align*}
Now note that $\eta^0= -\delta( 1+\cos 4\pi x)$, and
\begin{align*}
& (\eta^0)^2 = \delta^2 ( \frac 32 + 2\cos 4\pi x+ \frac 12 \cos 8\pi x), \\
& \Pi_N( (\eta^0)^2) = \delta^2( \frac 32 +2 \cos 4\pi x).
\end{align*}
Clearly then
\begin{align*}
\max  \bigl( -3\tau \Pi_N ( (\eta^0)^2)  \bigr) \ge \frac 3 4 \delta^2.
\end{align*}
By taking $\delta$ small and $\nu$ small, we then obtain that
\begin{align*}
\max \eta^1 >\frac 38 \delta^2.
\end{align*}

\end{proof}

\begin{prop} \label{prop_tmp_prop3.3a}
Assume $\|u^0\|_{\infty} \le 1+\alpha_0$ for some $0\le \alpha_0\le 1$. 
Let $0<\tau_{*}\le \tau \le \frac 12$.  If 
$N\ge N_0=N_0(\tau_*, \nu)$ (note that $N_0$ is independent of $\alpha_0$), then
for all $n\ge 1$,
\begin{align*}
\|u^n\|_{\infty} \le 1+ \theta^{n} \alpha_0+ \frac {\beta}{
\nu^2 N^2} \log (N+2) \cdot \frac {1-\theta^{n}}{1-\theta},
\end{align*}
where $\theta=1-2\tau$, and $\beta>0$ is an absolute constant.  
\end{prop}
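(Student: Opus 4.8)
The plan is to iterate the one‑step estimate behind Proposition \ref{prop_c3_1}, this time keeping track of the geometric contraction factor. Writing $u^n=1+\eta^n$ and using $\Pi_N 1=1$, $(\operatorname{Id}-\tau\nu^2\partial_{xx})1=1$, and the algebraic identity $f_\tau(1+\eta)-1=(1-2\tau)\eta-\tau(3+\eta)\eta^2$, subtracting $1$ from the scheme gives (since $\Pi_N\eta^n=\eta^n$)
\begin{align*}
(\operatorname{Id}-\tau\nu^2\partial_{xx})\eta^{n+1}=(1-2\tau)\eta^n-\tau(3+\eta^n)(\eta^n)^2+\tau\,\Pi_{>N}\bigl((3+\eta^n)(\eta^n)^2\bigr).
\end{align*}
Because $f_\tau$ is odd, $v^n:=-u^n$ solves the same scheme with $\|v^0\|_\infty=\|u^0\|_\infty$, so it suffices to bound $\max(u^{n+1}-1)$ from above; applying the result to $v^n$ then yields the matching lower bound on $u^{n+1}$.

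First I would record the scalar comparison sequence $b_0=\alpha_0$, $b_{n+1}=\theta b_n+C$ with $\theta=1-2\tau\in[0,1)$ and $C:=\beta\log(N+2)/(\nu^2N^2)$, whose closed form is exactly $b_n=\theta^n\alpha_0+C\,(1-\theta^n)/(1-\theta)$, i.e.\ the right‑hand side of the asserted bound minus $1$. Since $T(b)=\theta b+C$ is an increasing affine contraction with fixed point $b_\ast=C/(2\tau)$, one has $b_n-b_\ast=\theta^n(\alpha_0-b_\ast)$, hence $b_n\le\max\{\alpha_0,b_\ast\}$ for every $n$; using $\tau\ge\tau_\ast$ and $\alpha_0\le1$ this gives $b_n\le1$ for all $n$ once $N\ge N_0(\tau_\ast,\nu)$ is large enough that $C/(2\tau_\ast)\le1$ (this is the only place the lower bound $\tau\ge\tau_\ast$ and the constraint $\alpha_0\le1$ are genuinely used, and it is why $N_0$ can be taken independent of $\alpha_0$). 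I expect this a priori bookkeeping — guaranteeing $b_n\le1$, equivalently $3+\eta^n\ge0$, uniformly in $n$ — rather than any sharp estimate, to be the main thing to get right.

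Then I would prove $\|u^n\|_\infty\le1+b_n$ by induction. The case $n=0$ is the hypothesis $\|u^0\|_\infty\le1+\alpha_0=1+b_0$. For the inductive step, apply $(\operatorname{Id}-\tau\nu^2\partial_{xx})^{-1}$ to the displayed identity; on $\mathbb T$ this is convolution with the strictly positive, unit‑mass kernel $g_{\tau\nu^2}$ (the periodization of $\tfrac1{2\sqrt{\tau\nu^2}}e^{-|x|/\sqrt{\tau\nu^2}}$), so it preserves pointwise upper bounds. From $\|u^n\|_\infty\le1+b_n$ we get $\eta^n\in[-(2+b_n),b_n]$, and $b_n\le1$ forces $3+\eta^n\ge1-b_n\ge0$, so the non‑truncated part obeys $(1-2\tau)\eta^n-\tau(3+\eta^n)(\eta^n)^2\le\theta b_n$ pointwise and contributes at most $\theta b_n$ after convolution. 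For the truncated part, $g_{\tau\nu^2}*\Pi_{>N}\phi=K_{>N,\tau\nu^2}*\phi$, and $\|(3+\eta^n)(\eta^n)^2\|_\infty\le 4\cdot 9=36$ together with the $L^1$ bound $\|K_{>N,\tau\nu^2}\|_{L^1(\mathbb T)}\le c_2\log(N+2)/(1+\tau\nu^2N^2)$ from Section 2 (Theorem \ref{thm2.17_00}, or Proposition \ref{prop_KN_lu_0001}) gives an $L^\infty$ contribution at most $36\tau\cdot c_2\log(N+2)/(\tau\nu^2N^2)=36c_2\,\log(N+2)/(\nu^2N^2)$ — note the $\tau$ cancels. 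Choosing the absolute constant $\beta:=36c_2$ yields $\max(u^{n+1}-1)\le\theta b_n+C=b_{n+1}$; running the same argument for $-u^n$ gives $-u^{n+1}\le 1+b_{n+1}$, hence $\|u^{n+1}\|_\infty\le1+b_{n+1}$, which closes the induction and is exactly the claimed estimate.
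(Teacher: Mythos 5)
Your proof is correct and follows essentially the same route as the paper: write $u^n=1+\eta^n$, use positivity of the kernel of $(\operatorname{Id}-\tau\nu^2\partial_{xx})^{-1}$ and the sign of $-\tau(3+\eta^n)(\eta^n)^2$ to get the one-step recursion $\alpha_{n+1}\le\theta\alpha_n+C$ with $C=O(\nu^{-2}N^{-2}\log(N+2))$ (the $\tau$ cancelling against $\|K_{>N,\tau\nu^2}\|_{L^1}$), then iterate and repeat for $-u^n$. The only cosmetic difference is organizational: the paper first invokes Proposition \ref{prop_c3_1} with $\alpha_0=1$ to secure the uniform weak bound $\sup_n\|u^n\|_\infty\le 2$ before iterating, whereas you fold that into a single induction by checking a priori that the comparison sequence satisfies $b_n\le\max\{\alpha_0,\,C/(2\tau_*)\}\le 1$ — both devices serve the same purpose of keeping $3+\eta^n\ge 0$ and the cubic term uniformly bounded.
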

\begin{rem*}
It follows that
\begin{align*}
\limsup_{n\to\infty} \|u^n\|_{\infty} \le 1+ \frac {\beta}{2\tau
\nu^2 N^2} \log (N+2).
\end{align*}
In yet other words the true maximum $1$ is asymptotically preserved up to the spectral
truncation error! One should also note that the decay is exponential in $n$.
\end{rem*}
\begin{proof}
We adopt the same notation as in Proposition \ref{prop_c3_1} and the proof is a minor 
variation. First by using the proof of Proposition \ref{prop_c3_1} with $\alpha_0=1$, one
has the weak bound 
\begin{align} \label{weak_bdtmp1_1}
\sup_{n\ge 0} \|u^n\|_{\infty} \le 2.
\end{align}
We should emphasize here that $N_0$ can be taken to be independent of $u^0$ since we
assumed $\|u^0\|_{\infty} \le 2$. 

Define
\begin{align*}
\alpha_n = \max \eta_n.
\end{align*}
Then by repeating the derivation in Proposition \ref{prop_c3_1}, we obtain
\begin{align*}
\alpha_{n+1}
& \le (1-2\tau) \alpha_n  + \tau \cdot \frac{\operatorname{const}}{1+4\pi^2 \tau \nu^2 N^2}
\log(N+2)\cdot \operatorname{const} \notag \\
& \le (1-2\tau) \alpha_n + \frac {\beta} {\nu^2 N^2} \log (N+2),
\end{align*}
where $\beta>0$ is an absolute constant. Note that in deriving the above bound we have
used the fact that $\| \eta^n \|_{\infty} \lesssim 1$. This is where
\eqref{weak_bdtmp1_1} is needed.

Iterating in $n$ then gives the  upper bound. The proof for the lower bound for
$\tilde \eta_n= u^n+1$ is similar and thus we obtained the desired upper bound for 
$\|u^n\|_{\infty}$.
\end{proof}

\begin{prop}[Energy stability and $L^{\infty}$-stability for $\tau_*\le \tau
\le \frac 12$]  \label{prop3.5a}
Assume $u_0 \in H^1(\mathbb T)$ and 
$$
\| u^0\|_{L^{\infty}(\mathbb T)} \le 2.$$
Let $N_0$ be the same as in Proposition \ref{prop_tmp_prop3.3a}, and assume $N\ge N_0$
satisfies
\begin{align*}
\frac {\beta}{
\nu^2 N^2} \log (N+2) \cdot \frac {1}{2\tau_*} \le \frac 1{10},
\end{align*}
where the constant $\beta>0$ is the same as in Proposition \ref{prop_tmp_prop3.3a}. Let
\begin{align*}
n_0 = - \frac {\log (100)} { \log (1-2\tau_*)}.
\end{align*}

Then for any $n\ge n_0$, we have 
\begin{align*}
&\| u^n \|_{L^\infty(\mathbb T)} \le 1.11, \\
& E(u^{n+1}) \le E(u^n), 
\end{align*}
where 
$F(x)=\frac 14 (x^2-1)^2$, and
\begin{align*}
E(u) = \frac 12 \nu^2 \| \partial_x u \|_{L^2(\mathbb T)}^2 + \int_{\mathbb T}
F(u) dx.
\end{align*}
\end{prop}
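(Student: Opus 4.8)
The plan is to deduce both conclusions directly from the decay estimate of Proposition~\ref{prop_tmp_prop3.3a} together with the discrete energy estimate (the ``Energy estimate'' lemma proved above). No new machinery is needed; the work is just bookkeeping with the constants.

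First I would invoke Proposition~\ref{prop_tmp_prop3.3a} with $\alpha_0=1$, which is legitimate since $\|u^0\|_\infty\le 2=1+\alpha_0$ (and $0\le\alpha_0\le1$) and $N\ge N_0$. This gives, for all $n\ge1$,
\begin{align*}
\|u^n\|_\infty\le 1+\theta^n+\frac{\beta}{\nu^2N^2}\log(N+2)\cdot\frac{1-\theta^n}{1-\theta},\qquad \theta=1-2\tau.
\end{align*}
Then I would use that $\tau_*\le\tau\le\frac12$ implies $0\le\theta\le1-2\tau_*<1$ and $1-\theta=2\tau\ge2\tau_*$, so the spectral-error term is at most $\frac{\beta}{\nu^2N^2}\log(N+2)\cdot\frac1{2\tau_*}\le\frac1{10}$ by the standing hypothesis on $N$, while for $n\ge n_0$ the transient term obeys $\theta^n\le(1-2\tau_*)^n\le(1-2\tau_*)^{n_0}=e^{-\log100}=\frac1{100}$ by the choice $n_0=-\log(100)/\log(1-2\tau_*)$. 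Adding these, $\|u^n\|_\infty\le1+\frac1{100}+\frac1{10}=1.11$ for every $n\ge n_0$, which is the first assertion.

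Next, for energy decay I would fix $n\ge n_0$ and note that $n$ and $n+1$ are both $\ge n_0$, so $\max\{\|u^n\|_\infty^2,\|u^{n+1}\|_\infty^2\}\le(1.11)^2$. Plugging this into the energy estimate lemma yields
\begin{align*}
E(u^{n+1})-E(u^n)+\tfrac12\nu^2\|\partial_x(u^{n+1}-u^n)\|_2^2+\Bigl(\tfrac1\tau+\tfrac12-\tfrac32(1.11)^2\Bigr)\|u^{n+1}-u^n\|_2^2\le0.
\end{align*}
Since $\tau\le\frac12$ forces $\frac1\tau\ge2$, the coefficient in parentheses is at least $2+\frac12-\frac32(1.2321)>0$, hence the three correction terms on the left are nonnegative and $E(u^{n+1})\le E(u^n)$.

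There is no real obstacle in this argument: everything reduces to Proposition~\ref{prop_tmp_prop3.3a} plus arithmetic. The one point to watch is the elementary numerical inequality in the second step, namely that the absorbing coefficient $\frac1\tau+\frac12-\frac32(1.11)^2$ stays strictly positive for all $\tau\in(0,\tfrac12]$; this holds because $\frac32(1.11)^2=1.848\ldots<2.5\le\frac1\tau+\frac12$. (If a sharper terminal constant than $1.11$ were desired one would merely enlarge $n_0$ and the lower bound on $N_0$ accordingly, but that is not needed here.)
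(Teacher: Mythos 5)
Your proof is correct and follows essentially the same route as the paper: invoke Proposition \ref{prop_tmp_prop3.3a} with $\alpha_0=1$, bound the transient term by $1/100$ via the choice of $n_0$ and the spectral term by $1/10$ via the hypothesis on $N$, and then absorb the nonlinear term in the discrete energy estimate using $\frac1\tau+\frac12\ge\frac52>\frac32(1.11)^2$. The paper compresses the first step to ``easy to check''; you have simply supplied that arithmetic explicitly, and it checks out.
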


\begin{proof}
Easy to check that by using Proposition \ref{prop_tmp_prop3.3a}, we have for $n\ge n_0$,
\begin{align*}
&\| u^n \|_{L^\infty(\mathbb T)} \le 1.11.
\end{align*}
Now by a standard discrete energy estimate, we have 
\begin{align*}
E(u^{n+1}) - E(u^n) &+ \frac 12 \nu^2 \| \partial_x (u^{n+1}-u^n) \|_2^2 
+ (\frac 1 {\tau}+\frac 12) \|u^{n+1}-u^n\|_2^2 \notag \\
& \le \frac 32 \max\{ \|u^n\|_{\infty}^2, \; \|u^{n+1}\|_{\infty}^2
\}\cdot \| u^{n+1}-u^n\|_2^2.
\end{align*}
Note that $\frac 1 {\tau}+\frac 12 \ge \frac 52$, and for $n\ge n_0$,
\begin{align*}
\| u^n \|_{\infty}^2 \le 1.11^2 =1.2321< 5/3.
\end{align*}
It clearly follows that
\begin{align*}
E(u^{n+1}) \le E(u^n).
\end{align*}

\end{proof}

\begin{lem} \label{lem_polycubic3}
Let $\tau>0$ and consider the cubic polynomial $p(x)=(1+\tau) x -\tau x^3$.
Then the following hold:
\begin{itemize}

\item If $0<\tau\le \frac 12$, then $\max_{|x|\le 1} |p(x)| =1$. Actually for any
$1\le \alpha\le \sqrt{1+\frac 2 {\tau}}$, we have
\begin{align*}
\max_{|x|\le \alpha} |p(x)|  \le \alpha.
\end{align*}
Furthermore for any
$1<\alpha<\sqrt{1+\frac 2 {\tau}}$, we have the strict inequality
\begin{align*}
\max_{|x|\le \alpha} |p(x)| <\alpha.
\end{align*}

\item If $\frac 12 \le \tau \le 2$, then for any
$\frac {(1+\tau)^{\frac 32}} {\sqrt{3\tau}}
\cdot \frac 23 \le \alpha \le \sqrt{\frac {2+\tau}{\tau}} $, we have
\begin{align*}
\max_{|x|\le \alpha} |p(x)| \le \alpha.
\end{align*}
Furthermore if $\frac {(1+\tau)^{\frac 32}} {\sqrt{3\tau}}
\cdot \frac 23 <\alpha < \sqrt{\frac {2+\tau}{\tau}} $, then we have the strict inequality
\begin{align*}
\max_{|x|\le \alpha} |p(x)| < \alpha.
\end{align*}

\item For $\tau>2$, define $x_0 = \sqrt{ \frac {1+\tau} {3\tau}} \in (0,1)$
 and $x_{n+1} = p(x_n)$. Then $|x_n|\to \infty$
as $n\to \infty$. 
\end{itemize}

\end{lem}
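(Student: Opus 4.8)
The plan is to use that $p$ is odd and unimodal on $[0,\infty)$, so that everything reduces to comparing two quantities: the global maximum of $p$ and the value $|p(\alpha)|$ at the endpoint. First I would record the elementary facts. We have $p'(x)=(1+\tau)-3\tau x^2$ with positive root $x_*=\sqrt{(1+\tau)/(3\tau)}$, so $p$ increases on $[0,x_*]$ and decreases on $[x_*,\infty)$; moreover $p(x_*)=\tfrac23(1+\tau)x_*=\tfrac{2(1+\tau)^{3/2}}{3\sqrt{3\tau}}=:M$, which is precisely the left endpoint of the $\alpha$-interval in the second bullet, and $p$ vanishes at $\sqrt{1+1/\tau}$ and is negative beyond it. Since $p$ is odd, $\max_{|x|\le\alpha}|p(x)|=\max_{0\le x\le\alpha}|p(x)|$, and a one-line monotonicity argument gives: for $\alpha\le x_*$ this equals $p(\alpha)$, while for $\alpha\ge x_*$ it equals $\max\{M,|p(\alpha)|\}$. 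Three ``master'' inequalities, all polynomial, then do the work: (i) $p(\alpha)\le\alpha\iff\alpha\ge1$, because $\alpha-p(\alpha)=\tau\alpha(\alpha^2-1)$; (ii) $-p(\alpha)\le\alpha\iff\alpha^2\le(2+\tau)/\tau$, because $p(\alpha)+\alpha=\alpha((2+\tau)-\tau\alpha^2)$; (iii) for $\alpha\ge x_*$, $M\le\alpha$ reduces to $M\le x_*$, i.e.\ $\tfrac23(1+\tau)\le1$, i.e.\ $\tau\le\tfrac12$, and for $\alpha\ge M$ it is automatic.

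Next I would dispatch the three regimes. For $0<\tau\le\tfrac12$ one checks $x_*\ge1$: for $1\le\alpha\le x_*$ the maximum is $p(\alpha)\le\alpha$ by (i); for $x_*\le\alpha\le\sqrt{1+2/\tau}$ it is $\max\{M,|p(\alpha)|\}$, where $M\le x_*\le\alpha$ by (iii) and $|p(\alpha)|\le\alpha$ by (i) together with (ii). For $\tfrac12\le\tau\le2$ one shows $x_*\le M$ (equivalent to $\tau\ge\tfrac12$) and $1\le M\le\sqrt{(2+\tau)/\tau}$; the first of these is $4(1+\tau)^3\ge27\tau$, true because $h(\tau)=4(1+\tau)^3-27\tau$ is convex with $h(\tfrac12)=h'(\tfrac12)=0$, and the second is $4(1+\tau)^3\le27(2+\tau)$, true because $g(\tau)=27(2+\tau)-4(1+\tau)^3$ is decreasing on $[\tfrac12,\infty)$ with $g(2)=0$. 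Hence $\alpha\ge M\ge x_*$, the maximum is $\max\{M,|p(\alpha)|\}$, $M\le\alpha$ trivially, $p(\alpha)\le\alpha$ by (i) since $\alpha\ge M\ge1$, and $-p(\alpha)\le\alpha$ by (ii). For $\tau>2$ the same $g$ gives $g(\tau)<0$ strictly, i.e.\ $M>\beta:=\sqrt{(2+\tau)/\tau}$; since for $|x|\ge\beta$ one has $|p(x)|=|x|(\tau|x|^2-(1+\tau))\ge\lambda|x|$ with $\lambda:=\tau M^2-(1+\tau)>1$ (again equivalent to $M>\beta$), and $x_1=p(x_0)=M>\beta$, an induction gives $|x_n|\ge M$ and $|x_{n+1}|\ge\lambda|x_n|$ for all $n\ge1$, so $|x_n|\ge\lambda^{n-1}M\to\infty$.

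I do not expect a genuine obstacle: the substance is the clean unimodality reduction plus the three polynomial inequalities $h\ge0$ and $g\ge0$ on $[\tfrac12,2]$ and $g<0$ on $(2,\infty)$, each handled by a single derivative computation. The one point requiring care is the bookkeeping of strict versus non-strict inequalities: I would verify that for $\alpha$ strictly inside the stated interval at least one of (i)--(iii) is a strict inequality and that each of them degenerates only at the endpoints, so that the strict statements $\max_{|x|\le\alpha}|p(x)|<\alpha$ follow; I would also check the overlap case $\tau=\tfrac12$ (where $M=1=x_*$, so the two bullets agree) and the degenerate case $\tau=2$ (where the admissible $\alpha$-interval in the second bullet collapses to the single point $\alpha=\sqrt2=M$).
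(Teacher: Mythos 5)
Your proposal is correct and follows essentially the same route as the paper's (very terse) proof: identify the critical point $x_c=\sqrt{(1+\tau)/(3\tau)}$ with value $M=p(x_c)=\tfrac{2(1+\tau)^{3/2}}{3\sqrt{3\tau}}$, solve $p(x)=-x$ to get $\sqrt{(2+\tau)/\tau}$, compare $M$ with that root via the sign of $27(2+\tau)-4(1+\tau)^3$, and for $\tau>2$ iterate from $x_1=M>\sqrt{(2+\tau)/\tau}$. You simply supply in full the unimodality reduction and the polynomial inequalities that the paper dismisses as "straightforward," and your strict/non-strict bookkeeping is consistent with the statement.
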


\begin{rem}
For $\tau=\frac 12$, $\frac {(1+\tau)^{\frac 32}} {\sqrt{3\tau}}\cdot \frac 23=1$, and 
for $\tau=2$, $\frac {(1+\tau)^{\frac 32}} {\sqrt{3\tau}}\cdot \frac 23=\sqrt 2$.

\end{rem}

\begin{proof}
Straightforward. For the first two results, note that a critical point of $p$ is $x_c= \sqrt{ \frac {1+\tau}{ 3\tau} }$ such that
\begin{align*}
p(x_c) = \frac { (1+\tau)^{\frac 32}} {\sqrt{3\tau}} \cdot \frac 23.
\end{align*}
Solving $p(x)=-x$ gives $x_1=\sqrt{\frac {2+\tau}{\tau}}$. It is easy to verify that if
$\tau <2$, then $p(x_c) < -p(x_1)$ and when $\tau=2$ we have $p(x_c)=-p(x_1)$. 

For the last result, one notes that $x_1= p(x_0)=\frac {(1+\tau)^{\frac 32}} {\sqrt{3\tau}}
\cdot \frac 23 > \sqrt{\frac {2+\tau}{\tau}}$. It is then easy to verify that
$x_n$ oscillates to infinity. 
\end{proof}

Loosely speaking, with the help of Lemma \ref{lem_polycubic3}, one can reduce
the problem of estimating $\|u^{n+1}\|_{\infty}$ in terms of $\|u^n\|_{\infty}$
(when $\tau\ge \tau_*$ is not close to zero)
 to
the recurrent relation
\begin{align*}
\alpha_{n+1}= \max_{|x|\le \alpha_n} |p(x)| + \eta,
\end{align*}
where $\eta>0$ denotes the spectral error.  The following Lemma plays a key role in the
proof of maximum principle later. From a practical point of view, we do not state the results
in its most general form. For example, below $\alpha_0$ corresponds to the upper bound
of  $\|u^0\|_{\infty}$
in the original numerical scheme, and it is convenient to assume $\alpha_0$ to be around $1$
in practice. 

\begin{lem}[Prototype iterative system for the maximum principle] \label{lem_cubic_iteration}
Let $\tau>0$ and  $p(x)=(1+\tau) x -\tau x^3$. Consider the recurrent relation
 \begin{align*}
\alpha_{n+1}= \max_{|x|\le \alpha_n} |p(x)| + \eta,\quad n\ge 0, 
\end{align*}
where $\eta>0$. 

\begin{enumerate}
\item Case $0<\tau \le \frac 12$.  Let $\alpha_0=2$.  There exists an absolute constant $\eta_0>0$ sufficiently small, such
that for all $0 \le \eta\le \eta_0$, we have  $1\le \alpha_n \le 2$ for all $n$.  Furthermore
for all $n\ge 1$,
\begin{align} \label{eq_cubic_iteration_01}
1+\eta \le \alpha_n \le 1+ \theta^n +\frac {1-\theta^n}{1-\theta} \eta,
\end{align}
where $\theta =1-2\tau$.

\item Case $\frac 12 \le \tau \le 2-\epsilon_0$ where $0<\epsilon_0\le 1$. 
Let $\alpha_0 = \frac 12 (\frac {(1+\tau)^{\frac 32}} {\sqrt{3\tau}}\cdot \frac 23+
\sqrt{\frac {2+\tau}{\tau}})$. 
 Then there
exists a constant $\eta_0>0$ depending only on $\epsilon_0$, such that if 
$0\le \eta \le \eta_0$,  then for all $n\ge 1$, we have 
\begin{align*}
\frac {(1+\tau)^{\frac 32}} {\sqrt{3\tau}}\cdot \frac 23+\eta
\le \alpha_n \le \alpha_0.
\end{align*}
\end{enumerate}

\end{lem}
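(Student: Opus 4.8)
The plan is to analyze the scalar recursion through the non-decreasing function $g(\alpha):=\max_{|x|\le\alpha}|p(x)|$, so that it reads $\alpha_{n+1}=g(\alpha_n)+\eta$, and to feed into it the quantitative estimates of Lemma~\ref{lem_polycubic3}. First I would record the structure of $g$ in terms of the positive critical point $x_c=\sqrt{(1+\tau)/(3\tau)}$ of $p$ (with $p(x_c)=\tfrac23(1+\tau)^{3/2}/\sqrt{3\tau}$ the global maximum of $p$ on $[0,\infty)$) and the positive root $x_1=\sqrt{(2+\tau)/\tau}$ of $p(x)=-x$: by oddness and monotonicity of $p$ one has $g(1)=|p(1)|=1$, $g(\alpha)=p(\alpha)$ for $1\le\alpha\le x_c$, and $g(\alpha)=\max\{p(x_c),\,-p(\alpha)\}$ for $\alpha\ge x_c$; moreover $g$ is non-decreasing. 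Because of this monotonicity, controlling $\alpha_{n+1}$ in terms of $\alpha_n$ reduces to (i) exhibiting an interval that $g(\cdot)+\eta$ maps into itself, and (ii) a linear comparison making the recursion explicitly solvable.

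\emph{Case $0<\tau\le\frac12$.} The decisive estimate is
\[
g(\alpha)\ \le\ 1+(1-2\tau)(\alpha-1),\qquad 1\le\alpha\le 2 .
\]
I would prove it by splitting into the three ranges above. Where $g(\alpha)=p(\alpha)$ it is the algebraic identity $1+(1-2\tau)(\alpha-1)-p(\alpha)=\tau(\alpha-1)^2(\alpha+2)\ge 0$. Where $g(\alpha)=-p(\alpha)$, the inequality is equivalent to $r(\alpha):=\tau(\alpha^3+\alpha-2)-2\alpha\le 0$, which follows from convexity of $r$ on $[1,2]$ together with $r(1)=-2<0$ and $r(2)=8\tau-4\le0$ (here using $\tau\le\frac12$). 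Where $g(\alpha)=p(x_c)$ is constant it follows from the first case evaluated at $\alpha=x_c$ and monotonicity of the affine right-hand side. Granting this, set $a_n=\alpha_n-1$: as long as the orbit stays in $[1,2]$ the estimate gives $a_{n+1}\le\theta a_n+\eta$ with $\theta=1-2\tau$ and $a_0=1$, whose solution is exactly the upper bound in \eqref{eq_cubic_iteration_01}, while $a_n\ge\eta$ is immediate from $g\ge g(1)=1$. It then remains to choose $\eta_0$ small enough that $g(\cdot)+\eta$ keeps $[1,2]$ invariant: the lower endpoint is automatic, and at the upper endpoint the displayed estimate with $\alpha_n\le 2$ yields $\alpha_{n+1}\le 2-2\tau+\eta$, so it suffices that $\eta_0$ be taken small enough that this stays $\le 2$ (equivalently, that the attracting fixed point $1+\eta/(2\tau)+O(\eta^2)$ of $g(\cdot)+\eta$ remains below $2$). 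An induction on $n$ starting from $\alpha_0=2$ then delivers $1\le\alpha_n\le2$ for all $n$ and \eqref{eq_cubic_iteration_01} for $n\ge1$.

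\emph{Case $\frac12\le\tau\le2-\epsilon_0$.} Write $A=p(x_c)=\tfrac23(1+\tau)^{3/2}/\sqrt{3\tau}$ and $B=x_1=\sqrt{(2+\tau)/\tau}$, so $\alpha_0=\tfrac12(A+B)$; since $\tau\ge\frac12$ forces $A=\tfrac23(1+\tau)x_c\ge x_c$, we have $g(\alpha)=\max\{A,-p(\alpha)\}\ge A$ for every $\alpha\ge A$, and $A\ge p(1)=1$. Hence once $\alpha_n\ge A$ one gets $\alpha_{n+1}\ge A+\eta$, which is the lower bound. For the upper bound, by monotonicity it is enough to show $g(\alpha_0)\le\alpha_0-\eta_0$ for some $\eta_0=\eta_0(\epsilon_0)>0$. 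Using $p(B)=-B$ to factor $\alpha+p(\alpha)=(2+\tau)\alpha-\tau\alpha^3=\tau\alpha(B-\alpha)(B+\alpha)$, and $B-\alpha_0=\tfrac12(B-A)$, one computes
\[
\alpha_0-g(\alpha_0)=\min\Big\{\alpha_0-A,\ \alpha_0+p(\alpha_0)\Big\}
=\min\Big\{\tfrac{B-A}{2},\ \tau\alpha_0\cdot\tfrac{B-A}{2}\cdot(B+\alpha_0)\Big\}=\tfrac{B-A}{2},
\]
the last equality because $\tau\ge\frac12$, $\alpha_0\ge A\ge1$ and $B+\alpha_0\ge2$ make the second term at least the first. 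Now $B-A>0$ for every $\tau<2$ (as established in the proof of Lemma~\ref{lem_polycubic3}) and $\tau\mapsto B(\tau)-A(\tau)$ is continuous, so its infimum over the compact interval $[\tfrac12,\,2-\epsilon_0]$ is a positive constant; taking $\eta_0=\tfrac12\inf_{[\frac12,2-\epsilon_0]}(B-A)$ and $\eta\le\eta_0$, the induction $A\le\alpha_n\le\alpha_0\Rightarrow A+\eta\le\alpha_{n+1}=g(\alpha_n)+\eta\le g(\alpha_0)+\eta\le\alpha_0$ closes from $\alpha_0$, giving the claim for $n\ge1$.

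\emph{Main obstacle.} The algebraic factorizations and the monotonicity of $g$ are routine; the genuinely delicate point is securing the invariant interval with a suitably small $\eta_0$. In Case~2 this is precisely the statement that $\alpha_0-g(\alpha_0)=\tfrac12(B-A)$ stays bounded below, which degenerates as $\tau\uparrow2$ (where $A=B$) — hence the hypothesis $\tau\le2-\epsilon_0$ and the dependence $\eta_0=\eta_0(\epsilon_0)$. In Case~1 the analogous issue sits at the endpoint $\alpha=2$: the usable gap $2-g(2)$ behaves like $6\tau$ for small $\tau$ (and like an absolute constant once $\tau$ is away from $0$), so this is the one place where the smallness of $\eta_0$ has to be weighed against $\tau$; it is handled by the slope-$(1-2\tau)$ comparison above combined with $\alpha_n\le2$, which forces $\alpha_{n+1}\le2-2\tau+\eta$. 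Everything else is bookkeeping on the geometric series.
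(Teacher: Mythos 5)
Your argument follows the paper's route almost exactly. In Case 1 the paper reduces everything to the same affine majorant $\max_{|x|\le\alpha}|p(x)|\le 1+(1-2\tau)(\alpha-1)$ on $[1,2]$, proved by the same three-range split (it uses the mean value theorem with $p'(\xi)\le p'(1)=1-2\tau$ where you use the factorization $\tau(\alpha-1)^2(\alpha+2)$, and it handles the range where $g=-p$ by locating the point $x_*=2\sqrt{(1+\tau)/(3\tau)}\ge 2$ at which $|p|$ re-attains the value $p(x_c)$, where you instead use convexity of $r$); both variants are fine. In Case 2 the paper merely asserts that the trapping in $[1,\alpha_0]$ "is easy to check", so your explicit computation $\alpha_0-g(\alpha_0)=\tfrac12(B-A)$ and the choice $\eta_0=\tfrac12\inf_{[1/2,\,2-\epsilon_0]}(B-A)$ genuinely fill in a step the paper leaves out.

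The one real problem is the invariance of $[1,2]$ in Case 1, and you have put your finger on it without resolving it. For small $\tau$ one has $x_c>2$ and $p>0$ on $[0,2]$, so $g(2)=p(2)=2-6\tau$ and $\alpha_1=2-6\tau+\eta>2$ as soon as $\eta>6\tau$; worse, the fixed point of $\alpha\mapsto p(\alpha)+\eta$ solves $\tau\alpha(\alpha^2-1)=\eta$ and is $\approx(\eta/\tau)^{1/3}$ when $\eta\gg\tau$, so for, say, $\tau=10^{-6}$ and $\eta=10^{-2}$ the orbit increases to $\approx 21.5$ and the conclusion $\alpha_n\le 2$ is simply false. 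Hence "it suffices that $\eta_0$ be taken small enough" cannot yield an absolute constant: your own closing inequality $\alpha_{n+1}\le 2-2\tau+\eta\le 2$ requires $\eta\le 2\tau$, i.e. an $\eta_0$ depending on $\tau$. This defect is inherited from the statement itself (and from the paper's proof, which disposes of the trapping with "a simple induction argument"); the lemma holds only under a constraint of the form $\eta\le c\,\tau$, which is what is actually available in the applications, where $\eta$ carries an explicit factor of $\tau$ (see the proof of Proposition \ref{prop_tmp_prop3.3a}) and the regime $0<\tau\ll 1$ is treated by a separate argument. You should either restate the hypothesis as $0\le\eta\le\eta_0\,\tau$ with $\eta_0$ absolute, or state explicitly that your $\eta_0$ depends on $\tau$; as written, the "main obstacle" paragraph concedes the point that the proof of statement (1) does not close.
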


\begin{rem*}
For $\tau=2$, even if $\alpha_0=1$, we have $\alpha_n \to \infty$ as $n\to \infty$.
Also note that in Case (2) we do not discuss the decay estimate at all since in practice
we are only interested in the regime where the maximum is around $1$.

\end{rem*}

\begin{proof}
(1) Observe that for $\eta>0$ sufficiently small the values of $\alpha_n$ are trapped in
$[1,2]$ by a simple induction argument. By another induction we obtain
$\alpha_n \ge 1+\eta$ for all $n\ge 1$.

 For the decay estimate, we discuss three cases.

Case 1: $1\le \alpha_n \le \sqrt{\frac {1+\tau}{3\tau}}$. Then
$\max_{|x|\le \alpha_n}|p(x)|= p(\alpha_n)$, and 
\begin{align*}
\alpha_{n+1} &= 1+p(\alpha_n)-1+\eta  \notag \\
& = 1+ p^{\prime}(\xi_n) (\alpha_n-1) + \eta 
\quad \xi_n \in (1,\alpha_n), \quad p^{\prime}(\xi_n)
=1+\tau-3\tau (\xi_n)^2, \\
&\le 1+ (1-2\tau) (\alpha_n-1)+\eta.
\end{align*}

Case 2: $\sqrt{\frac {1+\tau}{3\tau}}
<\alpha_n \le \sqrt{\frac {1+\tau} {\tau}}$. Then $\max_{|x|\le \alpha_n}
|p(x)| = p(\sqrt{\frac {1+\tau}{3\tau}})$, and
\begin{align*}
\alpha_{n+1}& = 1+ p(\sqrt{\frac {1+\tau}{3\tau}}) -1 +\eta \notag \\
& \le 1+(1-2\tau) ( \sqrt{\frac {1+\tau}{3\tau}} -1) + \eta \notag \\
& \le 1+(1-2\tau) (\alpha_n-1)+\eta.
\end{align*}

Case 3: $\sqrt{\frac {1+\tau}{\tau}} \le \alpha_n \le 2$. Observe that
$\max_{|x|\le \alpha_n}|p(x)| = \max\{ p( \sqrt{\frac {1+\tau}{3\tau}}), 
|p(\alpha_n)| \}$.  Let $\tilde p(x) = |p(x)|$. Note that in the interval
$[\sqrt{\frac {1+\tau}{\tau}}, \sqrt{\frac {2+\tau} {\tau}}]$, the function
$\tilde p(x)$ monotonically increases from $0$ to $\sqrt{\frac {2+\tau}{\tau}}$. 
It is easy to check that $x_*=2\sqrt{\frac {1+\tau}{3\tau}}$ is the unique point
in this interval such that
\begin{align*}
\tilde p(x_*) =p(\sqrt{\frac {1+\tau}{3\tau}}).
\end{align*}
Note that $x_* \ge 2$ since $0<\tau\le \frac 12$.  Thus $|p(\alpha_n)|
\le p(\sqrt{\frac {1+\tau}{3\tau}})$ and we obtain
\begin{align*}
\alpha_{n+1}& = 1+ p(\sqrt{\frac {1+\tau}{3\tau}}) -1 +\eta \notag \\
& \le 1+(1-2\tau) (\alpha_n-1)+\eta.
\end{align*}
Concluding from all three cases, an iteration in $n$ then gives 
\eqref{eq_cubic_iteration_01}.

(2) First it is easy to check that  for all $n\ge 0$ the values of $\alpha_n$ are 
trapped in $[1, \alpha_0]$. Note that $\max_{|x| \le 1}  |p(x)|
=p(\sqrt{\frac {1+\tau}{3\tau}}) = \frac {(1+\tau)^{\frac 32}} {\sqrt{3\tau}}\cdot \frac 23$.
Clearly then the lower bound for $\alpha_n$, $n\ge 1$ holds.

\end{proof}

A direct corollary of Lemma \ref{lem_polycubic3} is the following.
\begin{cor}[No stability for $\tau>2$] \label{cor3.9a}
Let $\tau>2$. Define $u_0(x)\equiv \sqrt{ \frac {1+\tau} {3\tau}} \in (0,1)$.
Then for any $N\ge 1$, we have $\|u^n\|_{\infty} \to \infty$ as $n\to \infty$.
\end{cor}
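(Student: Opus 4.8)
The plan is to exploit the fact that a \emph{constant} initial datum collapses the entire spectral scheme to the scalar one-step recursion $x_{n+1}=p(x_n)$ already analyzed in Lemma \ref{lem_polycubic3}, so that the corollary follows directly from its third bullet with essentially no further work.

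First I would record that for the constant function $u_0\equiv c$ with $c:=\sqrt{\tfrac{1+\tau}{3\tau}}$ one has $u^0=\Pi_N u_0=u_0\equiv c$ for every $N\ge1$, since $\Pi_N$ always retains the zeroth Fourier mode. Next I would prove by induction on $n$ that each iterate is the constant function $u^n\equiv x_n$, where $x_0=c$ and $x_{n+1}=p(x_n)$ with $p(x)=(1+\tau)x-\tau x^3$. Indeed, assuming $u^n\equiv x_n$, the explicit term $\Pi_N\big((1+\tau)u^n-\tau(u^n)^3\big)$ equals the constant $p(x_n)$, and the update
\begin{align*}
(\operatorname{Id}-\tau\nu^2\partial_{xx})\,u^{n+1}=p(x_n)
\end{align*}
has the constant function $p(x_n)$ as its unique solution, because $\operatorname{Id}-\tau\nu^2\partial_{xx}$ has Fourier symbol $1+\tau\nu^2(2\pi k)^2\ge1$ and is therefore injective on $L^2(\mathbb T)$ while fixing constants. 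Hence $u^{n+1}\equiv p(x_n)=x_{n+1}$, closing the induction. Equivalently, this is just the statement that $T_{N,\tau\nu^2}$ acts as the identity on the line of constant functions and $f_\tau(c)=p(c)$.

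With this reduction in hand, the conclusion is immediate: by the third bullet of Lemma \ref{lem_polycubic3}, for $\tau>2$ and $x_0=\sqrt{\tfrac{1+\tau}{3\tau}}$ one has $|x_n|\to\infty$ as $n\to\infty$, whence $\|u^n\|_{L^\infty(\mathbb T)}=|x_n|\to\infty$. One should also note that $c\in(0,1)$ here, since $\tfrac{1+\tau}{3\tau}<\tfrac12$ for $\tau>2$.

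There is essentially no analytic obstacle in this argument; the only point requiring (elementary) verification is the claim that the implicit--explicit update preserves the one-dimensional subspace of constant functions, which is immediate from $\partial_{xx}(\mathrm{const})=0$ and $\Pi_N(\mathrm{const})=\mathrm{const}$. All the substantive content --- that the scalar orbit escapes to infinity, with the sign pattern of $p$ driving the divergence --- is already encapsulated in Lemma \ref{lem_polycubic3}, whose proof observes that $x_1=p(x_0)=\tfrac23\,\tfrac{(1+\tau)^{3/2}}{\sqrt{3\tau}}>\sqrt{\tfrac{2+\tau}{\tau}}$, so that the iterates oscillate with growing amplitude.
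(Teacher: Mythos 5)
Your proof is correct and is exactly the intended argument: the paper presents this as a direct corollary of Lemma \ref{lem_polycubic3} with no written proof, the implicit reduction being precisely your observation that the scheme preserves constant functions (since $\Pi_N$ and $(\operatorname{Id}-\tau\nu^2\partial_{xx})^{-1}$ both fix constants), so the iteration collapses to the scalar recursion $x_{n+1}=p(x_n)$ handled by the third bullet of that lemma.
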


Observe that for $\tau=2$ we have $\frac {(1+\tau)^{\frac 32}} {\sqrt{3\tau}}
\cdot \frac 23 = \sqrt{\frac {2+\tau}{\tau}}=\sqrt 2 $. Interestingly we shall show
that for $\tau=2$, the maximum
 $\alpha=\sqrt 2$ cannot be preserved even for moderately large $N$. 

\begin{prop}[No maximum principle for $\tau=2$ and $N$ not large] \label{prop_tau2_tmp001}
Let $\tau=2$. Let $0<\nu\ll 1$. For any
$\frac 1{2\sqrt 3  \pi \nu} \le N \lesssim \frac 1 {\sqrt{\nu}} e^{\frac 1{12\nu}}$,
we can find $u_0$ with $\|u_0\|_{\infty} \le \sqrt 2$, but
\begin{align*}
\|u^1\|_{\infty} > \sqrt 2.
\end{align*}
\end{prop}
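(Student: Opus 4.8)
The plan is to show that the one‑step map acts like the operator $T_{N,2\nu^2}=(\operatorname{Id}-2\nu^2\partial_{xx})^{-1}\Pi_N$ on a near‑extremizer of its $L^\infty\!\to L^\infty$ norm. Recall that $u^1=T_{N,2\nu^2}\bigl(f_2(u^0)\bigr)$ with $f_2(z)=3z-2z^3$ and $u^0=\Pi_N u_0$, and that $T_{N,2\nu^2}$ fixes constants. By Proposition \ref{2015prop1} in the case $\tau=2$, $\alpha=\sqrt2$ (where $\tfrac{(1+\tau)^{3/2}}{\sqrt{3\tau}}\cdot\tfrac23=\sqrt{\tfrac{2+\tau}{\tau}}=\sqrt2$), one has $\max_{|z|\le\sqrt2}|f_2(z)|=\sqrt2$, the value $\pm\sqrt2$ being attained both at the \emph{critical points} $z=\pm\tfrac1{\sqrt2}$ (where $f_2'=0$, since $f_2'(z)=3-6z^2$) and at the endpoints $z=\mp\sqrt2$. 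Hence every admissible datum, $\|u^0\|_\infty\le\sqrt2$, yields $\|f_2(u^0)\|_\infty\le\sqrt2$, and the issue is whether $\|T_{N,2\nu^2}f_2(u^0)\|_\infty$ can exceed $\sqrt2$.

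I then invoke the sharpness part of Proposition \ref{prop_tmp211} applied with $\nu$ replaced by $\sqrt2\,\nu$ (i.e.\ with parameter $\beta=2\nu^2$). This is legitimate: the hypotheses $\tfrac1{2\sqrt3\pi\nu}\le N\lesssim\tfrac1{\sqrt\nu}e^{1/(12\nu)}$ imply $\tfrac1{2\sqrt6\pi\nu}\le N\lesssim(2\nu^2)^{-1/4}e^{1/(6\sqrt2\,\nu)}$, and since $\tfrac1{12}<\tfrac1{6\sqrt2}$ the margin between $e^{1/(12\nu)}$ and $e^{1/(6\sqrt2\,\nu)}$ is an exponentially large factor that is reserved for later error control. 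The proposition then produces a point $x_0$ and a set $E\subset\mathbb T$ of measure $O(1)$ on which $K_{N,2\nu^2}<-\tfrac{0.3}{1+8\pi^2\nu^2(N+1)^2}$, with the finest features of the sign pattern of $K_{N,2\nu^2}$ living on intervals of length $\sim\varepsilon_*/N$ for an absolute constant $\varepsilon_*$; in particular $\|K_{N,2\nu^2}\|_{L^1(\mathbb T)}\ge 1+\tfrac{\gamma_1}{1+8\pi^2\nu^2(N+1)^2}>1$ (note, per the remark after Proposition \ref{prop_tmp211}, the gain here is $\sim(\nu^2N^2)^{-1}$ with no logarithm).

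The construction: take $u^0$ to be a band‑limited (degree $\le N$) \emph{Fejér‑type mollification} of $\tfrac1{\sqrt2}\,\operatorname{sign}\bigl(K_{N,2\nu^2}(x_0-\cdot)\bigr)$; convolving with a nonnegative unit‑mass kernel of degree $\le N$ gives $\|u_0\|_\infty=\|u^0\|_\infty\le\tfrac1{\sqrt2}<\sqrt2$, so $u^0$ is admissible. Because $\pm\tfrac1{\sqrt2}$ are critical points of $f_2$ with $f_2(\pm\tfrac1{\sqrt2})=\pm\sqrt2$, a Taylor expansion $f_2(\pm\tfrac1{\sqrt2}+w)=\pm\sqrt2-3\sqrt2\,w^2\mp2\,w^3$ shows $f_2(u^0)=\sqrt2\,\operatorname{sign}\bigl(K_{N,2\nu^2}(x_0-\cdot)\bigr)+r$, where the remainder $r$ is \emph{quadratic} in the mollification error and concentrated near the $O(N)$ sign changes of $K_{N,2\nu^2}(x_0-\cdot)$ (which cluster far from the diagonal $y=x_0$, where the kernel concentrates with amplitude $\sim\nu^{-1}$). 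Then
\[
u^1(x_0)=\bigl(K_{N,2\nu^2}*f_2(u^0)\bigr)(x_0)=\sqrt2\,\|K_{N,2\nu^2}\|_{L^1}+\bigl(K_{N,2\nu^2}*r\bigr)(x_0),
\]
and since $K_{N,2\nu^2}(x_0-y)\,r(y)\le0$ pointwise (on $\{\operatorname{sign}=+1\}$, $r\approx-3\sqrt2\,w^2<0$; on $\{\operatorname{sign}=-1\}$, $r\approx+3\sqrt2\,w^2>0$), the conclusion $\|u^1\|_\infty\ge u^1(x_0)>\sqrt2$ reduces to the inequality $|(K_{N,2\nu^2}*r)(x_0)|<\sqrt2\bigl(\|K_{N,2\nu^2}\|_{L^1}-1\bigr)$.

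The main obstacle is exactly this last estimate, and it is where the slack in the hypotheses is spent. The available gain is only $\sim(\nu^2N^2)^{-1}$ and, more importantly, it is \emph{linear} in the small constant $\varepsilon_*$ controlling the negative set, whereas the error contributed by the mollification transitions — where two sign changes sit a distance $\sim\varepsilon_*/N$ apart, so the mollified sign only dips by $O(\varepsilon_*)$ — is controlled by $\int|K_{N,2\nu^2}(x_0-\cdot)|\,w^2$ and comes out \emph{quadratic} in $\varepsilon_*$ (the factor $|K_{N,2\nu^2}|\sim(\nu^2N^2)^{-1}$ because the transitions avoid the diagonal, and $\int w^2\sim\varepsilon_*^2$ over the transitions); hence the smallness of $\varepsilon_*$ makes the gain dominate. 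What remains is the exponentially small periodization tail $K_{N,2\nu^2}=g_{2\nu^2}*D_N$ away from the diagonal (size $\sim\nu^{-1}e^{-c/\nu}$ with $c>\tfrac1{12}$), which is absorbed precisely by the constraint $N\lesssim\nu^{-1/2}e^{1/(12\nu)}$, together with the contribution of $r$'s quadratic tail near $y=x_0$, which is $O(N^{-2})\ll(\nu^2N^2)^{-1}$ since $0<\nu\ll1$. (A technically lighter route is $u^0=\tfrac1{\sqrt2}+\lambda v$ with a fixed band‑limited $v$ and $\lambda\to0^+$, giving $u^1=\sqrt2-3\sqrt2\,\lambda^2\,K_{N,2\nu^2}*v^2-2\lambda^3\,K_{N,2\nu^2}*v^3$; then the cubic term is negligible and one only needs one band‑limited $v$ — necessarily using modes comparable to $N$, so that $\Pi_N(v^2)$ can change sign — with $(K_{N,2\nu^2}*v^2)(x_0)<0$, but exhibiting and verifying such a $v$ is itself the crux of that approach.)
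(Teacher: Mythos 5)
Your reduction $u^1=K_{N,2\nu^2}*f_2(u^0)$ with $f_2(z)=3z-2z^3$, the transfer of Proposition \ref{prop_tmp211} to $\beta=2\nu^2$, and the identity $u^1(x_0)=\sqrt2\,\|K_{N,2\nu^2}\|_{L^1}+(K_{N,2\nu^2}*r)(x_0)$ are all correct, and the sign observation $K\cdot r\le 0$ is a nice touch. But the proof is not closed: everything hinges on the inequality $|(K_{N,2\nu^2}*r)(x_0)|<\sqrt2\,(\|K_{N,2\nu^2}\|_{L^1}-1)$, which you explicitly defer, and the heuristic you offer for it does not follow from anything established in the paper. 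Proposition \ref{prop_tmp211} gives only a \emph{lower} bound on the measure of $\{K_N<-0.3c_N\}$ (a union of intervals of length $\ge\varepsilon_*/N$ near the points $j_0/(N+1)$); it says nothing about the negative set being \emph{exactly} a union of isolated intervals of length $\sim\varepsilon_*/N$, nor about where the sign changes sit relative to the diagonal. Your quadratic-in-$\varepsilon_*$ bound needs both. Worse, a nonnegative degree-$N$ mollifier (Fej\'er) has tails decaying only like $(N|y|)^{-1}$, and at each of the $O(N)$ sign changes the mollified sign must pass through $0$, so $w=u^0-\tfrac1{\sqrt2}\operatorname{sign}(\cdot)$ is of size $O(1)$ on a set of width $\sim 1/N$ per transition regardless of how short the negative excursions are; summing, $\|w^2\|_{L^1}=O(1)$, not $O(\varepsilon_*^2)$. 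Since the total available gain is only $\sim(\nu N)^{-2}$, the error term is not visibly dominated, and the same tail problem defeats the ``lighter route'' $u^0=\tfrac1{\sqrt2}+\lambda v$: a band-limited $v^2$ concentrated on a negative interval of length $\varepsilon_*/N$ still dumps mass $\gtrsim M^{-1}$ per unit length onto the region where $|K_{>N}|\sim(\nu^2N)^{-1}$, which overwhelms the main negative contribution $\sim M(\nu N)^{-2}$.

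The structural reason you are forced into this losing quantitative competition is the choice to perturb around the \emph{critical points} $\pm\tfrac1{\sqrt2}$ of $f_2$, where $f_2'=0$ kills the linear term and leaves you with an $O(1)$ quadratic remainder against an $O((\nu N)^{-2})$ gain. The paper instead perturbs around the \emph{fixed point of the reflection}, $z=\sqrt2$, where $f_2(\sqrt2)=-\sqrt2$ and $f_2'(\sqrt2)=-9\neq0$: take $u_0=\sqrt2+\delta\eta$ with $\eta=-K_N^-$ (writing $K_N=K_N^+-K_N^-$), so that $\|u_0\|_\infty\le\sqrt2$ and $u^1(0)=-\sqrt2-9\delta\,(K_N*\eta)(0)+O(\delta^2)=-\sqrt2-9\delta\,\|K_N^-\|_{L^2}^2+O(\delta^2)$. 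The linear term has a definite sign, the error is $O(\delta^2)$ with $\delta$ a free parameter, and the only input needed from Proposition \ref{prop_tmp211} is the \emph{qualitative} fact $K_N^-\not\equiv0$ — no estimate on $\|K_N\|_{L^1}-1$, no control of the zero set, no mollification. If you want to salvage your argument, switch the base point to $\sqrt2$ and let the perturbation size $\delta$ do the work that you are currently asking $\varepsilon_*$ to do.
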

\begin{rem}
A further interesting issue is to consider $u_0$ spectrally localized with
 $\|u^0\|_{\infty} \le \sqrt 2$ and $\|u_0\|_{\infty} \le \sqrt 2$. But we do not
 dwell on this here.
\end{rem}
\begin{rem}
Similar results hold for any $\tau_*\le \tau \le 2$, $\alpha =\sqrt{1+\frac 2 {\tau}}$. 
Recall that $p(x)=(1+\tau) x -\tau x^3$, $p(\alpha)=-\alpha$, and $p^{\prime}(\alpha)<0$.
For the argument below, the main point is that the kernel $K_N$ has a nontrivial
negative part. One can find $u_0=\alpha+\eta$ with $\|u_0\|_{\infty} \le \alpha$, but
\begin{align*}
u^1(0) &= p(\alpha) + \bigl( K_N * ( p^{\prime}(\alpha) \eta) \bigr)(0)+ O(\eta^2) \notag \\
&=-\alpha+ p^{\prime}(\alpha) \bigl(K_N *(\eta) \bigr)(0)+O(\eta^2) <-\alpha,
\end{align*}
i.e. $\|u^1\|_{\infty}>\alpha$.  Thus for moderately large $N$, the upper bound
$\alpha= \sqrt{1+\frac 2{\tau}}$ cannot be preserved. 
\end{rem}
\begin{proof}
Recall that
\begin{align*}
u^{1}
= (1-\tau \nu^2 \partial_{xx} )^{-1} \Pi_N \bigl( (1+\tau) u^0 - \tau (u^0)^3 \bigr).
\end{align*}
Let $u_0= \sqrt 2 + \delta \eta$, where the function $\eta$ will be chosen
momentarily and $\delta>0$ will be taken sufficiently small.  Since
$u^0=\Pi_N u_0$, we have
\begin{align*}
u^{1}&= -\sqrt 2 + (1-\tau \nu^2 \partial_{xx} )^{-1} \Pi_N 
\bigl( -9 \delta \cdot  \eta) \notag  + O(\delta^2)\\
&=-\sqrt 2 - 9 \delta (1-\tau \nu^2 \partial_{xx} )^{-1} \Pi_N  (\eta)
+ O(\delta^2).
\end{align*}

From Proposition \ref{prop_tmp211}, we observe that the kernel $K_N$ corresponding
to the operator $(1-\tau \nu^2 \partial_{xx} )^{-1} \Pi_N$ can be written as
$K_N=K_N^{+}-K_N^{-}$ where $K_N^{+}=\max\{K_N,0\}$, $K_N^-=
\max\{-K_N,0\}$. Both $K_N^+$ and $K_N^-$ have nontrivial $L^1$ mass. Now note that
$K_N$ is an even function, and by the definition of convolution, we have
\begin{align*}
-\Bigl( (1-\tau \nu^2 \partial_{xx} )^{-1} \Pi_N (\eta) \Bigr)(0)
& = - \int_{0}^{1} K_N(-y) \eta (y) dy \notag \\
& = - \int_0^{1} K_N (y) \eta (y) dy = \int_0^{1} K_N^-(y) \eta (y) dy- 
\int_0^{1} K_N^+ (y) \eta (y) dy.
\end{align*}
One can then choose $\eta(x) = - K_N^-(x)$ and let $\delta$ be sufficiently small. 
Obviously $ \|u_0\|_{\infty} \le \sqrt 2$ and 
\begin{align*}
-u^1(0) >\sqrt 2.
\end{align*}
\end{proof}

\begin{prop}[No loss in spectral cut-off when $N$ is large] \label{prop3.13a}
Let $0<\tau_{*}\le \tau \le \frac 12$.  Assume $\|u^0\|_{\infty} \le 1$. If 
$N\ge N_1=N_1(\tau_*,\nu)$, then
\begin{align*}
\sup_{n\ge 0} \|u^n\|_{\infty} \le 1.
\end{align*}
More generally, for any
$1\le \alpha\le \sqrt{1+\frac 2 {\tau}}$, if $\|u^0\|_{\infty} \le \alpha$ and
$N\ge N_2=N_2(\tau_*, \nu)$, then
\begin{align*}
\sup_{n\ge 0} \|u^n\|_{\infty} \le \alpha.
\end{align*}

\end{prop}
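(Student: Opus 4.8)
The plan is to run a direct induction on $n$, decoupling the two effects in the scheme $u^{n+1}=T_{N,\tau\nu^2}\bigl(f_\tau(u^n)\bigr)$, where $T_{N,\tau\nu^2}=(\operatorname{Id}-\tau\nu^2\partial_{xx})^{-1}\Pi_N$ and $f_\tau(z)=(1+\tau)z-\tau z^3$ (this is the polynomial $p$ of Lemma \ref{lem_polycubic3}). The explicit nonlinear step is controlled by Lemma \ref{lem_polycubic3}: since $0<\tau\le\tfrac12$, we have $\max_{|x|\le\alpha}|f_\tau(x)|\le\alpha$ for every $1\le\alpha\le\sqrt{1+2/\tau}$. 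The linear solve is controlled by the strict maximum principle for $T_{N,\beta}$ from Theorem \ref{thm2.14old} (equivalently Theorem \ref{thm1.1_00}): $\|T_{N,\beta}g\|_{L^\infty(\mathbb T)}\le\|g\|_{L^\infty(\mathbb T)}$ whenever $N\ge N_0(\beta)=\frac{1}{2\pi^2\sqrt\beta}e^{1/(2\sqrt\beta)}$. The only point needing care is to choose the cut-off threshold independently of $\tau\in[\tau_*,\tfrac12]$ and of $\alpha$.

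To do this I would take $\beta=\tau\nu^2$ and observe that $\beta\ge\tau_*\nu^2$. Writing $t=1/\sqrt\beta$, one has $N_0(\beta)=\frac{t}{2\pi^2}e^{t/2}$, which is increasing in $t$ and hence \emph{decreasing} in $\beta$; therefore $N_0(\tau\nu^2)\le N_0(\tau_*\nu^2)$ for all admissible $\tau$. I then set
\[
N_1=N_2:=\Bigl\lceil \frac{1}{2\pi^2\nu\sqrt{\tau_*}}\, e^{\frac{1}{2\nu\sqrt{\tau_*}}} \Bigr\rceil,
\]
which depends only on $\tau_*$ and $\nu$. For any $N\ge N_2$ and any $\tau\in[\tau_*,\tfrac12]$ we have $N\ge N_0(\tau\nu^2)$, so Theorem \ref{thm2.14old} gives the strict bound $\|T_{N,\tau\nu^2}g\|_{L^\infty(\mathbb T)}\le\|g\|_{L^\infty(\mathbb T)}$ for every $g\in L^\infty(\mathbb T)$.

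The induction is then immediate. The base case is the hypothesis $\|u^0\|_\infty\le\alpha$. Assuming $\|u^n\|_\infty\le\alpha$, pointwise we have $|u^n(x)|\le\alpha$, so Lemma \ref{lem_polycubic3} (valid because $1\le\alpha\le\sqrt{1+2/\tau}$) yields $|f_\tau(u^n(x))|\le\max_{|y|\le\alpha}|f_\tau(y)|\le\alpha$ for every $x$, i.e.\ $\|f_\tau(u^n)\|_\infty\le\alpha$. Applying the strict maximum principle, $\|u^{n+1}\|_\infty=\|T_{N,\tau\nu^2}(f_\tau(u^n))\|_\infty\le\|f_\tau(u^n)\|_\infty\le\alpha$, which closes the induction and gives $\sup_{n\ge0}\|u^n\|_\infty\le\alpha$. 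The first assertion is the special case $\alpha=1$ (which lies in $[1,\sqrt{1+2/\tau}]$ for every $\tau>0$), with $N_1=N_2$.

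There is no real technical obstacle here once Section 2 and Lemma \ref{lem_polycubic3} are in hand; the only substantive point is the $\tau$-uniformity of the threshold, and it is precisely the reason for imposing $\tau\ge\tau_*>0$. Since $N_0(\tau\nu^2)$ grows like $e^{\operatorname{const}/(\nu\sqrt\tau)}$, no $\tau$-independent $N_1$ can be extracted from the strict maximum principle when $0<\tau\ll1$; that regime is instead handled in the later sections by combining the contractive estimates of Lemma \ref{lem_cubic_iteration} with the \emph{effective} (rather than strict) maximum principle.
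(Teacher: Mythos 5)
Your proof is correct and follows exactly the route the paper takes: induction on $n$, using Lemma \ref{lem_polycubic3} for the explicit nonlinear step and the strict maximum principle of Theorem \ref{thm2.14old} for the linear solve, with the monotonicity of $N_0(\beta)$ in $\beta$ supplying the $\tau$-uniform threshold via $\tau\ge\tau_*$. Your write-up simply makes explicit the details the paper leaves to the reader.
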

\begin{rem*}
Neglecting the dependence on $\tau_*$, we have 
$N_i(\tau_*, \nu)=O(e^{\frac {\operatorname{const}} {\nu} }) $ ($i=1,2$) which is of $O(1)$ for
large $\nu$. 
\end{rem*}

\begin{proof}
Recall that
\begin{align*}
u^{n+1}
= (1-\tau \nu^2 \partial_{xx} )^{-1} \Pi_N \bigl( (1+\tau) u^n - \tau (u^n)^3 \bigr).
\end{align*}
The result can then be easily proved using Lemma \ref{lem_polycubic3}, Theorem
\ref{thm2.14old} and induction on $n$.
Note that the condition $\tau\ge \tau_*$ is needed for controlling  the
operator $(1-\tau \nu^2 \partial_{xx} )^{-1} \Pi_N$ by taking $N$ large to be independent
of $\tau$. 
\end{proof}

Similarly we have the following proposition. The proof is omitted since it is similar.
\begin{prop} \label{prop3.14a}
Let $\frac 12 < \tau \le 2$. For any
$\frac {(1+\tau)^{\frac 32}} {\sqrt{3\tau}}
\cdot \frac 23 \le \alpha \le \sqrt{\frac {2+\tau}{\tau}} $, if $\|u^0\|_{\infty} \le \alpha$ and
$N\ge N_3=N_3(\nu)=O(e^{\frac {\operatorname{const}} {\nu} }) $, then
\begin{align*}
\sup_{n\ge 0} \|u^n\|_{\infty} \le \alpha.
\end{align*}
\end{prop}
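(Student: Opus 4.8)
The plan is to run the induction exactly as in the proof of Proposition~\ref{prop3.13a}, now using the second bullet of Lemma~\ref{lem_polycubic3} in place of the first. First I would recast the scheme in the fixed-point form $u^{n+1}=T_{N,\tau\nu^2}\bigl(f_\tau(u^n)\bigr)$ as in \eqref{r4}, where $f_\tau(x)=(1+\tau)x-\tau x^3$ and $T_{N,\tau\nu^2}=(\operatorname{Id}-\tau\nu^2\partial_{xx})^{-1}\Pi_N$. The two inputs are then: (i) the pointwise estimate $\max_{|x|\le\alpha}|f_\tau(x)|\le\alpha$ valid for $\frac12\le\tau\le2$ and $\frac{(1+\tau)^{3/2}}{\sqrt{3\tau}}\cdot\frac23\le\alpha\le\sqrt{\frac{2+\tau}{\tau}}$, and (ii) the strict maximum principle $\|T_{N,\beta}g\|_{L^\infty(\mathbb T)}\le\|g\|_{L^\infty(\mathbb T)}$ from Theorem~\ref{thm2.14old}, applied with $\beta=\tau\nu^2$.

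For the inductive step I would assume $\|u^n\|_\infty\le\alpha$. By Lemma~\ref{lem_polycubic3}, we get $\|f_\tau(u^n)\|_\infty\le\max_{|x|\le\alpha}|f_\tau(x)|\le\alpha$. Since the dissipation parameter here is $\beta=\tau\nu^2$, so that $\sqrt\beta=\nu\sqrt\tau$, the threshold for the strict maximum principle in Theorem~\ref{thm2.14old} reads $N\ge\frac1{2\pi^2\nu\sqrt\tau}e^{1/(2\nu\sqrt\tau)}$; granting this, $\|u^{n+1}\|_\infty=\|T_{N,\tau\nu^2}(f_\tau(u^n))\|_\infty\le\|f_\tau(u^n)\|_\infty\le\alpha$. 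The base case is the hypothesis $\|u^0\|_\infty\le\alpha$, so the induction closes and $\sup_{n\ge0}\|u^n\|_\infty\le\alpha$.

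It remains to observe that the threshold can be chosen independently of $\tau$ and of $\alpha$. Since $\alpha$ never enters the linear estimate and $\tau\in(\frac12,2]$ forces $\nu\sqrt\tau\in(\nu/\sqrt2,\sqrt2\,\nu]$, the quantity $\frac1{2\pi^2\nu\sqrt\tau}e^{1/(2\nu\sqrt\tau)}$ is bounded above by some $N_3=N_3(\nu)=O(e^{\operatorname{const}/\nu})$, which is $O(1)$ for $\nu\gtrsim1$. Thus I do not expect a real obstacle: the statement is essentially a corollary of Lemma~\ref{lem_polycubic3} and Theorem~\ref{thm2.14old}, the only mildly delicate point being this uniformity in the parameters. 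One should note the result is sharp in its $N$-dependence near $\tau=2$: there $\alpha$ is pinned to $\sqrt2$, and Proposition~\ref{prop_tau2_tmp001} shows the bound genuinely fails when $N$ is merely moderately large, so the exponential threshold $O(e^{\operatorname{const}/\nu})$ cannot be avoided.
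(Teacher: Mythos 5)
Your argument is correct and is exactly the paper's (omitted) proof: recast the scheme as $u^{n+1}=T_{N,\tau\nu^2}(f_\tau(u^n))$, combine the second bullet of Lemma \ref{lem_polycubic3} with the strict maximum principle of Theorem \ref{thm2.14old} at $\beta=\tau\nu^2$, and induct, with the lower bound $\tau>\frac12$ making the threshold $N_3(\nu)=O(e^{\operatorname{const}/\nu})$ uniform in $\tau$ and $\alpha$. Nothing to add.
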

\begin{rem*}
Note that the bound $N_3(\nu)=O(e^{\frac {\operatorname{const}} {\nu} }) $
 is not good when $\nu \to 0$. This will be improved below by a sligthly different method
for $\frac 12 <\tau<2$. Note that for $\tau=2$ such an improvement
is impossible by Proposition \ref{prop_tau2_tmp001}.
\end{rem*}

\begin{prop} \label{prop3.14_tmp00a}
Let $\frac 12 <\tau<2-\epsilon_0$ for some $0<\epsilon_0\le 1$.  Denote $M_0=
\frac 12 (\frac {(1+\tau)^{\frac 32}} {\sqrt{3\tau}}\cdot \frac 23+
\sqrt{\frac {2+\tau}{\tau}})$. 
if $\|u^0\|_{\infty} \le M_0$ and $N\ge N_4=N_4(\nu)=
C(\epsilon_0) \cdot \nu^{-1} \sqrt{|\log {\nu} |}$ when $0<\nu\ll 1$ ($C(\epsilon_0)$
is a constant depending only on $\epsilon_0$), then
\begin{align*}
\sup_{n\ge 0} \|u^n\|_{\infty} \le M_0.
\end{align*}
\end{prop}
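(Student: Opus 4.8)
The plan is to prove $\sup_{n\ge 0}\|u^n\|_{\infty}\le M_0$ by induction on $n$, exploiting the reformulation
\begin{align*}
u^{n+1}=T_{N,\tau\nu^2}\bigl(p(u^n)\bigr),\qquad p(x)=(1+\tau)x-\tau x^3,\qquad T_{N,\beta}=(\operatorname{Id}-\beta\partial_{xx})^{-1}\Pi_N,
\end{align*}
which holds because $u^n$ is band-limited to $\{|k|\le N\}$ (so $\Pi_N u^n=u^n$), giving $(\operatorname{Id}-\tau\nu^2\partial_{xx})u^{n+1}=\Pi_N\bigl((1+\tau)u^n-\tau(u^n)^3\bigr)=\Pi_N(p(u^n))$. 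The base case $n=0$ is the hypothesis $\|u^0\|_{\infty}\le M_0$. The inductive step combines two competing facts: the cubic $p$ is a \emph{strict} contraction of the ball of radius $M_0$, with a gap bounded below in terms of $\epsilon_0$; and the linear operator $T_{N,\tau\nu^2}$ inflates the $L^\infty$ norm by at most $1+\frac{\alpha_1}{1+\pi^2\tau\nu^2N^2}\log(N+2)$, by Theorem~\ref{thm2.14old}. Taking $N$ moderately large makes the linear loss no bigger than the cubic gain.

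First I would make the contraction quantitative. For $\tfrac12\le\tau<2$ the endpoints of the admissible window obey $\frac{(1+\tau)^{3/2}}{\sqrt{3\tau}}\cdot\frac23<\sqrt{\frac{2+\tau}{\tau}}$ (they coincide only at $\tau=2$), so $M_0$ is their strict midpoint and lies strictly inside the window; hence Lemma~\ref{lem_polycubic3} (second bullet) gives $\max_{|x|\le M_0}|p(x)|<M_0$. Writing $\delta(\tau)=M_0-\max_{|x|\le M_0}|p(x)|$, the map $\tau\mapsto\delta(\tau)$ is continuous and strictly positive on $[\tfrac12,2)$ and vanishes only as $\tau\to 2$, so $\delta:=\min_{\tfrac12\le\tau\le 2-\epsilon_0}\delta(\tau)\ge\delta_0(\epsilon_0)>0$. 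I would also record the crude bound $M_0\le 2$ on this range of $\tau$.

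Finally, assume $\|u^n\|_{\infty}\le M_0$. Then $\|p(u^n)\|_{\infty}\le\max_{|x|\le M_0}|p(x)|\le M_0-\delta$, and Theorem~\ref{thm2.14old} gives
\begin{align*}
\|u^{n+1}\|_{\infty}\le\Bigl(1+\frac{\alpha_1\log(N+2)}{1+\pi^2\tau\nu^2N^2}\Bigr)(M_0-\delta).
\end{align*}
Bounding $1+\pi^2\tau\nu^2N^2\ge\tfrac{\pi^2}{2}\nu^2N^2$ via $\tau>\tfrac12$, it suffices to arrange $\frac{2\alpha_1\log(N+2)}{\pi^2\nu^2N^2}\le\frac{\delta}{M_0-\delta}$, for which (using $M_0\le 2$ and $\delta\ge\delta_0(\epsilon_0)$) it is enough that $\frac{N^2}{\log(N+2)}\ge C(\epsilon_0)\nu^{-2}$; for $0<\nu\ll1$ this holds once $N\ge N_4=C(\epsilon_0)\nu^{-1}\sqrt{|\log\nu|}$, whence $\|u^{n+1}\|_{\infty}\le M_0$ and the induction closes. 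The point of the improvement over Proposition~\ref{prop3.14a} is that the lower bound $\tau>\tfrac12$ strips the $\tau$-dependence out of the damping $1+\pi^2\tau\nu^2N^2$, so $N_4$ depends on $\tau$ only through $\epsilon_0$, and the $\epsilon_0$-uniform cubic gap $\delta$ is enough to absorb the merely logarithmic loss of the effective (rather than strict) maximum principle. The one genuinely delicate point is the uniform lower bound $\delta_0(\epsilon_0)>0$ on the cubic gap; the rest is tracking absolute constants, and the assumption $0<\nu\ll1$ costs nothing since for $\nu\gtrsim1$ Proposition~\ref{prop3.14a} already furnishes an $O(1)$ threshold.
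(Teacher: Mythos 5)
Your proposal is correct and follows essentially the same route as the paper: the paper also combines the effective maximum principle for $(\op{Id}-\tau\nu^2\partial_{xx})^{-1}\Pi_N$ (written there as the sharp bound for $(\op{Id}-\tau\nu^2\partial_{xx})^{-1}$ plus the $O\bigl(\frac{\log(N+2)}{1+\nu^2N^2}\bigr)$ tail from $\Pi_{>N}$, which is equivalent to your multiplicative form) with the strict contraction of the cubic on the ball of radius $M_0$, closing the induction via Lemma \ref{lem_cubic_iteration}(2). Your explicit compactness argument for the uniform gap $\delta_0(\epsilon_0)>0$ is exactly the content of the constant $\eta_0(\epsilon_0)$ in that lemma, and your use of $\tau>\tfrac12$ to make $N_4$ independent of $\tau$ matches the paper's.
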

\begin{rem*}
The bound $M_0= \frac 12 (\frac {(1+\tau)^{\frac 32}} {\sqrt{3\tau}}\cdot \frac 23+
\sqrt{\frac {2+\tau}{\tau}})$ can be replaced by any number $\widetilde M_0 \in
(\frac {(1+\tau)^{\frac 32}} {\sqrt{3\tau}}\cdot \frac 23, \sqrt{\frac {2+\tau}{\tau}})$.
In this general case $N_4$ will also depend on $\widetilde M_0$, or more precisely, its
distance to the end-points 
$\frac {(1+\tau)^{\frac 32}} {\sqrt{3\tau}}\cdot \frac 23$ and
$ \sqrt{\frac {2+\tau}{\tau}}$. 
\end{rem*}

\begin{proof}
Write
\begin{align*}
u^{n+1}
&= (1-\tau \nu^2 \partial_{xx} )^{-1} \Pi_N \bigl( (1+\tau) u^n - \tau (u^n)^3 \bigr) 
\notag \\
& = (1-\tau \nu^2 \partial_{xx} )^{-1}  \bigl( (1+\tau) u^n - \tau (u^n)^3 \bigr)
+(1-\tau \nu^2 \partial_{xx} )^{-1} \Pi_{>N} \bigl( (1+\tau) u^n - \tau (u^n)^3 \bigr).
\end{align*}
Recall $p(x) = (1+\tau) x - \tau x^3$. Then
\begin{align*}
\| u^{n+1} \|_{\infty}
\le  \| p(u^n )\|_{\infty} + \frac {\operatorname{const}} { 1+ \nu^2 N^2}
\log (N+2) \cdot \|p(u^n) \|_{\infty}.
\end{align*}
The result then follows from Lemma \ref{lem_cubic_iteration} and induction.
\end{proof}

\subsection{The case $0<\tau <\tau_*$}
We now consider the analogue of Proposition \ref{prop_c3_1} for the case $0<\tau<
\tau_*$. 
\begin{prop}[Energy stability and $L^{\infty}$-stability for small time step] \label{prop_c3_1_small}
Assume $u_0 \in H^1(\mathbb T)$ and $\| u_0\|_{L^{\infty}(\mathbb T)} \le 1$. For
some $\tau_*=\tau_*(\nu, \|u_0\|_{H^1})$ the following holds for any
$0<\tau<\tau_*$: Let $0<\alpha_0\le 1$. If 
$N\ge N_0=N_0(\nu, \|u_0\|_{H^1},\alpha_0)$, then
\begin{align*}
&\sup_{n\ge 0} \| u^n \|_{\infty} \le 1+\alpha_0, \\
&  E(u^{n+1}) \le E(u^n), \qquad\forall\, n\ge 0,
\end{align*}
where 
$F(x)=\frac 14 (x^2-1)^2$, and
\begin{align*}
E(u) = \frac 12 \nu^2 \| \partial_x u \|_{L^2(\mathbb T)}^2 + \int_{\mathbb T}
F(u) dx.
\end{align*}

\end{prop}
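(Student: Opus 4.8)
The plan is to run a single induction on $n$ carrying \emph{two coupled bounds at once}: the pointwise bound $\|u^n\|_\infty \le 1+\alpha_0$ and the energy bound $E(u^n)\le E_0$, where $E_0 := \tfrac12\nu^2\|\partial_x u_0\|_2^2 + \tfrac94$ (a quantity that dominates $E(\Pi_N u_0)$ uniformly in $N$ as soon as $\|\Pi_N u_0\|_\infty\le 2$, using $\|\partial_x\Pi_N u_0\|_2\le\|\partial_x u_0\|_2$). The energy bound feeds a \emph{uniform} Sobolev bound $\|u^n\|_{H^1}\le M$, $M=M(\nu,\|u_0\|_{H^1})$, via $\|\partial_x u^n\|_2^2\le 2E_0/\nu^2$ and $\int F(u^n)\le E_0 \Rightarrow \|u^n\|_{L^4}^4\lesssim E_0 \Rightarrow \|u^n\|_{L^2}\lesssim 1$; this $H^1$ bound is exactly the ingredient that makes the $L^\infty$ step uniform in $\tau$. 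I will take $\tau_*$ to be an absolute constant, say $\tfrac16$ (so $\tau<\tfrac12$ and $\tfrac1\tau+\tfrac12>6\ge\tfrac32(1+\alpha_0)^2$), and choose $N_0=N_0(\nu,\|u_0\|_{H^1},\alpha_0)$ below.

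\textbf{Base case.} Writing $\Pi_N u_0 = u_0 - \Pi_{>N}u_0$ and using $\|\Pi_{>N}g\|_\infty\le\sum_{|k|>N}|\hat g(k)|\le(\sum_{|k|>N}\langle k\rangle^{-2})^{1/2}\|g\|_{H^1}\lesssim N^{-1/2}\|g\|_{H^1}$ on $\mathbb T$, one gets $\|u^0\|_\infty\le 1+CN^{-1/2}\|u_0\|_{H^1}\le 1+\alpha_0$ and $\max_x(u^0(x)-1)\le CN^{-1/2}\|u_0\|_{H^1}$ once $N\ge(C\|u_0\|_{H^1}/\alpha_0)^2$, while $\|u^0\|_{H^1}\le\|u_0\|_{H^1}\le M$ and $E(u^0)\le E_0$.

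\textbf{Inductive step.} Assume the two bounds hold for $0,\dots,n$. \emph{First} improve $L^\infty$: with $\eta^n=u^n-1$, the scheme gives, exactly as in the proof of Proposition \ref{prop_c3_1}, $(1-\tau\nu^2\partial_{xx})\eta^{n+1} = (1-2\tau)\eta^n - \tau(3+\eta^n)(\eta^n)^2 + \tau\,\Pi_{>N}\bigl(3(\eta^n)^2+(\eta^n)^3\bigr)$. Since $\|u^n\|_\infty\le 1+\alpha_0\le 2$ forces $3+\eta^n\ge 1-\alpha_0\ge 0$, and since $(1-\tau\nu^2\partial_{xx})^{-1}$ has a nonnegative kernel of unit mass (hence is an $L^\infty$-contraction obeying the maximum principle), $\max\eta^{n+1}\le(1-2\tau)\max\eta^n + \tau\bigl\|\Pi_{>N}(3(\eta^n)^2+(\eta^n)^3)\bigr\|_\infty$. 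Here is the crucial departure from the $\tau\ge\tau_*$ analysis: instead of bounding the truncation term by the kernel estimate of Theorem \ref{thm2.14old} (which carries the factor $\log(N+2)/(1+\nu^2\tau N^2)$ and would force $N$ to depend on $\tau$), I use $H^1$-regularity, $\bigl\|\Pi_{>N}(3(\eta^n)^2+(\eta^n)^3)\bigr\|_\infty\lesssim N^{-1/2}\|3(\eta^n)^2+(\eta^n)^3\|_{H^1}\lesssim N^{-1/2}(\|\eta^n\|_{H^1}^2+\|\eta^n\|_{H^1}^3)\le C_1 N^{-1/2}$, using that $H^1(\mathbb T)$ is an algebra and $\|\eta^n\|_{H^1}\le M+1$, with $C_1=C_1(\nu,\|u_0\|_{H^1})$ \emph{independent of $\tau$}. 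Thus $\max\eta^{n+1}\le(1-2\tau)\alpha_0+\tau C_1 N^{-1/2}\le\alpha_0$ once $N\ge(C_1/(2\alpha_0))^2$; running the same argument for $-u^n$ (which solves the same scheme since $f_\tau$ is odd) gives the lower bound, so $\|u^{n+1}\|_\infty\le 1+\alpha_0$. \emph{Then} deduce energy decay: with $\|u^n\|_\infty,\|u^{n+1}\|_\infty\le 1+\alpha_0\le 2$, the discrete energy estimate gives $E(u^{n+1})-E(u^n)\le\bigl(\tfrac32(1+\alpha_0)^2-\tfrac1\tau-\tfrac12\bigr)\|u^{n+1}-u^n\|_2^2\le 0$ since $\tau<\tfrac16$, hence $E(u^{n+1})\le E(u^n)\le E_0$, which returns $\|u^{n+1}\|_{H^1}\le M$. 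The induction closes; taking $N_0=\max\{(C\|u_0\|_{H^1}/\alpha_0)^2,(C_1/(2\alpha_0))^2\}$ yields $\sup_n\|u^n\|_\infty\le1+\alpha_0$ and $E(u^{n+1})\le E(u^n)$ for all $n$.

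\textbf{Expected main obstacle.} The genuine difficulty is the apparent circularity: the $L^\infty$ control is needed to run the energy estimate, yet the $H^1$ bound extracted from the energy is precisely what makes the $L^\infty$ step uniform in $\tau$ (no $\log N$, no $\nu^2\tau N^2$ in the denominator). I resolve this by never decoupling the two estimates — they are propagated in a single induction, and within each step the $L^\infty$ improvement is done first, using only the previous step's $H^1$ bound, and only afterwards is the energy decay deduced. The remaining care points, all routine, are: checking that $M$, and hence $C_1$, truly depend only on $(\nu,\|u_0\|_{H^1})$ and not on $\tau$ (immediate, since $M$ is built from $E_0$ and $\nu$), and that $E(\Pi_N u_0)\le E_0$ uniformly in $N$.
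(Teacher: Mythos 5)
Your proof is correct and follows essentially the same route as the paper's: a single induction propagating the $L^{\infty}$ bound and the energy bound together, with the energy-derived $H^1$ control feeding the key estimate $\|\Pi_{>N}g\|_{\infty}\lesssim N^{-1/2}\|g\|_{H^1}$ that makes the $L^{\infty}$ recursion $\max\eta^{n+1}\le(1-2\tau)\max\eta^n+\tau\,O(N^{-1/2})$ uniform in $\tau$. The only difference is the order within each step: the paper first closes the energy inequality using the crude bound $\|u^{n+1}\|_{\infty}\lesssim\|u^{n+1}\|_{H^1}\le\|p(u^n)\|_{H^1}\lesssim_{\nu,\|u_0\|_{H^1}}1$ (which is why its $\tau_*$ depends on $\nu$ and $\|u_0\|_{H^1}$), whereas you establish $\|u^{n+1}\|_{\infty}\le 1+\alpha_0\le 2$ first and then only need $\tfrac1\tau+\tfrac12\ge 6$, yielding an absolute $\tau_*$ — a harmless and slightly sharper variant.
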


\begin{proof}
It is necessary to modify the proof of Proposition \ref{prop_c3_1} and we shall
outline the main changes.

The main inductive assumption is:
\begin{align*}
& \|u^n\|_{\infty} \le 1+\alpha_0;\\
& E(u^{n}) \le E(u^{n-1}).
\end{align*}
We tacitly assume that $u^{-1} =u^0$ and start the induction for $n\ge 0$. 
Note that $\|u^0\|_{H^1} \le \|u_0\|_{H^1}$ and
$E(u^0) \lesssim E(u_0)$ since $u^0=\Pi_N u_0$.  

We now justify the
inductive assumption for $u^{n+1}$. This is divided into the following steps.

Step 1: Energy stability. By a standard discrete energy estimate, we have 
\begin{align*}
E(u^{n+1}) - E(u^n) &+ \frac 12 \nu^2 \| \partial_x (u^{n+1}-u^n) \|_2^2 
+ (\frac 1 {\tau}+\frac 12) \|u^{n+1}-u^n\|_2^2 \notag \\
& \le \frac 32 \max\{ \|u^n\|_{\infty}^2, \; \|u^{n+1}\|_{\infty}^2
\}\cdot \| u^{n+1}-u^n\|_2^2.
\end{align*}
Recall $p(x) = (1+\tau) x - \tau x^3$ and 
\begin{align*}
u^{n+1} = (1-\tau \nu^2 \partial_{xx})^{-1} \Pi_N ( p(u^n ) ).
\end{align*}
Then
\begin{align*}
\| u^{n+1} \|_{\infty} & \lesssim \| u^{n+1} \|_{H^1}  \notag \\
& \le  \|p(u^n)\|_{H^1} \lesssim_{ \nu, \|u_0\|_{H^1} } 1.
\end{align*}
By inductive assumption we have $\|u^n\|_{\infty} \le 2$. Then by taking $\tau_*>0$ 
sufficiently small (depending only on $\nu$ and $\|u_0\|_{H^1}$), we clearly have
$E(u^{n+1}) \le E(u^n)$.

Step 2: $L^{\infty}$-estimate.  This step is similar to that in Proposition \ref{prop_c3_1}.
Denote $u^n=1+\eta^n$. Then from the inductive assumption
\begin{align*}
-(2+\alpha_0) \le \eta^n \le \alpha_0.
\end{align*}

For $\eta^{n+1}=u^{n+1}-1$, we then have
\begin{align*}
\max \eta^{n+1}
&\le (1-2\tau) \alpha_0 + \tau \| (1-\tau \nu^2 \partial_{xx})^{-1} \Pi_{>N}
\bigl( 3 (\eta^n)^2 + (\eta^n)^3 \bigr) \|_{\infty} \notag \\
& \le (1-2\tau) \alpha_0 + \tau \| \Pi_{>N}
\bigl( 3 (\eta^n)^2 + (\eta^n)^3 \bigr) \|_{\infty} \notag \\
\end{align*}
By the inequality $\|\Pi_{>N} g\|_{\infty} \lesssim N^{-\frac 12} \|g\|_{H^1}$, we have
\begin{align*}
\| \Pi_{>N}( (\eta^n)^2) \|_{\infty} \lesssim
N^{-\frac 12} \| (\eta^n)^2 \|_{H^1} \lesssim_{ \nu, \|u_0\|_{H^1}} N^{-\frac 12}
\le \frac 12 \alpha_0, \qquad \text{for $N\ge N_0$}.
\end{align*}
Similar estimates hold for the term corresponding to $(\eta^n)^3$.  We then clearly
obtain
\begin{align*}
\|u^{n+1} \|_{\infty} \le 1+\alpha_0.
\end{align*}
\end{proof}

Now we discuss the decay estimate for the case $0<\tau<
\tau_*$.  Note that in the following $N_0$ is independent of $\alpha_0$. 
\begin{cor} \label{cor_c3_1_small}
Assume $u_0 \in H^1(\mathbb T)$ and $\| u^0\|_{L^{\infty}(\mathbb T)} \le 1+\alpha_0$
for some $0<\alpha_0\le 1$. For
some $\tau_*=\tau_*(\nu, \|u_0\|_{H^1})>0$ the following holds for any
$0<\tau<\tau_*$: If 
$N\ge N_0=N_0(\nu, \|u_0\|_{H^1})$, then for any $n\ge 1$, 
\begin{align*}
\|u^n\|_{\infty} \le  1+ \theta^{n} \alpha_0+
\frac {1-\theta^{n}}{1-\theta} \cdot \tau \cdot N^{-\frac 12} \cdot C_{\nu, \|u_0\|_{H^1}}.
\end{align*}
where $\theta=1-2\tau$, and
the constant $C_{\nu, \|u_0\|_{H^1}}>0$ depends only on $\nu$ and $\|u_0\|_{H^1}$.
Consequently
\begin{align*}
\limsup_{n\to \infty}
\|u^n\|_{\infty} \le 1+ \frac 12 \cdot  N^{-\frac 12} \cdot C_{\nu, \|u_0\|_{H^1}}.
\end{align*}
\end{cor}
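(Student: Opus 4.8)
The plan is to bootstrap off Proposition \ref{prop_c3_1_small} and then run the linear recursion that already appears in the proof of Proposition \ref{prop_tmp_prop3.3a}, now fed by a Sobolev estimate on the spectral tail rather than by the sharp $L^1$ kernel bound. First I would invoke Proposition \ref{prop_c3_1_small} \emph{at the fixed level} $\alpha_0=1$. This pins down $\tau_*=\tau_*(\nu,\|u_0\|_{H^1})$ and $N_0=N_0(\nu,\|u_0\|_{H^1})$ once and for all, independently of the actual (possibly tiny) $\alpha_0$ in the statement, and it delivers the weak a priori bounds $\sup_{n\ge0}\|u^n\|_{\infty}\le 2$ together with the energy monotonicity $E(u^{n+1})\le E(u^n)$. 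Telescoping the latter gives $\tfrac12\nu^2\|\partial_x u^n\|_2^2\le E(u^n)\le E(u^0)=E(\Pi_N u_0)\lesssim_{\nu}1+\|u_0\|_{H^1}^2$, and combining this with $\|u^n\|_2\le\|u^n\|_{\infty}\le 2$ yields a \emph{uniform-in-$n$} Sobolev bound $\sup_{n\ge0}\|u^n\|_{H^1(\mathbb T)}\le C_{\nu,\|u_0\|_{H^1}}$ (shrinking $\tau_*$ so that $\tau_*\le\tfrac12$ along the way).

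Next I would run the recursion. Writing $u^n=1+\eta^n$, so that $\Pi_N\eta^n=\eta^n$ and $-3\le\eta^n\le1$ by the weak bound, one has exactly as in Proposition \ref{prop_c3_1}
\begin{align*}
(1-\tau\nu^2\partial_{xx})\eta^{n+1}=(1-2\tau)\eta^n-\tau(3+\eta^n)(\eta^n)^2+\tau\,\Pi_{>N}\bigl(3(\eta^n)^2+(\eta^n)^3\bigr).
\end{align*}
Since the kernel of $(1-\tau\nu^2\partial_{xx})^{-1}$ is nonnegative with unit $L^1$ mass, $1-2\tau>0$, and $3+\eta^n\ge0$, the maximum principle gives, with $\alpha_n:=\max_x\eta^n(x)$,
\begin{align*}
\alpha_{n+1}\le(1-2\tau)\alpha_n+\tau\,\bigl\|\Pi_{>N}\bigl(3(\eta^n)^2+(\eta^n)^3\bigr)\bigr\|_{\infty}.
\end{align*}
I would then bound the tail by the Bernstein-type inequality $\|\Pi_{>N}g\|_{\infty}\lesssim N^{-1/2}\|g\|_{H^1(\mathbb T)}$ (already used in Proposition \ref{prop_c3_1_small}) together with the one-dimensional algebra property of $H^1$ and the uniform bounds above, getting $\|\Pi_{>N}(3(\eta^n)^2+(\eta^n)^3)\|_{\infty}\le N^{-1/2}C_{\nu,\|u_0\|_{H^1}}$. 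Hence $\alpha_{n+1}\le\theta\alpha_n+\tau N^{-1/2}C_{\nu,\|u_0\|_{H^1}}$ with $\theta=1-2\tau\in(0,1)$; the same argument applied to $-u^n$ controls the lower direction. Since the two scalar sequences start from $\max(\Pi_N u_0-1)\le\alpha_0$ and $\max(-\Pi_N u_0-1)\le\alpha_0$ respectively, iterating the scalar inequality yields $\|u^n\|_{\infty}\le1+\theta^n\alpha_0+\frac{1-\theta^n}{1-\theta}\tau N^{-1/2}C_{\nu,\|u_0\|_{H^1}}$ for all $n\ge1$, which is the asserted bound.

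The $\limsup$ assertion is then immediate: for $0<\tau<\tau_*\le\tfrac12$ we have $0<\theta<1$, so $\theta^n\to0$ and $\frac{1-\theta^n}{1-\theta}\to\frac1{1-\theta}=\frac1{2\tau}$, whence the accumulated spectral error tends to $\frac1{2\tau}\cdot\tau N^{-1/2}C_{\nu,\|u_0\|_{H^1}}=\frac12N^{-1/2}C_{\nu,\|u_0\|_{H^1}}$. The only genuinely delicate point in all of this is the bookkeeping of the first step: one must apply Proposition \ref{prop_c3_1_small} at the level $\alpha_0=1$ so that $\tau_*$ and $N_0$ do not secretly inherit a dependence on the small $\alpha_0$ of the statement, and one must make sure energy monotonicity is used to propagate a genuinely uniform-in-$n$ $H^1$ bound (not merely a finite-horizon one) so that $C_{\nu,\|u_0\|_{H^1}}$ in the recursion is a true constant. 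Everything else — the maximum-principle step, the Sobolev estimate for $\Pi_{>N}$, and the geometric iteration — is routine.
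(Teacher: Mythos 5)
Your proposal is correct and follows essentially the same route as the paper: apply Proposition \ref{prop_c3_1_small} at the fixed level $\alpha_0=1$ to secure $\tau_*$, $N_0$, the weak bound $\sup_n\|u^n\|_\infty\le 2$ and the energy monotonicity (hence a uniform-in-$n$ $H^1$ bound), then rerun the $\eta^n=u^n-1$ recursion with the spectral tail controlled by $\|\Pi_{>N}g\|_\infty\lesssim N^{-1/2}\|g\|_{H^1}$ to get $\alpha_{n+1}\le(1-2\tau)\alpha_n+\tau N^{-1/2}C_{\nu,\|u_0\|_{H^1}}$, and iterate. Your extra care about the $\alpha_0$-independence of $\tau_*,N_0$ and the uniformity of the $H^1$ bound is exactly the bookkeeping the paper performs implicitly.
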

\begin{proof}
First by taking $\alpha_0=1$ in Proposition \ref{prop_c3_1_small} we can achieve 
for $N\ge N_0=N_0(\nu, \|u_0\|_{H^1})$ that
\begin{align*}
&\sup_{n\ge 0} \| u^n \|_{\infty} \le 2, \\
& \sup_{n\ge 0} E(u^n) \le E(u^0).
\end{align*}

Now denote $\alpha_n = \max \eta^n=\max (u^n-1)$. Then by repeating the proof in Proposition \ref{prop_c3_1_small},
we obtain
\begin{align*}
\alpha_{n+1} \le (1-2\tau) \alpha_n +\tau \cdot N^{-\frac 12} \cdot C_{\nu, \|u_0\|_{H^1}},
\end{align*}
where the constant $C_{\nu, \|u_0\|_{H^1}}>0$ depends only on $\nu$ and $\|u_0\|_{H^1}$.
Similar estimates also hold for $\tilde \alpha_n= \max (-1-u^n)$.  Thus
for $\theta=1-2\tau$. 
\begin{align*}
\|u^n\|_{\infty} \le  1+ \theta^{n} \alpha_0+
\frac {1-\theta^{n}}{1-\theta} \cdot \tau \cdot N^{-\frac 12} \cdot C_{\nu, \|u_0\|_{H^1}}.
\end{align*}
\end{proof}

\subsection{New energy stability results for $\frac 12 <\tau <\tau_1
\approx 0.86$ }

Consider the equation
\begin{align*}
\frac 12 + \frac 1 x =\frac 32
\cdot \Bigl(  \frac 23 \cdot \frac {(1+x)^{\frac 32}} { \sqrt{3x} } \Bigr)^2.
\end{align*}
Easy to check that 
\begin{align*}
x= \tau_1= \frac 12 (-2 + (9 - 3 \sqrt{6} )^{\frac 13} + (9 + 3\sqrt 6)^{\frac 13})
\approx 0.860018
\end{align*}
is the unique real-valued solution. 

It follows that if $\frac 12 <\tau \le \tau_1-\epsilon_0$ where $0<\epsilon_0\le 0.1$, then
\begin{align} \label{3apr10:32-19000}
\frac 12 +\frac 1 {\tau} 
\ge \frac 3 2 \cdot \Bigl( \frac {(1+\tau)^{\frac 32}} {\sqrt{3\tau}}\cdot \frac 23
+ \eta(\epsilon_0) \Bigr)^2,
\end{align}
where $\eta(\epsilon_0)>0$ depends only on $\epsilon_0$.  For example, it is not difficult
to check that for all $\frac 12 \le \tau \le 0.86$,
\begin{align*}
\frac 12 +\frac 1 {\tau} 
\ge \frac 3 2 \cdot \Bigl( \frac {(1+\tau)^{\frac 32}} {\sqrt{3\tau}}\cdot \frac 23
+ \eta_0 \Bigr)^2, \qquad \eta_0=10^{-5}.
\end{align*}

\begin{prop}[Energy stability for $\frac 12 <\tau<\tau_1$] \label{prop3.18a}
Let $\frac 12 <\tau \le \tau_1-\epsilon_0$ for some $0<\epsilon_0\le 0.1$. 
Suppose $\|u^0\|_{\infty} \le \frac {(1+\tau)^{\frac 32}} {\sqrt{3\tau}}\cdot \frac 23$.
Denote
\begin{align*}
M_3=\frac {(1+\tau)^{\frac 32}} {\sqrt{3\tau}}\cdot \frac 23+\eta(\epsilon_0)
\end{align*}
where $\eta(\epsilon_0)$ is the same as in \eqref{3apr10:32-19000}.
Then for $N\ge N_0(\nu, \epsilon_0)$, we have 
\begin{align*}
&\sup_{n\ge 0} \|u^n\|_{\infty} \le M_3;  \\
&E(u^{n+1}) \le E(u^n), \quad \forall\, n\ge 0.
\end{align*}
\end{prop}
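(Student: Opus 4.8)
The plan is to prove the two conclusions in sequence: first the uniform bound $\sup_{n\ge 0}\|u^n\|_{L^\infty(\mathbb T)}\le M_3$, obtained by induction on $n$; then energy monotonicity, which will follow at once from the standard discrete energy estimate together with \eqref{3apr10:32-19000}. The role of the threshold $\tau_1\approx 0.86$ is that it is the exact value at which $\frac 12+\frac 1\tau=\frac 32\big(\frac{(1+\tau)^{\frac 32}}{\sqrt{3\tau}}\cdot\frac 23\big)^2$, so for $\frac12<\tau\le\tau_1-\epsilon_0$ the relation \eqref{3apr10:32-19000} holds with a genuine margin $\eta(\epsilon_0)>0$, and this same margin is what absorbs the spectral truncation error in the first part.

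For the $L^\infty$ bound I would recast the scheme as $u^{n+1}=T_{N,\tau\nu^2}\big(p(u^n)\big)$, where $p(x)=(1+\tau)x-\tau x^3$ and $T_{N,\tau\nu^2}=(\operatorname{Id}-\tau\nu^2\partial_{xx})^{-1}\Pi_N$, and observe that $u^0=\Pi_N u_0$ already satisfies $\|u^0\|_\infty\le\frac{(1+\tau)^{\frac 32}}{\sqrt{3\tau}}\cdot\frac 23<M_3$, which is the base case. Assume inductively $\|u^n\|_\infty\le M_3$. On $[0,\infty)$ the cubic $p$ has its positive critical point at $x_c=\sqrt{(1+\tau)/(3\tau)}$ with $p(x_c)=\frac{(1+\tau)^{\frac 32}}{\sqrt{3\tau}}\cdot\frac 23=M_3-\eta(\epsilon_0)$, and a direct computation gives $p(2x_c)=-p(x_c)$ with $|p(x)|\le p(x_c)$ for all $x\in[0,2x_c]$ (cf.\ Lemma \ref{lem_polycubic3} and the discussion of the point $x_*=2x_c$ in the proof of Lemma \ref{lem_cubic_iteration}). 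Since $\eta(\epsilon_0)$ may be taken as small as we please (the remark preceding the proposition gives $\eta_0=10^{-5}$ in the range $\tau\le 0.86$), we may assume $M_3\le 2x_c$; moreover $p(x_c)>x_c$ when $\tau>\frac12$, so $x_c<M_3$ and hence
\[
\|p(u^n)\|_\infty\le \max_{|x|\le M_3}|p(x)|=p(x_c)=M_3-\eta(\epsilon_0).
\]
Applying the effective maximum principle for $T_{N,\tau\nu^2}$ (Theorem \ref{thm2.14old}) yields $\|u^{n+1}\|_\infty\le\big(1+\frac{C\log(N+2)}{1+\pi^2\tau\nu^2N^2}\big)(M_3-\eta(\epsilon_0))$ for an absolute constant $C$; because $\tau>\frac12$, the error factor is at most $1+C'\nu^{-2}N^{-2}\log(N+2)$, so choosing $N_0=N_0(\nu,\epsilon_0)$ with $C'\nu^{-2}N_0^{-2}\log(N_0+2)\le \eta(\epsilon_0)/(M_3-\eta(\epsilon_0))$ forces $\|u^{n+1}\|_\infty\le M_3$ for all $N\ge N_0$, closing the induction. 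This runs parallel to Proposition \ref{prop3.14_tmp00a}; the only new point is to keep track that, for $M_3$ this close to $p(x_c)$, the maximum of $|p|$ on $[-M_3,M_3]$ is still attained at $x_c$.

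For energy monotonicity, feed the bound just obtained into the standard discrete energy estimate
\[
E(u^{n+1})-E(u^n)+\tfrac12\nu^2\|\partial_x(u^{n+1}-u^n)\|_2^2+\big(\tfrac1\tau+\tfrac12\big)\|u^{n+1}-u^n\|_2^2\le\tfrac32\max\{\|u^n\|_\infty^2,\|u^{n+1}\|_\infty^2\}\,\|u^{n+1}-u^n\|_2^2.
\]
Since $\|u^n\|_\infty,\|u^{n+1}\|_\infty\le M_3=\frac{(1+\tau)^{\frac 32}}{\sqrt{3\tau}}\cdot\frac 23+\eta(\epsilon_0)$, the inequality \eqref{3apr10:32-19000} gives $\frac 32\max\{\|u^n\|_\infty^2,\|u^{n+1}\|_\infty^2\}\le\frac 32 M_3^2\le\frac1\tau+\frac12$, so the right-hand side is dominated by the $(\frac1\tau+\frac12)\|u^{n+1}-u^n\|_2^2$ term on the left, leaving $E(u^{n+1})-E(u^n)\le-\frac12\nu^2\|\partial_x(u^{n+1}-u^n)\|_2^2\le 0$ for every $n\ge0$.

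I expect the first part to be the only real obstacle: one has to show the cubic iteration does not escape $[-M_3,M_3]$ even though each application of $\Pi_N$ can inflate the sup-norm by a factor $1+O(\log N/(\nu^2N^2))$. It works precisely because that inflation can be made smaller than the fixed margin $\eta(\epsilon_0)$ by which $\max_{|x|\le M_3}|p(x)|$ lies below $M_3$, and since the argument uses only $\|u^0\|_\infty$, the resulting $N_0$ depends on $\nu$ and $\epsilon_0$ alone. The energy step is then automatic from \eqref{3apr10:32-19000}.
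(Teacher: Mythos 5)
Your proposal is correct and follows essentially the same route as the paper: the $L^\infty$ bound is obtained by induction, splitting off the spectral truncation error (via the effective maximum principle for $T_{N,\tau\nu^2}$, exactly as in Proposition \ref{prop3.14_tmp00a}) and absorbing it into the strict gap $\max_{|x|\le M_3}|p(x)|=p(x_c)=M_3-\eta(\epsilon_0)$, and the energy decay then follows from the standard discrete energy estimate combined with \eqref{3apr10:32-19000}. Your side remark that $M_3\le 2x_c$ is in fact automatic, since any $\eta$ satisfying \eqref{3apr10:32-19000} forces $M_3\le\sqrt{(2+\tau)/(3\tau)}<\sqrt{(2+\tau)/\tau}<2x_c$ for $\tau<2$.
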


\begin{proof}
The $L^{\infty}$-stability estimate is established  in the same way as in Proposition
\ref{prop3.14_tmp00a} (see also the remark after).  Now
recall the discrete energy estimate
\begin{align*}
E(u^{n+1}) - E(u^n) &+ \frac 12 \nu^2 \| \partial_x (u^{n+1}-u^n) \|_2^2 
+ (\frac 1 {\tau}+\frac 12) \|u^{n+1}-u^n\|_2^2 \notag \\
& \le \frac 32 \max\{ \|u^n\|_{\infty}^2, \; \|u^{n+1}\|_{\infty}^2
\}\cdot \| u^{n+1}-u^n\|_2^2.
\end{align*}
Thus it suffices to check 
\begin{align*}
\frac 12 +\frac 1 {\tau} \ge \frac 32 \cdot M_3^2 
=\frac 32 \cdot ( \frac {(1+\tau)^{\frac 32}} {\sqrt{3\tau}}\cdot \frac 23+\eta(\epsilon_0)
)^2
\end{align*}
which is exactly \eqref{3apr10:32-19000}. 

\end{proof}

\section{General theory for dimension $d\ge 1$}
 Consider the scheme for Allen-Cahn on the torus $\mathbb T=[0,1)^d$ in the physical dimensions $d\le 3$:
\begin{align} \label{17O1}
\begin{cases}
\frac{u^{n+1} -u^n}{ \tau} = \nu^2 \Delta u^{n+1} -\Pi_N ( (u^n)^3- u^n),
\quad n\ge 0, \\
u^0 =\Pi_N u_0,
\end{cases}
\end{align}
where $\nu>0$.

\begin{thm}[Energy stability for $0<\tau \le 0.86$] \label{17O2}
Consider \eqref{17O1} with $u_0 \in H^s(\mathbb T^d)\cap H^1(\mathbb T^d)$, $s>\frac d2$ and $d\le 3$. 
Let $0<\tau \le 0.86$ and assume  $$\|u^0\|_{\infty} \le \sqrt{\frac 53}. \qquad (
\text{note that }
\sqrt{\frac 53} \approx 1.29099).$$
 Then we have energy stability for any $N\ge N_0=N_0(s, d,\nu, u_0)$:
\begin{align*}
E(u^{n+1}) \le E(u^n), \quad \forall\, n\ge 0.
\end{align*}
Furthermore,
\begin{align*}
\sup_{n\ge 0} \|u^n\|_{H^s} \le U_1<\infty,
\end{align*}
where $U_1>0$ is a constant depending only on ($s$, $d$, $\nu$, $u_0$).

\end{thm}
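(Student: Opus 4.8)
The plan is to run two coupled inductions on $n$: one keeping $\|u^n\|_{L^\infty(\mathbb T^d)}\le\sqrt{5/3}$ and one keeping $\|u^n\|_{H^s}\le U_1$, and then to feed the resulting uniform $L^\infty$ bound into the discrete energy identity. Writing $S=(\op{Id}-\tau\nu^2\Delta)^{-1}$, $f_\tau(x)=(1+\tau)x-\tau x^3$ and $f(x)=x-x^3$, I would first recast one step of \eqref{17O1}, using $\Pi_N u^n=u^n$, as
\[
u^{n+1}=S f_\tau(u^n)+\tau\, S\,\Pi_{>N}\big((u^n)^3\big)=Su^n+\tau S\Pi_N f(u^n).
\]
For the $L^\infty$ bound the first form is the useful one: the full resolvent $S$ has a strictly positive kernel of unit $L^1$ mass (a periodized Bessel kernel), so $\|Sf_\tau(u^n)\|_\infty\le\max_{|x|\le\|u^n\|_\infty}|f_\tau(x)|$ with \emph{no} condition on $N$, while the spectral remainder obeys $\|\tau S\Pi_{>N}((u^n)^3)\|_\infty\le\tau\|\Pi_{>N}((u^n)^3)\|_\infty\le\tau\,C_{s,d}N^{-(s-\frac d2)}\|u^n\|_\infty^2\|u^n\|_{H^s}$ by the Sobolev bound $\|\Pi_{>N}g\|_\infty\lesssim N^{-(s-\frac d2)}\|g\|_{H^s}$ ($s>\frac d2$) and a Moser product estimate.

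The decisive input is the \emph{quantitative} strict contraction of the cubic map from Proposition~\ref{2015prop1}/Lemma~\ref{lem_polycubic3}. Separating the cases $0<\tau\le\tfrac14$ (where $f_\tau$ is monotone on $[0,\sqrt{5/3}]$, so $\max_{|x|\le\sqrt{5/3}}|f_\tau(x)|=\sqrt{5/3}\,(1-\tfrac23\tau)$) and $\tfrac14<\tau\le0.86$ (where the maximum equals the interior critical value $\tfrac23(1+\tau)^{3/2}/\sqrt{3\tau}$, which stays strictly below $\sqrt{5/3}$), one obtains $\max_{|x|\le\sqrt{5/3}}|f_\tau(x)|\le\sqrt{5/3}-c_0\tau$ for all $0<\tau\le0.86$ with $c_0>0$ absolute. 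Then, under the inductive hypotheses $\|u^n\|_\infty\le\sqrt{5/3}$ and $\|u^n\|_{H^s}\le U_1$, I would conclude
\[
\|u^{n+1}\|_\infty\le\sqrt{5/3}-\tau\big(c_0-\tfrac53 C_{s,d}U_1 N^{-(s-\frac d2)}\big)\le\sqrt{5/3}
\]
as soon as $N\ge N_0(s,d,\nu,u_0)$; crucially the factor $\tau$ cancels, so $N_0$ is $\tau$-independent, and the base case $\|u^0\|_\infty=\|\Pi_N u_0\|_\infty\le\sqrt{5/3}$ is exactly the hypothesis. Granting the companion $H^s$ bound, energy stability then drops out of the discrete energy estimate proved before Proposition~\ref{prop_c3_1} (which holds verbatim on $\mathbb T^d$ with $\nabla$ replacing $\partial_x$): it gives $E(u^{n+1})-E(u^n)\le-\big(\tfrac1\tau+\tfrac12-\tfrac32\max\{\|u^n\|_\infty^2,\|u^{n+1}\|_\infty^2\}\big)\|u^{n+1}-u^n\|_2^2$, which is $\le0$ for $0<\tau\le\tfrac12$ since $\tfrac1\tau+\tfrac12\ge\tfrac52=\tfrac32\cdot\tfrac53$; for $\tfrac12<\tau\le0.86$ one uses the sharper bound $\|u^n\|_\infty\le\tfrac23(1+\tau)^{3/2}/\sqrt{3\tau}+O(N^{-(s-\frac d2)})$ valid for $n\ge1$ together with the defining relation \eqref{3apr10:32-19000} of $\tau_1\approx0.860018$ (take $\epsilon_0:=\tau_1-0.86$, and $N$ large enough to absorb the remainder into $\eta(\epsilon_0)$). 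Summing also yields $\sup_n E(u^n)\le E(u^0)\lesssim_{u_0}1$, hence the uniform bounds $\sup_n(\|\nabla u^n\|_2+\|u^n\|_{L^4})\lesssim_{\nu,u_0}1$ that are needed below.

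The main obstacle is the companion estimate $\sup_n\|u^n\|_{H^s}\le U_1$, with $U_1$ independent of both $n$ and $\tau\in(0,0.86]$. A one-step bound $\|u^{n+1}\|_{H^s}\le\|f_\tau(u^n)\|_{H^s}\le(1+\tfrac53 C_s)\|u^n\|_{H^s}$ only gives geometric growth, so one must exploit the smoothing of the implicit step through the discrete Duhamel formula $u^n=S^nu^0+\tau\sum_{j=0}^{n-1}S^{n-j}\Pi_N f(u^j)$. I would treat the zero Fourier mode separately (it telescopes to $\widehat{u^n}(0)$, bounded by $\|u^n\|_\infty$), and on the nonzero modes, where $S$ contracts strictly, split the sum at $m=n-j\asymp1/\tau$ --- tuned so that the near-diagonal block has time-length $\asymp1$ while $mc\gtrsim1$ on the tail, $c:=4\pi^2\tau\nu^2$. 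The near-diagonal block is absorbed into $\tfrac12\max_{j\le n}\|u^j\|_{H^s}$ using the $H^s$ product estimate and the just-proven $L^\infty$ bound; on the tail one uses the discrete parabolic smoothing $\|S^m(\Pi_N f(u^j))_{\ne0}\|_{H^s}\lesssim_s(mc)^{-(s+r)/2}\|f(u^j)\|_{H^{-r}}$ with $r$ chosen so $(s+r)/2>1$ and $\|f(u^j)\|_{H^{-r}}\le\|f(u^j)\|_{L^2}\lesssim_{\nu,u_0}1$ (from the energy bound), so that, by the choice of split, the tail sum is bounded independently of $\tau$ and $n$. A discrete Gr\"onwall estimate for $n$ below the block length plus a self-improving inequality above it then close $\sup_n\|u^n\|_{H^s}\le U_1(s,d,\nu,u_0)$; the coupling with the $L^\infty$ induction is harmless since at step $n+1$ both inequalities invoke only data at steps $\le n$. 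The genuinely delicate point, and where I expect most of the effort, is keeping every constant in this $H^s$ propagation uniform as $\tau\to0$ --- this is the higher-regularity analogue of the small-time-step analysis behind Proposition~\ref{prop_c3_1_small}.
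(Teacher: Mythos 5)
Your proposal is essentially correct, but it is architecturally quite different from the paper's proof, and in several respects more self-contained. The paper splits into three $\tau$-regimes: for $0<\tau\le\tau_*$ it outsources energy stability to the external reference \cite{LTang20a} and asserts the uniform $H^s$ bound ``follows from discrete smoothing estimates''; for $\tau_*\le\tau\le\frac12$ it adapts Proposition \ref{prop_c3_1}, controlling the spectral error via the $L^1$ kernel bound $\|(\op{Id}-\tau\nu^2\Delta)^{-1}\Pi_{>N}\|_{L^1}\lesssim (1+\beta N^2)^{-1}\log^d(N+2)$ from Theorem \ref{thm2.18_00}, which is why the lower bound $\tau\ge\tau_*$ is needed there to keep $N_0$ independent of $\tau$; and for $\frac12\le\tau\le0.86$ it repeats Proposition \ref{prop3.18a}. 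You instead run a single coupled induction valid for all $0<\tau\le0.86$: the decomposition $u^{n+1}=Sf_\tau(u^n)+\tau S\Pi_{>N}((u^n)^3)$ together with the positivity of the \emph{full} resolvent kernel and the bound $\|\Pi_{>N}g\|_\infty\lesssim N^{-(s-d/2)}\|g\|_{H^s}$ puts an explicit factor $\tau$ in front of the spectral error, which cancels against the contraction gain $c_0\tau$ of the cubic map, so no case split on $\tau$ is needed. This is essentially the mechanism of the paper's Proposition \ref{prop_c3_1_small} / Theorem \ref{17O3}, promoted to the role of the main engine. Your discrete-Duhamel argument for $\sup_n\|u^n\|_{H^s}\le U_1$ (splitting at $n-j\asymp1/\tau$, using $\sup_k\sqrt{m\tau}\,|k|(1+\tau k^2)^{-m}\lesssim1$ on the tail) is exactly the kind of discrete smoothing the paper invokes without writing out (compare the proof of \eqref{N27_1}); your version is a legitimate and complete replacement for the citation to \cite{LTang20a}. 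The arithmetic checks out: the block length must be taken to be a \emph{small} constant $\delta_0(s,d)$, not merely $O(1)$, so that the near-diagonal contribution is genuinely absorbed with coefficient $\frac12$, but this is a one-line adjustment of your ``tuned so that the time-length is $\asymp1$''.

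Two caveats. First, for $\frac12<\tau\le0.86$ your energy argument (like the paper's) only verifies $\frac1\tau+\frac12\ge\frac32\|u^n\|_\infty^2$ using the improved bound $\|u^n\|_\infty\le\frac23(1+\tau)^{3/2}/\sqrt{3\tau}+o(1)$ which is available only for $n\ge1$; at $n=0$ the hypothesis allows $\|u^0\|_\infty=\sqrt{5/3}$, and then $\frac1\tau+\frac12<\frac32\cdot\frac53$ for $\tau$ near $0.86$, so the discrete energy identity does not by itself give $E(u^1)\le E(u^0)$. This gap is inherited from the paper (Proposition \ref{prop3.18a} assumes the stronger hypothesis $\|u^0\|_\infty\le\frac23(1+\tau)^{3/2}/\sqrt{3\tau}$), so it is not a defect of your route specifically, but you should flag it rather than assert monotonicity ``for all $n\ge0$'' in that regime. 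Second, your claim $\max_{|x|\le\sqrt{5/3}}|f_\tau(x)|\le\sqrt{5/3}-c_0\tau$ uniformly for $\tau\le0.86$ should be justified with the explicit computation $(1+\tau)^3\le\frac{45}{4}\tau$ on $[\frac14,0.86]$ (true with margin); once that is written down the rest of your $L^\infty$ induction closes cleanly.
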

\begin{rem}
For $0<\nu \ll 1$,
the dependence of $N_0$ on $\nu$ is only power like which is a very mild constraint in practice. 
In dimension $d=1$ we have $N_0= O( \nu^{-1} (\log (\frac 1\nu +1))^{\frac 12})$ 
for $\tau = O(1)$ as shown
in the previous section. 
For $0<\tau\ll 1$, by using concurrently the energy conservation, 
a rough estimate gives  $\|u^n\|_{\infty} =O(\nu^{-\frac 14})$ which yields
$\tau_*= \nu^{\frac 12}$. This in turn renders the estimate
 $N_0= O( \nu^{-1.25} (\log (\frac 1 \nu+1))^{\frac 12})$
for $d=1$.
In dimension $d\ge 2$ this dependence gets a bit worse since we have
to use discrete smoothing estimates.
\end{rem}
\begin{proof}[Proof of Theorem \ref{17O2}]
We proceed in several steps.

1. By  using the theory developed in \cite{LTang20a},  it is not difficult to check that for some
$0<\tau_*=\tau_*(s,\nu, u_0,d)\le \frac 12$, we have energy stability $E(u^{n+1})\le E(u^n)$ for all 
$0<\tau \le \tau_*$ and $n\ge 0$.
Furthermore the uniform in time $H^s$ estimate follows from discrete smoothing estimates.

2. We now consider the regime $\tau_*\le \tau \le \frac 12$.  Since we assumed
$\|u^0\|_{\infty} \le \sqrt{\frac 53}$ and $\tau\ge \tau_*$, we can adapt the proof in Proposition \ref{prop_c3_1}. 
Observe that by Theorem \ref{thm2.18_00},
 the kernel $(1-\tau \nu^2\Delta )^{-1} \Pi_{>N}$ has a bound 
$N^{-2} \nu^{-2}  \log^d (N+2)$. We then clearly have (for $N$ sufficiently large)
\begin{align*}
\sup_{n\ge 0} \|u^n\|_{\infty} \le \sqrt{\frac 53}.
\end{align*}
It is obvious that
\begin{align*}
\frac 1 {\tau} + \frac 12 \ge 2.5 \ge \frac 32 \sup_{n\ge 0} \|u^n\|_{\infty}^2.
\end{align*}
Thus we have $E(u^{n+1}) \le E(u^n)$ for all $n\ge 0$.

3. Finally we consider the regime $\frac 12 \le \tau \le 0.86$. In this case we essentially repeat
the proof of Proposition \ref{prop3.18a}. Note that
\begin{align*}
\frac 12 (-2 + (9 - 3 \sqrt{6} )^{\frac 13} + (9 + 3\sqrt 6)^{\frac 13})
\approx 0.860018>0.86.
\end{align*}
The desired result clearly follows.

4. The uniform $H^s$ estimate for $\tau_*\le \tau\le 0.86$ follows easily from smoothing estimates.
We omit further details.
\end{proof}

\begin{thm}[Almost sharp maximum principle for $0<\tau\le \frac 12$] \label{17O3}
Consider \eqref{17O1} with $u_0 \in H^s(\mathbb T^d)\cap H^1(\mathbb T^d)$, $s>\frac d2$ and $d\le 3$. Let $0<\tau \le \frac 12$.
Assume $\|u^0\|_{\infty}=\|\Pi_N u_0\|_{\infty} \le 1+\alpha_0$ for some $0\le \alpha_0 \le \sqrt{\frac 53}-1$ 
(note that $\sqrt{\frac 53} -1 \approx 0.29099$). Then for $N\ge N_1=N_1(s,d,\nu, u_0)$, we have
\begin{align*}
\|u^n\|_{\infty} \le 1+ (1-2\tau)^n \alpha_0+  N^{-(s-\frac d2)} C_{s,d,\nu, u_0}, \qquad\forall\, n\ge 1, 
\end{align*}
where $C_{s, d, \nu,  u_0}>0$ depends only on ($s$, $d$, $\nu$, $u_0$).
\end{thm}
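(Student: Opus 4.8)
The plan is to reduce everything to an upper bound for $\max_{\mathbb T^d}(u^n-1)$. Since $p(x)=(1+\tau)x-\tau x^3$ is odd, the sequence $-u^n$ solves the scheme \eqref{17O1} with initial datum $-u^0$, so applying such a one‑sided bound to both $u^n$ and $-u^n$ yields the two‑sided control of $\|u^n\|_\infty$; note $\|u^0\|_\infty\le 1+\alpha_0$ gives $\max_{\mathbb T^d}(u^0-1)\le\alpha_0$ and $\max_{\mathbb T^d}(-u^0-1)\le\alpha_0$. First I would invoke Theorem \ref{17O2}: its hypotheses ($u_0\in H^s(\mathbb T^d)\cap H^1(\mathbb T^d)$, $s>\frac d2$, $d\le3$, $0<\tau\le\frac12\le0.86$, and $\|u^0\|_\infty\le1+\alpha_0\le\sqrt{5/3}$) are exactly those available here, so it furnishes energy stability together with the uniform bounds $\sup_n\|u^n\|_{L^\infty}\le\sqrt{5/3}$ and $\sup_n\|u^n\|_{H^s}\le U_1$ with $U_1=U_1(s,d,\nu,u_0)$. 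In particular $3+\eta^n=2+u^n\ge2-\sqrt{5/3}>0$, which is what makes the maximum‑principle step legitimate.

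Next I would run the $d$‑dimensional analogue of Propositions \ref{prop_c3_1} and \ref{prop_tmp_prop3.3a}. Writing $\eta^n=u^n-1$ and using $u^{n+1}=(\op{Id}-\tau\nu^2\Delta)^{-1}\Pi_N(p(u^n))$, $p(1)=1$, $\Pi_N1=1$, $\Pi_N\eta^n=\eta^n$, one obtains
\begin{align*}
(\op{Id}-\tau\nu^2\Delta)\eta^{n+1}=(1-2\tau)\eta^n-\tau(3+\eta^n)(\eta^n)^2+\tau\,\Pi_{>N}\bigl(3(\eta^n)^2+(\eta^n)^3\bigr).
\end{align*}
The operator $(\op{Id}-\tau\nu^2\Delta)^{-1}$ has a nonnegative periodized Bessel kernel of unit $L^1$ mass, hence is order‑preserving and an $L^\infty$‑contraction; since $1-2\tau\ge0$ and $3+\eta^n\ge0$, the first two terms on the right are $\le(1-2\tau)\alpha_n$ pointwise, with $\alpha_n:=\max_{\mathbb T^d}\eta^n$, so their image is $\le(1-2\tau)\alpha_n$, while the last term is controlled in $L^\infty$ by $\tau\|\Pi_{>N}(3(\eta^n)^2+(\eta^n)^3)\|_{L^\infty}$. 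For this I would use, for $s>\frac d2$,
\begin{align*}
\|\Pi_{>N}g\|_{L^\infty}\le\Bigl(\sum_{|k|_\infty>N}\langle k\rangle^{-2s}\Bigr)^{1/2}\|g\|_{H^s}\lesssim_{s,d}N^{-(s-\frac d2)}\|g\|_{H^s},
\end{align*}
together with the algebra property of $H^s(\mathbb T^d)$ ($s>\frac d2$) and $\sup_n\|u^n\|_{H^s}\le U_1$ to get $\|3(\eta^n)^2+(\eta^n)^3\|_{H^s}\le C_{s,d,\nu,u_0}$. This produces the recursion
\begin{align*}
\alpha_{n+1}\le(1-2\tau)\alpha_n+\tau\,C_{s,d,\nu,u_0}\,N^{-(s-\frac d2)}.
\end{align*}

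Iterating (geometric factor $1-2\tau\in[0,1)$ as $\tau\le\frac12$) gives $\alpha_n\le(1-2\tau)^n\alpha_0+\frac{1-(1-2\tau)^n}{2}C_{s,d,\nu,u_0}N^{-(s-\frac d2)}\le(1-2\tau)^n\alpha_0+\frac12 C_{s,d,\nu,u_0}N^{-(s-\frac d2)}$, and the same bound for $\max_{\mathbb T^d}(-u^n-1)$; combining them and renaming the constant yields the asserted estimate for all $n\ge1$, where $N_1$ is taken at least as large as the threshold $N_0$ of Theorem \ref{17O2}. The main obstacle is not this elementary iteration but the appeal to Theorem \ref{17O2}: the uniform‑in‑$n$ $H^s$ bound rests on discrete energy stability, itself proved by splitting $0<\tau\le\frac12$ into a small‑step regime $0<\tau\le\tau_*(s,d,\nu,u_0)$ (handled via the theory of \cite{LTang20a} and discrete parabolic smoothing) and $\tau_*\le\tau\le\frac12$, which is where the $N_1$‑threshold and the $(s,d,\nu,u_0)$‑dependence of all constants originate. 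A secondary point requiring care is that the spectral error must be routed through the $H^s$ norm as above, uniformly in $\tau\in(0,\frac12]$: the kernel $L^1$‑bound of Theorem \ref{thm2.18_00} degrades like $(\tau\nu^2N^2)^{-1}\log^d(N+2)$ and would force $N$ to depend on $\tau$, which the statement does not permit.
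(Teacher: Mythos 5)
Your proposal is correct and follows essentially the same route as the paper: invoke Theorem \ref{17O2} for the uniform $H^s$ bound, write the equation for $\eta^{n}=u^{n}-1$ (and for $-u^{n}$ by oddness of $p$), use positivity and unit mass of the untruncated resolvent kernel together with $1-2\tau\ge 0$ and $3+\eta^n\ge 0$, control the tail via $\|\Pi_{>N}g\|_{\infty}\lesssim N^{-(s-\frac d2)}\|g\|_{H^s}$ and the $H^s$ algebra property, and iterate the resulting contraction. Your closing remark about why the spectral error must be routed through $H^s$ rather than the $\tau$-dependent kernel bound is exactly the point the paper makes in motivating this argument.
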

\begin{proof}[Proof of Theorem \ref{17O3}]
First by using Theorem \ref{17O2}, we clearly have (in this part we determine $N_0$)
\begin{align*}
\sup_{n\ge 0} \|u^n\|_{H^s(\mathbb T^d)} \lesssim 1.
\end{align*}
We then adapt the $L^{\infty}$-part of the proof of Proposition \ref{prop_c3_1_small} and also
the proof of Corollary \ref{cor_c3_1_small}. The
key is to observe that
\begin{align*}
\| \Pi_{>N} g \|_{L^{\infty}(\mathbb T^d)} \lesssim N^{-(s-\frac d2)}
\| g\|_{H^s(\mathbb T^d) }.
\end{align*}
Thus 
\begin{align*}
\max \eta^{n+1} &\le (1-2\tau)\max \eta^n + \tau \| \Pi_{>N} (3 (\eta^n )^2 + (\eta^n)^3 ) \|_{\infty}
\notag \\
&\le (1-2\tau) \max \eta^n + \tau\cdot N^{-(s-\frac d2)} \cdot \operatorname{const}.
\end{align*}
The desired result clearly holds. Note that we allow the full range
$\alpha_0 \in [0,\sqrt{\frac 53} -1]$ (i.e. we do not insist $\alpha_0>0$) since we record explicitly
the spectral error term. 
\end{proof}

\begin{thm}[$L^{\infty}$-stability for $\frac 12 < \tau \le 2$] \label{17O4}
Consider \eqref{17O1} with $d\le 3$ and $\frac 12 <\tau \le 2$. Then the following hold:
\begin{enumerate}
\item 
Let $\frac 12 <\tau<2-\epsilon_0$ for some $0<\epsilon_0\le 1$.  Denote $M_0=
\frac 12 (\frac {(1+\tau)^{\frac 32}} {\sqrt{3\tau}}\cdot \frac 23+
\sqrt{\frac {2+\tau}{\tau}})$. 
If $\|u^0\|_{\infty} \le M_0$ and $N\ge N_2=N_2(\nu)=
C(\epsilon_0) \cdot \nu^{-1} (|\log {\nu} |)^{\frac d2}$ when $0<\nu\ll 1$ ($C(\epsilon_0)$
is a constant depending only on $\epsilon_0$), then
\begin{align*}
\sup_{n\ge 0} \|u^n\|_{\infty} \le M_0.
\end{align*}

\item Let $d=1,2$. 
 For any
$\frac {(1+\tau)^{\frac 32}} {\sqrt{3\tau}}
\cdot \frac 23 \le \alpha \le \sqrt{\frac {2+\tau}{\tau}} $, if $\|u^0\|_{\infty} \le \alpha$ and
$N\ge N_3=N_3(\nu)=O(e^{\frac {\operatorname{const}} {\nu} }) $, then
\begin{align*}
\sup_{n\ge 0} \|u^n\|_{\infty} \le \alpha.
\end{align*}

\end{enumerate}
\end{thm}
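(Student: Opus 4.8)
The plan is to establish Theorem \ref{17O4} as the multi-dimensional counterpart of Propositions \ref{prop3.14a} and \ref{prop3.14_tmp00a}, reducing both statements to the scalar iteration lemmas (Lemma \ref{lem_polycubic3} and Lemma \ref{lem_cubic_iteration}) via the $d$-dimensional linear maximum-principle estimates collected in Theorem \ref{thm2.18_00}. Throughout, rewrite \eqref{17O1} as $u^{n+1} = (\operatorname{Id} - \tau\nu^2\Delta)^{-1}\Pi_N\bigl(p(u^n)\bigr)$ with $p(x) = (1+\tau)x - \tau x^3$, and split the right-hand side as $(\operatorname{Id}-\tau\nu^2\Delta)^{-1}p(u^n) + T_{>N,\tau\nu^2}\,p(u^n)$, where $T_{>N,\beta} = (\operatorname{Id}-\beta\Delta)^{-1}(\operatorname{Id}-\Pi_N^{(d)})$. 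Since $(\operatorname{Id}-\tau\nu^2\Delta)^{-1}$ has a positive convolution kernel of unit mass, the first term contributes at most $\|p(u^n)\|_\infty$, so the whole analysis reduces to controlling the tail term.

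For statement (1), apply the effective maximum principle of Theorem \ref{thm2.18_00}(2): the kernel of $T_{>N,\tau\nu^2}$ has $L^1$ norm $\le \frac{c_3}{1+\tau\nu^2 N^2}(\log(N+2))^d$, hence
\begin{align*}
\|u^{n+1}\|_\infty \le \Bigl(1 + \frac{c_3(\log(N+2))^d}{1+\tau\nu^2 N^2}\Bigr)\,\|p(u^n)\|_\infty .
\end{align*}
Set $\alpha_n = \|u^n\|_\infty$ and run an induction with hypothesis $\alpha_n \le M_0$: under it $\max_{|x|\le\alpha_n}|p(x)| \le C(\tau)$ with $C(\tau)$ bounded uniformly for $\tfrac12 < \tau < 2$ (as $M_0 \le \sqrt5$), so $\alpha_{n+1} \le \max_{|x|\le\alpha_n}|p(x)| + \eta$ with $\eta = C(\tau)\,c_3(\log(N+2))^d/(1+\tau\nu^2 N^2)$. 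Since $\tau > \tfrac12$ forces $1 + \tau\nu^2 N^2 \gtrsim \max\{1,\nu^2 N^2\}$, choosing $N \ge N_2(\nu) = C(\epsilon_0)\,\nu^{-1}(|\log\nu|)^{d/2}$ makes $\eta \le \eta_0(\epsilon_0)$, the threshold of Lemma \ref{lem_cubic_iteration}(2) (whose $\alpha_0$ is precisely $M_0$); that lemma then returns $\alpha_{n+1} \le M_0$, closing the induction since $\alpha_0 = \|u^0\|_\infty \le M_0$.

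For statement (2), with $d \in \{1,2\}$, invoke instead the sharp maximum principle of Theorem \ref{thm2.18_00}(3): for $N \ge N_0(\tau\nu^2, d)$ the kernel of $(\operatorname{Id}-\tau\nu^2\Delta)^{-1}\Pi_N^{(d)}$ is positive with unit mass, so $\|u^{n+1}\|_\infty = \|(\operatorname{Id}-\tau\nu^2\Delta)^{-1}\Pi_N^{(d)}p(u^n)\|_\infty \le \|p(u^n)\|_\infty \le \max_{|x|\le\|u^n\|_\infty}|p(x)|$. For $\tfrac12 \le \tau \le 2$ and $\tfrac{(1+\tau)^{3/2}}{\sqrt{3\tau}}\cdot\tfrac23 \le \alpha \le \sqrt{\tfrac{2+\tau}{\tau}}$, Lemma \ref{lem_polycubic3} gives $\max_{|x|\le\alpha}|p(x)| \le \alpha$; hence $\|u^n\|_\infty \le \alpha$ propagates by induction from $\|u^0\|_\infty \le \alpha$. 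Since for $\tfrac12 < \tau \le 2$ one has $\sqrt{\tau\nu^2} = \nu\sqrt\tau \sim \nu$, the threshold $N_0(\tau\nu^2,d)$ is of the form $O(e^{\operatorname{const}/\nu})$, which matches the stated $N_3(\nu)$.

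The content here is largely bookkeeping, as all the hard analysis resides in Theorem \ref{thm2.18_00} and the cubic iteration lemmas. The point deserving care is the self-improving (bootstrap) structure of statement (1): the size of the spectral error $\eta$ is controlled only under the a priori bound $\alpha_n \le M_0$, which is exactly the conclusion we wish to propagate, so the induction must be organized so that the hypothesis at level $n$ both bounds $\max_{|x|\le\alpha_n}|p(x)|$ and feeds Lemma \ref{lem_cubic_iteration}(2) to yield level $n+1$; this also forces us to check that $C(\tau)$, and hence $N_2$, depend only on $\epsilon_0$ (through $\eta_0(\epsilon_0)$) and not degenerately on $\tau$ near the endpoints. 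A secondary remark is that statement (2) genuinely cannot be pushed to $d = 3$ by this route, since the positivity of $K_{N,\tau\nu^2}$ in Theorem \ref{thm2.18_00}(3) is only available for $d = 1,2$.
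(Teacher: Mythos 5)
Your proposal is correct and follows exactly the route the paper takes: for statement (1) it reproduces the argument of Proposition \ref{prop3.14_tmp00a} with the splitting $u^{n+1}=(\operatorname{Id}-\tau\nu^2\Delta)^{-1}p(u^n)+T_{>N,\tau\nu^2}\,p(u^n)$, the $L^1$ tail bound from Theorem \ref{thm2.18_00} and Lemma \ref{lem_cubic_iteration}(2); for statement (2) it reproduces Proposition \ref{prop3.14a} using the sharp maximum principle of Theorem \ref{thm2.18_00}(3) for $d=1,2$ together with Lemma \ref{lem_polycubic3}. The paper's own proof is only a two-sentence pointer to these ingredients, and your write-up supplies the same details (including the uniformity in $\tau$ and the bootstrap structure of the induction) correctly.
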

\begin{rem}
For $\tau=2$ and $N$ not large, there are counterexamples as shown in Proposition
\ref{prop_tau2_tmp001}.
\end{rem}

\begin{proof}[Proof of Theorem \ref{17O4}]
For the first statement, we repeat the proof of Proposition \ref{prop3.14_tmp00a}. Note that
for general dimension $d\le 3$, we use Theorem \ref{thm2.18_00} to control the operator
$(\operatorname{I}-\nu^2 \tau \Delta)^{-1} (\operatorname{Id}-\Pi_N)$. The second statement follows from a
similar argument as in Proposition \ref{prop3.14a}. Here we also use the sharp
maximum principle established in  Theorem \ref{thm2.18_00} for dimension $d=1,2$.
\end{proof}

We now consider the pure spectral truncation for Allen-Cahn on the torus $\mathbb T=[0,1)^d$ in dimensions $d\ge 1$:
\begin{align} \label{17O5}
\begin{cases}
\partial_t u = \nu^2 \Delta u -\Pi_N ( u^3- u),
\quad t>0, \\
u\Bigr|_{t=0} =v_0=\Pi_N u_0,
\end{cases}
\end{align}
where $\nu>0$.
\begin{thm}[Maximum principle for the continuous in time system with spectral
truncation] \label{17O6}
Consider \eqref{17O5} with $\nu>0$, $d\ge 1$. Assume $\|u_0\|_{\infty} \le 1$ and 
$u_0 \in H^s(\mathbb T^d)$, $s>\frac d2$.
Then for $N\ge N_0=N_0(s, d,\nu,  u_0)$, we have
\begin{align*}
\sup_{0\le t<\infty} \|u(t, \cdot)\|_{\infty} 
\le  1+N^{-\gamma_0} C_1,
\end{align*}
where $\gamma_0=\min\{s-\frac d2, 1\}$, and $C_1>0$ depends only on
($s$, $d$, $\nu$, $u_0$).
\end{thm}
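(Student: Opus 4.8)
The plan is to combine the $L^2$-gradient-flow structure of \eqref{17O5} with the $L^1$-type control of Section~2, but implemented through a Duhamel formula rather than a resolvent: the decisive point is to split $\Pi_N=\operatorname{Id}-\Pi_{>N}$ \emph{after} applying the heat semigroup, so that the quadratic part of the nonlinearity is seen by the full, positivity-preserving semigroup $e^{\tau\nu^2\Delta}$. Since $v_0=\Pi_N u_0\in S_N$ and $\nu^2\Delta$, $\Pi_N$ are Fourier multipliers, $u(t)\in S_N$ for all $t$; testing \eqref{17O5} against $u$ and using $-\|u\|_4^4+\|u\|_2^2\lesssim1$ gives $\tfrac{d}{dt}\|u\|_2^2\le C$, hence a global smooth solution on $[0,\infty)$. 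Moreover, exactly as in Theorem~\ref{17O2} (energy dissipation $\tfrac{d}{dt}\mathcal E(u)=-\|\partial_t u\|_2^2\le0$, which is valid because $\partial_t u\in S_N$ so the projection in the pairing is harmless, followed by parabolic smoothing), one gets the $N$-independent bound $\sup_{t\ge0}\|u(t)\|_{H^s(\mathbb T^d)}\le U_1(s,d,\nu,u_0)$; here $s>\tfrac d2$ is used both to make $H^s$ an algebra (so $u^3\in H^s$) and to make the tail estimate $\|\Pi_{>N}h\|_{L^\infty}\lesssim N^{-(s-d/2)}\|h\|_{H^s}$ available.

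For the $L^\infty$ bound, write $u=1+\eta$ to bound $\max u$ from above and $u=-1+\zeta$ to bound $\min u$ from below; both $\eta,\zeta\in S_N$, and a direct computation (using $\Pi_N\eta=\eta$, $A:=-\nu^2\Delta$, $A1=0$) gives
\begin{align*}
\partial_t\eta+(A+2)\eta=-\Pi_N\big(3\eta^2+\eta^3\big),\qquad \partial_t\zeta+(A+2)\zeta=\Pi_N\big(3\zeta^2-\zeta^3\big).
\end{align*}
On $S_N$ one has $e^{-t(A+2)}g=e^{-2t}e^{t\nu^2\Delta}g$, and $e^{\tau\nu^2\Delta}$ commutes with $\Pi_N$; hence Duhamel followed by $\Pi_N=\operatorname{Id}-\Pi_{>N}$ yields
\begin{align*}
\eta(t)=e^{-2t}e^{t\nu^2\Delta}\eta(0)-\int_0^t e^{-2(t-s)}e^{(t-s)\nu^2\Delta}\big(3\eta^2+\eta^3\big)(s)\,ds+\int_0^t e^{-2(t-s)}\Pi_{>N}e^{(t-s)\nu^2\Delta}\big(3\eta^2+\eta^3\big)(s)\,ds.
\end{align*}
On any interval on which $\|u\|_\infty\le\tfrac32$ we have $3+\eta\ge\tfrac32>0$, so $3\eta^2+\eta^3=\eta^2(3+\eta)\ge0$ and, by positivity of $e^{\tau\nu^2\Delta}$, the middle term is $\le0$ pointwise. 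For the first term, $\eta(0)=\Pi_N u_0-1\le\|\Pi_{>N}u_0\|_\infty\le CN^{-(s-d/2)}\|u_0\|_{H^s}$ pointwise (since $u_0\le1$), and $e^{t\nu^2\Delta}$ preserves this upper bound; for the last term, $\|\Pi_{>N}e^{(t-s)\nu^2\Delta}h(s)\|_\infty\le CN^{-(s-d/2)}\sup_t\|h(t)\|_{H^s}$ with $h=3\eta^2+\eta^3$, and the uniform $H^s$ bound controls the right-hand side by a constant. Carrying out the harmless time integral ($\int_0^t e^{-2(t-s)}\,ds\le\tfrac12$) gives $\max u(t)\le1+CN^{-(s-d/2)}$; the same argument for $\zeta$ (now $3-\zeta\ge\tfrac32>0$, so $3\zeta^2-\zeta^3\ge0$) gives $\min u(t)\ge-1-CN^{-(s-d/2)}$.

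Finally, a continuity/bootstrap argument closes the loop: set $T^\ast=\sup\{T:\|u(t)\|_\infty\le\tfrac32 \text{ for } t\le T\}$, which is positive because $\|u(0)\|_\infty=\|\Pi_N u_0\|_\infty\le1+CN^{-(s-d/2)}$; on $[0,T^\ast)$ the sign conditions above hold, so the previous step yields $\|u(t)\|_\infty\le1+CN^{-(s-d/2)}$ there, and choosing $N\ge N_0(s,d,\nu,u_0)$ so large that $CN^{-(s-d/2)}<\tfrac12$ forces $T^\ast=\infty$. Since $N^{-(s-d/2)}\le N^{-\gamma_0}$ for $N\ge1$ with $\gamma_0=\min\{s-\tfrac d2,1\}$, this is the asserted estimate, with $C_1=C_1(s,d,\nu,u_0)$. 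I expect the main obstacle to be the bookkeeping in the first two paragraphs: keeping the "good" sign of the quadratic term after passing to the full heat semigroup, which dictates the exact placement of the $\Pi_N=\operatorname{Id}-\Pi_{>N}$ splitting and uses $\Pi_N e^{\tau\nu^2\Delta}=e^{\tau\nu^2\Delta}\Pi_N$, together with establishing the $N$-uniform $H^s$ bound that makes the spectral-tail term genuinely $O(N^{-(s-d/2)})$.
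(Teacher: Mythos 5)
Your argument is correct in substance but follows a genuinely different route from the paper. The paper first establishes a crude a priori bound $\sup_t\|u(t)\|_\infty\le\beta_0:=B_0\|u_0\|_{H^s}+2$ by evaluating the equation at the first touching time $T_*$ and at a spatial maximum point $x_*$: there $\partial_t u=0$, $\Delta u\le 0$, and $u-u^3\le\beta_0-\beta_0^3\le-6$, which contradicts the $O(N^{-1})$ bound on $\|\Pi_{>N}(u^3-u)\|_\infty$ obtained by bootstrapping a gradient bound on $[T_0,T_*]$; it then reruns the same pointwise argument near the level $1$ to get the refined estimate, which is why its exponent is capped as $\gamma_0=\min\{s-\tfrac d2,1\}$ (the $N^{-1}$ comes from using only $\|\nabla u\|_\infty$ for large times). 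You instead write $u=\pm1+$ perturbation, pass to Duhamel, and exploit the positivity of $e^{\tau\nu^2\Delta}$ together with the favorable sign of $\eta^2(3+\eta)$ (valid once $\|u\|_\infty\le\tfrac32$; note $3+\eta\ge\tfrac12$, not $\tfrac32$, but positivity is all you need) to discard the main nonlinear term, leaving only the initial layer and the spectral tail. This is a clean alternative; it avoids touching-time arguments entirely, and your exponent $s-\tfrac d2$ is at least as good as $\gamma_0$.

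The one step you should rearrange is the $N$-independent bound $\sup_{t\ge0}\|u(t)\|_{H^s}\le U_1$, which you cite as following \emph{exactly} from Theorem \ref{17O2}. That theorem concerns the time-discrete scheme in $d\le3$ and assumes $u_0\in H^1$; here $d\ge1$ is arbitrary and, for $d=1$ with $\tfrac12<s<1$, $u_0$ need not lie in $H^1$, so $\mathcal E(\Pi_Nu_0)$ need not be bounded uniformly in $N$ and the "energy dissipation then parabolic smoothing" chain does not launch. The fix is to derive the uniform $H^s$ bound \emph{inside} your continuity argument: on $[0,T^*)$ you already have $\|u\|_\infty\le\tfrac32$ by definition of $T^*$, so the local theory gives $\|u(t)\|_{H^s}\le2\|u_0\|_{H^s}$ on $[0,T_0]$ and the mild formulation on unit time intervals (using $\|u-u^3\|_{L^2}\lesssim1$ from the $L^\infty$ control) gives the uniform $H^s$ bound for $t\ge T_0$ in every dimension. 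With that reordering the bootstrap closes and the proof is complete.
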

\begin{rem}
The assumption $u_0 \in H^s$ is needed so that we can have a uniform choice of the spectral
cut-off $N_0$.  One can also use Theorem \ref{17O3} and  take $\tau\to 0$ to derive the 
result for $d\le 3$. 
\end{rem}
\begin{proof}[Proof of Theorem \ref{17O6}]
%Since $\Pi_N$ is the product of one-dimensional Fourier multipliers, it is not difficult to verify that
%\begin{align*}
%\sup_{N\ge 1} \| \Pi_N g \|_q \le C_{q,d} \| g\|_q,  \qquad\forall\, 1<q<\infty,
%\end{align*}
%where $C_{q,d}$ depends only on ($q$, $d$). 

First observe that for some $T_0=T_0(s,d,\nu, \|u_0\|_{H^s})>0$, we can construct a unique
local solution which satisfies
\begin{align*}
&\max_{0\le t \le T_0} \|u (t) \|_{H^s} \le 2 \|u_0\|_{H^s}, \\
&\max_{0\le t \le T_0} \|u (t) \|_{\infty} \le  B_0 \|u_0\|_{H^s},
\end{align*}
where $B_0>0$ depends only on ($s$, $d$). By using Energy conservation and taking advantage
of the Fourier projection, it is clear that the local solution can be extended for all time 
(the bounds on various norms will depend on $N$ without further estimates). 
Note that $u \in C_t^0 L_x^{\infty}$, and in particular $\|u(t)\|_{\infty}$ is a continuous function
of $t$.  We first show that if $N\ge N_0=N_0(s,d,\nu, u_0)$ (the choice
of $N_0$ will become clear momentarily)
\begin{align} \label{17O7}
\sup_{0\le t <\infty} \| u(t) \|_{\infty} \le B_0 \| u_0 \|_{H^s} +2 =:\beta_0.
\end{align}
Indeed assume that $T_*>T_0$ is the first moment that $\|u(t) \|_{\infty}$ achieves
the upper bound. If $T_*$ does not exist, we are done. Otherwise $T_0<T_*<\infty$. 
By a simple bootstrapping estimate, we have
\begin{align*}
\max_{T_0\le t \le T_*} \| \nabla u(t) \|_{\infty} \le B_1,
\end{align*}
where $B_1>0$ depends only on ($s$, $d$, $\nu$, $u_0$). 
It follows that 
\begin{align*}
\max_{T_0\le t\le T_*} \| \Pi_{>N} (u^3-u) \|_{\infty} \le N^{-1} B_2,
\end{align*}
where $B_2>0$ depends only on ($s$, $d$, $\nu$, $u_0$).  Now without loss of generality
we can assume $u(T_*,x_*) =\beta_0$ for some $x_*\in \mathbb T^d$. Clearly
we have $(\partial_t u)(T_*,x_*) = 0$. On the other hand, we have
\begin{align*}
 & \nu^2 (\Delta u)(T_*,x_*) + \Pi_N ( u -u^3) \Bigr|_{(T_*, x_*) } \notag \\
 \le &\;   \beta_0 -\beta_0^3 + N^{-1} B_2  \le -6 +N^{-1} B_2 <0,
 \end{align*}
if $N\ge B_2$. 

Thus \eqref{17O7} holds. Furthermore by a bootstrapping estimate, we have
\begin{align*}
&\sup_{T_0\le t<\infty} \| \Pi_{>N} (u^3-u) \|_{\infty} \le N^{-1} B_3, \notag \\
&\sup_{0\le t \le T_0} \| \Pi_{>N} (u^3-u) \|_{\infty} \le N^{-(s-\frac d2)} B_4, 
\end{align*}
where $B_3>0$, $B_4>0$ depend only on ($s$, $d$, $\nu$, $u_0$). Also observe that
\begin{align*}
\| v_0\|_{\infty} \le 1+ O(N^{-(s-\frac d2)}).
\end{align*}
Our desired result then follows by running again the maximum principle argument
(similar to the proof of \eqref{17O7}) and choosing $N$ sufficiently large. 
\end{proof}

We now drop the assumption $u_0 \in H^s$ and derive another form of the maximum principle.

\begin{thm}[Maximum principle for the continuous in time system with spectral
truncation] \label{17O8}
Consider \eqref{17O5} with $\nu>0$, $d\ge 1$. Assume $\|u_0\|_{\infty} \le 1$.
Then for $N\ge N_1=N_1( d,\nu,  u_0)$, we have
\begin{align*}
\sup_{0\le t<\infty} \|u(t, \cdot)\|_{\infty} 
\le  \max\{\|v_0\|_{\infty}, 1\}+N^{-\frac 12} C_2,
\end{align*}
where $C_2>0$ depends only on
($d$, $\nu$, $u_0$).
\end{thm}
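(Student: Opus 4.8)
The plan is to imitate the proof of Theorem~\ref{17O6}, but since $u_0$ need no longer lie in $H^s$ the required time-uniform Sobolev control on the solution has to be \emph{manufactured} by parabolic smoothing rather than inherited from the data; the price is the weaker rate $N^{-1/2}$ (which, for $d\ge 2$, cannot come from merely ``gaining one derivative''). First I would record the elementary facts: since $v_0=\Pi_N u_0$ is a trigonometric polynomial, \eqref{17O5} is a polynomial ODE on the finite-dimensional space $S_N$, so there is a unique smooth solution with $u(t)\in S_N$ for all $t$; testing against $u$ and using $\Pi_N u=u$ together with $\|u\|_{L^4}^4\ge\|u\|_{L^2}^4$ gives $\tfrac{d}{dt}\|u\|_2^2\le 2\|u\|_2^2(1-\|u\|_2^2)$, hence $\sup_t\|u(t)\|_2\le\max\{\|v_0\|_2,1\}\le\|u_0\|_\infty\le 1$; and the energy $E(u)=\tfrac12\nu^2\|\nabla u\|_2^2+\tfrac14\|u^2-1\|_2^2$ is non-increasing, exactly as in the Introduction, because $\Pi_{>N}(u^3-u)$ is orthogonal to $S_N\ni\partial_t u$.

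Next I would isolate an initial layer $[0,T_0]$ with $T_0$ a small negative power of $N$, say $T_0\sim N^{-1/2}(\log (N+2))^{-4d}$. On this layer I would use the Duhamel formula $u(t)=e^{t\nu^2\Delta}v_0-\int_0^t e^{(t-s)\nu^2\Delta}\Pi_N(u^3-u)(s)\,ds$: the first term is bounded by $\|v_0\|_\infty$ because $e^{t\nu^2\Delta}$ is an $L^\infty$-contraction and $v_0$ is already band-limited; for the integral I would use $\|e^{\tau\nu^2\Delta}\Pi_N\|_{L^\infty\to L^\infty}\lesssim_d(\log (N+2))^d$ (or the sharper $1+\tfrac{c_d(\log(N+2))^d}{1+\tau\nu^2N^2}$, obtained by copying the argument behind Theorem~\ref{thm2.18_00}) together with the bootstrap bound $\|u\|_\infty\le\|v_0\|_\infty+1\lesssim_d(\log (N+2))^d$, which by the choice of $T_0$ yields $\|u(t)\|_\infty\le\|v_0\|_\infty+C_{d,\nu}N^{-1/2}$ for $t\le T_0$. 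Differentiating the same formula gives $\|u(T_0)\|_{H^1}\lesssim_{d,\nu}(T_0\nu^2)^{-1/2}\|v_0\|_2\lesssim_{d,\nu}N^{1/4}(\log (N+2))^{2d}$, and feeding this into the non-increasing energy produces the crucial \emph{time-uniform} bound $\sup_{t\ge T_0}\|u(t)\|_{H^1}\lesssim_{d,\nu}N^{1/4}(\log (N+2))^{2d}$.

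With that in hand I would run a regularity bootstrap: each application of $e^{(t-s)\nu^2\Delta}\colon H^{j-1}\to H^{j}$ costs a factor $((t-s)\nu^2)^{-1/2}$, the Kato--Ponce inequality gives $\|u^3-u\|_{H^{j-1}}\lesssim\|u\|_{H^{j-1}}(1+\|u\|_\infty^2)$, and $\|u\|_\infty\lesssim_d(\log (N+2))^d$, so over successive windows of length $T_0$ I obtain the time-uniform estimates $\sup_{t\ge\sigma T_0}\|u(t)\|_{H^j}\lesssim_{j,d,\nu}N^{j/4}(\log (N+2))^{c_{j,d}}$ for $j=1,\dots,\sigma$. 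Since $u(t)^3$ is supported in $\{|k|_\infty\le 3N\}$, a plain convolution/Cauchy--Schwarz estimate gives $\|\Pi_{>N}(u(t)^3)\|_\infty\lesssim_\sigma N^{d/2-\sigma}\|u(t)\|_{H^\sigma}^3$, which with $\sigma=2d+4$ is $\lesssim_{d,\nu}N^{-1}(\log (N+2))^{c_d}\le N^{-1/2}$, \emph{uniformly in} $t\ge\sigma T_0$.

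Finally I would close with an ODE comparison for $m(t)=\|u(t)\|_\infty$ (the bound on $\min_x u$ being identical after replacing $u$ by $-u$). At a spatial point realizing the supremum, $D^+m(t)\le\partial_t u\le\nu^2\Delta u+\Pi_N u-\Pi_N(u^3)\le m-m^3+\|\Pi_{>N}(u(t)^3)\|_\infty$, using $\Delta u\le 0$ at an interior maximum and $\Pi_N(u^3)=u^3-\Pi_{>N}(u^3)$; for $t\ge\sigma T_0$ this reads $D^+m\le m-m^3+N^{-1/2}$, so $m$ can never exceed $\max\{m(\sigma T_0),1+N^{-1/2}\}$, because at the level $1+N^{-1/2}$ the right-hand side is strictly negative. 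Combining with the layer estimate gives $\sup_t\|u(t)\|_\infty\le\max\{\|v_0\|_\infty,1\}+C_{d,\nu}N^{-1/2}$, which is $<\|v_0\|_\infty+1$ for $N\ge N_1(d,\nu,u_0)$; hence the bootstrap hypothesis used in the layer step propagates and the estimate holds for all time. I expect the main obstacle to be the third step --- turning the short-time $H^1$ gain plus energy monotonicity into genuinely \emph{time-uniform} higher-Sobolev bounds whose $N$-growth ($\sim N^{j/4}$ up to logarithms) is small enough to be beaten by the factor $N^{d/2-\sigma}$ in the aliasing estimate --- together with the bookkeeping needed to keep every constant independent of $N$ despite $\|v_0\|_\infty$ possibly growing like $(\log N)^d$.
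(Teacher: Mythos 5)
Your argument is sound and reaches the stated $N^{-1/2}$ rate, but it runs on different machinery than the paper's proof of Theorem \ref{17O8}. The paper never leaves a fixed, $N$-independent local interval $[0,t_0]$: it builds the local theory in $L^q$ with $q>4d$ large, using the uniform bound $\sup_N\|\Pi_N\|_{L^q\to L^q}<\infty$ so that $\|v_0\|_q\lesssim \|u_0\|_q\le 1$, and extracts the smoothing estimates $\|u(t)-e^{t\nu^2\Delta}v_0\|_{\infty}\lesssim t^{0.99}$ and $t^{0.51}\|\nabla u(t)\|_q\lesssim 1$. The aliasing error is then killed by gaining a \emph{single} derivative in $L^q$, namely $\|\Pi_{>N}(u-u^3)\|_{\infty}\lesssim N^{-0.99}\,t_1^{-0.51}$ on $[t_1,t_0]$ and $\lesssim N^{-1}$ for $t\ge t_0$, and the layer width $t_1=N^{-49/51}$ is chosen to balance this against the linear-layer error $t_1^{0.9}$, producing $N^{-1/2}$. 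You instead stay in $L^2$ plus the energy, take an $N$-dependent layer $T_0\sim N^{-1/2}(\log N)^{-4d}$ on which the crude $\|\Pi_N\|_{L^\infty\to L^\infty}\lesssim(\log N)^d$ bound suffices, and then convert the short-time $H^1$ gain plus energy monotonicity into time-uniform $H^\sigma$ bounds of size $N^{\sigma/4}$ (up to logarithms), which are beaten by the $N^{d/2-\sigma}$ Fourier-tail gain. Both routes are viable; yours trades the $L^q$ multiplier theory for a longer Sobolev bootstrap and heavier bookkeeping — in particular you must iterate the layer estimate a bounded number of times and carry the $L^\infty$ bootstrap hypothesis through every window, and the ceiling of that hypothesis should be taken as $\max\{\|v_0\|_\infty,1\}+1$ rather than $\|v_0\|_\infty+1$, since when $\|v_0\|_\infty$ is very small your closing bound $\max\{\|v_0\|_\infty,1\}+CN^{-1/2}$ need not fall below $\|v_0\|_\infty+1$. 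That last point is a cosmetic fix, not a gap.
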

\begin{proof}[Proof of Theorem \ref{17O8}]
The proof is similar to that in Theorem \ref{17O6} and we shall only sketch the needed modifications.
Since $\Pi_N$ is the product of one-dimensional Fourier multipliers, it is not difficult to verify that
\begin{align*}
\sup_{N\ge 1} \| \Pi_N g \|_q \le C_{q,d} \| g\|_q,  \qquad\forall\, 1<q<\infty,
\end{align*}
where $C_{q,d}$ depends only on ($q$, $d$).  Fix $4d<q<\infty$ sufficiently large (the needed
largeness will become clear momentarily.) It is not difficult to construct a local solution in 
$C_t^0 L_x^q$. Moreover the solution can be extended globally in time.
By using the local wellposedness, we can find $t_0=t_0(\|u_0\|_q,d,\nu)>0$
sufficiently small such that  
\begin{align*}
& \sup_{0\le t \le t_0}  \|u (t) \|_q \le 2 \| v_0 \|_q \le C_{q,d} \|u_0\|_q \le C_{q,d}, \\
& \sup_{0\le t \le t_0} t^{-0.99} \| u(t) -e^{t \nu^2 \Delta} v_0\|_{\infty} \le B_1 , \\
& \sup_{0\le t \le t_0} t^{0.51} \| \nabla u(t) \|_q \le B_2, 
\end{align*}
where $B_1>0$, $B_2>0$ are constants depending on ($d$, $\nu$, $u_0$, $q$).
Clearly for $t_1<t_0$ ($t_1$ will be chosen later) and $q$ sufficiently large, we have
\begin{align} \label{17O9}
\sup_{t_1\le t\le t_0} \| \Pi_{>N} ( u(t)-u(t)^3) \|_{\infty} \le N^{-0.99} t_1^{-0.51} B_3,
\end{align}
where $B_3>0$ depends on ($d$, $\nu$, $u_0$). Note that after this step is finished we can fix the value
of $q$.   By using the smoothing estimates and similar maximum principle estimates as
used in Theorem \ref{17O6} we also have 
\begin{align} \label{17O10}
\sup_{t \ge  t_0} \| \Pi_{>N} ( u(t)-u(t)^3) \|_{\infty} \le N^{-1} B_4,
\end{align}
where $B_4>0$ depends on ($d$, $\nu$, $u_0$).

Now note that for $0\le t\le t_1$, we have 
\begin{align*}
\| u(t) \|_{\infty} \le \|v_0\|_{\infty} + B_1 t_1^{0.9}.
\end{align*}
Choosing $t_1= N^{-\frac {49}{51}}$ and running a simple maximum principle argument using
\eqref{17O9}, \eqref{17O10} then yields the result.
\end{proof}

\section{1D Allen-Cahn: fully discrete case using DFT } \label{S:DFT1}
In this section we consider the analysis of the 1D Allen-Cahn on the periodic torus
$\mathbb T=[0,1)$ using discrete Fourier transform. We discretize the domain 
$[0,1)$ using $x_j= \frac {j} N$, $j=0$, $1$, $\cdots$, $N-1$, where $N$ is taken to be an even
number.  In typical FFT simulations, $N$ is usually taken to be a dyadic number.
 We use $u^n=(u^n_0,
\cdots, u^n_{N-1})^T$ to
denote the approximation of $u(t,x_j)=u(t,\frac j N)$, $ t= n\tau$, $0\le j\le N-1$. We shall adopt the following
convention for discrete Fourier transform: 
\begin{align*}
& \tilde U_k = \frac 1 N \sum_{j=0}^{N-1} U_j e^{-2\pi i k \cdot \frac jN}; \\
& U_j = \sum_{k=-\frac N2+1}^{\frac N2} \tilde U_k e^{2\pi i k \cdot \frac jN},
\end{align*}
where $\tilde U=(\tilde U_0,\cdots, \tilde U_{N-1})^T$ is the (approximate) Fourier coefficient 
vector of
 the input data $U=(U_0,\cdots, U_{N-1})^T$.   Note that $\tilde U_{k\pm N} =\tilde U_k$ for any $k\in \mathbb Z$. The discrete Laplacian operator $\Delta_h$ 
 corresponds to the Fourier multiplier $-(2\pi k)^2$ (when $k$ is restricted to $-\frac N2<k \le\frac N2$).

 For $U=(U_0,\cdots, U_{N-1})^T \in \mathbb R^N$, $V=(V_0,\cdots, V_{N-1})^T\in \mathbb R^N$, we define the inner product
 \begin{align*}
 \langle U, V \rangle = \frac 1 N \sum_{j=0}^{N-1} U_j V_j.
 \end{align*}
 It is easy to check that 
 \begin{align*}
 \langle U, V \rangle = \sum_{-\frac N 2<k\le \frac N2} \tilde U_k \tilde V_{-k}
 \end{align*}
 which corresponds to the Plancherel formula.

 For any real-valued $g\in H^s(\mathbb T)$ with $s>\frac 12$, we have
 \begin{align*}
 \tilde g_k = \frac 1N \sum_{j=0}^{N-1} g(\frac jN) e^{-2\pi i k\cdot \frac jN}
 = \sum_{l\in \mathbb Z} \hat g(k+l N), \qquad -\frac N2 <k\le \frac N2.
 \end{align*}
 Note that the convergence of the series is not a problem thanks to Cauchy-Schwartz.
 We define 
 \begin{align*}
 (Q_N g)(x) &=\op{Re}\Bigl( \sum_{-\frac N2 <k\le \frac N2} \tilde g_k e^{2\pi i k \cdot x} \Bigr) 
 \notag \\
 &= \sum_{|k|<\frac N2} \tilde g_k e^{2\pi i k\cdot x}
 + \frac 12 \tilde g_{\frac N2} \cdot ( e^{2\pi i\cdot \frac N2x} + e^{-2\pi i \cdot \frac N2 x}).
 \end{align*}
 Here in the above we used the fact that  $\tilde g_k^*= \tilde g_{-k}$ 
 (here $*$ denotes complex conjugate) since $g$ is real-valued.
 In particular 
 \begin{align*}
 \tilde g_{-\frac N2}=\tilde g_{\frac N2} = \frac 1 N \sum_{j=0}^{N-1} g(\frac j N) (-1)^{j}
 \end{align*}
 is real-valued. Note that 
 \begin{align} \label{Nov1.001}
& \| Q_N g \|_2^2 = \sum_{|k|<\frac N2} |\tilde g_k|^2 + \frac 12 |\tilde g_{\frac N2} |^2, \notag\\
 &\frac 1 {4\pi^2} \| \partial_x Q_N g\|_2^2= 
 \sum_{|k|<\frac N2} |k|^2 |\tilde g_k|^2+ \frac 12
 \cdot \frac {N^2} 4 \cdot |\tilde g_{\frac N2} |^2.
 \end{align}
 
 On the Fourier side,
 \begin{align*}
 \widehat{Q_N g}(k) =\begin{cases}
 \sum_{l\in \mathbb Z} \hat g(k+l N), \quad -\frac N2 < k < \frac N2;\\
\frac 12 \sum_{l \in \mathbb Z} \hat g(k+lN), \quad k = \pm \frac N2; \\
 0, \qquad \text{otherwise}.
 \end{cases}
 \end{align*}
 
 In yet other words, for smooth $g:\, \mathbb T \to \mathbb R$ we have
 \begin{align*}
 & g(x) = \sum_{l \in \mathbb Z} \sum_{-\frac N2 <k\le \frac N2}
 \hat g(k+l N) e^{2\pi i (k+lN) x}, \notag \\
 & (Q_N g)(x) =\sum_{|k|<\frac N2}
 \Bigl( \sum_{l \in \mathbb Z} \hat g(k+l N)  \Bigr)e^{2\pi i k\cdot x}
 % \notag \\
 %& \qquad \qquad 
 + \Bigl( \sum_{l \in \mathbb Z}
 \hat g(\frac N2+ lN) \Bigr) \cos (2\pi \cdot \frac N2 x).
 \end{align*}

 In particular if $\hat g$ is supported in $\{ -\frac N2\le k \le \frac N2 \}$
 with $\hat g(\frac N2)=\hat g(-\frac N2)$, then clearly $Q_N g =g$ and one
 can perfectly reconstruct $g$ from its node values $g(\frac jN)$, $0\le j\le N-1$. 
 One should note, However,  that the operator $Q_N$ in general \emph{does not} commute with the usual
 differentiation operators or frequency projection operators.

 We shall be using the following estimates
 without explicit mentioning:
 \begin{align*}
 & \|(Q_N - \op{Id} )g \|_{\infty} = \| Q_N g - g \|_{\infty} 
 \lesssim   \sum_{|k|\ge \frac N2} |\widehat{g}(k)| \lesssim N^{-\frac 12} \| \partial_x g \|_{L^2(\mathbb T)}.
 &
 \end{align*}

 \begin{rem}\label{Nov1re1}
 Note that the operator $Q_N$ is also defined for bounded $g:$ $\mathbb T\to \mathbb R$ by using only
 the expression
 \begin{align} \label{Nov1.01}
 (Q_N g)(x)  &=\op{Re}\Bigl( \sum_{-\frac N2 <k\le \frac N2}
 \Bigl( \frac 1 N\sum_{j=0}^{N-1} g(\frac j N) e^{-2\pi i k \frac jN} \Bigr)
 e^{2\pi i k \cdot x} \Bigr) \notag \\
 &= \sum_{j=0}^{N-1} g(\frac jN)  G_N(x-\frac jN),
 \end{align}
 where
 \begin{align*}
 G_N(y)= \frac {\sin N \pi y} {N\tan \pi y}.
 \end{align*}
 In general given $U=(U_0,\cdots,U_{N-1})^T \in \mathbb R^N$, we can define
 \begin{align} \label{Nov1.01b}
 (Q_NU)(x)
 &=\op{Re}\Bigl( \sum_{-\frac N2 <k\le \frac N2}
 \Bigl( \frac 1 N\sum_{j=0}^{N-1} U_j e^{-2\pi i k \frac jN} \Bigr)
 e^{2\pi i k \cdot x} \Bigr) \notag \\
 &= \sum_{j=0}^{N-1} U_j G_N(x -\frac jN).
 \end{align}
 Clearly if $\|U\|_{\infty} \le 1$, then
 \begin{align*}
 \| Q_N U \|_{L^{\infty}}\le \sup_{x\in \mathbb T}\sum_{j=0}^{N-1} |G_N(\frac jN-x)|
 \lesssim \log N.
 \end{align*}
 On the other hand,  by taking $x= \frac {\epsilon_0} N$ with $0<\epsilon_0<1$,  $U_j=1$
 for $\frac N{100} \le j\le \frac N{10}$ with $j$ being even, $U_j=0$ otherwise, one can 
 easily check that
 \begin{align*}
 |(Q_N U)(x)| \gtrsim \log N.
 \end{align*}
 Thus the mere condition $\| U\|_{\infty} \le 1$ cannot guarantee a good bound on $Q_N U$. 
 In a similar vein, one can also consider the function
 \begin{align*}
 \phi_N(x)= \op{Re} \Bigl( \sum_{-\frac N2 <k \le \frac N2}
 \frac 1 {1+\nu^2 (2\pi k)^2 \tau} e^{2\pi i k \cdot x } \Bigr).
 \end{align*}
 For $\tau>0$ sufficiently small ($\tau$ can depend on $N$), we have 
 \begin{align*}
 \| \phi_N\|_{L^1(\mathbb T)} \gtrsim \log N.
 \end{align*}
Note that as $\tau\to 0$, $\phi_N(\frac j N) \to 0$ for all $1\le |j|\le N-1$, whereas $\phi_N(0) \to 1$.
 \end{rem}
 
\medskip

Now consider the fully discrete system:
\begin{align} \label{Nov1a}
\frac {U^{n+1} - U^n}{\tau} = \nu^2 \Delta_h U^{n+1} + U^n-(U^n)^{.3},
\end{align}
where $(U^n)^{.3}=((U_0^n)^3, \cdots, (U_{N-1}^n)^3)^T$.
We recast it as
\begin{align} \label{Nov1}
U^{n+1} = (\operatorname{I}-\nu^2\tau \Delta_h)^{-1} \Bigl( (1+\tau)U^n - \tau (U^n)^{.3} \Bigr).
\end{align}
One can view \eqref{Nov1a} as the time discretization of the continuous-in-time ODE system:
\begin{align} \label{Nov1.1}
\begin{cases}
\frac  d {dt} U = \nu^2 \Delta_h U + U - U^{.3}, \\
U\Bigr|_{t=0}=U^0 \in \mathbb R^N.
\end{cases}
\end{align}
For small $\tau>0$ one can study the nearness of the solutions to \eqref{Nov1} and 
\eqref{Nov1.1}. However we shall not explore it here. On the other hand, we can consider the following system 
\begin{align} \label{Nov1.2}
\begin{cases}
\partial_t u = \nu^2 \Delta u  + Q_N ( u -u^3), \\
u\Bigr|_{t=0} = Q_N (u_{\operatorname{init} }),
\end{cases}
\end{align}
where $u_{\operatorname{init}}$ is a bounded real-valued function on $\mathbb T$.  If we require
\begin{align*}
U^0_j = u_{\operatorname{init}}(\frac jN), \quad \forall\, 0\le j\le N-1,
\end{align*}
then it is not difficult to check that the system \eqref{Nov1.1} and \eqref{Nov1.2} are equivalent,
i.e.
\begin{align*}
U(t)_j = u(t, \frac j N), \qquad\forall\, 0\le j \le N-1.
\end{align*}
Thanks to the spectral localization, the function $u(t,x)$ is uniquely determined by its node values
$u(t, \frac jN)$ and thus the above equivalence is not a problem.

\begin{lem} \label{Nov2}
Let  $N\ge 4+\frac 1 {\pi^2 \nu\sqrt{\tau}} e^{\frac 1 {2\nu \sqrt{\tau} } }$. 
Suppose $f\in \mathbb R^N$, then
\begin{align*}
\| (\operatorname{I}- \nu^2 \tau\Delta_h)^{-1} f \|_{\infty} \le \| f\|_{\infty}.
\end{align*}
\end{lem}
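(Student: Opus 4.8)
The plan is to transfer the continuous-variable result (Proposition~\ref{prop_tmp211}) to the discrete Fourier setting by comparing the discrete kernel with the periodization of the exponential kernel $g_{\beta}^{(0)}$. Write $\beta = \nu^2\tau$ and set $M = \frac N2$. The operator $(\operatorname{I} - \beta\Delta_h)^{-1}$ acts on $\mathbb R^N$ as convolution (in the discrete sense) with the vector whose entries are
\begin{align*}
K_j = \sum_{-\frac N2 < k \le \frac N2} \frac{1}{1 + 4\pi^2\beta k^2}\, e^{2\pi i k \cdot \frac jN}, \qquad 0 \le j \le N-1.
\end{align*}
So $\bigl((\operatorname{I}-\beta\Delta_h)^{-1} f\bigr)_j = \frac1N\sum_{l} K_{j-l} f_l$, and the claimed bound $\|(\operatorname{I}-\beta\Delta_h)^{-1}f\|_\infty \le \|f\|_\infty$ follows at once if we show $\frac1N\sum_{j=0}^{N-1} |K_j| \le 1$. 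Since the zeroth Fourier coefficient of $(K_j)$ is $1$ (it equals $\sum_k \widehat{\cdots}(k)$ evaluated correctly, i.e. $\frac1N\sum_j K_j = 1$ because only the $k=0$ mode survives), it suffices to prove $K_j \ge 0$ for every $j$: then $\frac1N\sum_j|K_j| = \frac1N\sum_j K_j = 1$ and we are done.

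First I would establish the positivity $K_j \ge 0$. The idea is exactly the real-variable heuristic recalled in the Remark after Theorem~\ref{thm1.1_00}: split $\frac{1}{1+4\pi^2\beta k^2}$ over $|k| \le M$ into the ``main part'' coming from the full-line kernel plus a small tail. Concretely, for any real $x$,
\begin{align*}
\sum_{-M < k \le M} \frac{1}{1+4\pi^2\beta k^2}\, e^{2\pi i k x}
= \sum_{k\in\mathbb Z} \frac{1}{1+4\pi^2\beta k^2}\, e^{2\pi i k x} - \sum_{|k| > M}\frac{1}{1+4\pi^2\beta k^2}\, e^{2\pi i k x} + O(\text{one boundary term}),
\end{align*}
and by Poisson summation (the third Remark in the Notation section, with $d=1$) the full sum equals $\frac{1}{2\sqrt\beta}\sum_{n\in\mathbb Z} e^{-|x+n|/\sqrt\beta} \ge \frac1{\sqrt\beta}e^{-1/(2\sqrt\beta)}$ for $x\in[-\tfrac12,\tfrac12]$. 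The tail is bounded by $\sum_{|k|>M}\frac{1}{4\pi^2\beta k^2} \le \frac{1}{2\pi^2\beta M}$. Hence the whole expression is strictly positive provided $\frac1{\sqrt\beta}e^{-1/(2\sqrt\beta)} > \frac{1}{2\pi^2\beta M}$, i.e. $M > \frac{1}{2\pi^2\sqrt\beta}e^{1/(2\sqrt\beta)}$, which is precisely $N = 2M \ge \frac{1}{\pi^2\sqrt\beta}e^{1/(2\sqrt\beta)}$, matching the hypothesis (the ``$4+$'' absorbing the single boundary mode $k = M$ with coefficient $\frac1{1+4\pi^2\beta M^2}$, which is positive anyway and can simply be kept on the positive side). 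Evaluating at $x = j/N$ gives $K_j \ge 0$ as needed.

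The main obstacle — really the only subtlety — is bookkeeping around the endpoint mode $k = \tfrac N2 = M$, since the DFT convention here sums over $-M < k \le M$ (asymmetric), whereas the natural comparison object $\sum_{|k|\le M}$ is symmetric. The cleanest fix is to note $\frac{1}{1+4\pi^2\beta M^2} e^{2\pi i M x}$ has the same coefficient as the $k=-M$ term, so $\sum_{-M<k\le M} = \sum_{|k|<M} + \frac{1}{1+4\pi^2\beta M^2}e^{2\pi i M x}$; at $x = j/N$ one has $e^{2\pi i M j/N} = e^{\pi i j} = (-1)^j$, so this extra term is $\pm\frac{1}{1+4\pi^2\beta M^2}$ and when it is $-$ it must be dominated by the surplus in the main estimate. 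A slightly wasteful but safe route is to bound $\frac{1}{1+4\pi^2\beta M^2} \le \frac{1}{4\pi^2\beta M^2} \le \frac{1}{4\pi^2\beta M}$ and fold it into the tail bound, giving the requirement $\frac1{\sqrt\beta}e^{-1/(2\sqrt\beta)} > \frac{1}{2\pi^2\beta M} + \frac{1}{4\pi^2\beta M} = \frac{3}{4\pi^2\beta M}$; the extra constant and the ``$+4$'' slack in the statement of $N$ comfortably cover this. I would then conclude: $K_j \ge 0$ for all $j$, hence $\frac1N\sum_j |K_j| = 1$, hence $\|(\operatorname{I}-\nu^2\tau\Delta_h)^{-1}f\|_\infty \le \|f\|_\infty$ for all $f\in\mathbb R^N$.
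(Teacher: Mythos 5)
Your proposal is correct and is essentially the paper's own argument: the paper likewise writes the discrete kernel as the symmetric sum $\sum_{|k|\le N/2}$ minus the endpoint mode, lower-bounds the full Fourier series by $\frac{1}{\nu\sqrt\tau}e^{-1/(2\nu\sqrt\tau)}$ via Poisson summation, bounds the tail $\sum_{|k|\ge N/2}$ by $O\bigl(\frac{1}{\nu^2\tau N}\bigr)$, and concludes positivity plus unit discrete mass, hence the $\ell^\infty$ bound by discrete Young. Your handling of the asymmetric $k=N/2$ mode is, if anything, slightly more explicit than the paper's.
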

\begin{proof}
It suffices to examine the kernel $K_h$ corresponding to $(I-\nu^2\Delta_h )^{-1}$. 
Denote $\beta = \nu \sqrt{\tau}$. Note that for $x= \frac { l} N$, $l \in \mathbb Z$
with $|l| \le N-1$, 
\begin{align*}
K_h(x) =K_h(\frac lN)= \sum_{ |k| \le \frac N2} \frac 1 {1+\beta^2 (2\pi k)^2} e^{2\pi i k 
\frac l N} - \frac 1 {1+\beta^2 (N\pi)^2} (-1)^l
\end{align*}
is real-valued.  On the other hand, for such $x$ we clearly have
\begin{align*}
K_h(x) &\ge \sum_{k \in \mathbb Z} \frac 1 {1+\beta^2 (2\pi k)^2}
e^{2\pi i kx} - \sum_{|k|\ge \frac N2}
\frac 1 {1+\beta^2 (2\pi k)^2} \notag \\
&> \frac 1 {\beta} e^{-\frac 1 {2\beta}} -\frac 1 {4\pi^2 \beta^2} \cdot 
2\cdot \frac 1 {\frac N2-1}.
\end{align*}
Thus $K_h(x)$ is positive if $N\ge 4+ \frac 1 {\pi^2 \beta} e^{\frac 1 {2\beta}} $ and
$x= \frac l N$ with $l\in \mathbb Z$, $|l|\le N-1$. The desired result then easily follows from
the discrete
convolution inequality.
\end{proof}

\begin{thm}[Strict maximum principle] \label{Nov2_thma}
Consider \eqref{Nov1} with initial data $U^0\in \mathbb R^N$. Assume $0<\tau \le \frac 12$. 
Suppose $\|U^0\|_{\infty} \le 1$. If $N\ge4+\frac 1 {\pi^2 \nu\sqrt{\tau}} e^{\frac 1 {2\nu \sqrt{\tau} } }$, then 
\begin{align*}
\|U^n\|_{\infty} \le 1, \qquad\forall\, n\ge 1.
\end{align*}
\end{thm}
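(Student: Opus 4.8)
The plan is to combine two facts already established above---the discrete contraction estimate of Lemma \ref{Nov2} and the pointwise cubic bound of Proposition \ref{2015prop1}---and run a one-line induction on $n$. First I would rewrite one step of \eqref{Nov1} as $U^{n+1} = (\operatorname{I}-\nu^2\tau\Delta_h)^{-1}\bigl(f_{\tau}(U^n)\bigr)$, where $f_{\tau}(x)=(1+\tau)x-\tau x^3$ is applied componentwise (this is the notation of Section 3). The induction hypothesis is $\|U^n\|_{\infty}\le 1$, with the base case $n=0$ being the assumption $\|U^0\|_{\infty}\le 1$.

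For the inductive step, assume $\|U^n\|_{\infty}\le 1$, i.e.\ $|U^n_j|\le 1$ for every $0\le j\le N-1$. Since $0<\tau\le\frac12$, the first bullet of Proposition \ref{2015prop1} (equivalently Lemma \ref{lem_polycubic3}) gives $\max_{|x|\le 1}|f_{\tau}(x)|=1$; applying this to each entry $x=U^n_j$ yields $\|f_{\tau}(U^n)\|_{\infty}=\|(1+\tau)U^n-\tau (U^n)^{.3}\|_{\infty}\le 1$. Next, the hypothesis $N\ge 4+\frac{1}{\pi^2\nu\sqrt{\tau}}e^{1/(2\nu\sqrt{\tau})}$ is exactly the one required by Lemma \ref{Nov2}, so $(\operatorname{I}-\nu^2\tau\Delta_h)^{-1}$ has operator norm at most $1$ on $(\mathbb R^N,\|\cdot\|_{\infty})$---indeed, under this constraint its discrete kernel $K_h$ is nonnegative (the Poisson-summation lower bound on the periodized exponential, as in the proof of Lemma \ref{Nov2} and Proposition \ref{prop_tmp211}) and has unit mass, so the operator is a discrete convolution against a probability weight. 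Combining, $\|U^{n+1}\|_{\infty}=\|(\operatorname{I}-\nu^2\tau\Delta_h)^{-1}f_{\tau}(U^n)\|_{\infty}\le\|f_{\tau}(U^n)\|_{\infty}\le 1$, which closes the induction.

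I do not expect a genuine obstacle in this argument: the whole content sits in the two cited results, namely that the $N$-threshold is precisely what forces positivity of $K_h$ (not merely $\|K_h\|_1\le 1$) and that the cubic $f_{\tau}$ maps $[-1,1]$ into itself for $\tau\le\frac12$ with the maximum attained at the endpoints. If anything merits a remark, it is the sharpness: once $\tau>\frac12$ one has $\max_{|x|\le1}|f_{\tau}(x)|>1$, and once $N$ drops below the threshold $K_h$ changes sign, so in either regime the strict bound can genuinely fail---consistent with Proposition \ref{prop_c3_0}. I would present the proof in three or four lines essentially as above.
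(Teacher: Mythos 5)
Your proof is correct and is exactly the paper's argument: the paper's proof of Theorem \ref{Nov2_thma} is the one-line statement ``This follows from an easy induction using Lemma \ref{Nov2} and Lemma \ref{lem_polycubic3},'' and your write-up simply spells out that induction (componentwise bound $\max_{|x|\le 1}|f_{\tau}(x)|=1$ for $\tau\le\frac12$, followed by the $L^\infty$-contraction of $(\operatorname{I}-\nu^2\tau\Delta_h)^{-1}$ guaranteed by the positivity of its discrete kernel under the stated threshold on $N$). No gaps.
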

\begin{proof}
This follows from an easy induction using Lemma \ref{Nov2} and Lemma 
\ref{lem_polycubic3}.
\end{proof}
\begin{rem}
Consider the kernel function
\begin{align*}
 \phi_N(x)= \op{Re} \Bigl( \sum_{-\frac N2 <k \le \frac N2}
 \frac 1 {1+\nu^2 (2\pi k)^2 \tau} e^{2\pi i k \cdot x } \Bigr).
 \end{align*}
 As was already pointed out in Remark \ref{Nov1re1}, in general $\phi_N$ can have $O(\log N)$ $L^1$ mass when $\tau \to 0$. On the other hand,  if we restrict to the grid points
 $x_j = \frac j N$ and consider the discrete $L^1$ mass
 \begin{align*}
 m_N= \frac 1 N \sum_{j=0}^{N-1} |\phi_N(\frac j N)|,
 \end{align*}
 one can have unit discrete $L^1$ mass for some special values of $N$ but this does not hold 
 in general. For example consider $N=2$, we have
 (below we denote $s= 4\pi^2 \nu^2 \tau$)
 \begin{align*}
 \phi_2(0)= 1+\frac 1 {1+s}, \quad  \phi_2(\frac 1 2)= 1-\frac 1 {1+s}>0, \qquad\forall\, s>0.
 \end{align*}
 Thus $m_2 =1$. On the other hand, for $N=4$, we have
 \begin{align*}
 \phi_4(0)=1+\frac 2{1+s}+\frac 1{1+4s},\quad \phi_4(\frac 14)=1-\frac 1{1+4s},
 \quad \phi_4(\frac 12)=1-\frac 2{1+s}+\frac 1{1+4s}, \quad
 \phi_4(\frac 34)=1-\frac 1{1+4s}.
 \end{align*}
 For $s>0$ sufficiently small, we have $\phi_4(\frac 12)<0$ and it follows that
 $m_4 >1$. In general, consider $N=4K$ with $K$ being an  integer, then
 \begin{align*}
 \phi_N(\frac 12) = \sum_{-\frac N2 <k \le \frac N2} \frac 1 {1+k^2 s} (-1)^k.
 \end{align*}
 Note that 
 \begin{align*}
 \frac d {ds} \phi_N(\frac 12) \Bigr|_{s=0} &=
 \sum_{-\frac N2+1 \le k\le \frac N2} k^2 (-1)^{k+1}
 =2\sum_{\substack{1\le k \le \frac N2-1 \\ \text{$k$ is odd} } }
 k^2-
 2\sum_{\substack{1\le k \le \frac N2-1 \\ \text{$k$ is even} } } k^2 -\bigl(\frac N2\bigr)^2 \notag \\
 & = 2 \sum_{l=1}^K \Bigl( (2l-1)^2 - (2l)^2 \Bigr) + 4K^2 =-2K = -\frac N2.
 \end{align*}
 Thus for small $s>0$ we have $\phi_N(\frac 12) <0$ and this renders $m_N>1$. More quantitatively
 we have for $s \le \alpha_1 N^{-2}$ ($\alpha_1>0$ is a sufficiently small absolute constant),
 \begin{align*}
 \phi_N(\frac 12) \le -\alpha_2 N\cdot s,
 \end{align*}
 where $\alpha_2>0$ is a small  absolute constant. 
 \end{rem}

Our next set of results will be concerned with generalized maximum principles.  We first
consider the ODE system \eqref{Nov1.1}.

\begin{thm}[Almost sharp maximum principle]
Consider \eqref{Nov1.1}. Suppose $u_{\op{init}}:\, \mathbb T\to \mathbb R$ satisfies $u_{\op{init}} \in H^{\frac 32} (\mathbb T)$ and
$\|u_{\op{init}}\|_{\infty} \le 1$. Take the initial data
$U^0$ such that $U^0_j= u_{\op{init}}(\frac j N)$ for $0\le j \le N-1$. 
 Then for $N\ge N_1=N_1(u_{\op{init}}, \nu)>0$, we have
\begin{align*}
\|U(t)\|_{\infty} \le \|u(t) \|_{\infty} \le  1 + C_1 \cdot N^{-\frac 12},
\end{align*}
where $C_1>0$ depends only on ($u_{\op{init}} $, $\nu$). In the above $u$ is the solution
to \eqref{Nov1.2} with $Q_N u_{\op{init}}$ as initial data.
\end{thm}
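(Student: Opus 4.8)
The plan is to run a pointwise barrier (parabolic maximum principle) argument on the equivalent continuous-in-space equation \eqref{Nov1.2}, exactly as in the proof of Theorem~\ref{17O6}, with the spectral-Galerkin error $\Pi_{>N}(u^3-u)$ there replaced by the collocation error $(\op{Id}-Q_N)(u^3-u)$. First I would record the reduction: since the right-hand side of \eqref{Nov1.2} always lies in the range of $Q_N$, for every $t$ the solution $u(t,\cdot)$ is a trigonometric polynomial of degree $\le N/2$; it is therefore determined by its node values, which solve \eqref{Nov1.1}, and $\|U(t)\|_\infty=\max_j|u(t,j/N)|\le\|u(t)\|_\infty$, so it suffices to bound $\|u(t)\|_\infty$. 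Global existence and $C^1_tC^\infty_x$ smoothness for fixed $N$ are automatic, since \eqref{Nov1.1} is the gradient flow $\frac{d}{dt}U=-\nabla E_h(U)$ of the coercive energy $E_h(U)=\tfrac12\nu^2\langle-\Delta_hU,U\rangle+\tfrac1{4N}\sum_{j=0}^{N-1}(U_j^2-1)^2$, which is non-increasing along trajectories so solutions stay bounded.

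Next I would assemble three quantitative ingredients, all with constants independent of $N$. \emph{(i) Aliasing bounds for $Q_N$:} for $g\in H^1(\mathbb T)$, $\|Q_Ng-g\|_\infty\le\sum_{|k|\ge N/2}|\widehat g(k)|\le C_*N^{-1/2}\|\partial_xg\|_{L^2}$, and $\|Q_Ng\|_{\dot H^1}\lesssim\|g\|_{H^1}$, both by Cauchy--Schwarz on the aliasing sum $\widehat{Q_Ng}(k)=\sum_l\widehat g(k+lN)$ after bounding the weight $|k|^2(\langle k\rangle^{-2}+N^{-2})$ for $|k|\le N/2$. Since $U^0_j=u_{\op{init}}(j/N)$ are \emph{exact} node values, $\|U^0\|_\infty\le\|u_{\op{init}}\|_\infty\le1$, hence $\|u(0)\|_\infty=\|Q_Nu_{\op{init}}\|_\infty\le1+C_*N^{-1/2}\|u_{\op{init}}\|_{H^{3/2}}$. \emph{(ii) Uniform $\dot H^1$ bound:} from energy monotonicity, $\tfrac12\nu^2\|\partial_x u(t)\|_{L^2}^2\le E_h(U(t))\le E_h(U(0))\le\tfrac12\nu^2 C\|u_{\op{init}}\|_{H^1}^2+\tfrac14$, so $\|\partial_x u(t)\|_{L^2}^2\le M_1(\nu,u_{\op{init}})$ for all $t\ge0$. \emph{(iii) Error bound:} combining (i) and (ii), $\|(\op{Id}-Q_N)(u^3-u)(t)\|_\infty\le C_*N^{-1/2}\|\partial_x(u^3-u)(t)\|_{L^2}\le C_*N^{-1/2}\sqrt{M_1}\,(1+3\|u(t)\|_\infty^2)$.

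With these in hand the barrier step is short. Put $\epsilon=C_1N^{-1/2}$, where $C_1=C_1(\nu,u_{\op{init}})$ is chosen below, and restrict to $N\ge C_1^2$ so $\epsilon\le1$. If $\sup_t\|u(t)\|_\infty>1+\epsilon$, let $T_*>0$ be the first time $\|u(T_*)\|_\infty=1+\epsilon$ (positive because $\|u(0)\|_\infty<1+\epsilon$ once $C_1>C_*\|u_{\op{init}}\|_{H^{3/2}}$). Replacing $u$ by $-u$ if needed, choose $x_*$ with $u(T_*,x_*)=1+\epsilon=\max_x u(T_*,x)$, so $\partial_{xx}u(T_*,x_*)\le0$; on $[0,T_*]$ one has $\|u\|_\infty\le1+\epsilon\le2$, so by (iii) $\|(\op{Id}-Q_N)(u^3-u)(T_*)\|_\infty\le 13\,C_*\sqrt{M_1}\,N^{-1/2}=:BN^{-1/2}$, and therefore
\begin{align*}
\partial_t u(T_*,x_*)&=\nu^2\partial_{xx}u(T_*,x_*)-Q_N(u^3-u)(T_*,x_*)\\
&\le 0+\bigl((1+\epsilon)-(1+\epsilon)^3\bigr)+BN^{-1/2}=-\epsilon(2+3\epsilon+\epsilon^2)+BN^{-1/2}\le(B-2C_1)N^{-1/2}.
\end{align*}
Taking $C_1:=B+C_*\|u_{\op{init}}\|_{H^{3/2}}+1$ makes the last quantity negative, contradicting $\partial_t u(T_*,x_*)\ge0$ (valid since $u(t,x_*)\le\|u(t)\|_\infty<1+\epsilon$ for $t<T_*$). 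Hence $\sup_t\|u(t)\|_\infty\le1+C_1N^{-1/2}$ for all $N\ge N_1:=\max\{C_1^2,4\}$, and consequently $\|U(t)\|_\infty\le\|u(t)\|_\infty\le1+C_1N^{-1/2}$.

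The conceptual heart — at the first point and time where $u$ reaches $1+\epsilon$ the reaction term $u-u^3\le-2\epsilon$ dominates the $O(N^{-1/2})$ collocation error and forces $\partial_t u<0$, so the level cannot be crossed — is routine. The only genuine work, and the place where care is required, is making \emph{all} constants independent of $N$: this is delicate precisely because $Q_N$ is neither an orthogonal projection nor commutes with $\partial_x$, so the aliasing estimates in (i) and the identification of the discrete energy with $\|\partial_x u\|_{L^2}^2$ in (ii) must be handled carefully at the Nyquist mode $k=\pm N/2$ and its factor-$\tfrac12$ weight (cf.~\eqref{Nov1.001}).
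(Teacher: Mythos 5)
Your proposal is correct and follows essentially the same route as the paper: energy monotonicity for the discrete gradient flow to get a uniform-in-time $\dot H^1$ bound on $u=Q_NU$, the aliasing estimate $\|(Q_N-\op{Id})g\|_\infty\lesssim N^{-1/2}\|\partial_x g\|_{L^2}$ to turn the collocation error into an $O(N^{-1/2})$ forcing, and a first-crossing maximum-principle argument at the level $1+C_1N^{-1/2}$. The only cosmetic difference is that you obtain the a priori bound $\|u\|_\infty\le 2$ on $[0,T_*]$ from the barrier itself, whereas the paper reads it off from the $H^1$ bound via the 1D Sobolev embedding before invoking the (unspelled-out) maximum principle; both are fine, and your explicit handling of the Nyquist-mode weight matches the paper's \eqref{Nov1.001}.
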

\begin{rem}
Since we are concerned with the generalized maximum principe,
the dependence of $N_1$ on $\nu$ will only be power-type which is much better than the
dependence in Theorem \ref{Nov2_thma}. These can be easily inferred from the computations
in the proof below. However we shall not state the explicit dependence here for the ease of 
notation. 
\end{rem}

\begin{proof}
Step 1. Energy estimate and global wellposedness. Concerning the system \eqref{Nov1.1},
it is not difficult to check that $\frac d {dt} E(U) \le 0$, where
\begin{align}
E(U) & = \frac 12 \nu^2 \| \nabla_h U\|_2^2 + \frac 14 \cdot \frac 1N \sum_{j=0}^{N-1}
(U_j^2-1)^2  \notag \\
& = \frac 12 \nu^2 \langle -\Delta_h U, U \rangle +\frac 14
\langle U^{.2} -1, U^{.2}-1 \rangle. \label{Nov18EU1}
\end{align}
It follows easily that the system \eqref{Nov1.1} admits a global solution. 

Step 2. $H^1$ Estimate of $u$. We first consider the initial data. Observe that
\begin{align*}
\frac 1 {4\pi^2} \| \partial_x Q_N u_{\op{init}} \|_2^2 & = \sum_{|k|< \frac N2} k^2
\Bigl|\sum_{l \in \mathbb Z} \widehat{u_{\op{init}}} (k+l N) \Bigr|^2  
 + 2 \cdot \frac {N^2} 4 \Bigl|\frac 12\sum_{l\in \mathbb Z} \widehat{u_{\op{init}}}(\frac N2+l N)
\Bigr|^2 \notag \\
& \lesssim \| \partial_x u_{\op{init}} \|_2^2 +  \| u_{\op{init} } \|_{H^{\frac 32}(\mathbb T)}^2
\lesssim \| u_{\op{init} } \|_{H^{\frac 32}(\mathbb T)}^2.
\end{align*}
Similarly $\| Q_N u_{\op{init}} \|_2 \lesssim \| u_{\op{init} } \|_{H^{\frac 32}(\mathbb T)}$.  Thus
\begin{align*}
\|Q_N u_{\op{init} } \|_2 +
\|Q_N u_{\op{init} } \|_{\infty} \lesssim \| Q_N u_{\op{init}} \|_{H^1(\mathbb T)} \lesssim \| u _{\op{init}} \|_{H^{\frac 32}(\mathbb T)}.
\end{align*}
Note that $\| U^0\|_{\infty} \le \| Q_N u_{\op{init} } \|_{\infty}$, and by 
\eqref{Nov1.001}
\begin{align*}
\| \nabla_h U^0\|_2^2 = \sum_{-\frac N 2 <k\le \frac N2} k^2
|\widetilde{U^0_k}|^2 \sim \| \partial_x Q_N u_{\op{init} } \|_2^2.
\end{align*}
Thus
\begin{align*}
E(U^0) \le  A_1=A_1(\| u_{\op{init}} \|_{H^{\frac 32} }, \nu).
\end{align*}
Now by using the inequality $ U_j^2 \le \frac 12 (U_j^2-1)^2 + \frac 32$, it is then clear that
\begin{align*}
\| u(t) \|_{H^1}^2  \lesssim  \sum_{-\frac N2 <k\le \frac N2}
(1+k^2)|\widetilde{U(t)_k}|^2  \lesssim 1+  E(U(t) ) \le 1+ E(U^0)
\le  1+ A_1, \qquad\forall\, t\ge 0.
\end{align*}

Step 3. $L^{\infty}$-estimate of $u$.  For the initial data, observe that
\begin{align*}
\| Q_N u_{\op{init} } - u_{\op{init}} \|_{\infty}
\lesssim \sum_{|k|\ge \frac N2} | \widehat{u_{\op{init} }} (k)| \lesssim N^{-\frac 12}
\| u_{\op{init}} \|_{H^1(\mathbb T)}.
\end{align*}
Thus
\begin{align*}
\| Q_N u_{\op {init}} \|_{\infty} \le 1+ O(N^{-\frac 12}),
\end{align*}
where the implied constant depends on ($u_{\op{init}}$, $\nu$). 

Now we consider the nonlinear term.  By using Step 2, we have $\|u(t)^3 \|_{H^1} \lesssim 1$
uniformly in $t\ge 0$.  Clearly we have
\begin{align*}
\| Q_N (u^3- u ) - (u^3 -u ) \|_{\infty} = O(N^{-\frac 12}). 
\end{align*}
Then the equation for $u$ can be rewritten as
\begin{align*}
\begin{cases}
\partial_t u = \nu^2 \Delta u +u-u^3 + O(N^{-\frac 12}), \\
\| u(0) \|_{\infty} \le 1 + O(N^{-\frac 12}).
\end{cases}
\end{align*}
A simple maximum principle argument then yields the desired result. 
\end{proof}

\begin{thm}[Almost sharp maximum principle, version 2]
Consider \eqref{Nov1.1}. Suppose $u_{\op{init}}:\, \mathbb T\to \mathbb R$ has Fourier support in 
$\{|k|\le \frac N2 \}$ with $\widehat{u_{\op{init}}}(\frac N2)
=\widehat{u_{\op{init}}}(-\frac N2)$ and
$\|u_{\op{init}}\|_{\infty} \le 1$. Note that $Q_N (u_{\op{init}})=u_{\op{init}}$.
 Take the initial data
$U^0$ such that $U^0_j= u_{\op{init}}(\frac j N)$ for $0\le j \le N-1$. 
 Then for $N\ge N_2=N_2(u_{\op{init}}, \nu)>0$, we have
\begin{align*}
\|U(t)\|_{\infty} \le \|u(t) \|_{\infty} \le  1 + C_2 \cdot N^{-\frac 14},
\end{align*}
where $C_2>0$ depends only on ($u_{\op{init}} $, $\nu$). In the above $u$ is the solution
to \eqref{Nov1.2} with $Q_N u_{\op{init}} = u_{\op{init}} $ as initial data.
\end{thm}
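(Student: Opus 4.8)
The plan is to use the equivalence between the ODE system \eqref{Nov1.1} and the band-limited PDE \eqref{Nov1.2}, and then to run a direct maximum principle for $u$ after dealing with a short initial layer. First I would record the structural facts. Because $u_{\op{init}}$ has Fourier support in $\{|k|\le\frac N2\}$ with $\widehat{u_{\op{init}}}(\frac N2)=\widehat{u_{\op{init}}}(-\frac N2)$, we have $Q_N u_{\op{init}}=u_{\op{init}}$; moreover both $e^{t\nu^2\Delta}$ and $Q_N$ map this class of functions into itself, so by the Duhamel formula $u(t)$ stays in it for all $t$ and $Q_N u(t)=u(t)$. Hence \eqref{Nov1.2} may be rewritten as
\begin{align*}
\partial_t u = \nu^2 \Delta u + u - u^3 + g, \qquad g := Q_N(u-u^3) - (u-u^3) = u^3 - Q_N(u^3),
\end{align*}
with $\|u(0)\|_\infty=\|u_{\op{init}}\|_\infty\le 1$. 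Since $u(t)$ lives in a fixed finite-dimensional space, global solvability follows once an a priori $L^\infty$ bound is available (alternatively from $\frac{d}{dt}E(U)\le0$), and the whole argument is carried out inside a continuity/bootstrap region $\{t:\ \|u(t)\|_\infty\le 2\}$.

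The forcing $g$ is a pure aliasing error: as $u^3$ is spectrally supported in $\{|k|\le\frac32 N\}$, the difference $u^3-Q_N(u^3)$ consists of the discarded frequencies $\frac N2<|k|\le\frac32 N$ together with their aliases folded back into $|k|<\frac N2$ (plus an endpoint adjustment at $|k|=\frac N2$), so $\|g\|_\infty\lesssim\sum_{|k|>\frac N2-1}|\widehat{u^3}(k)|\lesssim N^{-1/2}\|u^3\|_{H^1}\lesssim N^{-1/2}(1+\|\partial_x u\|_\infty)$ on the bootstrap region. For small times $\|\partial_x u\|_\infty$ can be as large as $O(N)$ (the data have merely $L^\infty$ control), but it improves for $t>0$: restarting Duhamel on $[t-t_1,t]$ with $t_1:=N^{-1/2}$, using $\|\nabla G_\tau\|_{L^1(\mathbb T)}\lesssim\tau^{-1/2}$, the $L^\infty$-contraction of $e^{\tau\nu^2\Delta}$, the bootstrap bound $\|u\|_\infty\le2$, and $\|Q_N h\|_\infty\lesssim(\log N)\|h\|_\infty$ from Remark \ref{Nov1re1}, gives $\sup_{t\ge t_1}\|\partial_x u(t)\|_\infty\lesssim_\nu t_1^{-1/2}$, i.e. $\lesssim_\nu N^{1/4}$, whence $\sup_{t\ge t_1}\|g(t)\|_\infty\lesssim_\nu N^{-1/2}\cdot N^{1/4}=N^{-1/4}$. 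On the initial layer $0\le t\le t_1$, Duhamel together with the $L^\infty$-contraction of the heat semigroup and $\|Q_N(u-u^3)\|_\infty\lesssim\log N$ yields $\|u(t)\|_\infty\le1+C(\log N)\,t_1=1+CN^{-1/2}\log N$.

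Finally I would close with the maximum principle for $t\ge t_1$: at a space point realizing $\pm u(t)$ its maximum one has $\partial_t u\le\|u\|_\infty-\|u\|_\infty^3+\|g\|_\infty$, so $\|u(t)\|_\infty$ is dominated by the scalar ODE $\dot M=M-M^3+\varepsilon$ with $\varepsilon=C_\nu N^{-1/4}$ and $M(t_1)=\|u(t_1)\|_\infty\le1+CN^{-1/2}\log N$; for $N$ large this initial value lies in the basin of the stable equilibrium $M_\varepsilon\approx1+\varepsilon/2$, so comparison gives $\|u(t)\|_\infty\le1+C_2(\nu)N^{-1/4}$ for all $t\ge t_1$, hence (with the initial-layer bound) for all $t\ge0$. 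For $N\ge N_2(u_{\op{init}},\nu)$ this is below $2$, so the bootstrap closes and the solution is global; restricting to $x_j=j/N$ and using $U(t)_j=u(t,x_j)$ gives $\|U(t)\|_\infty\le\|u(t)\|_\infty\le1+C_2 N^{-1/4}$. The main obstacle is the smoothing step: one must show the cubic obeys $\|u^3(t)\|_{H^1}\lesssim_\nu N^{1/4}$ for $t\gtrsim N^{-1/2}$ even though the data are only bounded, which is what forces the Duhamel restart and the careful bookkeeping of the $\log N$ losses introduced by $Q_N$ — everything else is a routine adaptation of the arguments already used for Theorems \ref{17O6} and \ref{17O8}.
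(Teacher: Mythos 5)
Your proposal is correct and follows essentially the same route as the paper: a local contraction estimate on the initial layer $[0,t_1]$ with $t_1=N^{-1/2}$ giving $\|u\|_\infty\le 1+O(N^{-1/4})$ there, a derivative bound of size $O(N^{1/4})$ for $t\ge t_1$ so that the aliasing error $\|Q_N(u^3)-u^3\|_\infty\lesssim N^{-1/2}\cdot N^{1/4}=O(N^{-1/4})$, and a maximum-principle/comparison argument to close. The only (harmless) deviation is in the smoothing step: the paper propagates $\|u(t_1)\|_{H^1}\le O(N^{1/4})$ to all $t\ge t_1$ via the monotone discrete energy, whereas you rerun Duhamel on windows of length $t_1$ to get $\|\partial_x u\|_\infty\lesssim t_1^{-1/2}$ inside the $L^\infty$ bootstrap region — both yield the same $O(N^{-1/4})$ error.
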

\begin{rem}
The dependence of $N_2$ on $\nu$ is only power-like.
\end{rem}
\begin{rem}
By Remark \ref{Nov1re1}, one cannot deduce a good bound
on $\|Q_N u_{\op{init}} \|_{\infty}$ for general $u_{\op{init}}$ assuming only
$\|u_{\op{init}}\|_{\infty} \le 1$.
\end{rem}
\begin{proof}
For simplicity of notation write $u_0 = u_{\op{init}}$.  Note that $u(t)$, $t\ge 0$ all have
frequency supported in $[-\frac N2, \frac N2]$. In particular $Q_N u(t) =u(t)$ for all $t\ge 0$. 

Step 1. Local in time estimate near $t=0$. By a contraction estimate,  for some sufficiently small
$t_0=t_0(u_0,\nu)>0$ we have 
\begin{align*}
& \| u(t) \|_{\infty} \le 1 +\beta_1 t^{\frac 12}, \quad \forall\, 0\le t\le t_0; \\
&  \sup_{0\le t\le t_0} t^{\frac 12} \| u(t) \| _{H^1} \le \beta_2,
\end{align*}
where the constants $\beta_1>0$, $\beta_2>0$ only depend on  ($u_0$, $\nu$). 
Now pick $t_1= N^{-\frac 1{2} } \le t_0$ and we clearly have
\begin{align*}
& \sup_{0\le t\le t_1} \|u(t) \|_{\infty} \le 1 + O(N^{-\frac 1{4} }); \\
& \| u(t_1) \|_{H^1} \le O(N^{\frac 1{4}}).
\end{align*}
For $t>t_1$, we can regard $u(t_1)$ as initial data and observe that by using energy conservation
(similar to the argument in the preceding theorem, but note that we no longer need the $H^{\frac 32}$ bound), we have
\begin{align*}
\sup_{t\ge t_1} \|u(t) \|_{H^1} \le O(N^{\frac 14}). 
\end{align*}
Furthermore, for $t\ge t_1$, assuming that $\|u(t) \|_{\infty} \le 2$, it is easy to check that
\begin{align*}
\| Q_N (u(t)^3) - u(t)^3 \|_{\infty} \le O(N^{-\frac 14}).
\end{align*}
One can then run a maximum principle argument to finish the proof (i.e. consider some $t_*>t_1$
as the first time that $\|u(t)\|_{\infty}$ exceeds the bound $1+ C\cdot N^{-\frac 14}$ and
show a contradiction). 
\end{proof}

\begin{thm}[Almost sharp maximum principle for \eqref{Nov1.1}, general rough initial data]
\label{N21_5}
Consider \eqref{Nov1.1}. Suppose $U^0 \in \mathbb R^N$ satisfies $\|U^0\|_{\infty} \le 1$.
If $N\ge N_3(\nu)>0$,  it holds that
\begin{align*}
\sup_{0\le t <\infty} \| U(t) \|_{\infty} \le 1+\epsilon_1,
\end{align*}
where $0<\epsilon_1<10^{-2}$ is an absolute constant. More precisely the following hold.
(Below we shall write $X=O(Y)$ if $|X| \le C Y$ where the constant $C$ only depends on $\nu$.)
\begin{enumerate}
\item For $t\ge T_0= T_0(\nu)>0$, 
\begin{align*}
& \| U(t) \|_{\infty} \le \|u(t) \|_{\infty} \le 1 + O(N^{-\frac 18} (\log N)^2); \\
& \| \partial_x u(t) \|_2 \le O(1), \quad E( U(t)) \le O(1),
\end{align*}
where $u$ solves \eqref{N21_5.e1}, and $E(U)$ was defined in \eqref{Nov18EU1}.

\item For $C_1=C_1(\nu)>0 $ and $C_1\cdot N^{-\frac 34} \le t \le T_0$, it holds that
\begin{align*}
\| U(t) \|_{\infty} \le \| u(t) \|_{\infty} \le 1+ O(N^{-\frac 18} (\log N)^2 ).
\end{align*}

\item For $C_2=C_2(\nu)>0$ and $ C_2 \cdot N^{-2} \log N \le t \le C_1 \cdot N^{-\frac 34}$,
\begin{align*}
\| U(t) \|_{\infty} \le \| u(t) \|_{\infty} \le 1+ O(N^{-\frac 3 {16}} ).
\end{align*}

\item For $0<t \le C_2 N^{-2} \log N$, we have
\begin{align*}
& \| u(t) -e^{\nu^2 t \partial_{xx}} v_0 \|_{\infty} \le O(N^{-\frac 12} (\log N)^{\frac 14} ), \\
& \sup_{0\le l \le N-1} | (e^{\nu^2 t \partial_{xx} } Q_N U^0)(t, \frac l N) |
\le (1+\frac 12 \epsilon_1) \|U^0\|_{\infty};\\
& \|U(t) \|_{\infty} \le (1+\frac 12 \epsilon_1) \|U^0\|_{\infty}
+ O(N^{-\frac 12} (\log N)^{\frac 14} ).
\end{align*}

\item If $\|U^0\|_{\infty} \le \frac 1 {1+\frac 12 \epsilon_1} $ (note that 
$\frac 1 {1+\frac 12 \epsilon_1} >0.995$), then for all $t\ge 0$, we have
\begin{align*}
\|U(t) \|_{\infty} \le 1 + O(N^{-\frac 18} (\log N)^2).
\end{align*}

\item Suppose $f:\, \mathbb T\to \mathbb R$ is continuous and $\|f\|_{\infty} \le 1$. 
If $(U^0)_l= f(\frac l N)$ for all $0\le l\le N-1$,  then
\begin{align*}
\sup_{t\ge 0} \|U(t) \|_{\infty} 
\le 1 + O(\omega_f(N^{-\frac 23}) ) + O(N^{-c}),
\end{align*}
where $c>0$ is an absolute constant, and $\omega_f$ is defined in \eqref{N18:1b.0}.

\item  Suppose $f:\, \mathbb T\to \mathbb R$ is $C^{\alpha}$-continuous (see
\eqref{N18:1b.1}) for some $0<\alpha<1$ and $\|f\|_{\infty} \le 1$.  
If $(U^0)_l= f(\frac l N)$ for all $0\le l\le N-1$,  then
\begin{align*}
\sup_{t\ge 0}\|U(t)\|_{\infty}
\le 1 + O(N^{-c_1}),
\end{align*}
where $c_1>0$ is a constant depending only on $\alpha$. 
\end{enumerate}
Moreover we have the following result which shows the sharpness of our estimates above.
There exists a  function $f$: $\mathbb T \to \mathbb R$, continuous at all of $\mathbb T
\setminus \{x_*\}$ for some $x_*\in \mathbb T$ (i.e. continuous at all $x\ne x_*$) and  has the bound $\|f\|_{\infty} \le 1$
 such that 
the following hold:
for a sequence of even numbers $N_m \to \infty$, $t_m =\nu^{-2}N_m^{-2}$ and $x_m
=j_m/N_m$ with $0\le j_m\le N_m-1$ ,  if  $\tilde U_m(t) \in \mathbb R^{N_m}$ solves \eqref{Nov1.1}
with $\tilde U_m(0) =v_m \in \mathbb R^{N_m}$ satisfying $(v_m)_j= f(\frac j {N_m})$ for all $0\le j \le N_m-1$.  Then
\begin{align*}
|\tilde U_m(t_m,x_m) | \ge 1+ \eta_*, \qquad\forall\, m\ge 1,
\end{align*}
where $\eta_*>0.001$ is an absolute constant.
\end{thm}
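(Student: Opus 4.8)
\medskip

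\noindent\emph{Proof strategy.}
The plan is to reduce everything to the equivalent semilinear heat equation \eqref{Nov1.2} (equivalently \eqref{N21_5.e1}), which is globally well posed by the discrete energy dissipation $\frac{d}{dt}E(U)\le 0$ with $E$ as in \eqref{Nov18EU1}, and then to split the time axis into the four windows matching statements (4)(a)--(d), (3), (2), (1). The backbone of the whole argument is a sharp $\ell^\infty$ bound, at grid points, for the frequency-truncated discrete heat semigroup $e^{\nu^2 t\Delta_h}$: its kernel $p_t(\ell)=\frac1N\sum_{|k|\le N/2}e^{-4\pi^2\nu^2k^2t}\,\omega^{k\ell}$ with $\omega=e^{2\pi i/N}$ should satisfy $\|p_t\|_{\ell^1}\le 1+\tfrac12\epsilon_1$ for $0<t\le C_2(\nu)N^{-2}\log N$, and $\|p_t\|_{\ell^1}=1$ once $t\gtrsim \nu^{-2}N^{-2}\log N$ (the exponentially small frequency tail then makes $p_t$ pointwise positive). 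First I would establish this by the summation-by-parts / Fej\'er-kernel device used throughout Section 2: since $k\mapsto e^{-4\pi^2\nu^2k^2t}$ is convex for $k\gtrsim(\nu\sqrt t)^{-1}$, the kernel $p_t$ is a nonnegative combination of Fej\'er kernels plus boundary terms whose $\ell^1$ mass is controlled by quantities of the type $|c_N-c_{N+1}|N+|c_N|\log N$, while Poisson summation identifies the mass-one positive part with a periodized Gaussian. This is the refinement, on the grid, of the kernel estimates already compiled in Remark \ref{Nov1re1}.

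With this in hand the shortest window $0<t\le t_1:=C_2N^{-2}\log N$ is handled by Duhamel, $U(t)=e^{\nu^2 t\Delta_h}U^0+\int_0^t e^{\nu^2(t-s)\Delta_h}(U(s)-U(s)^{.3})\,ds$, where the source term, though only $O(1)$ in $\ell^\infty$, integrates to $O(N^{-1/2}(\log N)^{1/4})$ over $[0,t_1]$ after paying $t_1$ and using $\|U^0\|_{\ell^2}\lesssim\|U^0\|_\infty\le 1$; combined with the kernel bound this gives $\|U(t)\|_\infty\le(1+\tfrac12\epsilon_1)\|U^0\|_\infty+O(N^{-1/2}(\log N)^{1/4})$, which is statement (4)(d). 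At $t=t_1$ the solution has become regular: the energy identity plus discrete smoothing give polynomial-in-$N$ control of $\|u(t_1)\|_{H^1}$, and a standard bootstrap propagates weighted bounds $t^{\theta}\|\nabla u(t)\|_{q}\lesssim 1$ for a large fixed $q$, so $\|Q_N(u^3-u)-(u^3-u)\|_\infty$ decays like a fixed negative power of $N$ (times a power of $t^{-1}$) on $[t_1,C_1N^{-3/4}]$ and like $N^{-1/2}$ on $[T_0,\infty)$ once the energy has dissipated to $O(1)$. Feeding these into a continuity-in-$t$ maximum-principle argument verbatim as in the proofs of Theorem \ref{17O6} and Theorem \ref{17O8} yields statements (4)(c), (3), (2) and (1), and fixes $\epsilon_1$ as an absolute constant. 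Statement (5) is then immediate: if $\|U^0\|_\infty\le(1+\tfrac12\epsilon_1)^{-1}$, the shortest-window bound keeps $\|U(t)\|_\infty$ strictly below $1+\tfrac12\epsilon_1$, after which the later windows apply unconditionally. For (6)--(7), where $(U^0)_\ell=f(\ell/N)$, I would refine the first window using $(e^{\nu^2 t\Delta_h}U^0)_\ell-f(\ell/N)=\sum_m p_t(m)\big(f(\tfrac{\ell-m}{N})-f(\tfrac\ell N)\big)$, which is bounded by $\|p_t\|_{\ell^1}$ times the modulus of continuity $\omega_f$ (see \eqref{N18:1b.0}) evaluated at the effective width of $p_t$, plus the small mass of its oscillatory tail; optimizing the crossover time produces $\omega_f(N^{-2/3})$, and for $C^\alpha$ data (see \eqref{N18:1b.1}) the power $N^{-c_1}$.

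Finally, for the sharpness assertion I would exhibit one fixed bounded $f$ with a single jump at $x_*$ --- an alternating $\pm1$ dyadic pattern accumulating at $x_*$, in the spirit of the Lebesgue-constant construction behind Proposition \ref{prop_Omegapp} and the counterexamples of Proposition \ref{prop_c3_0} and Proposition \ref{prop_tau2_tmp001} --- and track the solution at the critical time $t_m=\nu^{-2}N_m^{-2}$, which is precisely the transition time at which the truncated discrete heat kernel $p_{t_m}$ fails to be positive and carries an oscillatory part with non-negligible $\ell^1$ mass localized near the jump. For a grid point $x_m=j_m/N_m$ adjacent to $x_*$ the sign pattern of $p_{t_m}(j_m-\cdot)$ agrees with $v_m=(f(j/N_m))_j$ on a macroscopic set, forcing $|(e^{\nu^2 t_m\Delta_h}v_m)_{j_m}|\ge 1+2\eta_*$; since the nonlinear Duhamel correction over $[0,t_m]$ is $o(1)$, the lower bound $|\tilde U_m(t_m,x_m)|\ge 1+\eta_*$ survives. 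The main obstacle throughout is the first bullet --- pinning the constant in $\|p_t\|_{\ell^1}\le 1+\tfrac12\epsilon_1$ in the transition regime $t\sim\nu^{-2}N^{-2}$, and then, for sharpness, proving the \emph{reverse} estimate for $p_{t_m}$ with a single $N_m$-independent test function; the remaining energy/smoothing bookkeeping is routine and already rehearsed in Sections 3--4.
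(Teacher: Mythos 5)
Your overall architecture coincides with the paper's: reduce to the spectrally localized PDE \eqref{N21_5.e1} via $Q_N$, run a Duhamel/contraction argument on a short initial window, pass to energy dissipation plus smoothing plus a maximum-principle argument for $t\gtrsim N^{-3/4}$ and $t\gtrsim T_0$, refine the shortest window via the modulus of continuity for statements (6)--(7), and build the sharpness example by matching signs of the kernel at the critical time $t\sim \nu^{-2}N^{-2}$. The time windows, the exponents, and the role of Theorems \ref{N18:1}--\ref{N18:1b} are all correctly identified.

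However, there is a genuine gap at what you yourself call the backbone: the bound $\|p_t\|_{\ell^1}\le 1+\tfrac12\epsilon_1$ with $\epsilon_1<10^{-2}$ \emph{absolute} in the transition regime $t\sim \nu^{-2}N^{-2}$. The summation-by-parts/Fej\'er device of Section 2 produces an upper bound of the schematic form $1+|c_{N/2}-c_{N/2+1}|\cdot N+|c_{N/2}|\cdot\|D\|_{1}$, and in the transition regime the boundary coefficient $c_{N/2}=e^{-\pi^2\nu^2 N^2 t}$ is of order one (e.g.\ $e^{-\pi^2/4}\approx 0.085$ at $t=\tfrac14\nu^{-2}N^{-2}$), so this route yields at best $1+O(1)$ — and on the continuum it yields $1+O(\log N)$, which is exactly the content of Remark \ref{Nov1re1} that the untruncated $L^1$ norm genuinely blows up. The gain from restricting to grid points cannot be seen by convexity alone. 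The paper's actual proof of Theorem \ref{N18:1} rescales $Q_j$ into a Gaussian convolution of $\sin N\pi y/\tan\pi y$, identifies the main contribution $\frac1N\sum_j|Q_j|$ with $\sum_{j\in\mathbb Z}|\beta_j|$ where $\beta_j$ are the Fourier coefficients of the periodized Gaussian $e^{-\pi^2k_0^2s^2}$ on $[-\tfrac12,\tfrac12)$, and then pins the constant by a sign analysis giving the closed form $A(k_0)=2\int_0^1e^{-bs^2}\,ds-e^{-b}$ for small $k_0$ and an integration-by-parts/convexity-of-$e^{-x^2}$ argument for large $k_0$. Without this (or an equivalent exact computation) the quantitative constants in statements (4) and (5) are not obtained.

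The same issue recurs, in reverse, for the sharpness assertion: the lower bound $A(1)\ge 1.002$ rests on a rigorous numerical evaluation of the finitely many coefficients $\beta_0,\dots,\beta_{\pm3}$ at $k_0=1$, and the counterexample $f$ is then pieced together from the sign patterns $\operatorname{sgn}(c_j)$, $|j|\le 3$, at a sequence of points accumulating at $x_*$ — only seven signs per scale matter, not a macroscopic set. You correctly flag both the forward and reverse kernel estimates as the main obstacles, but the method you propose for them would not close.
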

\begin{rem}
 One should compare our last result with the Fourier spectral Galerkin truncation case (see for
example Theorem \ref{17O8}) where we dealt with initial data bounded 
by one and proved a sharp maximum principle. There thanks to spectral localization the initial
data is smooth.  Here in the collocation case, our counterexample
shows that there is non-approximation if we work with mere $L^{\infty}$ initial data which is bounded by
one and everywhere continuous except at one point. This is certainly connected with the non-smooth
cut-off in the Fourier space.
\end{rem}
\begin{rem}
The dependence of $N_3$ on $\nu$ is only power-like.
\end{rem}

\begin{proof}
To simplify the notation we shall tacitly assume $\nu=1$ below. It is not difficult to keep
track of the explicit dependence of $\nu$  but we shall not do it here in order to simplify
the presentation. 
Let $u=u(t,x)$ solve the following system (below $\nu=1$):
\begin{align} \label{N21_5.e1}
\begin{cases}
\partial_t u = \nu^2 \partial_{xx} u +Q_N( u -u^3), \\
u\Bigr|_{t=0} =Q_N U^0 =:v_0,
\end{cases}
\end{align}
where $Q_N U^0$ was defined in \eqref{Nov1.01b}.  It is not difficult to check 
that $u(t, \frac j N)= U(t)_j$ for all $0\le j\le N-1$ and $t\ge 0$. 
Furthermore 
\begin{align*}
&\| v_0\|_{L_x^2(\mathbb T)} = \| U^0 \|_{l^2} 
= \Bigl( \frac 1 N \sum_j (U^0_j)^2  \Bigr)^{\frac 12} \le 1; \\
& \| v_0 \|_{L_x^{\infty} (\mathbb T)} \lesssim \log N.
\end{align*}
In the argument below we shall take $N\ge N_0$ where $N_0$ is  a sufficiently large absolute constant. The needed largeness of $N_0$ can be easily worked out from the argument.

Step 1. Local in time estimate. Introduce the norm
\begin{align*}
\|u\|_{X_T}:= \frac { \|u \|_{L_t^{\infty} L_x^{\infty} ([0,T]\times \mathbb T)}}
{\log N} + \| t^{0.5} \partial_x u \|_{L_t^{\infty} L_x^2( [0,T]\times \mathbb T)}
+ \| u \|_{L_t^{\infty} L_x^2 ([0,T] \times \mathbb T)}.
\end{align*}
%For sufficiently small $T=T_0>0$ one can show contraction in the $X_T$-norm. For example,
By using the mild formulation  (note that $Q_N u = u$)
\begin{align*}
u(t) = e^{t \partial_{xx} } v_0 
+ \int_0^t e^{(t-s) \partial_{xx} } (u - u^3 ) ds 
-\int_0^t e^{(t-s) \partial_{xx} } ( Q_N - \op{Id} ) (u^3) ds,
\end{align*}
we have 
\begin{align*}
\|u (t) \|_{\infty}
&\lesssim \log N+ t \max_{0\le s \le t } \| u(s) \|_{\infty}
+ \int_0^t (t-s)^{-\frac 12} \| u^3 \|_1 ds 
+ \int_0^t N^{-\frac 12} \| \partial_x (u^3) \|_2 ds \notag \\
& \lesssim \log N + t \log N +  t^{\frac 12} \log N +  N^{-\frac 12} (\log N)^2 t^{0.5}
\lesssim \log N.
\end{align*}
Also  by using the inequality $\| u\|_3 \lesssim  \|u\|_2+
\| u\|_2^{\frac 56} \| \partial_x u \|_2^{\frac 16}$, we have
\begin{align*}
\| u(t) \|_2 
& \lesssim 1 + t +  \int_0^t (t-s)^{-\frac 14} \| u^3 \|_1ds 
+ \int_0^t   \| (Q_N -\op{Id} )(u^3) \|_{L^2(\mathbb T)} ds \notag \\
& \lesssim 1 + t +  \int_0^t (t-s)^{-\frac 14} \| u^3 \|_1ds 
+ \int_0^t   \| (Q_N -\op{Id} )(u^3) \|_{L^{\infty}(\mathbb T)} ds \notag \\
& \lesssim 1 + t +  \int_0^t (t-s)^{-\frac 14} \| u^3 \|_1ds 
+ \int_0^t  N^{-\frac 12} \| \partial_x (u^3 ) \|_2 ds \notag \\
& \lesssim 1 + t + t^{\frac 34} + \int_0^t (t-s)^{-\frac 14} s^{-\frac 14} ds 
+N^{-\frac 12} (\log N)^2 t^{0.5} \lesssim 1,
\end{align*}
and 
\begin{align*}
t^{0.5} \| \partial_x u (t) \|_2 & \lesssim 1+ t^{0.5} \int_0^t (t-s)^{-\frac 34}
\| u^3 \|_1 ds + t^{0.5} \int_0^t (t-s)^{-\frac 12} N^{-\frac 12} \| \partial_x (u^3) \|_2 ds 
\notag \\
& \lesssim 1+ t^{0.5} \int_0^t (t-s)^{-\frac 34}
s^{-\frac 14} ds + t^{0.5} \int_0^t (t-s)^{-\frac 12} N^{-\frac 12} (\log N)^2 s^{-\frac 12} ds 
\notag \\
& \lesssim  1+ t^{0.5} \lesssim 1.
\end{align*}
By carefully tracking the constants and similar estimates as done in the above, it is not difficult
to check that for $T=T_0>0$ ($T_0$ is a  sufficiently small absolute constant) we have 
contraction in the ball $\{ \|u \|_{X_T} \le C_1 \} $ where $C_1$ is an absolute constant.

Step 2. Refined estimate. For any $0<t\le T_0$, we have
\begin{align*}
\| u(t) - e^{t\partial_{xx} } v_0 \|_{\infty}
& \le \int_0^t \| e^{(t-s)\partial_{xx} } (u-u^3) \|_{\infty} ds +
\int_0^t \| e^{(t-s) \partial_{xx}} ( Q_N-\op{Id} ) (u^3) \|_{\infty} ds \notag \\
& \lesssim \int_0^t (t-s)^{-\frac 14} \| u(s) \|_2 ds 
+ \int_0^t (t-s)^{-\frac 12} \| u^3 \|_1 ds + 
\int_0^t N^{-\frac 12} \| \partial_x (u^3 ) \|_2 ds \notag \\
& \lesssim  t^{\frac 34}+t^{\frac 14} + N^{-\frac 12} (\log N)^2 t^{\frac 12}.
\end{align*}
Now take $T_1 = 100 N^{-2} \log N \le T_0$, we have 
\begin{align*}
\| u(T_1) -e^{T_1 \partial_{xx} } v_0 \|_{\infty} \lesssim N^{-\frac 12} (\log N)^{\frac 14}.
\end{align*}
On the other hand, since
\begin{align*}
(e^{t \partial_{xx}} v_0)(t,x)
= \frac 1 N \sum_{j=0}^{N-1} (U^0)_j
\op{Re}
\Bigl( \sum_{-\frac N2 <k \le \frac N2}
e^{-(2\pi k)^2 t + 2\pi i k (x-\frac j N) } \Bigr),
\end{align*}
it is clear that 
\begin{align}
\| e^{T_1 \partial_{xx} } v_0 \|_{\infty}
& \le \| \op{Re} \Bigl(\sum_{-\frac N2 <k \le \frac N2} e^{-(2\pi k)^2 T_1 + 2\pi i k\cdot x } \Bigr) \|_{\infty}
\notag \\
& \le 1 + \int_{|k| >\frac N2 -1} e^{-(2\pi k)^2 T_1}  dk \notag \\
& \le 1 + N^{-1}.  \notag 
\end{align}
Thus 
\begin{align*}
\|u (T_1) \|_{\infty} \le 1 + O(N^{-\frac 12} (\log N)^{\frac 14} ).
\end{align*}
By a similar estimate, we have for any $T_1 \le t \le T_2 =N^{-\frac 34} \le T_0$, 
\begin{align*}
\| u(t) \|_{\infty} \le 1 + O(t^{\frac 14}) \le 1 +O(N^{-\frac 3 {16}} ).
\end{align*}

Step 3. The regime $t\ge T_2$.  Note that at $t=T_2$, we have
\begin{align*}
& \| u(T_2)\|_{\infty} \le 1 + O(N^{-\frac 3{16}});\\
& \| \partial_x u(T_2) \|_{2} \le  O(T_2^{-\frac 12}) = O(N^{\frac 3 8}).
\end{align*}
It follows easily that for the system \eqref{Nov1.1},
\begin{align*}
E(U(t)) \le E(U(T_2) ) \le O(N^{\frac 38}), \quad \forall\, t\ge T_2,
\end{align*}
where $E(U)$ was defined in \eqref{Nov18EU1}. This in turn implies that 
\begin{align*}
\| \partial_x u(t) \|_2 \le O(N^{\frac 38} ), \quad \forall\, t\ge T_2.
\end{align*}
One should note that for $t\ge T_0$ we have $\| \partial_x u(t) \|_2 \lesssim 1$. 
Thus 
\begin{align*}
\|(Q_N-\op{Id})(u(t)^3 ) \|_{\infty} \lesssim N^{-\frac 12}
\cdot N^{\frac 38} (\log N)^2 \le O(N^{-\frac 18} (\log N)^2), \quad \forall\, t\ge T_2.
\end{align*}
By using a maximum principle argument, we then obtain 
\begin{align*}
\sup_{t\ge T_2} \| u(t) \|_{\infty} \le 1+ O(N^{-\frac 18} (\log N)^2).
\end{align*}

Step 4. The regime $t\le T_1$.  In this regime we observe that 
\begin{align*}
\| u(t) - e^{t \partial_{xx}} v_0 \|_{\infty} \lesssim N^{-\frac 12} (\log N)^{\frac 14},
\quad \forall\, 0\le t \le T_1.
\end{align*}
By Remark \ref{Nov1re1}, in general we have $\|v_0\|_{\infty} \lesssim \log N$ which is
optimal. Thus we need to restrict to the grid points $x_j = \frac j N$, $0\le j\le N-1$ and 
work with the function $U(t)$. Now denote  $U^{\op{lin} } (t) \in \mathbb R^N$ such that
\begin{align*}
U^{\op{lin}} (t)_j = (e^{t \partial_{xx} } v_0 )(t,\frac jN), \quad \forall\, 0\le j \le N-1.
\end{align*}
Clearly 
\begin{align*}
\| U(t) - U^{\op{lin}}(t) \|_{\infty} \lesssim N^{-\frac 12} (\log N)^{\frac 14},
\quad \forall\, 0\le t \le T_1.
\end{align*}
The remaining results then follow from Theorem \ref{N18:1}, Theorem \ref{N18:1a} 
and Theorem \ref{N18:1b} proved
in the appendix.
\end{proof}

\begin{thm}[Generalized $L^{\infty}$-stability for \eqref{Nov1a}, case
$\frac 12 \le \tau \le 1.99$]
Consider \eqref{Nov1a}. 
Suppose $U^0 \in \mathbb R^N$ satisfies 
$\| U^0\|_{\infty} \le 1$. Assume $\frac 12 \le \tau\le 1.99$. If $N\ge N_4(\nu)>0$ (the dependence of $N_4$ on $\nu$ is only 
power-like), then the following hold. 

\begin{itemize}
\item For all $n\ge 0$, we have
\begin{align} \label{N26_1}
\| U^n \|_{\infty} \le M_a = 
\frac 12 ( \frac 23 \cdot \frac {(1+\tau)^{\frac 32} } {\sqrt{3\tau}}
+\sqrt{\frac  {2+\tau}{\tau} } \Bigr).
\end{align}

\item If $\frac 12 \le \tau \le 0.86$, then for all $n\ge 0$, we have
\begin{align}
&\| U^n\|_{\infty} \le M_b = \frac 23 \cdot \frac {(1+\tau)^{\frac 32} } {\sqrt{3\tau}}
+ \eta_0, \qquad \eta_0=10^{-5};  \label{N26_2}\\
& E(U^{n+1}) \le E(U^n),     \label{N26_3}
\end{align}
where $E(U)$ was defined in \eqref{Nov18EU1}. Moreover, for $n\ge 1$,
\begin{align}
E(U^n) \le E(U^1) \le \mathcal E_1, \label{N26_4}
\end{align}
where $\mathcal E_1>0$ depends only on $\nu$.
\end{itemize}
\end{thm}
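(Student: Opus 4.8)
The plan is to reduce the fully discrete scheme \eqref{Nov1} to the scalar prototype iteration of Lemma~\ref{lem_cubic_iteration} and then run a discrete energy estimate on the resulting $L^\infty$ bound. The one ingredient I would establish first is the \emph{discrete resolvent bound}: writing $\beta=\nu^2\tau$ and letting $K_h\in\mathbb R^N$ be the circular-convolution kernel of $(\operatorname{I}-\beta\Delta_h)^{-1}$, I claim $\tfrac1N\sum_{l=0}^{N-1}|K_h(\tfrac lN)|\le 1+\delta_N$ with $\delta_N=\tfrac{\alpha\log(N+2)}{1+\nu^2\tau N^2}$ and $\alpha>0$ absolute, hence $\|(\operatorname{I}-\beta\Delta_h)^{-1}W\|_\infty\le(1+\delta_N)\|W\|_\infty$ for all $W\in\mathbb R^N$. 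This is the discrete counterpart of Theorem~\ref{thm2.14old} and I would obtain it by periodizing the continuous kernel of Section~2 onto the grid and quoting the $L^1$ estimates there, much as in the kernel computation in the proof of Lemma~\ref{Nov2}. Since $\tfrac12\le\tau\le1.99$ forces $\nu^2\tau\ge\nu^2/2$, the loss $\delta_N$ is small uniformly in $\tau$ once $N\ge N_4(\nu)$ with $N_4$ only power-like in $1/\nu$.

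With this in hand, \eqref{N26_1} is a short induction. From \eqref{Nov1}, $U^{n+1}=(\operatorname{I}-\beta\Delta_h)^{-1}\bigl(p(U^n)\bigr)$ with $p(x)=(1+\tau)x-\tau x^3$ applied node-by-node, so $\|p(U^n)\|_\infty=\max_{|x|\le\|U^n\|_\infty}|p(x)|$ and the resolvent bound gives $\|U^{n+1}\|_\infty\le\max_{|x|\le\|U^n\|_\infty}|p(x)|+\eta$ with $\eta:=\delta_N\sup_{|x|\le M_a}|p(x)|$, as long as $\|U^n\|_\infty\le M_a$. I would then invoke Lemma~\ref{lem_cubic_iteration}(2) with $\epsilon_0=0.01$, together with the monotone comparison $\|U^n\|_\infty\le\alpha_n$ and the base case $\|U^0\|_\infty\le1\le M_a$; the contraction gap $M_a-\max_{|x|\le M_a}|p(x)|$ supplied by Lemma~\ref{lem_polycubic3} is continuous and strictly positive on the compact set $\tau\in[\tfrac12,1.99]$, hence bounded below, so $\eta\le\eta_0(\epsilon_0)$ whenever $N\ge N_4(\nu)$. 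This yields $\|U^n\|_\infty\le M_a$ for all $n$.

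For the sharper bound \eqref{N26_2} when $\tfrac12\le\tau\le0.86$, the key observation is that a single step drives the iterate into the flat part of $\alpha\mapsto\max_{|x|\le\alpha}|p(x)|$. Put $x_c:=\sqrt{(1+\tau)/(3\tau)}$, so $p(x_c)=\tfrac23\tfrac{(1+\tau)^{3/2}}{\sqrt{3\tau}}$ and $M_b=p(x_c)+\eta_0$, and let $x_*:=2x_c$, the point of $[\sqrt{(1+\tau)/\tau},\sqrt{(2+\tau)/\tau}]$ at which $|p|$ again equals $p(x_c)$; on $[x_c,x_*]$ one has $\max_{|x|\le s}|p(x)|=p(x_c)$. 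Since $\|U^0\|_\infty\le1$, $x_c\le1$ and $1\le x_*$, the first step gives $\|U^1\|_\infty\le p(x_c)+\eta$, and provided $\eta$ is small enough that $p(x_c)+\eta\le x_*$ --- which holds uniformly on $[\tfrac12,1.99]$ since $x_*-p(x_c)=\tfrac23 x_c(2-\tau)$ is bounded below there --- induction gives $\|U^n\|_\infty\le p(x_c)+\eta$ for all $n\ge1$; enlarging $N$ so that also $\eta\le\eta_0=10^{-5}$ and using $1\le p(x_c)$ for $n=0$ yields $\|U^n\|_\infty\le M_b$ for all $n\ge0$. Then \eqref{N26_3} follows from the discrete energy identity --- the node-by-node Taylor expansion of $F$ plus the discrete Plancherel identity for the $\Delta_h$-term, identical in form to the continuous identity of Section~3, with $E$ as in \eqref{Nov18EU1} --- combined with the inequality $\tfrac1\tau+\tfrac12\ge\tfrac32 M_b^2$ valid for $\tfrac12\le\tau\le0.86$ (the display just before Proposition~\ref{prop3.18a}); this is the only place the cut-off $\tau_1\approx0.86$ is used. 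Finally \eqref{N26_4} follows from $E(U^n)\le E(U^1)$ and the smoothing bound $\|\nabla_h(\operatorname{I}-\beta\Delta_h)^{-1}W\|_2\le\tfrac1{2\nu\sqrt\tau}\|W\|_2$ (from the multiplier $\tfrac{2\pi k}{1+\beta(2\pi k)^2}$), which with $\|p(U^0)\|_\infty\le p(x_c)\le\sqrt2$ gives $\nu^2\|\nabla_h U^1\|_2^2\le\tfrac1{2\tau}\le1$, while $\|U^1\|_\infty\le M_b$ controls the quartic term, so $E(U^1)\le\mathcal E_1$ with $\mathcal E_1$ absolute (in particular depending only on $\nu$).

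I expect the main obstacle to be the first step together with the uniformity bookkeeping in the two induction arguments: the single spectral loss $\delta_N$ has to be forced simultaneously below the $\tau$-dependent contraction gap of Lemma~\ref{lem_polycubic3}, below the margin $x_*-p(x_c)=\tfrac23 x_c(2-\tau)$, and below $\eta_0=10^{-5}$, all \emph{uniformly} over $\tau\in[\tfrac12,1.99]$, so that $N_4$ depends only on $\nu$. Transferring the Section~2 $L^1$ kernel estimate to the grid --- i.e.\ controlling the negative part of the aliased discrete kernel $K_h$ --- is the one genuinely new computation; everything else is a reduction to Lemma~\ref{lem_polycubic3}, Lemma~\ref{lem_cubic_iteration} and the discrete energy estimate.
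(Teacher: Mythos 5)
Your proposal is correct and follows essentially the same route as the paper: a discrete resolvent/kernel bound reduces the scheme to the prototype iteration of Lemmas \ref{lem_polycubic3} and \ref{lem_cubic_iteration}, the energy decay hinges on the inequality $\tfrac12+\tfrac1\tau\ge\tfrac32 M_b^2$ for $\tfrac12\le\tau\le0.86$, and \eqref{N26_4} comes from a discrete smoothing estimate on $U^1$. The only cosmetic difference is that the paper obtains the kernel bound by comparing $K_N$ directly with the positive periodized kernel $K_\infty$ (giving an $O(N^{-1})$ loss for $\tau\ge\tfrac12$) rather than transferring the Section~2 $L^1$ estimate to the grid, but both serve the same purpose.
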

\begin{rem}
The cut-off $\tau=1.99$ is for convenience only. One can replace it by any number less
than $2$ but  $N_4$ will have to be adjusted correspondingly.
\end{rem}
\begin{proof}
Denote
\begin{align} \label{N26_4a}
&K_N(x) = \op{Re} \Bigl( \sum_{-\frac N2< k \le \frac N2} \frac 1 {1+\tau (2\pi k \nu )^2}
e^{2\pi i k \cdot x} \Bigr),  \notag \\
&K_{\infty}(x) = \sum_{k \in \mathbb Z} 
\frac 1 {1+\tau (2\pi k \nu )^2}
e^{2\pi i k \cdot x}.
\end{align}
For $\frac 12 \le \tau \le 2$ we have 
\begin{align*}
\| K_N - K_{\infty} \|_{L^{\infty} (\mathbb T)} \le  O(N^{-1})
\end{align*}
and
\begin{align*}
\frac 1 N \sum_{j=0}^{N-1} | (K_N-K_{\infty})(\frac jN) | \le O(N^{-1}).
\end{align*}
Denote $p(x) = (1+\tau) x -\tau x^3$. By \eqref{Nov1}, we have
\begin{align*}
\| U^{n+1} \|_{\infty} \le \| p(U^n) \|_{\infty} 
+ \frac{\op{const}} {N}\cdot \| p(U^n)\|_{\infty}.
\end{align*}
Then \eqref{N26_1} follows from Lemma \ref{lem_cubic_iteration} and induction. 

The proof of \eqref{N26_2}--\eqref{N26_3} is similar to that in the proof of Proposition
\ref{prop3.18a}. The main point is to use the inequality
\begin{align*}
\frac 12 +\frac 1{\tau} \ge \frac 3 2 M_b^2
\end{align*}
which holds for all $\frac 12 \le \tau\le 0.86$. 

Finally \eqref{N26_4} follows from \eqref{Nov1} and the simple estimate 
\begin{align*}
\| \Delta_h U^{n+1} \|_2 \lesssim \| p(U^n) \|_2  \lesssim 1.
\end{align*}
\end{proof}

\begin{thm}[Almost sharp maximum principle for \eqref{Nov1a}, case
$ N^{-0.3} \le \tau \le \frac 12$]
Consider \eqref{Nov1a}. 
Suppose $U^0 \in \mathbb R^N$ satisfies 
$\| U^0\|_{\infty} \le 1$. Assume $
N^{-0.3} \le \tau\le \frac 12$. If $N\ge N_5(\nu)>0$ (the dependence of $N_5$ on $\nu$ is only 
power-like), then the following hold. 

For all $n\ge 0$, we have
\begin{align} 
&\| U^n \|_{\infty} \le 1+ O(N^{-\frac 13});  \label{N26_7a} \\
& E(U^{n+1}) \le E(U^n), \label{N26_7b}
\end{align}
where $E(U)$ was defined in \eqref{Nov18EU1}. Moreover, for $n\ge \frac 1 {\tau}$,
\begin{align}
E(U^n) \le \mathcal E_2, \label{N26_8}
\end{align}
where $\mathcal E_2>0$ depends only on $\nu$.
\end{thm}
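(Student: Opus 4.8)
The plan is to run the fully discrete iteration \eqref{Nov1} directly on the grid, exploiting that the scheme is simply $U^{n+1}=(\operatorname{I}-\nu^2\tau\Delta_h)^{-1}p(U^n)$ with $p(x)=(1+\tau)x-\tau x^3$ acting componentwise; in particular there is no interpolation or aliasing to track, and the only linear operator present is $(\operatorname{I}-\nu^2\tau\Delta_h)^{-1}$, whose discrete convolution kernel is the sampled truncated Bessel kernel $(K_h)_l=K_N(l/N)$ with $K_N(x)=\sum_{-N/2<k\le N/2}c_k e^{2\pi i kx}$, $c_k=(1+\nu^2\tau(2\pi k)^2)^{-1}$. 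The first and main step is the quantitative kernel bound
\begin{align*}
\tfrac1N\sum_{l=0}^{N-1}|(K_h)_l|\le 1+\delta_N,\qquad \delta_N\lesssim_\nu \frac{1}{\nu^2\tau N^{3/2}},
\end{align*}
so that in the regime $\tau\ge N^{-0.3}$ one has $\delta_N=O_\nu(N^{-6/5})$ and $\delta_N/\tau=O_\nu(N^{-9/10})$. I would obtain this by comparison with the full kernel $K_\infty=g_{\nu^2\tau}\ge 0$: Poisson summation gives $\tfrac1N\sum_l K_\infty(l/N)=\sum_{m\in\mathbb Z}c_{mN}=1+O((\nu^2\tau N^2)^{-1})$, while $\tfrac1N\sum_l|K_N(l/N)-K_\infty(l/N)|=\tfrac1N\sum_l\bigl|\sum_{|k|>N/2}c_k e^{2\pi i kl/N}\bigr|$ is controlled, via Plancherel on the grid and the aliased coefficients $\sum_{m\ne0}c_{k_0+mN}\lesssim(\nu^2\tau N^2)^{-1}$, by $O((\nu^2\tau)^{-1}N^{-3/2})$. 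This is the non-asymptotic substitute for Lemma \ref{Nov2} (which only gave a clean bound for $N$ exponentially large in $1/(\nu\sqrt\tau)$); the extra factor $\nu^2\tau N^2$ supplied by $\tau\ge N^{-0.3}$ is precisely what makes $\delta_N$ polynomially small, and the rate $N^{-1/3}$ stated in \eqref{N26_7a} is a safe, non-optimized value.

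Given this, \eqref{N26_7a} follows by scalar comparison with the cubic iteration. Since each $|p(U^n_j)|\le\max_{|x|\le\|U^n\|_\infty}|p(x)|$, we get $\|U^{n+1}\|_\infty\le(1+\delta_N)\max_{|x|\le\|U^n\|_\infty}|p(x)|\le\max_{|x|\le\|U^n\|_\infty}|p(x)|+\eta$ with $\eta:=2\delta_N$ (using $\max_{|x|\le2}|p(x)|\le2$ for $0<\tau\le\tfrac12$ together with the a priori bound $\|U^n\|_\infty\le2$). Hence $\alpha_n:=\|U^n\|_\infty$ is dominated by the scalar iterate $\tilde\alpha_{n+1}=\max_{|x|\le\tilde\alpha_n}|p(x)|+\eta$, $\tilde\alpha_0=\|U^0\|_\infty\le1$; by the case analysis in the proofs of Lemma \ref{lem_polycubic3} and Lemma \ref{lem_cubic_iteration}, $\tilde\alpha_{n+1}\le1+(1-2\tau)(\tilde\alpha_n-1)+\eta$ whenever $1\le\tilde\alpha_n\le2$, and $\tilde\alpha_{n+1}\le1+\eta$ when $\tilde\alpha_n<1$, which by a one-line induction (using $(1-2\tau)+2\tau=1$) yields $\tilde\alpha_n\le1+\tfrac{\eta}{2\tau}$ for all $n$. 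Thus $\|U^n\|_\infty\le1+\delta_N/\tau\le1+O_\nu(N^{-1/3})$ for $N\ge N_5(\nu)$, which in turn validates a posteriori the bootstrap $\|U^n\|_\infty\le2$ and $\tilde\alpha_n\in[0,2]$, where $\eta\le\eta_0$ and $\delta_N/\tau\le1$ hold for $N\ge N_5(\nu)$ with $N_5$ depending only on $\nu$.

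For \eqref{N26_7b} I would use the discrete energy identity for \eqref{Nov1a}, derived by pairing the scheme with $U^{n+1}-U^n$ in $\langle U,V\rangle=\tfrac1N\sum_j U_jV_j$, summing by parts in the $\Delta_h$ term, and Taylor expanding $F(x)=\tfrac14(x^2-1)^2$ pointwise at each $U^n_j$ with $f=F'$, $f'(x)=3x^2-1$, to get
\begin{align*}
E(U^{n+1})-E(U^n)+\tfrac12\nu^2\|\nabla_h(U^{n+1}-U^n)\|_2^2+\bigl(\tfrac1\tau+\tfrac12\bigr)\|U^{n+1}-U^n\|_2^2\le\tfrac32\max\{\|U^n\|_\infty^2,\|U^{n+1}\|_\infty^2\}\|U^{n+1}-U^n\|_2^2
\end{align*}
with $E$ as in \eqref{Nov18EU1}. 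Since $\tau\le\tfrac12$ gives $\tfrac1\tau+\tfrac12\ge\tfrac52>\tfrac32\cdot\tfrac53$ and \eqref{N26_7a} gives $\|U^n\|_\infty^2\le1+O_\nu(N^{-1/3})<\tfrac53$ for $N\ge N_5(\nu)$, we conclude $E(U^{n+1})\le E(U^n)$ for all $n\ge0$. For \eqref{N26_8}: from the mild form $U^n=(\operatorname{I}-\nu^2\tau\Delta_h)^{-n}U^0+\tau\sum_{m<n}(\operatorname{I}-\nu^2\tau\Delta_h)^{-(n-m)}(U^m-(U^m)^{.3})$, the discrete smoothing bound $\|\nabla_h(\operatorname{I}-\nu^2\tau\Delta_h)^{-j}g\|_2\lesssim(j\tau)^{-1/2}\nu^{-1}\|g\|_2$, and $\|U^m-(U^m)^{.3}\|_2\le(1+\|U^m\|_\infty^2)\|U^m\|_2\lesssim1$ (using \eqref{N26_7a} and $\|U^m\|_2\le\|U^m\|_\infty$), one gets $\|\nabla_h U^n\|_2\lesssim_\nu(n\tau)^{-1/2}+(n\tau)^{1/2}$; at $n=\lceil1/\tau\rceil$ this is $\lesssim_\nu1$, so $E(U^{\lceil1/\tau\rceil})\le\mathcal E_2(\nu)$, and monotonicity \eqref{N26_7b} propagates the bound to all $n\ge1/\tau$.

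The one genuinely load-bearing estimate is the grid $\ell^1$ bound on the discrete Bessel kernel in the first step; everything downstream is a direct adaptation of Lemma \ref{lem_cubic_iteration}, the discrete energy estimate, and standard discrete parabolic smoothing. Notably, in contrast to the continuous-in-time collocation analysis elsewhere in the paper, no $H^1$ control of the initial data, no aliasing/interpolation bookkeeping, and no continuity-in-time argument is needed here, because for $N^{-0.3}\le\tau\le\tfrac12$ a single implicit step already smooths enough that $\delta_N$ is polynomially small. The only delicate edge is $\tau$ approaching $\tfrac12$, where $1-2\tau\to0$: the scalar comparison still closes because the forcing $\eta/2\tau$ stays bounded and $\max_{|x|\le\alpha}|p(x)|\le\alpha$ for $1\le\alpha\le\sqrt{1+2/\tau}$ by Lemma \ref{lem_polycubic3}, so $N_5$ can indeed be taken to depend only on $\nu$.
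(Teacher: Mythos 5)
Your proposal is correct and follows essentially the same route as the paper: a quantitative closeness of the sampled implicit-Euler kernel to the positive continuum kernel $K_\infty$, a contraction argument for the cubic map $p(x)=(1+\tau)x-\tau x^3$ to close the $L^\infty$ bound, the standard discrete energy identity with $\tfrac1\tau+\tfrac12\ge\tfrac32\cdot\tfrac53$, and discrete parabolic smoothing via the Duhamel sum to bound $E(U^{\lceil 1/\tau\rceil})$. The only (harmless) deviations are technical: you control the grid $\ell^1$ error by discrete Plancherel and aliasing, getting $O((\nu^2\tau)^{-1}N^{-3/2})$, where the paper simply uses the $L^\infty$ tail bound $O((\nu^2\tau N)^{-1})$, and you close the induction by domination with the scalar iterate of Lemma \ref{lem_cubic_iteration} rather than the paper's direct one-step bootstrap with a fixed $\delta=O(N^{-1/3})$.
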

\begin{proof}
We define $K_N$ and $K_{\infty}$ as in \eqref{N26_4a}. Then for $
N^{-0.3} \le \tau\le \frac 12$, it is clear that
\begin{align*}
\frac 1 N\sum_{j=0}^{N-1}
|(K_N-K_{\infty})(\frac j N) | \le \| K_N -K_{\infty} \|_{L^{\infty}(\mathbb T)} \le  
O(N^{-1} {\tau}^{-1} ). 
\end{align*}
Set $\delta>0$ which we will take to be $O(N^{-\frac 13})$. Inductively assume $\|U^n\|_{\infty} \le 1+\delta$.
By \eqref{Nov1} and the preceding estimate, we have
\begin{align*}
\| U^{n+1} \|_{\infty} \le \max_{|x| \le 1+\delta} |p(x)| +
O(N^{-1} \tau^{-1} ).
\end{align*}
where $p(x) = (1+\tau) x - \tau x^3$. Observe that $p^{\prime}(1)=1-2\tau$ and
for $0<\tau \le \frac 12$, 
\begin{align*}
\max_{|x| \le 1+\delta} |p(x)| 
\le 1 + (1-2\tau) \delta + O(\delta^2).
\end{align*}
Since we assume $\delta = O(N^{-\frac 13})$, it follows easily that
\begin{align*}
& O(\delta^2) \ll \tau \delta, \\
& O(N^{-1} \tau^{-1} ) \ll \tau \delta, 
\end{align*}
and 
\begin{align*}
\|U^{n+1} \|_{\infty} \le 1+(1-2\tau) \delta +O(\delta^2)
+ O(N^{-1} \tau^{-1}  ) \le 1+ \delta.
\end{align*}
Thus \eqref{N26_7a} holds.

Now  \eqref{N26_7b} follows from the fact that for $\tau\le \frac 12$, 
\begin{align*}
\frac 1{\tau}+\frac 12 \ge \frac 52 \ge \frac 3 2 \max\{\|U^n\|_{\infty}^2, \;
\|U^{n+1}\|_{\infty}^2 \}, \qquad \forall\, n\ge 0.
\end{align*}
One should note again here that the restriction for $\tau$  for energy stability is quite mild, i.e. $\tau$ must be smaller than $1$ (provided $\|U^n\|_{\infty}$ is close to $1$).  The restriction
for $\tau\le \frac 12$ is was due to the fact that we need to have $\|U^n\|_{\infty} \le 1+
\text{small error}$.

Finally to show \eqref{N26_8} we shall use discrete smoothing estimates.  To this
end we assume with no loss that $\nu=\frac 1 {2\pi}$ and $A=(\op{I} - \nu^2 \tau \Delta_h 
)^{-1}$. Note that on the Fourier side $\widehat A(k) = (1+ k^2 \tau)^{-1}$ for
$-\frac N2<k \le \frac N2$. It is clear that if $\tau
\sim 1$ we have $E(U^1) \le O(1)$. Thus we only need to consider the regime
$0<\tau \ll 1$.  We shall show for all 
$1\le n \le 1/\tau$,
%$1\le n \le \epsilon_0 /\tau$ ($\epsilon_0$
%is a sufficiently small constant whose smallness will be specified in the proof below)
\begin{align} \label{N27_1}
\sqrt{ n \tau} \| \partial_h U^n \|_2 \le C_0,
\end{align}
where $C_0>0$ is an absolute constant (for general $\nu>0$, $C_0$ will depend on $\nu$; here we assumed $\nu=1/2\pi$ for simplicity).
 Here we use the notation $\partial_h$ to denote 
the Fourier multiplier $2\pi ik$ in the discrete Fourier transform formula.  In particular we have
for $U\in \mathbb R^N$, 
\begin{align*}
\| \partial_h U \|_2^2 = \sum_{-\frac N2 <k \le \frac N2} 
(2\pi k)^2 |\tilde U_k|^2.
\end{align*}

It is not difficult to check that for $n\ge 1$, 
\begin{align*}
U^n = A^n U^0 + \tau \sum_{j=0}^{n-1} A^{n-j}  \Bigl( f(U^{j}) \Bigr),
\end{align*}
where $f(z) = z-z^3$ and $f(U)= U-U^{.3}$. 

Observe that for any $j\ge 1$ and $\tau>0$, we have
\begin{align*}
\sup_{k \in \mathbb Z} 
\sqrt{j\tau} 2\pi |k| (1+\tau k^2)^{-j}
\le 2\pi\cdot \sup_{x\ge 0}  x (1+\frac {x^2} j)^{-j} \le C_1<\infty,
\end{align*}
where $C_1>0$ is an absolute constant.

Now first for $n=1$, we have 
\begin{align*}
\sqrt{\tau} \|\partial_h U^1\|_2 
& \le \sqrt{\tau} \| \partial_x A U^0\|_2 + \tau^{\frac 32}
\| \partial_x A  \Bigl( f(U^0) \Bigr) \|_2 \notag \\
& \lesssim \| U^0\|_2 + \tau \| f(U^0)\|_2 \le  C_0,
\end{align*}
where clearly we can take $C_0$ sufficiently large to fulfill the last bound.

Similarly for $n\ge 2$ with $n\tau \le 1$,
\begin{align*}
\sqrt{n\tau} \|\partial_h U^n\|_2 
& \le \sqrt{n\tau} \|\partial_x A^n U^0\|_2 
+ \tau \cdot \sqrt{n\tau}
\| \partial_x A^n \Bigl( f(U^0) \Bigr) \|_2
+ \tau \sqrt{n\tau} \sum_{j=1}^{n-1} \| \partial_x A^{n-j} \Bigl( f(U^{j} ) \Bigr)
\|_2 \notag \\
& \le C_1 \| U^0\|_2+ C_1 \| f(U^0)\|_2+  \sup_{1\le j\le n-1}
\|f(U^j)\|_2 \cdot C_1 \tau \cdot \sqrt{n\tau} \sum_{j=1}^{n-1} \frac 1 {\sqrt{(n-j) \tau} }
\le C_0.
\end{align*}
Thus \eqref{N27_1} is proved.
\end{proof}

Our next result is concerned with the regime $0<\tau \le N^{-0.3}$. Let $u^n=u^n(x):
\mathbb T \to \mathbb R$ solve
the system
\begin{align} \label{N28_0}
\begin{cases}
(\op{Id}-\nu \tau \Delta) u^{n+1} =  u^n + \tau Q_N ( f(u^n) ), \\
u^0= Q_N U^0,
\end{cases}
\end{align}
where $f(z)=z-z^3$. It is not difficult to check that $u^n$ and \eqref{Nov1a} are
equivalent in the sense that $u^n(\frac j N) = (U^n)_j$ for all $n\ge 0$ and
$0\le j\le N-1$.

\begin{thm}[Almost sharp maximum principle for \eqref{Nov1a}, case
$ 0<\tau \le N^{-0.3} $]
Consider \eqref{Nov1a}. 
Suppose $U^0 \in \mathbb R^N$ satisfies 
$\| U^0\|_{\infty} \le 1$. Assume $
0<\tau\le N^{-0.3} $. If $N\ge N_6(\nu)>0$ (the dependence of $N_6$ on $\nu$ is only 
power-like), then it holds that 
\begin{align*}
\sup_{n\ge 0} \|U^n \|_{\infty} \le 1 +\epsilon_1,
\end{align*}
where $0<\epsilon_1 <10^{-2}$ is an absolute constant. More precisely the following hold.
(Below we shall write $X=O(Y)$ if $|X|\le CY$ and the constant $C$ depends only
on $\nu$.)
\begin{enumerate}
\item $E(U^{n+1}) \le E(U^n)$ for all $n\ge 0$.
\item For some $T_0=T_0(\nu)>0$ sufficiently small and for all $n\ge T_0/\tau$, we have
\begin{align*}
&  \| \partial_x u^n \|_{L_x^2(\mathbb T)} \le O(1); \\
& \| U^n \|_{\infty} \le \| u^n\|_{\infty} \le 1+ O(N^{-\frac 3{40}}),
\end{align*}
where $u^n$ solves \eqref{N28_0}.
\item  For all $n\ge 2N^{-0.3}/\tau$, we have
\begin{align*}
\| U^n\|_{\infty} \le \| u^n \|_{\infty} \le 1+O(N^{-\frac 3 {40}}).
\end{align*}
\item For $1\le n \le 2N^{-0.3}/\tau$, we have 
\begin{align*}
& \|u^n - (\op{Id}-\nu \tau \partial_{xx})^{-n} u^0 \|_{\infty}
\le O(N^{-\frac 3 {40}}); \\
& \sup_{0\le l\le N-1}\Bigl| 
\Bigl( (\op{Id}-\nu \tau \partial_{xx})^{-n} u^0 \Bigr)(\frac l N)
\Bigr| \le (1+\frac 1 2 \epsilon_1) \|U^0\|_{\infty};  \\
& \|U^n\|_{\infty} \le (1+\frac 12 \epsilon_1)
\|U^0\|_{\infty} + O(N^{-\frac 3 {40}}).
\end{align*}
\item If $\|U^0\|_{\infty} \le \frac 1 {1+\frac 12 \epsilon_1}$ 
(note that $1/(1+\frac 12 \epsilon_1)>0.995$), then 
\begin{align*}
\sup_{n\ge 0} \|U^n\|_{\infty} \le 1 + O(N^{-\frac 3{40}}).
\end{align*}

\item Suppose $f:\, \mathbb T\to \mathbb R$ is continuous and $\|f\|_{\infty} \le 1$. 
If $(U^0)_l= f(\frac l N)$ for all $0\le l\le N-1$,  then
\begin{align*}
\sup_{n\ge 0} \|U^n\|_{\infty} 
\le 1 + O(\omega_f(N^{-\frac 23}) ) + O(N^{-c}),
\end{align*}
where $c>0$ is an absolute constant, and $\omega_f$ is defined in \eqref{N18:1b.0}.

\item  Suppose $f:\, \mathbb T\to \mathbb R$ is $C^{\alpha}$-continuous for some $0<\alpha<1$ and $\|f\|_{\infty} \le 1$.  
If $(U^0)_l= f(\frac l N)$ for all $0\le l\le N-1$,  then
\begin{align*}
\sup_{n\ge 0}\|U^n\|_{\infty}
\le 1 + O(N^{-c_1}),
\end{align*}
where $c_1>0$ is a constant depending only on $\alpha$. 
\end{enumerate}
Moreover we have the following result which shows the sharpness of our estimates above.
There exists a  function $f$: $\mathbb T \to \mathbb R$, continuous at all of $\mathbb T
\setminus \{x_*\}$ for some $x_*\in \mathbb T$ (i.e. continuous at all $x\ne x_*$) and  has the bound $\|f\|_{\infty} \le 1$
 such that 
the following hold:
for a sequence of even numbers $N_m \to \infty$, $\tau_m =\frac 14 \nu^{-2}N_m^{-2}$ and $x_m
=j_m/N_m$ with $0\le j_m\le N_m-1$ , if $\tilde U_m \in \mathbb R^{N_m}$ solves 
\begin{align*}
(\op{Id} -\tau \nu^2 \Delta_h ) \tilde U_m = V_m + \tau_m f(V_m),
\end{align*}
where $(V_m)_l = f(\frac l {N_m})$ for all $0\le l\le N_m-1$. Then 
\begin{align*}
|\tilde U_m(x_m) | \ge 1+ \eta_*, \qquad\forall\, m\ge 1,
\end{align*}
where $\eta_*>0.001$ is an absolute constant.
\end{thm}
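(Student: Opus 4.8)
The plan is to run the four-stage argument of Theorem \ref{N21_5} (the continuous-in-time collocation case) with the truncated heat semigroup replaced by the backward-Euler propagator. I would work throughout with the function formulation \eqref{N28_0}: writing $v_0=Q_NU^0$, and $V_N$ for the trigonometric polynomials of degree $\le N/2$ with $\widehat w(\tfrac N2)=\widehat w(-\tfrac N2)$, the solution $u^n\in V_N$ satisfies $u^n(\tfrac jN)=(U^n)_j$ and
\begin{align*}
u^n = A^n v_0 + \tau\sum_{j=0}^{n-1}A^{n-j}Q_N\!\big(f(u^j)\big),\qquad A=(\operatorname{Id}-\nu^2\tau\partial_{xx})^{-1},\quad f(z)=z-z^3,
\end{align*}
where $A$ acts on $V_N$ (equivalently, $A^m$ has multiplier $(1+\nu^2\tau(2\pi k)^2)^{-m}$ on $|k|\le N/2$), and $\|v_0\|_{L^2}=\|U^0\|_{\ell^2}\le1$ but in general only $\|v_0\|_{L^\infty}\lesssim\log N$ (Remark \ref{Nov1re1}). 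Three facts about $A$ are needed: (i) on $V_N$ the multiplier of $A^n$ is the Fourier transform of a positive Bessel-type kernel of mass $1$, so $\|A^nw\|_{L^\infty(\mathbb T)}\le\|w\|_{L^\infty(\mathbb T)}$ for $w\in V_N$; (ii) $\sup_k2\pi|k|(1+\nu^2\tau(2\pi k)^2)^{-m}\lesssim(m\tau)^{-1/2}$, as in the previous theorem, hence $\|\partial_hA^m\|_{L^2\to L^2}\lesssim(m\tau)^{-1/2}$; (iii) once $m\tau\gtrsim N^{-2}$ the grid kernel of $A^m$ differs from the positive, mass-$1$ untruncated kernel by a tail of $\ell^1$-norm $O(N^{-c})$, while if $m\tau\ll N^{-2}$ then $A^m$ is $O(N^{-c})$-close to the identity on grid data. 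The sharp grid-sampling statements (and the matching counterexample, and the $C^0$/$C^\alpha$ refinements) are the appendix Theorems \ref{N18:1}, \ref{N18:1a}, \ref{N18:1b}, whose proofs transfer verbatim to $A^m$ with these substitutions.

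First I would treat the early window $0<n\tau\le2N^{-0.3}$. A contraction argument in the Kato-type norm $(\log N)^{-1}\|u\|_{L^\infty_tL^\infty_x}+\|(n\tau)^{1/2}\partial_xu\|_{L^\infty_tL^2_x}+\|u\|_{L^\infty_tL^2_x}$ gives $\|u^n\|_\infty\lesssim\log N$, $\|u^n\|_2\lesssim1$ and $(n\tau)^{1/2}\|\partial_xu^n\|_2\lesssim1$ on $n\tau\le T_0$ for an absolute $T_0>0$. Feeding this into the Duhamel identity, using (i) and $\|Q_Ng\|_{L^\infty}\lesssim(\log N)\|g\|_{L^\infty}$, gives $\|u^n-A^nv_0\|_{L^\infty}\lesssim\tau n(\log N)^4\le N^{-3/40}$ for $n\le2N^{-0.3}/\tau$. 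It then remains to bound $A^nv_0$ \emph{at grid points}; the crude bound $\|A^nv_0\|_{L^\infty(\mathbb T)}\le\|v_0\|_{L^\infty(\mathbb T)}\lesssim\log N$ is too lossy, so one invokes the appendix estimates, which exploit that $A^n$ band-limits $v_0$ below the sampling scale to obtain $\sup_{0\le l\le N-1}|(A^nv_0)(\tfrac lN)|\le(1+\tfrac12\epsilon_1)\|U^0\|_\infty$, the sharper $1+O(\omega_f(N^{-2/3}))+O(N^{-c})$ when $U^0$ is sampled from a continuous $f$, and $1+O(N^{-c_1})$ when $f$ is $C^\alpha$. Adding these to the Duhamel bound proves (4), (6) and (7).

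Next, for $2N^{-0.3}\le n\tau\le T_0$ and $n\tau\ge T_0$, I would use that there $A$ acts on $V_N$-functions by a positive kernel of mass $1$, so from $u^{n+1}=A\big(u^n+\tau Q_Nf(u^n)\big)$,
\begin{align*}
\|u^{n+1}\|_\infty\le\big\|u^n+\tau f(u^n)\big\|_\infty+\tau\big\|(Q_N-\operatorname{Id})(u^n-(u^n)^3)\big\|_\infty\le\max_{|x|\le\|u^n\|_\infty}|p(x)|+O\!\big(\tau N^{-1/2}(1+\|u^n\|_\infty^2)\|\partial_xu^n\|_2\big),
\end{align*}
$p(x)=(1+\tau)x-\tau x^3$. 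In the middle window (ii) gives $\|\partial_xu^n\|_2=O((n\tau)^{-1/2})=O(N^{0.15})$, so the error is $o(N^{-3/40})$; in the late window energy decay (below) gives $\|\partial_xu^n\|_2=O(1)$, so the error is $O(N^{-1/2})$. Since $0<\tau\le\tfrac12$, Lemmas \ref{lem_polycubic3} and \ref{lem_cubic_iteration}, applied to $\alpha_{n+1}=\max_{|x|\le\alpha_n}|p(x)|+\eta$ with $\eta=O(N^{-3/40})$ started from the value $\le1+\tfrac12\epsilon_1+O(N^{-3/40})$ supplied by the early window, keep the $\alpha_n$ in $[1,1+\epsilon_1]$ and force $\alpha_n\le1+O(N^{-3/40})$ once $n\ge2N^{-0.3}/\tau$; this yields (2), (3) and $\sup_n\|U^n\|_\infty\le1+\epsilon_1$. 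Energy stability (1) then follows from the discrete identity $E(U^{n+1})-E(U^n)+\tfrac12\nu^2\|\nabla_h(U^{n+1}-U^n)\|_2^2+(\tfrac1\tau+\tfrac12)\|U^{n+1}-U^n\|_2^2\le\tfrac32\max\{\|U^n\|_\infty^2,\|U^{n+1}\|_\infty^2\}\|U^{n+1}-U^n\|_2^2$: since $\tau\le\tfrac12$ we have $\tfrac1\tau+\tfrac12\ge\tfrac52$, and $\|U^n\|_\infty\le1+\epsilon_1<\sqrt{5/3}$ makes the right side absorbable; $E(U^n)\le\mathcal E_2$ for $n\ge1/\tau$ comes from discrete smoothing bookkeeping ($\sqrt{n\tau}\|\partial_hU^n\|_2\le C_0$ for $n\le1/\tau$), just as in the previous theorem. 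Part (5) is the special case $\|U^0\|_\infty\le(1+\tfrac12\epsilon_1)^{-1}$, for which $\sup_l|(A^nv_0)(\tfrac lN)|\le1$ in the early window, so there is no overshoot through $1$ and $\|U^n\|_\infty\le1+O(N^{-3/40})$ for all $n$.

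Finally, the sharpness addendum: with $\tau_m=\tfrac14\nu^{-2}N_m^{-2}$ one has $\tau_m\nu^2N_m^2=\tfrac14$, so $(1+\tau_m\nu^2(2\pi k)^2)^{-1}=(1+\pi^2k^2/N_m^2)^{-1}$ on $|k|\le N_m/2$ is a fixed, smooth, strictly decreasing symbol with an $O(1)$ value at the cutoff; the corresponding kernel $K$ of $(\operatorname{Id}-\tau_m\nu^2\Delta_h)^{-1}$ on the grid is a fixed-scale mollification of the Dirichlet kernel and, as in the $\tau\to0$ discussion of Remark \ref{Nov1re1} (and matching the heat-at-time-$t_m$ kernel of the counterexample for Theorem \ref{N21_5}, since $\tau_m\nu^2N_m^2\sim t_m\nu^2N_m^2\sim1$), has a non-trivial negative part. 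Since $\|f\|_\infty\le1$, the term $\tau_mf(V_m)$ contributes only $O(N_m^{-2})$, so $\tilde U_m(x_m)=\tfrac1{N_m}\sum_lK(x_m-\tfrac l{N_m})f(\tfrac l{N_m})+O(N_m^{-2})$; choosing $x_m$ at a grid point near the negative trough of $K$, one builds a single $f:\mathbb T\to\mathbb R$ with $\|f\|_\infty\le1$, continuous off one point $x_*$ (oscillating ever faster as $x\to x_*$ so that for each $m$ the samples $f(\tfrac l{N_m})$ are sign-aligned with $K(x_m-\tfrac l{N_m})$), making the weighted sum equal $\tfrac1{N_m}\sum_l|K(x_m-\tfrac l{N_m})|\ge1+\eta_*$ with $\eta_*>0.001$ absolute — the construction carried out in the appendix for Theorem \ref{N21_5}. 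The main obstacle is the early-window bound $\sup_l|(A^nv_0)(\tfrac lN)|\le(1+\tfrac12\epsilon_1)\|U^0\|_\infty$ and its refinements: this is exactly the point at which the $\log N$ sup-norm overshoot of the collocation interpolant $Q_N$ must be shown undone by the (scale $\ll N$) smoothing of $A^n$, uniformly over $1\le n\le2N^{-0.3}/\tau$ and $0<\tau\le N^{-0.3}$, and it — together with the matching counterexample — is the heart of the appendix.
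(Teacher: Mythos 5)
Your overall architecture coincides with the paper's: reformulate through \eqref{N28_0} so that $u^n\in V_N$ interpolates $U^n$, run a local contraction with $\log N$-sized data, compare with the pure resolvent flow via Duhamel in the window $n\tau\lesssim N^{-0.3}$, quote the appendix grid-kernel estimates (Theorem \ref{N21:1}) for statements (4)--(7) and the sharpness example, and close the later windows with the cubic iteration $\alpha_{n+1}\le\max_{|x|\le\alpha_n}|p(x)|+O(\tau N^{-1/2}\|\partial_x u^n\|_2)$ plus the discrete energy inequality. Two of your preliminary facts, however, are not right as stated, and the error propagates into the junction between the early and middle windows. First, your fact (iii) claims that once $m\tau\gtrsim N^{-2}$ the grid kernel of $A^m$ is within $O(N^{-c})$ of the positive unit-mass kernel; this is false in the range $N^{-2}\lesssim m\tau\lesssim N^{-0.3}$ — there the grid $\ell^1$ mass exceeds $1$ by an absolute constant (this is precisely the $1+\epsilon_a$ regime of Theorem \ref{N21:1}, and your own sharpness construction at $\tau_m=\frac14\nu^{-2}N_m^{-2}$, $n=1$, contradicts the claim). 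The $O(N^{-c})$ closeness only begins once $m\tau\nu^2\gtrsim N^{-0.3}$, where the tail $\sum_{|k|>N/2}(1+(2\pi k\nu)^2\tau)^{-m}=O((m\tau N)^{-1})=O(N^{-0.7})$. (Relatedly, your fact (i) should be justified via the \emph{untruncated} periodized kernel: the band-limited kernel is not positive in this regime — cf.\ Remark \ref{Nov1re1} and Proposition \ref{prop_tmp211} — but its action on $V_N$ coincides with convolution against the positive kernel of $B=(\op{Id}-\nu^2\tau\partial_{xx})^{-1}$, which is exactly why the paper works with $B$ throughout.)

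Second, and this is the genuine gap: you seed the middle-window induction at $n_0=2N^{-0.3}/\tau$ with $\alpha_{n_0}\le1+\tfrac12\epsilon_1+O(N^{-3/40})$ and assert that the iteration ``forces $\alpha_n\le1+O(N^{-3/40})$ once $n\ge2N^{-0.3}/\tau$''. The recursion contracts the excess only by the factor $(1-2\tau)$ per step, so from a constant-size excess $\tfrac12\epsilon_1$ one needs $\gtrsim\tau^{-1}\log N$ additional steps (time of order $\log N$) to reach $1+O(N^{-3/40})$; hence statements (2) and (3) at the stated thresholds do not follow from your argument as written. What is missing is the paper's key junction estimate: at $n_0\tau\sim N^{-0.3}$ one bounds the linear flow over \emph{all} of $\mathbb T$, namely $\|B^{n_0}u^0\|_{L^\infty(\mathbb T)}\le 1+\sum_{|k|>\frac N2-1}(1+(2\pi k\nu)^2\tau)^{-n_0}\le1+O(N^{-0.7})$, which uses positivity and unit mass of the untruncated kernel so that the grid Riemann sum of its absolute value is $\le 1+\mathrm{tail}$ for every $x$, not only at grid points. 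Combined with the Duhamel bound this gives $\|u^{n_0}\|_{L^\infty(\mathbb T)}\le1+O(N^{-3/40})$ already at the junction, and only then does the cubic iteration with per-step error $O(\tau N^{-0.35})$ keep the bound $1+O(N^{-3/40})$ for all $n\ge n_0$. With that estimate inserted (and fact (iii) corrected to the threshold $m\tau\gtrsim N^{-0.3}$), your proof matches the paper's.
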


\begin{proof}
We shall use the system \eqref{N28_0}.  Denote $B= (\op{Id}- \nu \tau \Delta)^{-1}$. 
Clearly we have (below $f_1(z)=-z^3$)
\begin{align} \label{N28_4}
u^n= B^{n-1} u^1+ \tau \sum_{j=1}^{n-1} B^{n-j}\Bigl( f(u^j) \Bigr)
+\tau \sum_{j=1}^{n-1} B^{n-j} (Q_N-\op{Id}) \Bigl( f_1(u^j) \Bigr), \; n\ge 1,
\end{align}
with 
\begin{align*}
\|u^0\|_{L^2(\mathbb T)} \le 1, \qquad \|u^0\|_{L^{\infty}(\mathbb T)}
\lesssim \log N.
\end{align*}
In the argument below we shall assume $N\ge N_6=N_6(\nu)$ where $N_6$
is a sufficiently large constant depending only on $\nu$. The needed largeness of
$N_6$ as well as the precise (power) dependence on $\nu$ can be worked out from the context
with more detailed computations. However for simplicity of notation we shall not 
present it here.

Step 1. Local estimate. We show that for $T_0=T_0(\nu)>0$ sufficiently small and all
$n\ge 1$ with $n\tau \le T_0$, it holds that
\begin{align} \label{N28_7}
\frac {\|u^n\|_{\infty}} {\log N}
+ \sqrt{n\tau} \|\partial_x u^n \|_2 + \|u^n\|_2 \le C_0,
\end{align}
where $C_0>0$ is a constant depending only on $\nu$. 

For $n=1$, we shall start with
\begin{align*}
U^1 =(\op{Id}-\tau \nu^2 \Delta_h)^{-1} U^0 +\tau (\op{Id}-\tau\nu^2 \Delta_h)^{-1}
(f(U^0) ).
\end{align*}
Clearly
\begin{align*}
&\sqrt{\tau} \|\partial_x u^1\|_2 \le \sqrt{\tau} \| \partial_h U^1 \|_2  \lesssim \|U^0\|_2 + 
\tau \| f(U^0)\|_2 \le \frac 13 C_0;\\
&\|u^1\|_2 \le \|U^1\|_2 \lesssim \|U^0\|_2+ \tau \| f(U^0)\|_2 \le
\frac 13 C_0.
\end{align*}
On the other hand, noting that $\Bigl( f(u^0) \Bigr)(\frac j N) =
\Bigl(f(U^0) \Bigr)_j$ for all $0\le j\le N-1$, we have
\begin{align*}
\| u^1 \|_{\infty} &\le  \|u^0\|_{\infty}+ \tau \| Q_N ( f(u^0) ) \|_{\infty} \notag \\
& = \|Q_N U^0\|_{\infty} +\tau \| Q_N ( f(U^0) ) \|_{\infty}
\lesssim \log N. 
\end{align*}
Thus 
\begin{align*}
\frac {\|u^1\|_{\infty} } {\log N} + \sqrt{\tau} \|\partial_x u^1 \|_2 
+ \|u^1 \|_2 \le C_0.
\end{align*}
Next for $n\ge 2$ with $n\tau \le T_0$ and for all $1\le j\le n-1$, we inductively assume that 
\begin{align*}
\frac {\|u^j\|_{\infty}} {\log N}
+ \sqrt{j\tau} \|\partial_x u^j \|_2 + \|u^j\|_2 \le C_0. 
\end{align*}
Then by \eqref{N28_4} and Lemma \ref{N29_apL1}, we have
(note that $j\tau \le T_0 \ll 1$)
\begin{align*}
\|u^n\|_{\infty}
&\le \|u^1\|_{\infty}
+ \tau b_1 \sum_{j=1}^{n-1}
((n-j)\tau)^{-\frac 12} \| f(u^j) \|_1
+ \tau \sum_{j=1}^{n-1}
\| (Q_N -\op{Id})(f_1(u^j) ) \|_{\infty} \notag \\
& \le b_2 \log N + b_3 \cdot \sqrt{n\tau} \cdot C_0(C_0+1) \log N 
+  \tau \sum_{j=1}^{n-1} N^{-\frac 12} b_4 \cdot \|f_1^{\prime}(u^j) \partial_x u^j\|_2
\notag \\
& \le b_2 \log N + b_3 \cdot \sqrt{n\tau} \cdot C_0(C_0+1) \log N 
+  \tau \sum_{j=1}^{n-1} N^{-\frac 12} b_4 \cdot  (\log N)^2 (j\tau)^{-\frac 12}
\cdot C_0^3
\notag \\
& \le \frac 13 C_0 \log N,
\end{align*}
where in the above we use $b_i$ to denote various constants depending only on $\nu$, and 
we take $T_0$ sufficiently small and $N$ sufficiently large.
Similarly, by using the inequality 
$\|u\|_3 \lesssim \|u\|_2+ \|u\|_2^{\frac 56} \| \partial_x u\|_2^{\frac 16}$, 
we have
\begin{align*}
\|u^n\|_{2}
&\le \|u^1\|_{2} 
+ \tau b_5 \sum_{j=1}^{n-1}
((n-j)\tau)^{-\frac 14} \| f(u^j) \|_1
+ \tau \sum_{j=1}^{n-1}
\| (Q_N -\op{Id})(f_1(u^j) ) \|_{2} \notag \\
& \le b_6+ b_7\tau \sum_{j=1}^{n-1}
((n-j)\tau)^{-\frac 14}
\cdot( C_0+C_0^3 +C_0^{\frac 52}
\cdot (j\tau)^{-\frac 14} )
+\tau \sum_{j=1}^{n-1}
\| (Q_N -\op{Id})(f_1(u^j) ) \|_{\infty} \notag \\ 
& \le  b_6+ b_8\cdot ( (n\tau)^{\frac 34}+(n\tau)^{\frac 12})
+ \tau \sum_{j=1}^{n-1} N^{-\frac 12} b_4 \cdot  (\log N)^2 (j\tau)^{-\frac 12}
\cdot C_0^3
\notag \\
& \le \frac 13 C_0,
\end{align*}
and 
\begin{align*}
&\sqrt{n\tau} \|\partial_x u^n\|_2  \notag \\
\le &\sqrt{n\tau}
\| \partial_x B^{n-1} u^1 \|_2
+ \sqrt{n\tau} \tau 
\sum_{j=1}^{n-1} \| \partial_x B^{n-j} ( f(u^j ) )\|_2
+ \sqrt{n\tau}\tau \sum_{j=1}^{n-1}
\| \partial_x B^{n-j} (Q_N-\op{Id}) (f_1(u^j) ) \|_2 \notag \\
 \le&\; b_9 +  \sqrt{n\tau} \tau \sum_{j=1}^{n-1}
 ((n-j)\tau)^{-\frac 34}  \| f(u^j) \|_1
 + \sqrt{n\tau} \tau \sum_{j=1}^{n-1} ((n-j)\tau)^{-\frac 12}
 N^{-\frac 12} b_4 \| f_1^{\prime}(u^j) \partial_x u^j \|_2  \notag \\
 \le & \frac 13 C_0.
\end{align*}
Hence \eqref{N28_7} is proved.

Step 2. Refined estimate. First we note that 
\begin{align*}
\|Q_N (f(u^0) )\|_{\infty} =\|Q_N (f(U^0) )\|_{\infty} \lesssim \log N.
\end{align*}
Since $\tau \le N^{-0.3}$, we obtain for any $n\ge 1$,
\begin{align} \label{N28_8}
\tau \|B^n Q_N( f(u^0) ) \|_{\infty} \lesssim N^{-0.3} \log N.
\end{align}
Now for any $1\le n \le T_0 /\tau$,  by using \eqref{N28_8}
and the fact that 
\begin{align*}
\| u^j\|_3^3 \lesssim \|u^j\|_2^3 + \| u^j\|_2^{\frac 52}
\| \partial_x u^j \|_2^{\frac 12} \lesssim 1+ (j\tau)^{-\frac 14}, \quad \forall\, 1\le j\le T_0/\tau,
\end{align*}
 we have
\begin{align*}
&\| u^n - B^n u^0 \|_{\infty} \notag \\
\le &\;
\tau \| B^n Q_N (f (u^0) ) \|_{\infty}
+ \tau \sum_{j=1}^{n-1} \| B^{n-j} f(u^j )\|_{\infty}
+ \tau \sum_{j=1}^{n-1}  \| (Q_N-\op{Id}) ( f_1(u^j) ) \|_{\infty} \notag \\
\lesssim &\; N^{-0.3} \log N
+ \tau \sum_{j=1}^{n-1} ( (n-j )\tau)^{-\frac 14}
\| u^j \|_2
+ \tau \sum_{j=1}^{n-1} ((n-j)\tau)^{-\frac 12} \| (u^j)^3 \|_1
+ N^{-\frac 12} (\log N)^2 (n\tau)^{\frac 12} \notag \\
\lesssim & N^{-0.3} \log N+ (n\tau)^{\frac 14}
+ (n\tau)^{\frac 34} \lesssim N^{-0.3} \log N + (n\tau)^{\frac 14}.
\end{align*}

Step 3. The regime $n\tau \ge 2N^{-0.3}$. Set $n_0= [2 N^{-0.3}/\tau]\ge 2$ where
$[x]$ denotes the usual integer part of the real number $x>0$.  Clearly
$n_0\tau \sim N^{-0.3}$ and 
\begin{align*}
\| u^{n_0} - B^{n_0} u^0\|_{\infty} \lesssim N^{-\frac 3 {40}}.
\end{align*}
On the other hand, since
\begin{align*}
(B^{n_0} u^0)(x) = \frac 1 N \sum_{j=0}^{N-1}
(U^0)_j \op{Re} \Bigl(
\sum_{-\frac N2 <k \le \frac N2}
(1+(2\pi k\nu)^2 \tau)^{-n_0} e^{2\pi i k(x-\frac j N) } \Bigr),
\end{align*}
we have
\begin{align*}
\| B^{n_0} u^0 \|_{\infty}
& \le 1 + \sum_{|k|>\frac N2-1}  (1+(2\pi k \nu)^2 \tau)^{-n_0} \notag \\
& \le 1 + \sum_{|k|>\frac N2-1}  \frac 1 { 1+ (2\pi \nu)^2 n_0 \tau k^2} \notag \\
& \le 1+ O( (n_0\tau N)^{-1}) = 1+ O(N^{-0.7}).
\end{align*}
Thus
\begin{align*}
\| u^{n_0} \|_{\infty} \le 1 + O(N^{-\frac 3 {40}} ).
\end{align*}
Note that 
\begin{align*}
\| \partial_x u^{n_0} \|_2 \lesssim \frac 1 {\sqrt{n_0 \tau} } \le O(N^{0.15}).
\end{align*}
Thus for $n_0 \le n \le T_0/\tau$, 
\begin{align*}
E(U^n) \le  E(U^{n_0}) \le O(N^{0.3}). 
\end{align*}
This implies for all $n_0\le n \le T_0/\tau$, 
\begin{align*}
&\| \partial_x u^n \|_2 \le O(N^{0.15} ), \\
& \| (Q_N -\op{Id}) ( f_1(u^n ) ) \|_{\infty} 
\le O(N^{-\frac 12} \cdot N^{0.15}) = O(N^{-0.35}).
\end{align*}
Now consider the system
\begin{align*}
(\op{Id}-\nu^2 \tau \Delta) u^{n+1} = u^n + \tau f(u^n) 
+\tau (Q_N-\op{Id})(f_1(u^n) ).
\end{align*}
If we assume $\|u^n\|_{\infty} \le 1+\delta$, then 
(note again that for $p(x)=x+\tau (x-x^3)$, $p^{\prime\prime}(x)<0$ for $x$ near $\pm 1$ so
that $O(\delta^2)$ does not appear below)
\begin{align*}
\|u^{n+1}\|_{\infty} \le 1+ (1-2\tau) \delta 
+\tau \cdot O(N^{-0.35}) \le 1+\delta,
\end{align*}
provided we take $\delta \ge O(N^{-0.25})$. 

Thus it is clear that for all $n_0 \le n \le T_0/\tau$,
\begin{align*}
\| u^n \|_{\infty} \le 1 + O(N^{-\frac 3 {40} }).
\end{align*}
For $n\ge T_0/\tau$, note that $\| \partial_x u^n \|_2 \lesssim 1$. By another maximum
principle argument, we obtain
\begin{align*}
\sup_{n\ge n_0} \|u^n\|_{\infty} \le 1 +O(N^{-\frac 3{40}}).
\end{align*}
Note that we did not optimize the exponent  (in $N$) here for simplicity of presentation. These
can certainly be improved by more refined estimates but we do not dwell on this issue here.

Step 4. The regime $ n\tau \le 2N^{-0.3}$.  In this regime we observe that
\begin{align*}
\| u^n - B^n u^0 \|_{\infty} \le O(N^{-\frac 3 {40}}). 
\end{align*}
The main contribution of the $L^{\infty}$-estimate of $u^n$ then comes from the linear
flow piece. In this case we have to restrict to the grid points $x_j =j/N$ for $0\le j\le N-1$
and work with $U_{\op{lin} }^n \in \mathbb R^N$ such that
\begin{align*}
(U_{\op{lin}}^n )_j = (B^n u^0)(\frac j N), \quad\forall\, 0\le j\le N-1.
\end{align*}
The remaining results then follow from Theorem \ref{N21:1} proved in the appendix and
a simple perturbation argument.
\end{proof}

\section{Strang splitting with Fourier collocation for Allen-Cahn}
In this section we consider the Strang splitting with Fourier collocation for Allen-Cahn
on $\mathbb T$. The PDE is again
\begin{align*}
\partial_t u = \nu^2 \Delta u + f(u),\qquad f(u)=u-u^3.
\end{align*}
We shall adopt the same notation as in Section \ref{S:DFT1}.  In particular we slightly
abuse the notation and denote for $U\in \mathbb R^N$
\begin{align*}
f(U)= U - U^{\cdot 3}.
\end{align*}

We consider the time splitting
as follows. Let $\tau>0$ be the time step.  Consider the ODE
\begin{align*}
\begin{cases}
\frac d {dt} U = f(U), \\
U\Bigr|_{t=0} = a \in \mathbb R^N.
\end{cases}
\end{align*}
We define the solution operator $\mathcal N_{\tau}:\, \mathbb R^N
\to \mathbb R^N$ as the map $a \to U(\tau)$.  Thanks to
the explicit form of $f(U)$, we have ( $a=(a_0,\cdots, a_{N-1})^T$ and with no loss
we shall assume that $N\ge 2$ )
\begin{align*}
(\mathcal N_{\tau} a)_j = U(\tau)_j = \frac {a_j}
{ \sqrt{ e^{-2\tau} + (1-e^{-2\tau} ) (a_j)^2} },
\qquad j=0, 1,\cdots, N-1.
\end{align*}

Define $\mathcal S_{\tau}= e^{\frac {\tau}2 \nu^2 \Delta_h}$. 
Then $U^{n+1}$, $U^n \in \mathbb R^N$
are related via the relation:
\begin{align} \label{Dec5:0}
U^{n+1}= 
\mathcal S_{\tau} \mathcal N_{\tau} \mathcal S_{\tau} U^n, 
\qquad n\ge 0.
\end{align}

For $U\in \mathbb R^N$, we recall 
\begin{align*}
\|U\|_2^2 = \frac 1 N \sum_{j=0}^{N-1} U_j^2 = \sum_{-\frac N2<k
\le \frac N2} 
|\tilde U_k |^2.
\end{align*}
Denote 
\begin{align*}
\| \partial_h U \|_2^2 = \sum_{-\frac N2 <k \le \frac N2}
(2\pi k)^2 | \tilde U_k|^2.
\end{align*}

\begin{lem} \label{Dec5:1}
Consider \eqref{Dec5:0}. We have
\begin{align} \label{Dec5:1a}
\sup_{n\ge 0} \|U^n \|_2 \le \max\{ 1, \|U^0\|_2 \}.
\end{align}
Furthermore assuming $\|U^0\|_2\le 1$,  there are $\tau_0=\tau_0(\nu)>0$,
$t_0=t_0(\nu)>0$  sufficiently small such that the following hold:
If  $\tau\ge \tau_0$, then
\begin{align} \label{Dec5:1b}
\sup_{n\ge 1} \| \partial_h U^n \|_2 \le C_1,
\end{align}
where $C_1>0$ depends only on $\nu$.
If $0<\tau<\tau_0$, then 
\begin{align} \label{Dec5:1c}
\sup_{1\le n\le t_0/\tau} \sqrt{n\tau} \| \partial_h U^n \|_2+ 
 \sup_{n\ge t_0/\tau} \|\partial_h U^n \|_2 \le  C_2,
\end{align}
where $C_2>0$ depends only on ($\nu$, $t_0$).

Suppose $\|U^0\|_{\infty} \le 1$, then for all $\tau>0$, we have
\begin{align} \label{Dec5:1d}
\sup_{n\ge 1} \| U^n \|_{\infty} \le C_3,
\end{align}
where $C_3>0$ depends only on $\nu$. Also for any $t_1\ge \tau$ and
$n\tau \le t_1$, we have
\begin{align} \label{Dec5:1e}
\sup_{n\le t_1 /\tau} \| U^n - \mathcal S_{2n\tau} U^0\|_{\infty}
\le C_4 t_1,
\end{align}
where $C_4>0$ depends only on $\nu$.
\end{lem}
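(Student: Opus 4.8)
Here is the plan, component by component.

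\emph{The $\ell^2$ bound \eqref{Dec5:1a}.} The plan is to exploit that $\mathcal N_\tau$ is the time-$\tau$ flow of the decoupled system $\dot V_j=V_j-V_j^3$. If $V(\cdot)$ solves this and $g(t)=\|V(t)\|_2^2=\frac1N\sum_j V_j(t)^2$, then $g'(t)=\frac2N\sum_j(V_j^2-V_j^4)\le 2\big(g(t)-g(t)^2\big)$ by Jensen's inequality applied to $t\mapsto t^2$; hence $g(t)\le\max\{1,g(0)\}$ for all $t\ge0$, i.e. $\|\mathcal N_\tau a\|_2\le\max\{1,\|a\|_2\}$. Since $\mathcal S_\tau$ has Fourier multiplier $e^{-\frac\tau2\nu^2(2\pi k)^2}\le 1$ it is an $\ell^2$-contraction, so $\|U^{n+1}\|_2\le\|\mathcal N_\tau\mathcal S_\tau U^n\|_2\le\max\{1,\|U^n\|_2\}$, and \eqref{Dec5:1a} follows by induction.

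\emph{The $H^1$ bounds \eqref{Dec5:1b}--\eqref{Dec5:1c}.} The plan is to run the discrete analogue of a parabolic smoothing/bootstrap argument, just as in the proof of Theorem \ref{N21_5} and the semidiscrete results above. Writing $\mathcal N_\tau b=b+\int_0^\tau f(\Phi_\sigma b)\,d\sigma$ with $\Phi$ the componentwise logistic flow and $f(V)=V-V^{.3}$, and unrolling \eqref{Dec5:0}, one gets the discrete Duhamel identity
\begin{align*}
U^n=\mathcal S_{2n\tau}U^0+\sum_{m=0}^{n-1}\mathcal S_{(2n-2m-1)\tau}\int_0^\tau f\big(\Phi_\sigma\mathcal S_\tau U^m\big)\,d\sigma .
\end{align*}
Then I would use $\|\partial_h\mathcal S_t\|_{\ell^2\to\ell^2}\lesssim_\nu t^{-1/2}$ together with the a priori $\ell^2$ bound from \eqref{Dec5:1a}, a discrete Gagliardo--Nirenberg inequality to control $\|f(\Phi_\sigma\mathcal S_\tau U^m)\|_2$ by $\|U^m\|_2$ and $\|\partial_h U^m\|_2$, and sum the resulting integral inequality (the heat factors give a convergent kernel). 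A continuity/contraction argument then propagates $\sqrt{n\tau}\,\|\partial_h U^n\|_2\lesssim_\nu1$ for $n\tau\le t_0$; for $n\tau\ge t_0$ the state lies in a fixed $H^1$-ball, so $\|\partial_h U^n\|_2\lesssim_{\nu,t_0}1$, giving \eqref{Dec5:1c}. When $\tau\ge\tau_0$ the first step $U^1=\mathcal S_\tau\mathcal N_\tau\mathcal S_\tau U^0$ already gains a full derivative since $\|\partial_h\mathcal S_\tau\|_{\ell^2\to\ell^2}\lesssim_{\nu,\tau_0}1$, and then the uniform bound \eqref{Dec5:1b} follows in the same way.

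\emph{The $\ell^\infty$ statements \eqref{Dec5:1d}--\eqref{Dec5:1e} and the main obstacle.} These follow from the same Duhamel identity by an $L^\infty$ argument split into time regimes, mirroring the four-step proof of Theorem \ref{N21_5}. For $n\tau\gtrsim t_0$ I would combine the $H^1$ bound above, the smoothing estimate $\|\mathcal S_t\|_{\ell^2\to\ell^\infty}\lesssim_\nu1+t^{-1/4}$, and the aliasing bound $\|(Q_N-\operatorname{Id})(W^3)\|_\infty\lesssim N^{-1/2}\|\partial_x(W^3)\|_2$, and run a maximum-principle/continuity argument to get $\sup_{n\ge1}\|U^n\|_\infty\le C_3$. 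For $n\tau\lesssim t_0$ I would telescope $U^n-\mathcal S_{2n\tau}U^0$ via the Duhamel identity: once the $\ell^2$ and $H^1$ bounds are in force each of the $n$ summands is $O(\tau)$ in $\ell^\infty$, giving the $O(n\tau)=O(t_1)$ estimate \eqref{Dec5:1e}; the linear part $\mathcal S_{2n\tau}U^0$, evaluated at the grid points $x_j=j/N$, is controlled by bounding the band-limited heat kernel there. The hard part will be precisely this short-time regime: the discrete propagator $\mathcal S_t=e^{\frac t2\nu^2\Delta_h}$ is \emph{not} an $\ell^\infty$-contraction on the grid when $t\lesssim N^{-2}$ (its kernel degenerates toward the sign-changing Dirichlet kernel), so the $\ell^\infty$ bound cannot be propagated step by step. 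This forces the introduction of cutoff times of size $\gtrsim N^{-2}\log N$, below which the truncation tail of the heat kernel is negligible, and a reduction of the very-short-time regime to grid-point estimates of explicit exponential sums, exactly as in the collocation analysis behind Theorem \ref{N21_5} (carried out in the appendix).
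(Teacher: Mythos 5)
Your proposal is correct and follows essentially the same route as the paper's proof: the pointwise ODE energy identity combined with the $\ell^2$-contractivity of $\mathcal S_\tau$ for \eqref{Dec5:1a}, the discrete Duhamel expansion $U^n=\mathcal S_{2n\tau}U^0+\tau\sum_{j=0}^{n-1}\mathcal S_{(2(n-j)-1)\tau}f_j$ with semigroup smoothing estimates for \eqref{Dec5:1b}--\eqref{Dec5:1c}, and the grid-point heat-kernel bounds of Theorem \ref{N18:1} applied once per Duhamel summand (precisely because $\mathcal S_t$ is not an $\ell^\infty$-contraction on the grid) for \eqref{Dec5:1d}--\eqref{Dec5:1e}. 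The only deviations are cosmetic: you represent $\mathcal N_\tau-\op{Id}$ as an integral along the logistic flow where the paper uses the pointwise bound $|(\mathcal N_\tau a)_j-a_j|\lesssim \tau(|a_j|+|a_j|^3)$, and you route the $H^1$ bootstrap through $L^2$ bounds and Gagliardo--Nirenberg where the paper bounds $\|f_j\|_1$ and invokes the $(l\tau)^{-3/4}$ smoothing of $\partial_h\mathcal S_{l\tau}$.
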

\begin{proof}
For the basic $L^2$ bound we can just use the ODE. Clearly
\begin{align*}
\frac 12 \frac d {dt} ( \| U \|_2^2) & = \|U\|_2^2 - \|U \|_4^4 \notag \\
& \le \|U \|_2^2 - \|U\|_2^4. 
\end{align*}
Thus $\|U(t) \|_2 \le \max\{1, \|U(0)\|_2\}$. This immediately yields 
\eqref{Dec5:1a}.

For \eqref{Dec5:1b}, we note that since $\tau \gtrsim 1$, it follows easily that
\begin{align*}
\| \partial_h U^{n+1} \|_2 \lesssim \| \mathcal N_{\tau} \mathcal
S_{\tau} U^n \|_2 \lesssim 1.
\end{align*}

For \eqref{Dec5:1c},  we first rewrite 
\begin{align}
U^{n+1} & = \mathcal S_{2\tau} U^n
+ \mathcal S_{\tau} ( \mathcal N_{\tau} -\op{Id} ) \mathcal S_{\tau} U^n \notag \\
& = :\mathcal S_{2\tau} U^n + \tau \mathcal S_{\tau}  f_n, \label{Dec5:2}
\end{align}
where
\begin{align*}
f_n = \frac { \mathcal  N_{\tau} - \op{Id} }{\tau} \mathcal S_{\tau} U^n.
\end{align*}
Now by using the inequality
\begin{align*}
\Bigl| \frac x {\sqrt{e^{-2\tau} + (1-e^{-2\tau} ) x^2}} -x 
\Bigr|\lesssim \tau (|x|+|x|^3), \qquad \forall\, 0<\tau \lesssim 1, \; x \in \mathbb R,
\end{align*}
We have 
\begin{align*}
|f_n| \lesssim  | \mathcal S_{\tau} U^n|+ |\mathcal S_{\tau} U^n|^3.
\end{align*}
By \eqref{Dec5:2}, we have
\begin{align}
U^n &= \mathcal S_{2n\tau} U^0 + \tau \sum_{j=0}^{n-1}   \mathcal S_{(2(n-j)-1)\tau} f_j
\notag \\
& =  \mathcal S_{2n\tau} U^0 + \tau \mathcal S_{(2n-1)\tau} f_0+ 
%S_{2(n-1)\tau} U^1+
 \tau \sum_{j=1}^{n-1}   \mathcal S_{(2(n-j)-1)\tau} f_j, 
\quad\forall\, n\ge 1. \label{Dec5:3}
\end{align}
Note that in the above, the empty summation when $n=1$ is defined to be zero.
For $1\le j\le n-1$, note that 
\begin{align*}
\| f_j \|_1 & \lesssim \| \mathcal S_{\tau} U^j \|_1 + \| \mathcal S_{\tau} U^j \|_3^3 \notag \\
& \lesssim \| \mathcal S_{\tau} U^j \|_2 + \|\mathcal S_{\tau} U^j \|_{\infty} \| U^j \|_2^2  \notag \\
& \lesssim 1+ \|\mathcal S_{\tau} U^j \|_{\infty}.
\end{align*}
To bound $\|\mathcal S_{\tau} U^j \|_{\infty}$, we observe that for $U\in \mathbb R^N$,
\begin{align*}
\| U\|_{\infty} &\le \| Q_N U\|_{L_x^{\infty}(\mathbb T)}
\lesssim \|Q_N U\|_{L_x^2(\mathbb T)}+ \|Q_N U \|_{L_x^2(\mathbb T)}^{\frac 12} \| 
\partial_x Q_N U \|_{L_x^2(\mathbb T)}^{\frac 12} \notag \\
& \lesssim  \| U\|_2+ \| U \|_2^{\frac 12} \| \partial_h U \|_2^{\frac 12}.
\end{align*}
Thus for $1\le j\le n-1$, 
\begin{align*}
\| f_j\|_1 \lesssim 1 + \| \partial_h U^j \|_2^{\frac 12}.
\end{align*}
For $j=0$, we have
\begin{align*}
\| f_0 \|_1 &\lesssim 1 + \| \mathcal S_{\tau} U^0\|_{\infty} \lesssim 1+
\| \partial_h S_{\tau} U^0\|_2^{\frac 12} \notag \\
& \lesssim  1+ \tau^{-\frac 12}.
\end{align*}
Denote the kernel corresponding to $\mathcal S_{l\tau}$ as $K_{l\tau}$ ($l\ge 1$), then clearly
\begin{align*}
&\| K_{l\tau} \|_{2} \lesssim  (l \tau)^{-\frac 14}, \quad
\| \partial_h K_{l\tau} \|_2 \lesssim (l \tau)^{-\frac 34}, \quad 
\forall\, l\tau \lesssim 1 ;\\
&\| \partial_h  \mathcal S_{l\tau} U \|_2 \lesssim  (l \tau)^{-\frac 34} \|U \|_1, \qquad
\forall\, U \in \mathbb R^N, 
\; l\tau \lesssim 1.
\end{align*}
We then obtain the recurrent relation,
\begin{align*}
F_n=\sqrt{n\tau} \| \partial_h U^n\|_2
\lesssim 1 + \tau \sqrt{n\tau}
\sum_{j=1}^{n-1} ((n-j)\tau)^{-\frac 34}
( 1+ (j\tau)^{-\frac 14} F_j^{\frac 12}),
\end{align*}
where $F_j = (j\tau)^{\frac 12} \| \partial_h U^j\|_2$. 
Let $n_0$ be the first integer such that $n_0\tau\ge t_0$. 
Our desired result for $U^{n_0}$ then easily follows
from a simple induction procedure. Note that we need  $n_0\tau \le t_0 +\tau \ll 1$. 
For general $U^n$ we need to estimate it using $U^{n_0}$ where $(n-n_0)\tau =O(t_0)$. 

Next for \eqref{Dec5:1d}, we only need to consider the regime
$0<\tau \ll 1$ and $n\tau \le t_1$ for some $t_1 \ll 1$. By Theorem \ref{N18:1} and
using \eqref{Dec5:3}, we have
\begin{align*}
\| U^n\|_{\infty} \lesssim \|U^0\|_{\infty}
+ \tau \sum_{j=0}^{n-1} (\|U^j \|_{\infty} + \|U^j \|_{\infty}^3).
\end{align*}
A simple induction argument then yields the desired estimate.  Finally the estimate \eqref{Dec5:1e}
follows easily from \eqref{Dec5:1d}.
\end{proof}

\begin{thm}[Almost sharp maximum principle for \eqref{Dec5:0}, case
$ N^{-0.4} \le \tau <\infty$]
Consider \eqref{Dec5:0}. 
Suppose $U^0 \in \mathbb R^N$ satisfies 
$\| U^0\|_{\infty} \le 1$. Assume $
N^{-0.4} \le \tau<\infty$. If $N\ge N_1(\nu)>0$ (the dependence of $N_1$ on $\nu$ is only 
power-like), then  for all $n\ge 0$, we have
\begin{align}  \notag
&\| U^n \|_{\infty} \le 1+ O(N^{-0.5}).
\end{align}
%where $c>0$ is an absolute constant.
\end{thm}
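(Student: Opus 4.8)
The plan is to prove the stronger, \emph{strict}, bound $\|U^n\|_\infty\le 1$ for every $n\ge 0$ (which of course implies the asserted $1+O(N^{-0.5})$, with error term in fact $0$). Since \eqref{Dec5:0} is the composition $U^{n+1}=\mathcal S_\tau\mathcal N_\tau\mathcal S_\tau U^n$, it suffices to show that each of $\mathcal N_\tau$ and $\mathcal S_\tau$ maps the cube $[-1,1]^N$ into itself once $\tau\ge N^{-0.4}$ and $N\ge N_1(\nu)$; induction on $n$ starting from $U^0\in[-1,1]^N$ then finishes the argument. First, the nonlinear solver preserves the cube for every $\tau>0$: writing $g(x)=x\bigl(e^{-2\tau}+(1-e^{-2\tau})x^2\bigr)^{-1/2}$ so that $(\mathcal N_\tau a)_j=g(a_j)$, one checks directly that $|g(x)|\le 1$ is equivalent to $x^2\le e^{-2\tau}+(1-e^{-2\tau})x^2$, i.e.\ to $x^2\le 1$. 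Hence $\|\mathcal N_\tau a\|_\infty\le\max\{1,\|a\|_\infty\}$ and $\mathcal N_\tau\colon[-1,1]^N\to[-1,1]^N$, with no condition on $N$ or $\tau$.

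The heart of the matter is to show that for $\tau\ge N^{-0.4}$ and $N\ge N_1(\nu)$ the discrete heat operator $\mathcal S_\tau=e^{\frac\tau2\nu^2\Delta_h}$ acts on $\mathbb R^N$ as convolution on the grid $\{j/N\}$ against a \emph{nonnegative} kernel $H_\tau$ of unit grid mass, $\sum_{j=0}^{N-1}H_\tau(j/N)=1$. Given this, $(\mathcal S_\tau W)_l$ is a convex combination of the entries of $W$, so $\|\mathcal S_\tau W\|_\infty\le\|W\|_\infty$ and $\mathcal S_\tau\colon[-1,1]^N\to[-1,1]^N$. To prove the positivity, write $a=2\pi^2\nu^2\tau$, so $\mathcal S_\tau$ has Fourier symbol $e^{-ak^2}$; by Poisson summation
\begin{align*}
N\,H_\tau(m/N)=\sum_{-\frac N2<k\le\frac N2}e^{-ak^2}e^{2\pi ikm/N}
=\Theta_a(m/N)-R_N(m/N),
\end{align*}
where $\Theta_a(y)=\sum_{k\in\mathbb Z}e^{-ak^2}e^{2\pi iky}=\sqrt{\pi/a}\sum_{n\in\mathbb Z}e^{-\pi^2(y+n)^2/a}>0$ is the periodized Gaussian and $R_N$ collects the excluded frequencies $|k|\ge N/2$, so $|R_N|\le\sum_{|k|\ge N/2}e^{-ak^2}\lesssim a^{-1/2}e^{-aN^2/4}$. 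Since $\Theta_a(y)\ge\Theta_a(\tfrac12)\ge 2\sqrt{\pi/a}\,e^{-\pi^2/(4a)}$, positivity of $H_\tau$ holds as soon as $\tfrac14aN^2$ dominates $\tfrac{\pi^2}{4a}$ (up to a logarithm), i.e.\ $N^2\gtrsim a^{-2}=(2\pi^2\nu^2\tau)^{-2}$; using $\tau\ge N^{-0.4}$ we have $aN^2\ge 2\pi^2\nu^2 N^{1.6}$ and $a^{-1}\le(2\pi^2\nu^2)^{-1}N^{0.4}$, so this is implied by $N^{1.2}\gtrsim\nu^{-4}$, i.e.\ by $N\ge N_1(\nu)\sim\nu^{-10/3}$, which is power-like in $\nu^{-1}$ as required. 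It is precisely here that the threshold $\tau\ge N^{-0.4}$ enters: it forces $aN^2\gtrsim\nu^2N^{1.6}$ to beat $a^{-1}\lesssim\nu^{-2}N^{0.4}$; any threshold $\tau\ge N^{-1+\varepsilon}$ would serve equally well.

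With the two preservation statements in hand, $\|U^0\|_\infty\le 1$ gives $U^0\in[-1,1]^N$, hence $\mathcal S_\tau U^0\in[-1,1]^N$, hence $\mathcal N_\tau\mathcal S_\tau U^0\in[-1,1]^N$, hence $U^1=\mathcal S_\tau\mathcal N_\tau\mathcal S_\tau U^0\in[-1,1]^N$; inductively $\|U^n\|_\infty\le 1$ for all $n\ge 0$, which is stronger than the claim. The main obstacle is thus the kernel‑positivity step for $\mathcal S_\tau$: one must dominate the truncation remainder $R_N$ by the minimum $\Theta_a(\tfrac12)$ of the periodized Gaussian — which is only \emph{exponentially} small when $\tau$ sits near the threshold — uniformly in $N^{-0.4}\le\tau<\infty$ and with explicit power‑like $N_1(\nu)$. (If one prefers to stay within the framework of Lemma~\ref{Dec5:1}, an alternative is to pass to the frequency‑localized interpolant $u^n$ with $u^n(j/N)=(U^n)_j$, invoke the $L^2$ and parabolic smoothing bounds there, and run a maximum‑principle bootstrap treating the collocation discrepancy $\|(Q_N-\operatorname{Id})\,g(\mathcal S_\tau u^n)\|_\infty\lesssim N^{-1/2}\|g(\mathcal S_\tau u^n)\|_{H^1}$ as a forcing of size $O(N^{-1/2})$; this reproduces the stated $O(N^{-0.5})$, but the direct positivity argument above is both shorter and sharper.)
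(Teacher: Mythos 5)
Your proposal is correct, but it takes a genuinely different route from the paper. The paper's proof never establishes positivity of the discrete heat kernel: it invokes Theorem \ref{N18:1} only through the \emph{effective} bound $\|\mathcal S_\tau U\|_\infty\le(1+O(N^{-1}))\|U\|_\infty$ (valid since $N^2\nu^2\tau/2\ge 100\log N$ when $\tau\ge N^{-0.4}$ and $N\ge N_1(\nu)$), pairs it with the quantitative contraction of $\mathcal N_\tau$ toward $\pm1$, namely $\|\mathcal N_\tau U\|_\infty\le 1+e^{-2\tau}\bigl|\|U\|_\infty-1\bigr|+Ce^{-4\tau}\bigl|\|U\|_\infty-1\bigr|^2$, and then closes an induction on the invariant band $1+\delta$ by checking $O(N^{-1})+O(\delta^2)\le(1-e^{-2\tau})\delta$; the choice $\delta\sim N^{-1/2}$ is exactly what balances $(1-e^{-2\tau})\gtrsim N^{-0.4}$ against the two error sources, which is where the exponent $0.5$ in the statement comes from. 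You instead upgrade the linear step to a genuine $\ell^\infty$-contraction by proving grid-positivity of the kernel of $\mathcal S_\tau$, and then need only the trivial invariance of $[-1,1]$ under the exact flow $\mathcal N_\tau$; your computation is sound, and the key structural point — that the Gaussian multiplier makes the truncation remainder $O(a^{-1/2}e^{-aN^2/4})$, which beats the exponentially small minimum $\sim a^{-1/2}e^{-\pi^2/(4a)}$ of the periodized Gaussian under only power-like largeness $N\gtrsim\nu^{-10/3}$ (in contrast with the resolvent kernel of Lemma \ref{Nov2}, whose $k^{-2}$ tails force exponentially large $N$) — is a real observation. Your argument therefore yields the strict bound $\|U^n\|_\infty\le1$, which is stronger than the stated $1+O(N^{-0.5})$ and is consistent with the paper (the paper's lower-bound counterexamples live at $\tau\sim N^{-2}$, far below the threshold $\tau\gtrsim\nu^{-2}N^{-1}$ where positivity fails). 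What the paper's softer route buys is robustness: the same induction template survives when the kernel does have a small negative part, which is precisely the situation in the companion regime $0<\tau<N^{-0.4}$.
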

\begin{proof}
Since $\tau \ge N^{-0.4}$, by Theorem \ref{N18:1}, we have
\begin{align*}
\|\mathcal S_{\tau} U \|_{\infty} \le   (1+O(N^{-1} )) \|U \|_{\infty},
\qquad\forall\, U \in \mathbb R^N.
\end{align*}

On the other hand, if $\|U\|_{\infty} \le 1+\epsilon_1$ where $\epsilon_1 \ll 1$, then 
it is not difficult to check that for $\tau>0$,
\begin{align*}
\| \mathcal N_{\tau} U \|_{\infty} \le 1 + e^{-2\tau} \cdot   \Bigl|\|U\|_{\infty}-1\Bigr|
+
e^{-4\tau} \cdot C_1 \cdot \Bigl|\|U\|_{\infty}-1\Bigr|^2,
\end{align*}
where $C_1>0$ is an absolute constant. 

Our main induction hypothesis is $\|U^n\|_{\infty} \le 1 +\delta$, where $\delta=O(N^{-0.5})$.
Then it suffices to verify the inequality:
\begin{align*}
(1+O(N^{-1}) ) \Bigl(1+ e^{-2\tau} (O(N^{-1}) + \delta)+
e^{-4\tau} C_1 \cdot (O(N^{-2})  + \delta^2) \Bigr)
\le 1+\delta.
\end{align*}
This amounts to showing 
\begin{align*}
O(N^{-1}) +O(\delta^2) \le (1- e^{-2\tau} ) \delta.
\end{align*}
Since $\delta=O(N^{-0.5})$ and $\tau\ge N^{-0.4}$, this last inequality is obvious for $N$
sufficiently large.
\end{proof}

\begin{lem} \label{leDec8:1}
Assume $N\ge 2$. Suppose $u$ is a smooth solution to the equation:
\begin{align*}
\begin{cases}
\partial_t u = Q_N ( f(u) ), \\
u(0) =a,
\end{cases}
\end{align*}
where $f(z)=z-z^3$, and the initial data $a$ satisfies $\| a \|_{H^1(\mathbb T)} \le A_0$ for some 
constant $A_0\ge 1$. We also assume that $\widehat{a}$ is supported in $\{k:\, -\frac N2 <k\le \frac N2\}$ (in particular this implies $Q_N a = a$).
Then for some $t_0 >0$ depending only on $A_0$ (one can take $t_0 \sim A_0^{-2}$),
it holds that 
\begin{align*}
\sup_{0\le t \le t_0} \| u(t) \|_{H^1} \le \alpha A_0,
\end{align*}
where $\alpha>0$ is an absolute constant.
\end{lem}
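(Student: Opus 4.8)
The plan is to run a straightforward a priori energy estimate in $H^1$, built on two structural observations: the flow preserves the frequency localization, and $Q_N$ acting on a product of band-limited functions is a genuinely $H^1$-bounded operation despite the aliasing.

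First I would record that the solution stays in the finite-dimensional space $S_N$ of real trigonometric polynomials with frequencies in $\{|k|\le \frac N2\}$ and $\widehat{w}(\frac N2)=\widehat{w}(-\frac N2)$ (equivalently, $Q_N w = w$). By hypothesis $a\in S_N$, and if $u(t)\in S_N$ then $f(u)=u-u^3$ has frequencies in $\{|k|\le \frac 32 N\}$, so by the very definition of $Q_N$ the vector field $Q_N(f(u))$ again lies in $S_N$; hence $\partial_t u\in S_N$ and the localization is propagated in time. Since $S_N$ is finite-dimensional and the vector field is polynomial, hence smooth and locally Lipschitz, in the Fourier coefficients, a smooth solution lives on a maximal interval on which it can blow up only if $\|u(t)\|_{H^1}\to\infty$; therefore it is enough to establish the stated bound a priori on $[0,t_0]$.

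The heart of the argument is the nonlinear estimate: for $w\in S_N$,
\begin{align*}
\| Q_N(f(w)) \|_{H^1(\mathbb T)} \le \| Q_N w\|_{H^1(\mathbb T)} + \| Q_N(w^3) \|_{H^1(\mathbb T)} = \| w \|_{H^1(\mathbb T)} + \| Q_N(w^3) \|_{H^1(\mathbb T)} \lesssim \| w \|_{H^1(\mathbb T)} + \| w \|_{H^1(\mathbb T)}^3 .
\end{align*}
Here I would exploit that $w^3$ has frequencies only in $\{|k|\le \frac 32 N\}$, so for each $|k|\le \frac N2$ the aliased coefficient $\widehat{Q_N(w^3)}(k)=\sum_l \widehat{w^3}(k+lN)$ is a sum of at most four nonzero terms, each with $|k+lN|\ge \frac N2\ge|k|$, so that $\langle k\rangle\le\langle k+lN\rangle$. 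Summing the (at most four) shifted copies, using $\langle k\rangle^2|\widehat{w^3}(k+lN)|^2\le\langle k+lN\rangle^2|\widehat{w^3}(k+lN)|^2$, yields $\|Q_N(w^3)\|_{H^1}\lesssim\|w^3\|_{H^1}$; the factor $\frac12$ at the edge modes $k=\pm\frac N2$ only helps. One then closes with the one-dimensional Sobolev embedding $H^1(\mathbb T)\hookrightarrow L^\infty(\mathbb T)$ and the Leibniz rule, $\|w^3\|_{H^1}\lesssim\|w\|_\infty^2\|w\|_{H^1}\lesssim\|w\|_{H^1}^3$.

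Finally, with $M(t)=\|u(t)\|_{H^1}$ one has $\frac{d}{dt}M(t)\le\|\partial_t u(t)\|_{H^1}=\|Q_N(f(u(t)))\|_{H^1}\le C(M(t)+M(t)^3)\le 2CM(t)^3$ as long as $M(t)\ge1$, with $C$ an absolute constant. The differential inequality for $M^{-2}$ gives $M(t)^{-2}\ge A_0^{-2}-4Ct$, so $M(t)\le\sqrt 2\,A_0$ for $0\le t\le t_0:=\frac{1}{8CA_0^2}\sim A_0^{-2}$ (using $A_0\ge1$); a routine continuity/bootstrap argument upgrades this to the claim on the full interval, with $\alpha=\sqrt2$. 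The only mildly delicate point is bookkeeping of the aliasing and edge modes in the nonlinear estimate; the rest is the standard finite-dimensional Picard–Gronwall mechanism.
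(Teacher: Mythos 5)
Your proof is correct and is essentially the paper's argument in different packaging: the paper runs the same computation directly on Fourier coefficients, using the observation $|k_1|+|k_2|+|k+lN-k_1-k_2|\ge |k+lN|\ge \tfrac14|k|$ for $k\in J$, $|l|\le 2$ (which is exactly your "aliased frequencies are at least as large as the target frequency" point combined with the Leibniz distribution of the derivative), and then closes with Gronwall on $A(t)=1+\|u(t)\|_{H^1}^2$. Your only loose end is the case $M(t)<1$ in the differential inequality, which your continuity/bootstrap remark (or working with $1+M^2$ as the paper does) handles routinely.
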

\begin{proof}
Denote $J=\{k\in \mathbb Z:\, -\frac N2<k\le \frac N2\}$. 
Observe that for any $k \in J$, we have
\begin{align*}
|\hat u (t, k)|
\le |\hat a (k) | + \int_0^t |\hat u(s, k) | ds
+ \sum_{|l|\le 2}
\int_0^t \sum_{k_1, k_2 \in J}
|\hat u(s,k+lN -k_1-k_2)| |\hat u(s, k_1) | |\hat u(s,k_2) | ds.
\end{align*}
Clearly for $|l|\le 2$ and $k, k_1, k_2 \in J$, we have
\begin{align*}
|k_1|+|k_2| + |k+lN-k_1-k_2|
\ge |k+lN| \ge \frac 14|k|.
\end{align*}
Define
\begin{align*}
A(t) = 1+\sum_{k\in J} |\hat u(t, k) |^2 \cdot (1+ k^2).
\end{align*}
It is then not difficult to show that 
\begin{align*}
A(t) \le A(0) + \int_0^t  C \cdot A(s)^{\frac 32} ds,
\end{align*}
where $C>0$ is an absolute constant. The desired result then follows easily by taking $t$
sufficiently small.
\end{proof}

\begin{thm}[Almost sharp maximum principle for \eqref{Dec5:0}, case
$ 0< \tau <N^{-0.4} $]
Consider \eqref{Dec5:0}. 
Suppose $U^0 \in \mathbb R^N$ satisfies 
$\| U^0\|_{\infty} \le 1$. Assume $
0< \tau<N^{-0.4}$. If $N\ge N_2(\nu)>0$ (the dependence of $N_2$ on $\nu$ is only 
power-like), then 
\begin{align}  \notag
\sup_{n\ge 0} \| U^n \|_{\infty} \le 1+ \epsilon_1,
\end{align}
where $0<\epsilon_1<10^{-2}$ is an absolute constant. More precisely the following hold.
\begin{enumerate}
\item For some $T_0=O(N^{-0.3})$ sufficiently small and for all $n\ge T_0/\tau$, we have
\begin{align*}
& \| U^n \|_{\infty} \le \| u^n\|_{\infty} \le 1+ O(N^{-0.29}),
\end{align*}
where $u^n$ solves \eqref{Dec8:2}.
\item For $1\le n \le T_0/\tau$, we have 
\begin{align*}
& \|U^n - \mathcal S_{2n\tau} U^0 \|_{\infty}
\le O(N^{-0.3}); \\
& \|
 \mathcal S_{2n\tau} U^0 
\|_{\infty} \le (1+\frac 1 2 \epsilon_1) \|U^0\|_{\infty};  \\
& \|U^n\|_{\infty} \le (1+\frac 12 \epsilon_1)
\|U^0\|_{\infty} + O(N^{-0.3}).
\end{align*}
\item If $\|U^0\|_{\infty} \le \frac 1 {1+\frac 12 \epsilon_1}$ 
(note that $1/(1+\frac 12 \epsilon_1)>0.995$), then 
\begin{align*}
\sup_{n\ge 0} \|U^n\|_{\infty} \le 1 + O(N^{-0.3}).
\end{align*}

\item Suppose $f:\, \mathbb T\to \mathbb R$ is continuous and $\|f\|_{\infty} \le 1$. 
If $(U^0)_l= f(\frac l N)$ for all $0\le l\le N-1$,  then
\begin{align*}
\sup_{n\ge 0} \|U^n\|_{\infty} 
\le 1 + O(\omega_f(N^{-\frac 23}) ) + O(N^{-c}),
\end{align*}
where $c>0$ is an absolute constant, and $\omega_f$ is defined in \eqref{N18:1b.0}.

\item  Suppose $f:\, \mathbb T\to \mathbb R$ is $C^{\alpha}$-continuous for some $0<\alpha<1$ and $\|f\|_{\infty} \le 1$.  
If $(U^0)_l= f(\frac l N)$ for all $0\le l\le N-1$,  then
\begin{align*}
\sup_{n\ge 0}\|U^n\|_{\infty}
\le 1 + O(N^{-c_1}),
\end{align*}
where $c_1>0$ is a constant depending only on $\alpha$. 
\end{enumerate}
Moreover we have the following result which shows the sharpness of our estimates above.
There exists a  function $f$: $\mathbb T \to \mathbb R$, continuous at all of $\mathbb T
\setminus \{x_*\}$ for some $x_*\in \mathbb T$ (i.e. continuous at all $x\ne x_*$) and  has the bound $\|f\|_{\infty} \le 1$
 such that 
the following hold:
for a sequence of even numbers $N_m \to \infty$, $\tau_m =\frac 14 \nu^{-2}N_m^{-2}$
, $n_m\ge 1$,  and $x_m
=j_m/N_m$ with $0\le j_m\le N_m-1$, if $\tilde U_m \in \mathbb R^{N_m}$ satisfies
\begin{align*}
\tilde U_m = \underbrace{e^{\frac 1 2 \tau_m \Delta_h} \mathcal N_{\tau_m} e^{\frac 12
\tau_m \Delta_h} \cdots 
e^{\frac 1 2 \tau_m \Delta_h} \mathcal N_{\tau_m} e^{\frac 12
\tau_m \Delta_h}}_{\text{iterate $n_m$ times}}  a_m,
\end{align*}
where $a_m \in \mathbb R^{N_m} $ satisfies 
$(a_m)_l = f(\frac l {N_m})$ for all $0\le l\le N_m-1$. Then 
\begin{align*}
|\tilde U_m(x_m) | \ge 1+ \eta_*, \qquad\forall\, m\ge 1,
\end{align*}
where $\eta_*>0.001$ is an absolute constant.
\end{thm}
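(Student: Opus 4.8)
The proof has two independent parts: the upper bounds (1)--(5), which follow the template already used for the fully discrete forward Euler scheme in the preceding theorem, and the sharpness assertion, which needs a separate construction. For the upper bounds I would first pass to the continuous-in-$x$ representative $u^n$ solving \eqref{Dec8:2} (so that $(U^n)_j=u^n(\tfrac jN)$ and $Q_N u^n=u^n$), and use the Duhamel identity from the proof of Lemma~\ref{Dec5:1}, namely $u^n=\mathcal S_{2n\tau}u^0+\tau\sum_{j=0}^{n-1}\mathcal S_{(2(n-j)-1)\tau}f_j$ with $f_j=\tau^{-1}(\mathcal N_\tau-\op{Id})\mathcal S_\tau u^j$ and $|f_j|\lesssim|\mathcal S_\tau u^j|+|\mathcal S_\tau u^j|^3$. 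Lemma~\ref{Dec5:1} provides the a priori bounds $\|u^j\|_2\lesssim1$, $\|u^j\|_\infty\lesssim\log N$, $\sqrt{j\tau}\,\|\partial_x u^j\|_2\lesssim1$ for $j\tau\le t_0$ and $\|\partial_x u^j\|_2\lesssim1$, $\|u^j\|_\infty\lesssim1$ for $j\tau\ge t_0$. One then splits into three time regimes exactly as in the forward Euler argument for $0<\tau\le N^{-0.3}$: for $n\tau\ge T_0$ the aliasing error $\|(Q_N-\op{Id})(u^3)\|_\infty\lesssim N^{-1/2}\|\partial_x(u^3)\|_2=O(N^{-c})$ and a discrete maximum principle as in Theorem~\ref{17O6} gives $\|u^n\|_\infty\le1+O(N^{-c})$ and $E(U^{n+1})\le E(U^n)$; for $2N^{-0.3}\le n\tau\le T_0$ one bootstraps from the endpoint $n_0\sim 2N^{-0.3}/\tau$; and for $n\tau\le 2N^{-0.3}$ the nonlinear correction is $O(N^{-0.3})$ in $L^\infty$, so $\|U^n\|_\infty$ is controlled by the grid-restricted linear flow $(\mathcal S_{2n\tau}U^0)_l=(\mathcal S_{2n\tau}u^0)(\tfrac lN)$, whence statements (4)--(5) and the $1+\tfrac12\epsilon_1$ bounds of (2)--(3) follow from the appendix results (Theorems~\ref{N18:1}, \ref{N18:1a}, \ref{N18:1b}) on iterated heat flow sampled at the nodes, for continuous, $C^\alpha$, and merely bounded data respectively. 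Gluing the regimes yields $\sup_n\|U^n\|_\infty\le1+\epsilon_1$ together with (1).

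For the sharpness statement I would take $n_m=1$, so $\tilde U_m=\mathcal S_{\tau_m}\mathcal N_{\tau_m}\mathcal S_{\tau_m}a_m$ with $\tau_m=\tfrac14\nu^{-2}N_m^{-2}$; the two heat factors compose to the grid operator with Fourier symbol $e^{-\pi^2k^2/N_m^2}$ on $-\tfrac{N_m}2<k\le\tfrac{N_m}2$. Let $H_m(\tfrac lN)=\tfrac1N\sum_{-N/2<k\le N/2}e^{-\pi^2k^2/N^2}e^{2\pi ikl/N}$ (with $N=N_m$); Poisson summation gives $H_m(\tfrac lN)=\sum_{p\in\mathbb Z}\check h(N_mp-l)$ where $\check h(t)=\int_{-1/2}^{1/2}e^{-\pi^2\xi^2}e^{2\pi i\xi t}\,d\xi$. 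Two facts drive everything. First, $\sum_{l=0}^{N-1}H_m(\tfrac lN)=1$ identically. Second, the $1$-periodization of $e^{-\pi^2\xi^2}$ has only a corner (a jump in its derivative, of size $2\pi^2e^{-\pi^2/4}$) at the band edge $\xi=\pm\tfrac12$, so integrating by parts twice gives $\check h(l)=-\tfrac{(-1)^l e^{-\pi^2/4}}{2l^2}+O(l^{-3})$; in particular $\check h(l)<0$ for all large even $l$, hence $\sum_l|H_m(\tfrac lN)|=1+2\sum_{\check h(l)<0}|\check h(l)|+o(1)\ge1+2\eta_*$, where $\eta_*>0.001$ is an absolute constant (the excess from the even-$l$ tail of $\tfrac{e^{-\pi^2/4}}{2l^2}$ exceeds $0.002$). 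Thus if $a_m$ agreed with $-\operatorname{sgn}\bigl(H_m(\tfrac{j_m-\cdot}{N_m})\bigr)$ at every node we would get $(\mathcal S_{2\tau_m}a_m)(\tfrac{j_m}{N_m})=-\|H_m\|_{\ell^1}\le-(1+2\eta_*)$.

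To realize this with a single fixed $f$ I would use condensation of singularities. Fix $N_m\uparrow\infty$ growing rapidly and disjoint intervals $I_m=(x_*+2^{-m},\,x_*+2^{-m}+L_m)$ with $L_m\to0$, $L_mN_m\to\infty$, $L_mN_m=o(2^{-m}N_m)$; on $I_m$ let $f$ be continuous, bounded by $1$, vanish near $\partial I_m$, and equal $-\operatorname{sgn}\bigl(H_m(\tfrac{j_m-l}{N_m})\bigr)$ at every node $\tfrac lN_m\in I_m$, where $\tfrac{j_m}{N_m}$ lies near the center of $I_m$; set $f\equiv0$ off $\bigcup_m I_m$. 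Then $\|f\|_\infty\le1$, $f$ is continuous on $\mathbb T\setminus\{x_*\}$, and $f$ has no limit at $x_*$ (every neighbourhood carries values near $+1$, near $-1$, and $0$). With $a_m=(f(\tfrac j{N_m}))_j$, the contribution to $(\mathcal S_{2\tau_m}a_m)(\tfrac{j_m}{N_m})=\sum_l (a_m)_l H_m(\tfrac{j_m-l}{N_m})$ from nodes outside $I_m$ is $\le\sum_{|t|>L_mN_m/2}|\check h(t)|+O(N_m^{-2})=O((L_mN_m)^{-1})\to0$ (using the $O(t^{-2})$ decay of $\check h$ at integers and the fast growth of $N_m$), and the mismatch at the $O(1)$ taper nodes, where $|H_m|$ is already smallest, is also $o(1)$; hence $(\mathcal S_{2\tau_m}a_m)(\tfrac{j_m}{N_m})\le-(1+2\eta_*)+o(1)$. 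Finally the nonlinear factor is a genuine perturbation: since $\|\mathcal S_{\tau_m}a_m\|_\infty\lesssim\log N_m\ll\tau_m^{-1/2}$ we have $\|\mathcal N_{\tau_m}(\mathcal S_{\tau_m}a_m)-\mathcal S_{\tau_m}a_m\|_\infty\lesssim\tau_m(\log N_m)^3$, and after the last smoothing step $\|\tilde U_m-\mathcal S_{2\tau_m}a_m\|_\infty\lesssim N_m^{-2}(\log N_m)^4\to0$. Combining, $|\tilde U_m(x_m)|\ge1+\eta_*$ for all large $m$ with $x_m=\tfrac{j_m}{N_m}$, and dropping finitely many $m$ finishes the proof.

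The main obstacle is twofold. In the upper bounds it is the bookkeeping in the intermediate window $2N^{-0.3}\le n\tau\le T_0$, where $\|\partial_x u^n\|_2$ is only $O((n\tau)^{-1/2})$ and may be as large as $N^{0.15}$: one must verify that the accumulated aliasing term $\tau\sum_j((n-j)\tau)^{-1/2}N^{-1/2}\|\partial_x(u^j)^3\|_2$ in the Duhamel formula still fits inside the $O(N^{-c})$ budget, which dictates the particular choices of exponents. In the sharpness part the crux is to make one $f$ serve every $N_m$ at once while staying continuous off $x_*$: this forces the geometric separation of the bumps $I_m$ and the non-decay of the oscillation amplitude near $x_*$, and it requires checking that the sign pattern on $I_m$ is genuinely resolved by sampling at mesh $1/N_m$ without interference from the neighbouring bumps --- precisely the point where the rapid growth of $N_m$ relative to $2^m$ and the summable $O(l^{-2})$ decay of $\check h$ at integers are used.
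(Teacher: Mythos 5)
Your proof follows essentially the same route as the paper: the same time-regime splitting anchored at $n_0\tau\sim N^{-0.3}$, reduction to the grid-sampled linear flow $\mathcal S_{2n\tau}U^0$ via Lemma \ref{Dec5:1} together with the appendix Theorems \ref{N18:1}, \ref{N18:1a}, \ref{N18:1b} for the early regime, a maximum-principle/bootstrap argument with $O(N^{-1/2})$ aliasing error for the late regime, and for sharpness the same reduction (with $n_m=1$) to the discrete heat kernel at $t=\frac14\nu^{-2}N^{-2}$ whose discrete $\ell^1$ mass exceeds $1.002$, combined with the condensation-of-singularities construction of $f$ from Theorem \ref{N18:1a}. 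The one caution is your incidental assertion that $E(U^{n+1})\le E(U^n)$ for the Strang scheme: the paper does not claim (and the splitting does not obviously satisfy) discrete energy decay, but since nothing in the statement requires it, this does not affect the validity of your argument.
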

\begin{proof}
We consider $u^{n+1}$, $u^n:\, \mathbb T\to \mathbb R$ defined via the relation:
\begin{align} \label{Dec8:2}
u^{n+1} = e^{\frac {\tau} 2 \partial_{xx} } V_{\tau} e^{\frac {\tau} 2 \partial_{xx}} u^n,
\end{align}
where $V_{\tau}$ is the time-$\tau$ solution operator to the problem:
\begin{align*}
\partial_t u = Q_{N} ( f(u) ).
\end{align*}
It is not difficult to check that if $u^0 = Q_N U^0$, then $u^n = Q_N U^n$ for all $n\ge 1$. 

Choose $n_0\ge 1$ such that $n_0 \tau \sim N^{-0.3}$. By Lemma \ref{Dec5:1}, we have
\begin{align*}
\| U^{n_0} -\mathcal S_{2n_0 \tau} U^0\|_{\infty}
\le O(N^{-0.3}).
\end{align*}
Thus
\begin{align*}
\| u^{n_0} - Q_{N} ( \mathcal S_{2n_0\tau} U^0 ) \|_{\infty} 
\le O(N^{-0.3} \log N).
\end{align*}
Now 
\begin{align*}
\Bigl( Q_N (\mathcal S_{2n_0\tau} U^0) \Bigr)(x) & = \frac 1 N \sum_{j=0}^{N-1} 
U_j^0 \op{Re}\Bigl(
\sum_{-\frac N2 <k \le \frac N2}
e^{-(2\pi k \nu)^2 2n_0 \tau} e^{2\pi i k\cdot (x- \frac j N) } \Bigr).
\end{align*}
Since $\|U^0\|_{\infty} \le 1$ and $n_0\tau \sim N^{-0.3}$, it easily follows that
\begin{align*}
\| Q_{\mathcal S_{2n_0\tau}} U^0 \|_{\infty} \le 1+ \sum_{|k|\ge \frac N2}
e^{-(2\pi k \nu)^2 2n_0\tau} \le 1 +O(N^{-10}).
\end{align*}
Thus 
\begin{align*}
\| u^{n_0} \|_{\infty} \le 1 + O(N^{-0.3} \log N).
\end{align*}

Then for $n\ge n_0$,  we consider the problem
\begin{align*}
\begin{cases}
\partial_t u = Q_N (f(u) ) = f(u) + (Q_N-\op{Id} ) (f(u) ), \\
u(0) =e^{\frac{\tau}2 \partial_{xx}} u^n=e^{\frac{\tau} 2 \partial_{xx}} Q_N U^n.
%\| u(0 )\|_{\infty} \le 1+O(N^{-c}).
\end{cases}
\end{align*}
By Lemma \ref{leDec8:1} (note that $\tau <N^{-0.4}$ can be made sufficiently small
by taking $N$ large), we have
\begin{align*}
\sup_{0\le t \le \tau} \| u(t) \|_{H^1}  =O(1).
\end{align*}
By using Lemma \ref{Dec5:1}, we have 
\begin{align*}
\| (Q_N- \op{Id} ) (f(u) ) \|_{\infty}
&\lesssim N^{-\frac 12}.
\end{align*}
If we assume that $\|u(0)\|_{\infty} \le 1+\delta_0$ for some $\delta_0
=O(N^{-0.29})$, then
a simple maximum principle argument  yields that 
\begin{align*}
\sup_{0\le t \le \tau} \| u(t) \|_{\infty} \le 1+\delta_0.
\end{align*}
Consequently
\begin{align*}
\sup_{n\ge n_0} \|u^n \|_{\infty} \le 1 + O(N^{-0.29}).
\end{align*}

Finally we consider the regime $1\le n\le n_0$. 
Clearly we have 
\begin{align*}
&\sup_{1\le n\le n_0} \| U^{n} -\mathcal S_{2n\tau} U^0\|_{\infty}
\le O(N^{-0.3}), \notag \\
& \sup_{1\le n\le n_0} \| u^{n} - Q_{N} ( \mathcal S_{2n\tau} U^0 ) \|_{\infty} 
\le O(N^{-0.3} \log N).
\end{align*}
We are then in a situation similar to that in Theorem \ref{N21_5}. The argument is then
similar and thus omitted. 
\end{proof}
\begin{rem}
We should point out that, by using the argument in the preceding proof,
it is also possible to obtain the result $\|U^n\|_{\infty} \le \|u^n\|_{\infty}
\le 1 + O(N^{-c})$ for
the case $N^{-0.4} \le \tau \le \tau_1$, where $\tau_1 \ll 1$ is a constant determined
in Lemma \ref{leDec8:1} ($\tau_1=t_0$ in the notation therein). We briefly
sketch the argument as follows. Again
 consider $u^{n+1}$, $u^n:\, \mathbb T\to \mathbb R$ defined via the relation:
\begin{align*}
u^{n+1} = e^{\frac {\tau} 2 \partial_{xx} } V_{\tau} e^{\frac {\tau} 2 \partial_{xx}} u^n,
\end{align*}
where $V_{\tau}$ is the time-$\tau$ solution operator to the problem:
\begin{align*}
\partial_t u = Q_{N} ( f(u) ).
\end{align*}
Denote $v^1 = e^{\frac{\tau}2 \partial_{xx} } u^0$.  Clearly since 
$\tau \ge N^{-0.4}$, 
\begin{align*}
\| v^1 \|_{\infty} \le 1+ \sum_{|k|\ge \frac N2}
e^{-(2\pi k \nu)^2 \tau} \le 1 +O(N^{-10}).
\end{align*}
Then we consider the problem
\begin{align*}
\begin{cases}
\partial_t u = Q_N (f(u) ) = f(u) + (Q_N-\op{Id} ) (f(u) ), \\
u(0) =e^{\frac{\tau} 2 \partial_{xx}} Q_N U^n.
%\| u(0 )\|_{\infty} \le 1+O(N^{-c}).
\end{cases}
\end{align*}
By using Lemma \ref{Dec5:1} and Lemma \ref{leDec8:1}, we have 
\begin{align*}
\| (Q_N- \op{Id} ) (f(u) ) \|_{\infty}
&\lesssim N^{-\frac 12} \| \partial_x u \|_2 (1+ \|u \|_{\infty}^2) \notag \\
& \lesssim  N^{-\frac 12} (1+ (\log N)^2)
+ N^{-\frac 12} (1+(\log N)^2) \cdot \tau^{-\frac 12}.
\end{align*}
In the above last inequality, the first term is due to the case $\tau \sim 1$,
and the second term accounts for the case $\tau \ll 1$.   Since we assumed
$\tau\ge N^{-0.4}$, we then obtain
\begin{align*}
\| (Q_N- \op{Id} ) (f(u) ) \|_{\infty}  \le O(N^{-0.29}). 
\end{align*}
A maximum principle argument then yields that $\|u^n\|_{\infty}
\le 1 +O(N^{-c})$. 
\end{rem}
\begin{rem}
Denote $\theta=e^{-2\tau}$ and for $u:\, \mathbb T\to \mathbb R$, 
\begin{align*}
(T_{\theta} u )(x) = \frac {u(x)} { \sqrt{\theta +(1-\theta) u(x)^2}}, \quad x \in \mathbb T.
\end{align*}
Clearly we have $Q_{N} \mathcal N_{\tau} U = Q_{N} T_{\theta} Q_N U $ for
any $U\in \mathbb R^N$. It is not difficult
to check that for any $f \in H^1(\mathbb T)$, we have
\begin{align*}
\| Q_N f \|_{H^1(\mathbb T)} \lesssim \| f \|_{H^1(\mathbb T)}.
\end{align*}
Note that the function $g(z)=z/\sqrt{\theta+(1-\theta)z^2 }$ satisfies
\begin{align*}
\sup_{\theta\gtrsim 1} \| g^{\prime}\|_{L^{\infty}(\mathbb R)} \lesssim 1.
\end{align*}

It follows that for $0<\tau\lesssim 1$ (i.e. $\theta \gtrsim 1$), 
\begin{align*}
\| Q_N \mathcal N_{\tau} U \|_{H^1(\mathbb T)}
\lesssim  \| T_{\theta} Q_N U \|_{H^1(\mathbb T)}
\lesssim \| Q_N U \|_{H^1(\mathbb T)} \lesssim 
\| \partial_h U\|_2+ \|U\|_2.
\end{align*}
In particular this implies that we can improve the uniform-in-time $H^1$ estimate
in Lemma \ref{leDec8:1} to  any finite
$t_0\sim 1$. A further interesting issue is to understand the regime $t_0\gg 1$. However
in this regime one should not expect uniform-in-time $H^1$ bounds. A counterexample is
as follows. Take $\phi(x)= \cos 2\pi A x$ where $A\gg 1$ is an integer. Take $0< \theta \ll A^{-10}$
and $u=\sqrt{\theta} \phi$.  Then clearly $\|u \|_{H^1(\mathbb T)} \ll 1$ while
\begin{align*}
\| \partial_x T_{\theta} u \|_{L_x^2(\mathbb T)} \gtrsim A \gg 1.
\end{align*}
By taking $N$ sufficiently large, we obtain
\begin{align*}
\| \partial_x Q_N T_{\theta} Q_N u \|_{L_x^2(\mathbb T)} \gtrsim A \gg 1.
\end{align*}
Thus in the regime $t_0\gg 1$ one does not have uniform-in-time $H^1$-norm bounds.
\end{rem}

\section{Maximum principle for spectral Burgers}
Consider
\begin{align} \label{Dec2_0}
\begin{cases}
\partial_t u + \Pi_N ( u u_x) = \nu^2 u_{xx}, \quad (t,x) \in (0,\infty) \times \mathbb T; \\
u\Bigr|_{t=0} = u^0= \Pi_N f,
\end{cases}
\end{align}
where $f \in L^{\infty}(\mathbb T)$. 

\begin{thm}
Let $\nu>0$ and $u$ be the solution to \eqref{Dec2_0}. 
We have for all $N\ge 2$, 
\begin{align*}
\sup_{t\ge 0} \|u(t) \|_{\infty}
\le \|u^0\|_{\infty} + \alpha \cdot N^{-c}, 
\end{align*}
where $c>0$ is an absolute constant, and $\alpha>0$ depends only on 
($\|f\|_{\infty}$, $\nu$). 
\end{thm}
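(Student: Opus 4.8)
\emph{Setup and reduction.} The plan is to run a continuity (bootstrap) argument: on the maximal interval $[0,T_*)$ on which $\sup_{t\le T_*}\|u(t)\|_\infty\le\|u^0\|_\infty+1$, I would prove the strictly better bound $\|u^0\|_\infty+\tfrac12\alpha N^{-c}$, forcing $T_*=\infty$. Throughout, $u(t)$ has Fourier support in $\{|k|\le N\}$ (the projection is preserved by the scheme) and $\bar u:=\int u^0\,dx$ is conserved; with $w:=u-\bar u$ one has $\int_{\mathbb T}w\,dx=0$ and $\partial_t w+\bar u\,w_x=\nu^2 w_{xx}-\Pi_N(ww_x)$. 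The basic energy inputs come from testing the equation against $u$, which gives the exact identity $\tfrac12\tfrac{d}{dt}\|u\|_2^2=-\nu^2\|u_x\|_2^2$ (the nonlinear term $\int u^2u_x\,dx$ vanishes), hence $\|u(t)\|_2\le\|f\|_2\le\|f\|_\infty=:A$ for all $t$ and $\nu^2\int_0^\infty\|u_x\|_2^2\,dt\le\tfrac12A^2$; testing the $w$-equation against $w$ and using Poincar\'e yields the exponential decay $\|w(t)\|_2\le Ae^{-4\pi^2\nu^2 t}$. Since $uu_x=\tfrac12\partial_x(w^2)+\bar u\,w_x$ and $\bar u\,w_x$ is spectrally supported in $\{|k|\le N\}$, the spectral defect is
\[
\Pi_{>N}(uu_x)=\tfrac12\,\Pi_{>N}\partial_x(w^2),
\]
which lives in the frequency annulus $\{N<|k|\le 2N\}$. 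A clean way to exploit the true (unprojected) dynamics is to compare $u$ with the viscous Burgers flow $v$ started from the same data $u^0$: $v$ satisfies the \emph{exact} maximum principle $\|v(t)\|_\infty\le\|u^0\|_\infty$, and the difference $d:=u-v$ solves the forced equation $\partial_t d+\tfrac12\partial_x\!\big(d(u+v)\big)=\nu^2 d_{xx}+\tfrac12\Pi_{>N}\partial_x(w^2)$. Thus it suffices to show $\|d\|_{L^\infty_{t,x}}\lesssim_{\nu,\|f\|_\infty}N^{-c}$ for some absolute $c>0$.

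\emph{Large times and the easy regime.} Split the time axis at a fixed $t_0=t_0(\nu,\|f\|_\infty)$. On $[t_0,\infty)$ I would run iterated energy (equivalently, mild-formulation smoothing) estimates, using the uniform $L^2$ bound to absorb the cubic nonlinearity, to obtain $\|u(t)\|_{H^3}\le C(\nu,\|f\|_\infty)$ uniformly in $N$ for $t\ge t_0$; interpolating with the exponential $L^2$-decay of $w$ gives $\|w(t)\|_{H^2}\lesssim e^{-\delta t}$ with $\delta\sim\nu^2$. Using $\|\Pi_{>N}\partial_x g\|_\infty\lesssim N^{-1/2}\|g\|_{H^2}$ together with the one-dimensional product estimate $\|w^2\|_{H^2}\lesssim\|w\|_{H^2}^2$, the defect is $\lesssim N^{-1/2}e^{-2\delta t}$ on $[t_0,\infty)$. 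Feeding this into the Duhamel representation of $d$ (or directly into the maximum-principle inequality $\tfrac{d}{dt}\|u\|_\infty\le\|\Pi_{>N}(uu_x)\|_\infty$ valid a.e.), the contribution of $[t_0,\infty)$ to $\|d\|_\infty$ is $O_{\nu,\|f\|_\infty}(N^{-1/2})$; here the weakly-singular Gronwall factor coming from the $\partial_x\!\big(d(u+v)\big)$ term is harmless because on $[t_0,\infty)$ one has $\|u\|_\infty,\|v\|_\infty\lesssim 1$ after the solution has smoothed.

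\emph{The short window $[0,t_0]$ — the main obstacle.} This is the heart of the matter. Here $u^0=\Pi_N f$ is controlled only in $L^2$ (its $H^s$ norms can be as large as $N^s\|f\|_\infty$), so the pointwise size $\|\Pi_{>N}(uu_x)(t)\|_\infty$ can genuinely be a positive power of $N$ near $t=0$, and the naive inequality $\tfrac{d}{dt}\|u\|_\infty\le\|\Pi_{>N}(uu_x)\|_\infty$ is far too lossy — it discards exactly the dissipation acting on the frequencies the defect excites. The point to exploit is that the defect lives in $\{|k|\sim N\}$, so propagation by $e^{(t-s)\nu^2\partial_{xx}}$ damps it by $e^{-c\nu^2 N^2(t-s)}$, and more importantly that the Fourier modes of $u$ with $|k|\gtrsim N$ are themselves damped at rate $\gtrsim\nu^2 N^2$. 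Concretely I would decompose $u=\Pi_{\le N/2}u+\Pi_{>N/2}u$: for the high part, an energy estimate of the form $\tfrac{d}{dt}\|\Pi_{>N/2}u\|_2^2\le-\pi^2\nu^2 N^2\|\Pi_{>N/2}u\|_2^2+\tfrac1{4\nu^2}\|u\|_\infty^2\|u\|_2^2$ shows that $\Pi_{>N/2}u$ relaxes to a quasi-equilibrium of size $O(N^{-1})$ on the (very short) timescale $\nu^{-2}N^{-2}$, and a Duhamel analysis using heat damping of high frequencies together with Wiener-norm bounds on the nonlinearity converts this into control of its effect on $\|u\|_\infty$; for the low part, which obeys a Burgers-type equation whose spectral defect now involves only the (small) high part, one again compares with the genuine viscous Burgers flow started from $\Pi_{\le N/2}u^0$. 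Making this precise requires (i) a short-time fixed-point/continuity estimate for $u$ in heat-smoothed norms, e.g.\ controlling $\sup_{0<t\le t_0}t^{a/2}\|u(t)\|_{\dot H^a}$ for a suitable small $a>0$ and $\sup_{0<t\le t_0}\|\widehat{w}(t)\|_{\ell^1}\lesssim(\nu^2 t)^{-1/4}\|f\|_2$, and (ii) a careful bookkeeping of how the various time cutoffs (of sizes like $\nu^{-2}N^{-2}$, $\nu^{-2}N^{-2}\log N$, and $t_0$) trade against powers of $N$ so that every contribution to $\|u(t)\|_\infty-\|u^0\|_\infty$ is $O_{\nu,\|f\|_\infty}(N^{-c})$. (One expects the constant $\alpha$ to depend on $1/\nu$ only polynomially for $\nu\lesssim1$, since all the smallness is driven by the explicit $\nu^2N^2$ damping rate.) This short-time multiscale balancing is the technical core and the main difficulty of the argument.
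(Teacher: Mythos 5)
Your global structure (mean-zero decomposition, $L^2$ decay of $w=u-\bar u$, the observation that the spectral defect equals $\tfrac12\Pi_{>N}\partial_x(w^2)$ supported in $\{N<|k|\le 2N\}$, smoothing plus a maximum-principle comparison for $t\ge t_0$) matches the paper, and that part is fine. The genuine gap is the short window $[0,t_0]$: you correctly flag it as the core of the proof, but you only sketch a multiscale strategy (splitting $u=\Pi_{\le N/2}u+\Pi_{>N/2}u$, tracking relaxation on the timescale $\nu^{-2}N^{-2}$, juggling several time cutoffs) and you explicitly leave the bookkeeping undone. As written, the theorem is not proved.

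Moreover, the premise behind your detour is mistaken. You assert that $u^0=\Pi_Nf$ "is controlled only in $L^2$" and that the inequality $\tfrac{d}{dt}\|u\|_\infty\le\|\Pi_{>N}(uu_x)\|_\infty$ is "far too lossy" near $t=0$. In fact $\Pi_N$ is uniformly bounded on $L^p(\mathbb T)$ for every finite $p$, so $\|u^0\|_{L^p}\lesssim_p\|f\|_\infty$ uniformly in $N$; parabolic smoothing from $L^p$ data then gives $\|v(t)\|_\infty\le C\,t^{-\epsilon_0}e^{-c_1\nu^2t}$ and $\||\partial_x|^{1/8}\partial_xv(t)\|_\infty\le C\,t^{-\epsilon_0-7/16}e^{-c_1\nu^2t}$ with $\epsilon_0=\tfrac1{2p}$ as small as you like. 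Since $\Pi_{>N}$ acting on the annulus $\{N<|k|\le 2N\}$ gains $N^{-1/8}$ (up to a logarithm) at the cost of $|\partial_x|^{1/8}$, one gets
\begin{align*}
\|(\op{Id}-\Pi_N)(uu_x)\|_\infty\le C_2\,N^{-c}\,t^{-\epsilon_1}e^{-c_1\nu^2t},\qquad \epsilon_1<1,\ \forall\,t>0,
\end{align*}
valid down to $t=0$. The singularity is time-integrable, so even though the forcing is pointwise large (a positive power of $N$) at times $t\sim N^{-2}$, its cumulative contribution is $O(N^{-c})$: writing $\partial_tu+uu_x=\nu^2u_{xx}+F$ and comparing with $\|u^0\|_\infty+a(t)$, $a(t)=C_2N^{-c}\int_0^ts^{-\epsilon_1}e^{-c_1\nu^2s}\,ds\le \alpha N^{-c}$, the maximum principle closes on all of $(0,\infty)$ in one stroke (take $t_0\to0^+$). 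No bootstrap, no frequency splitting of $u$, and no multiscale time decomposition are needed. If you want to salvage your write-up, replace the entire short-time discussion with the uniform $L^p$ bound on $\Pi_Nf$ and the resulting $t^{-\epsilon_0}$ smoothing estimates.
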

\begin{rem}
The dependence of $\alpha$ on $1/\nu$ is at most power-like for $0<\nu \lesssim 1$.
\end{rem}
\begin{rem}
If one work with $u^0=\tilde P_N f$, where $\tilde P_N$ is a nice Fourier projector
such as the Fejer's kernel which satisfies $\|\tilde P_N f \|_{\infty} \le \|f\|_{\infty}$ and
$\operatorname{supp}(\tilde P_N f ) \subset \{ k:\, -\frac N2 <k \le \frac N2\}$,  then
we obtain in this case 
\begin{align*}
\sup_{t\ge 0} \|u(t) \|_{\infty}
\le \|f\|_{\infty} + O(N^{-c}).
\end{align*}
Such pre-processing of initial data is quite easy to implement in practice.
\end{rem}
\begin{proof}
Denote $\bar u$ as the average of $u$ on $\mathbb T$ which is clearly preserved in time.
Let $v = u - \bar u$.  Then 
\begin{align*}
\frac 12 \frac d {dt} \Bigl( \| v \|^2_2 \Bigr)
= - \nu^2 \| \partial_x v \|_2^2  \le  - 4 \pi^2 \nu^2 \|v \|_2^2.
\end{align*}
Thus 
\begin{align*}
\| v(t) \|_{2} \le  e^{-4\pi^2 \nu^2 t}  \|v(0)\|_2, \qquad\forall\, t\ge 0.
\end{align*}
By using this and standard smoothing estimates, we then obtain
\begin{align*}
&\| v(t) \|_{\infty}  \le C_1 t^{-\epsilon_0} e^{-c_1 t \nu^2}, \quad\forall\, t> 0; \\
& \| \partial_x v(t) \|_{2} \le C_1 t^{-\frac 12} e^{-c_1 t \nu^2},
\qquad\forall\, t>0; \\
&\| |\partial_x|^{\frac 18} \partial_x v \|_{\infty} 
\le C_1  t^{-\epsilon_0-\frac 7{16}}  e^{-c_1 t \nu^2}, \quad\forall\, t>0. 
\end{align*}
where the constant $C_1>0$ depend only on ($\|f\|_{\infty}$, $\nu$),   $c_1>0$ is an absolute constant,
and $0<\epsilon_0 \ll1$ is a small absolute constant.  In the above the pre-factor
$t^{-\epsilon_0}$ is due to the fact that we work with $\| v(0) \|_p \lesssim
\|f\|_p$ for some $p$ sufficiently
large (instead of using $\|v(0) \|_{\infty}$). It follows that
\begin{align*}
\| (\op{Id} - \Pi_N) ( u u_x) \|_{\infty}
=\| (\op{Id} - \Pi_N ) ( (\bar u +v) v_x) \|_{\infty}
\le C_2 N^{-c} t^{-\epsilon_1} e^{- c_1 t \nu^2}, \qquad \forall\, t>0, 
\end{align*}
where $c>0$ is an absolute constant, $C_2>0$ depends only on 
($\|f\|_{\infty}$, $\nu$), and $0<\epsilon_1<1$ is another absolute constant.  Thus we may
rewrite the original equation as
\begin{align*}
\partial_t u +u u_x = \nu^2 u_{xx} + F,
\end{align*}
where  $F$ obeys
\begin{align*}
\| F\|_{\infty} \le C_2 N^{-c} t^{-\epsilon_1} e^{-c_1 t \nu^2}, \qquad\forall\, t>0.
\end{align*}
Since by spectral truncation $u^0$ is clearly smooth, the function $u$ is also smooth. 
Consider
\begin{align*}
g(t,x) = u(t,x) -\Bigl( \|u^0\|_{\infty} + a(t) \Bigr),
\end{align*}
where 
\begin{align*}
a(t) = C_2 N^{-c} \int_0^t s^{-\epsilon_1} e^{-c_1 s \nu^2} ds.
\end{align*}
Clearly for any $t_0>0$ and $t\ge t_0$, we have $g$ is smooth and
\begin{align*}
\partial_t g +u g_x  \le \nu^2 g_{xx},  \qquad t\ge t_0.
\end{align*}
A simple maximum principle argument then yields 
\begin{align*}
\max_{x\in \mathbb T} g(t,x) \le \max_{x\in \mathbb T} g(t_0,x), \qquad \forall\, t\ge t_0.
\end{align*}
Sending $t_0\to 0+$ then yields the desired estimate for the upper bound. The proof
for the lower bound is similar. We omit the details. 
\end{proof}

Consider
\begin{align} \label{Dec2_2}
\begin{cases}
\frac{u^{n+1}-u^n} 
{\tau}  + \Pi_N ( u^n \partial_x u^n) = \nu^2 
\partial_{xx} u^{n+1}, \quad (t,x) \in (0,\infty) \times \mathbb T; \\
u\Bigr|_{t=0} = u^0= \Pi_N f,
\end{cases}
\end{align}
where $f \in L^{\infty}(\mathbb T)$. 

\begin{thm}
Let $\nu>0$ and $u^n$ be the solution to \eqref{Dec2_2}. 
We have for all $0<\tau<\tau_0=\tau_0(\nu, \|f\|_{\infty})$ and $N\ge 2$, 
\begin{align*}
\sup_{n\ge 0} \|u^n \|_{\infty}
\le \|u^0\|_{\infty} + \alpha_1 \cdot N^{-\frac 12} +\alpha_2 \tau^{0.49}, 
\end{align*}
where $\alpha_1, \alpha_2>0$ depend only on 
($\|f\|_{\infty}$, $\nu$). 
\end{thm}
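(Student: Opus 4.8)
The plan is to run a bootstrap on \eqref{Dec2_2}, treat the explicit Burgers term as a small perturbation, and split time into an early regime $n\tau\le T_1$ (handled through the contractive linear resolvent) and a bulk regime $n\tau\ge T_1$ (handled by a discrete maximum principle), where $T_1$ is a small power of $\tau$, $\tau$ lies below a threshold $\tau_0(\nu,\|f\|_\infty)$, and the running bootstrap hypothesis is a crude bound $\sup_{0\le n\le m}\|u^n\|_\infty\le M(\nu,\|f\|_\infty)$ which the final estimate improves. First I would establish $N$-uniform a priori bounds. The iteration is well defined with $u^n\in S_N$, the mean $\bar u=\widehat f(0)$ is preserved, and $v^n:=u^n-\bar u$ has zero mean. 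Pairing \eqref{Dec2_2} with $u^{n+1}$, using $(u^nu^n_x,u^n)=0$ and $\|(\op{Id}-\tau\nu^2\partial_{xx})^{-1}\|_{L^2\to L^2}\le1$, gives
\[
\|v^{n+1}\|_2^2-\|v^n\|_2^2+\tfrac12\|v^{n+1}-v^n\|_2^2+2\tau\nu^2\|\partial_xv^{n+1}\|_2^2\le 2\tau^2\|u^n\|_\infty^2\|\partial_xv^n\|_2^2 ,
\]
which, under the bootstrap hypothesis and for $\tau$ small, telescopes to $\sup_n\|v^n\|_2\lesssim1$ with geometric decay $\|v^n\|_2\lesssim e^{-c\nu^2n\tau}$. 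Feeding this into the discrete Duhamel formula $u^n=L^nu^0-\tau\sum_{j<n}L^{n-j}\Pi_N(u^ju^j_x)$, $L:=(\op{Id}-\tau\nu^2\partial_{xx})^{-1}$, with $\|L^m\|_{L^p\to L^q}\lesssim(m\tau)^{-\frac12(\frac1p-\frac1q)}$ and $\|\partial_xL^m\|_{L^1\to L^2}\lesssim(m\tau)^{-\frac34}$ for $m\tau\lesssim1$, and using that $\Pi_N$ is bounded on $L^p$ uniformly in $N$ for $1<p<\infty$ (so $\|u^0\|_p\lesssim\|f\|_\infty$ despite $u^0=\Pi_N f$ carrying no extra regularity), I would obtain $N$-independent parabolic bounds
\[
\|v^n\|_\infty+(n\tau)^{\frac12}\|\partial_xu^n\|_2+(n\tau)^{k/2}\|u^n\|_{H^k}\lesssim_k\big(1+(n\tau)^{-\epsilon}\big)e^{-c\nu^2n\tau},\qquad n\ge1 ,
\]
for all $k$ and a fixed small $\epsilon>0$. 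In particular $E^n:=(\op{Id}-\Pi_N)(u^nu^n_x)$ lives at frequencies $N<|k|\le2N$, so Bernstein gives $\|E^n\|_\infty\lesssim N^{-\frac12}(n\tau)^{-\gamma}e^{-c\nu^2n\tau}$ with $\gamma<1$, whence $\tau\sum_{n\ge1}\|E^n\|_\infty\lesssim N^{-\frac12}$.

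For the early regime $0<n\tau\le T_1$ I would use the operator form $u^{n+1}=L(u^n-\tau\Pi_N(u^nu^n_x))$ directly (not the reformulation below, whose pointwise error terms are only power-of-$N$ bounded near $t=0$). Since the kernel of $L$ is strictly positive with unit $L^1$ mass on the torus, $L^n$ is an exact $L^\infty$-contraction and $\|L^nu^0\|_\infty\le\|u^0\|_\infty$; writing $u^ju^j_x=\tfrac12\partial_x((u^j)^2)$ with $\|(u^j)^2\|_2=\|u^j\|_4^2\lesssim1+(j\tau)^{-1/4}$ ($O(1)$ at $j=0$ by $L^4$-boundedness of $\Pi_N$), a Cauchy–Schwarz estimate of $\tau\sum_{j<n}\|L^{n-j}\Pi_N(u^ju^j_x)\|_\infty$ on the Fourier side yields $\|u^n-L^nu^0\|_\infty\lesssim N^{-\frac12}+(n\tau)^{c'}$ for some $c'>0$. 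Choosing $T_1=\tau^{\theta}$ with $\theta c'\ge0.49$ then gives $\|u^n\|_\infty\le\|u^0\|_\infty+\alpha_1N^{-\frac12}+\alpha_2\tau^{0.49}$ for $n\tau\le T_1$, improving the bootstrap there.

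For the bulk regime $n\tau\ge T_1$ I would rewrite \eqref{Dec2_2} as a discrete viscous Burgers equation with small mean-zero forcing, $u^{n+1}-\tau\nu^2\partial_{xx}u^{n+1}=u^n-\tau\,u^n\partial_xu^n+\tau E^n$, and run a discrete maximum principle: at a maximum point $x_\ast$ of $u^{n+1}$ one has $\partial_xu^{n+1}(x_\ast)=0$, $\partial_{xx}u^{n+1}(x_\ast)\le0$, so after adding the vanishing term $\tau u^{n+1}(x_\ast)\partial_xu^{n+1}(x_\ast)=0$,
\[
u^{n+1}(x_\ast)\le u^n(x_\ast)+\tfrac{\tau}{2}\,\partial_x\!\big((u^{n+1}+u^n)(u^{n+1}-u^n)\big)(x_\ast)+\tau E^n(x_\ast) .
\]
Since $u^{n+1}-u^n=\tau(\nu^2\partial_{xx}u^{n+1}-\Pi_N(u^nu^n_x))$, and on $n\tau\ge T_1$ the $N$-independent smoothing bounds control all the high-derivative factors by $C(\nu,\|f\|_\infty)(n\tau)^{-p}e^{-c\nu^2n\tau}$ with $p$ bounded (using $\|\Pi_N g\|_{C^k}\lesssim_k\|g\|_{H^{k+1/2+}}$, no logarithm), the telescoping term is $O(\tau(n\tau)^{-p}e^{-c\nu^2n\tau})$. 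Doing the same at a minimum and iterating from the first $n_1$ with $n_1\tau\ge T_1$,
\[
\|u^m\|_\infty\le\|u^{n_1}\|_\infty+C\tau\!\!\sum_{n\ge n_1}\!\big(\tau(n\tau)^{-p}+\|E^n\|_\infty\big)e^{-c\nu^2n\tau}\le\|u^0\|_\infty+C\big(N^{-\frac12}+\tau^{0.49}\big),
\]
where $\|u^{n_1}\|_\infty$ comes from the early regime and the exponent $0.49$ is produced by balancing $\theta$ in $T_1=\tau^\theta$ against $p$ and $\gamma$. This closes the bootstrap for $N\ge N_0(\nu,\|f\|_\infty)$ and $\tau\le\tau_0(\nu,\|f\|_\infty)$; the lower bound on $u^m$ is obtained identically, at minima.

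The hard part will be the uniform-in-$N$ a priori energy and smoothing estimates together with the bookkeeping near $t=0$: the explicit Burgers term wrecks the exact discrete energy identity, so stability must be bought from the smallness of $\tau$ inside the bootstrap, and since $u^0=\Pi_N f$ carries no regularity beyond $L^\infty$ while $\Pi_N$ is not bounded on $L^\infty$, every estimate touching the nonlinearity has to be routed through $L^p$ and $L^2$ norms (via the uniform $L^p$-boundedness of $\Pi_N$) and through the parabolic smoothing of the implicit viscosity. Tracking the resulting $N$- and $\tau$-powers so that, after optimizing the regime boundary $T_1$, the residual errors emerge as $N^{-1/2}$ and $\tau^{0.49}$ uniformly in $N$ is where the real work lies; structurally the argument runs in parallel to the Allen–Cahn collocation theorems proved earlier in the paper.
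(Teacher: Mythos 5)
Your architecture is the same as the paper's: uniform-in-$N$ $L^2$ decay of $v^n=u^n-\bar u$ plus parabolic smoothing through the implicit resolvent, a spectral error $\|(\op{Id}-\Pi_N)(u^n u^n_x)\|_\infty\lesssim N^{-1/2}(n\tau)^{-\gamma}e^{-cn\tau}$ that sums to $O(N^{-1/2})$, and a per-step maximum principle in which the transport term is moved to the implicit level so that the only loss is an $O(\tau^2(n\tau)^{-p})$ commutator, summable in $n$ to $\tau^{1/2-}$. The genuine gap is quantitative and sits at the first step / early regime. Your Duhamel bound for $n\tau\le T_1$ routes the nonlinearity through $L^2$: the $j=0$ term is $\tau\|L^{n}\Pi_N\partial_x((u^0)^2)\|_\infty\lesssim \tau\,(n\tau)^{-3/4}\|(u^0)^2\|_2$, which at $n=1$ equals $\tau^{1/4}$ --- far larger than $\tau^{0.49}$. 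Your own balancing then cannot close: with $c'=1/4$ the requirement $\theta c'\ge 0.49$ forces $T_1=\tau^{\theta}\le\tau^{1.96}<\tau$, so the early regime contains no $n\ge1$, the bulk iteration must be seeded with $\|u^1\|_\infty\le\|u^0\|_\infty+O(\tau^{1/4})$, and the final error is $\tau^{1/4}$, not $\tau^{0.49}$.

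The fix is the tool you name in your last paragraph but do not deploy where it is needed: estimate $\tau\|\partial_x L^{m}\Pi_N g\|_\infty\le \|\tau\partial_x L^{m}\|_{L^p\to L^\infty}\|g\|_p\lesssim_p \tau^{1/2-\frac1{2p}} m^{-1/2-\frac1{2p}}\|g\|_p$ with $p$ large, using $\|(u^0)^2\|_p=\|u^0\|_{2p}^2=\|\Pi_N f\|_{2p}^2\lesssim_p\|f\|_\infty^2$ by the uniform $L^{2p}$-boundedness of $\Pi_N$. Taking $\frac1{2p}<0.01$ gives $\|u^1\|_\infty\le\|u^0\|_\infty+C\tau^{0.49}\|f\|_\infty^2$; this is exactly the paper's estimate of $\|\tau\partial_x A\,\Pi_N((u^0)^2)\|_\infty$ and is the reason the stated exponent is $0.49$ rather than $0.5$. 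Once the first step is handled this way the regime split is unnecessary --- the max-principle iteration can start at $n=1$, since $\tau\sum_{n\ge1}\tau(n\tau)^{-3/2-\epsilon}\lesssim\tau^{1/2-\epsilon}$ converges. Two smaller points: your energy inequality carries the error $\tau^2\|u^n\|_\infty^2\|\partial_x v^{n}\|_2^2$ at step $n$ while the dissipation sits at step $n+1$, so absorption needs a summation rather than a per-step cancellation (the paper instead absorbs into $\|v^{n+1}-v^n\|_2^2$ under the condition $\tau^{-1}\ge\|u^n\|_\infty^2+\|\partial_x u^n\|_\infty$); and you should verify that your smoothing bounds actually deliver $\gamma<1$ in the spectral error (the paper obtains $\gamma=3/4+\epsilon$), since $\gamma=1$ would cost a $\log(1/\tau)$.
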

\begin{rem}
For convenience we do not spell out the explicit dependence of $\tau_0$ on $\nu$.
\end{rem}
\begin{proof}
To ease the notation we shall set $\nu=1$. 
Rewrite \eqref{Dec2_2} as
\begin{align} \label{Dec10:1}
u^{n+1} = A u^n - \frac 12 \tau A \Pi_N \partial_x (  (u^n)^2),
\end{align}
where $A=(\op{Id}-\tau \partial_{xx} )^{-1}$.  

Denote $v^n = u^n - \bar u$, where (below we denote $\int =\int_{\mathbb T} dx$)
\begin{align*}
\bar u = \int u^0 = \int f.
\end{align*}
Then clearly 
\begin{align} \label{Dec3:1}
\frac{v^{n+1}-v^n}
{\tau} + \Pi_N ( u^n \partial_x v^n) =  \partial_{xx} v^{n+1}.
\end{align}

We shall proceed in several steps. 

Step 1. Local smoothing estimates. 
Let $t_0>0$ be chosen sufficiently
small (depending only on $\|f\|_{\infty}$) so that the nonlinear part is dominated by the linear part when $n\tau\le t_0$. 
Choose $1\le n_0 \in \mathbb Z$ such that $n_0 \tau \sim t_0$. By using
\eqref{Dec10:1} and discrete smoothing estimates, we have
\begin{align}
&\sup_{1\le n \le n_0}( \sqrt{n\tau} \| \partial_x u^n \|_2 + 
\|u^n\|_{2}) \le C_1, \\
& \| \partial_x u^{n_0} \|_{\infty} +\| u^{n_0}\|_{\infty} \le C_1,
\end{align}
where $C_1>0$ depends only on $\|f\|_{\infty}$.

Step 2. $L^2$ estimate. Multiplying both sides of \eqref{Dec3:1} by $v^{n+1}$ and
integrating, we obtain
\begin{align*}
\frac 1 {2\tau} 
( \| v^{n+1}\|_2^2 - \| v^n \|_2^2
+ \| v^{n+1} -v^n \|_2^2)
+  \| \partial_x v^{n+1}\|_2^2
+ N_1 \le 0,
\end{align*}
where
\begin{align*}
N_1 & = \int \Pi_N (u^n \partial_x v^n) v^{n+1} 
= \int u^n \partial_x v^n ( v^{n+1} -v^n )   \notag \\
& =
\int u^n (\partial_x v^{n+1}) (v^{n+1}-v^n) 
+ \int u^n \partial_x (v^n - v^{n+1}) \cdot (v^{n+1} -v^n) dx.
\end{align*}
Clearly it holds that 
\begin{align*} 
|N_1|&\le \|u^n\|_{\infty} \cdot \| \partial_x v^{n+1} \|_2
\| v^{n+1} -v^n \|_2 + \frac 12 \| \partial_x u^n \|_{\infty}
\| v^{n+1} -v^n \|_2^2 \notag \\
&\le \; \frac 12 \| \partial_x v^{n+1} \|_2^2
+( \frac 1 {2}
\| u^n \|_{\infty}^2 + \frac 12 \| \partial_x u^n \|_{\infty})
\cdot \|v^{n+1}-v^n\|_2^2.
\end{align*}
Thus if 
\begin{align} \label{Dec10:2}
\frac 1 {\tau}
\ge 
\| u^n \|_{\infty}^2 + \|\partial_x u^n \|_{\infty},
\end{align}
then 
\begin{align*}
(1+ 4\pi^2 \tau) \| v^{n+1}\|_2^2
\le \|v^{n+1} \|_2^2+
  \tau \| \partial_x v^{n+1} \|_2^2 \le \|v^n\|_2^2.
\end{align*}
By a similar estimate and under the same condition \eqref{Dec10:2}, we also obtain
\begin{align*}
\| u^{n+1} \|_2 \le \|u^n \|_2.
\end{align*}

Step 3. Induction. For $n\ge n_0$, we inductively assume
\begin{align*}
\| \partial_x u^n \|_{\infty} \le  A_1 \tau^{-\frac 34},
\quad \|u^n\|_{\infty} \le A_1 \tau^{-\frac 14},
\quad \| u^n\|_{2} \le C_1,
\end{align*}
where $A_1$ is a suitably large constant. Throughout the argument we shall 
assume $\tau>0$ is sufficiently small, in particular it has to satisfy
\begin{align*}
\tau^{-1} \ge A_1^2 \tau^{-\frac 12} + A_1 \tau^{-\frac 34}
\end{align*}
and some additional mild constraints in the argument below. Now clearly by Step $2$
we have $\|u^{n+1} \|_2 \le C_1$.  By using \eqref{Dec10:1}, we have
(below $\beta_i$ denotes absolute constants)
\begin{align*}
\| \partial_x u^{n+1} \|_{\infty}
& \le \tau^{-\frac 34} \cdot \beta_1 \cdot C_1
+ \beta_2 \| \Pi_N ( (u^n)^2) \|_{\infty} \notag \\
& \le \tau^{-\frac 34} \beta_1 C_1
+ \beta_3 \Bigl(
1+ \|u^n\|_{\infty}^2 
\log ( 10 + \|\partial_x u^n \|_{\infty} \| u^n \|_{\infty} ) \Bigr) \notag \\
& \le \tau^{-\frac 34} \beta_1 C_1
+ \beta_3 \Bigl( 1+ \tau^{-\frac 12} A_1^2
\log( 10+ A_1^2 \tau^{-1} ) \Bigr) \notag \\
& \le A_1 \tau^{-\frac 34},
\end{align*}
where in the last step we choose $A_1\ge 2\beta_1 C_1$  and take $\tau$ sufficiently small.
By a similar estimate, it is not difficult to check that
\begin{align*}
\| u^{n+1}\|_{\infty} \le A_1 \tau^{-\frac 14}.
\end{align*}
This then completes the induction proof and we obtain 
\begin{align*}
& \| u^{n+1} \|_2 \le \|u^n\|_2 \le \|u^{n_0} \|_2, \qquad \forall\, n\ge n_0; \\
& (1+4\pi^2 \tau) \|v^{n+1}\|_2^2 \le  \|v^n\|_2^2, \qquad \forall\, n\ge n_0.
\end{align*}
It follows that
\begin{align*}
\|v^n\|_2 \le C_2 e^{-\beta n\tau}, \qquad \forall\, n\ge 1,
\end{align*}
where $\beta>0$ is an absolute constant, and $C_2>0$ depends only on $\|f\|_{\infty}$.

Step 4. Time-global estimates.  By using the estimates derived in Step 1 to Step 3 and
further bootstrapping estimates, we then obtain
\begin{align*}
& \| v^n \|_{\infty} \le C_3\cdot (n\tau)^{-\epsilon_1} e^{-\gamma_1 n\tau}, \qquad \forall\, n\ge 1;\\
& \| \partial_x u^n \|_{\infty} \le C_3 \cdot (n\tau)^{-\frac 12-\epsilon_2}
e^{-\gamma_2 n \tau}, \qquad \forall\, n\ge 1;\\
& \| \partial_{xx} u^n \|_{\infty} \le C_3 \cdot (n\tau)^{-1-\epsilon_3}
e^{-\gamma_3 n \tau}, \qquad \forall\, n\ge 1;\\
& \| \partial_{xxx} u^n \|_{\infty}
\le C_3 \cdot (n\tau)^{-\frac 32 -\epsilon_4} e^{-\gamma_4 n \tau}, \quad\forall\, n\ge 1;
\end{align*}
where $0<\epsilon_1, \epsilon_2,\epsilon_3,\epsilon_4\ll 1$ are absolute constants,
$\gamma_i>0$ are absolute constants, and $C_3>0$ depends only on $\|f\|_{\infty}$. 
By using \eqref{Dec2_2}, we have
\begin{align*}
&\tau \|u^n \partial_x (u^{n+1} -u^n) \|_{\infty} \le C_5 \cdot \tau^2 
\cdot (n\tau)^{-\frac 32-\epsilon_5}, \qquad \forall\, n\ge 1,
\end{align*}
where $0<\epsilon_5\ll 1$ is an absolute constant, and $C_5>0$ depends only
on $\|f\|_{\infty}$.  Also
\begin{align*}
\tau \| (\op{Id}-\Pi_N) ( u^n \partial_x u^n) \|_{\infty}
\le C_6 \cdot \tau \cdot N^{-\frac 12}
\cdot (n\tau)^{-\frac 34-\epsilon_6} e^{-\gamma_5 n\tau}, \qquad\forall\, n\ge 1,
\end{align*}
where $0<\epsilon_6\ll 1$ is an absolute constant, $\gamma_5>0$ is an
absolute constant, and $C_6>0$ depends only on $\|f\|_{\infty}$. 
Now for $n\ge 1$ we rewrite the equation for $u^{n+1}$ as
\begin{align*}
(\op{Id} -\tau \partial_{xx}) u^{n+1}
=u^n +\tau u^n \partial_x u^{n+1} +\tau u^n \partial_x (u^n -u^{n+1})
+ \tau (\op{Id}-\Pi_N)(u^n \partial_x u^n).
\end{align*}
Clearly by a maximum principle argument, we have
\begin{align*}
\|u^{n+1} \|_{\infty}
&\le \|u^n\|_{\infty} +
\tau \| u^n \partial_x (u^{n+1}-u^n) \|_{\infty}
+\tau \| (\op{Id}-\Pi_N) (u^n \partial_x u^n ) \|_{\infty}  \notag \\
&\le \|u^n \|_{\infty} +C_5 \cdot \tau^2 
\cdot (n\tau)^{-\frac 32-\epsilon_5}
+ C_6 \cdot \tau \cdot N^{-\frac 12}
\cdot (n\tau)^{-\frac 34-\epsilon_6} e^{-\gamma_5 n\tau}, \qquad\forall\, n\ge 1.
\end{align*}
Iterating in $n$,  we obtain
\begin{align*}
\| u^n \|_{\infty}
\le \|u^1 \|_{\infty}
+ C_7 \tau^{\frac 12 -\epsilon_5}  +C_8 N^{-\frac 12}, \qquad\forall\, n\ge 1,
\end{align*}
where $C_7, C_8>0$ are constants depending only on $\|f\|_{\infty}$.  Finally
we observe that
\begin{align*}
\| u^1 \|_{\infty}
& \le \|u^0\|_{\infty}
+\frac 12\| \tau \partial_x A \Pi_N ( (u^0)^2) \|_{\infty} \notag \\
& \le \|u^0\|_{\infty} + C \tau^{\frac 12-\epsilon_6} \|f \|_{\infty}^2,
\end{align*}
where $0<\epsilon_6\ll 1$ is an absolute constant, and $C>0$ is an absolute constant. 
Our desired result then follows easily.
\end{proof}

We now consider the Fourier collocation case. 
Let $U(t) \in \mathbb R^N$, $N\ge 2$ solve
\begin{align} \label{Dec12:1}
\begin{cases}
\frac d {dt} U + \frac 12 \partial_h ( U^{\cdot 2} )=\Delta_h U,  \quad
t>0, \\
U\Bigr|_{t=0} = U^0 \in \mathbb R^N,
\end{cases}
\end{align}
where for convenience we have set the viscosity coefficient $\nu=1$, and the operators
$\partial_h$, $\Delta_h$ correspond to the Fourier multipliers
$2\pi i k$, $-4\pi^2 k^2$ for $-\frac N2<k\le \frac N2$ respectively in the DFT formula.  

Set $u=Q_N U$. We obtain the following reformulation of \eqref{Dec12:1}:
\begin{align} \label{Dec12:2}
\begin{cases}
\partial_t u +\frac 12 \partial_x Q_N (u^2) = \partial_{xx} u, \quad
(t,x) \in (0,\infty)\times \mathbb T; \\
u\Bigr|_{t=0} =u^0= Q_N U^0.
\end{cases}
\end{align}
\begin{rem}
At this point we should point out a subtle technical difficulty associated with
the analysis of \eqref{Dec12:1} and the equivalent system \eqref{Dec12:2}.
Namely in general we have
\begin{align*}
\langle \partial_h ( U^{\cdot 2} ), U \rangle  \ne 0,
\end{align*}
or in terms of $u$:
\begin{align*}
\int_{\mathbb T} Q_N(u^2) \partial_x u dx \ne 0.
\end{align*}
An example can be constructed as follows. Let $N=6 k_0 \ge 6$ where $k_0$ is an integer. 
Set $m= N/3$ and  $u(x) = \sin 2\pi m x$.  It is not difficult to check that
\begin{align*}
Q_N(u^2) = \frac 12 - \frac 12 \cos2\pi m x. 
\end{align*}
Clearly then
\begin{align*}
\int_{\mathbb T} Q_N(u^2) \partial_x u dx \ne 0.
\end{align*}
\end{rem}

\begin{thm} \label{TmDec12:1}
Consider \eqref{Dec12:1} with $N\ge 2$. Suppose $f$ is a real-valued function with $f=Q_Nf$.
Let $(U^0)_j= f(\frac j N)$ for all $0\le j\le N-1$.
Then for all $N\ge N_0=N_0(\|f\|_{2})$, we have
\begin{align*}
\sup_{t\ge 0} \|U(t) \|_{\infty}
\le \sup_{t\ge 0} \|u(t) \|_{\infty} \le \|f \|_{\infty} + \gamma_1 \cdot N^{-\frac 12} 
\cdot (1+ \| \partial_x f \|_2^2),
\end{align*}
where $\gamma_1>0$ depends only on $\|f\|_{2}$,
and $u$ solves \eqref{Dec12:2}.
\end{thm}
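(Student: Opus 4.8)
The plan is to study the reformulated system \eqref{Dec12:2} directly, since the node values of $U(t)$ agree with $u(t,\frac jN)$ and hence $\|U(t)\|_\infty \le \|u(t)\|_\infty$. The first step is to derive an $L^2$-type decay estimate: even though $\int_{\mathbb T} Q_N(u^2)\partial_x u\,dx$ need not vanish (as the preceding remark emphasizes), the offending term can be controlled. Writing $v = u - \bar u$ where $\bar u = \int_{\mathbb T} u^0$, one multiplies the equation by $v$ and integrates; the quadratic transport contribution is $\frac12\int Q_N(u^2)\partial_x v\,dx$, which differs from $\frac12\int u^2\partial_x v\,dx = 0$ (after using $\partial_x v = \partial_x u$) only by a high-frequency error $\frac12\int (Q_N-\op{Id})(u^2)\partial_x v\,dx$, bounded by $N^{-1/2}\|\partial_x(u^2)\|_2\|\partial_x v\|_2$ or absorbed using smallness. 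I would first establish short-time smoothing bounds $\sqrt{t}\,\|\partial_x u(t)\|_2 \lesssim 1$, $\|u(t)\|_2 \lesssim 1$ for $0\le t\le t_0$ via the mild formulation and a contraction argument in the norm used in Section~\ref{S:DFT1}, and then bootstrap from time $t_0$ using the (corrected) energy inequality to get exponential decay $\|v(t)\|_2 \le C e^{-ct}$ globally, with constants depending only on $\|f\|_2$.

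The second step is to upgrade to pointwise and derivative bounds. Using the $L^2$ decay, standard smoothing estimates for $e^{t\partial_{xx}}$ applied to the mild formulation (exactly as in the proof of the spectral Burgers theorem earlier in this section) give $\|v(t)\|_\infty$, $\|\partial_x v(t)\|_2$, $\|\partial_x u(t)\|_\infty$, and one more derivative, each bounded by $C t^{-\alpha}e^{-ct}$ for small $\alpha$ and constants depending only on $\|f\|_2$; near $t=0$ the regularity of the initial data (here $f=Q_Nf$ with $\partial_x f\in L^2$) removes the singular prefactor and produces the explicit factor $(1+\|\partial_x f\|_2^2)$ in the final bound. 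With these in hand, one estimates the aliasing error
\begin{align*}
\|(\op{Id}-Q_N)(u(t)^2)\|_\infty \lesssim N^{-1/2}\,\|\partial_x(u(t)^2)\|_2 \lesssim N^{-1/2}(1+\|\partial_x f\|_2^2)\,t^{-\alpha}e^{-ct},
\end{align*}
which is integrable in $t$.

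The third step is the maximum principle argument itself. Rewrite \eqref{Dec12:2} as $\partial_t u + u\,\partial_x u = \partial_{xx} u + F$, where $F = \tfrac12\partial_x\big((\op{Id}-Q_N)(u^2)\big)$. This is where care is needed: $F$ carries a derivative, so I would instead move the aliasing term to the right as $\tfrac12(\op{Id}-Q_N)(u^2)$ is NOT what appears — one must either integrate by parts differently or, more robustly, bound $\|F\|_\infty = \tfrac12\|\partial_x(\op{Id}-Q_N)(u^2)\|_\infty \lesssim N^{-1/2}\||\partial_x|^{1/2}\partial_x(u^2)\|_\infty$ using the extra derivative regularity already obtained, keeping a time-integrable, $N^{-c}$-small majorant $a(t) = \int_0^t \|F(s)\|_\infty\,ds \le \gamma_1 N^{-1/2}(1+\|\partial_x f\|_2^2)$. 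Then $g(t,x) = u(t,x) - (\|f\|_\infty + a(t))$ satisfies $\partial_t g + u\,\partial_x g \le \partial_{xx} g$ on $[t_0,\infty)$ with $g$ smooth, so the classical parabolic maximum principle gives $\max_x g(t,x)\le \max_x g(t_0,x)$; letting $t_0\to 0^+$ and noting $u(0)=f$ yields $u(t,x)\le \|f\|_\infty + \gamma_1 N^{-1/2}(1+\|\partial_x f\|_2^2)$. The lower bound is symmetric.

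The main obstacle I expect is controlling the non-skew-symmetric collocation nonlinearity in the energy estimate: the identity $\int u^2\partial_x u\,dx=0$ fails after the $Q_N$ truncation, so the $L^2$ decay is not automatic and must be salvaged by an aliasing-error estimate that is itself only available once one has at least $\dot H^1$ control — so there is a mild circularity that has to be broken by the short-time smoothing bootstrap before the global decay can be closed. A secondary technical point is that $F$ contains a spatial derivative of the aliasing error, so one must spend one of the higher-derivative smoothing estimates to keep $\|F(t)\|_\infty$ both $N^{-c}$-small and integrable in $t$; this is the reason the final bound degrades to $N^{-1/2}$ rather than a higher power and why the $\|\partial_x f\|_2^2$ factor is natural.
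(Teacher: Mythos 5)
Your three-step architecture ($L^2$ decay for $v=u-\bar u$, parabolic smoothing to get higher-order bounds with the $(1+\|\partial_x f\|_2^2)$ factor, then a comparison-function maximum principle with the time-integrable forcing $F=\tfrac12\partial_x(\op{Id}-Q_N)(u^2)$) is exactly the paper's, and Steps 2--3 match the paper's proof in substance, including the key bound $\|\partial_x(\op{Id}-Q_N)(u^2)\|_\infty\lesssim N^{-1/2}\|\partial_x^2(u^2)\|_2$ obtained by spending the second-derivative smoothing estimate. The one genuine difference is in Step 1: the ``circularity'' you diagnose is an artifact of estimating the aliasing error by $\|(Q_N-\op{Id})(u^2)\|_2\lesssim N^{-1}\|\partial_x(u^2)\|_2$, which indeed needs $\|u\|_\infty$ and forces your short-time bootstrap. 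The paper instead uses that $(Q_N-\op{Id})g$ is built entirely from Fourier modes $|m|\gtrsim N$ of $g$, so that $\|(Q_N-\op{Id})g\|_2\lesssim N^{-1/2}\|\partial_x g\|_1$ for $g$ spectrally supported in $\{|k|\le 10N\}$; applied to $g=v^2+2\bar u v$ this gives the aliasing contribution $\lesssim\bigl(N^{-1}|\bar u|+N^{-1/2}\|v\|_2\bigr)\|\partial_x v\|_2^2$, which is absorbed into the dissipation for $N\ge N_0(\|f\|_2)$ using only the $L^2$ norm. This closes the energy estimate unconditionally and globally in one stroke, with no preliminary smoothing step and no continuity argument; your route can be made to work but is heavier and requires tracking an a priori $L^\infty$ (or $H^1$) bound concurrently with the decay, which is precisely the delicate point the sharper $L^1$-based aliasing estimate is designed to avoid.
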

\begin{proof}
Step $1$. $L^2$ estimate. Set $v= u-\bar u$ and note that $\bar u$ is clearly preserved
in time. Then
\begin{align*}
\partial_t v + \frac 12 \partial_x Q_N( v^2 +2 \bar u v) = \partial_{xx} v.
\end{align*}
Multiplying both sides by $v$ and integrating, we obtain
\begin{align*}
\frac 12 \frac d {dt} \| v\|_2^2
&= \frac 12 \langle (Q_N-\op{Id}) (v^2 +2v \bar u), \partial_x v) - \| \partial_x v\|_2^2 \notag \\
&\le \frac 12 \| (Q_N-\op{Id}) (v^2+2v\bar u)\|_2 \| \partial_x v\|_2 
-\| \partial_x v \|_2^2.
\end{align*}
Now note that for $g$ with Fourier support localized in $\{k:\; |k| \le 10N\}$ we have
\begin{align*}
\| (Q_N-\op{Id}) g\|_2 \lesssim N^{-1} \| \partial_x g \|_2 \lesssim N^{-\frac 12} 
\|\partial_x g\|_1.
\end{align*}
Thus
\begin{align*}
\| (Q_N-\op{Id}) (v^2+2v\bar u)\|_2 \|
\lesssim N^{-1} |\bar u| \|\partial_x v\|_2 + N^{-\frac 12} \|v\|_2 \| \partial_x v\|_2.
\end{align*}
We then obtain
\begin{align*}
\frac d {dt} \|v\|_2^2 \le (  \alpha_0 N^{-1}
|\bar u| +
\alpha_0 N^{-\frac 12} \| v\|_2 - 2) \| \partial_x v\|_2^2,
\end{align*}
where $\alpha_0>0$ is an absolute constant.  Clearly for $N\ge N_0=N_0(\|f\|_2)$ sufficiently
large, we have
\begin{align*}
\|v(t) \|_2 \le \|v(0) \|_2 e^{- 2\pi^2 t }, \qquad \forall\, t\ge 0.
\end{align*}

Step 2. Higher order estimates.  Observe that for $t\gtrsim 1$, by smoothing estimates we have
\begin{align*}
\| \partial_x u(t) \|_{H^3(\mathbb T)} \le C \cdot e^{-\beta_1 t},
\end{align*}
where the constant $C>0$ depends only on $\|f\|_2$, and $\beta_1>0$ is an absolute constant.
Now we only need to focus on the local in time estimate. 
We rewrite the equation for $u$ as
\begin{align} \label{Dec15:1}
\partial_t u + u \partial_x u = \frac 12
\partial_x ( \op{Id} -Q_N) (u^2) + \partial_{xx} u.
\end{align}
Clearly for $T>0$, we have
\begin{align}
\| \partial_x u \|_{L_t^{\infty}L_x^2([0,T]\times \mathbb T)}
&\lesssim \| \partial_x u_0\|_2 + T^{\frac 14} \|\partial_x u \|_{L_t^{\infty}L_x^2([0,T]\times
\mathbb T)} \cdot \|u\|_{L_t^{\infty} L_x^2}  \notag \\
& \qquad \qquad + 
T^{\frac 14} \| |\partial_x|^{\frac 12} (Q_N-\op{Id})(u^2) \|_{L_t^{\infty}L_x^2([0,T]
\times \mathbb T)}. \label{Dec15:2}
\end{align}
Since $u$ is spectrally localized to $\{|k| \le N \}$, we have
\begin{align*}
\| |\partial_x|^{\frac 12} (Q_N-\op{Id})(u^2)  \|_{L_x^2}
& \lesssim N^{-\frac 12} \| \partial_x (u^2) \|_{L_x^2} \notag \\
& \lesssim N^{-\frac 12} \| \partial_x u \|_{L_x^2} \| u \|_{L_x^{\infty}}
\lesssim \| \partial_x u \|_{L_x^2} \| u \|_{L_x^2}.
\end{align*}
Plugging this estimate into \eqref{Dec15:2}, we obtain for $T_0=T_0(\|f\|_2)$ sufficiently small that
\begin{align*}
\| \partial_x u \|_{L_t^{\infty} L_x^2([0,T_0]\times \mathbb T)} \lesssim
\| \partial_x f \|_2.
\end{align*}
Now on the same time interval $[0,T_0]$, we have
\begin{align*}
\| t^{\frac 14}  \partial_x u \|_{L_t^{\infty} L_x^{\infty} ([0,T_0]\times \mathbb T)}
&\lesssim \| \partial_x f \|_2 +  T_0^{\frac 14}
\cdot \|\partial_x u \|_{L_t^{\infty} L_x^{2}([0,T_0]\times \mathbb T)} \cdot \|u \|_{L_t^{\infty} L_x^{\infty}
([0,T_0]\times \mathbb T)} 
 \notag \\
 & \le  C \cdot \|\partial_x f \|_2 \cdot (1+ \| \partial_x f \|_2^{\frac 12}),
\end{align*}
where $C>0$ depends only on $\|f \|_2$.  
Next take $\epsilon_0<1$ to be an absolute constant which is sufficiently close to $1$.
We have
\begin{align*}
\| t^{\frac {\epsilon_0}2}  \partial_x |\partial_x|^{\epsilon_0} 
u \|_{L_t^{\infty} L_x^{2} ([0,T_0]\times \mathbb T)}
&\lesssim \| \partial_x f \|_2 +  T_0^{\frac 14}
\cdot \|\partial_x u \|_{L_t^{\infty} L_x^{\infty}([0,T_0]\times \mathbb T)} \cdot \|u \|_{L_t^{\infty} L_x^{2}
([0,T_0]\times \mathbb T)} 
 \notag \\
 & \le  C \cdot \|\partial_x f \|_2 \cdot (1+ \| \partial_x f \|_2^{\frac 12}),
\end{align*}
where $C>0$ depends only on $\|f\|_2$.  Now observe that (for
the second inequality below we used the fact that 
$u$ is spectrally localized to $\{|k| \le N\}$)
\begin{align*}
\| |\partial_x|^{\epsilon_0} \partial_x (u^2) 
\|_{L_x^2}
& \lesssim \| u\|_{L_x^2}^2 + \| |\partial_x|^{\epsilon_0}
\partial_x u \|_{L_x^2} \cdot \| u \|_{L_x^{\infty}}; \\
\| |\partial_x|^{\epsilon_0} \partial_x (Q_N-\op{Id}) (u^2)
\|_{L_x^2}
& \lesssim N^{-(1-\epsilon_0)} \| \partial_x^2 ( u^2) \|_{L_x^2} \notag \\
& \lesssim N^{-(1-\epsilon_0)} (\| \partial_{xx} u \|_{L_x^2}
\| u\|_{L_x^{\infty}} + \| \partial_x u \|_{L_x^2} \| \partial_x u \|_{L_x^{\infty}} ) \notag \\
& \lesssim \| |\partial_x|^{\epsilon_0} \partial_x u \|_{L_x^2}
\cdot (\| u\|_{L_x^2} + \| u \|_{L_x^2}^{\frac 12} \| \partial_x u \|_{L_x^2}^{\frac 12} )
+ \| \partial_x u \|_{L_x^2}^2.
\end{align*}
By using the preceding estimates,  we then compute
\begin{align*}
\| t^{\frac 14} \partial_x^2 u
\|_{L_t^{\infty}L_x^2([0,T_0]\times \mathbb T)}
& \lesssim \| \partial_x f \|_2 + C+ C\cdot \|\partial_x f \|_2 (1+\| \partial_x f \|_2) \notag \\
& \le C + C \|\partial_x f \|_2^2,
\end{align*}
where $C>0$ depends only on $\|f\|_2$. 

Step 3. Conclusion. 
By using the estimate from Step $2$, we have
for any $t>0$,
\begin{align*}
\| \partial_x ( \op{Id}-Q_N ) (u^2) \|_{\infty}
& \lesssim N^{-\frac 12} \| \partial_x^2 ( u^2) \|_2 \notag \\
& \le  C_2  (1+ \| \partial_x f \|_2^2) \cdot N^{-\frac 12} t^{-\frac 14} e^{-\beta t},
\end{align*}
where $\beta>0$ is an absolute constant, and $C_2>0$ depends only on $\|f \|_{2}$. 
Note that
\begin{align*}
\| u^0 \|_{\infty} \le \| f\|_{\infty} + \| (\op{Id} - Q_N) f \|_{\infty}
\le \| f \|_{\infty} +  N^{-\frac 12} \cdot \gamma \cdot \| \partial_x f\|_{2},
\end{align*}
where $\gamma>0$ is an absolute constant.
The desired result then follows from a maximum principle argument.
\end{proof}

\begin{thm} \label{thm7.4}
Consider \eqref{Dec12:1} with $N\ge 2$. Then for all $N\ge N_1=N_1(\|U^0\|_{\infty})$, we have
\begin{align*}
\sup_{t\ge 0} \| U(t) \|_{\infty} \le (1+\epsilon_1) \| U^0\|_{\infty},
\end{align*}
where $0<\epsilon_1 <10^{-2}$ is an absolute constant. More precisely the following hold.
(Below we write $X=O(Y)$ if $|X|\le CY$ and the constant $C$ depends only on $\|U^0\|_{\infty}$.)

\begin{enumerate}
\item For any $t>0$, we have
\begin{align*}
\frac d {dt} ( \|U(t) \|_2^2) = \frac d {dt} ( \|u(t) \|_2^2) \le 0,
\end{align*}
where $u$ solves \eqref{Dec12:2}. In particular for any $0\le t_1 <t_2 <\infty$,
we have
\begin{align*}
\| U(t_2) \|_2 \le \|U(t_1) \|_2.
\end{align*}

\item For $t\ge T_1= N^{-\frac 25}$, we have
\begin{align*}
\sup_{t\ge T_1} \| U(t) \|_{\infty} \le \sup_{t\ge T_1} \|u(t) \|_{\infty}
\le \|U^0\|_{\infty} +O(N^{-0.1}).
\end{align*}

\item For $0<t \le T_1$, we have
\begin{align*}
& \| u(t) -e^{t \partial_{xx}} Q_N U^0\|_{\infty} \le O(N^{-0.1} ), \\
& \sup_{0\le l \le N-1} | (e^{ t \partial_{xx} } Q_N U^0)(t, \frac l N) |
\le (1+\frac 12 \epsilon_1) \|U^0\|_{\infty};\\
& \|U(t) \|_{\infty} \le (1+\frac 12 \epsilon_1) \|U^0\|_{\infty}
+ O(N^{-0.1} ).
\end{align*}

\item Suppose $f:\, \mathbb T\to \mathbb R$ is continuous. 
If $(U^0)_l= f(\frac l N)$ for all $0\le l\le N-1$,  then
\begin{align*}
\sup_{t\ge 0} \|U(t) \|_{\infty} 
\le \|f\|_{\infty} + O(\omega_f(N^{-\frac 23}) ) + O(N^{-c}),
\end{align*}
where $c>0$ is an absolute constant, and $\omega_f$ is defined in \eqref{N18:1b.0}.

\item  Suppose $f:\, \mathbb T\to \mathbb R$ is $C^{\alpha}$-continuous (see
\eqref{N18:1b.1}) for some $0<\alpha<1$.  
If $(U^0)_l= f(\frac l N)$ for all $0\le l\le N-1$,  then
\begin{align*}
\sup_{t\ge 0}\|U(t)\|_{\infty}
\le \|f\|_{\infty} + O(N^{-c_1}),
\end{align*}
where $c_1>0$ is a constant depending only on $\alpha$. 
\end{enumerate}
Moreover we have the following result which shows the sharpness of our estimates above.
There exists a  function $f$: $\mathbb T \to \mathbb R$, continuous at all of $\mathbb T
\setminus \{x_*\}$ for some $x_*\in \mathbb T$ (i.e. continuous at all $x\ne x_*$) and  has the bound $\|f\|_{\infty} \le 1$
 such that 
the following hold:
for a sequence of even numbers $N_m \to \infty$, $t_m =N_m^{-2}$ and $x_m
=j_m/N_m$ with $0\le j_m\le N_m-1$ ,  if  $\tilde U_m(t) \in \mathbb R^{N_m}$ solves \eqref{Dec12:1}
with $\tilde U_m(0) =v_m \in R^{N_m}$ satisfying $(v_m)_j= f(\frac j {N_m})$ for all $0\le j \le N_m-1$.  Then
\begin{align*}
|\tilde U_m(t_m,x_m) | \ge 1+ \eta_*, \qquad\forall\, m\ge 1,
\end{align*}
where $\eta_*>0.001$ is an absolute constant.

\end{thm}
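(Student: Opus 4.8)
The plan is to construct an explicit almost-everywhere-continuous initial profile $f$ bounded by one whose discrete Fourier interpolant, after one parabolic-type half-step, overshoots the bound $1$ by a uniformly positive amount $\eta_*$ at a specific grid point. The mechanism is exactly the lack of positivity of the truncated heat kernel discussed in Remark \ref{Nov1re1} and Remark after Lemma \ref{Nov2}: the discrete multiplier $k\mapsto (1+(2\pi k\nu)^2 t)^{-1}$ with $t\sim \nu^{-2}N^{-2}$ is essentially $(1+(k/N)^2)^{-1}$, whose periodization/inverse DFT is a kernel $G_N$ that is \emph{not} nonnegative; convolving an $L^\infty$-bounded but oscillatory profile against the negative lobes of $G_N$ produces an $O(1)$ overshoot. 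First I would fix $t_m=N_m^{-2}$ (so that $\nu=1$ after the reduction used throughout Section \ref{S:DFT1}; for the Burgers flow \eqref{Dec12:1} the viscosity is already normalized to one) and observe that for such short times the nonlinear Burgers term contributes only $O(N_m^{-c})$ to $\tilde U_m(t_m)$: indeed by the local smoothing estimates in the proof of Theorem \ref{TmDec12:1} (Step 2) together with the Duhamel representation $u(t)=e^{t\partial_{xx}}Q_NU^0+\frac12\int_0^t e^{(t-s)\partial_{xx}}\partial_x Q_N(u^2)\,ds$, the integral term is bounded in $L^\infty$ by $C\,t^{1/2}\,\|\partial_x(Q_N f)\|_\infty\lesssim N_m^{-1}\cdot\text{(growth of }\|\partial_x Q_N f\|_\infty)$, which stays $o(1)$ provided $f$ is chosen with mild (say $C^{1/2-}$ away from $x_*$) regularity. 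So it suffices to exhibit the overshoot for the \emph{linear} flow $e^{t_m\partial_{xx}}$ applied to $Q_NU^0$ evaluated at the grid.

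The core of the argument is a pointwise computation. Writing $\beta_m=4\pi^2 t_m=4\pi^2 N_m^{-2}$, the linear-flow grid values are
\begin{align*}
\bigl(e^{t_m\partial_{xx}}Q_NU^0\bigr)\Bigl(\tfrac{l}{N_m}\Bigr)
=\frac1{N_m}\sum_{j=0}^{N_m-1}f\Bigl(\tfrac{j}{N_m}\Bigr)\;
\operatorname{Re}\!\Bigl(\sum_{-\frac{N_m}2<k\le\frac{N_m}2} e^{-\beta_m k^2}\,e^{2\pi i k(l-j)/N_m}\Bigr),
\end{align*}
which is a discrete convolution $f(\cdot)\ast H_{N_m}$ against the kernel $H_{N_m}(x)=\operatorname{Re}\sum_{-\frac{N_m}2<k\le\frac{N_m}2}e^{-\beta_m k^2}e^{2\pi i kx}$. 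Since $\beta_m k^2=(2\pi k/N_m)^2$ for $|k|\le N_m/2$, the kernel $H_{N_m}$ is, up to $O(N_m^{-1})$ errors controlled via Poisson summation, a rescaled copy of the fixed real-analytic profile $h(y)=\operatorname{Re}\int_{-1/2}^{1/2}e^{-(2\pi\xi)^2}e^{2\pi i\xi y}\,d\xi$ — more precisely $H_{N_m}(x)\approx N_m\,h(N_m x)$ near $x=0$ and decays rapidly away from the integers. The key analytic fact, which I would verify by a short rigorous numerical integration exactly as in Lemma \ref{2015lem2.12}, is that $h$ changes sign: $\int_{\mathbb R}h=1$, $h(0)>0$, but $h$ has a negative trough on some interval $[a,b]$ with $a,b$ absolute. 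Then I would choose $f$ to be (a periodization of) $f(y)=-\operatorname{sgn}\!\bigl(h(N_{m}\,(y-x_m))\bigr)$ on the relevant $O(N_m^{-1})$-neighbourhood of a chosen grid point $x_m=j_m/N_m$ and $f\equiv 0$ elsewhere — this makes $f$ bounded by $1$, continuous except at the countably many sign-change points which we compress into a single limit point $x_*$ by a suitable diagonal/telescoping construction across the sequence $N_m$. With this choice $(f\ast H_{N_m})(x_m)=\tfrac1{N_m}\sum_j |H_{N_m}(x_m-\tfrac{j}{N_m})|$ restricted to the support, which equals $\int |h| $ over a macroscopic window, i.e. strictly exceeds $\int h=1$ by an absolute constant $2\eta_*$; shrinking $\eta_*$ to absorb the $O(N_m^{-c})$ nonlinear and aliasing errors gives $|\tilde U_m(t_m,x_m)|\ge 1+\eta_*$ with $\eta_*>0.001$.

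I expect the main obstacle to be the \textbf{construction of a single profile $f$ that works simultaneously for the whole sequence $N_m$ while being continuous away from one point $x_*$}: for each individual $N_m$ the bad profile is a specific $\pm1$ step function with $O(N_m)$ jumps, and one must interleave these at geometrically separated scales (e.g. taking $N_{m+1}\gg N_m$ and placing the $m$-th pattern in a dyadic annulus around $x_*$) so that the resulting $f$ is genuinely a function, has $\|f\|_\infty\le 1$, and is continuous everywhere except at $x_*$. This is the same kind of lacunary/telescoping construction used for classical Lebesgue-constant lower bounds; the bookkeeping is delicate but routine. The secondary obstacle is making the approximations $H_{N_m}(x)\approx N_m h(N_m x)$ and the Burgers-nonlinearity bound $O(N_m^{-c})$ fully quantitative with constants independent of $m$ — both follow from Poisson summation and the smoothing estimates already established in the proofs of Theorems \ref{TmDec12:1} and \ref{thm7.4}, so no new machinery is needed, only careful tracking of the $\nu=1$ normalization and of the dependence on $\|\partial_x f\|_2$ (which, for the step-function construction, one replaces by an $H^s$, $s<1/2$, or $C^{1/2-\epsilon}$ quantity that stays uniformly bounded).
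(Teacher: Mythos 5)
Your overall strategy coincides with the paper's: reduce to the linear flow at time $t_m\sim N_m^{-2}$, observe that the grid-restricted discrete $\ell^1$ mass of the frequency-truncated heat kernel exceeds $1$ by an absolute constant (the paper's quantity $A(k_0)=\sum_j|\beta_j|$ in Theorem \ref{N18:1}, verified by rigorous numerics as in Lemma \ref{2015lem2.12}), choose $f$ to match the signs of the kernel in an $O(N_m^{-1})$ window around a point $x_m$, and interleave the patterns at points $y_m\to x_*$ so that $f$ is continuous off $x_*$ (this is exactly the construction in Theorem \ref{N18:1a}, which the paper's Step 4 invokes). Two remarks on constants: the paper only verifies the excess for $k_0=2N\sqrt{t}=1$, i.e.\ $t_m=\tfrac14 N_m^{-2}$, so with the statement's $t_m=N_m^{-2}$ your profile $h$ corresponds to $k_0=2$ and the numerical verification must be redone; and note that the paper's sign pattern $\operatorname{sgn}(c_j)$, $|j|\le 3$, is constant, so only a bump of $7$ grid values per scale is needed, which simplifies the telescoping considerably compared with taking $\operatorname{sgn}(h(N_m(\cdot-x_m)))$ over a whole macroscopic window.

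There is, however, one genuine gap: your bound on the nonlinear Duhamel term by $C\,t^{1/2}\|\partial_x(Q_Nf)\|_\infty$ does not close for the initial data you must use. The construction forces $f$ to take the value $1$ at a block of grid points and the value $0$ (or $\operatorname{sgn}(c_j)$) at the adjacent grid points of the same window, i.e.\ $f$ oscillates by $O(1)$ over distance $O(N_m^{-1})$ near $x_m$; hence $\|f\|_{C^{\alpha}}\gtrsim N_m^{\alpha}$ and there is no choice of the interpolation making the $C^{1/2-}$ norm uniformly bounded, so $t^{1/2}\|\partial_xQ_Nf\|_\infty$ is $O(1)$ rather than $o(1)$. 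The fix is to use the derivative-free estimate already established in Step 2 of the paper's proof of this theorem,
\begin{align*}
\| u(t)-e^{t\partial_{xx}}Q_NU^0\|_{L^\infty}\lesssim t^{\frac14}\bigl(\|U^0\|_2^4+\|U^0\|_2^3+\|U^0\|_2\|U^0\|_\infty\bigr),
\end{align*}
which places the derivatives on $u(s)$ for $s>0$ via parabolic smoothing ($\|\partial_xu(s)\|_2\lesssim s^{-1/2}$) and requires only $\|U^0\|_2\le\|U^0\|_\infty\le1$; at $t=t_m=N_m^{-2}$ it yields an $O(N_m^{-1/2})$ error, which is absorbed into $\eta_*$.
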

\begin{proof}
We proceed in several steps.

Step 1.  We note that statement (1) follows from the same proof as in Theorem \ref{TmDec12:1}.

Step 2. Local in time estimate. To establish the remaining results, we first perform a local in time estimate. First we show that for some $T_0=T_0(\|U^0\|_2)$ sufficiently small, we have 
contraction using the norm
\begin{align*}
\|u \|_{L_t^{\infty} L_x^2 ([0,T_0]\times \mathbb T)}
+ \| t^{\frac 12} \partial_x u \|_{L_t^{\infty} L_x^2([0,T_0]\times \mathbb T)}.
\end{align*}
Indeed by using the estimate
\begin{align*}
\| |\partial_x|^{\frac 12} (Q_N-\op{Id}) (u^2) \|_{L_x^2}
& \lesssim N^{-\frac 12} \| \partial_x u \|_{L_x^2} \| u\|_{L_x^{\infty}} \notag \\
& \lesssim \| \partial_x u \|_{L_x^2} \cdot \|u \|_{L_x^2},
\end{align*}
we have 
\begin{align*}
\| t^{\frac 12} \partial_x u \|_{L_t^{\infty} L_x^2([0,T_0]\times
\mathbb T)}
& \lesssim \| u(0) \|_2 + T_0^{\frac 14} \| t^{\frac 12} \partial_x u\|_{L_t^{\infty}
L_x^2([0,T_0]\times \mathbb T)} 
\cdot \|u\|_{L_t^{\infty} L_x^2}. 
\end{align*}
Thus for $T_0=T_0(\|U^0\|_{2})$ sufficiently small, we have
\begin{align*}
\|u \|_{L_t^{\infty} L_x^2 ([0,T_0]\times \mathbb T)}
+ \| t^{\frac 12} \partial_x u \|_{L_t^{\infty} L_x^2([0,T_0]\times \mathbb T)}
\lesssim \| U^0 \|_2.
\end{align*}
Now for any $0<T\le T_0$, we have
\begin{align*}
\| u(T) -e^{T\partial_{xx}} Q_N U^0\|_{L_x^{\infty}}
& \lesssim \| t^{\frac 12} \partial_x u \|_{L_t^{\infty} L_x^2 ([0,T_0]\times
\mathbb T)} \cdot \| u\|_{L_t^{\infty}L_x^2([0,T_0]\times \mathbb T)}  \notag \\
& \lesssim \| U^0\|_2^2.
\end{align*}
For $0<t \le T_0$, we write 
\begin{align*}
u(t) = e^{t \partial_{xx} }Q_N U^0 +h_1=: \Theta(t) +h_1,
\end{align*}
where $\| h_1 \|_{L_t^{\infty} L_x^{\infty}} \lesssim \|U^0 \|_2^2$. Note that
both $\Theta$ and $h_1$ are spectrally localized to $\{ |k|\le N \}$. It follows that
for $0<t\le T_0$, 
\begin{align*}
\| Q_N ( u(t)^2) \|_{L_x^2(\mathbb T)}
& \lesssim \| Q_N ( h_1^2) \|_{L_x^2(\mathbb T)}
+ \| Q_N (h_1 \Theta(t) ) \|_{L_x^2(\mathbb T)} + \| Q_N ( \Theta(t)^2) \|_{L_x^2(\mathbb T)} \notag \\
& \lesssim  \|U^0\|_2^4 + \|U^0\|_2^2  \| \Theta(t) \|_{L_x^2(\mathbb T)}
+ \| e^{t \Delta_h} U^0 \|_{2} \| e^{t\Delta_h} U^0\|_{\infty} \notag \\
& \lesssim  \| U^0\|_2^4 + \|U^0\|_2^3 + \|U^0\|_2 \|U^0\|_{\infty}.
\end{align*}
By using this improved estimate, we then obtain for any $0<T\le T_0$,
\begin{align*}
\| u(T) -e^{T\partial_{xx}} Q_N U^0\|_{L_x^{\infty}}
\lesssim T^{\frac 14} 
\Bigl( \| U^0\|_2^4 + \|U^0\|_2^3 + \|U^0\|_2 \|U^0\|_{\infty} \Bigr).
\end{align*}

Step 3. The regime $t\ge T_1=N^{-\frac 25}$.  Clearly
\begin{align*}
\| u(T_1) \|_{\infty} &\le \| e^{T_1 \partial_{xx} } Q_N U^0 \|_{\infty} + O(N^{-0.1}) \notag \\
& \le \|U^0\|_{\infty} + O(N^{-0.1}),
\end{align*}
where we have used the fact that
\begin{align*}
\sum_{|k|\ge \frac N2} e^{-(2\pi k)^2 T_1} \le O(N^{-10}).
\end{align*}
By Step 2 we also have
\begin{align*}
\| \partial_x u (T_1) \|_2  = O(N^{-\frac 25}).
\end{align*}
By Theorem \ref{TmDec12:1}, it follows that
\begin{align*}
\sup_{t\ge T_1} \|U(t)\|_{\infty}
\le \sup_{t\ge T_1} \|u(t) \|_{\infty} 
\le \|U^0\|_{\infty} + O(N^{-0.1}).
\end{align*}

Step 4. The regime $t<T_1$. Note here that our nonlinear solution is dominated by the linear
part. The situation is then similar to that in Theorem \ref{N21_5} with minor changes.
We omit the details.
\end{proof}

\begin{rem}
We should mention that by using the machinery developed above, it is also possible to give a complete
rigorous analysis of the fully discrete scheme
\begin{align*}
\frac {U^{n+1}-U^n} {\tau} +\frac 12 \partial_h( (U^n)^{\cdot 2} )= \Delta_h U^{n+1},
\quad n\ge 0.
\end{align*}
Yet another variant is
\begin{align*}
\frac {U^{n+1}-U^n} {\tau} +U^n \partial_h U^n= \Delta_h U^{n+1},
\quad n\ge 0,
\end{align*}
where $U^n \partial_h U^n$ takes point-wise product of $U^n$ and $\partial_h U^n$.
We shall address these and similar other schemes elsewhere.
\end{rem}

\section{Two dimensional Navier-Stokes}

Consider on the torus $\mathbb T^2 = [0,1)^2$ the two dimensional Navier-Stokes
system expressed in the vorticity form:
\begin{align} \label{Dec17:0}
\begin{cases}
\partial_t \omega + \Pi_N( u \cdot \nabla \omega) = \Delta \omega, \\
\omega \Bigr|_{t=0} = \omega^0 = \Pi_N f,
\end{cases}
\end{align}
where $f\in L^{\infty}(\mathbb T^2)$ and has mean zero.
Here and below we assume $\omega$ has mean zero which is clearly preserved in time. The
velocity $u$ is connected to the vorticity $\omega$ through the Biot-Savart law:
$u=\nabla^{\perp} \Delta^{-1} \omega$.

\begin{lem} \label{lmDec17:1}
Let the dimension $d\ge 1$. 
Suppose $g: \, \mathbb T^d \to \mathbb R$  is in $C^{\gamma}(\mathbb T^d)$ for some
$0<\gamma<1$. Then for all $N\ge 2$, it holds that
\begin{align*}
\| (\op{Id} - \Pi_N) g \|_{\infty} \le C_1\cdot N^{-\gamma} (\log N)^d \| g \|_{C^{\gamma}},
\end{align*}
where $C_1>0$ depends only on ($d$, $\gamma$).
\end{lem}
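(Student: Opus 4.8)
\textbf{Proof proposal for Lemma \ref{lmDec17:1}.}

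The plan is to reduce the $d$-dimensional estimate to a tensorized version of the classical one-dimensional fact that the Dirichlet projection of a H\"older function converges at the H\"older rate (up to a logarithm coming from the operator norm of $\Pi_N$ on $L^\infty$). First I would write $\op{Id}-\Pi_N = \op{Id} - \Pi_{N,k_1}\cdots\Pi_{N,k_d}$ and telescope:
\begin{align*}
\op{Id}-\Pi_N^{(d)} = \sum_{j=1}^d \Pi_{N,k_1}\cdots \Pi_{N,k_{j-1}} (\op{Id}-\Pi_{N,k_j}),
\end{align*}
so that it suffices to bound each summand. Since each $\Pi_{N,k_i}$ is convolution in the $i$-th variable with the one-dimensional Dirichlet kernel $D_N$, the product $\Pi_{N,k_1}\cdots\Pi_{N,k_{j-1}}$ has $L^\infty\to L^\infty$ operator norm $\lesssim (\log(N+2))^{j-1} \le (\log(N+2))^{d}$ by the first Proposition of Section 2 (applied in each coordinate). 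Hence the whole estimate reduces to showing, for each fixed coordinate direction, that $\|(\op{Id}-\Pi_{N,k_j}) g\|_\infty \lesssim_{d,\gamma} N^{-\gamma}\|g\|_{C^\gamma}$, where $\Pi_{N,k_j}$ acts only in one variable.

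The key one-dimensional step is the following: if $h:\mathbb T\to\mathbb R$ lies in $C^\gamma(\mathbb T)$, then $\|h - D_N * h\|_{L^\infty(\mathbb T)} \lesssim_\gamma N^{-\gamma}\|h\|_{\dot C^\gamma}$. I would prove this by the standard device of exploiting the mean-zero cancellation of $D_N$ away from the origin together with a Dini/Jackson-type argument: write $(D_N*h)(x)-h(x) = \int_{\mathbb T} D_N(y)(h(x-y)-h(x))\,dy$, split the integral into $|y|\le 1/N$ and $|y|>1/N$, use $|D_N(y)|\lesssim N$ on the first region and $|D_N(y)|\lesssim 1/|y|$ on the second, and bound $|h(x-y)-h(x)|\le \|h\|_{\dot C^\gamma}|y|^\gamma$; the near piece contributes $\lesssim N\cdot N^{-1}\cdot N^{-\gamma} = N^{-\gamma}$ and the far piece contributes $\lesssim \int_{1/N}^{1/2} |y|^{\gamma-1}\,dy \lesssim N^{-\gamma}$. (One can alternatively combine adjacent sign-alternating lobes of $D_N$ as in Proposition \ref{prop_Omegapp} to make the far-piece bound clean, but the crude $1/|y|$ bound already suffices.) Applying this with $h$ the one-variable slice of $g$ and noting $\|h\|_{\dot C^\gamma(\mathbb T)} \le \|g\|_{C^\gamma(\mathbb T^d)}$ uniformly in the frozen variables gives the claimed one-dimensional bound; multiplying by the $(\log(N+2))^d$ operator-norm factor from the telescoping and summing the $d$ terms yields the lemma with $C_1=C_1(d,\gamma)$.

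The main obstacle is a bookkeeping one rather than a deep one: making sure the operator-norm loss is genuinely only $(\log(N+2))^d$ and not worse, i.e. that when we apply the partial projections $\Pi_{N,k_1}\cdots\Pi_{N,k_{j-1}}$ to the function $(\op{Id}-\Pi_{N,k_j})g$, we really only pay the $L^\infty$ norm of a $(j-1)$-fold tensor Dirichlet kernel. This is where one must be careful that $\|(\op{Id}-\Pi_{N,k_j})g\|_{L^\infty(\mathbb T^d)}$ is controlled uniformly before applying the remaining projections — which is exactly what the one-dimensional H\"older estimate above, applied slicewise, provides. A minor secondary point is the endpoint behavior as $\gamma\to 1$, where the far-piece integral $\int_{1/N}^{1/2}|y|^{\gamma-1}dy$ stays $O(N^{-\gamma})$ with constant blowing up like $1/(1-\gamma)$, consistent with the constant $C_1$ being allowed to depend on $\gamma$; since the lemma only claims $0<\gamma<1$, this is harmless.
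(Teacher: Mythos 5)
Your overall strategy (telescope the tensor projection, reduce to one dimension, pay $\|D_N\|_{L^1}\sim\log N$ per remaining coordinate) is structurally sound and genuinely different from the paper's proof, but the key one-dimensional step is wrong as stated and as argued. The far-piece computation does not close: with the crude bound $|D_N(y)|\lesssim |y|^{-1}$ and $|h(x-y)-h(x)|\le \|h\|_{\dot C^\gamma}|y|^\gamma$ you get
\begin{align*}
\int_{1/N}^{1/2} |y|^{\gamma-1}\,dy \;=\; \tfrac1\gamma\bigl(2^{-\gamma}-N^{-\gamma}\bigr) \;=\; O_\gamma(1),
\end{align*}
which is $O(1)$, not $O(N^{-\gamma})$ (the integrand is integrable at the origin since $\gamma>0$, so removing the interval $[0,1/N]$ gains nothing). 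To get any decay from the far region you \emph{must} exploit the oscillation of $D_N$ — the half-period shift $y\mapsto y+\tfrac1{2N+1}$ or the lobe-pairing you mention parenthetically — and even then the sharp classical (Lebesgue) estimate is $\|h-D_N*h\|_{L^\infty(\mathbb T)}\lesssim_\gamma N^{-\gamma}\log N\,\|h\|_{C^\gamma}$, \emph{with} the logarithm; the log-free bound you assert is false for the class $C^\gamma$, $0<\gamma<1$. Fortunately this does not kill your approach: the $j$-th telescoping term then contributes $(\log N)^{j-1}\cdot N^{-\gamma}\log N\le N^{-\gamma}(\log N)^d$, so the lemma still follows once the one-dimensional step is replaced by the correct Lebesgue/Jackson estimate. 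As written, however, the proof is incomplete.

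For comparison, the paper avoids real-space kernel estimates on $D_N$ entirely: it splits $g$ with a \emph{smooth} frequency cut-off, $(\op{Id}-\Pi_N)g=(\op{Id}-\Pi_N)P_Ng+P_{\gtrsim N}g$, bounds $\|P_{\gtrsim N}g\|_\infty\lesssim N^{-\gamma}\|g\|_{C^\gamma}$ by the Littlewood--Paley characterization of $C^\gamma$ (no log, since the projector is smooth), and writes $(\op{Id}-\Pi_N)P_Ng=T_{>N,1}(\op{Id}-\Delta)P_Ng$, using the sharp bound $\|K_{>N,1}\|_{L^1}\lesssim N^{-2}(\log N)^d$ from Theorem \ref{thm2.18_00} together with $\|(\op{Id}-\Delta)P_Ng\|_\infty\lesssim N^{2-\gamma}\|g\|_{C^\gamma}$. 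That route localizes the entire logarithmic loss in the already-established kernel estimate, which is why it goes through in one line; your route requires re-proving a (log-weighted) one-dimensional Jackson-type theorem from scratch.
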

\begin{proof}
By using  smooth Fourier cut-offs, we can rewrite
\begin{align*}
(\op{Id}-\Pi_N) g = (\op{Id} - \Pi_N) P_N g  + P_{\gtrsim N} g,
\end{align*}
where $P_N$, $P_{\gtrsim N}$ are smooth Fourier projectors localized
to the regimes $\{ |k| \sim N \}$ and $ \{ |k|\gtrsim N \}$ respectively.
The result then follows
from Theorem \ref{thm2.18_00} together with the fact that 
\begin{align*}
\| (\op{Id}-\Delta) P_N g \|_{\infty} \lesssim N^{2-\gamma} \| g\|_{C^{\gamma}}.
\end{align*}
\end{proof}

\begin{thm}
Let  $\omega$ be the solution to \eqref{Dec17:0}. 
We have for all $N\ge 2$, 
\begin{align*}
\sup_{t\ge 0} \|\omega(t) \|_{\infty}
\le \|\omega^0\|_{\infty} + \alpha \cdot N^{-\frac 12}, 
\end{align*}
where  $\alpha>0$ depends only on 
$\|f\|_{\infty}$. 
\end{thm}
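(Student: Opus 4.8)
The plan is to mimic the Burgers analysis from Section 7 (Theorem for \eqref{Dec2_0}), replacing the scalar $L^2$ energy estimate by the $L^2$ enstrophy estimate for the vorticity and using the Biot--Savart law to control the velocity. First I would let $\omega$ solve \eqref{Dec17:0}, note that $\int_{\mathbb T^2}\omega\,dx=0$ is preserved in time, and derive the enstrophy bound
\begin{align*}
\frac 12 \frac d{dt}\|\omega(t)\|_2^2 = -\|\nabla\omega\|_2^2 \le -4\pi^2\|\omega\|_2^2,
\end{align*}
where the nonlinear term drops because $\Pi_N$ is self-adjoint and $\int (u\cdot\nabla\omega)\,\omega\,dx = \frac 12\int u\cdot\nabla(\omega^2)\,dx = 0$ since $\nabla\cdot u=0$ (here $\Pi_N\omega=\omega$ so $\langle \Pi_N(u\cdot\nabla\omega),\omega\rangle = \langle u\cdot\nabla\omega,\omega\rangle$). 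This gives exponential decay $\|\omega(t)\|_2 \le e^{-4\pi^2 t}\|\omega^0\|_2 \le e^{-4\pi^2 t}\|f\|_\infty$.

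Next I would bootstrap via the mild (Duhamel) formulation $\omega(t) = e^{t\Delta}\omega^0 - \int_0^t e^{(t-s)\Delta}\Pi_N(u\cdot\nabla\omega)\,ds$, together with parabolic smoothing for the heat semigroup on $\mathbb T^2$ and the Calderón--Zygmund bound $\|u(t)\|_{C^{\gamma}} \lesssim_\gamma \|\omega(t)\|_{C^{\gamma}}$ (plus $\|u\|_\infty\lesssim \|\omega\|_2+\|\omega\|_\infty$ type estimates from Biot--Savart), to obtain for $t>0$
\begin{align*}
\|\omega(t)\|_{\infty} + \||\nabla|^{\gamma}\omega(t)\|_{\infty} \le C_1\, t^{-\epsilon_0}\, e^{-c_1 t}, \qquad \|\nabla\omega(t)\|_{\infty}\le C_1\, t^{-\frac 12-\epsilon_0}e^{-c_1 t},
\end{align*}
with $C_1$ depending only on $\|f\|_\infty$, $c_1>0$ absolute and $0<\epsilon_0\ll 1$ absolute (the pre-factor $t^{-\epsilon_0}$ coming, as in the Burgers proof, from starting the bootstrap at $\|\omega(0)\|_p\lesssim\|f\|_\infty$ for large finite $p$ rather than at $L^\infty$). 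Combining with Lemma \ref{lmDec17:1} (with $d=2$), this yields
\begin{align*}
\|(\op{Id}-\Pi_N)(u\cdot\nabla\omega)\|_{\infty} \le C_2\, N^{-\frac 12}\, t^{-\epsilon_1} e^{-c_1 t}, \qquad \forall\, t>0,
\end{align*}
where $C_2>0$ depends only on $\|f\|_\infty$ and $0<\epsilon_1<1$; a small loss in the $N$-power (here $N^{-1/2}$) is acceptable and one could squeeze more out of it with sharper Hölder exponents.

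Finally I would rewrite the equation in the form $\partial_t\omega + u\cdot\nabla\omega = \Delta\omega + F$ with $\|F(t)\|_{\infty}\le C_2 N^{-1/2}t^{-\epsilon_1}e^{-c_1 t}$, and run the transport--diffusion maximum principle exactly as in the Burgers case: for fixed $t_0>0$ consider $g(t,x) = \omega(t,x) - (\|\omega^0\|_\infty + a(t))$ with $a(t) = C_2 N^{-1/2}\int_0^t s^{-\epsilon_1}e^{-c_1 s}\,ds$ (a convergent integral, bounded by $\alpha N^{-1/2}$ with $\alpha$ depending only on $\|f\|_\infty$), which satisfies $\partial_t g + u\cdot\nabla g \le \Delta g$ for $t\ge t_0$; the parabolic maximum principle gives $\max_x g(t,x)\le \max_x g(t_0,x)$, and letting $t_0\to 0^+$ yields $\sup_{t}\|\omega(t)\|_\infty \le \|\omega^0\|_\infty + \alpha N^{-1/2}$, with the lower bound handled symmetrically. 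The main obstacle I anticipate is organizing the smoothing/bootstrap estimates in Step 2 cleanly for the vorticity equation in $2$D (the velocity being one derivative smoother than the vorticity via Biot--Savart makes the nonlinearity effectively first order, which is exactly what makes the scheme work, but the Hölder/$L^p$ smoothing bookkeeping needs care); the maximum-principle step itself is routine once $F$ is controlled.
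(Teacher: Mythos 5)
Your proposal is correct and follows essentially the same route as the paper: the $L^2$ enstrophy decay (nonlinearity vanishing by incompressibility), parabolic smoothing to get an exponentially decaying Hölder bound on $u\cdot\nabla\omega$, Lemma \ref{lmDec17:1} to convert that into an $O(N^{-1/2})$ bound on $(\op{Id}-\Pi_N)(u\cdot\nabla\omega)$, and the transport–diffusion maximum principle with the time-integrated forcing, exactly as in the Burgers case. The only detail worth noting is that the Hölder exponent must be taken strictly above $1/2$ (the paper uses $C^{0.52}$) so that the $(\log N)^2$ factor from the lemma is absorbed into $N^{-1/2}$, which your remark about sharper Hölder exponents already anticipates.
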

\begin{proof}
First we compute the $L^2$-norm. Clearly
\begin{align*}
\frac 1 2 \frac d {dt} \int_{\mathbb T^2} \omega^2 dx 
= - \| \nabla \omega \|_2^2 \le -(2\pi)^2 \| \omega \|_2^2.
\end{align*}
Thus
\begin{align*}
\| \omega(t) \|_2 \le \| f\|_2 \cdot e^{-4\pi^2 t }, \qquad t\ge 0.
\end{align*}
Using this and further smoothing estimates, we obtain
\begin{align*}
\| u(t)\cdot \nabla \omega(t) \|_{C^{0.52}} \le  C_1 t^{-\epsilon_1} e^{-c_1 t}, \qquad\forall\,
t>0,
\end{align*}
where $C_1>0$ depends only on $\|f \|_{\infty}$, and $0<\epsilon_1<1$, $c_1>0$ are
absolute constants. By Lemma \ref{lmDec17:1},  it follows that 
\begin{align*}
\| (\op{Id} - \Pi_N) (u(t) \cdot \nabla \omega(t) ) \|_{\infty}
\le C_2 N^{-\frac 12} t^{-\epsilon_1} e^{-c_1t}, \qquad\forall\, t>0,
\end{align*}
where $C_2>0$ depends only on $\|f\|_{\infty}$.  Our desired result then follows from
a maximum principle argument similar to that in Burger's case.
\end{proof}
Consider
\begin{align} \label{Dec17:2}
\begin{cases}
\frac{\omega^{n+1}-\omega^n} 
{\tau}  + \Pi_N ( u^n \cdot \nabla \omega^n) = 
\Delta \omega^{n+1}, \quad (t,x) \in (0,\infty) \times \mathbb T^2; \\
\omega\Bigr|_{t=0} = \omega^0= \Pi_N f,
\end{cases}
\end{align}
where $f \in L^{\infty}(\mathbb T^2)$ and has mean zero. 

\begin{thm}
Let  $\omega^n$ be the solution to \eqref{Dec17:2}. 
We have for all $0<\tau<\tau_0=\tau_0(\|f\|_{\infty})$ and $N\ge 2$, 
\begin{align*}
\sup_{n\ge 0} \|\omega^n \|_{\infty}
\le \|\omega^0\|_{\infty} + \alpha_1 \cdot N^{-\frac 12} +\alpha_2 \tau^{0.4}, 
\end{align*}
where $\alpha_1, \alpha_2>0$ depend only on  $\|f\|_{\infty}$. 
\end{thm}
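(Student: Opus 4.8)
The strategy mirrors the forward-Euler Burgers case treated just above (the time-discretized system \eqref{Dec2_2}), but now in two dimensions with the transport nonlinearity $u^n\cdot\nabla\omega^n$. The plan is to establish uniform-in-time smoothing bounds for the discrete flow $\omega^n$, use them to show that the aliasing/truncation error $(\op{Id}-\Pi_N)(u^n\cdot\nabla\omega^n)$ and the one-step time-discretization defect are both small, rewrite \eqref{Dec17:2} as a perturbed transport--diffusion scheme, and close with a discrete maximum principle argument. Concretely: first, pass to the zero-mean vorticity (mean is preserved) and run the basic $L^2$ energy estimate $\frac12\frac{d}{dt}$-analogue in discrete time, namely pair \eqref{Dec17:2} with $\omega^{n+1}$ to get $(1+4\pi^2\tau)\|\omega^{n+1}\|_2^2\le\|\omega^n\|_2^2$ once $\tau$ is small enough that the nonlinear commutator term is absorbed (here one uses that $\nabla\cdot u=0$, so $\int u^n\cdot\nabla\omega^n\,\omega^n=0$, and only the difference $\omega^{n+1}-\omega^n$ survives, controlled by $\|\nabla\omega^{n+1}\|_2\|\omega^{n+1}-\omega^n\|_2$ plus a $\|\nabla u^n\|_\infty$ term). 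This yields exponential decay $\|\omega^n\|_2\lesssim\|f\|_\infty e^{-\beta n\tau}$.

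Second, bootstrap: a short local-in-time analysis near $n=0$ (choosing $n_0$ with $n_0\tau\sim t_0$ a small constant depending only on $\|f\|_\infty$) gives discrete smoothing bounds $\sqrt{n\tau}\,\|\nabla_h\omega^n\|_2\lesssim 1$ and then, by iterating the discrete Duhamel formula $\omega^n=A^{n}\omega^0+\tau\sum A^{n-j}(-\Pi_N(u^j\cdot\nabla\omega^j))$ with $A=(\op{Id}-\tau\Delta)^{-1}$, higher derivative bounds of the form $\|\omega^n\|_{C^{0.52}}\lesssim (n\tau)^{-\epsilon_1}e^{-c_1 n\tau}$ together with $\|u^n\|_{C^1}$, $\|\nabla\omega^n\|_\infty$ bounds with the same decay structure — exactly as in the Burgers proof, using that $u=\nabla^\perp\Delta^{-1}\omega$ gains one derivative. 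Third, feed these into Lemma \ref{lmDec17:1} (with $d=2$, $\gamma=0.52$) to obtain
\begin{align*}
\tau\,\|(\op{Id}-\Pi_N)(u^n\cdot\nabla\omega^n)\|_\infty \le C\,\tau\,N^{-1/2}(\log N)^2 (n\tau)^{-\epsilon_1}e^{-c_1 n\tau},
\end{align*}
and also control the time-discretization defect $\tau\|u^n\cdot\nabla(\omega^{n+1}-\omega^n)\|_\infty\lesssim\tau^2(n\tau)^{-3/2-\epsilon}$ plus the first-step error $\|\omega^1\|_\infty\le\|\omega^0\|_\infty+C\tau^{0.4}\|f\|_\infty^2$; summing the defect bound over $n$ produces the $\tau^{0.4}$ contribution while summing the aliasing bound produces the $N^{-1/2}$ contribution.

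Fourth, rewrite the scheme at each step as
\begin{align*}
(\op{Id}-\tau\Delta)\omega^{n+1} = \omega^n + \tau\, u^n\cdot\nabla\omega^{n+1} + \tau\,u^n\cdot\nabla(\omega^n-\omega^{n+1}) + \tau\,(\op{Id}-\Pi_N)(u^n\cdot\nabla\omega^n),
\end{align*}
which has the form of a backward-Euler step for $\partial_t\omega+u\cdot\nabla\omega=\Delta\omega$ plus a small forcing; the elliptic operator $(\op{Id}-\tau\Delta)^{-1}$ is a positive convolution kernel of unit mass, and $u^n\cdot\nabla(\cdot)$ is a first-order transport term, so a discrete maximum principle gives $\|\omega^{n+1}\|_\infty\le\|\omega^n\|_\infty + (\text{the small defects})$; iterating in $n$ and using the summability of the error terms yields the claimed bound. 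The main obstacle I expect is Step two: obtaining the uniform-in-time $C^{0.52}$ (and $\|\nabla\omega^n\|_\infty$, $\|u^n\|_{C^1}$) bounds for the \emph{fully discrete} scheme with the transport nonlinearity requires combining the discrete smoothing estimates (as in the $\sqrt{n\tau}\|\partial_h U^n\|_2$ arguments of Section \ref{S:DFT1}) with a careful bootstrap that keeps the exponential decay and handles the loss of one derivative in $u=\nabla^\perp\Delta^{-1}\omega$; once these a priori bounds are in hand, the aliasing estimate via Lemma \ref{lmDec17:1} and the maximum principle are routine adaptations of the Burgers argument.
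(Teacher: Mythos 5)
Your proposal follows essentially the same route as the paper: discrete $L^2$ decay via pairing with $\omega^{n+1}$, a bootstrap to discrete smoothing bounds with exponential decay, Lemma \ref{lmDec17:1} for the aliasing error, the one-step defect bound summing to $\tau^{0.4}$, and the same rewriting of the scheme followed by a maximum principle iteration. One small simplification you missed: by incompressibility the term $\int u^n\cdot\nabla(\omega^n-\omega^{n+1})\,(\omega^{n+1}-\omega^n)\,dx$ vanishes identically, so no $\|\nabla u^n\|_\infty$ contribution is needed in the $L^2$ step (unlike the Burgers case) and the smallness condition on $\tau$ only involves $\|u^n\|_\infty^2$.
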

\begin{proof}
Rewrite \eqref{Dec17:2} as
\begin{align} \label{Dec17:3}
\omega^{n+1} = A \omega^n - \tau A \Pi_N  (  u^n \cdot \nabla \omega^n),
\end{align}
where $A=(\op{Id}-\tau \Delta)^{-1}$.  

We shall proceed in several steps.

Step 1. $L^2$ estimate. Multiplying both sides of \eqref{Dec17:2} by $\omega^{n+1}$ and
integrating, we obtain
\begin{align*}
\frac 1 {2\tau} 
( \| \omega^{n+1}\|_2^2 - \| \omega^n \|_2^2
+ \| \omega^{n+1} -\omega^n \|_2^2)
+  \| \nabla \omega^{n+1}\|_2^2
+ N_1 \le 0,
\end{align*}
where
\begin{align*}
N_1 & = \int \Pi_N (u^n \cdot \nabla \omega^n) \omega^{n+1} 
= \int u^n \cdot\nabla \omega^n ( \omega^{n+1} -\omega^n )   \notag \\
& =
\int (u^n \cdot\nabla \omega^{n+1}) (\omega^{n+1}-\omega^n) 
+ \int u^n  \cdot \nabla(\omega^n - \omega^{n+1})  (\omega^{n+1} -\omega^n) dx.
\end{align*}
The second term above vanishes thanks to incompressibility.  Then 
\begin{align*} 
|N_1|&\le \|u^n\|_{\infty} \cdot \| \nabla \omega^{n+1} \|_2
\| \omega^{n+1} -\omega^n \|_2  \notag \\
&\le \; \frac 12 \| \nabla \omega^{n+1} \|_2^2
+ \frac 1 {2}
\| u^n \|_{\infty}^2 
\cdot \|\omega^{n+1}-\omega^n\|_2^2.
\end{align*}
Thus if 
\begin{align} \notag
\frac 1 {\tau}
\ge 
\| u^n \|_{\infty}^2,
\end{align}
then 
\begin{align*}
(1+ 8\pi^2 \tau) \| \omega^{n+1}\|_2^2
\le \|\omega^{n+1} \|_2^2+
  \tau \| \nabla \omega^{n+1} \|_2^2 \le \|\omega^n\|_2^2.
\end{align*}

Step 2. Induction.

For $n=0$, we have
\begin{align*}
\|u^0 \|_{\infty} \le C_1 \|\omega^0\|_4 \le C_2 \| f\|_4,
\end{align*}
where $C_1>0$, $C_2>0$ are absolute constants.  Clearly if 
\begin{align*}
\frac 1 {\tau} \ge  C_2 \|f\|_4,
\end{align*}
then 
\begin{align*}
(1+8\pi^2 \tau)\|\omega^1 \|_2^2 \le \|\omega^0\|_2^2.
\end{align*}
Now for $n\ge 0$, we assume 
\begin{align*}
(1+8\pi^2 \tau) \|\omega^{j+1} \|_2^2 \le \| \omega^{j} \|_2^2, \qquad \forall\, 
1\le j\le n.
\end{align*}
Then for $n+1$ by using \eqref{Dec17:3}, we have
\begin{align*}
\|\omega^{n+1}\|_4 
& \lesssim \| |\nabla|^{\frac 12} A \omega^n \|_2
+ \tau \| \Delta A (u^n \omega^n) \|_{\frac 43} \notag \\
& \lesssim \tau^{-\frac 14} \| f \|_2
+ \| u^n \|_4 \| \omega^n \|_2 \notag \\
& \le C_3 \tau^{-\frac 14} \| f \|_2 + C_4 \|f\|_2^2,
\end{align*}
where $C_3>0$, $C_4>0$ are absolute constants. Thus if
\begin{align*}
\frac 1 {\tau} \ge  C_3 \tau^{-\frac 14} \| f \|_2 + C_4 \|f\|_2^2,
\end{align*}
then clearly we have
\begin{align*}
(1+8\pi^2 \tau) \|\omega^{n+2} \|_2^2 \le \| \omega^{n+1} \|_2^2.
\end{align*}

This then completes the induction proof and we obtain  for sufficiently small $\tau$, 
\begin{align*}
& (1+8\pi^2 \tau) \|\omega^{n+1}\|_2^2 \le  \|\omega^n\|_2^2, \qquad \forall\, n\ge 0.
\end{align*}
It follows that
\begin{align*}
\|\omega^n\|_2 \le \gamma_1 e^{-\beta n\tau}, \qquad \forall\, n\ge 1,
\end{align*}
where $\beta>0$ is an absolute constant, and $\gamma_1>0$ depends only on $\|f\|_{\infty}$.

Step 3. Smoothing estimates. 
Let $t_0>0$ be chosen sufficiently
small (depending only on $\|f\|_{\infty}$) so that the nonlinear part is dominated by the linear part when $n\tau\le t_0$. 
Choose $1\le n_0 \in \mathbb Z$ such that $n_0 \tau \sim t_0$. By using
\eqref{Dec17:3} and discrete smoothing estimates, we have
\begin{align}
&\sup_{1\le n \le n_0}( (n\tau)^{1.6}
\| |\nabla|^{3.2} \omega^n \|_{20}+ (n\tau)^{0.9}\| |\nabla|^{1.8} \omega^n \|_{20}+ 
\|\omega^n\|_{20}) \le D_1,  \notag\\
& \sup_{n\ge n_0} \| \nabla^4 \omega^{n} \|_{\infty} \le D_1 e^{-\beta_1 n\tau}, \notag
\end{align}
where $D_1>0$ depends only on $\|f\|_{\infty}$, and $\beta_1>0$ is
an absolute constant. It follows that
\begin{align*}
\| |\nabla|^{1.8} \omega^n \|_{20}
\le D_2 \cdot (n\tau)^{-0.9} e^{-\beta_2 n\tau}, \qquad\forall\, n\ge 1,
\end{align*}
where $D_2>0$ depends only on $\|f \|_{\infty}$, and $\beta_2>0$ is an absolute
constant. By Lemma \ref{lmDec17:1},  we  obtain
\begin{align*}
\tau \| (\op{Id}-\Pi_N) ( u^n \cdot \nabla  \omega^n) \|_{\infty}
\le D_3 \cdot \tau \cdot N^{-\frac 12}
\cdot (n\tau)^{-0.9} e^{-\beta_2 n\tau}, \qquad\forall\, n\ge 1,
\end{align*}
where  $D_3>0$ depends only on $\|f\|_{\infty}$. 
By using \eqref{Dec17:2}, we obtain for $n\ge 1$, 
\begin{align*}
\tau \| u^n \cdot \nabla (\omega^{n+1} - \omega^n ) \|_{\infty}
& \lesssim \tau^2 \| u^n \|_{\infty} \| \nabla ( \Pi_N ( u^n \cdot \nabla \omega^n ) )
\|_{\infty}
+ \tau^2 \|u^n\|_{\infty} \| \nabla^3 \omega^{n+1} \|_{\infty} \notag \\
& \le  D_4 \tau^2\cdot (n\tau)^{-1.6},
\end{align*}
where  $D_4>0$ depends only on $\|f\|_{\infty}$. 

Step 4. Time-global estimates.  
Now for $n\ge 1$ we rewrite the equation for $\omega^{n+1}$ as
\begin{align*}
(\op{Id} -\tau \Delta) \omega^{n+1}
=u^n +\tau u^n \cdot \nabla \omega^{n+1} +\tau u^n \cdot \nabla (\omega^n -\omega^{n+1})
+ \tau (\op{Id}-\Pi_N)(u^n \cdot \nabla  \omega^n).
\end{align*}
Clearly by a maximum principle argument and the estimates derived in earlier steps, we have
\begin{align*}
\|\omega^{n+1} \|_{\infty}
&\le \|\omega^n\|_{\infty} +
\tau \| u^n \cdot \nabla (\omega^{n+1}-\omega^n) \|_{\infty}
+\tau \| (\op{Id}-\Pi_N) (u^n \cdot \nabla \omega^n ) \|_{\infty}  \notag \\
&\le \|\omega^n \|_{\infty} +D_4 \cdot \tau^2 
\cdot (n\tau)^{-1.6}
+ D_3 \cdot \tau \cdot N^{-\frac 12}
\cdot (n\tau)^{-0.9} e^{-\beta_2n\tau}, \qquad\forall\, n\ge 1.
\end{align*}
Iterating in $n$,  we obtain
\begin{align*}
\| \omega^n \|_{\infty}
\le \|\omega^1 \|_{\infty}
+ D_5 \tau^{0.4}  +D_6 N^{-\frac 12}, \qquad\forall\, n\ge 1,
\end{align*}
where $D_5, D_6>0$ are constants depending only on $\|f\|_{\infty}$.  Finally
we observe that
\begin{align*}
\| \omega^1 \|_{\infty}
& \le \|\omega^0\|_{\infty}
+\| \tau  A \Pi_N \nabla  \cdot ( u^0 \omega^0) \|_{\infty} \notag \\
& \le \|\omega^0\|_{\infty} + C \tau^{0.4} \|f \|_{\infty}^2,
\end{align*}
where $C>0$ is an absolute constant. 
Our desired result then follows easily.
\end{proof}
\begin{rem}
We should mention that similar to the Burgers case, we can also develop the corresponding
analysis for the Fourier collocation method applied to the Navier-Stokes system and
many other similar fluid models. All these and related issues will be investigated elsewhere.
\end{rem}

\appendix
\section{Some auxiliary estimates}
\begin{lem} \label{aOc1}
Consider for $z>0$, 
\begin{align*}
F(z) = e^{-z} \int_0^{\infty}
e^{-zt} (t+\frac {t^2} 2)^{-\frac 12} dt.
\end{align*}
Then for $0<z\le \frac 14$, we have
\begin{align*}
F(z) = c_1-c_2 \log z + f_1(z),
\end{align*}
where $c_1, c_2>0$ are constants, and
\begin{align*}
& |f_1(z)| \lesssim z |\log z|, \quad |f_1^{\prime}(z)| \lesssim |\log z|.
\end{align*}
\end{lem}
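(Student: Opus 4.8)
\textbf{Proof plan for Lemma~\ref{aOc1}.}
The plan is to analyze $F(z)$ by isolating the logarithmic singularity coming from the behavior of the integrand near $t=0$, where $(t+\tfrac{t^2}{2})^{-1/2}\sim t^{-1/2}$ is integrable but the $e^{-zt}$ factor fails to decay, producing the $\log z$ term after integration over all of $(0,\infty)$. First I would split the integral at $t=1$ (any fixed cutoff works): write $F(z)=e^{-z}(I_1(z)+I_2(z))$ with $I_1(z)=\int_0^1 e^{-zt}(t+\tfrac{t^2}{2})^{-1/2}\,dt$ and $I_2(z)=\int_1^\infty e^{-zt}(t+\tfrac{t^2}{2})^{-1/2}\,dt$. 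The tail $I_2$ is the easier piece: there $(t+\tfrac{t^2}{2})^{-1/2}\le \sqrt{2}\,t^{-1}$, and one can write $I_2(z)=\int_1^\infty e^{-zt}t^{-1}\,dt+\int_1^\infty e^{-zt}\big((t+\tfrac{t^2}{2})^{-1/2}-t^{-1}\big)\,dt$. The first of these is the exponential integral $E_1(z)$, whose small-$z$ expansion $E_1(z)=-\gamma-\log z+O(z)$ is classical and contributes the $-c_2\log z$ term; the second has an integrand bounded by $Ct^{-2}$ uniformly in $z\in(0,\tfrac14]$, so it is a bounded $C^1$ function of $z$ with derivative controlled by $\int_1^\infty t^{-1}dt$-type bounds — wait, more carefully, differentiating in $z$ brings down a factor $-t$, so one gets $\int_1^\infty e^{-zt}t\cdot t^{-2}dt=\int_1^\infty e^{-zt}t^{-1}dt=O(|\log z|)$, which is exactly the claimed bound on $f_1'$.

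Next I would handle $I_1(z)$. On $[0,1]$ substitute or simply expand: $(t+\tfrac{t^2}{2})^{-1/2}=t^{-1/2}(1+\tfrac t2)^{-1/2}=t^{-1/2}+t^{-1/2}\big((1+\tfrac t2)^{-1/2}-1\big)$, where the correction $t^{-1/2}((1+\tfrac t2)^{-1/2}-1)$ is bounded (indeed $O(t^{1/2})$) on $[0,1]$. Thus $I_1(z)=\int_0^1 e^{-zt}t^{-1/2}\,dt+\int_0^1 e^{-zt}t^{-1/2}\big((1+\tfrac t2)^{-1/2}-1\big)\,dt$. The second integral is manifestly a bounded $C^1$ function of $z$ on $(0,\tfrac14]$ — actually analytic in $z$ — with all $z$-derivatives $O(1)$, so it goes into $f_1$ with room to spare. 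The first integral $\int_0^1 e^{-zt}t^{-1/2}\,dt$ is $O(1)$ uniformly in $z$ (dominated by $\int_0^1 t^{-1/2}dt=2$) and, more importantly, is smooth in $z$ on, say, $(0,\tfrac14]$ with bounded derivatives: differentiating gives $-\int_0^1 e^{-zt}t^{1/2}\,dt$, bounded by $\tfrac23$. So $I_1$ contributes only to the constant $c_1$ and to $f_1$, with no logarithm. Assembling: $F(z)=e^{-z}\big(c_2'(-\log z)+\text{(bounded }C^1\text{ in }z)\big)$, and then pulling out the $e^{-z}=1+O(z)$ prefactor turns $e^{-z}(-\log z)$ into $-\log z+O(z|\log z|)$ and multiplies the bounded part by $1+O(z)$, keeping it bounded and $C^1$. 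Collecting constants gives $F(z)=c_1-c_2\log z+f_1(z)$ with $c_2>0$ (it equals the coefficient $\sqrt{2}$ times... — but the precise value is immaterial, only positivity, which follows since the $\log z$ coefficient comes with a definite sign from $E_1$), and $|f_1(z)|\lesssim z|\log z|$, $|f_1'(z)|\lesssim|\log z|$.

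The only mildly delicate point — and the main obstacle, though a minor one — is bookkeeping the $z$-derivative estimates carefully enough to confirm the claimed $|f_1'(z)|\lesssim|\log z|$ rather than something worse. This comes down to two facts: (i) differentiating in $z$ under the integral sign is justified by dominated convergence on each piece (the differentiated integrands are dominated uniformly for $z$ in compact subsets of $(0,\infty)$, and near $z=0$ the relevant integrals $\int_1^\infty e^{-zt}t^{-1}\,dt$ grow only logarithmically), and (ii) the $e^{-z}$ prefactor, when differentiated, produces $-e^{-z}(c_1-c_2\log z+f_1)$ plus $e^{-z}$ times the derivative of the bracket, and the worst term among these is the $\log z$ from differentiating the $E_1$ piece. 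I would also double-check that $c_1>0$, which is automatic since $F(z)>0$ for all $z>0$ and $f_1(z)=o(\log z)$ forces $c_1-c_2\log z$ to match the positive leading behavior — but in fact for the intended application only the form of the expansion and the bounds on $f_1$ matter, so I would not belabor the sign or value of $c_1$.
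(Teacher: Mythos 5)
Your overall strategy is sound and is genuinely different from the paper's: you split the original integral at $t=1$ and match the tail against the classical exponential integral $E_1(z)=\int_1^\infty e^{-zt}t^{-1}\,dt$, whereas the paper first rescales $t\mapsto t/z$ to get $e^{-z}\int_0^\infty e^{-t}(tz+\tfrac{t^2}{2})^{-1/2}dt$, truncates to $[0,1]$, discards the $e^{-t}$ and $e^{-z}$ factors into the error, and then rescales again to reduce to $\int_0^{1/z}(2t+t^2)^{-1/2}dt$, extracting the logarithm from $\int_3^{1/z}t^{-1}dt$. Both routes are elementary and of comparable length; yours imports the known expansion $E_1(z)=-\gamma-\log z+O(z)$, the paper's is self-contained. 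Your treatment of $I_1$ and of the $e^{-z}$ prefactor, and your derivative bookkeeping (in particular $\tfrac{d}{dz}E_1(z)+\tfrac1z=(1-e^{-z})/z=O(1)$), are all fine.

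There is, however, one concrete error in the tail decomposition. You write
\begin{align*}
I_2(z)=\int_1^\infty e^{-zt}t^{-1}\,dt+\int_1^\infty e^{-zt}\Bigl((t+\tfrac{t^2}{2})^{-1/2}-t^{-1}\Bigr)\,dt
\end{align*}
and claim the second integrand is $O(t^{-2})$ uniformly. It is not: for large $t$ one has $(t+\tfrac{t^2}{2})^{-1/2}=\sqrt2\,t^{-1}-\sqrt2\,t^{-2}+O(t^{-3})$, so $(t+\tfrac{t^2}{2})^{-1/2}-t^{-1}=(\sqrt2-1)t^{-1}+O(t^{-2})$, and your ``remainder'' integral still diverges like $(\sqrt2-1)|\log z|$ as $z\to0$. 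As written this breaks the bound $|f_1(z)|\lesssim z|\log z|$ and gives the wrong constant $c_2$. The fix is immediate: subtract $\sqrt2\,t^{-1}$ instead of $t^{-1}$, i.e.\ write $I_2(z)=\sqrt2\,E_1(z)+\int_1^\infty e^{-zt}\bigl((t+\tfrac{t^2}{2})^{-1/2}-\sqrt2\,t^{-1}\bigr)dt$; the corrected remainder integrand really is $O(t^{-2})$, the rest of your argument goes through verbatim, and one reads off $c_2=\sqrt2$ (consistent with the paper, where the $\sqrt2$ enters via $(tz+\tfrac{t^2}{2})^{-1/2}=\sqrt2\,(2tz+t^2)^{-1/2}$).
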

\begin{proof}[Proof of Lemma \ref{aOc1}]
We shall use the notation ``NICE" to denote those terms which can be included
in $c_1$ and $f_1$. 
It is not difficult to check that
\begin{align*}
F(z) &= e^{-z} \int_0^{\infty} e^{-t} (tz + \frac {t^2} 2)^{-\frac 12} dt \notag \\
&=e^{-z}\int_0^1 e^{-t} (tz + \frac {t^2} 2)^{-\frac 12} dt +\operatorname{NICE} \notag \\
&=e^{-z}\int_0^1 (tz + \frac {t^2} 2)^{-\frac 12} dt +\operatorname{NICE} \notag \\
&=\int_0^1 (tz + \frac {t^2} 2)^{-\frac 12} dt +\operatorname{NICE}.
\end{align*}
Now by a simple change of variable, we have
\begin{align*}
\int_0^1 (2 tz +t^2)^{-\frac 12} dt
&= \int_0^{\frac 1 z} (2 t+ t^2)^{-\frac 12 } dt \notag \\
&= \int_0^{3} (2 t+ t^2)^{-\frac 12 } dt 
+\int_3^{\frac 1 z} t^{-1} \cdot (1+\frac 2 t)^{-\frac 12} dt \notag \\
&=  \int_0^{3} (2 t+ t^2)^{-\frac 12 } dt 
+ \int_3^{\frac 1z} t^{-1} dt 
+ \int_3^{\infty} t^{-1}\cdot ( (1+\frac 2 t)^{-\frac 12}-1) dt \notag \\
&\qquad - \int_{\frac 1z}^{\infty} t^{-1}\cdot ( (1+\frac 2 t)^{-\frac 12}-1) dt
\end{align*}
Note that $\int_0^3 (2t+t^2)^{-\frac 12} dt =2\sinh^{-1}(\sqrt{\frac 32} )\approx
2.0634>\log 3\approx 1.0986$.
The desired result then clearly follows.
\end{proof}

\begin{proof}[Proof of Statement (3) of Theorem \ref{thm2.18_00}, 2D case]
For simplicity we take $\beta=1$.  We first consider the case $d=2$.
It suffices for us to work with the expression
\begin{align*}
I(x) = \sum_{n \in \mathbb Z^2}
\int_{|y|_{\infty} <\frac 12}
F(x-y +n) \frac {\sin M\pi y_1}{\sin \pi y_1}
\frac {\sin M\pi y_2} {\sin \pi y_2} dy_1 dy_2,
\end{align*}
where $M=2N+1$, and for $z\ne 0$,
\begin{align*}
F(z)=e^{-|z|} \int_0^{\infty} e^{-|z| t} (t+\frac {t^2} 2)^{-\frac 12} dt.
\end{align*}
We shall show that for $|x|_{\infty} \le \frac 12$, $I(x)$ has a strictly positive lower bound
if $M$ is sufficiently large, namely
\begin{align*}
I(x) \gtrsim \log(1+ \frac 1 {|x|} ), \quad\forall\, x \ne 0.
\end{align*}

Case 1: $|n|_{\infty}\ge 2$. It is not difficult to check that for $|x|_{\infty} \le \frac 12$,
$|y|_{\infty}<\frac 12$, the function
\begin{align*}
F_1(y) =\Bigl( \sum_{|n|_{\infty} \ge 2} 
F(x-y+n) \Bigr) \cdot \frac{y_1}{\sin \pi y_1} \cdot \frac {y_2}{\sin \pi y_2}
\end{align*}
is $C^{\infty}$ smooth and has bounded derivatives of all orders.  We then have
\begin{align}
  & \int_{|y|_{\infty}<\frac 12} F_1(y)
  \frac{\sin M\pi y_1} {y_1} \frac {\sin M \pi y_2} {y_2} dy \notag \\
  =&\; \int_{|y|_{\infty}<\frac 12 M}
  F_1(\frac y M) \frac {\sin \pi y_1} {y_1} \frac {\sin \pi y_2} {y_2} dy \notag \\
  =&\; O(M^{-1}  {(\log M)^2} ) + \frac 1 {\pi^2} 
  \int_{|y|_{\infty}<\frac 12 M}
  F_1(\frac y M) \frac {1-\cos \pi y_1} {y_1^2} \cdot \frac {1- \cos \pi y_2}{y_2^2} dy \notag \\
  =&\; O(M^{-1}  {(\log M)^2} ) + \frac 1 {\pi^2}  F_1(0) 
  \int_{|y|_{\infty}<\frac 12 M}
  \frac {1-\cos \pi y_1} {y_1^2} \cdot \frac {1- \cos \pi y_2}{y_2^2} dy  \notag \\
  =&\;  O(M^{-1}  {(\log M)^2} ) + \frac 1 {\pi^2}  F_1(0)
 \Bigl (\int_{\mathbb R}
  \frac {1-\cos \pi y_1} {y_1^2} dy_1 \Bigr)^2. \label{aOc2}
\end{align}  
Thus the contribution of this piece is acceptable for us.

Case 2: $|n|_{\infty}\le 1$. There are five terms $n=(\pm 1, \pm 1)$ and $n=(0,0)$. 
We consider only the case $n=(0,0)$ as the others can be similarly treated. 
Let $\chi \in C_c^{\infty}(\mathbb R^2)$ be such that $0\le \chi(z) \le 1$ for all $z$,
$\chi(z)=1$ for $|z|\le \frac 1{50}$ and $\chi(z)=0$ for $|z|\ge \frac 1{49}$. By Lemma
\ref{aOc1}, we  decompose $F$ as
\begin{align*}
F(z)&= F(z) \chi(z) + F(z) (1-\chi(z) ) \notag \\
&=-c_2\chi(z) \log |z|  + f_1(|z|) \chi(z) + c_1 \chi(z) +F(z) (1-\chi(z)).
\end{align*}
By a computation similar to that in \eqref{aOc2}, it is not difficult to check that the
contribution due to the term $c_1 \chi(z) + F(z) (1-\chi(z) )$ is acceptable for us. 
We now consider the contribution due to the term $f_1(|z|) \chi(z)$. For this
we need to work with the expression
\begin{align*}
I_1=\int_{|y|_{\infty} <\frac 12}
f_1( |y- x | )\underbrace{\chi(y -x) \cdot \frac {y_1} {\sin \pi y_1} 
\cdot \frac {y_2} {\sin \pi y_2}}_{=:\chi_1(y,x)} \cdot \frac {\sin M\pi y_1 \sin M\pi y_2} {y_1 y_2} dy_1 dy_2. 
\end{align*}
We then write
\begin{align*}
I_1 = \int_{|y|_{\infty}<\frac 12 M}
f_1(|\frac yM-x|) \chi_1(\frac y M,x)
\cdot \frac {\sin \pi y_1 \sin \pi y_2} {y_1 y_2} dy.
\end{align*}
By repeating a similar integration by parts argument in \eqref{aOc2} and
using Lemma \ref{aOc1}, we then only need to control
the main error term (the boundary terms are easily controlled)
\begin{align*}
\operatorname{er}_1:=\int_{|y|_{\infty}<\frac 12 M}
\frac 1 M \cdot | \log|\frac {y-Mx} M | |\cdot \frac 1 {(1+|y_1|)(1+|y_2|)} dy.
\end{align*}
By the simple 1D inequality (below $|z_1| \lesssim M$, and one can use the fact that
$|\log |y_1-z_1|| \lesssim \log (2+|y_1|) + \log |z_1|$ for $|y_1-z_1|>1$):
\begin{align*}
\int_{|y_1|<\frac 12 M} |\log |y_1-z_1| | \cdot \frac 1 {1+|y_1|} dy_1
\lesssim (\log M)^2,
\end{align*}
we have 
\begin{align*}
\operatorname{er}_1 \lesssim M^{-1} (\log M)^4.
\end{align*}
Thus the contribution of $I_1$ is acceptable for us. 
Finally we consider the contribution due to the term $-\chi(z) \log |z| $. For this
we need to work with the expression
\begin{align}
I_2 &=-\int_{|y|_{\infty} <\frac 12}
\log|y-x| \cdot \underbrace{\chi(y -x) \cdot \frac {y_1} {\sin \pi y_1} 
\cdot \frac {y_2} {\sin \pi y_2}}_{=:\chi_1(y,x)} \cdot \frac {\sin M\pi y_1 \sin M\pi y_2} {y_1 y_2} dy_1 dy_2 \notag 
\end{align}
Consider $0<|x| \ll 1$. We have
\begin{align}
I_2&=-\int_{|y|_{\infty}<\frac 12 M}
\log |\frac yM-x|\cdot \chi_1(\frac y M,x)
\cdot \frac {\sin \pi y_1 \sin \pi y_2} {y_1 y_2} dy\notag \\
&= -\int_{|y|_{\infty}<\frac 12 M}
\log | y-M x|\cdot \chi_1(\frac y M,x)
\cdot \frac {\sin \pi y_1 \sin \pi y_2} {y_1 y_2} dy\notag \\
&\quad + \log M \int_{|y|_{\infty}<\frac 12 M} \chi_1(\frac y M,x)
\cdot \frac {\sin \pi y_1 \sin \pi y_2} {y_1 y_2} dy\notag \\
=&\;-\int_{|y|_{\infty}<\frac 12 M}
\log | y-M x|\cdot \chi_1(\frac y M,x) \chi(y-Mx) 
\cdot \frac {\sin \pi y_1 \sin \pi y_2} {y_1 y_2} dy\label{aOc2.0a} \\
&\quad-\int_{|y|_{\infty}<\frac 12 M}
(\log | y-M x| )(1-\chi(y-Mx) )\cdot \chi_1(\frac y M,x)
\cdot \frac {\sin \pi y_1 \sin \pi y_2} {y_1 y_2} dy \label{aOc2.0b} \\
&\quad + (\log M)  \chi_1(0,x) \int_{|y|_{\infty}<\frac 12 M}
 \frac {\sin \pi y_1 \sin \pi y_2} {y_1 y_2} dy\label{aOc2.0c} \\
&\quad + \log M \int_{|y|_{\infty}<\frac 12 M}( \chi_1(\frac y M,x)
-\chi_1(0,x) )
\cdot \frac {\sin \pi y_1 \sin \pi y_2} {y_1 y_2} dy. \label{aOc2.0d}
\end{align}
For \eqref{aOc2.0a}, since $\chi(y-Mx)$ is compactly supported, it follows easily that 
\begin{align*}
|\eqref{aOc2.0a} | \lesssim 1.
\end{align*}
For \eqref{aOc2.0b}, by successive integration by parts, we have 
\begin{align*}
\eqref{aOc2.0b} &= -(\log | M x| )(1-\chi(Mx) )\cdot \chi_1(0,x)
\cdot \int_{|y|_{\infty}<\frac 12 M}
\frac {(1-\cos\pi y_1) (1-\cos \pi y_2)} {\pi^2 y_1^2 y_2^2} dy +O(1).
\end{align*}
We can rewrite \eqref{aOc2.0c} as
\begin{align*}
\eqref{aOc2.0c}=(\log M)  \chi_1(0,x) \int_{|y|_{\infty}<\frac 12 M}
 \frac {(1-\cos\pi y_1) (1-\cos \pi y_2)} {\pi^2 y_1^2 y_2^2} dy +O(M^{-1} (\log M)^5).
 \end{align*}
 For \eqref{aOc2.0d}, one can use integration by parts to obtain that
\begin{align*}
|\eqref{aOc2.0d}| \lesssim M^{-1} (\log M)^5.
\end{align*}
Collecting the estimates, it is then not difficult to check that for $0<|x| \ll 1$ and $M$ sufficiently large, 
\begin{align*}
I_2 \ge  - c_1\log |x|,
\end{align*}
where $c_1>0$ is an absolute constant.

Next consider the case $|x|\sim 1$. In this case we note that $M|x| = O(M)$. In
estimating \eqref{aOc2.0a}, we note that when $|y-Mx| \lesssim 1$, we have $|y|=O(M)$ which
implies that
\begin{align*}
|\eqref{aOc2.0a}| =O(M^{-1}).
\end{align*}
On the other hand, it is not difficult to check that 
\begin{align*}
&\eqref{aOc2.0b}  \notag\\
=& -(\log | M x| )(1-\chi(Mx) )\cdot \chi_1(0,x)
\cdot \int_{|y|_{\infty}<\frac 12 M}
\frac {(1-\cos\pi y_1) (1-\cos \pi y_2)} {\pi^2 y_1^2 y_2^2} dy +O(M^{-1}
(\log M)^5).
\end{align*}
Here when estimating the error terms, we used the fact that for $M|x| =O(M)$,
\begin{align*}
\int_{1\le |y_1|, |y_2|<M}
\frac 1 {1+|y-Mx|} \frac 1 {(1+|y_1|)(1+|y_2|)} dy  \lesssim 
M^{-1} (\log M)^2.
\end{align*}
Collecting the estimates, we  obtain that for the case $|x|\sim 1$,
\begin{align*}
I_2 \ge -c_2 (\log |x|) \chi(x) + O(M^{-1} (\log M)^5).
\end{align*}
where $c_2$ is an absolute constant and we assume $M$ is sufficiently large.
Thus for all $x\ne 0$ with $|x|_{\infty}<\frac 12$, we have
\begin{align*}
I_2 \ge - c_3 (\log |x|) \chi(x) + O(M^{-1} (\log M)^5).
\end{align*}
One need not worry about the case $\chi(x)=0$ (i.e. when $|x|\ge \frac 1{50}$), since
 by \eqref{aOc1}, the main order is a constant.

Collecting all the estimates from Case 1 and Case 2, we then obtain the desired conclusion for
2D.
\end{proof}
\section{Estimate of $e^{\nu^2 t\partial_{xx} } Q_N U^0$}
Recall that 
\begin{align*}
(e^{ \nu^2 t \partial_{xx}} Q_N U^0 )(t,x)
=\op{Re} \Bigl( \sum_{-\frac N2<k \le \frac N2} \Bigl( \frac 1 N \sum_{j=0}^{N-1} U^0_j e^{-2\pi i \frac {k j}N }
e^{-(2\pi k)^2 \nu^2 t} 
\Bigr)
e^{2\pi i k\cdot x} \Bigr).
\end{align*}
We need to bound $\sup_{0\le l\le N-1} | (e^{\nu^2 t \partial_{xx} } Q_N U^0 )(t,\frac lN)|$ under the assumption that
$\| U^0 \|_{\infty} \le 1$.
%We need to bound the discrete $L^1$ mass (under the assumption that $\| U^0\|_{\infty} \le 1$)
%\begin{align*}
%\frac 1 N \sum_{j=0}^{N-1} |(e^{t \partial_{xx} } Q_N U^0) (\frac j N) |.
%\end{align*}
Clearly we have
\begin{align*}
\sup_{0\le l\le N-1} | (e^{\nu^2 t \partial_{xx} } Q_N U^0 )(t,\frac lN)| \le 
\frac 1 N \sum_{j=0}^{N-1} \Bigl| \op{Re} \Bigl( { \sum_{-\frac N2 < k \le \frac N2}
e^{-4\pi^2\nu^2 t k^2  +2\pi i k \cdot \frac j N}}  \Bigr)\Bigr| =: A_{N,t}.
\end{align*}

Below we shall write $X= O(Y)$ if $|X|\le CY$  and $C>0$ is a  constant depending only on $\nu$.
\begin{thm} \label{N18:1}
It holds that 
\begin{align*}
A_{N,t} \le 
\begin{cases}
1+ O(N^{-1}), \qquad \text{if $N^2 \nu^2 t \ge 100 \log N$}; \\
1+ \epsilon_a + O(N^{-\frac 13} \sqrt{\log N}), \qquad \text{if $N^2 \nu^2 t <100 \log N$}.
\end{cases}
\end{align*}
Here $0<\epsilon_a <0.01$ is an absolute constant. Furthermore, we have
for $t=t_0=\frac 14 N^{-2}\nu^{-2}$,
\begin{align*}
A_{N, t_0} >1.001+O(N^{-\frac 13} \sqrt{\log N}).
\end{align*}
\end{thm}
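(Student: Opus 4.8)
The strategy is to pass from the discrete sum $A_{N,t}$ to a continuous model. Write $S(j) = \operatorname{Re}\bigl(\sum_{-N/2 < k \le N/2} e^{-4\pi^2\nu^2 t k^2 + 2\pi i k j/N}\bigr)$, so that $A_{N,t} = \frac 1N\sum_{j=0}^{N-1}|S(j)|$. The first step is the easy regime $N^2\nu^2 t \ge 100\log N$: here the Gaussian weights decay so fast that $S(j)$ is within $O(N^{-1})$ of the full bilateral theta sum $\sum_{k\in\mathbb Z} e^{-4\pi^2\nu^2 t k^2}\cos(2\pi k j/N)$, which by Poisson summation equals a positive periodization $\frac{1}{2\nu\sqrt{\pi t}}\sum_{n\in\mathbb Z}e^{-(j/N+n)^2/(4\nu^2 t)}$; its discrete average over $j$ is $1 + O(N^{-1})$, since the periodization integrates to $1$ and the Riemann-sum error is $O(N^{-1})$ for such a smooth, rapidly varying but tame function (one must check the error bound carefully because the width $\sqrt{\nu^2 t}$ can be as small as $\sqrt{\log N}/N$, but $100\log N$ was chosen precisely so that the tails $\sum_{|k|>N/2}e^{-4\pi^2\nu^2 t k^2} = O(N^{-1})$).

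The second step, $N^2\nu^2 t < 100\log N$, is the substantive one. Here I would rescale: put $\sigma = 2\pi\nu\sqrt t$ and note $N\sigma \lesssim \sqrt{\log N}$. Approximate the Riemann sum $\frac1N\sum_k$ by the integral $\int$, so that $S(j) \approx \int_{-N/2}^{N/2} e^{-\sigma^2\xi^2}\cos(2\pi\xi j/N)\,d\xi$; after the change of variable $\xi = N\eta$ this is $N\int_{-1/2}^{1/2} e^{-N^2\sigma^2\eta^2}\cos(2\pi j\eta)\,d\eta$, i.e. $A_{N,t}$ is close to $\frac1N\sum_{j=0}^{N-1}\bigl|N\int_{-1/2}^{1/2}e^{-a\eta^2}\cos(2\pi j\eta)\,d\eta\bigr|$ with $a = N^2\sigma^2$. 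The discrete $j$-average is then itself a Riemann sum and converges, up to $O(N^{-1/3}\sqrt{\log N})$ errors (the exponent $\frac13$ coming from balancing the Riemann-sum step error against the fact that the integrand oscillates at frequency $\sim N$), to $\Phi(a) := \int_0^1\bigl|\int_{-1/2}^{1/2}e^{-a\eta^2}\cos(2\pi s\eta)\,d\eta\bigr|\,ds$ (after rescaling $\int_{-1/2}^{1/2}$ appropriately in the limit $a\to\infty$ one recovers the truncated-Gaussian Fourier-integral kernel). One then shows $\sup_{a\ge 0}\Phi(a) = 1 + \epsilon_a$ for an absolute constant $0<\epsilon_a<0.01$: the $L^1$ mass of the (truncated) Gaussian-windowed Dirichlet-type kernel exceeds $1$ by a small amount because of the sidelobes created by the sharp cutoff at $k = \pm N/2$, but the Gaussian damping keeps the overshoot uniformly small. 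The bound $\epsilon_a < 0.01$ is a finite computation: for $a$ large the kernel is essentially $\mathcal F^{-1}(e^{-a\xi^2}1_{|\xi|\le N/2})$ whose $L^1$ norm is controlled, and for $a$ bounded the kernel is a genuine positive approximate identity with mass $\le 1$.

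For the final statement, take $t_0 = \frac14 N^{-2}\nu^{-2}$, so $a = N^2\sigma^2 = 4\pi^2 N^2\nu^2 t_0 = \pi^2$, a fixed number lying in the regime covered by Step 2. Then $A_{N,t_0} = \Phi(\pi^2) + O(N^{-1/3}\sqrt{\log N})$, and it remains to verify the strict numerical inequality $\Phi(\pi^2) > 1.001$. I would do this by evaluating (rigorously, with error bars, exactly as in the proof of Lemma \ref{2015lem2.12}) the one-dimensional integral $\Phi(\pi^2) = \int_0^1 \bigl|g(s)\bigr|\,ds$ where $g(s) = \int_{-1/2}^{1/2}e^{-\pi^2\eta^2}\cos(2\pi s\eta)\,d\eta$; the point is that $g$ changes sign (its first sidelobe is negative), so $\int_0^1|g| > \int_0^1 g = $ (the $s$-integral of $g$, which is the value of a Gaussian at a point, strictly less than $1$ but the $|\cdot|$ recovers more than enough), and a careful lower bound on the negative sidelobe contribution gives the $0.001$ margin. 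The main obstacle is this last rigorous numerical estimate together with keeping the Riemann-sum error genuinely of size $O(N^{-1/3}\sqrt{\log N})$ uniformly — i.e. making the passage from $A_{N,t}$ to $\Phi(a)$ quantitative rather than merely asymptotic, which requires splitting the $j$-sum according to whether $j/N$ is near a zero of $g$ (where $|g|$ is small and cancellation in the Riemann sum is delicate) or away from them.
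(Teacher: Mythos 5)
Your treatment of the easy regime $N^2\nu^2 t\ge 100\log N$ matches the paper's (compare $Q_j$ with the full positive theta sum, bound the tail $\sum_{|k|\ge N/2}e^{-4\pi^2\nu^2 tk^2}=O(N^{-1})$), and your overall plan of reducing the second regime to a single rescaled quantity depending on $a=4\pi^2 N^2\nu^2 t$ is also the right idea. But the identification of that limiting quantity is wrong, and the error is fatal. After your rescaling you have
$A_{N,t}\approx \frac1N\sum_{j=0}^{N-1}\bigl|\,N\int_{-1/2}^{1/2}e^{-a\eta^2}\cos(2\pi j\eta)\,d\eta\,\bigr|=\sum_{j=0}^{N-1}\bigl|c_j\bigr|$ with $c_j=\int_{-1/2}^{1/2}e^{-a\eta^2}\cos(2\pi j\eta)\,d\eta$: the factors of $N$ cancel, so what remains is a sum over \emph{integer} frequencies $j$ with no $1/N$ weight. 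This is the $\ell^1$ norm of the Fourier coefficients of the periodization of $e^{-a\eta^2}$ on $[-\tfrac12,\tfrac12)$ (up to symmetrization $j\leftrightarrow N-j$), \emph{not} a Riemann sum for $\Phi(a)=\int_0^1|c_s|\,ds$. Your $\Phi$ cannot be the right object: since $|c_s|\le c_0\le\int_{\mathbb R}e^{-a\eta^2}d\eta=\sqrt{\pi/a}$, one has $\Phi(\pi^2)\le 1/\sqrt{\pi}<0.6$, so the claimed verification $\Phi(\pi^2)>1.001$ is impossible, and the upper bound $1+\epsilon_a$ would degenerate to something $\le 1$ rather than capture the overshoot.

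The correct mechanism, which the paper isolates, is the sign alternation of the coefficients $c_j$: the periodized Gaussian is continuous on the torus but its derivative jumps at the seam $\eta=\pm\tfrac12$, so $c_j\sim \mathrm{const}\cdot(-1)^{j+1}a e^{-a/4}j^{-2}$, whence $\sum_j|c_j|>\sum_j c_j=1$. Concretely the paper shows $A_{N,t}=A(k_0)+O(N^{-1/3}\sqrt{\log N})$ with $A(k_0)=\sum_{j\in\mathbb Z}|\beta_j|$, $\beta_j=\int_0^1e^{-\frac14\pi^2k_0^2s^2}\cos(\pi js)\,ds$, $k_0=2N\nu\sqrt t$ (the tail $N^{1/3}<j<N-N^{1/3}$ supplying the $N^{-1/3}\sqrt{\log N}$ error, which your proposal does not actually derive); it then gets the closed form $A(k_0)=2\int_0^1e^{-bs^2}ds-e^{-b}$ for $b=\tfrac14\pi^2k_0^2\le\tfrac12$ by determining the signs of the $\beta_j$, a Poisson-summation/convexity bound for $k_0\gtrsim1$, and for $t=t_0$ (i.e.\ $k_0=1$) the numerical evaluation $\sum_{|j|\le3}|\beta_j|\ge1.002$. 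Your remark that "for $a$ bounded the kernel is a genuine positive approximate identity with mass $\le1$" is exactly backwards: the moderate-$a$ regime ($a=\pi^2$ at $t=t_0$) is where the kernel fails positivity and the mass strictly exceeds $1$. To repair the proof you must replace $\Phi(a)$ by $\sum_{j\in\mathbb Z}|c_j|$ and carry out the sign analysis of the $c_j$.
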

The rest of this section is devoted to the proof of Theorem \ref{N18:1}. To ease the notation
we shall assume $\nu=1$. 

First observe that if $N^2 t \ge 100 \log N$, then
\begin{align*}
A_{N,t} & \le 1 + \sum_{|k|\ge \frac N2} e^{-4 \pi^2 t k^2} \notag  \\
& \le 1 + \int_{|k|\ge \frac N2-1} e^{-4\pi^2 tk^2} d k \le 1 +O(N^{-1}).
\end{align*}

Thus we only need to consider the regime $N^2 t <100 \log N$. Denote 
$k_0 = 2N \sqrt {t}$ and note that $k_0 \le 20 \sqrt{\log N}$. 
Denote 
\begin{align*}
Q_j = \sum_{-\frac N2 < k \le \frac N2}
e^{-4\pi^2t k^2  +2\pi i k \cdot \frac j N}.
\end{align*}

It is easy to check that
\begin{align*}
\op{Re}\Bigl( \sum_{-\frac N2<k\le \frac N2} e^{2\pi i k\cdot x} \Bigr)
= \frac {\sin N\pi x} {\tan \pi x}.
\end{align*}
Then
\begin{align}
Q_j&= \frac 1 {\sqrt{4\pi t}} \int_{\mathbb R}e^{-\frac { (\frac jN-y)^2} {4t} }
\frac {\sin N \pi y} { \tan \pi y} dy \notag \\
&= \frac 1 {\sqrt{\pi} } \int_{\mathbb R} e^{-y^2}
\frac {\sin 2N \sqrt t \pi y} { \tan \pi (\frac jN +2\sqrt t y) } dy (-1)^j\notag \\
&= \frac 1 {\sqrt{\pi} } \int_{\mathbb R} e^{-y^2}
\frac {\sin \pi k_0 y} { \tan \pi (\frac {j+k_0y} N ) } dy (-1)^j\label{N18:2} \\
&= \frac 1 {\sqrt{\pi} k_0} \int_{\mathbb R}
e^{-\frac {y^2}{k_0^2}} \frac {\sin \pi y} {\tan \pi \frac {j +y} N} dy (-1)^j. 
\label{N18:3}
\end{align}
We shall use either \eqref{N18:2} or \eqref{N18:3}.  Note that 
\begin{align*}
|Q_j| =|Q_{N-j}|, \qquad \forall\, 1\le j\le N-1.
\end{align*}

Case 1: \texttt{The regime $N^{\frac 13} < j <N-N^{\frac 13} $}. 
This is the tail piece.
Observe that
 \begin{align*}
 \int_{|y| \ge 100 \sqrt{\log N} }
 e^{-y^2} \Bigl| \frac {\sin \pi k_0y } { \tan \pi \frac {j +k_0y} N}
 \Bigr| dy \le  N^{-5}. 
 \end{align*}
 Then we only need to estimate
 \begin{align*}
 &\Bigl| \int_{0<y\le 100 \sqrt{\log N}}
 e^{-y^2} \sin \pi k_0 y ( \cot \pi {\frac {j+k_0y} N} -\cot \pi \frac{j-k_0 y} N ) dy \Bigr|
 \notag \\
 \lesssim &\;  \frac {N^2} {j_1^2} \cdot \frac {k_0} N = N j_1^{-2} k_0, \qquad j_1=\min
 \{j, N-j\}.
 \end{align*}
  Then
  \begin{align*}
  \frac 1 N \sum_{N^{\frac 13} < j< N-N^{\frac 13}}
  |Q_j| \lesssim \sum_{ N^{\frac 13} <j <N-N^{\frac 13} }
  j_1^{-2} k_0 \lesssim N^{-\frac 13} \sqrt{\log N}.
  \end{align*}
  Thus the regime $N^{\frac 13} <j <N-N^{\frac 13}$ is OK for us.

  Case 2: \texttt{The regime $0\le j \le N^{\frac 13}$}. This is the main piece. Note 
  that since $Q_j = Q_{N-j}$, the regime $ N-N^{\frac 13}\le j \le N-1$ does not need any further
  analysis.  Now for $0\le j \le N^{\frac 13}$, by using \eqref{N18:3}, we have 
  \begin{align*}
  |Q_j| = \frac 1 {\sqrt{\pi} k_0}\Bigl|
  \int_{\R} e^{-\frac {(y+j)^2}{k_0^2} } \frac {\sin \pi y} {\tan \pi \frac {y} N } dy\Bigr|.
  \end{align*}
  Observe that 
  \begin{align*}
k_0^{-1} \Bigl |\int_{|y|\ge N^{\frac 12}}
e^{-\frac {(y+j)^2} {k_0^2} } \cdot 
\frac {\sin \pi y} {\tan \pi \frac y N} d y \Bigr| \lesssim e^{-c N}.
\end{align*}
Note that for $|x| \ll 1$, we have
\begin{align*}
|\cot x - \frac 1 x| \lesssim x.
\end{align*}
We obtain
\begin{align*}
\frac 1 N \sum_{j\le N^{\frac 13}} k_0^{-1}  \Bigl| \int_{|y| \le N^{\frac 12}} 
e^{- \frac{(y+j)^2}{k_0^2}} \cdot \frac {|y|} N dy\Bigr| \lesssim  \frac 1 N \cdot N^{\frac 13} \cdot N^{-\frac 12}
\lesssim N^{-\frac 76}. 
\end{align*}
We then obtain the main piece
\begin{align*}
&\frac 1 N \sum_{j\le N^{\frac 13}} k_0^{-1}
\Bigl| \int_{|y|\le N^{\frac 12}} e^{- \frac{(y+j)^2}{k_0^2}}
\cdot \frac {\sin \pi y} y N dy  \Bigr| \notag \\
= &\sum_{j\le N^{\frac 13}} k_0^{-1}
\Bigl| \int_{\R } e^{- \frac{(y+j)^2}{k_0^2}}
\cdot \frac {\sin \pi y} y dy   - \int_{|y|>N^{\frac 12}} 
e^{- \frac{(y+j)^2}{k_0^2}}
\cdot \frac {\sin \pi y} y dy \Bigr|   \notag \\
=& \biggl( \sum_{j\le N^{\frac 13}} k_0^{-1}
\Bigl| \int_{\R } e^{-\frac{(y+j)^2}{k_0^2}}
\cdot \frac {\sin \pi y} y dy  \Bigr|  \biggr) + 
O(e^{-cN}). 
\end{align*}
Thus
\begin{align*}
\frac 1 N \sum_{j\le N^{\frac 13}} |Q_j|
= \pi^{-\frac 32} k_0^{-1}\sum_{j\le N^{\frac 13}}
\Bigl| \int_{\R } e^{-\frac{(y+j)^2}{k_0^2}}
\cdot \frac {\sin \pi y} y dy  \Bigr| + O(N^{-\frac 76}).
\end{align*}
Fix $j$ and consider
\begin{align*}
F_j(1)= \pi^{-\frac 32}k_0^{-1}  \int_{\R} e^{-\frac {(y+j)^2 } {k_0^2} } \frac {\sin \pi  y} { y} dy =
\int_0^1 F_j^{\prime}( s) d s,
\end{align*}
where
\begin{align*}
F_j^{\prime}(s) &= \pi^{-\frac 12} k_0^{-1}  \op{Re} \Bigl( \int_{\R} e^{-\frac {(y+j)^2}{k_0^2}}
 e^{i \pi s  y} dy \Bigr) \notag \\
&= e^{-\frac 14 \pi^2 k_0^2 s^2} \cos \pi s  j.
\end{align*}
It follows that
\begin{align*}
  \pi^{-\frac 32}k_0^{-1} \int_{\R } e^{-\frac{(y+j)^2}{k_0^2}}
\cdot \frac {\sin \pi y} y dy =\int_0^1 e^{-\frac 14 \pi^2 k_0^2 s^2} \cos \pi   j s ds.
\end{align*}

Collecting the estimates, 
we then obtain for $k_0 \le 20 \sqrt{\log N}$,
\begin{align*}
 & \frac 1 N \sum_{j=0}^{N-1} |Q_j| \notag \\
 =& O(N^{-\frac 13} \sqrt{\log N}) + \sum_{|j| \le N^{\frac 13} } \Bigl| \; \underbrace{\int_0^1 e^{-\frac 14 \pi^2 k_0^2 s^2} \cos \pi   j s ds
  }_{=:\beta_j} \; \Bigr|.
  \end{align*}

Here $\beta_j$ depends on $k_0$ but we suppress this explicit dependence for simplicity of
notation.
By using successive integration by parts (three times), it is not difficult to check that
uniformly for all $k_0 \le 20 \sqrt{\log N}$, 
\begin{align*}
|\beta_j | \lesssim j^{-2}, \qquad \forall\, |j|\ge N^{\frac 13}.
\end{align*}
Thus we can rewrite
\begin{align*}
  \frac 1 N \sum_{j=0}^{N-1} |Q_j| 
 = O(N^{-\frac 13} \sqrt{\log N}) + \sum_{j \in \mathbb Z }  |\beta_j|.
  \end{align*}
We now focus on analyzing the quantity
\begin{align*}
A(k_0)= \sum_{j \in \mathbb Z}
|\beta_j |.
\end{align*}

 First we consider $k_0=1$.
 For $k_0=1$, a rigorous numerical computation gives
  \begin{align*}
  &\beta_0\approx 0.54934; \\
  &\beta_1=\beta_{-1} \approx 0.219568; \\
  &\beta_2=\beta_{-2} \approx 0.0031275\\
  &\beta_3=\beta_{-3} \approx 0.00413676.
  \end{align*}
  It follows that
  \begin{align*}
  A(1) \ge \sum_{|j|\le 3} \beta_j \ge 1.002.
  \end{align*}

Next we shall give more precise bounds on $A(k_0)$ depending on the 
size of $k_0$. 

  \begin{rem}
   Observe that
  \begin{align*}
  \frac 12 \int_{-1}^1 e^{-\frac 14 \pi^2 k_0^2 s^2}
  \cos j\pi s ds =  \int_{|s|\le \frac 12}
  e^{-\pi^2 k_0^2 s^2} \cos 2\pi j s ds.
   \end{align*}

  We may then rewrite
   \begin{align*}
 A(k_0)= \sum_{j \in \mathbb Z } {\Bigl|\int_{|s|\le \frac 12} e^{- \pi^2 k_0^2 s^2} \cos 2\pi   j s ds
  \Bigr|}.
  \end{align*}
  Interestingly, this is exactly the $l^1$ norm of the Fourier coefficients of the function
  $e^{-\pi^2 k_0^2 x^2} $ on
  the periodic torus $\mathbb T=[-\frac 12, \frac 12)$.
\end{rem}

  \begin{rem}
 Interestingly, if we consider
 \begin{align*}
 S=\sum_{j\in \mathbb Z} \Bigl| \int_{|s|\le \frac 12} e^{-\beta |s|} 
 \cos 2\pi js d s \Bigr| =: \sum_{j\in \mathbb Z} |P_j|,
 \end{align*}
  we can obtain the closed form of $P_j$.  Indeed we have
  \begin{align*}
  P_j=\int_{|s|\le \frac 12} e^{-\beta |s| } \cos 2\pi j s ds 
  & = 2 \op{Re} \Bigl( \int_0^{\frac 12} e^{-s(\beta-2\pi i j)} ds  \Bigr) \notag \\
  & =2 \op{Re}\left( \frac {e^{-s(\beta-2\pi i j)} }{ -(\beta-2\pi i j) } \Bigr|_{s=0}^{\frac 12} \right)
  \notag \\
  &=2   \beta \frac {e^{-\frac 12\beta} (-1)^j - 1} {-(\beta^2+(2\pi j)^2 )} 
  \notag \\
  &= 2\beta \frac {1-e^{-\frac 12\beta} (-1)^j} { \beta^2 + (2\pi j)^2} \ge 0.
  \end{align*}
  Thus $S=1$. 
  \end{rem}

  \subsection{Estimate of $A(k_0)$ when $k_0$ is large}
  We first give a bound on $A(k_0)$ when $k_0\ge 1$. Rewrite
  \begin{align*}
  A(k_0)=  \sum_{j \in \mathbb Z} {\Bigl|\int_{|s|\le \frac 12} e^{- a s^2} \cos 2\pi   j s ds
  \Bigr|},
  \end{align*}
  where $a=\pi k_0^2\ge \pi$.
  Denote 
  \begin{align*}
  f_0(s) = \sum_{n \in \mathbb Z} e^{-a(s+n)^2}.
  \end{align*}
  Clearly 
  \begin{align*}
  \int_{|s|\le \frac 12} f_0(s) \cos 2\pi js ds =\int_{\R} e^{-as^2} \cos 2\pi js ds \ge 0,
  \end{align*}
  and thus
  \begin{align*}
  \sum_{j\in \mathbb Z}
  \Bigl| \int_{|s| \le \frac 12} f_0(s) \cos 2\pi js ds 
  \Bigr| = \sum_{j\in \mathbb Z}
   \int_{|s| \le \frac 12} f_0(s) \cos 2\pi js ds  = f_0(0) =\sum_{n \in \mathbb Z}
   e^{-a n^2}.
   \end{align*}
   
 Now for $|n|\ge 1$ and $|j|\ge 1$, we have
 \begin{align*}
    & - \int_{|s|\le \frac 12} e^{-a(s+n)^2} \cos 2\pi js ds \notag \\
=& \frac  1 {2\pi j} \int_{|s|\le \frac 12} \frac d {ds}
\left( e^{-a(s+n)^2} \right) \sin 2\pi j s ds \notag \\
=& (2\pi j)^{-2} \frac d{ds} \left( e^{-a(s+n)^2} \right) (-\cos 2\pi js)
\Bigr|_{s=-\frac 12}^{\frac 12}
+ (2\pi j)^{-2}
\int_{|s|\le \frac 12} \frac {d^2}{ds^2}
\left( e^{-a(s+n)^2} \right) \cos 2\pi j s ds.
\end{align*}

Now consider the function $e^{-x^2}$. Clearly
\begin{align*}
(e^{-x^2} )^{\prime\prime} = e^{-x^2} (4x^2-2) \ge 0, \qquad\forall\, |x| \ge \frac 1 {\sqrt 2}.
\end{align*}
It follows that $\frac {d^2}{ds^2} e^{-a (s+n)^2}    \ge 0$ since $(n+s)^2 \ge \frac 14$ and
$a\ge \pi$.  Thus
\begin{align*}
&(2\pi j)^{-2} \left|
\int_{|s|\le \frac 12} \frac {d^2}{ds^2}
\left( e^{-a(s+n)^2} \right) \cos 2\pi j s ds
\right| \le (2\pi j)^{-2} 2a \left|e^{-a(n+\frac 12)^2} (n+\frac 12)- e^{-a(n-\frac 12)^2} (n-\frac 12) \right|;\\
& (2\pi j)^{-2} \left| \frac d{ds} \left( e^{-a(s+n)^2} \right) (-\cos 2\pi js)
\Bigr|_{s=-\frac 12}^{\frac 12}
\right|
\le  
(2\pi j)^{-2} 2a \left|e^{-a(n+\frac 12)^2} (n+\frac 12)- e^{-a(n-\frac 12)^2} (n-\frac 12) \right|.
\end{align*}

 It follows that
 \begin{align*}
 A(k_0) &\le \sum_{n \in \mathbb Z} e^{-a n^2} + \sum_{|j|\ge 1, |n|\ge 1}
 2 (2\pi j)^{-2} 2a \left|e^{-a(n+\frac 12)^2} (n+\frac 12)- e^{-a(n-\frac 12)^2} (n-\frac 12) \right| 
  + \int_{|s|\le \frac 12} \sum_{|n|\ge 1} e^{-a(n+s)^2} ds \notag \\
  & \le 1 +2 \sum_{n\ge 1} e^{-an^2} + \int_{|s|\ge \frac 12} e^{-as^2} ds +
  \frac a 3
  \sum_{|n|\ge 1} \left|e^{-a(n+\frac 12)^2} (n+\frac 12)- e^{-a(n-\frac 12)^2} (n-\frac 12) \right| 
  \notag \\
  & \le 1+ 2 \sum_{n\ge 1} e^{-an^2} + \frac 2 {\sqrt a} \int_{s\ge \frac 12 \sqrt a}
  e^{-s^2} ds+
   \frac a 3 e^{-\frac a 4}, \qquad a=\pi k_0^2 \ge \pi.
 \end{align*}
  This bound is particularly effective when $a$ is large.

  \begin{rem}
  We should mention that it is possible to extract more information on the
  Fourier coefficients $\beta_j$ in the regime $k_0\gtrsim 1$. 
 Consider for $b=\frac 14 \pi k_0^2\ge \frac 12$,
 \begin{align*} 
 A(k_0)= \sum_{j \in \mathbb Z} |\beta_j|, \qquad \beta_j = \int_0^1 e^{-bs^2}
  \cos \pi j s ds.
  \end{align*}
 We rewrite
 \begin{align*}
 \beta_j &= \int_0^{\infty} e^{-bs^2}  \cos \pi j s ds
 - \int_1^{\infty} e^{-bs^2} \cos \pi js ds  \notag \\
 &=\frac 12 \sqrt{\pi} b^{-\frac 12}
 e^{-\frac {\pi^2 j^2}{4b} } - \int_1^{\infty} e^{-bs^2} \cos \pi js ds .
 \end{align*}
  Clearly for $j\ne 0$, 
  \begin{align*}
   & - \int_1^{\infty} e^{-bs^2} \cos \pi js ds  \notag \\
 =&\; (j\pi)^{-2}  \Bigl( 
 \partial_s (e^{-bs^2} ) \bigr|_{s=1} \cdot (-1)^j
 +\int_1^{\infty}
 \partial_{ss} (e^{-bs^2} ) \cos j\pi s ds \Bigr).
 \end{align*}
  Observe that $\partial_{ss} e^{-bs^2}=e^{-bs^2} 2b( 2bs^2-1) \ge 0$ for $s\ge 1$, $b\ge
  \frac 12$. Thus $\beta_j\ge 0$ if $j$ is odd.
\end{rem}

  \subsection{Estimate of $A(k_0)$ when  $k_0$ is small}
In this subsection we give the precise form of $A(k_0)$ when $k_0$ is small. First we rewrite 
 \begin{align*}
  A(k_0)= \sum_{j \in \mathbb Z} |\beta_j|, \qquad \beta_j=
  \int_0^1 e^{-b s^2} \cos \pi   j s ds,
  \end{align*}
  where $b= \frac 14 \pi^2 k_0^2 \le \frac 12$. 
Clearly  for $j\ge 1$, 
\begin{align*}
 -\beta_j=& -\int_0^1 e^{-b s^2} \cos \pi j s ds  \notag \\
 =&\; (j\pi)^{-1} \int_0^1  \partial_s (e^{-b s^2}) \sin j\pi s ds \notag \\
 =& \; (j\pi)^{-2} \partial_s(e^{-b s^2} ) (-\cos j \pi s) \Bigr|_{s=0}^1
 + (j\pi)^{-2} 
 \int_0^1 \partial_{ss} ( e^{-b s^2}) \cos j\pi s ds.
 \end{align*}
  Now denote  $g(s)= (j\pi)^{-2} (-1) \partial_s (e^{-b s^2})$ and note that $g^{\prime}(s)\ge 0$
  whenever $0<b\le \frac 12$ and $0\le s\le 1$. Then
  \begin{align*}
  \beta_j &= g(1) (-1)^{j+1} + \int_0^1 g^{\prime}(s)  \cos j\pi s ds.
  \end{align*}
  It follows that $\beta_j\le 0$ for $j$ even, and $\beta_j\ge 0$ for $j$ odd.  Note that $\beta_0>0$.
  
  We then write
  \begin{align*}
  A(k_0)& = \beta_0 +\sum_{\text{$j$ odd} } \beta_j -\sum_{ \text{ $0\ne j$ even}}
  \beta_j \notag \\
  &=2\beta_0+ \sum_{j \in \mathbb Z} \beta_j - 2 (\beta_0+ \sum_{\text{$0\ne j$ even }} \beta_j )\notag \\
  &= 2\int_0^1 e^{-bs^2} ds +1 - (1+e^{-b} ) \notag \\
  & =2 \int_0^1 e^{-bs^2} ds -e^{-b}.
  \end{align*}
  Thus for $0<b=\frac 14 \pi^2 k_0^2\le \frac 12$, 
   \begin{align*}
 A(k_0)
 = 2 \int_0^1 e^{-bs^2} ds -e^{-b}.
  \end{align*}
  
  \begin{rem}
  We explain how to rigorously compute $\sum_{l \in \mathbb Z} \beta_l$ and
   $\sum_{l \in \mathbb Z}  \beta_{2l} $ as follows. Firstly we rewrite
   \begin{align*}
   \beta_l = \int_{|s|\le \frac 12} f(s) \cos 2 \pi l s ds, 
   \end{align*}
   where $f(s) =e^{-as^2}$. 
 Recall the Dirichlet kernel
 \begin{align*}
 D_N(x)= \sum_{|j|\le N} e^{2\pi i j x} = \frac {\sin(2N+1)\pi x}{ \sin \pi x}.
 \end{align*}
 Clearly
 \begin{align*}
 \int_{|x| \le \frac 12} (f(x) -f(0)) D_N(x) dx  
% =& \int_{|x| \le \frac 12}
 %\frac {f(x)-f(0)} {\sin \pi x} \sin (2N+1) \pi x dx
 %\notag \\
 =& \frac{f(x)-f(0)} {\sin \pi x} \frac 1 {2N+1} 
(-\cos (2N+1)\pi x) \Bigr|_{x=-\frac 12}^{\frac 12}  \notag \\
&\quad+\frac 1 {2N+1} \int_{|x|\le \frac 12}
\frac d {dx}
\Bigl( \frac{f(x)-f(0)} {\sin \pi x} 
\Bigr) \cos (2N+1) \pi x dx = O(\frac 1N).
\end{align*}
Thus \begin{align*}
\int_{|x| \le \frac 12} f(x) D_N(x) dx \to f(0), \quad \text{as $N\to \infty$}.
\end{align*}
Alternatively if one does not need the quantitative convergence rate,  one can directly obtain
\begin{align*}
\lim_{N\to \infty} \int_{|x|\le \frac 12} \frac {f(x)-f(0)} {\sin \pi x}  \sin(2N+1)\pi x dx=0
\end{align*}
since $(f(x)-f(0))/\sin \pi x \in L^1((-\frac 12, \frac 12) )$.

Next we consider  
\begin{align*}
\int_{|x|\le \frac 12} f(x) D_N (2x )dx 
&= \frac 12\int_{|x| \le 1} f(\frac x 2) D_N(x) dx \notag \\
&= \frac 12 \int_{|x|\le \frac 12} 
f(\frac x 2) D_N(x) dx +  \int_{\frac 12 \le x \le 1} f(\frac x2) D_N(x) dx \notag \\
&= \frac 12 \int_{|x|\le \frac 12} 
f(\frac x 2) D_N(x) dx + \int_0^{\frac 12} f(\frac {1-x} 2)
D_N(x) dx. 
\end{align*}
Now consider 
\begin{align*}
f(\frac {1-x}2) = f( \frac 12) +\tilde f(x).
\end{align*}
Clearly
\begin{align*}
\int_0^{\frac 12} \tilde f(x) D_N(x) dx 
& = \int_0^{\frac 12} \frac {\tilde f(x) }{\sin \pi x} \sin(2N+1)\pi x dx \notag \\
&= \frac{\tilde f(x) }{\sin \pi x} \frac 1 {2N+1} \frac 1{\pi} (-\cos (2N+1)\pi x) 
\Bigr|_{0}^{\frac 12} +O(\frac 1 N) = O(\frac 1 N).
\end{align*}
Thus we obtain
\begin{align*}
  \int_{|x|\le \frac 12} f(x) D_N (2x )dx 
= \frac 12 (f(0)+f(\frac 12) ) +O(\frac 1 N).
\end{align*}
 \end{rem}
 \begin{rem}
 More generally, suppose $a<b$ and let $(n_j)_{j=0}^L$ be all the possible integer points
 in the interval $[a,b]$ listed in the ascending order. If $a$ is an integer then $n_0=a$ and similarly
 if $b$ is an integer then $n_L=b$.  Suppose $f$ is a continuous function on $[a,b]$ such that
 \begin{align*}
 \frac { f(x) - \tilde f(x) } {\sin \pi x } \in L^1((a,b)),
 \end{align*}
 where
 \begin{align*}
 \tilde f(x) = \sum_{j=0}^L f(n_j) \phi(x-n_j).
 \end{align*}
 Here $\phi \in C_c^{\infty}(\mathbb R)$ is such that $\phi(x) \equiv 1$ for $|x|\le \frac 14$
 and $\phi(x)\equiv 0$ for $|x| \ge \frac 13$. 
If there are no integers in $[a,b]$ then we define $\tilde f(x)\equiv 0$.  Then we have
\begin{align*}
&\lim_{N\to \infty} \int_{a}^b f(x) \frac {\sin (2N+1)\pi x} {\sin \pi x} dx  \notag \\
=&
\begin{cases}
0, \qquad \text{if $[a,b]$ contains no integers}; \\
\sum_{j=0}^L f(n_j), \qquad \text{ if $[a,b]$ contain some integers, but $a$ and $b$ are not integers}; \\
\frac 12 f(a) + \sum_{j=1}^L f(n_j), \qquad
\text{ if $a\in \mathbb Z$, $b\notin \mathbb Z$}; \\
\frac 12 f(b)+ \sum_{j=0}^{L-1} f(n_j), \qquad \text{ if $b \in \mathbb Z$, but $a\notin Z$};\\
 \frac 12 (f(a)+f(b)) + \sum_{j=1}^{L-1} f(n_j),
 \qquad \text{ if $a\in \mathbb Z$ and $b\in \mathbb Z$}.
 \end{cases}
 \end{align*}
 Note that
 the above derivation is a natural extension of the usual identity for Dirac comb:
 \begin{align*}
 \sum_{j \in \mathbb Z} \cos 2 \pi j s =\lim_{N\to \infty}
 \sum_{|j|\le N} \cos 2\pi j s = \sum_{n \in \mathbb Z} \delta (s+n),
 \qquad \text{in $\mathcal S^{\prime}(\mathbb R)$}.
 \end{align*}
 \end{rem}

For a function $f: \; \mathbb T \to \mathbb R$, we  denote 
\begin{align*}
(e^{ \nu^2 t \partial_{xx}} Q_N f )(t,x)
=\op{Re} \Bigl( \sum_{-\frac N2<k \le \frac N2} \Bigl( \frac 1 N \sum_{j=0}^{N-1} f(\frac j N) e^{-2\pi i \frac {k j}N }
e^{-(2\pi k)^2 \nu^2 t} 
\Bigr)
e^{2\pi i k\cdot x} \Bigr).
\end{align*}

\begin{thm} \label{N18:1a}
Let $\nu>0$ and identify $\mathbb T=[0,1)$.
There exists a  function $f$: $\mathbb T \to \mathbb R$ with $\|f\|_{\infty} \le 1$ and satisfying
$f$ is continuous at $[0, \frac 14) \bigcup (\frac 14, 1)$ (i.e. $f$ is continuous 
at all $\frac 14 \ne x \in \mathbb T$)
 such that 
the following hold:
for a sequence of even numbers $N_m \to \infty$, $t_m =\frac 14\nu^{-2}N_m^{-2}$ and $x_m
=j_m/N_m$ with $0\le j_m\le N_m-1$ , we have
\begin{align} \label{N19:1}
\Bigl| ( e^{\nu^2 t \partial_{xx} } Q_{N_m} f  )(t_m, x_m ) \Bigr|\ge 1+\epsilon_*,
\end{align}
where $\epsilon_*>0$ is an absolute constant.  

Let $u_m(t) \in \mathbb R^{N_m}$ solve
\begin{align*} %\label{N19:2}
\begin{cases}
\frac d {dt} u_m = \nu^2 \Delta_h u_m +u_m - (u_m)^{.3}; \\
u_m\Bigr|_{t=0} =  v_m,
\end{cases}
\end{align*}
where $(v_m)_j= f(\frac j {N_m})$ for all $0\le j \le N_m-1$.  Then
\begin{align} \label{N19:2}
|u_m(t_m,x_m) | \ge 1+\frac 1 2 \epsilon_*, \qquad\forall\, m\ge 1.
\end{align}
\end{thm}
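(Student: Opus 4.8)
\textbf{Proof plan for Theorem \ref{N18:1a}.} The statement has two parts: first, constructing a function $f$ with $\|f\|_\infty\le 1$, continuous away from $x=\frac14$, for which the sampled heat-smoothed reconstruction $e^{\nu^2 t_m\partial_{xx}}Q_{N_m}f$ overshoots $1$ by a fixed amount at a well-chosen grid point and time; second, transferring this overshoot to the genuine nonlinear ODE solution $u_m$ via a short-time perturbation argument. The plan is to anchor everything on the kernel analysis already done in this section, namely the quantity $A_{N,t}$ and the identity (in the proof of Theorem \ref{N18:1}) that for $t_0=\frac14 N^{-2}\nu^{-2}$ one has $A_{N,t_0}>1.001+O(N^{-1/3}\sqrt{\log N})$, together with the limiting computation $A(1)\ge 1.002$ coming from the explicit Fourier coefficients $\beta_0,\beta_{\pm1},\beta_{\pm2},\beta_{\pm3}$ of $e^{-\pi^2 x^2}$ on $[-\frac12,\frac12)$.

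First I would make the construction explicit. The quantity $A_{N,t_0}=\frac1N\sum_{j=0}^{N-1}|Q_j|$ is a \emph{sum of absolute values} of the values $(e^{\nu^2 t_0\partial_{xx}}Q_NP)(t_0,\frac jN)$ where $P$ is the (normalized) all-ones data; this is larger than $1$ precisely because $Q_j$ changes sign in $j$. The standard trick is to choose $f$ so that its sampled values $f(\frac jN)=\operatorname{sgn}(Q_j)\in\{\pm1\}$ — then the reconstruction at the appropriate grid point equals $\sum_j |Q_j|/N=A_{N,t_0}>1+\epsilon_*$. The subtlety is that the sign pattern of $Q_j$ depends on $N$, so no single function $f$ works for all $N$; instead I would use the fact that, from the proof of Theorem \ref{N18:1}, $Q_j\approx (-1)^j\beta_j$ for $|j|\lesssim N^{1/3}$ (with $k_0=2N\sqrt{t_0}=1$ here), and $\beta_0,\beta_1=\beta_{-1},\beta_2=\beta_{-2},\beta_3=\beta_{-3}$ are \emph{all positive} with $\sum_{|j|\le3}\beta_j\ge1.002$. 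So the sign of $Q_j$ for small $|j|$ is just $(-1)^j$, which is $N$-independent. This suggests taking $f$ to be (a bounded, almost-everywhere continuous version of) $x\mapsto(-1)^{[Nx]}$-type oscillation concentrated near one point — concretely, pick $x_*=\frac14$ and let $f$ near $\frac14$ oscillate between $\pm1$ with the pattern $(-1)^j$ on intervals of shrinking length as $x\to\frac14$, while being a fixed continuous function away from $\frac14$; choosing the sequence $N_m$ along which the oscillation length matches $1/N_m$ makes the sampled data realize the sign pattern $\operatorname{sgn}(Q_{j_m+j})=(-1)^j$ for $|j|\lesssim N_m^{1/3}$ around a chosen base index $j_m$. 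Then $(e^{\nu^2 t_m\partial_{xx}}Q_{N_m}f)(t_m,x_m)\ge\sum_{|j|\le 3}\beta_j-O(N_m^{-1/3}\sqrt{\log N_m})\ge 1+\epsilon_*$ for $m$ large, with $\epsilon_*\in(0,10^{-3})$ an absolute constant; this gives \eqref{N19:1}. I would double-check the tail bounds $|\beta_j|\lesssim j^{-2}$ (already noted in the excerpt) so that the contribution of $|j|>3$ is $O(1)$ but does not spoil the sign of the main terms — actually since we are summing $\sum_j|Q_j|$ this only helps, as all terms add in absolute value.

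Second, I would transfer the overshoot to $u_m(t_m)$. The ODE is $\frac{d}{dt}u_m=\nu^2\Delta_h u_m+u_m-u_m^{.3}$ with $\|v_m\|_\infty\le 1$; writing $u_m(t)=e^{\nu^2 t\Delta_h}v_m+\int_0^t e^{\nu^2(t-s)\Delta_h}(u_m-u_m^{.3})(s)\,ds$ and using Lemma \ref{Dec5:1}-type crude bounds ($\|u_m(s)\|_\infty\lesssim\log N_m$ for $s\le t_m$, since $t_m\sim N_m^{-2}$ is tiny and the Duhamel term is controlled by $t_m$ times a log), the nonlinear correction over $[0,t_m]$ has sup-norm $O(t_m\log N_m)=O(N_m^{-2}\log N_m)$, which is far smaller than $\epsilon_*$. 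Evaluating at the grid point $x_m$ — where $e^{\nu^2 t_m\Delta_h}v_m$ agrees with $(e^{\nu^2 t_m\partial_{xx}}Q_{N_m}f)(t_m,x_m)$ — gives $|u_m(t_m,x_m)|\ge 1+\epsilon_*-O(N_m^{-2}\log N_m)\ge 1+\frac12\epsilon_*$ for $m$ large, which is \eqref{N19:2}.

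\textbf{Main obstacle.} The delicate point is the construction of a single $f$, continuous except at $\frac14$, whose samples on the grid $\{j/N_m\}$ realize the required sign pattern $\operatorname{sgn}(Q_{j_m+j})=(-1)^j$ simultaneously for all $|j|\lesssim N_m^{1/3}$ and all $m$ along the chosen subsequence. One must pick $N_m$ (e.g. $N_m=4\cdot 2^m$ or a rapidly growing sequence) and nested dyadic-type intervals accumulating at $\frac14$ so that on the $m$-th scale the function genuinely takes the values $(-1)^j$ at $x=j_m/N_m+j/N_m$; continuity away from $\frac14$ forces $f$ to interpolate, but since we only sample, the interpolated values are irrelevant. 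Verifying that this $f$ is honestly bounded by $1$, that it is continuous at every $x\ne\frac14$, and that the "boundary'' indices $|j|\sim N_m^{1/3}$ contribute only the $O(N_m^{-1/3}\sqrt{\log N_m})$ error (not something that cancels the main $\sum_{|j|\le3}\beta_j$) is the bookkeeping-heavy part; the rest is routine given Theorem \ref{N18:1} and standard smoothing estimates.
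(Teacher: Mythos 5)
Your overall architecture coincides with the paper's: produce the overshoot at the linear level by choosing $f$ whose grid samples match the sign pattern of the kernel values $Q_j$ (so that the reconstruction at a grid point picks up $\sum_j|Q_j|/N=A_{N,t_0}>1.001$), localize the construction at base points $y_m\to\frac14$ so that a single $f$ continuous off $\frac14$ works along a subsequence $N_m$, and then pass to the nonlinear ODE by Duhamel plus the crude bounds of this section. The second (perturbative) half of your argument is fine and is exactly what the paper does by invoking Theorem \ref{N21_5}.

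There is, however, a genuine gap in your construction of $f$. You prescribe $f(\frac{l_m+j}{N_m})=(-1)^j$ for \emph{all} $|j|\lesssim N_m^{1/3}$, on the assumption that $\operatorname{sgn}(Q_j)=(-1)^j$ throughout that range; but since $Q_j\approx(-1)^j\beta_j$, this amounts to assuming $\beta_j>0$ for all such $j$, which is only verified (numerically) for $|j|\le 3$. It is in fact false: by the Dirac-comb identity used in the appendix one has $\sum_{j\in\mathbb Z}\beta_j=1$, while $\sum_{|j|\le3}\beta_j\ge 1.002$, so $\sum_{|j|\ge4}\beta_j\le-0.002$ and some $\beta_j$ with $|j|\ge4$ are necessarily negative. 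With your choice of $f$, the contribution of the indices $4\le|j|\le N_m^{1/3}$ is precisely $\sum_{4\le|j|\le N_m^{1/3}}\beta_j\approx-0.002$, which cancels the entire margin; your displayed lower bound ``$\ge\sum_{|j|\le3}\beta_j-O(N^{-1/3}\sqrt{\log N})$'' silently drops these terms, and your aside that the tail ``only helps, as all terms add in absolute value'' would be true only if $f$ realized $\operatorname{sgn}(Q_j)$ exactly, which $(-1)^j$ does not do for $|j|\ge4$. The paper's device repairs this cleanly: set $f(\frac{l_m+j}{N_m})=\operatorname{sgn}(c_j)$ for $|j|\le3$ and $f(\frac{l_m+j}{N_m})=0$ for $4\le|j|\le N_m^{1/3}$. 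Then only the seven coefficients of known sign contribute, the bound $1.002+O(N_m^{-1/3}\sqrt{\log N_m})$ is honest, and the interpolation problem becomes trivial (prescribe $\pm1$ at seven points near each $y_m=\frac14+\frac1{m+10}$, zero at the neighbouring sampled points, and interpolate continuously), rather than requiring an alternating $\pm1$ pattern at $\sim N_m^{1/3}$ points per scale.
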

\begin{rem}
The point $x_*= \frac 14$ is for convenience only and we may replace it by any other
point $x_*\in \mathbb T$.
\end{rem}

\begin{proof}
We begin by noting that \eqref{N19:2} follows from \eqref{N19:1} and a perturbation argument
(as in Theorem \ref{N21_5}).
We now show \eqref{N19:1}. With no loss we assume $\nu=1$.  Suppose $f$ satisfies $\|f\|_{\infty} \le 1$. Then for $N\ge 10$, $\tau_N =\frac 14N^{-2}$ and $0\le l \le N-1$, 
by using the proof of Theorem \ref{N18:1}, we have
\begin{align*}
(e^{t \partial_{xx} } Q_N f)( \tau_N , \frac l N)
& = \sum_{j=0}^{N-1} f(\frac {l-j}N) \cdot \frac 1 N \op{Re}
\Bigl( \sum_{-\frac N2 <k \le \frac N2} e^{ -4\pi^2 k^2 \tau_N} e^{2\pi i k\frac j N} \Bigr) \notag \\
& =O(N^{-\frac 13} \sqrt{\log N})
+ f(\frac l N) c_0 + \sum_{1\le | j |\le N^{\frac 13} } f(\frac {l +j }N)  c_j,
\end{align*}
where 
\begin{align*}
c_j = \int_0^1 e^{-\frac 1{16} \pi^2 s^2} \cos (\pi js )ds (-1)^j. 
\end{align*}
By the computation in the proof of Theorem \ref{N18:1}, we have
\begin{align*}
\sum_{|j|\le 3} |c_j| \ge 1.002.
\end{align*}
Thus if $f$ satisfies $f(\frac {l+j} N) = \op{sgn}(c_j)$ for $|j|\le 3$,  and 
$f(\frac {l+j} N) =0$ for $4\le |j|\le N^{\frac 13}$, then 
\begin{align*}
(e^{t \partial_{xx} } Q_N f)( \tau_N , \frac l N)
\ge 1.002 +O(N^{-\frac 13 } \sqrt{\log N}).
\end{align*}
Now for $m=1,2,3,\cdots$, define 
\begin{align*}
y_m = \frac 1 4 + \frac 1 {m+10}.
\end{align*}
We may then choose large  $N_m$ and $0\le l_m\le N_m-1$ such that 
\begin{align*}
\Bigl| \frac {l_m} {N_m} - y_m \Bigr| \ll  \frac 1 {m^2}.
\end{align*}
By a suitable interpolation, it is then obvious to define the function $f$ such that $f(\frac {l_m+j} {N_m} ) = \op{sgn} (c_j)$ for $|j|\le 3$ and for all $m$ satisfying the desired properties.
\end{proof}
To state the next result we need some notation. Suppose $f: \, \mathbb T \to \mathbb R$ is 
continuous. Then we denote the modulus of continuity $\omega_f: [0,1] \to [0,\infty)$ as
\begin{align} \label{N18:1b.0}
\omega_f( \delta) =\max_{|x-y|\le \eta} |f(x)-f(y)|, 
\end{align}
where $|x-y|$ denotes the distance of $x$, $y \in \mathbb T$.  For $0<\alpha<1$, we say $f:\, \mathbb T\to
\mathbb R$ is $C^{\alpha}$-continuous if 
\begin{align} \label{N18:1b.1}
\sup_{0<|x-y|\le 1} \frac { |f(x)-f(y)|} {|x-y|^{\alpha}} <\infty.
\end{align}

\begin{thm} \label{N18:1b}
Let  $\nu>0$. Suppose $f:\, \mathbb T\to \mathbb R$ is continuous and $\|f\|_{\infty} \le 1$. 
Then
\begin{align*}
\sup_{t\ge 0}\sup_{0\le l \le N-1} | (e^{\nu^2 t \partial_{xx} } Q_N f)(t, \frac l N) |
\le 1 + O(\omega_f(N^{-\frac 23}) ) + O(N^{-c}),
\end{align*}
where $c>0$ is an absolute constant. If $f:\, \mathbb T\to \mathbb R$ is $C^{\alpha}$-continuous 
for some $0<\alpha<1$ and $\|f\|_{\infty} \le 1$. 
Then
\begin{align*}
\sup_{t\ge 0}\sup_{0\le l \le N-1} | (e^{\nu^2 t \partial_{xx} } Q_N f)(t, \frac l N) |
\le 1 + O(N^{-c_1}),
\end{align*}
where $c_1>0$ is a constant depending only on $\alpha$. 
\end{thm}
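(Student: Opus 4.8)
\textbf{Proof proposal for Theorem \ref{N18:1b}.}
The plan is to bound the quantity $A_{N,t}^{\mathrm{node}}:=\sup_{0\le l\le N-1}|(e^{\nu^2 t\partial_{xx}}Q_N f)(t,\frac lN)|$ by splitting into the two regimes already handled in Theorem \ref{N18:1}: large diffusion time $N^2\nu^2 t\ge 100\log N$ and small diffusion time $N^2\nu^2 t<100\log N$. With $\nu$ normalized to $1$ (the general case only changes constants), write
\begin{align*}
(e^{t\partial_{xx}}Q_N f)(t,\tfrac lN)=\sum_{j=0}^{N-1} f(\tfrac{l-j}{N})\cdot \frac1N\operatorname{Re}\Bigl(\sum_{-\frac N2<k\le \frac N2} e^{-4\pi^2 k^2 t}e^{2\pi ik\frac jN}\Bigr)=\sum_{j} f(\tfrac{l-j}{N})\,Q_j,
\end{align*}
where $Q_j$ is exactly the kernel analyzed in the proof of Theorem \ref{N18:1}. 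The point is that $\sum_j Q_j=1$ (the $k=0$ term), so $\sum_j f(\frac{l-j}N)Q_j - f(\frac lN)\sum_j Q_j=\sum_j\bigl(f(\frac{l-j}N)-f(\frac lN)\bigr)Q_j$, and we estimate this difference using the modulus of continuity together with the decay of $Q_j$ in $j$.

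For the large-time regime, Theorem \ref{N18:1} already gives $\sum_j|Q_j|\le 1+O(N^{-1})$, so $A_{N,t}^{\mathrm{node}}\le \|f\|_\infty(1+O(N^{-1}))\le 1+O(N^{-1})$ and there is nothing to do. For the small-time regime, the decomposition from the proof of Theorem \ref{N18:1} shows that the total mass is concentrated: the "tail" indices $N^{1/3}<j<N-N^{1/3}$ contribute at most $O(N^{-1/3}\sqrt{\log N})$ to $\sum|Q_j|$, and on the remaining "core" indices $|j|\lesssim N^{1/3}$ (after folding $Q_j=Q_{N-j}$) one has $Q_j=(-1)^j\beta_j+O(N^{-7/6})$ with $\beta_j=\int_0^1 e^{-\frac14\pi^2 k_0^2 s^2}\cos(\pi j s)\,ds$ and $k_0=2N\sqrt t\le 20\sqrt{\log N}$. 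Since $\sum_{j\in\mathbb Z}\beta_j=\int_{\mathbb R}(\text{after Poisson summation})$ equals the value at $0$ of the periodized Gaussian, hence $\sum_j\beta_j=1+O(e^{-c k_0^{-2}})$-type corrections — more precisely one uses $\sum_j|Q_j|=A(k_0)+O(N^{-1/3}\sqrt{\log N})$ with $A(k_0)$ bounded absolutely — the core piece produces the $\|f\|_\infty$ bound plus a genuine difference term $\sum_{|j|\lesssim N^{1/3}}\bigl(f(\frac{l-j}N)-f(\frac lN)\bigr)(-1)^j\beta_j$. Here $|\frac{l-j}N-\frac lN|=\frac{|j|}N\le N^{1/3-1}=N^{-2/3}$, so each difference is $O(\omega_f(N^{-2/3}))$; summing against $\sum_j|\beta_j|=A(k_0)=O(1)$ gives the bound $1+O(\omega_f(N^{-2/3}))+O(N^{-1/3}\sqrt{\log N})+O(N^{-7/6+1/3})$. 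Taking $c$ to be a small absolute constant (say $c=\frac16$) absorbing $\sqrt{\log N}$ factors then yields the first claim. The $C^\alpha$ case follows at once by $\omega_f(N^{-2/3})\le \|f\|_{\dot C^\alpha}N^{-2\alpha/3}$, so one may take $c_1=\min\{\frac{2\alpha}{3},\,\frac16\}$.

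The main obstacle I expect is not any single estimate but the bookkeeping of the error terms uniformly in $t$ (equivalently in $k_0\in(0,20\sqrt{\log N}]$): one must verify that the decay $|\beta_j|\lesssim j^{-2}$ and the tail bound $O(N^{-1/3}\sqrt{\log N})$ hold \emph{with constants independent of $k_0$} over this whole range, so that the convergence of $\sum_j|\beta_j|$ and the localization of the mass to $|j|\lesssim N^{1/3}$ are genuinely uniform. This is exactly the content of the integration-by-parts arguments already carried out in the proof of Theorem \ref{N18:1} (three integrations by parts give $|\beta_j|\lesssim j^{-2}$ uniformly for $k_0\le 20\sqrt{\log N}$), so the proof will mostly consist of invoking those estimates and then inserting the modulus-of-continuity bound on the differences $f(\frac{l-j}N)-f(\frac lN)$; I would therefore present it as a short corollary-style argument referencing the proof of Theorem \ref{N18:1} rather than redoing the kernel analysis.
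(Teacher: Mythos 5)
Your proposal is correct and follows essentially the same route as the paper's proof: reduce to the regime $N^2\nu^2 t<100\log N$ via Theorem \ref{N18:1}, reuse its tail/core kernel decomposition with the coefficients $c_j$, and replace $f(\tfrac{l+j}{N})$ by $f(\tfrac lN)$ on the core $|j|\le N^{1/3}$ at a cost of $\omega_f(N^{-2/3})$ per term summed against the uniformly bounded $\sum_j|\beta_j|=A(k_0)$. The only cosmetic difference is that you phrase the main term via $\sum_jQ_j=1$ minus the tail, which is exactly what the paper does implicitly.
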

\begin{proof}
With no loss we assume $\nu=1$.  Suppose $f$ satisfies $\|f\|_{\infty} \le 1$.
By Theorem \ref{N18:1} we only need to consider the regime $N^2 t <100 \log N$. 
For $0\le l \le N-1$ and $N^2 t <100 \log N$, 
by using the proof of Theorem \ref{N18:1}, we have
\begin{align*}
(e^{t \partial_{xx} } Q_N f)( t , \frac l N)
& = \sum_{j=0}^{N-1} f(\frac {l-j}N) \cdot \frac 1 N \op{Re}
\Bigl( \sum_{-\frac N2 <k \le \frac N2} e^{ -4\pi^2 k^2 t } e^{2\pi ik \frac j N} \Bigr) \notag \\
& =O(N^{-\frac 13} \sqrt{\log N})
+ f(\frac l N) c_0 + \sum_{1\le | j |\le N^{\frac 13} } f(\frac {l +j }N)  c_j,
\end{align*}
where 
\begin{align*}
c_j = \int_0^1 e^{-\frac 14 \pi^2 N^2 t s^2} \cos (\pi js )ds (-1)^j. 
\end{align*}
Now observe that for $|j| \le N^{\frac 13}$, 
\begin{align*}
&| f( \frac {l +j } N) -f (\frac l N) | \le \omega_f (N^{-\frac 23} ), \quad \text{if $f$ is continuous}, \\
& | f(\frac {l+j} N) -f (\frac l N) | \le O(N^{-\frac 23 \alpha}), \quad \text{if $f$ is $C^{\alpha}$
continuous}.
\end{align*}
Thus in the main order, the function $f$ can be taken as a constant value (the contributions of
$f$ for   $j$ in the
regime $N^{\frac 13}<j<N-N^{\frac 13}$ is unimportant).  Since
\begin{align*}
\sum_{j=0}^{N-1} \frac 1 N \op{Re}
\Bigl( \sum_{-\frac N2 <k \le \frac N2} e^{ -4\pi^2 k^2 t } e^{2\pi ik \frac j N} \Bigr)
=1,
\end{align*}
the desired result then follows easily.
\end{proof}

\section{Estimate of $(\op{Id} - \nu^2 \tau \partial_{xx} )^{-n} Q_N U^0$ and related
estimates}
\begin{lem} \label{N29_apL1}
Let $\tau>0$ and $s\ge 1$. Then  for any $1\le p <q\le \infty$, we have
\begin{align} \label{N29_apL2}
\| (\op{Id} - \tau \nu^2 \partial_{xx} )^{-s} f \|_{L_x^q(\mathbb T)}
\le C_{p,q,\nu} \cdot \Bigl(1+ (s\tau )^{-\frac 12(\frac 1p-\frac 1q)}  \Bigr)
\| f \|_{L_x^p(\mathbb T)},
\end{align}
where $C_{p,q, \nu}>0$ depends only on ($p$, $q$, $\nu$). Also
\begin{align} \label{N29_apL3}
\| \partial_x (\op{Id} - \tau \nu^2 \partial_{xx} )^{-s} f \|_{L_x^2(\mathbb T)}
\le C_2 \cdot \Bigl(1+ (s\tau )^{-\frac 34}  \Bigr)
\| f \|_{L_x^1(\mathbb T)},
\end{align}
where $C_2>0$ depends only on $\nu$.
\end{lem}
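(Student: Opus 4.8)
\textbf{Proof plan for Lemma \ref{N29_apL1}.}

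The plan is to exploit the explicit spectral representation of the operator $A_s := (\op{Id} - \tau\nu^2\partial_{xx})^{-s}$, whose Fourier multiplier is $\widehat{A_s}(k) = (1+\tau\nu^2(2\pi k)^2)^{-s}$ for $k\in\mathbb Z$, and reduce everything to an $L^1$ bound on the associated periodic convolution kernel. Write $A_s f = K_{s,\tau} * f$ where $K_{s,\tau}(x) = \sum_{k\in\mathbb Z}(1+\tau\nu^2(2\pi k)^2)^{-s} e^{2\pi i k\cdot x}$. By Young's inequality on the torus, \eqref{N29_apL2} with general $1\le p<q\le\infty$ will follow from a bound of the shape $\|K_{s,\tau}\|_{L^r(\mathbb T)} \le C_{r,\nu}(1+(s\tau)^{-\frac12(1-\frac1r)})$ for the conjugate exponent $r$ defined by $1+\frac1q = \frac1p + \frac1r$; similarly \eqref{N29_apL3} will follow from $\|\partial_x K_{s,\tau}\|_{L^2(\mathbb T)}\le C_\nu(1+(s\tau)^{-\frac34})$ together with $\|(\partial_x K_{s,\tau})*f\|_2 \le \|\partial_x K_{s,\tau}\|_2\|f\|_1$. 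So the whole lemma is an estimate on $L^r$ norms of $K_{s,\tau}$ and its derivative.

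The key steps, in order, are as follows. First, I would treat the regime $s\tau \gtrsim 1$ separately: there the kernel is essentially harmless because $\widehat{A_s}(k)$ decays like a Gaussian-type tail and one checks directly $\|K_{s,\tau}\|_{L^\infty}\lesssim 1$ and $\|\partial_x K_{s,\tau}\|_{L^2}\lesssim 1$, say by summing the series or comparing with the heat kernel $e^{s\tau\nu^2\partial_{xx}}$ (since $(1+y)^{-s}\le e^{-sy/2}$ for $0\le y\le 1$, and for $y\ge1$ one has $(1+y)^{-s}\le 2^{-s}$, both giving rapidly converging sums). This handles the ``$1+$'' part. Second, for the genuinely singular regime $0<s\tau\ll 1$, I would pass to the real line: introduce $h_{s,\tau}(x) = \mathcal F^{-1}\bigl((1+\tau\nu^2(2\pi\xi)^2)^{-s}\bigr)(x)$ on $\mathbb R$, note the scaling $h_{s,\tau}(x) = (\nu\sqrt\tau)^{-1} h_{s,1}(x/(\nu\sqrt\tau))$ after absorbing constants, and relate $K_{s,\tau}$ to the periodization $\sum_n h_{s,\tau}(x+n)$ via Poisson summation — which is legitimate here because for $s\ge 1$ the Fourier coefficients are summable and $h_{s,\tau}$ has enough decay (in fact exponential decay for $s$ a positive integer, polynomial decay of order $2s$ in general; either suffices). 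Third, I would estimate $\|h_{s,1}\|_{L^r(\mathbb R)}$ and $\|\partial_x h_{s,1}\|_{L^2(\mathbb R)}$: near the origin $h_{s,1}(x)$ behaves like the $1$D Bessel potential $G_{2s}$, i.e. like $|x|^{2s-1}$ when $2s<1$, $\log(1/|x|)$ when $2s=1$, and is bounded when $2s>1$ (all consistent with the asymptotics for $G_s$ recalled in Section 2 with $d=1$); away from the origin it has fast decay. Then the scaling $h_{s,\tau}$ converts the singularity scale $1$ into scale $\nu\sqrt\tau$, producing exactly the factor $(s\tau)^{-\frac12(1-\frac1r)}$ after computing the $L^r$ norm of a function that looks like $(\nu\sqrt\tau)^{-1}$ on a set of measure $\sim\nu\sqrt\tau$; the $s$-dependence of the prefactor is uniform for $s\ge1$ so it can be absorbed into $C_{p,q,\nu}$. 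For $\partial_x K_{s,\tau}$ one repeats this with an extra derivative: $\partial_x h_{s,1}$ is $L^2$-integrable near $0$ precisely because the multiplier $\xi(1+\xi^2)^{-s}\in L^2(\mathbb R)$ for $s\ge1$ (Plancherel), and scaling yields $(s\tau)^{-3/4}$.

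The main obstacle I anticipate is not conceptual but bookkeeping: making the transition from the torus to $\mathbb R$ rigorous and getting the $(s\tau)$-power exactly right. One has to be careful that the ``error'' from periodization (the terms $h_{s,\tau}(x+n)$ with $n\ne 0$) is genuinely lower order — this is fine since those terms are evaluated at distance $\ge 1 \gg \nu\sqrt\tau$ from any singularity, so they contribute $O(1)$ in $L^\infty$ hence $O(1)$ in every $L^r(\mathbb T)$, which is dominated by the ``$1+$'' term already present. A secondary nuisance is the borderline case $2s$ near the value where the near-origin behavior of $h_{s,1}$ switches (e.g. $s$ close to $\frac12$, if one wanted the range $\frac12\le s<1$), but since the statement assumes $s\ge1$ we are always in the bounded-kernel regime for $h_{s,1}$ itself, and only $\partial_x h_{s,1}$ carries a genuine singularity, which is controlled by Plancherel as noted. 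Hence I expect the argument to go through cleanly, with the bulk of the work being the explicit $L^r$ computation of the model function $x\mapsto (\nu\sqrt\tau)^{-1}\langle x/(\nu\sqrt\tau)\rangle^{-2s}$ and its derivative.
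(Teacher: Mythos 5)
Your overall reduction (Young's inequality on the torus plus an $L^r$ bound on the convolution kernel) matches the paper's, but your kernel estimate has a genuine gap: the scaling you describe produces $\tau^{-\frac12(\frac1p-\frac1q)}$, not the asserted $(s\tau)^{-\frac12(\frac1p-\frac1q)}$. Concretely, rescaling only in $\tau$ gives $h_{s,\tau}(x)=(\nu\sqrt\tau)^{-1}h_{s,1}(x/(\nu\sqrt\tau))$, and your model computation (``height $(\nu\sqrt\tau)^{-1}$ on a set of measure $\sim\nu\sqrt\tau$'') yields $\|h_{s,\tau}\|_{L^r}\sim \tau^{-\frac12(1-\frac1r)}$ with no $s$ anywhere. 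The decay in $s$ cannot be ``absorbed into $C_{p,q,\nu}$'' as you claim — it \emph{is} the content of the lemma. What you are missing is that $h_{s,1}$ itself spreads out as $s\to\infty$: by the substitution $\xi\mapsto\xi/\sqrt s$ one finds $\|h_{s,1}\|_{L^\infty(\mathbb R)}=\int(1+(2\pi\nu\xi)^2)^{-s}d\xi\sim_\nu s^{-1/2}$ and more generally $\|h_{s,1}\|_{L^r(\mathbb R)}\sim_{\nu,r} s^{-\frac12(1-\frac1r)}$, and likewise $\|\partial_x h_{s,1}\|_{L^2}\sim_\nu s^{-3/4}$ by Plancherel. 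Without tracking this decay your argument proves a strictly weaker statement, which would not suffice for the applications in the paper (e.g. the discrete smoothing bounds $\sqrt{n\tau}\,\|\partial_h U^n\|_2\lesssim 1$, where the lemma is applied with $s=n\to\infty$ and $\tau\to0$ with $n\tau$ fixed).

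The paper avoids this entirely by staying on the Fourier side: it uses the elementary convexity inequality $(1+\tfrac xs)^{-s}\le(1+x)^{-1}$ for $s\ge1$, $x\ge0$, which dominates the multiplier $(1+\tau\nu^2(2\pi k)^2)^{-s}=(1+\tfrac{\alpha k^2}{s})^{-s}$ by the resolvent multiplier $(1+\alpha k^2)^{-1}$ at the \emph{rescaled} parameter $\alpha=s\tau(2\pi\nu)^2$; direct summation then gives $\|K\|_{L^\infty}\lesssim 1+\alpha^{-1/2}$, and interpolation against $\|K\|_{L^1}=1$ (the kernel is nonnegative, e.g. by subordination to the heat semigroup) gives the $L^r$ bound with the correct $(s\tau)$-power. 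For \eqref{N29_apL3} the paper similarly peels off a factor $(s\tau)^{-1/2}$ from the multiplier $|2\pi k|(1+\tfrac{\alpha k^2}{s})^{-s}$ and bounds the remainder by duality. If you want to salvage your real-space route you must compute the $s$-dependence of $\|h_{s,1}\|_{L^r}$ and $\|\partial_x h_{s,1}\|_{L^2}$ explicitly (equivalently, rescale by $\nu\sqrt{s\tau}$ rather than $\nu\sqrt\tau$), at which point the two proofs become essentially equivalent.
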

\begin{proof}
Denote by $K$ the kernel corresponding to $(\op{Id}-\tau \nu^2 \partial_{xx} )^{-s}$.  
Clearly $\|K\|_{L_x^1(\mathbb T)} =1$ and 
\begin{align*}
\| K\|_{L_x^{\infty}(\mathbb T)} \le 1+ \sum_{0\ne k \in \mathbb Z} (1+ \frac {\alpha} s k^2)^{-s},
\end{align*}
where $\alpha=s \tau (2\pi \nu)^2$.  Now note that for $s\ge 1$ and $x\ge 0$, we have
\begin{align} \label{N29_apL4}
(1+\frac x s)^{-s} \le (1+x)^{-1}.
\end{align}
It follows that
\begin{align*}
&\|K\|_{L_x^{\infty}(\mathbb T)} \lesssim 1+ \alpha^{-\frac 12}; \\
& \|K\|_{L_x^r(\mathbb T)} \lesssim 1+\alpha^{-\frac 12(1-\frac 1r)}, \qquad\forall\, 1\le r\le \infty.
\end{align*}
Then \eqref{N29_apL2}  follows from Young's inequality. Now for 
\eqref{N29_apL3}, we first note that  (by using \eqref{N29_apL4})
\begin{align*}
\| \partial_x (\op{Id} - \tau \nu^2 \partial_{xx} )^{-s} f \|_{L_x^2(\mathbb T)}
\lesssim (\tau s)^{-\frac 12}
\| (\op{Id} - s\tau \nu^2 \partial_{xx})^{-\frac 12} f\|_{L_x^2(\mathbb T)}.
\end{align*}
Denote $T= (\op{Id}-s\tau \nu^2 \partial_{xx})^{-\frac 12}$. Since
\begin{align*}
\| Tf \|_2^2 = \langle f , T^2 f \rangle.
\end{align*}
The desired estimate follows easily.
\end{proof}

We recall for $n\ge 1$,
\begin{align*}
( (\op{Id} - \nu^2 \tau \partial_{xx} )^{-n} Q_N U^0 )(x)
=\op{Re} \Bigl( \sum_{-\frac N2<k \le \frac N2} \Bigl( \frac 1 N \sum_{j=0}^{N-1} U^0_j e^{-2\pi i \frac {k j}N }
(1+\nu^2\tau (2\pi k)^2 )^{-n} 
\Bigr)
e^{2\pi i k\cdot x} \Bigr).
\end{align*}
We need to examine $\sup_{0\le l\le N-1} | ((\op{Id} - \nu^2 \tau \partial_{xx} )^{-n}  Q_N U^0 )(\frac lN)|$ under the assumption that
$\| U^0 \|_{\infty} \le 1$.
%We need to bound the discrete $L^1$ mass (under the assumption that $\| U^0\|_{\infty} \le 1$)
%\begin{align*}
%\frac 1 N \sum_{j=0}^{N-1} |(e^{t \partial_{xx} } Q_N U^0) (\frac j N) |.
%\end{align*}
Clearly we have
\begin{align*}
\sup_{0\le l\le N-1} | ((\op{Id} - \nu^2 \tau \partial_{xx} )^{-n} Q_N U^0 )(\frac lN)| \le 
\frac 1 N \sum_{j=0}^{N-1} \Bigl| \op{Re} \Bigl( { \sum_{-\frac N2 < k \le \frac N2}
(1+ (2\pi \nu k)^2 \tau)^{-n} 
e^{2\pi i k \cdot \frac j N}}  \Bigr)\Bigr| =: B_{N,n}.
\end{align*}

Below we shall write $X= O(Y)$ if $|X|\le CY$  and $C>0$ is a  constant depending only on $\nu$.
\begin{thm} \label{N21:1}
Let $\nu>0$, $\tau>0$ and $n\ge 1$.
It holds that 
\begin{align*}
B_{N,n} \le 
\begin{cases}
1+O(N^{-1}), \qquad \text{if $\tau\ge \frac 12$}; \\
1+ O(N^{-0.7}), \qquad \text{if $0<\tau<\frac 12$ and $ n\tau \nu^2 \ge N^{-0.3} $}; \\
1+ \epsilon_a + O(N^{-\frac 13} \sqrt{\log N}), \qquad \text{if 
$0<\tau<\frac 12$ and $ n\tau \nu^2\le N^{-0.3} $}.
\end{cases}
\end{align*}
Here $0<\epsilon_a <0.01$ is an absolute constant.

If
$\tau = \frac 1 {4 N^2 \nu^2}$, then  for $n=1$ we have
\begin{align*}
B_{N, 1} >1.001+O(N^{-c}),
\end{align*}
where $c>0$ is an absolute constant.

 Suppose $f:\, \mathbb T\to \mathbb R$ is continuous and $\|f\|_{\infty} \le 1$.  Then
\begin{align} \label{N21:1.0a}
\sup_{0\le l \le N-1} \Bigl| 
( (\op{Id}-\nu^2 \tau \partial_{xx} )^{-n} Q_N f)( \frac l N) \Bigr| 
\le 1 + O(\omega_f(N^{-\frac 23}) ) + O(N^{-c}),
\end{align}
where $c>0$ is an absolute constant, and $\omega_f$ is defined in \eqref{N18:1b.0}.

Suppose $f:\, \mathbb T\to \mathbb R$ is $C^{\alpha}$-continuous for some $0<\alpha<1$ 
and $\|f\|_{\infty} \le 1$.  
If $(U^0)_l= f(\frac l N)$ for all $0\le l\le N-1$,  then
\begin{align} \label{N21:1.0b}
\sup_{0\le l \le N-1} \Bigl| 
( (\op{Id}-\nu^2 \tau \partial_{xx} )^{-n} Q_N f)( \frac l N) \Bigr| 
\le 1 + O(N^{-c_1}),
\end{align}
where $c_1>0$ is a constant depending only on $\alpha$. 

There exists a  function $f$: $\mathbb T \to \mathbb R$, continuous at all of $\mathbb T
\setminus \{x_*\}$ for some $x_*\in \mathbb T$ (i.e. continuous at all $x\ne x_*$) and  has the bound $\|f\|_{\infty} \le 1$
 such that 
the following hold:
for a sequence of even numbers $N_m \to \infty$, $\tau_m =\frac 14\nu^{-2}N_m^{-2}$ and $x_m
=j_m/N_m$ with $0\le j_m\le N_m-1$ , we have 
\begin{align} \label{N21:1.0c}
\Bigl| ( (\op{Id} - \nu^2 \tau \partial_{xx} )^{-1} Q_N f) (x_m)  \Bigr| \ge 1+ \eta_*, \qquad\forall\, m\ge 1,
\end{align}
where $\eta_*>0.001$ is an absolute constant.

\end{thm}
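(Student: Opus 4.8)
\textbf{Proof proposal for Theorem \ref{N21:1}.}
The plan is to follow the same template as the proof of Theorem \ref{N18:1a}, replacing the heat semigroup $e^{\nu^2 t \partial_{xx}}$ with the resolvent power $(\op{Id}-\nu^2\tau\partial_{xx})^{-1}$ and the Gaussian weight with the corresponding multiplier $(1+(2\pi\nu k)^2\tau)^{-1}$. First I would reduce everything to a pointwise statement about the kernel on the grid. With no loss set $\nu=1$. For $x=\frac lN$ with $0\le l\le N-1$ and $\tau=\tau_N=\frac14 N^{-2}$, expand
\begin{align*}
\bigl( (\op{Id}-\tau_N\partial_{xx})^{-1} Q_N f\bigr)\Bigl(\frac lN\Bigr)
= \sum_{j=0}^{N-1} f\Bigl(\frac{l-j}{N}\Bigr)\cdot \frac1N\op{Re}
\Bigl( \sum_{-\frac N2<k\le\frac N2} (1+(2\pi k)^2\tau_N)^{-1} e^{2\pi i k\frac jN}\Bigr).
\end{align*}
Denote by $d_j$ the inner factor $\frac1N\op{Re}(\sum_k (1+(2\pi k)^2\tau_N)^{-1} e^{2\pi ik j/N})$. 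By the computations already performed in the proof of Theorem \ref{N21:1} (the estimate $B_{N,1}\le 1+\epsilon_a+O(N^{-1/3}\sqrt{\log N})$ and especially the explicit lower bound $B_{N,1}>1.001+O(N^{-c})$), the kernel $(d_j)$ concentrates on $|j|\lesssim N^{1/3}$, the tail contributes $O(N^{-1/3}\sqrt{\log N})$, and $\sum_{|j|\le 3}|d_j|\ge 1.001+O(N^{-c})$ for an absolute $c>0$; this last fact is the analogue of $\sum_{|j|\le3}|c_j|\ge1.002$ in Theorem \ref{N18:1}. That quantitative ``excess mass'' statement, which is already proved in the excerpt as part of Theorem \ref{N21:1}, is the crucial input and I would simply cite it rather than rederive it.

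Next I would build the function $f$. Fix $\eta_*:=\frac12(1.001-1)>0.0005$; actually choose $\eta_*>0.001$ by taking the excess $\sum_{|j|\le3}|d_j|-1$ slightly more carefully, say $\ge1.002+O(N^{-c})$ as in Theorem \ref{N18:1}, so that after absorbing the $O(N^{-c})$ error one still exceeds $1+\eta_*$ with $\eta_*>0.001$. Set $x_* = \frac14$. For $m=1,2,3,\dots$ put $y_m = \frac14 + \frac1{m+10}$, so $y_m\to\frac14$ and the $y_m$ are distinct and bounded away from $\frac14$ for each fixed $m$. Choose an even integer $N_m$ growing fast enough (e.g. $N_m\ge 2m^4$) and an index $0\le l_m\le N_m-1$ with $|\frac{l_m}{N_m}-y_m|\ll m^{-2}$; since the spacing between the consecutive ``anchor windows'' $[y_m-N_m^{-2/3}, y_m+N_m^{-2/3}]$ is $\asymp m^{-2}$ while each window has width $\ll m^{-2}$ once $N_m$ is large, these windows are pairwise disjoint and all avoid a fixed neighborhood of $\frac14$. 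On the window around $\frac{l_m}{N_m}$ prescribe $f(\frac{l_m+j}{N_m}) = \op{sgn}(d_j)$ for $|j|\le 3$ and $f(\frac{l_m+j}{N_m})=0$ for $4\le|j|\le N_m^{1/3}$; off all windows set $f\equiv 0$, and interpolate continuously (piecewise linearly, say, rescaled to respect $\|f\|_\infty\le1$) between prescribed values. Because the windows shrink and accumulate only at $\frac14$, the resulting $f$ is continuous on $\mathbb T\setminus\{x_*\}$ and bounded by $1$; at $x_*=\frac14$ it genuinely fails to be continuous, which is unavoidable and is the point of the counterexample. Then plug into the expansion: at $x_m=\frac{l_m}{N_m}$ and $\tau_m=\frac14 N_m^{-2}$ one gets $|((\op{Id}-\tau_m\partial_{xx})^{-1} Q_{N_m}f)(x_m)| \ge \sum_{|j|\le3}|d_j| + O(N_m^{-1/3}\sqrt{\log N_m}) \ge 1+\eta_*$ for all large $m$; discarding finitely many initial $m$ (or starting the sequence later) gives the bound for all $m\ge1$. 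This yields \eqref{N21:1.0c}, and reinstating general $\nu>0$ only rescales $\tau_m$ to $\frac14\nu^{-2}N_m^{-2}$ as stated.

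The main obstacle is the geometric bookkeeping of the windows: one must ensure simultaneously that (i) the prescription windows $\{\frac{l_m+j}{N_m}: |j|\le N_m^{1/3}\}$ for different $m$ do not overlap, so that $f$ is well-defined, (ii) they do not overlap the ``$f=0$'' bulk in a way that spoils the clean value $f(x_m)=\op{sgn}(d_0)$ and the cancellation structure, and (iii) the only accumulation point of the windows is $x_*$, so continuity holds everywhere except there. All three are handled by choosing $y_m\to\frac14$ with spacing $\asymp m^{-2}$ and $N_m$ growing polynomially fast in $m$ (window width $\asymp N_m^{1/3}/N_m = N_m^{-2/3}\ll m^{-2}$), exactly as in the proof of Theorem \ref{N18:1a}; the argument there transfers essentially verbatim. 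The remaining ingredients—the kernel localization on $|j|\lesssim N^{1/3}$, the tail bound $O(N^{-1/3}\sqrt{\log N})$, and the excess-mass inequality $\sum_{|j|\le3}|d_j|>1.001$—are already established earlier in the statement of Theorem \ref{N21:1} and in Theorem \ref{N18:1}, so no new estimate is needed; the work is purely in assembling $f$.
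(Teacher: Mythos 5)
Your proposal only addresses the final clause of the theorem, the construction of the discontinuous $f$ realizing \eqref{N21:1.0c}, and for the rest it cites ``the estimate $B_{N,1}\le 1+\epsilon_a+O(N^{-1/3}\sqrt{\log N})$ and especially the explicit lower bound $B_{N,1}>1.001+O(N^{-c})$'' as ``already proved in the excerpt as part of Theorem \ref{N21:1}.'' That is circular: those bounds are part of the statement you are being asked to prove, and they constitute the bulk of the work. Concretely, three ingredients are missing. First, the three-regime upper bound on $B_{N,n}$ for \emph{general} $n\ge 1$: the regimes $\tau\ge\frac12$ and $n\tau\nu^2\ge N^{-0.3}$ are handled by direct tail sums over $|k|\ge N/2$ using $(1+(2\pi k)^2\tau)^{-n}\le(1+n\tau k^2)^{-1}$-type comparisons, while the regime $n\tau\nu^2\le N^{-0.3}$ requires the subordination identity $a^{-n}=\frac1{(n-1)!}\int_0^\infty e^{-sa}s^{n-1}\,ds$ to transfer the Gaussian result of Theorem \ref{N18:1} to the resolvent power; the same identity is what yields \eqref{N21:1.0a} and \eqref{N21:1.0b} from Theorem \ref{N18:1b}. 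None of this appears in your write-up. Second, the excess-mass inequality for $n=1$, $\tau=\frac14 N^{-2}\nu^{-2}$ cannot be borrowed from the Gaussian computation $\sum_{|j|\le3}|c_j|\ge 1.002$: the kernel is now Lorentzian rather than Gaussian, and one must redo the Poisson-summation/integration argument with $e^{-|y|/\sqrt\tau}$ in place of $e^{-y^2}$, arriving at the coefficients $\beta_j=\int_0^1(1+(k_1\pi s)^2)^{-1}\cos\pi js\,ds$ with $k_1=N\sqrt\tau=\frac12$, and then verify numerically that $\sum_{|j|\le3}|\beta_j|>1.002$ (the paper gets $\approx 1.0023$ from $\beta_0\approx0.639$, $\beta_1\approx0.166$, \dots). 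You assume the analogue holds but do not establish it, and it is not automatic.

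The one part you do carry out, the assembly of the counterexample $f$ from shrinking windows around $y_m=\frac14+\frac1{m+10}$ accumulating only at $x_*=\frac14$, is correct and matches the paper's intent (the paper itself only sketches this step, deferring to the proof of Theorem \ref{N18:1a}); your bookkeeping of window disjointness and the single accumulation point is if anything more explicit than the paper's. But as a proof of the full statement the proposal has a genuine gap: it presupposes exactly the quantitative kernel estimates that the theorem asserts.
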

We now give the proof of Theorem \ref{N21:1}. To ease the notation
we shall assume $\nu=1$. We discuss several cases.

Case 1: $\tau\ge \frac 12$. Observe that  for all $n\ge 1$,
\begin{align*}
\sum_{|k|\ge \frac N2}
\frac 1 { (1+ (2\pi k)^2 \tau)^{n} } & \lesssim \int_{ |k|\ge \frac N2 -1} \frac 1 {k^2}  d k\notag \\
& \le O(N^{-1}).
\end{align*}
Thus in this case
\begin{align*}
|B_{N,n} | \le 1+ O(N^{-1}).
\end{align*}

Case 2: $0<\tau<\frac 12$ and $n\tau \ge N^{-0.3}$.  We have
\begin{align*}
\sum_{|k|\ge \frac N2}
\frac 1 { (1+ (2\pi k)^2 \tau)^{n} } & \lesssim \int_{ |k|\ge \frac N2 -1} \frac 1 {1+ n\tau k^2}  d k\notag \\
& \le O(N^{-1}(n\tau)^{-1} ) \le O(N^{-0.7}).
\end{align*}
Thus in this case
\begin{align*}
|B_{N,n} | \le 1+ O(N^{-0.7}).
\end{align*}

Case 3: $0<\tau<\frac 12$ and $n\tau<N^{-0.3}$.  In this case we shall use Theorem
\ref{N18:1}.  Note that for $a= 1+\beta k^2$ ($\beta>0$ is a constant) and $n\ge 1$, we have
\begin{align} \label{N21:2.0c}
a^{-n} = \frac 1 {(n-1)!} \int_0^{\infty} e^{-sa} s^{n-1} ds.
\end{align}
The desired estimate then follows from this and Theorem \ref{N18:1}.

By a similar reasoning, we note that \eqref{N21:1.0a}--\eqref{N21:1.0b} follow from \eqref{N21:2.0c} and Theorem
\ref{N18:1b}.

Next we shall derive more detailed estimates for $B_{N,1}$. 
We begin with  an identity: for $\tau>0$, 
\begin{align*}
\int_{\R} \frac 1 {1+(2\pi k)^2 \tau} e^{2\pi i kx} dk = \frac 1 {2\sqrt {\tau} } e^{-\frac {|x|} 
{\sqrt {\tau} } }.
\end{align*}
  
We denote $B_{N,1}$ as $B_N$:
\begin{align*}
B_N=\frac 1 N \sum_{j=0}^{N-1} |Q_j|,
\end{align*}
where
\begin{align*}
Q_j &= \sum_{-\frac N2 < k\le \frac N2} \frac 1 { 1+(2\pi k)^2 \tau} e^{2\pi i k \frac jN} \notag \\
& =\frac 1 {2\sqrt{\tau}} \int_{\R} e^{-\frac {|y- \frac j N |} {\sqrt{\tau} } }
\frac {\sin N\pi y} {\tan \pi y} dy \notag \\
& =(-1)^j \frac 1 {2\sqrt{\tau}} \int_{\R} e^{-\frac {|y|} {\sqrt{\tau} } }
\frac {\sin N \pi y} { \tan \pi( \frac j N +y) } dy \notag \\
&=(-1)^j \frac 12 \int_{\R} e^{-|y|} 
\frac{\sin k_1 y} { \tan \pi( \frac {j +k_1y } N)} dy.
\end{align*}
Here we denote $k_1= N\sqrt{\tau}$.

 Case 1: $k_1 \ge N^{\frac {1+\epsilon_0}2}$ where $0<\epsilon_0\ll 1$ is a small
 constant. Then $N\tau>N^{\epsilon_0}$ and
 \begin{align*}
 B_N \le 1 + \sum_{|k|\ge \frac N2} \frac 1 {1+(2\pi k)^2 \tau} 
 \le 1 + O(N^{-\epsilon_0}).
 \end{align*}
  
  Case 2: $k_1 < N^{\frac {1+\epsilon_0} 2}$. Note that we may restrict the integral
  in $dy$ to the regime $|y| \le 100 \log N$ since the contribution due to the other 
  regime is negligible. 
  
  First consider $ N^{\frac 12+2\epsilon_0} \le j \le N-N^{\frac 12+2\epsilon_0}$. Clearly
  for $j_1 =\min\{j, |N-j| \}$, we have
  \begin{align*}
  \int_{0<y< 100 \log N} e^{-y} | \cot(\frac {j+k_1 y} N) -
  \cot(\frac {j-k_1 y} N ) | dy 
  \lesssim   \frac { N^2 } {j_1^2} 
  \cdot \frac {k_1} N
  \end{align*}
  Then
  \begin{align*}
  \frac 1 N \sum_{N^{\frac 12+2\epsilon_0} \le j \le N- N^{\frac 12+2\epsilon_0}}
  |Q_j| \lesssim  k_1 \cdot N^{-\frac 12-2\epsilon_0}  \le O(N^{-\epsilon_0}).
  \end{align*}
  
  Next consider $0\le j< N^{\frac 12+2\epsilon_0}$.  We rewrite
  \begin{align*}
  Q_j = \frac 1{2k_1} \int_{\R} e^{-\frac {|y+j|} {k_1} }
  \frac {\sin \pi y } {\tan \pi\frac y N} dy
  \end{align*}
  In the above integral, observe that the regime $|y| \ge N^{\frac 23}$ is negligible. 
  On the other hand for $|y| < N^{\frac 23}$, we observe that
  \begin{align*}
  |\cot (\frac y N) - \frac N y | \lesssim \frac {|y|} N.
  \end{align*}
  Clearly the contribution due to the error term $O(\frac {|y|} N) \le O(\frac {|y|} {k_1} )
  \cdot \frac {k_1} N$ is bounded by
  \begin{align*}
  \frac 1 N \sum_{j\le  N^{\frac 12 +2\epsilon_0} } \frac {k_1} N \lesssim O(N^{-c}).
  \end{align*}
  
 We then obtain 
 \begin{align*}
 B_N = O(N^{-c})+\sum_{|j| \le N^{\frac 12 + 2\epsilon_0} }
 \frac 1 {2k_1} \frac 1 {\pi} 
 \Bigl| \int_{\R} e^{-\frac {|y+j|} {k_1} } \frac {\sin \pi y} y dy \Bigr|
 \end{align*}
 
 Now we have
 \begin{align*}
 F_j(1) = \frac 1 {2k_1 \pi} \int_{\R} e^{-\frac {|y+j|} {k_1} }
 \frac {\sin \pi y} y dy= \int_0^1 F_j^{\prime}(s) ds,
 \end{align*}
  where
  \begin{align*}
  F_j^{\prime}(s) &= \frac 1 {2k_1} \int_{\R} e^{-\frac {|y+j|} {k_1} }
  \cos (\pi s y) dy \notag \\
  &= (\cos \pi js )  \frac 1 {2k_1} \int_{\R} e^{-\frac {|y|} {k_1} }
  e^{i \pi s y  } dy \notag \\
  & = (\cos \pi js ) \cdot \frac 1 {1+ (k_1 \pi s)^2}.
  \end{align*}
  Thus we obtain 
  \begin{align*}
  B_N &= O(N^{-c}) + \sum_{|j| \le N^{\frac 12+2\epsilon_0} }
  \Bigl| \int_0^1 \frac 1 {1+(k_1 \pi s)^2} \cos \pi  j s d s \Bigr| \notag \\
  &= O(N^{-c}) + \sum_{j \in \mathbb Z }
  \Bigl| \int_0^1 \frac 1 {1+(k_1 \pi s)^2} \cos \pi  j s d s \Bigr|
  \end{align*}
  
  Now for $k_1=\frac 12$, we denote 
  \begin{align*}
  \beta_j = \int_0^1 \frac 1 {1+(k_1 \pi s)^2} \cos \pi  j s d s.
  \end{align*}
  By rigorous numerical computation, we have
  \begin{align*}
  &\beta_0 \approx 0.639093; \\
  & \beta_1 =\beta_{-1} \approx  0.165881; \\
  &\beta_2=\beta_{-2} \approx 0.00883796;\\
  &\beta_3=\beta_{-3} \approx 0.00691018;\\
  &\beta_4=\beta_{-4} \approx -0.00220351.
  \end{align*}
  In particular
  \begin{align*}
  \sum_{|j|\le 3} |\beta_j| > 1.0023.
  \end{align*}

Finally we show how to derive \eqref{N21:1.0c}.  With no loss we assume $\nu=1$. 
Suppose $f$ satisfies $\|f\|_{\infty} \le 1$.  Then for $N\ge 10$, $\tau=\frac 14 N^{-2}$
and $0\le l\le N-1$, we have
\begin{align*}
( (\op{Id} -  \tau \partial_{xx} )^{-1} Q_N f)(\frac l N)
& = \sum_{j=0}^{N-1} f(\frac {l-j} N)
\cdot \frac 1 N \op{Re}
\Bigl( \sum_{-\frac N2<k\le \frac N2} (1+(2\pi)^2 \tau k^2)^{-1} e^{2\pi i k
\frac j N } \Bigr) \notag \\
& = O(N^{-c}) +  f(\frac lN) \beta_0 + \sum_{1\le |j| \le N^{\frac 12+2\epsilon_0} }
f(\frac {l+j} N) \beta_j,
\end{align*}
where
\begin{align*}
\beta_j = \int_0^1 \frac 1 {1+(\frac 12 \pi s)^2} \cos \pi j s ds.
\end{align*}
It is then clear to choose $f$ similar to the proof in Theorem \ref{N18:1a}. We omit
further details.

\frenchspacing
\bibliographystyle{plain}

\begin{thebibliography}{99}


\bibitem{EMformula}
 %\href{https://en.wikipedia.org/wiki/Euler-Maclaurin_formula}
%lalala
 \url{https://en.wikipedia.org/wiki/Euler-Maclaurin_formula}


\bibitem{RH98}
P. Rybka and K.H. Hoffmann.
Convergence of solutions to Cahn-Hilliard equation.
Comm. Partial Differential Equations 24 (1999), no. 5-6, 1055--1077.

%\bibitem{LYZ13}
%Li, D., Yu, X. and Zhai, Z.
%On the Euler-Poincar\' e equation with non-zero dispersion.
%Arch. Ration. Mech. Anal. 210 (2013), no. 3, 955--974.

\bibitem{LTang20a}
D. Li and T. Tang.
Stability analysis for the Implicit-Explicit discretization of the Cahn-Hilliard equation.
arXiv:2008.03701


\bibitem{L13}
 D. Li.
On a frequency localized Bernstein inequality and some generalized Poincar\'{e}-type inequalities.
Math. Res. Lett. 20 (2013), no. 5, 933--945.

\bibitem{LarTh03}
S. Larsson and V. Thom\'ee. 
Partial differential equations with numerical methods,
vol 45 of Texts in Applied Mathematics. Springer-Verlag, Berlin, 2003.




\bibitem{TY16}
T. Tang and J. Yang.
Implicit-explicit scheme for the Allen-Cahn equation preserves the maximum principle. 
J. Comput. Math. 34 (2016), no. 5, 471--481.

\bibitem{STY16}
J. Shen, T. Tang and J. Yang.
On the maximum principle preserving schemes for the generalized Allen-Cahn equation. 
Commun. Math. Sci. 14 (2016), no. 6, 1517--1534.

\bibitem{YDZ18}
J. Yang, Q. Du and W. Zhang.
Uniform $L^p$-bound of the Allen-Cahn equation and its numerical discretization.
Int. J. Numer. Anal. Model. 15 (2018), no. 1-2, 213--227.

\end{thebibliography}

\end{document}